\newcommand{\ket}[1]{{| #1 \rangle}}      
\newcommand{\br}[1]{{\langle #1 \rangle}}  
\numberwithin{equation}{section}
\let\tmp\newinsert 
\let\newinsert\newbox
\let\newinsert\tmp 
\newcommand{\End}{\operatorname{End}}
\newcommand{\Hom}{\operatorname{Hom}}
\numberwithin{equation}{section}
\newtheorem{thm}{Theorem}[subsection]
\newtheorem{prop}[thm]{Proposition}
\newtheorem{conj}[thm]{Conjecture}
\newtheorem{lem}[thm]{Lemma}
\newtheorem{rem}[thm]{Remark}
\newtheorem{rems}[thm]{Remarks}
\newtheorem{xmpl}[thm]{Example}
\newtheorem{cor}[thm]{Corollary}
\newtheorem{cors}[thm]{Corollaries}
\newtheorem{definition}[thm]{Definition}
\newtheorem{defprop}[thm]{Definition-proposition}
\newcommand{\recbinom}{\genfrac{[}{]}{0pt}{}}
\newcommand{\rightarrowdbl}{\rightarrow\mathrel{\mkern-14mu}\rightarrow}
\newcounter{saveenumi}
\newcommand{\id}{{\operatorname{id}}}
\newcommand{\tr}{{\operatorname{tr}}}
\begin{document}
\selectlanguage{english}
\raggedbottom
\definecolor{keywords}{RGB}{0,51,255}
\definecolor{comments}{RGB}{0,0,113}
\definecolor{red}{RGB}{160,0,0}
\definecolor{green}{RGB}{0,150,0}
\lstset{frame=single,language=Python, breaklines=true, 
	basicstyle=\ttfamily\scriptsize, 
	keywordstyle=\color{keywords},
	commentstyle=\color{comments},
	stringstyle=\color{red},
	showstringspaces=false,
	identifierstyle=\color{green},
	procnamekeys={def,class},
	numbers=left}


\begin{titlepage}
	\large
	\centering
	\vspace{1cm}
	\textbf{On the structure of tensor products of the finite-dimensional representations of quantum affine $\boldsymbol{\mathfrak{sl}_n}$}\\
	\vspace{4cm}
	\textbf{Master thesis}\\
	\vspace{1cm}
	To obtain the degree\\
	Master of Science - mathematics\\
	submitted by\\
	Henrik \textsc{Jürgens}\\
	\vspace{2cm}
	{4. September 2024\\
	(edited 28. Januar 2025)}\\
	\vspace{2cm}
	\begin{figure}[H]
		\includegraphics[scale = 2]{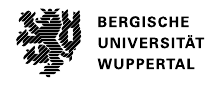}
	\end{figure}
	Fakultät für Mathematik und Naturwissenschaften\\
	\vspace{1cm}
	Erstgutachter: Prof. Dr. Matthias Wendt\\
	Zweitgutachter: Prof. Dr. Markus Reineke\\
\end{titlepage}
\tableofcontents
\thispagestyle{empty}

\newpage
\setcounter{page}{1}
\chapter{Introduction}
In mathematics, \textit{quantum affine algebras} (or \textit{affine quantum groups}), \textit{Yangians} and similar \textit{Hopf algebras} appear when considering (parameter dependent) braiding relations. In statistical mechanics, a branch of physics, these so-called '\textit{quantum groups}' come up as the local symmetries of integrable vertex models and their corresponding spin chains. Moreover, the braiding relations turn out to be the local integrability condition for the so-called $R$\textit{-matrix}, a matrix which is usually defined only to simplify the multiplication of the local (Boltzmann-)weights of the model. For these integrable vertex models, however, the local weights, collected in the $R$-matrix, depend on two spectral parameters (usually complex or real) and the $R$-matrix satisfies the so-called parameter dependent (quantum) \textit{Yang--Baxter equation}
\begin{equation*}
	R_{12}(u_1,u_2)R_{13}(u_1,u_3)R_{23}(u_2,u_3) = R_{23}(u_2,u_3)R_{13}(u_1,u_3)R_{12}(u_1,u_2).
\end{equation*}
In fact, from this equation, together with some regularity assumptions on the $R$-matrix, one obtains a one parameter family of commuting transfer matrices of the vertex model.
Equivalently, it is possible to construct an infinite number of independent conserved quantities (preserved symmetries of the model) for the corresponding spin chain and the parameter dependent transfer matrix of the vertex model can be interpreted as their generating function.

From an abstract point of view, the physical models can be build simply by considering representations of these quantum groups and applying them on an element $\mathcal{R}$, called the \textit{universal $R$-matrix}, which sits in the twofold tensor product of the quantum group (in some sense) and satisfies the braiding relations.\footnote{I.e. the (quantum) Yang--Baxter equation and some further (regularity) assumptions on the element $\mathcal{R}$ which will be explained in more detail later.}

In this thesis, the plan is to give a precise description of the finite-dimensional representation theory of quantum affine $\mathfrak{sl}_n$ and the cluster algebra structure of its \textit{Grothendieck ring}. The idea is to build a mathematical foundation, connect different levels of mathematical abstraction and explain how they can be used to answer questions about the structure of the representations.

Personally, my motivation to understand the structure of these representations, as a physicist, is based on an attempt to understand and calculate the correlation functions of the models, which correspond to fundamental representations of the Yangian of $\mathfrak{sl}_n$, exactly, i.e. rank $n-1$ versions of the Heisenberg XXX spin chain (see \cite{BJ} and \cite{HJue}). However, I ended up also learning about the structure of the finite-dimensional representations of quantum affine $\mathfrak{sl}_n$, which is known to be similar. Indeed, this makes sense from a physics point of view since the models corresponding to fundamental representations of quantum affine $\mathfrak{sl}_n$ are similar rank $n-1$ versions of the Heisenberg XXZ spin chain. The latter is known to limit in the XXX spin chain when sending the anisotropy parameter $\Delta$ to $1$ in a certain way.

However, there has been great success with calculating the correlation functions of the Heisenberg XXX and XXZ models (i.e. rank 1) by analysing the (tensor product) structure of their representations. This is explained in the paper \cite{BJMST} and consecutively in the famous series of papers called 'Hidden Grassmann Structure in the XXZ model' \cite{BJMST_HGS,BJMST_HGS2,JMS_HGS3}. Apart from my ansatz \cite{HJue}, which turned out to describe the correlation functions only partially, it is unclear if there is a way to generalize the whole construction for higher rank and if or how it is based on a more general mathematical structure.

Certainly, it is helpful to understand and explain the mathematics that I have applied. This is because on the one side I want to set a mathematical basis for continuing my research and on the other side I want to simplify the accessibility of this branch of mathematics for some of my friends and other mathematicians and mathematical physicists who may not be able to spend so much time studying in particular the books of R. Carter \cite{Carter}, V. Chari and A. Pressley \cite{CPBook}, and all the other papers I have been working through. Therefore, the ultimate goal of this thesis is to explain the finite-dimensional representation theory of quantum affine $\mathfrak{sl}_n$ from an almost elementary standpoint.
\section{Structure}
The thesis is organized in the following way.\\

In \textbf{Chapter} \ref{ch:math_backgr} I give an introduction to the mathematical structures we are interested in and outline the connection to simple Lie algebras.
These are the so-called 'quantum groups', most generally referred to as (non-cocommutative) Hopf algebras. As the chapter is mainly based on the books of Carter \cite{Carter} and Chari and Pressley \cite{CPBook}, it is called 'mathematical background'. In the concrete, I will mainly collect and reorganise some key facts from R. Carter's book \cite{Carter} on finite and affine Kac--Moody algebras, as well as from V. Chari and A. Pressley's book \cite{CPBook} on Quantum Groups, which are necessary to understand the main objects in this thesis in Section \ref{sect:Kac--Moody} and \ref{sect:quantum_groups}, respectively. These are the finite-dimensional representations of (untwisted) quantum affine algebras. The general idea is to provide an analogy between the (loop algebra) realisation of an untwisted affine Kac--Moody algebras given in the book of Carter \cite{Carter} in Chapter 18 and the 'second Drinfeld realization' of untwisted quantum affine algebras given in Chapter 12 in the book of Chari and Pressley. Building on this foundation, we proceed to discuss the finite-dimensional representations of the latter in Section \ref{sect:finite-dim_reps} and in Chapter \ref{ch:fin_dim_rep_quantum_aff} highlighting some of the analogies to the finite-dimensional representation theory of finite-dimensional simple Lie algebras over $\mathbb{C}$ discussed in Subsection \ref{subsect:Cartan_data}. The sections are organized as follows.\\

After providing a brief introduction to the concepts of Lie algebras and their universal enveloping algebras, we introduce Kac--Moody algebras in \textbf{Section} \ref{sect:Kac--Moody} as a generalisation of finite-dimensional simple Lie algebras over $\mathbb{C}$. We give a short summary of what is explained and proven in the book of Carter \cite{Carter} in the Chapters 14-18 and prepare the discussion of quantum affine algebras in Section \ref{sect:quantum_groups}. In particular, this is done to explain the importance of the so-called 'second Drinfeld realisation' for quantum affine algebras by drawing an analogy to the loop construction for affine Kac--Moody algebras.\\

The story goes as follows. After writing down the definition of a so-called 'generalized Cartan matrix' (GCM), denoted by $A$, we then provide an abstract definition of the Kac--Moody algebras $L(A)$ and $L(A)'$ in terms of $A$ in \textbf{Subsection} \ref{subsect:Kac--Moody_abstract}.\\

Given that the structure of a Kac--Moody algebra is fundamentally determined by its GCM, we briefly explore the trichotomy of possible GCM types in \textbf{Subsection} \ref{subsect:GCM_finite_affine}. We establish that GCMs of finite and affine types are symmetrizable, with finite-type GCMs being equivalent to Cartan matrices. In such cases, the Kac--Moody algebra $L(A)$ corresponds to the finite-dimensional complex simple Lie algebra with the Cartan matrix $A$.
Additionally, we note that affine Cartan matrices are classified by a list of affine connected Dynkin diagrams, paralleling the classification of Cartan matrices by connected Dynkin diagrams.\\

Since the Kac--Moody algebra $L(A)$ is generally infinite-dimensional, defining an invariant bilinear form, such as the Killing form for finite-dimensional Lie algebras, becomes problematic. To address this, a solution is provided by the standard invariant bilinear form, applicable when the GCM $A$ is symmetrisable. This is further explained in \textbf{Subsection} \ref{subsect:standard_inv_bil}.\\

Since this thesis focuses particularly on (untwisted) affine Kac--Moody algebras, we detail the concepts specific to these in \textbf{Subsection} \ref{subsect:aff_Kac--Moody}.\\

Given that the invariant bilinear form for a Kac--Moody algebra is constructed inductively on subalgebras $L(r)\subset L(A)$ where $L(0)= H$ and $\bigcup_{r\geq 0} L(r) = L(A)$, it is evident that Kac--Moody algebras are more complex than finite-dimensional simple Lie algebras. However, starting from a Cartan matrix $A^0$, there is a straightforward method for constructing an affine Cartan matrix $A$. These affine Cartan matrices are known as affine Cartan matrices of untwisted type. Fortunately, a similar construction -- referred to as the loop construction -- exists at the level of Lie algebras. We discuss this construction for untwisted affine Kac--Moody algebras in \textbf{Subsection} \ref{subsect:loop_constr_untw_Kac--Moody}. We conclude that $L(A)$ can be constructed from $L^0=L(A^0)$ by first considering the loop algebra $\mathfrak{L}(L^0)$, then constructing a one-dimensional central extension, and finally adjoining an element $d$ that acts as a derivation. Additionally, we identify the elements of the affine Kac--Moody algebra $L(A)$, as described in Subsection \ref{subsect:aff_Kac--Moody}, with their corresponding elements in $\hat{\mathfrak{L}}(L^0)=\tilde{\mathfrak{L}}(L^0)\oplus\mathbb{C}d =\mathfrak{L}(L^0)\oplus\mathbb{C}c\oplus\mathbb{C}d $, and provide an explicit description of the standard invariant bilinear form in $\hat{\mathfrak{L}}(L^0)$. This approach is much more concrete compared to the abstract definition of $L(A)$, since $\mathfrak{L}(L^0)$ can be viewed as the Lie algebra of Laurent polynomial maps from $\mathbb{C}^{\times}$ to $L^0$.\\

Before describing the structure of (affine) quantum groups and explaining how quantum affine algebras can be defined through quantisations of universal enveloping algebras of (affine) Kac--Moody algebras, we provide a short summary of the finite-dimensional representation theory of finite-dimensional semisimple Lie algebras over $\mathbb{C}$ in \textbf{Subsection} \ref{subsect:Cartan_data}. Moreover, we establish some concepts which will be generalised later when discussing the representation theory of quantized universal enveloping algebras and specifically quantum affine algebras in Section \ref{sect:finite-dim_reps} and Chapter \ref{ch:fin_dim_rep_quantum_aff}.\\

In \textbf{Section} \ref{sect:quantum_groups} we finally introduce the so-called 'quantum groups', most generally referred to as Hopf algebras.\footnote{ Sometimes also defined as Hopf algebras that are quasitriangular in some sense (see Subsection \ref{subsect:QUE_algebras}).} The focus of this section is to prepare for the discussion of the abelian monoidal structure of the finite-dimensional representations of quantized universal enveloping algebras of (affine) Kac--Moody algebras.\\

To set a starting point, we establish the general concept of a Hopf algebra using 'string diagrams' in \textbf{Subsection} \ref{subsect:Hopf_algebras}. We then provide the basic definition of a representation of a Hopf algebra and discuss some fundamental constructions and properties related to it.\\

With the basic concepts established, we introduce the notion of an almost cocommutative Hopf algebra in \textbf{Subsection} \ref{subsect_quasitriang_Hopf} and discuss the properties of its universal $R$-matrices. We show that these matrices naturally define an isomorphism $V\otimes W\to W\otimes V$, thereby inducing a braiding on the tensor product of representations. Furthermore, we demonstrate that the universal $R$-matrix satisfies the (quantum) Yang--Baxter equation in the special case where the Hopf algebra is quasitriangular.\\

In fact, the representations of a Hopf algebra and quasitriangular (or almost cocommutative) Hopf algebra can more generally be understood in terms of the concept of an abelian monoidal category and an abelian quasitensor category, respectively. This is discussed in \textbf{Subsection} \ref{subsect:Ab_mon_cat}. Moreover, we introduce the important concept of the Grothendieck ring for an abelian monoidal category and state its commutativity in the case when the category is quasitensor.\\

After discussing the general structure of representations of (quasitriangular) Hopf algebras, we return to examining interesting explicit examples in \textbf{Subsection} \ref{subsect:QUE_algebras}. These are among other things the quantizations of universal enveloping algebras of (affine) Kac--Moody algebras.\footnote{ We will call them simply quantized universal enveloping (or QUE) algebras.} These are, in fact, Hopf algebras and we discuss two definitions, $U_h(L(A)')$ over the ring $\mathbb{C}[[h]]$ of formal power series in an indeterminate $h$, and $U_q(L(A)')$ over the field $\mathbb{Q}(q)$ (or $\mathbb{C}(q)$ by base change) of rational functions of an indeterminate $q$. After providing some motivation, we introduce these algebras in terms of their Drinfeld--Jimbo generators,\footnote{ We also call them Chevalley generators in analogy to our discourse on Kac--Moody algebras in Section \ref{sect:Kac--Moody}.} and then deduce some of their basic properties. We also initiate the discussion of whether these specific Hopf algebras can be quasitriangular.\\

Finally, we introduce an alternative realisation in terms of loop algebra-like generators in the case when $L(A)'=\tilde{g}$ is an untwisted affine Kac--Moody algebra in \textbf{Subsection} \ref{subsect:quantum_affine}, drawing the analogy to the loop construction in Subsection \ref{subsect:loop_constr_untw_Kac--Moody}. This alternative is sometimes called the 'new realization' or 'second Drinfeld realization' and $U_h(\tilde{g})$ resp. $U_q(\tilde{g})$ are called (untwisted) quantum affine algebras. We also prepare the discussion of the finite-dimensional representations in Section \ref{sect:finite-dim_reps} and Chapter \ref{ch:fin_dim_rep_quantum_aff}, noting some of their important basic properties in addition to those already covered by the Drinfeld--Jimbo presentations in Subsection \ref{subsect:QUE_algebras}.\\

In \textbf{Section} \ref{sect:finite-dim_reps} we explore some of the important basic properties of the finite-dimensional representations of quantum affine algebras, primarily explained in the Chapters 10 and 12 of the book \cite{CPBook}. However, we will incorporate information from various related papers, definitions from my dissertation \cite{HJue}, and some of my own calculations to complete the proofs and prepare the comprehensive discussion of the finite-dimensional representation theory of untwisted quantum affine algebras in Chapter \ref{ch:fin_dim_rep_quantum_aff}.\\

We begin by drawing the analogy between the finite-dimensional (type $\boldsymbol{1}$) representations of $U_q(\mathfrak{g})$ and those of $\mathfrak{g}$ at the begin of the section (cf. \cite{CPBook} Chapter 10). Building on this analogy, we explore the general description of the finite-dimensional (type $\boldsymbol{1}$) representations of the quantum affine algebra $U_q(\tilde{\mathfrak{g}})$ via Drinfeld polynomials in \textbf{Subsection} \ref{subsect:drinfeld_poly}. Specifically, we establish a one-to-one correspondence (up to twisting) between irreducible finite-dimensional representations of $U_q(\tilde{\mathfrak{g}})$ and $I$-tuples of polynomials $P_i\in\mathbb{C}[u]$, $i\in I$, with constant term $1$. Additionally, we incorporate the definition of the loop-weight spaces, justify the term 'loop-weight' for finite-dimensional (type $\boldsymbol{1}$) representations, and then present our initial definition of the $q$-character based on the loop-weights.\\

In \textbf{Subsection} \ref{subsect:Jimbos_hom_und_ev_reps} we explore the consequences of the existence of an evaluation homomorphism for $\mathfrak{g}=\mathfrak{sl}_{l+1}$, $l\in\mathbb{N}$ (Jimbo's homomorphism). Specifically, we compute the Drinfeld polynomials and loop-weights for the evaluation representations of $U_q(\tilde{\mathfrak{sl}}_2)$. We then discuss further consequences established in \cite{CP1994a}. In addition, we obtain a complete description of the tensor product of irreducible (type $\boldsymbol{1}$) representations of $U_q(\tilde{\mathfrak{sl}}_2)$, implying in particular that $U_q(\tilde{\mathfrak{sl}}_2)$ can't be quasitriangular. Nevertheless, since the non-quasitriangularity depends on specific cases where the Drinfeld polynomials are in a 'special position', we can still establish the commutativity of the Grothendieck ring $\operatorname{Rep}U_q(\tilde{\mathfrak{sl}}_2)$.\\

In \textbf{Subsection} \ref{subsect:trigon_R_matrix_A_l}, we further discuss the properties of (untwisted) quantum affine algebras, highlighting a result by Drinfeld which shows that $U_h(\tilde{\mathfrak{g}})$ is 'pseudotriangular'. Precisely, this means that there exists an element $\tilde{\mathcal{R}}(\lambda)\in (U_h(\tilde{\mathfrak{g}})\otimes U_h(\tilde{\mathfrak{g}}))((\lambda))$, known as the 'pseudo-universal' $R$-matrix, which satisfies the (quantum) Yang Baxter equation with multiplicative spectral parameters. Fortunately, an explicit but rather technical formula for its computation was obtained by Khoroshkin and Tolstoy \cite{KT} in 1992. Moreover, the fundamental $R$-matrix $R(\lambda)=(\rho_f\otimes\rho_f)((\tau_\lambda\otimes\id)(\tilde{\mathcal{R}}))$ has been explicitly calculated for $\mathfrak{g}=\mathfrak{sl}_2$ and $\mathfrak{g}=\mathfrak{sl}_3$ in \cite{BGKNR}, and the general formula for $\mathfrak{g}=\mathfrak{sl}_{l+1}$ was conjectured based on direct calculations, identifying it as the unique (up to normalisation) intertwiner between fundamental representations. The Khoroshkin and Tolstoy formula, however, offers the advantage of providing a normalisation that preserves all the functional properties of the universal $R$-matrix. We conclude the subsection by presenting their result and stating the general properties of $\tilde{\mathcal{R}}(\lambda)$.\\

Having introduced the fundamental (or fundamental-fundamental) $R$-matrix in Subsection \ref{subsect:trigon_R_matrix_A_l}, we have also set the basis for defining the $\mathfrak{sl}_{l+1}$ fundamental Vertex models in statistical mechanics. We approach this topic by briefly introducing a graphical notation in \textbf{Subsection} \ref{subsect:graphical_notation},  followed by an explanation from a physics perspective in \textbf{Subsection} \ref{subsect:A_physics_pov}.\\

In \textbf{Chapter} \ref{ch:fin_dim_rep_quantum_aff}, we return to the study of the category $\mathrm{C}$ of finite-dimensional type $\boldsymbol{1}$ representations of $U_q(\tilde{\mathfrak{g}})$. To prepare the reader for the general discussion, we first summarize and reformulate the results for $\mathfrak{g}=\mathfrak{sl}_2$. We then examine the properties of the $q$-character and draw an analogy with the representation theory of $\mathfrak{g}$ by formulating the theorem of the highest loop-weight for $U_q(\tilde{\mathfrak{g}})$. Finally, in the case when $\mathfrak{g}=\mathfrak{sl}_{l+1}$, we provide a comprehensive explanation of the structure of the Grothendieck ring of $\mathrm{C}$ and present an explicit formula for calculating the $q$-characters of the (prime) snake modules, which turn out to be real (prime) simple objects of $\mathrm{C}$.\\

In \textbf{Section} \ref{sect:fin-dim_rep_quantum_aff_sl2} we prepare the general discussion of the category $\mathrm{C}$ by summarizing and partially reformulating the comprehensive results obtained in Subsections \ref{subsect:drinfeld_poly} and \ref{subsect:Jimbos_hom_und_ev_reps} for $\mathfrak{g}=\mathfrak{sl}_2$. Precisely, we explicitly write down the $q$-character of the evaluation representation $V^{(r)}(a)$ as a Laurent monomial in the fundamental loop-weights $Y_a$. By comparing the loop-weight spaces of different $\mathfrak{g}$-weight, we identify affine simple roots $A_a$ and observe a partial ordering of the loop-weights compatible with the partial ordering of the $\mathfrak{g}$-weights. This leads us to consider the formulation of a theorem of the highest loop-weight and the fact that the $q$-character defines an injective ring homomorphism $\chi_q:\operatorname{Rep}U_q(\tilde{\mathfrak{sl}}_2)\to \mathbb{Z}[Y_{a}^{\pm 1}]_{a\in\mathbb{C}^\times}$. We then elucidate that it is enough to only consider representations in a subcategory $\mathrm{C}_{\mathbb{Z}}$ which includes all special positions, highlighting the simplification obtained by effectively working 'modulo general positions'. Finally, we reformulate the product structure of the Grothendieck ring, as indicated by Proposition \ref{prop:special_pos_short_ex}, in terms of the so-called $\operatorname{T}$-system.\\

Moving on to the general description of the representation theory of $\mathrm{C}$ in \textbf{Section} \ref{sect:gen_untw_case}, we formulate the proof of Theorem \ref{thm:drinfeld-l_weights} which details the structure of the loop-weights. Hence, justifying the Definition \ref{def:q-character_drinfeld} of the $q$-character.
We simplify the notation by expressing everything in terms of the fundamental loop-weights $Y_{i,a}=1-au_i$, $i\in I$.
After briefly discussing a more direct but equivalent definition of the $Y_{i,a}$ provided in the papers \cite{FR} and \cite{FM}, we identify the affine simple roots and define the affine weight and root lattices, $\mathcal{P}$ and $\mathcal{Q}$, respectively. We introduce the concept of dominant loop-weights and observe a partial ordering of the loop-weights that is consistent with the partial ordering of $\mathfrak{g}$-weights.
We then formulate the theorem on the highest loop-weight and describe the general properties of the $q$-character, extending the results for $\mathfrak{g}=\mathfrak{sl}_2$ in a straightforward manner. In particular, we discuss the commutativity of the Grothendieck ring $\operatorname{Rep}U_q(\tilde{\mathfrak{g}})$, and that $\chi_q$ is an injective ring homomorphism $\operatorname{Rep}U_q(\tilde{\mathfrak{g}})\to \mathbb{Z}[\mathcal{P}]$, which restricts to the usual character homomorphism on $U_q(\mathfrak{g})\lhook\joinrel\rightarrow U_q(\tilde{\mathfrak{g}})$.
Finally, we remark that the $q$-character homomorphism $\chi_q$ is $\mathbb{C}^\times$-equivariant, thereby setting the stage for working 'modulo general positions' in the subsequent subsections.\\

Indeed, based on the characterization of special positions outlined in the paper \cite{C2002}, it is clear that limiting our consideration to a subcategory $\mathrm{C}_{\mathbb{Z}}$ is again sufficient. This is explained in \textbf{Subsection} \ref{subsect:q-char_and_snake_mod}, where we narrow our focus to type $A_l$, $l\in\mathbb{N}$, moving forward. We then define the so-called (prime) snake modules, specific irreducible elements of $\mathrm{C}_{\mathbb{Z}}$, introduced by Mukhin and Young in \cite{MY2} for types $A$ and $B$. These are characterised by being special, anti-special, thin and real (and, respectively, prime). We further claim that prime snake modules are exactly the prime irreducible elements of $\mathrm{C}_{\mathbb{Z}}$ and discuss a system of relations in the Grothendieck ring $\operatorname{Gr}(\mathrm{C}_{\mathbb{Z}})$ established in \cite{MY}. This so-called 'extended $\operatorname{T}$-system' is a system of relations satisfied by the prime snake modules that extends the usual $T$-system. It is reasonable to expect that our assertion can be proven with it. However, there already exists a proof in the paper \cite{BJY} by Duan, Li and Luo which uses a slightly different set of relations for the prime snake modules called the $S$-systems. We give the reference, briefly explain their findings, and offer a more detailed explanation in Subsection \ref{subsect:form_in_terms_of_clust_alg}.\\

Knowing about the importance of prime snake modules, we are interested in their $q$-characters. On one side, this is because it provides a way to explicitly write down the product in the Grothendieck ring, on the other side, because we can extract the $\mathfrak{g}$-character from it by restriction, and hence observe its decomposition into a direct sum of $\mathfrak{g}$-irreducible subspaces.
Fortunately, Mukhin and Young prove an explicit formula for the $q$-characters of snake modules shortly after introducing them in their paper \cite{MY2}. This so-called 'path formula' for the $q$-characters of snake modules is described in \textbf{Subsection} \ref{subsect:the_path_formula}. We conclude the subsection by suspecting an elegant solution to the problem of computing projectors onto prime snake modules.\\

Finally, in \textbf{Subsection} \ref{subsect:form_in_terms_of_clust_alg}, we return to a more detailed explanation of the results in the paper \cite{BJY}. As explained in Subsection \ref{subsect:q-char_and_snake_mod}, their results imply our claim that prime snake modules are exactly the prime irreducible elements of $\mathrm{C}_{\mathbb{Z}}$. Moreover, the authors prove the so-called Hernandez--Leclerc conjecture for types $A$ (and $B$), implying that the Grothendieck ring $\operatorname{Gr}(\mathrm{C}_{\mathbb{Z}})$ is isomorphic to an infinite cluster algebra, call it $\mathcal{A}_\infty$, such that the prime snake modules correspond exactly to the cluster variables of $\mathcal{A}_\infty$. In fact, the $S$-systems, satisfied by the prime snake modules, are defined such that they resemble the mutations of cluster variables in the cluster algebra $\mathcal{A}_\infty$. Therefore, prime snake modules are precisely the prime irreducible elements of $\mathrm{C}_{\mathbb{Z}}$. We briefly introduce the concept of cluster algebras, define the infinite cluster algebra $\mathcal{A}_\infty$ and finally state the Hernandez Leclerc Conjecture.

\newpage
\chapter{Mathematical background}
\label{ch:math_backgr}
In this chapter I want to make the reader familiar with the structures that I am interested in and give the connection to the basic concept of a Lie Algebra.
\begin{definition}[Lie algebra]~\\
	A \textbf{Lie algebra} is a vector space $\mathfrak{g}$ over a field $k$ on which a (Lie bracket) multiplication
	\begin{equation*}
		\mathfrak{g}\times \mathfrak{g} \to \mathfrak{g}\\
		(x,y)\mapsto [x,y]
	\end{equation*}
is defined satisfying the axioms
\begin{enumerate}[label=\normalfont(\roman*)]
	\item $[\cdot,\cdot]$ is bilinear,
	\item $[x,x]=0$ for all $x\in \mathfrak{g}$,
	\item $[[x,y],z]+[[y,z],x]+[[z,x],y]=0$ for all $x,y,z\in \mathfrak{g}$.\label{Lie_ax:3}
\end{enumerate}
Axiom \ref{Lie_ax:3} is called the Jacobi identity and it can easily be memorized by realizing its equivalence to the statement that the mapping $ad_x$, given by the left multiplication $[x,\cdot]:\mathfrak{g}\to \mathfrak{g}$, is a derivative for all $x\in \mathfrak{g}$, i.e. it fulfils the Leibniz (product) rule with respect to the Lie bracket. $\odot$
\end{definition}
Clearly, I do not intend going into the details about the classification of finite-dimensional Lie algebras\footnote{We assume $k=\mathbb{C}$ if not stated otherwise.} here and rather refer the interested reader to the first part of the book \cite{Carter}. On the other side, it can be nice to see how Kac--Moody algebras are a bigger class of Lie algebras that includes all finite-dimensional semisimple ones and I want to provide the connection. Moreover, the beauty of the classification of the finite-dimensional semisimple Lie algebras stresses the importance of the loop construction which will be presented here. Furthermore, we will see that this construction translates to the second Drinfeld realisation in the $q$-deformed case. However, I prefer to not require too much prior knowledge, give the most definitions from scratch and collect the required facts.\\

Before we start with some more advanced definitions, let us introduce another basic concept (cf. \cite{Carter} Section 9.1). The \textbf{universal enveloping algebra} $U(\mathfrak{g})$ of a Lie algebra $\mathfrak{g}$. It is an associative algebra $U(\mathfrak{g})$ such that it's representation theory is the same as the representation theory of the Lie algebra $\mathfrak{g}$. We can build it from the \textbf{tensor algebra} $T(\mathfrak{g})$ of $\mathfrak{g}$ by dividing out a 2-sided ideal $J$. Since the Lie algebra $\mathfrak{g}$ is in particular a vector space, one can construct the tensor algebra $T(\mathfrak{g})$ from it. It is the free algebra of all possible tensor products of all possible vectors in $\mathfrak{g}$, so
\begin{equation*}
	T(\mathfrak{g}) = k\;\oplus\;\mathfrak{g}\;\oplus\; (\mathfrak{g}\otimes \mathfrak{g}) \;\oplus\; (\mathfrak{g}\otimes \mathfrak{g}\otimes \mathfrak{g}) \;\oplus\; \cdots
\end{equation*}
where $T^0(\mathfrak{g})=k$, $T^1(\mathfrak{g})=\mathfrak{g}$, $T^2(\mathfrak{g})=\mathfrak{g}\otimes \mathfrak{g}$, etc..
\begin{definition}[universal enveloping algebra]
	Let $\mathfrak{g}$ be a Lie algebra over a field $k$ and let $J$ be the 2-sided ideal of the tensor algebra $T(\mathfrak{g})$ generated by all elements of the form
	\begin{equation*}
		x\otimes y - y\otimes x - [x,y]\qquad\text{for }x,y\in \mathfrak{g}.
	\end{equation*}
	Then $U(\mathfrak{g}) := T(\mathfrak{g})/J$ is an associative algebra over $k$ called the \textbf{universal enveloping algebra} of $\mathfrak{g}$. 
\end{definition}
It's name is justified by the following universal property.
\begin{prop}
	Let $A$ be any associative algebra with $1$ over $k$ and $[A]$ the corresponding Lie algebra. Then for any Lie algebra homomorphism $\theta:\mathfrak{g}\to [A]$ there exists a unique associative algebra homomorphism $\phi: U(\mathfrak{g})\to A$ such that $\phi\circ\sigma = \theta$, where $\sigma$ is the canonical linear map $\mathfrak{g}\to T^1(\mathfrak{g}) \to T(\mathfrak{g}) \to U(\mathfrak{g})$.
\begin{equation*}
	\begin{tikzcd}[row sep=huge, column sep=huge]
		U(\mathfrak{g}) \arrow[dashrightarrow]{r}{\exists !} & A \\
		\mathfrak{g} \arrow{u}{\sigma} \arrow{ur}{\theta}
	\end{tikzcd}
\end{equation*}
$\odot$
\end{prop}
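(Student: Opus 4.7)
The plan is to obtain $\phi$ by first lifting $\theta$ to the tensor algebra $T(\mathfrak{g})$ using its universal property, and then descending to the quotient $U(\mathfrak{g}) = T(\mathfrak{g})/J$ by verifying that the lift annihilates the ideal $J$. Uniqueness will come almost for free from the fact that $\sigma(\mathfrak{g})$ generates $U(\mathfrak{g})$ as an associative algebra.

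In more detail, I would first regard $\theta$ merely as a $k$-linear map $\mathfrak{g}\to A$. Since $T(\mathfrak{g})$ is the free associative $k$-algebra on the vector space $\mathfrak{g}$, the universal property of the tensor algebra produces a unique associative algebra homomorphism $\tilde{\phi}: T(\mathfrak{g}) \to A$ whose restriction to $T^1(\mathfrak{g}) = \mathfrak{g}$ agrees with $\theta$. Concretely, $\tilde{\phi}(x_1 \otimes \cdots \otimes x_n) = \theta(x_1)\cdots \theta(x_n)$ and $\tilde{\phi}(1) = 1$.

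Next I would show that $J \subseteq \ker \tilde{\phi}$. For the generators of $J$ this is a direct check: for $x,y \in \mathfrak{g}$,
\begin{equation*}
    \tilde{\phi}(x \otimes y - y \otimes x - [x,y]) = \theta(x)\theta(y) - \theta(y)\theta(x) - \theta([x,y]) = [\theta(x),\theta(y)]_A - \theta([x,y]),
\end{equation*}
which vanishes precisely because $\theta$ is a Lie algebra homomorphism $\mathfrak{g} \to [A]$. Since $\ker \tilde{\phi}$ is a two-sided ideal of $T(\mathfrak{g})$ containing the generators of $J$, it contains all of $J$, and hence $\tilde{\phi}$ factors through a unique associative algebra homomorphism $\phi: U(\mathfrak{g}) \to A$ with $\phi \circ \pi = \tilde{\phi}$, where $\pi: T(\mathfrak{g}) \to U(\mathfrak{g})$ is the quotient map. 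Since $\sigma = \pi \circ \iota$ with $\iota: \mathfrak{g} \hookrightarrow T(\mathfrak{g})$ the canonical inclusion, we get $\phi \circ \sigma = \tilde{\phi} \circ \iota = \theta$, as required.

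For uniqueness, I would argue that $U(\mathfrak{g})$ is generated as an associative algebra by $\sigma(\mathfrak{g})$ together with $1$ (this is immediate from the construction, as $T(\mathfrak{g})$ is generated by $T^1(\mathfrak{g})$ and the quotient map is surjective). Hence any associative algebra homomorphism $\phi': U(\mathfrak{g}) \to A$ with $\phi' \circ \sigma = \theta$ is determined on generators, and so must coincide with $\phi$ everywhere. I do not expect any serious obstacle here; the one subtle point worth emphasising is that one must genuinely use the Lie homomorphism property of $\theta$ in the step $J \subseteq \ker \tilde{\phi}$, and that both universal properties (of $T(\mathfrak{g})$ and of the quotient by an ideal) are being applied in sequence.
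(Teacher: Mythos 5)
Your proof is correct and is exactly the standard argument: the paper itself omits the proof (deferring to Carter, Section 9.1), and the argument given there proceeds the same way — extend $\theta$ to $T(\mathfrak{g})$ by the universal property of the tensor algebra, check that the generators $x\otimes y-y\otimes x-[x,y]$ of $J$ are annihilated using that $\theta$ is a Lie algebra homomorphism into $[A]$, factor through the quotient, and deduce uniqueness from the fact that $\sigma(\mathfrak{g})$ and $1$ generate $U(\mathfrak{g})$. No gaps; the one subtlety you flag (where the Lie homomorphism property enters) is indeed the only point of substance.
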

So for any unital associative algebra $A$ the Lie algebra homomorphisms $\mathfrak{g}\to [A]$ are in bijection with the unital algebra homomorphisms $U(\mathfrak{g})\to A$ through $\sigma: \mathfrak{g} \to U(\mathfrak{g})$. It means that the functor $U:\mathfrak{g}\to U(\mathfrak{g})$ is the left adjoint of the functor $[\cdot]:A\to[A]$.\\
	
Using this universal property, we can relate representations of the Lie algebra $\mathfrak{g}$ to representations of the associative algebra $U(\mathfrak{g})$. If $V$ is a vector space over $\mathbb{C}$, the set $\End(V)$ is a unital associative algebra and $[\End(V)]$ the corresponding Lie algebra. A representation of $\mathfrak{g}$ is a Lie algebra homomorphism $\mathfrak{g}\to[\End(V)]$ and a representation of $U(\mathfrak{g})$ is an associative algebra homomorphism $U(\mathfrak{g})\to\End(V)$. Then, the correspondence is as follows (see \cite{Carter} Proposition 9.3 for the proof).
\begin{prop}
	There is a bijective correspondence between representations $\theta:\mathfrak{g}\to[\End(V)]$ and representations $\phi:U(\mathfrak{g})\to \End(V)$. Corresponding representations are related by the condition
	\begin{equation*}
		\phi(\sigma(x))=\theta(x)\qquad\text{for all }x\in\mathfrak{g}.\quad\odot
	\end{equation*}
\end{prop}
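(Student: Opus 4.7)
The statement is essentially a direct corollary of the universal property already established, so the plan is to set up the two maps of the claimed bijection and check they are mutually inverse.

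In the forward direction, suppose $\theta:\mathfrak{g}\to[\End(V)]$ is a representation. Take $A=\End(V)$ in the previous proposition; this is a unital associative $k$-algebra whose associated Lie algebra is exactly $[\End(V)]$. The universal property then produces a unique associative algebra homomorphism $\phi:U(\mathfrak{g})\to\End(V)$ with $\phi\circ\sigma=\theta$. So from $\theta$ we obtain $\phi$, and the compatibility condition $\phi(\sigma(x))=\theta(x)$ holds by construction.

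In the backward direction, given an associative algebra homomorphism $\phi:U(\mathfrak{g})\to\End(V)$, define $\theta:=\phi\circ\sigma:\mathfrak{g}\to\End(V)$. I would then verify that $\theta$ is automatically a Lie algebra homomorphism into $[\End(V)]$. The key observation here is that inside $U(\mathfrak{g})$ the relation
\begin{equation*}
	\sigma(x)\sigma(y)-\sigma(y)\sigma(x)=\sigma([x,y])
\end{equation*}
holds for all $x,y\in\mathfrak{g}$, because the generators $x\otimes y-y\otimes x-[x,y]$ of the ideal $J$ vanish in the quotient $T(\mathfrak{g})/J=U(\mathfrak{g})$. Applying the algebra homomorphism $\phi$ to both sides yields
\begin{equation*}
	\theta(x)\theta(y)-\theta(y)\theta(x)=\theta([x,y]),
\end{equation*}
which is exactly the Lie algebra homomorphism condition $\theta([x,y])=[\theta(x),\theta(y)]$ in $[\End(V)]$. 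Thus $\theta$ is a genuine representation of $\mathfrak{g}$.

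Finally, I would check that these two assignments are mutually inverse. Starting from $\theta$, forming $\phi$ via the universal property and then setting $\phi\circ\sigma$ returns $\theta$ by definition of $\phi$. Starting from $\phi$, setting $\theta=\phi\circ\sigma$ and then re-applying the universal property produces a unique associative algebra map $U(\mathfrak{g})\to\End(V)$ factoring $\theta$ through $\sigma$; but $\phi$ itself already has this factorisation property, so uniqueness forces the reconstructed map to equal $\phi$. The only thing to be careful about is precisely this uniqueness argument, which is really the main (and only) substantive point, since the Lie homomorphism check for $\theta=\phi\circ\sigma$ is immediate from the defining relations of $U(\mathfrak{g})$; everything else is bookkeeping around the previous proposition.
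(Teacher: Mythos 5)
Your proof is correct and is essentially the standard argument the paper relies on (it defers to Carter's Proposition 9.3, which proceeds exactly this way): apply the universal property with $A=\End(V)$ in one direction, use the relation $\sigma(x)\sigma(y)-\sigma(y)\sigma(x)=\sigma([x,y])$ in $U(\mathfrak{g})$ in the other, and invoke uniqueness to see the assignments are mutually inverse. Nothing is missing.
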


Let us also write down the Poincaré-Birkhoff-Witt (PBW) basis theorem and its fundamental consequences. They are proven in Section 9.2 in the book \cite{Carter}.
\begin{thm}[Poincaré-Birkhoff-Witt] Let $\mathfrak{g}$ be a Lie algebra with basis $\{x_i|\,i\in I\}$. Let $<$ be a total order on the index set I. Let $\sigma:\mathfrak{g}\to U(\mathfrak{g})$ be the natural linear map as above. Let $\sigma(x_i)=y_i$. Then the elements
	\begin{equation*}
		y_{i_1}^{r_1}y_{i_2}^{r_2}\cdots y_{i_n}^{r_n}
	\end{equation*}
for all $n\geq 0$, $r_i\geq 0$ and all $i_1,1_2,\dots,i_n\in I$ with $i_1<i_2<\dots<i_n$ form a basis for $U(\mathfrak{g})$. $\odot$
\end{thm}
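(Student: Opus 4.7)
The plan is to prove the theorem in the two standard steps: first that the ordered monomials span $U(\mathfrak{g})$, and second that they are linearly independent. Throughout, I would work with the filtration $U(\mathfrak{g})_{\leq n}$ induced by the tensor-degree filtration on $T(\mathfrak{g})$, so that the defining relation
\begin{equation*}
    y_i y_j - y_j y_i = \sigma([x_i,x_j]) \in U(\mathfrak{g})_{\leq 1}
\end{equation*}
lets me swap adjacent generators at the cost of strictly lower-degree terms.

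For \textbf{spanning}, a double induction on the degree $n$ and on the number of inversions in $(j_1,\dots,j_n)$ handles the argument: any monomial $y_{j_1}\cdots y_{j_n}$ either is already ordered, or contains an adjacent out-of-order pair which I can swap; the swap reduces the inversion count (for the same degree) and introduces a remainder in $U(\mathfrak{g})_{\leq n-1}$, to which the inductive hypothesis applies. Iterating this rewrites any element as a $k$-linear combination of ordered monomials.

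For \textbf{linear independence}, the strategy is to construct a representation of $U(\mathfrak{g})$ on the vector space $V$ with formal basis consisting of the ordered monomials $z_{i_1}^{r_1}\cdots z_{i_n}^{r_n}$ (indexed exactly as in the statement), in which distinct ordered monomials act as linearly independent operators. Concretely, I would define operators $\rho(x_i)\in\End(V)$ recursively: on a basis vector $z_{j_1}^{s_1}\cdots z_{j_m}^{s_m}$ with $i\leq j_1$ the operator simply prepends $z_i$, while for $i>j_1$ it sends the basis vector to $z_{j_1}\cdot\rho(x_i)(z_{j_1}^{s_1-1}\cdots z_{j_m}^{s_m})+\rho([x_i,x_{j_1}])(z_{j_1}^{s_1-1}\cdots z_{j_m}^{s_m})$, which is well-defined by induction on total degree. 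I would then verify that $\rho$ is a Lie algebra homomorphism $\mathfrak{g}\to[\End(V)]$, invoke the universal property to extend it to $\bar{\rho}:U(\mathfrak{g})\to\End(V)$, and observe that $\bar{\rho}(y_{i_1}^{r_1}\cdots y_{i_n}^{r_n})(1)=z_{i_1}^{r_1}\cdots z_{i_n}^{r_n}$. Since these images are, by construction, distinct basis vectors of $V$, any nontrivial linear relation among the ordered monomials in $U(\mathfrak{g})$ would transport to a nontrivial relation in $V$, a contradiction.

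The \textbf{main obstacle} is the verification that $[\rho(x_i),\rho(x_j)]=\rho([x_i,x_j])$ on every basis vector of $V$. The recursive definition makes the relation manifest when applied to vectors whose leading index is already larger than both $i$ and $j$, but for the general case one must split into the cases $i<j_1$, $j<j_1$, and $j_1\leq\min(i,j)$, and the verification requires applying the Jacobi identity in $\mathfrak{g}$ in a precise way to match the cross terms produced by the two orders of application. This is the technical heart of PBW, and once it is done, combining the spanning step with the injectivity furnished by $\bar{\rho}$ yields both halves of the theorem.
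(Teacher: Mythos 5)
Your proposal is the classical Birkhoff--Witt argument, and it is correct in outline. Note, however, that the thesis does not prove this theorem at all: it is stated with proof omitted and an explicit reference to Carter, Section 9.2, where essentially the argument you describe (spanning by rewriting modulo the filtration, independence via a representation on the free module spanned by ordered monomials) is carried out. So there is nothing in the paper to diverge from; your route coincides with the cited source's.

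Two small cautions on the independence half. First, the well-definedness of the recursive operators $\rho(x_i)$ is not an induction on total degree alone: for $i>j_1$ the formula refers to $\rho(x_i)$ on a shorter monomial \emph{and} to $\rho(x_{j_1})$ applied to its image, whose leading index need not exceed $j_1$, so ``$z_{j_1}\cdot$'' must be read as the operator $\rho(x_{j_1})$ and the construction set up as a double induction (on degree and on the index, or on degree and inversions) to avoid circularity. Second, the verification $[\rho(x_i),\rho(x_j)]=\rho([x_i,x_j])$, which you rightly identify as the technical heart, is exactly where the Jacobi identity enters and is the bulk of the work in Carter's proof; your sketch correctly locates it and describes the case analysis, so no idea is missing, but the proposal stands or falls with that computation being written out.
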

The theorem has the following consequences.
\begin{cors}\hspace{1em}
	\begin{enumerate}
		\item The map $\sigma:\mathfrak{g}\to U(\mathfrak{g})$ is injective
		\item The subspace $\sigma(\mathfrak{g})$ is a Lie subalgebra of $[U(\mathfrak{g})]$ isomorphic to $\mathfrak{g}$. Therefore, $\sigma$ identifies $\mathfrak{g}$ with a Lie subalgebra of $[U(\mathfrak{g})]$.
		\item $U(\mathfrak{g})$ has no zero-divisors. $\odot$
	\end{enumerate}	
\end{cors}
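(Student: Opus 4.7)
The plan is to derive all three corollaries from the PBW basis, with (1) and (2) being nearly formal consequences and only (3) requiring a genuine filtration argument. For (1), I would observe that among the PBW basis elements are the $y_i=\sigma(x_i)$ themselves (take $n=1$, $r_1=1$ in the theorem). Hence $\{y_i\}_{i\in I}$ is linearly independent in $U(\mathfrak{g})$, and since $\sigma$ sends the basis $\{x_i\}$ of $\mathfrak{g}$ linearly to the linearly independent set $\{y_i\}$, it is injective.

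For (2), the defining ideal $J\subset T(\mathfrak{g})$ contains $x\otimes y-y\otimes x-[x,y]$ for all $x,y\in\mathfrak{g}$, so in $U(\mathfrak{g})$ one has $\sigma(x)\sigma(y)-\sigma(y)\sigma(x)=\sigma([x,y])$. This says $\sigma$ is a Lie algebra morphism $\mathfrak{g}\to[U(\mathfrak{g})]$; combined with (1) it is an isomorphism onto its image, which is therefore a Lie subalgebra of $[U(\mathfrak{g})]$ isomorphic to $\mathfrak{g}$.

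The substance of the proof is (3). My plan is to introduce the standard ascending filtration $U^{(0)}\subset U^{(1)}\subset\cdots$ where $U^{(n)}$ is the image of $\bigoplus_{k\leq n}T^{k}(\mathfrak{g})$ under the quotient map. Part (2) implies that $[y_i,y_j]=\sigma([x_i,x_j])\in U^{(1)}$ has strictly lower filtration degree than $y_iy_j\in U^{(2)}$; consequently the associated graded algebra $\operatorname{gr}U(\mathfrak{g})=\bigoplus_n U^{(n)}/U^{(n-1)}$ is commutative. Reading off a basis of each graded piece directly from the PBW theorem identifies $\operatorname{gr}U(\mathfrak{g})$ with the symmetric algebra $S(\mathfrak{g})$, which is a polynomial ring and hence an integral domain.

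Given this, the no-zero-divisor statement follows by the standard graded lifting argument: if $a,b\in U(\mathfrak{g})$ are nonzero, let $m,n$ be the minimal integers with $a\in U^{(m)}$ and $b\in U^{(n)}$; then the symbols $\overline{a}\in U^{(m)}/U^{(m-1)}$ and $\overline{b}\in U^{(n)}/U^{(n-1)}$ are both nonzero, so $\overline{a}\,\overline{b}\neq 0$ in $\operatorname{gr}U(\mathfrak{g})$, which is the image of $ab$ in $U^{(m+n)}/U^{(m+n-1)}$, so in particular $ab\neq 0$. The main technical point is therefore the identification $\operatorname{gr}U(\mathfrak{g})\cong S(\mathfrak{g})$: surjectivity of the natural map $S(\mathfrak{g})\to\operatorname{gr}U(\mathfrak{g})$ is immediate from the definition, and the injectivity (the non-trivial part) is precisely a repackaging of the linear independence statement of PBW. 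This is really the only obstacle, and having PBW available reduces it to bookkeeping.
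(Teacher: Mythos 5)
Your proof is correct, and it is essentially the standard argument: the paper itself omits the proof of these corollaries (deferring to Section 9.2 of Carter's book), and the route you take — reading (1) and (2) off the PBW basis together with the defining relations of the ideal $J$, then proving (3) by passing to the filtration, identifying $\operatorname{gr}U(\mathfrak{g})\cong S(\mathfrak{g})$ via PBW, and lifting the no-zero-divisor property through leading symbols — is exactly the argument of the cited source. The only step you compress is the commutativity of $\operatorname{gr}U(\mathfrak{g})$, which needs $[U^{(m)},U^{(n)}]\subseteq U^{(m+n-1)}$ in general, not just for the generators; this follows by a routine induction from the generator case, so it is bookkeeping rather than a gap.
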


\section{Kac--Moody algebras}\label{sect:Kac--Moody}
To give an abstract introduction to (affine) Kac--Moody algebras, the first part of this section is supposed to be a short summary of Chapter 14 in the book \cite{Carter}. Then, we collect the most important facts about the classification of (affine) generalized Cartan matrices in Chapter 15 of \cite{Carter}, define the standard invariant bilinear form and discuss some properties of affine Kac--Moody algebras in Chapter 16 and 17 of \cite{Carter}. Finally, the loop construction of affine Kac--Moody algebras (Chapter 18 in \cite{Carter}), which provides a direct connection to the well known finite-dimensional semisimple Lie algebras, is presented. We will see later, that this connection is one of the main reasons for their importance in physics. Here, the proofs are omitted as soon as they are not considered essential for the understanding and the reader is referred to the book \cite{Carter} otherwise.\\

V.G. Kac and R.V. Moody independently initiated the study of certain Lie algebras $L(A)$ associated with a so-called generalised Cartan matrix $A$ in 1967. An $n\times n$ matrix $A=(A_{ij})$ is called a \textbf{generalized Cartan matrix} (GCM) if it satisfies the conditions
\begin{equation*}
	\begin{split}
		&A_{ii} = 2\quad \text{for } i=1,\dots,n\\
		&A_{ij}\in \mathbb{Z}\quad\text{and}\quad A_{ij}\leq 0 \quad \text{if } i\neq j\\
		&A_{ij}=0 \quad\text{implies}\quad A_{ji}=0.
	\end{split}
\end{equation*}
In particular, the Cartan matrix of any finite-dimensional simple Lie algebra is a generalized Cartan matrix. In this special case, when $A$ is a Cartan matrix, we shall see that the Lie algebra $L(A)$ constructed by Kac and Moody coincides with the finite-dimensional simple Lie algebra with Cartan matrix $A$. Though, the Lie algebra $L(A)$ is generally infinite-dimensional. We will call the Lie algebra $L(A)$ associated to a GCM $A$ the Kac--Moody algebra associated to $A$.
\subsection{The Kac--Moody algebras $L(A)$ and $L(A)'$}\label{subsect:Kac--Moody_abstract}
Let $A$ be an $n\times n$ matrix over $\mathbb{C}$. A \textbf{realization} of a square matrix $A$ is a triple $(H,\Pi,\Pi^\vee)$ such that
\begin{enumerate}
	\item[] $H$ is a finite-dimensional vector space over $\mathbb{C}$
	\item[] $\Pi^\vee=\{h_1,\dots,h_n\}$ is a linearly independent subset of $H$
	\item[] $\Pi = \{\alpha_1,\dots,\alpha_n\}$ is a linearly independent subset of $H^*$ \footnote{With an upper star we denote the dual vector space.}
	\item[] $\alpha_j(h_i) = A_{ij}$ for all $i,j$.
\end{enumerate}
\begin{prop}
	If $(H,\Pi,\Pi^\vee)$ is a realization of $A$ then $\operatorname{dim}H\geq 2n-\operatorname{rank}A.$ $\odot$
\end{prop}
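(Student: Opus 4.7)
The statement relates three natural invariants: the dimension of $H$, the size $n$ of the matrix, and its rank. The key idea is to encode $\Pi^\vee$ and $\Pi$ as linear maps to and from $\mathbb{C}^n$, respectively, and read off the rank of $A$ from their composition.

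Concretely, I would define $\phi:\mathbb{C}^n\to H$ by $e_i\mapsto h_i$ and $\psi:H\to\mathbb{C}^n$ by $\psi(h)=(\alpha_1(h),\dots,\alpha_n(h))$. The first map is injective because $\Pi^\vee$ is linearly independent, and the second is surjective because $\Pi\subset H^*$ is linearly independent (its dual expressing $\psi$ in coordinates is the inclusion of the span of $\Pi$, which has rank $n$ exactly when the $\alpha_j$ are independent). Computing the composition on basis vectors gives $\psi(\phi(e_i))=(\alpha_1(h_i),\dots,\alpha_n(h_i))=(A_{i1},\dots,A_{in})$, so in standard bases $\psi\circ\phi$ is represented by $A^T$. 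Consequently $\operatorname{rank}(\psi\circ\phi)=\operatorname{rank}A$.

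The remaining step is a rank-nullity argument applied to the restriction $\psi|_{\operatorname{im}\phi}$. Since $\operatorname{im}(\psi\circ\phi)=\psi(\operatorname{im}\phi)$, I get
\begin{equation*}
\dim\operatorname{im}\phi \;=\; \dim\psi(\operatorname{im}\phi) \;+\; \dim(\operatorname{im}\phi\cap\ker\psi),
\end{equation*}
which becomes $n=\operatorname{rank}A+\dim(\operatorname{im}\phi\cap\ker\psi)$, so $\dim(\operatorname{im}\phi\cap\ker\psi)=n-\operatorname{rank}A$. Because this intersection sits inside $\ker\psi$, and $\dim\ker\psi=\dim H-n$ by surjectivity of $\psi$, the inequality $n-\operatorname{rank}A\leq\dim H-n$ follows immediately, i.e.\ $\dim H\geq 2n-\operatorname{rank}A$.

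There is no real obstacle here; the only minor subtlety is being careful about the direction of the convention $\alpha_j(h_i)=A_{ij}$ (so that the composition $\psi\circ\phi$ is represented by $A^T$, whose rank coincides with $\operatorname{rank}A$) and verifying surjectivity of $\psi$ from the independence of the $\alpha_j$. Once these are in place, the entire argument reduces to one application of rank-nullity to the restricted map.
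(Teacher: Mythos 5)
Your argument is correct: injectivity of $\phi$ from independence of $\Pi^\vee$, surjectivity of $\psi$ from independence of $\Pi$, the identification of $\psi\circ\phi$ with $A^T$, and the rank--nullity count on $\psi|_{\operatorname{im}\phi}$ together give exactly $\dim H\geq 2n-\operatorname{rank}A$. The paper omits this proof and defers to Carter, and your dimension-count is essentially the same standard linear-algebra argument used there, so there is nothing further to add.
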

\begin{definition}
	A \textbf{minimal  realization} of $A$ is a realization in which
	\begin{equation*}
		\operatorname{dim}H = 2n-\operatorname{rank}A.\quad\odot
	\end{equation*}
\end{definition}
\begin{prop}
	Any $n\times n$ matrix over $\mathbb{C}$ has a minimal realization and any two minimal realizations are isomorphic. $\odot$
\end{prop}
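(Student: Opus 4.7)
The plan is to treat existence and uniqueness separately; throughout, set $\ell := \operatorname{rank} A$ and $d := 2n - \ell$.

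For existence, I would realize $H = \mathbb{C}^d$ with standard basis $e_1,\ldots,e_d$, take $h_i := e_i$ for $i = 1,\ldots,n$ (manifestly linearly independent), and construct the $\alpha_j \in H^*$ by prescribing an $n \times d$ matrix $M$ with $M_{jk} := \alpha_j(e_k)$. The requirement $\alpha_j(h_i) = A_{ij}$ fixes the first $n$ columns of $M$ to be the transpose $A^T$ (which has rank $\ell$), and I would choose the remaining $n-\ell$ columns to be a basis of a subspace complementary to the image of $A^T$ inside $\mathbb{C}^n$. This forces $\operatorname{rank} M = n$, so $\{\alpha_j\}$ is linearly independent in $H^*$, and the realization axioms hold by construction.

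For uniqueness, the key input is the structural observation that in any minimal realization one has $\bigcap_{j=1}^n \ker \alpha_j \subseteq \operatorname{span}(h_1,\ldots,h_n)$. This follows from a dimension count: the left-hand side has dimension $d - n = n - \ell$ because the $\alpha_j$ are linearly independent in $H^*$, while its intersection with $\operatorname{span}(h_i)$ is cut out by the condition $a^T A = 0$ on the coefficient vector $a \in \mathbb{C}^n$ and thus also has dimension $n - \ell$ (the left nullity of $A$). Equality of these dimensions forces the claimed containment.

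Given two minimal realizations $(H,\Pi,\Pi^\vee)$ and $(H',\Pi',\Pi'^\vee)$, I would build an isomorphism $\phi:H\to H'$ by extending $\Pi^\vee$ to a basis $\{h_1,\ldots,h_d\}$ of $H$, setting $\phi(h_i) := h'_i$ for $i\le n$, and for each $k > n$ choosing $\phi(h_k) \in H'$ to solve the system $\alpha'_j(\phi(h_k)) = \alpha_j(h_k)$, $j=1,\ldots,n$. Such a solution exists because the evaluation map $H' \to \mathbb{C}^n$, $x \mapsto (\alpha'_j(x))_j$, is surjective (its kernel has dimension $n-\ell$). By construction $\alpha'_j\circ\phi = \alpha_j$ on all of $H$. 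To verify bijectivity, I would suppose $\sum_{k>n} c_k \phi(h_k) = \sum_i d_i h'_i$ and apply each $\alpha'_j$; rewriting the resulting identity with the relation $\alpha'_j \circ \phi = \alpha_j$ yields $y := \sum_{k>n} c_k h_k - \sum_i d_i h_i \in \bigcap_j \ker\alpha_j$, which by the structural lemma lies in $\operatorname{span}(h_1,\ldots,h_n)$. Since $h_1,\ldots,h_d$ is a basis of $H$, this forces $c_k=0$ for all $k>n$, so $\phi$ is injective and hence an isomorphism of realizations.

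The main obstacle is the structural lemma, where minimality enters decisively: without the equality $d = 2n-\ell$, the annihilator $\bigcap_j \ker\alpha_j$ would have extra dimensions transverse to $\operatorname{span}(h_i)$, and the argument for bijectivity of $\phi$ would break down (indeed, non-minimal realizations admit genuine moduli of complements to $\operatorname{span}(h_i)$). Once the lemma is available, the rest of the uniqueness proof is a routine assembly of the isomorphism from the forced boundary data on $\Pi^\vee$.
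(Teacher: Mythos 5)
Your argument is correct, but note that the thesis itself gives no proof of this proposition: it is quoted from Carter with the proof omitted, so the comparison is really with the standard argument there. Your existence construction ($H=\mathbb{C}^{2n-\ell}$, $h_i=e_i$, and the $\alpha_j$ encoded by an $n\times(2n-\ell)$ matrix whose first $n$ columns are $A^{T}$ and whose last $n-\ell$ columns span a complement of the row space of $A$) is essentially the same augmented-matrix construction used in Carter and Kac. For uniqueness, however, the textbook route normalizes both realizations by a block-matrix/change-of-basis argument (reordering so that a nonsingular $\ell\times\ell$ block sits in the corner and bringing the extended matrix $(\alpha_j(h_i))$ to a canonical form), whereas you isolate the coordinate-free lemma $\bigcap_{j}\ker\alpha_j\subseteq\operatorname{span}(h_1,\dots,h_n)$, proved by the dimension count $d-n=n-\ell=\dim\{a: a^{T}A=0\}$, and then build $\phi$ directly from the forced data $\phi(h_i)=h_i'$ plus arbitrary solutions of $\alpha_j'(\phi(h_k))=\alpha_j(h_k)$ for the complementary basis vectors. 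This is a clean alternative: it makes transparent exactly where minimality enters (without $\dim H=2n-\ell$ the annihilator of the $\alpha_j$ would stick out of $\operatorname{span}(h_i)$ and injectivity of $\phi$ would fail), at the cost of being slightly less explicit than the normal-form description. One small compression: after concluding $c_k=0$ for $k>n$ you should also note that $\sum_i b_i h_i'=0$ forces $b_i=0$ by the linear independence of the $h_i'$, which completes injectivity; this is immediate, so there is no genuine gap.
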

Let $l$ be the rank of $A$. Let $(H,\Pi,\Pi^\vee)$ be a minimal realization of $A$. Then we have
\begin{enumerate}
	\item[] $\operatorname{dim}H=2n-l$
	\item[] $\Pi^\vee =\{h_1,\dots,h_n\}\subset H,\quad \Pi=\{\alpha_1,\dots,\alpha_n\}\subset H^*$
	\item[] $\alpha_j(h_i) = A_{ij}.$
\end{enumerate}
Then, we define the Lie algebra $\tilde{L}(A)$ by generators and relations as follows.
\begin{definition}[the Lie algebra $\tilde{L}(A)$]~\\
	Let $X=\{e_1,\dots,e_n,f_1,\dots,f_n,\tilde{x}\text{ for all } x\in H\}$ and let $R$ be the set of Lie words \footnote{A Lie word is a term $\mathfrak{w}$ with $\mathfrak{w}=0$.}
	\begin{enumerate}
		\item[] $\tilde{x}-\lambda\tilde{y}-\mu\tilde{z}$ for all $x,y,z\in H$, $\lambda,\mu \in \mathbb{C}$ with $x=\lambda y+\mu z$
		\item[] $[\tilde{x},\tilde{y}]$ for all $x,y\in H$
		\item[] $[e_i,f_i]-\tilde{h_i}$ for $i=1,\dots,n$
		\item[] $[e_i,f_j]$ for all $i\neq j$
		\item[] $[\tilde{x},e_i]-\alpha_i(x)e_i$ for all $x\in H$ and $i=1,\dots,n$
		\item[] $[\tilde{x},f_i]+\alpha_i(x)f_i$ for all $x\in H$ and $i=1,\dots,n$,
	\end{enumerate}
	then $\tilde{L}(A):= L(X;\,R)$ is defined as the Lie algebra generated by the elements $X$ subject to the relations $R$. $\odot$
\end{definition}
\begin{lem}
	If a different minimal realization of $A$ is chosen the Lie algebra $\tilde{L}(A)$ is the same up to isomorphism. $\odot$
\end{lem}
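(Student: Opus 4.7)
The plan is to invoke the earlier proposition guaranteeing that any two minimal realizations of $A$ are isomorphic, and then to transport this isomorphism of realizations through the presentation by generators and relations to obtain an isomorphism of the two candidate Lie algebras.

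First, let $(H,\Pi,\Pi^\vee)$ and $(H',\Pi',\Pi'^{\vee})$ be two minimal realizations of $A$, with associated Lie algebras $\tilde{L}(A)$ and $\tilde{L}(A)'$ respectively, generated by the sets $X=\{e_i,f_i,\tilde{x}\}$ and $X'=\{e'_i,f'_i,\tilde{x}'\}$. By the proposition above, there exists a linear isomorphism $\phi:H\to H'$ with $\phi(h_i)=h'_i$ for every $i$ and $\alpha'_j\circ\phi=\alpha_j$ for every $j$. The key step is to use $\phi$ to define the candidate map on generators by
\begin{equation*}
e_i\mapsto e'_i,\qquad f_i\mapsto f'_i,\qquad \tilde{x}\mapsto \widetilde{\phi(x)}.
\end{equation*}
To apply the universal property of Lie algebras presented by generators and relations, I will check that every defining relation in $R$ is mapped to a relation that holds in $\tilde{L}(A)'$.

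The verification is largely a bookkeeping exercise. Linearity of $\phi$ handles the relations of the form $\tilde{x}-\lambda\tilde{y}-\mu\tilde{z}$; abelianness of $\phi(H)\subset \tilde{L}(A)'$ handles $[\tilde{x},\tilde{y}]$; the property $\phi(h_i)=h'_i$ turns $[e_i,f_i]-\tilde{h}_i$ into $[e'_i,f'_i]-\widetilde{h'_i}$; the relation $[e_i,f_j]=0$ for $i\neq j$ is trivially preserved; and the crucial identities
\begin{equation*}
[\widetilde{\phi(x)},e'_i]=\alpha'_i(\phi(x))\,e'_i=\alpha_i(x)\,e'_i,\qquad [\widetilde{\phi(x)},f'_i]=-\alpha_i(x)\,f'_i
\end{equation*}
follow from $\alpha'_i\circ\phi=\alpha_i$. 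This yields a well-defined Lie algebra homomorphism $\Phi:\tilde{L}(A)\to \tilde{L}(A)'$.

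Finally, applying exactly the same construction to the inverse isomorphism of realizations $\phi^{-1}:H'\to H$ produces a Lie algebra homomorphism $\Psi:\tilde{L}(A)'\to \tilde{L}(A)$. Since $\Psi\circ\Phi$ and $\Phi\circ\Psi$ act as the identity on the respective generating sets, they coincide with the identity on each Lie algebra, so $\Phi$ is the desired isomorphism. I do not anticipate a serious obstacle here: the only thing one has to be careful about is that the isomorphism of minimal realizations really is unique enough to identify $h_i\leftrightarrow h'_i$ and $\alpha_i\leftrightarrow \alpha'_i$ on the nose (which is built into the definition of an isomorphism of realizations), so that the Chevalley-type generators on both sides correspond in the obvious way.
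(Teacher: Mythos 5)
Your argument is correct and is exactly the standard proof: the paper itself omits the proof (deferring to Carter), where the lemma is established in precisely this way, by transporting the isomorphism of minimal realizations through the presentation by generators and relations and checking each defining relation. The only quibble is notational: writing $\tilde{L}(A)'$ for the algebra built from the second realization clashes with the paper's use of $L(A)'$ for the derived subalgebra, so a different symbol would be clearer.
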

\begin{prop}
	We have $\tilde{L}(A) = \tilde{N}^-\oplus\tilde{H}\oplus\tilde{N}$ a direct sum of subspaces, where $\tilde{N}$ is generated by $e_1,\dots,e_n$ freely and $\tilde{N}^-$ is generated by $f_1,\dots,f_n$ freely. $\odot$
\end{prop}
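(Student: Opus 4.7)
The plan is to establish both claims at once by constructing a representation of $\tilde{L}(A)$ on which $\tilde{N}^-$ acts faithfully by left multiplication on a free associative algebra; a symmetric construction will then handle $\tilde{N}$, and a weight-grading argument extracts the triangular decomposition.

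Concretely, fix $\lambda\in H^*$ and let $V$ be the free associative algebra $k\langle F_1,\dots,F_n\rangle$ on $n$ symbols. The monoid $Q_+=\bigoplus_i\mathbb{N}\alpha_i$ grades $V$ by declaring $F_{j_1}\cdots F_{j_k}$ to have degree $\alpha_{j_1}+\cdots+\alpha_{j_k}$. I would define an action of the generators $X$ of $\tilde{L}(A)$ on $V$ by letting $f_i$ act as left multiplication by $F_i$, letting each $\tilde{x}$ act on $V_\eta$ by the scalar $(\lambda-\eta)(x)$, and defining $e_i$ recursively along the grading by
\begin{equation*}
    e_i\cdot 1 \;=\; 0,\qquad e_i\cdot(F_j w) \;=\; F_j\bigl(e_i\cdot w\bigr) + \delta_{ij}\,\tilde{h}_i\cdot w.
\end{equation*}
Since $V$ is free associative, every non-zero monomial has a unique leading letter, so this recursion unambiguously defines $e_i\in\End(V)$.

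It then remains to verify that the defining relations in $R$ are satisfied on $V$. The $\tilde{x}$-among-themselves relations and the linearity relation $\tilde{x}-\lambda\tilde{y}-\mu\tilde{z}$ hold by the definition of the $H$-action. The eigenvector relations $[\tilde{x},e_i]=\alpha_i(x)e_i$ and $[\tilde{x},f_i]=-\alpha_i(x)f_i$ follow from a one-line weight computation using that $f_i$ raises the $Q_+$-degree by $\alpha_i$ and $e_i$ lowers it by $\alpha_i$. The Chevalley relations $[e_i,f_j]=\delta_{ij}\tilde{h}_i$ hold by the very construction of the $e_i$ action.

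The main obstacle --- modest, but genuinely the only substantive part --- is the passage from the representation to the stated decomposition. Since $f_i$ acts on $V$ by left multiplication and $V=U(\hat{N}^-)$ for the free Lie algebra $\hat{N}^-$ on $f_1,\dots,f_n$, the subalgebra $\tilde{N}^-$ of $\tilde{L}(A)$ maps injectively into $\End(V)$; equivalently, the canonical surjection $\hat{N}^-\to\tilde{N}^-$ is also injective, so $\tilde{N}^-$ is free on the $f_i$. The mirror construction (swap $e$'s and $f$'s and replace $\lambda$ by $-\lambda$, adjusting signs in the $H$-action accordingly) gives the analogous statement for $\tilde{N}$. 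To see $\tilde{L}(A)=\tilde{N}^-+\tilde{H}+\tilde{N}$, one argues by induction on the length of a bracket monomial in the generators: using Jacobi together with the relations in $R$, any bracket of an $e_i$, $f_j$, or $\tilde{x}$ with an element already in $\tilde{N}^-+\tilde{H}+\tilde{N}$ lands again in $\tilde{N}^-+\tilde{H}+\tilde{N}$. Directness follows because, on the representation $V$, elements of $\tilde{N}^-$ strictly raise the $Q_+$-degree of $1\in V_0$, elements of $\tilde{H}$ act by scalars on $V_0$, and elements of $\tilde{N}$ annihilate $V_0$; varying $\lambda$ then rules out any non-trivial relation between the three summands.
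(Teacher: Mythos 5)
Your construction is correct and is essentially the canonical argument: the paper itself omits the proof and defers to Carter (Chapter 14), where exactly this representation of $\tilde{L}(A)$ on the tensor algebra (the free associative algebra on $n$ symbols), with $f_i$ acting by left multiplication, $\tilde{x}$ acting on the degree-$\eta$ component by the scalar $(\lambda-\eta)(x)$, and $e_i$ defined by the same recursion, is used to obtain both the freeness of $\tilde{N}^\pm$ and the triangular decomposition. Your spanning step via ad-stability of $\tilde{N}^-+\tilde{H}+\tilde{N}$ and the directness argument by letting $\lambda$ vary are the standard conclusions, so nothing essential is missing.
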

Let $Q$ be the subgroup of $H^*$ given by $Q=\{\alpha=k_1\alpha_1+\cdots+k_n\alpha_n\,;\,k_1,\dots,k_n\in \mathbb{Z}\}$, let $Q^+=\{\alpha\neq 0\in Q\,;\,k_i\geq0\text{ for all }i\}$ and $Q^-=\{\alpha\neq 0\in Q\,;\,k_i\leq0\text{ for all }i\}$. We call $Q$ (resp. $Q^+$, $Q^-$) the (positive, negative) \textbf{root lattice}. For each $\alpha\in Q$ we define the \textbf{root space}
\begin{equation*}
	\tilde{L}_\alpha = \{y\in \tilde{L}(A)\,;\;[\tilde{x},y]=\alpha(x)y\quad\text{for all }x\in H\}.
\end{equation*}
\begin{prop}\hspace{1em}
	\begin{enumerate}[label=\normalfont(\roman*)]
		\item $\tilde{L}(A) =\oplus_{\alpha\in Q}\tilde{L}_\alpha$.
		\item $\operatorname{dim}\tilde{L}_\alpha$ is finite for all $\alpha\in Q$.
		\item $\tilde{L}_0 = \tilde{H}$.
		\item If $\alpha\neq 0$ then $\tilde{L}_\alpha=O$ unless $\alpha\in Q^+$ or $\alpha\in Q^-$.\footnote{ $O$ the trivial zero-dimensional Lie algebra $\{0\}$.}
		\item $[\tilde{L}_\alpha,\tilde{L}_\beta]\subset \tilde{L}_{\alpha+\beta}$ for all $\alpha,\beta\in Q$.
	\end{enumerate}\vspace{.5em}
	In particular $\operatorname{dim}\tilde{L}_{k\alpha_i} = \delta_{k,1}+\delta_{k,-1}$ for all $k\in\mathbb{Z}\backslash\{0\}$, $i=1,\dots,n$. $\odot$
\end{prop}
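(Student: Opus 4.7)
The proof breaks naturally into five tasks matching items (i)--(v), but the ordering should not be the one in which they are stated. I would first dispose of (v), since it is purely a Jacobi-identity computation: for $y\in\tilde L_\alpha$, $z\in\tilde L_\beta$ and any $x\in H$,
\begin{equation*}
[\tilde x,[y,z]]=[[\tilde x,y],z]+[y,[\tilde x,z]]=(\alpha+\beta)(x)\,[y,z],
\end{equation*}
so $[y,z]\in\tilde L_{\alpha+\beta}$. Here I use that $\tilde H$ is abelian, so $\operatorname{ad}\tilde x$ acts on $\tilde L(A)$ and its simultaneous generalised eigenspaces for $x$ varying are closed under brackets with the obvious additivity of weights.

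Next I would establish that the \emph{generators} lie in root spaces: from the defining relations, $e_i\in\tilde L_{\alpha_i}$, $f_i\in\tilde L_{-\alpha_i}$, and $\tilde H\subset\tilde L_0$. Combined with (v), every iterated bracket of the $e_i$ lands in some $\tilde L_\alpha$ with $\alpha\in Q^+$, and symmetrically for $f_i$ in $Q^-$. The preceding proposition says $\tilde L(A)=\tilde N^-\oplus\tilde H\oplus\tilde N$, so this shows $\tilde L(A)=\sum_{\alpha\in Q}\tilde L_\alpha$ with only $\alpha=0$, $\alpha\in Q^+$, $\alpha\in Q^-$ contributing. Directness of the sum is standard: the $\tilde L_\alpha$ are simultaneous eigenspaces for the commuting family $\{\operatorname{ad}\tilde x\mid x\in H\}$, and the $\alpha\in Q\subset H^*$ are distinguishable as functionals on $H$ (using the linear independence of $\Pi$). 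This yields (i) and (iv). Part (iii) then follows by intersecting with the triangular decomposition: an element of $\tilde L_0\cap\tilde N$ would be a sum of contributions from $\tilde L_\alpha\cap\tilde N$ with $\alpha\in Q^+$, but $0\notin Q^+$; similarly for $\tilde N^-$. Hence $\tilde L_0=\tilde H$.

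For (ii) and the final explicit dimension count, I would invoke the freeness of $\tilde N$ and $\tilde N^-$ from the previous proposition. The free Lie algebra on $e_1,\dots,e_n$ carries a multidegree $\mathbb N^n$-grading with finite-dimensional graded pieces. The point I expect to be the principal obstacle is matching this multidegree grading to the $Q$-grading: because $\Pi=\{\alpha_1,\dots,\alpha_n\}$ is linearly independent in $H^*$, the map $(k_1,\dots,k_n)\mapsto\sum k_i\alpha_i$ from $\mathbb N^n$ into $Q^+\cup\{0\}$ is a bijection onto its image, and the graded piece of multidegree $(k_1,\dots,k_n)$ is precisely $\tilde L_\alpha\cap\tilde N$ for $\alpha=\sum k_i\alpha_i$. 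Thus each $\tilde L_\alpha\cap\tilde N$ is finite-dimensional, and the same argument applies to $\tilde N^-$; since $\tilde L_0=\tilde H$ is also finite-dimensional, (ii) follows.

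The final assertion is a direct payoff: for $k\geq 2$, the multidegree-$(0,\dots,k,\dots,0)$ component of the free Lie algebra on $e_1,\dots,e_n$ is spanned by nested brackets involving only $e_i$, all of which vanish because $[e_i,e_i]=0$; for $k=1$ only $e_i$ itself survives, giving $\dim\tilde L_{\alpha_i}=1$. The case $k\leq -1$ is identical with $f_i$ in place of $e_i$. Intersecting with $\tilde H$ contributes nothing since $k\alpha_i\neq 0$. This gives $\dim\tilde L_{k\alpha_i}=\delta_{k,1}+\delta_{k,-1}$ for $k\neq 0$, completing the proof.
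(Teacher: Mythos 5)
Your proof is correct, and since the paper states this proposition without proof (deferring, as announced, to Carter's Chapter 14), there is nothing in the text to contrast it with: your route — weight additivity via the Jacobi identity, placing the generators $e_i$, $f_i$, $\tilde{x}$ in root spaces, directness of simultaneous eigenspaces for distinct functionals, and the multidegree grading of the free Lie algebras $\tilde{N}$, $\tilde{N}^-$ supplied by the preceding triangular-decomposition proposition — is exactly the standard argument found there. The only point worth flagging is that the one-dimensionality of $\tilde{L}_{\pm\alpha_i}$ (i.e. $e_i\neq 0$, $f_i\neq 0$ in $\tilde{L}(A)$) genuinely requires the freeness assertion of that preceding proposition, which you do correctly invoke.
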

\begin{prop}
	The algebra $\tilde{L}(A)$ contains a unique ideal $I$ maximal with respect to $I\cap\tilde{H} = O$. $\odot$
\end{prop}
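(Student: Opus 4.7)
The plan is to establish existence and uniqueness simultaneously by exhibiting $I$ as the sum of \emph{all} ideals of $\tilde{L}(A)$ whose intersection with $\tilde{H}$ is trivial. The entire argument hinges on a single structural fact that I would prove first: every ideal $J\triangleleft \tilde{L}(A)$ is graded with respect to the root space decomposition, that is,
\begin{equation*}
J=\bigoplus_{\alpha\in Q}\bigl(J\cap \tilde{L}_\alpha\bigr).
\end{equation*}
This is the only step that requires real work; everything else is essentially bookkeeping.

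To prove this grading claim, I would use the standard Vandermonde trick. Given $y\in J$, write $y=y_{\alpha_1}+\cdots+y_{\alpha_k}$ with the $\alpha_j\in Q$ pairwise distinct and $y_{\alpha_j}\in \tilde{L}_{\alpha_j}$ (allowed by the previous proposition). Since a finite union of proper hyperplanes in the finite-dimensional $\mathbb{C}$-vector space $H$ cannot exhaust $H$, one may pick $x\in H$ with $\alpha_1(x),\dots,\alpha_k(x)$ all distinct. Applying $(\operatorname{ad}\tilde{x})^j$ to $y$ for $j=0,1,\dots,k-1$ then yields $k$ elements of $J$ whose expansions in $\{y_{\alpha_1},\dots,y_{\alpha_k}\}$ have the Vandermonde matrix $(\alpha_i(x)^j)_{j,i}$ as coefficient matrix. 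Inverting this matrix expresses each $y_{\alpha_j}$ as a linear combination of the elements $(\operatorname{ad}\tilde{x})^jy\in J$, so $y_{\alpha_j}\in J$ as required.

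With the grading in hand, the proposition drops out quickly. Since $\tilde{L}_0=\tilde{H}$, one has $J\cap \tilde{H}=J\cap \tilde{L}_0$ for every ideal $J$. Let $\mathcal{F}$ be the family of ideals $J\triangleleft \tilde{L}(A)$ with $J\cap\tilde{H}=O$; the zero ideal belongs to $\mathcal{F}$, so $\mathcal{F}\neq\varnothing$. For any collection $\{J_\lambda\}\subset\mathcal{F}$, uniqueness of the root space decomposition in $\tilde{L}(A)$ together with the grading of each $J_\lambda$ implies
\begin{equation*}
\Bigl(\sum_\lambda J_\lambda\Bigr)\cap \tilde{L}_\alpha=\sum_\lambda\bigl(J_\lambda\cap \tilde{L}_\alpha\bigr)\qquad\text{for every }\alpha\in Q.
\end{equation*}
Specialising to $\alpha=0$ shows $\sum_\lambda J_\lambda$ is again in $\mathcal{F}$. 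Consequently $I:=\sum_{J\in\mathcal{F}}J$ itself lies in $\mathcal{F}$ and contains every member of $\mathcal{F}$, which simultaneously establishes existence, maximality and uniqueness.

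The main obstacle, as indicated, is the Vandermonde/diagonalisation step. Beyond that the argument is purely formal, uses only the general properties of $\tilde{L}(A)$ already collected in the preceding propositions, and is in particular independent of the detailed relations defining $\tilde{L}(A)$.
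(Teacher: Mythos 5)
Your proof is correct; the thesis itself omits the argument (the $\odot$ marks a statement whose proof is deferred to Carter's book), and what you give is essentially that standard proof: every ideal is graded by the root space decomposition because the adjoint action of $\tilde{H}$ is diagonalizable (your Vandermonde step), so since $\tilde{L}_0=\tilde{H}$ the sum of all ideals meeting $\tilde{H}$ trivially again meets $\tilde{H}$ trivially and is the unique maximal such ideal. No gaps; the auxiliary facts you invoke ($\tilde{L}(A)=\bigoplus_{\alpha\in Q}\tilde{L}_\alpha$ and $\tilde{L}_0=\tilde{H}$) are exactly the ones recorded in the preceding proposition.
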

\begin{definition}[the Kac--Moody algebra $L(A)$]
	Let $A$ be a GCM. Then the Kac--Moody algebra $L(A)$ with GCM $A$ is defined as
	\begin{equation*}
		L(A)=\tilde{L}(A)/I.\;\, \odot
	\end{equation*}
\end{definition}
As before, using the natural homomorphism $\tilde{L}(A)\to L(A)$, we obtain the decomposition
\begin{equation*}
	L(A)= N^-\oplus H\oplus N.
\end{equation*}
Vice versa, on can check if a Lie algebra $\mathfrak{g}$ is isomorphic to $L(A)$ as follows.
\begin{prop}\label{prop:isoto_g(A)}
	Suppose $A$ is an $n\times n$ GCM. Let $\mathfrak{g}$ be a Lie algebra over $\mathbb{C}$ and $H$ a finite-dimensional abelian subalgebra of $\mathfrak{g}$ with linear independent subsets $\Pi^\vee\subset H$ and $\Pi\subset H^*$ such that $(H,\Pi,\Pi^\vee)$ is a minimal realization of $A$ as above. Suppose in addition that $e_1,\dots,e_n$, $f_1,\dots,f_n$ are elements of $\mathfrak{g}$ satisfying
	\begin{enumerate}
		\item[] $[e_i,f_j]=\delta_{i,j}h_i$,
		\item[] $[x,e_i] = \alpha_i(x)e_i$ for $x\in H$,
		\item[] $[x,f_i] = -\alpha_i(x)f_i$ for $x\in H$
	\end{enumerate}
	and that $\mathfrak{g}$ is generated by $H$, $e_1,\dots,e_n$, $f_1,\dots,f_n$. If $\mathfrak{g}$ has no non-zero ideal $J$ with $J\cap H=O$, then $\mathfrak{g}$ is isomorphic to the Kac--Moody algebra $L(A)$, $\mathfrak{g} \cong L(A)$. $\odot$
\end{prop}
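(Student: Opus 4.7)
The plan is to exhibit the desired isomorphism as the map induced by the universal property of $\tilde{L}(A)$ after quotienting by the maximal ideal $I$. Since any two minimal realizations of $A$ are isomorphic, we may identify the minimal realization used in the abstract construction of $\tilde{L}(A)$ with the triple $(H,\Pi,\Pi^\vee)$ sitting inside $\mathfrak{g}$. The generators $\tilde{x}\in\tilde{L}(A)$ (for $x\in H$), $e_i$ and $f_i$ then have obvious targets in $\mathfrak{g}$, namely $x$, $e_i$ and $f_i$ themselves, and a straightforward check shows that each of the six defining Lie-word relations of $\tilde{L}(A)$ is satisfied in $\mathfrak{g}$ by the hypotheses. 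This yields a Lie algebra homomorphism $\phi:\tilde{L}(A)\to\mathfrak{g}$, which is surjective because its image contains the generating set $H\cup\{e_1,\dots,e_n,f_1,\dots,f_n\}$.

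Next I would compare $K:=\ker\phi$ with the maximal ideal $I$ of $\tilde{L}(A)$. Since $\phi$ restricts to a linear isomorphism $\tilde{H}\xrightarrow{\sim}H$ (both being realizations of $A$ of the same minimal dimension $2n-\operatorname{rank}A$, and $\phi$ sending the basis $\{\tilde{h}_1,\dots,\tilde{h}_n,\ldots\}$ appropriately), we get $K\cap\tilde{H}=O$. By the uniqueness of $I$ as the maximal such ideal this forces $K\subseteq I$, and hence $\phi$ descends to a surjection
\begin{equation*}
	\bar\phi:L(A)=\tilde{L}(A)/I\longrightarrow \mathfrak{g}/\phi(I) \cong \mathfrak{g}/\phi(I).
\end{equation*}

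To complete the proof it remains to show $\phi(I)=O$, and this is where the hypothesis that $\mathfrak{g}$ contains no nonzero ideal trivially meeting $H$ will be used. Clearly $\phi(I)$ is an ideal of $\mathfrak{g}$ (as the image under a surjective Lie homomorphism of an ideal). I would show $\phi(I)\cap H=O$: suppose $\phi(y)\in H$ for some $y\in I$; then there is $z\in\tilde{H}$ with $\phi(z)=\phi(y)$, so $y-z\in K\subseteq I$, hence $z=y-(y-z)\in I\cap\tilde{H}=O$, giving $\phi(y)=0$. By assumption this forces $\phi(I)=O$, i.e.\ $I\subseteq K$, and combined with the inclusion $K\subseteq I$ already established we obtain $K=I$. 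Then $\phi$ factors as an isomorphism $L(A)\xrightarrow{\sim}\mathfrak{g}$.

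The main obstacle I anticipate is the bookkeeping in the first step: the definition of $\tilde{L}(A)$ involves a specific minimal realization, so one must carefully transport the generators using the isomorphism of minimal realizations guaranteed by the earlier proposition, and verify that this transport respects all six families of relations. Once $\phi$ is constructed, the rest is a clean two-sided squeeze between $K$ and $I$ using, respectively, the maximality of $I$ inside $\tilde{L}(A)$ and the hypothesis on ideals of $\mathfrak{g}$ meeting $H$ trivially.
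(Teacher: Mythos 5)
Your proof is correct, and it is essentially the standard recognition argument for this proposition (the paper itself omits the proof and defers to Carter's book, where exactly this strategy is used): map the presented algebra $\tilde{L}(A)$ onto $\mathfrak{g}$ via the given data, then squeeze the kernel $K$ against the maximal ideal $I$ from both sides, using maximality of $I$ for $K\subseteq I$ and the hypothesis on ideals of $\mathfrak{g}$ meeting $H$ trivially for $I\subseteq K$. The small points you leave implicit (that any ideal meeting $\tilde{H}$ trivially lies in $I$, and that $\phi(I)$ is an ideal since $\phi$ is surjective) are exactly the facts the quoted propositions supply, so there is no gap.
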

\begin{cor}
	If $A$ is a Cartan matrix then $L(A)$ is the finite-dimensional semisimple Lie algebra with Cartan matrix $A$. $\odot$
\end{cor}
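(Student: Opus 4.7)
The strategy is to apply Proposition \ref{prop:isoto_g(A)} to the classical finite-dimensional semisimple Lie algebra $\mathfrak{g}$ with Cartan matrix $A$ (which exists uniquely up to isomorphism by the classical Cartan--Killing--Chevalley--Serre classification). The plan is to exhibit the data $(H, \Pi, \Pi^\vee, e_i, f_i)$ inside $\mathfrak{g}$ satisfying the hypotheses of that proposition, and then verify the two nontrivial conditions: that these data form a \emph{minimal} realization of $A$, and that $\mathfrak{g}$ has no non-zero ideal with trivial intersection with $H$.

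First I would take $H \subset \mathfrak{g}$ to be a Cartan subalgebra, let $h_1,\dots,h_n \in H$ be the simple coroots, and $\alpha_1,\dots,\alpha_n \in H^*$ the simple roots. The Chevalley generators $e_i, f_i$ corresponding to the simple roots $\pm\alpha_i$ are chosen so that $[e_i,f_i] = h_i$, and the classical relations $[h,e_i] = \alpha_i(h) e_i$, $[h,f_i] = -\alpha_i(h)f_i$, $[e_i,f_j] = 0$ for $i\neq j$ are standard. By construction $\alpha_j(h_i) = A_{ij}$, so $\Pi$ and $\Pi^\vee$ are indeed linearly independent subsets as required for a realization. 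For minimality, I would use that a Cartan matrix is by definition non-singular, so $\operatorname{rank} A = n$ and a minimal realization has $\dim H = 2n - n = n$; since the simple coroots form a basis of the Cartan subalgebra of a semisimple Lie algebra, $\dim H = n$ holds as well, so $(H,\Pi,\Pi^\vee)$ is minimal.

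Next I would check that $\mathfrak{g}$ is generated (as a Lie algebra) by $H$ together with the $e_i, f_i$. This is a standard consequence of the root-space decomposition: every root is a $\mathbb{Z}$-linear combination of simple roots, the non-zero root spaces are one-dimensional, and every root vector is reached by successive brackets of the simple $e_i$ or $f_i$; together with $H = \operatorname{span}\{h_i\} = \operatorname{span}\{[e_i,f_i]\}$ this shows that $\{e_i,f_i\}$ already suffice to generate $\mathfrak{g}$, so a fortiori $H$ together with the $e_i,f_i$ do.

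The step I expect to require the most care is verifying that $\mathfrak{g}$ has no non-zero ideal $J$ with $J \cap H = O$. Here I would use the root-space decomposition $\mathfrak{g} = H \oplus \bigoplus_{\alpha \in \Phi} \mathfrak{g}_\alpha$ and the fact that any ideal $J$ is $H$-stable (hence a sum of weight spaces). If $J \cap H = O$, then $J = \bigoplus_{\alpha \in \Phi_J} \mathfrak{g}_\alpha$ for some subset $\Phi_J \subset \Phi$; taking any $\alpha \in \Phi_J$ and a non-zero $x \in \mathfrak{g}_\alpha \subset J$, the bracket $[x, \mathfrak{g}_{-\alpha}]$ lies in $J \cap H$ and is non-zero because the $\mathfrak{sl}_2$-triple structure on $\mathfrak{g}_\alpha \oplus [\mathfrak{g}_\alpha,\mathfrak{g}_{-\alpha}] \oplus \mathfrak{g}_{-\alpha}$ forces $[\mathfrak{g}_\alpha,\mathfrak{g}_{-\alpha}] \neq 0$. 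This contradiction shows $J = O$, and then Proposition \ref{prop:isoto_g(A)} yields $\mathfrak{g} \cong L(A)$. The subtlety is really that this argument uses the non-degeneracy of the Killing form on semisimple Lie algebras (equivalently, the existence of $\mathfrak{sl}_2$-triples for each root), which is input from the classical theory one is allowed to assume here.
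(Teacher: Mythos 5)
Your proof is correct and follows exactly the route the paper intends: the corollary is stated as an immediate consequence of Proposition \ref{prop:isoto_g(A)}, applied to the classical finite-dimensional semisimple Lie algebra with Cartan matrix $A$ (with the proof details deferred to Carter), and your verification of minimality ($\det A\neq 0$ gives $\dim H = n$), generation by $H$, $e_i$, $f_i$, and the absence of ideals meeting $H$ trivially via the root-space decomposition and $[\mathfrak{g}_\alpha,\mathfrak{g}_{-\alpha}]\neq 0$ is precisely that argument. No gaps.
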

For each $\alpha\in Q$ we define the \textbf{root space}
\begin{equation*}
	L_\alpha = \{y\in L(A)\,;\;[x,y]=\alpha(x)y\quad\text{for all }x\in H\}
\end{equation*}
as above and analogously we can state the following.
\begin{prop}\hspace{1em}
	\begin{enumerate}[label=\normalfont(\roman*)]
		\item $L(A) =\oplus_{\alpha\in Q}L_\alpha$.
		\item $\operatorname{dim}L_\alpha$ is finite for all $\alpha\in Q$.
		\item $L_0 = \tilde{H}$.
		\item If $\alpha\neq 0$ then $L_\alpha=O$ unless $\alpha\in Q^+$ or $\alpha\in Q^-$.
		\item $[L_\alpha,L_\beta]\subset L_{\alpha+\beta}$ for all $\alpha,\beta\in Q$.
	\end{enumerate}\vspace{.5em}
	In particular $\operatorname{dim}L_{k\alpha_i} = \delta_{k,1}+\delta_{k,-1}$ for all $k\in\mathbb{Z}\backslash\{0\}$, $i=1,\dots,n$. $\odot$
\end{prop}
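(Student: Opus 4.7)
The plan is to derive the five claims for $L(A) = \tilde{L}(A)/I$ from the corresponding five claims for $\tilde{L}(A)$, which have already been stated. The central technical step is to show that the maximal ideal $I$ is itself $Q$-graded with respect to the root space decomposition $\tilde{L}(A)=\bigoplus_{\alpha\in Q}\tilde{L}_\alpha$; once this is done, each $L_\alpha$ is simply the quotient $\tilde{L}_\alpha/(I\cap\tilde{L}_\alpha)$, and the assertions follow immediately.

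For the gradedness of $I$, I would start with an arbitrary $y\in I$ and write $y=\sum_{\alpha}y_\alpha$ with $y_\alpha\in\tilde{L}_\alpha$ (only finitely many nonzero) using the decomposition of $\tilde{L}(A)$. For each $h\in H$ the element $[\tilde h,y]=\sum_\alpha \alpha(h)\,y_\alpha$ lies in $I$ since $I$ is an ideal. Iterating and using that $\Pi=\{\alpha_1,\dots,\alpha_n\}$ is linearly independent in $H^*$, one picks finitely many $h^{(1)},\dots,h^{(k)}\in H$ so that the Vandermonde-type matrix $(\alpha_j(h^{(s)})^{r})$ built from the roots occurring in the support of $y$ is invertible; solving the resulting linear system expresses each $y_\alpha$ as a $\mathbb{C}$-linear combination of the $\mathrm{ad}(\tilde h^{(s)})^r\,y$, all of which lie in $I$. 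This proves $I=\bigoplus_\alpha (I\cap \tilde{L}_\alpha)$.

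Having gradedness, (i) is automatic from $L(A)=\tilde{L}(A)/I=\bigoplus_\alpha \tilde{L}_\alpha/(I\cap \tilde{L}_\alpha)$, and (ii) follows because $\dim L_\alpha\leq\dim\tilde{L}_\alpha<\infty$. For (iii), $I\cap\tilde{L}_0=I\cap \tilde H=O$ by the very defining property of $I$, so $L_0\cong \tilde H = H$. Part (iv) transfers directly from the corresponding statement for $\tilde{L}(A)$, since a zero root space upstairs yields a zero root space downstairs. Part (v) follows because the quotient map is a Lie algebra homomorphism preserving the $Q$-grading: for $\bar x\in L_\alpha$, $\bar y\in L_\beta$ coming from $x\in\tilde{L}_\alpha$, $y\in\tilde{L}_\beta$, one has $[\bar x,\bar y]=\overline{[x,y]}\in \overline{\tilde{L}_{\alpha+\beta}}\subset L_{\alpha+\beta}$.

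For the final statement $\dim L_{k\alpha_i}=\delta_{k,1}+\delta_{k,-1}$, the upper bound is immediate from $\dim \tilde{L}_{k\alpha_i}=\delta_{k,1}+\delta_{k,-1}$. The opposite bound for $k=\pm 1$ requires showing that the images $\bar e_i,\bar f_i$ are nonzero in $L(A)$. This follows because $[\bar e_i,\bar f_i]=\bar h_i$, and $\bar h_i\neq 0$ since $h_i\in \tilde H\setminus (I\cap\tilde H)=\tilde H$. The main obstacle I expect is a clean execution of the Vandermonde argument, i.e.\ arguing carefully that one can always find enough elements of $H$ to separate the finitely many weights in the support of $y$; this rests on the linear independence of $\Pi$ together with the fact that $\alpha_i$ extends to a character of $H$ through the realization, so distinct $\alpha\in Q$ define distinct characters on a sufficiently large subset of $H$.
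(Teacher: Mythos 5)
Your proposal is correct, and it is essentially the argument the paper relies on: the $\odot$ indicates the proof is omitted and deferred to Carter's Chapter 14, where the decomposition of $L(A)=\tilde{L}(A)/I$ is obtained exactly as you do it, by noting that $I$, being an ideal, is stable under $\operatorname{ad}\tilde{H}$ and hence graded, $I=\bigoplus_{\alpha\in Q}(I\cap\tilde{L}_\alpha)$, so all five statements descend from the corresponding ones for $\tilde{L}(A)$, with $L_0=\tilde{H}$ coming from $I\cap\tilde{H}=O$ and $\dim L_{\pm\alpha_i}=1$ from $[\bar e_i,\bar f_i]=\bar h_i\neq 0$. The only point worth making explicit (and it is routine) is that the weight space $L_\alpha$ of the quotient coincides with the image of $\tilde{L}_\alpha$, which follows from linear independence of weight vectors of distinct weights together with the graded decomposition of $L(A)$ you established.
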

\begin{definition}\label{def:Cartan,neg_pos_fund_mult_roots}
	$H$ will be called a \textbf{Cartan subalgebra} of $L(A)$. An element $\alpha\in H^*$ is called a \textbf{root} of $L(A)$ if $\alpha\neq0$ and $L_\alpha\neq O$. Every root lies in $ Q^+$ or $ Q^-$. The roots in $ Q^+$ are called \textbf{positive roots} and those in $ Q^-$ \textbf{negative roots}. The dimension of $L_\alpha$ is called the \textbf{multiplicity} of $\alpha$ and the $\alpha_1,\dots,\alpha_n$ are called the \textbf{fundamental roots}.
\end{definition}
Before we discuss the different possible GCM, let us give a simple but important definition.
\begin{definition}[the Kac--Moody (sub)algebra $L(A)'$]
	We denote by $L(A)'$ the subalgebra of $L(A)$ generated by $e_1,\dots,e_n$, $f_1,\dots,f_n$. $\odot$
\end{definition}
\begin{prop}\hspace{1em}
	\begin{enumerate}[label=\normalfont(\roman*)]
		\item $L_\alpha$ lies in $L(A)'$ for each root $\alpha$ of $L(A)$.
		\item $L(A)'=(H\cap L(A)')\oplus\sum_{\alpha\neq0}L_\alpha$.
		\item $L(A)'=[L(A),L(A)]$. $\odot$
	\end{enumerate}
\end{prop}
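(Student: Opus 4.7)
The plan is to work systematically from (i) through (iii), each building on the previous, using mainly the root space decomposition $L(A)=N^-\oplus H\oplus N$ carried over from the analogous decomposition of $\tilde{L}(A)$ via the natural map $\tilde{L}(A)\to L(A)$, together with the fact that $N$ is spanned by iterated brackets of the $e_i$ and $N^-$ by iterated brackets of the $f_i$.

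For (i): Any root $\alpha$ lies in $Q^+$ or $Q^-$, so $L_\alpha$ is contained in $N$ or in $N^-$, respectively. Since $L(A)'$ is by definition the subalgebra generated by the $e_i, f_i$ and is in particular closed under the bracket, it contains every iterated bracket of the $e_i$'s and every iterated bracket of the $f_i$'s. Hence $N\subseteq L(A)'$ and $N^-\subseteq L(A)'$, giving $L_\alpha\subseteq L(A)'$ for every root $\alpha$.

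For (ii): The inclusion $(H\cap L(A)')\oplus\sum_{\alpha\neq 0}L_\alpha\subseteq L(A)'$ is clear from (i). Conversely, pick $x\in L(A)'\subseteq L(A)$ and, using the root space decomposition of $L(A)$, write $x=h+\sum_{\alpha\neq 0}x_\alpha$ with $h\in H$ and $x_\alpha\in L_\alpha$. By (i) each $x_\alpha\in L(A)'$, hence $h=x-\sum_\alpha x_\alpha\in L(A)'$, so $h\in H\cap L(A)'$. Since the sum $H+\sum_\alpha L_\alpha$ is already direct in $L(A)$, the induced sum inside $L(A)'$ is direct as well.

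For (iii): For the inclusion $L(A)'\subseteq[L(A),L(A)]$, it suffices to check the generators. Since $\alpha_i(h_i)=A_{ii}=2$, the defining relations give $[h_i,e_i]=2e_i$, $[h_i,f_i]=-2f_i$, and $[e_i,f_i]=h_i$, so $e_i,f_i,h_i$ all lie in $[L(A),L(A)]$; as $[L(A),L(A)]$ is an ideal (in particular a subalgebra), it contains the subalgebra they generate, namely $L(A)'$. For the reverse inclusion, decompose $L(A)=H\oplus N^-\oplus N$ and expand $[L(A),L(A)]$ into brackets of the three summands: $[H,H]=0$ since $H$ is abelian, while $[H,N]\subseteq N$, $[H,N^-]\subseteq N^-$, $[N,N]\subseteq N$, $[N^-,N^-]\subseteq N^-$ all lie in $L(A)'$ by (i) (each piece is a sum of positive or negative root spaces), and finally $[N,N^-]\subseteq L(A)'$ because $N,N^-\subseteq L(A)'$ and $L(A)'$ is a subalgebra. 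This gives $[L(A),L(A)]\subseteq L(A)'$.

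No step should be a serious obstacle; the only point that needs a moment's care is the bookkeeping in (iii), making sure that after decomposing an arbitrary bracket along $H\oplus N^-\oplus N$ every piece actually lands in $L(A)'$, which is where (i) is used decisively.
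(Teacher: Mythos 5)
Your proof is correct, and it is essentially the standard argument (the one in Carter's Chapter 14, to which the paper defers): positive/negative root spaces sit inside the subalgebras generated by the $e_i$ resp. $f_i$, the root space decomposition then yields (ii), and (iii) follows by expressing the generators as brackets and decomposing $[L(A),L(A)]$ along $H\oplus N^-\oplus N$. No gaps worth flagging.
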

\subsection{GCM of finite and affine type}\label{subsect:GCM_finite_affine}
The structure of the Kac--Moody algebra $L(A)$ depends crucially on the GCM $A$. One can proof that there are basically three different cases. The GCM can be of \textbf{finite}, \textbf{affine} or \textbf{indefinite type}. However, GCMs of finite and affine type will be most important for us. We will see, that GCMs are symmetrisable in these cases and that there is a full classification of all the finite and affine GCMs in terms of Dynkin diagrams. Let us collect the important facts about the classification of GCMs in Chapter 15 of \cite{Carter}. The proofs can be found in the book.\\

Two $n\times n$ GCMs $A$, $A'$ are called \textbf{equivalent} if there is a permutation $\sigma\in S_n$ \footnote{We define $S_n$ to be the group of permutations of $n$ elements $1,\dots,n$.} such that
\begin{equation*}
	A'_{ij} = A_{\sigma(i)\sigma(j)}\quad\text{for all }i,j.
\end{equation*}
A GCM is called \textbf{indecomposable} if it is not equivalent to a diagonal sum
\begin{equation*}
	\begin{pmatrix}
		A_1 & O \\
		O & A_2
	\end{pmatrix}
\end{equation*}
of smaller GCMS $A_1$, $A_2$.\footnote{ Here the symbol $O$ is used for a matrix with $0$'s in all its entries.} Obviously, if $A$ is a GCM so is its transpose $A^t$ and it is indecomposable only if $A^t$ is indecomposable.
Let us define three types of GCM. For a vector $v=(v_1,\dots,v_n)$ in $\mathbb{R}^n$ we write $v\geq 0$ if $v_i\geq 0$ for each $i$, and $v>0$ if $v_i>0$ for each $i$.
\begin{definition}
	A GCM has
	\begin{itemize}
		\item[] \textbf{finite type} if
		\begin{enumerate}[label=\normalfont(\roman*)]
			\item $\det A\neq 0$,
			\item there exists $u>0$ with $Au>0$,
			\item $Au\geq 0$ implies $u>0$ or $u=0$.
		\end{enumerate}
		\item[] \textbf{affine type} if
		\begin{enumerate}[label=\normalfont(\roman*)]
			\item $\operatorname{corank} A = 1$ (i.e. $\operatorname{rank} A = n-1$),
			\item there exists $u>0$ such that $Au =0$,
			\item $Au\geq0$ implies $Au=0$.
		\end{enumerate}
		\item[] \textbf{indefinite type} if
		\begin{enumerate}[label=\normalfont(\roman*)]
			\item there exists $u>0$ such that $Au<0$,
			\item $Au\geq0$ and $u\geq0$ imply $u=0$. $\odot$
		\end{enumerate}
	\end{itemize}
\end{definition}
In fact, it can be shown that these are the only possible cases for an indecomposable GCM $A$ and that the type of $A^t$ is the same as the type of $A$ (see \cite{Carter} Section 15.1).\\

Let $A=(A_{ij})$ be a GCM with $i,j\in\{1,\dots,n\}$ and let $J$ be a subset of $\{i,\dots,n\}$. Let $A_J=(A_{ij}),i,j\in J$. Then $A_J$ is also a GCM, called a \textbf{principal minor} of $A$.
\begin{definition}
	A GCM $A$ is called \textbf{symmetrisable} if there exists a non-singular diagonal matrix $\hat{D}$ and a symmetric matrix $\hat{B}$ such that $A=\hat{D}\hat{B}$. $\odot$
\end{definition}
The case when the GCM is symmetrisable provides a helpful simplification for the discussion of the finite and affine type because of the following Theorem.
\begin{thm}
	Let $A$ be an indecomposable GCM of finite or affine type. Then $A$ is symmetrisable. $\odot$
\end{thm}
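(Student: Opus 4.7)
The plan is to reduce symmetrisability to a purely combinatorial condition on the support graph of $A$ --- the graph with vertices $\{1,\dots,n\}$ and an edge $\{i,j\}$ whenever $A_{ij}\neq 0$ (equivalently $A_{ji}\neq 0$, by the third GCM axiom) --- and then verify this condition using the very restricted shape of the finite and affine Dynkin diagrams.

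First I would establish the following cycle criterion: $A$ is symmetrisable if and only if there exist non-zero scalars $d_1,\dots,d_n$ with $d_j A_{ij}=d_i A_{ji}$ for all $i,j$, which by multiplying the defining relations around a closed walk $i_1,i_2,\dots,i_r,i_1$ in the support graph is equivalent to requiring
\[
A_{i_1 i_2}A_{i_2 i_3}\cdots A_{i_r i_1}=A_{i_2 i_1}A_{i_3 i_2}\cdots A_{i_1 i_r}
\]
for every such cycle. Necessity is immediate from the defining relations (the $d_i$ factors cancel symmetrically on both sides). For sufficiency, pick $d_1=1$ and extend along a spanning tree of the support graph via $d_j:=d_i\,A_{ij}/A_{ji}$ whenever $\{i,j\}$ is an edge; indecomposability of $A$ ensures the support graph is connected, so every $d_i$ becomes defined, and the cycle identity guarantees that the result is independent of the chosen path, hence well-defined on the whole graph. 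The resulting $\hat D=\operatorname{diag}(d_1,\dots,d_n)$ is automatically non-singular, and $\hat B=\hat D^{-1}A$ is symmetric by construction.

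For the classification step I would invoke the structural results from Carter, Chapter 15: all indecomposable finite-type Dynkin diagrams are trees, so the support graph contains no cycles at all and the cycle identity holds vacuously; for affine type, all diagrams except $\tilde A_{n-1}$ are again trees, while for $\tilde A_{n-1}$ the support graph is the single $n$-cycle in which every off-diagonal entry satisfies $A_{ij}=A_{ji}=-1$, so both sides of the cycle identity collapse to $(-1)^n$ and the condition holds. The main obstacle is avoiding circular reasoning here: the tree/single-cycle structure of the finite and affine diagrams has to be established without already assuming symmetrisability. This is the content of Carter's Perron--Frobenius-style analysis of the non-negative matrix $2I-A$ driven by the defining inequalities $Au>0$ respectively $Au=0$, which shows that any extra chord in the support graph would force the spectral radius to violate the required strict or sharp eigenvalue bound. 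Once this structural input is granted, the cycle criterion yields the theorem at once.
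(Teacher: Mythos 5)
The thesis itself gives no proof here (the theorem is quoted from Carter, Ch.~15), and your argument is essentially the standard one from that source: the cycle-product criterion — which is precisely the Lemma stated immediately after this theorem in the thesis — combined with the structural fact that finite-type diagrams are trees while affine-type diagrams are trees or a single cycle with all off-diagonal entries $-1$; so your route is correct and essentially the same as the cited proof. The only step you rightly flag, getting that structural fact non-circularly, does not really need a Perron--Frobenius spectral-radius argument or the classification lists: it follows from the trichotomy facts that every proper principal submatrix of an indecomposable finite or affine GCM has only finite-type indecomposable components, that a cycle is never of finite type (since $A_J\mathbf{1}\leq 0$, applying condition (iii) of finite type to $u=-\mathbf{1}$ gives a contradiction), and that in the affine case a full cycle with $Au=0$, $u>0$ forces $2u_i\geq u_{i-1}+u_{i+1}$ with equality throughout, hence $A_{ij}=A_{ji}=-1$ on every edge.
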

\begin{lem}
	Let $A$ be a GCM. Then $A$ is symmetrisable iff
	\begin{equation*}
		A_{i_1i_2}A_{i_2i_3}\dots A_{i_ki_1}=A_{i_2i_1}A_{i_3i_2}\dots A_{i_1i_k}
	\end{equation*}
	for all $i_1,i_2,\dots,i_k\in\{1,\dots,n\}$. $\odot$
\end{lem}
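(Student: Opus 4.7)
The plan is to prove both implications. The forward direction is immediate: if $A_{ij} = d_i B_{ij}$ with $\hat{B}$ symmetric and all $d_i \neq 0$, then
\[
A_{i_1 i_2}A_{i_2 i_3}\cdots A_{i_k i_1} = (d_{i_1}d_{i_2}\cdots d_{i_k})\,B_{i_1 i_2}B_{i_2 i_3}\cdots B_{i_k i_1},
\]
and the analogous expression for $A_{i_2 i_1}A_{i_3 i_2}\cdots A_{i_1 i_k}$ has the same scalar prefactor $d_{i_1}\cdots d_{i_k}$ while the product of $B$'s agrees by symmetry, since the multiset of unordered pairs visited is the same.

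For the converse, I would first observe that symmetrisability of $A$ is equivalent to the existence of nonzero scalars $d_1,\ldots,d_n$ with $d_j A_{ij} = d_i A_{ji}$ for all $i,j$; one then sets $\hat{D}=\operatorname{diag}(d_1,\ldots,d_n)$ and $\hat{B}:=\hat{D}^{-1}A$. Using that $A$ is a GCM, $A_{ij}=0$ iff $A_{ji}=0$, so the required relation is automatic whenever $A_{ij}=0$. I therefore introduce the graph $\Gamma$ on $\{1,\ldots,n\}$ with an edge between $i$ and $j$ precisely when $A_{ij}\neq 0$. In each connected component of $\Gamma$, I fix a base vertex $v_0$, set $d_{v_0}:=1$, and for any other vertex $v$ of that component I choose a path $v_0=u_0,u_1,\ldots,u_m=v$ in $\Gamma$ and put
\[
d_v \;:=\; \prod_{\ell=1}^{m}\frac{A_{u_\ell u_{\ell-1}}}{A_{u_{\ell-1}u_\ell}},
\]
which is nonzero since every factor is nonzero by construction. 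Vertices in different components are unrelated by the symmetry condition, so we may freely take $d_v=1$ on each new component.

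The main obstacle will be showing that $d_v$ does not depend on the chosen path; this is precisely where the hypothesised cyclic identity enters. Given two paths $v_0=u_0,\ldots,u_m=v$ and $v_0=w_0,\ldots,w_{m'}=v$, I concatenate the first with the reverse of the second to obtain a closed sequence of length $k:=m+m'$,
\[
v_0,\,u_1,\,\ldots,\,u_{m-1},\,v,\,w_{m'-1},\,\ldots,\,w_1,
\]
and apply the assumed identity $A_{i_1 i_2}\cdots A_{i_k i_1}=A_{i_2 i_1}\cdots A_{i_1 i_k}$ to it. Sorting the factors, the left-hand side reads $\bigl(\prod_{\ell=1}^{m}A_{u_{\ell-1}u_\ell}\bigr)\bigl(\prod_{\ell=1}^{m'}A_{w_\ell w_{\ell-1}}\bigr)$ while the right-hand side reads $\bigl(\prod_{\ell=1}^{m}A_{u_\ell u_{\ell-1}}\bigr)\bigl(\prod_{\ell=1}^{m'}A_{w_{\ell-1}w_\ell}\bigr)$; dividing one by the other produces exactly the equality of the two candidate values of $d_v$. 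With path-independence in hand, the defining relation $d_j A_{ij}=d_i A_{ji}$ for $i,j$ adjacent in $\Gamma$ follows by reading off the single-edge case of the construction, and for non-adjacent pairs it is trivial. Hence $\hat{B}=\hat{D}^{-1}A$ is symmetric and $A$ is symmetrisable. The cyclic hypothesis does also need to be applied with possibly repeated indices (e.g.\ when the two paths traverse a common vertex), but this causes no issue since the assumption is stated for arbitrary index tuples.
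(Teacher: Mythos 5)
Your proof is correct. The paper itself gives no argument for this lemma (it is quoted from Carter's book with the proof omitted, as signalled by the $\odot$), and your argument — the immediate forward direction plus the converse via defining $d_v$ by products of ratios $A_{u_\ell u_{\ell-1}}/A_{u_{\ell-1}u_\ell}$ along paths in the graph of nonzero entries, with the cyclic hypothesis guaranteeing path-independence and hence $d_jA_{ij}=d_iA_{ji}$ — is precisely the standard proof found in the cited reference, so there is nothing to fault.
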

\begin{cor}
	Let $A$ be a symmetrisable and indecomposable GCM. Then $A$ can be expressed in the form $A=\hat{D}\hat{B}$ where $\hat{D}=\operatorname{diag}(\hat{d}_1,\dots,\hat{d}_n)$, $B$ is symmetric, with $\hat{d}_1,\dots,\hat{d}_n >0$ in $\mathbb{Z}$ and $\hat{B}_{ij}\in\mathbb{Q}$. Moreover, $\hat{D}$ is determined up to a scalar multiple. $\odot$
\end{cor}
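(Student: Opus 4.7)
The plan is to exploit the fact that a decomposition $A=\hat{D}\hat{B}$ with $\hat{B}$ symmetric is equivalent to the system of scalar equations $\hat{d}_j A_{ij} = \hat{d}_i A_{ji}$ for all $i,j$, and to solve this system by propagating values along the connection graph of $A$, using indecomposability and the cyclic relation from the previous lemma to ensure consistency.

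First, I would record the basic mechanism. Writing $\hat{D}=\operatorname{diag}(\hat{d}_1,\dots,\hat{d}_n)$, the matrix $\hat{B}=\hat{D}^{-1}A$ has entries $\hat{B}_{ij}=A_{ij}/\hat{d}_i$, and symmetry of $\hat{B}$ is equivalent to
\begin{equation*}
  \hat{d}_j A_{ij} = \hat{d}_i A_{ji} \qquad \text{for all } i,j.
\end{equation*}
Since $A$ is a GCM, $A_{ij}=0$ iff $A_{ji}=0$, so each such equation is trivial whenever one side vanishes; otherwise, as $A_{ij},A_{ji}\in\mathbb{Z}_{<0}$, the ratio $A_{ji}/A_{ij}$ is a positive rational.

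Second, I would construct the $\hat{d}_i$. Start by setting $\hat{d}_1=1$. For any index $j$, use indecomposability of $A$ to pick a chain $1=i_0,i_1,\dots,i_k=j$ with $A_{i_{s}i_{s+1}}\neq 0$ (equivalently $A_{i_{s+1}i_{s}}\neq 0$) for each $s$, and define
\begin{equation*}
  \hat{d}_j := \prod_{s=0}^{k-1} \frac{A_{i_{s+1}i_{s}}}{A_{i_{s}i_{s+1}}}.
\end{equation*}
Each factor is a positive rational, so $\hat{d}_j\in\mathbb{Q}_{>0}$. The key point is that this definition is independent of the chosen chain: any two chains from $1$ to $j$ combine to give a closed loop, and the previous lemma's identity $A_{i_1i_2}A_{i_2i_3}\cdots A_{i_ki_1}=A_{i_2i_1}A_{i_3i_2}\cdots A_{i_1i_k}$ forces the two products to agree. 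By construction the defining relation $\hat{d}_j A_{ij}=\hat{d}_i A_{ji}$ then holds along edges of the chain, and the same cyclic identity implies it holds for \emph{all} $i,j$ with $A_{ij}\neq 0$ (and is trivial otherwise). Finally, multiplying all $\hat{d}_i$ by a common positive integer (a common denominator of the $\hat{d}_i\in\mathbb{Q}_{>0}$) makes them lie in $\mathbb{Z}_{>0}$, while $\hat{B}=\hat{D}^{-1}A$ has rational entries.

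Third, for uniqueness up to a scalar, suppose $A=\hat{D}_1\hat{B}_1=\hat{D}_2\hat{B}_2$ with both $\hat{B}_a$ symmetric. Setting $E=\hat{D}_2\hat{D}_1^{-1}=\operatorname{diag}(e_1,\dots,e_n)$, one gets $E\hat{B}_1=\hat{B}_2$, and evaluating symmetry of both sides yields $(e_i-e_j)(\hat{B}_1)_{ij}=0$; equivalently $(e_i-e_j)A_{ij}=0$. Thus $e_i=e_j$ whenever $A_{ij}\neq 0$, and by indecomposability this propagates through the whole index set, giving $\hat{D}_2=c\hat{D}_1$ for a constant $c$.

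The main obstacle is the well-definedness step: showing that the value $\hat{d}_j$ produced by the chain formula is independent of the chain, and equivalently that the relation $\hat{d}_j A_{ij}=\hat{d}_i A_{ji}$ holds across \emph{all} pairs $(i,j)$ and not merely along a spanning tree of the connection graph. This is precisely where the preceding cyclic-product lemma is essential; the rest of the argument is bookkeeping around the GCM sign conventions and clearing denominators.
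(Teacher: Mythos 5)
Your proof is correct and is essentially the argument the thesis itself defers to (Carter, Ch.~15): you use exactly the preceding cyclic-product lemma to make the chain-propagated values $\hat{d}_j$ well defined on the connected index set given by indecomposability, clear denominators to land in $\mathbb{Z}_{>0}$, and use connectivity again for uniqueness up to a scalar. The only blemish is a harmless inversion in the uniqueness step: since $\hat{B}_a=\hat{D}_a^{-1}A$, your $E=\hat{D}_2\hat{D}_1^{-1}$ satisfies $\hat{B}_2=E^{-1}\hat{B}_1$ (equivalently take $E=\hat{D}_1\hat{D}_2^{-1}$), but the resulting relation $(e_i-e_j)A_{ij}=0$ and the conclusion are unaffected.
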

We obtain the following simplification for the possible cases if indecomposable GCM.
\begin{thm}
	Let $A$ be an indecomposable GCM. Then:
	\begin{enumerate}[label=\normalfont(\alph*)]
		\item $A$ has finite type iff all its principal minors have positive determinant.
		\item A has affine type iff $\det A=0$ and all proper principal minors have positive determinant.
		\item A has indefinite type otherwise. $\odot$
	\end{enumerate}
\end{thm}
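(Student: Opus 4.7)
The plan is to exploit the symmetrizability of indecomposable GCMs of finite or affine type (just established in the previous theorem) in order to reduce questions about determinants of principal minors of $A$ to analogous questions for a symmetric matrix $\hat{B}$. Concretely, write $A = \hat{D}\hat{B}$ with $\hat{D} = \operatorname{diag}(\hat{d}_1,\dots,\hat{d}_n)$ having positive entries and $\hat{B}$ symmetric. Then for every subset $J \subseteq \{1,\dots,n\}$ we have $A_J = \hat{D}_J \hat{B}_J$, so $\det A_J$ and $\det \hat{B}_J$ share the same sign. Hence the assertions about principal minors of $A$ reduce to assertions about those of $\hat{B}$. Note also that the sign conditions on $A$ translate to $\hat{B}_{ii} > 0$ and $\hat{B}_{ij}\leq 0$ for $i\neq j$.

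For the forward direction of (a), assume $A$ is of finite type and pick $u > 0$ with $Au > 0$ by the defining condition (ii). Then $\hat{B}u = c$ with each $c_i > 0$, which rewrites as $\hat{B}_{ii} u_i = c_i + \sum_{j \neq i} |\hat{B}_{ij}|\, u_j$. The key computation is a quadratic-form trick: substituting $v_i = u_i w_i$ and using this identity, one obtains after regrouping
\begin{equation*}
	v^{t}\hat{B} v \;=\; \sum_i c_i u_i w_i^{2} \;+\; \sum_{i < j} |\hat{B}_{ij}|\, u_i u_j (w_i - w_j)^{2}.
\end{equation*}
This is non-negative, and vanishes only for $v = 0$, so $\hat{B}$ is positive definite. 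Every principal submatrix of a positive-definite symmetric matrix is itself positive-definite, so all principal minors of $\hat{B}$, and hence of $A$, have positive determinant.

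For the forward direction of (b), assume $A$ of affine type and pick $u > 0$ with $Au = 0$. The same substitution yields $v^{t}\hat{B} v = \sum_{i<j}|\hat{B}_{ij}|\, u_i u_j\,(w_i - w_j)^{2}\geq 0$, so $\hat{B}$ is positive semi-definite. The form vanishes on $v$ iff $w_i = w_j$ whenever $\hat{B}_{ij} \neq 0$; by the indecomposability of $A$ this forces all $w_i$ to agree, so $\ker \hat{B} = \mathbb{R} u$, $\operatorname{rank}\hat{B} = n-1$, and $\det A = 0$. For any $J \subsetneq \{1,\dots,n\}$, a vector in $\ker \hat{B}_J$ extended by zeros lies in $\ker \hat{B} = \mathbb{R} u$; since $u$ has no zero coordinates, this extension, and hence the original vector, must vanish. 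Thus each proper $\hat{B}_J$ is positive-definite and $\det A_J > 0$.

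The converses of (a) and (b), together with (c), will follow from the trichotomy already established (each indecomposable GCM is of exactly one of the three types). Indeed, for (a) $(\Leftarrow)$ the hypothesis $\det A > 0$ excludes the affine case, and for (b) $(\Leftarrow)$ the hypothesis $\det A = 0$ excludes the finite case. The harder part, and what I expect to be the main obstacle, is excluding the indefinite type in each converse, since indefinite GCMs need not be symmetrizable and thus evade the quadratic-form argument above. My plan here is to apply the Perron--Frobenius theorem to the indecomposable entrywise-nonnegative matrix $2I - A$: this yields a positive eigenvector $u$ with eigenvalue $\rho \geq 0$, so $Au = (2-\rho)u$, and the sign of $2-\rho$ distinguishes the three types directly. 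Combining this with the assumption on principal minors — either via an M-matrix style argument showing that positivity of all principal minors of $A$ forces $A^{-1}$ to have nonnegative entries (which furnishes a positive vector required for finite type), or by tracking the smallest eigenvalue $2-\rho$ against the sign of $\det A$ — rules out the indefinite case and completes the proof of (a) and (b). Part (c) is then immediate by elimination.
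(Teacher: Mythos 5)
The paper itself contains no proof of this theorem: it is quoted from Chapter 15 of Carter's book with the proofs explicitly omitted, so I am comparing your attempt with the standard argument. Your forward implications are exactly that standard argument and are essentially complete: writing $A=\hat D\hat B$, taking $u>0$ with $Au>0$ (resp.\ $Au=0$), and substituting $v_i=u_iw_i$ to get $v^{t}\hat Bv=\sum_i c_iu_iw_i^{2}+\sum_{i<j}|\hat B_{ij}|u_iu_j(w_i-w_j)^{2}$ does give positive (semi)definiteness of $\hat B$, and indecomposability pins the kernel down to $\mathbb{R}u$ in the affine case. The only elision is the zero-extension step for a proper submatrix: that $x\in\ker\hat B_J$ extends by zeros to an element of $\ker\hat B$ is not automatic but follows from semidefiniteness via $\tilde x^{t}\hat B\tilde x=x^{t}\hat B_Jx=0\Rightarrow\hat B\tilde x=0$; say so explicitly.

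The genuine gap is the converse halves of (a) and (b), which you present only as a plan. Granting the trichotomy, everything reduces to showing that the minor hypotheses exclude indefinite type, and neither of your two suggested routes is carried out. The M-matrix route would work, but the fact you would invoke --- a Z-matrix all of whose principal minors are positive has entrywise nonnegative inverse (equivalently $\rho(2I-A)<2$) --- is itself a theorem of essentially the same depth as what is being proved, so citing it without proof just relocates the burden; if you go this way you should prove it, e.g.\ by induction on $n$, using positivity of the proper minors to produce $u>0$ with $Au>0$. The second route, ``tracking $2-\rho$ against the sign of $\det A$'', is not developed and does not close the argument as stated: $A$ is not symmetric, $\det A$ is the product of all eigenvalues and is not controlled by the Perron root alone, and in case (b) the hypothesis $\det A=0$ only gives $\rho\geq 2$, so you would still have to show that positivity of the proper principal minors forces $\rho=2$. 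Until one of these arguments (or the inductive argument on principal submatrices that the standard proofs use) is actually executed, parts (a) $\Leftarrow$, (b) $\Leftarrow$, and hence (c), remain unproven.
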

In particular one can check that an indecomposable GCM $A$ has finite type iff $A$ is a Cartan matrix. Those are classified by the standard list of connected Dynkin diagrams in Section 6.4 in the book \cite{Carter}. Similarly, if the GCM $A$ is of affine type, there is a full classification in terms of the affine list of Dynkin diagrams in Section 15.3 in the book \cite{Carter}.
\subsection{The standard invariant bilinear form}\label{subsect:standard_inv_bil}
Let us discuss the problem of defining an invariant bilinear form (cf. \cite{Carter} Section 16.1). When $A$ is a Cartan matrix, the Kac--Moody algebra $L(A)$ is the corresponding finite-dimensional Lie Algebra $\mathfrak{g}$ with Cartan matrix $A$. Thus, it has a non degenerate symmetric bilinear form
\begin{equation*}
	\langle ,\rangle : L(A) \times L(A) \to \mathbb{C}
\end{equation*}
which is invariant, i.e.
\begin{equation*}
	\langle [x,y],z\rangle = \langle x,[y,z]\rangle\qquad \text{ for }x,y,z\in L(A).
\end{equation*}
The Killing form defined by
\begin{equation*}
	\langle x,y\rangle \coloneq \tr(\operatorname{ad}_x\circ\operatorname{ad}_y)
\end{equation*}
has these properties. Though, since $L(A)$ is infinite-dimensional whenever the GCM $A$ is not finite, one cannot use the definition of the Killing form in general. However, when $A$ is symmetrisable, there is another way to define it. So let's suppose that $A$ is symmetrisable, then $A=\hat{D}\hat{B}$ with $\hat{D}$ diagonal and $\hat{B}$ symmetric. Let $\hat{D}= \operatorname{diag}(\hat{d}_1,\dots,\hat{d}_n)$ and suppose that $(H,\Pi,\Pi^\vee)$ is a minimal realisation of $A$, i.e. $\Pi^\vee=\{h_1,\dots,h_n\}$ is a linearly independent subset of $H$, $\Pi = \{\alpha_1,\dots,\alpha_n\}$ is a linearly independent subset of $H^*$, $\alpha_j(h_i)=A_{ij}$ and $\dim H = 2n-l$ where $l=\operatorname{rank}A$. Furthermore, let $H'$ be the subspace of $H$ spanned by $h_1,\dots,h_n$ and let $H''$ be a complementary subspace of $H'$ in $H$. Then
\begin{equation*}
	H = H'\oplus H''\qquad \dim H' = n, \dim H' = n-l
\end{equation*}
and we can define a bilinear form $\langle,\rangle: H\times H \to \mathbb{C}$ on $H$ by the rules:
\begin{equation*}
	\begin{split}
		&\langle h_i,h_j\rangle = \hat{d}_i\hat{d}_j\hat{B}_{ij}\qquad i,j = 1,\dots,n\\
		&\langle h_i,x\rangle = \langle x,h_i\rangle = \hat{d}_i\alpha_i(x)\qquad \text{for }x\in H''\\
		&\langle x,y\rangle = 0\qquad \text{for } x,y\in H''.
	\end{split}
\end{equation*}
\begin{prop}
	This form on $H$ is non degenerate. $\odot$
\end{prop}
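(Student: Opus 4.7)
The plan is to show directly that the form has trivial radical: suppose $z \in H$ satisfies $\langle z, y \rangle = 0$ for every $y \in H$, and deduce $z = 0$. Writing $z = z' + z''$ with $z' = \sum_i c_i h_i \in H'$ and $z'' \in H''$, I will probe the vanishing condition first against arbitrary $x \in H''$ and then against each basis element $h_j$ of $H'$, producing two families of equations to analyse simultaneously.

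Testing against $x \in H''$ kills $\langle z'', x \rangle$ by the third defining rule of the form and leaves $\sum_i c_i \hat{d}_i\, \alpha_i(x) = 0$. Setting $a_i := \hat{d}_i c_i$, this says the functional $\sum_i a_i \alpha_i$ vanishes on $H''$. Testing against $h_j$ and cancelling the nonzero $\hat{d}_j$ gives $\sum_i c_i A_{ij} = -\alpha_j(z'')$ for every $j$, which I repackage as $A^T c = -\phi(z'')$, where $\phi : H \to \mathbb{C}^n$ is the linear map $y \mapsto (\alpha_1(y), \ldots, \alpha_n(y))$.

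The central observation is a dimension count that relies on the minimality of the realization. Because the $\alpha_i$ are linearly independent in $H^*$ and $\dim H = 2n-l$, the kernel of $\phi$ has dimension $n-l$ and $\phi$ is surjective onto $\mathbb{C}^n$. The restriction $\phi|_{H'}$ sends $h_i$ to the $i$-th row of $A$, so its image has dimension $l$; since $\dim H'' = n-l$ and the two images together fill $\mathbb{C}^n$, the restriction $\phi|_{H''}$ is injective and its image is a direct-sum complement of $\phi(H')$ in $\mathbb{C}^n$. This complementarity $\mathbb{C}^n = \phi(H') \oplus \phi(H'')$ is the main technical input of the argument, and I expect it to be the only real obstacle.

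With that in hand, the equation $A^T c = -\phi(z'')$ has its left side in the column space of $A^T$ (which coincides with $\phi(H')$) and its right side in the complementary $\phi(H'')$, so both sides vanish: injectivity of $\phi|_{H''}$ yields $z'' = 0$, and $A^T c = 0$ survives. Using $A^T = \hat{B}\hat{D}$ (obtained from $A = \hat{D}\hat{B}$ together with $\hat{B}^T = \hat{B}$), this rewrites as $\hat{B} a = 0$, and multiplying by $\hat{D}$ gives $A a = 0$. Reading $(A a)_j = \sum_i a_i\, \alpha_j(h_i)$, this says $\sum_i a_i \alpha_i$ also vanishes on $H'$, so altogether it vanishes on $H = H' \oplus H''$. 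Linear independence of the $\alpha_i$ in $H^*$ then forces $a = 0$, hence $c = 0$ and $z = 0$. Everything beyond the complementarity is bookkeeping that exploits the symmetrisation $A = \hat{D}\hat{B}$ to match the two equations coming from probing against $H'$ and $H''$ respectively.
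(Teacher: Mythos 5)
Your proof is correct. The thesis itself omits the argument (the proposition is quoted from Carter with the proof deferred to the book), and your computation of the radical is essentially the standard one found there: probe a radical element against $H''$ and against the $h_j$, and use minimality of the realization plus the symmetrisation $A=\hat{D}\hat{B}$ to force both components to vanish. Your packaging via the evaluation map $\phi(y)=(\alpha_1(y),\dots,\alpha_n(y))$ is clean: surjectivity of $\phi$ (linear independence of the $\alpha_i$) together with $\dim H''=n-l$ and $\operatorname{rank}A=l$ does give $\mathbb{C}^n=\phi(H')\oplus\phi(H'')$ with $\phi|_{H''}$ injective, which is exactly what is needed to split the equation $A^{T}c=-\phi(z'')$ into $z''=0$ and $A^{T}c=0$, and the passage $\hat{B}a=0\Rightarrow Aa=0$ is where symmetrisability genuinely enters. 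One cosmetic slip at the end: with the paper's convention $\alpha_j(h_i)=A_{ij}$ one has $(Aa)_j=\sum_i a_i\,\alpha_i(h_j)$, not $\sum_i a_i\,\alpha_j(h_i)$ (the latter would be $(A^{T}a)_j$); the conclusion you draw from $Aa=0$, namely that $\sum_i a_i\alpha_i$ vanishes on $H'$, is nevertheless the correct one, so this is only an index typo and not a gap.
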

\begin{thm}\label{thm:standard_inv}
	Suppose $A$ is a symmetrisable GCM. Then the Kac--Moody algebra $L(A)$ has a non-degenerate symmetric invariant bilinear form. $\odot$
\end{thm}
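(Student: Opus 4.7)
The approach is to extend the non-degenerate bilinear form already defined on $H$ to all of $L(A)$ by induction on the height of roots, and then verify symmetry, invariance, and non-degeneracy of the resulting form.

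To set things up, I would consider the filtration $L(0) = H \subset L(1) \subset L(2) \subset \cdots$ with
$$L(r) = H \oplus \bigoplus_{0 < |\mathrm{ht}(\alpha)| \le r} L_\alpha, \qquad \bigcup_{r \ge 0} L(r) = L(A).$$
By the root-space grading, any invariant bilinear form must vanish on $L_\alpha \times L_\beta$ whenever $\alpha + \beta \neq 0$. Hence at step $r$ the only new data required is a pairing $L_\alpha \times L_{-\alpha} \to \mathbb{C}$ for each positive root $\alpha$ of height $r$. Since $L_\alpha$ is spanned by brackets $[e_i, x_i]$ with $x_i \in L_{\alpha - \alpha_i} \subset L(r-1)$, one is forced by invariance to set
$$\langle [e_i, x_i], y \rangle := -\langle x_i, [e_i, y] \rangle \qquad \text{for } y \in L_{-\alpha},$$
which makes sense because $[e_i, y] \in L_{-\alpha + \alpha_i}$ and the pairing $L_{\alpha-\alpha_i} \times L_{-\alpha+\alpha_i}$ is already available by the inductive hypothesis. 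An analogous formula handles elements of $L_{-\alpha}$ written as brackets with $f_i$.

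The main obstacle is well-definedness: the decomposition $x = \sum_i [e_i, x_i]$ is highly non-unique, so one must show that any two such presentations give the same value. The cleanest route is to first construct the form on $\tilde{L}(A)$, where the freeness of $\tilde{N}^{\pm}$ makes the inductive step more transparent, and then show that the radical of the resulting form is an ideal of $\tilde{L}(A)$ whose intersection with $\tilde{H}$ is zero. The non-degeneracy of the form on $H$ (the preceding proposition) ensures that the radical meets $\tilde{H}$ trivially. By the maximality of the ideal $I$ with $I \cap \tilde{H} = O$, the radical is contained in $I$, so the form descends to a well-defined bilinear form on $L(A) = \tilde{L}(A)/I$. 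The inductive checks of symmetry and of the invariance identity $\langle [a,b], c\rangle = \langle a, [b,c]\rangle$ then reduce at each height to the inductive hypothesis combined with the Jacobi identity and the defining relations for $e_i, f_i, h_i$; these are routine once well-definedness is in place.

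For non-degeneracy on $L(A)$, I would argue that the radical $R$ of the extended form is an ideal of $L(A)$, which is an immediate consequence of invariance. Lifting $R$ to $\tilde{L}(A)$ yields an ideal containing $I$ whose intersection with $\tilde{H}$ is still zero (since the form restricted to $H$ is non-degenerate), and the maximality of $I$ forces the preimage to equal $I$, so $R = 0$. This gives the non-degenerate symmetric invariant bilinear form on $L(A)$ claimed by the theorem.
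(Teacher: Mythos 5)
Your overall strategy is the one the paper (following Carter, Chapter 16) sketches: extend the non-degenerate form from $H$ by induction on the height grading, with the value on $L_\alpha\times L_{-\alpha}$ forced by invariance, and get non-degeneracy from the fact that the radical is an ideal meeting the Cartan subalgebra trivially. However, your descent step from $\tilde{L}(A)$ to $L(A)$ has the containment backwards. A bilinear form on $\tilde{L}(A)$ passes to the quotient $\tilde{L}(A)/I$ only if $I$ lies in its radical, i.e. $\langle I,\tilde{L}(A)\rangle=0$; the inclusion you establish, $\operatorname{rad}\subseteq I$ (via maximality of $I$ among ideals with $I\cap\tilde{H}=O$), does not give this. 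The missing direction is true but needs an argument: since $I$ is graded with zero component in $\tilde{H}$, take $x\in I\cap\tilde{L}_\alpha$ and $y\in\tilde{L}_{-\alpha}$; the analogue on $\tilde{L}(A)$ of the identity $[x,y]=\langle x,y\rangle h'_\alpha$ (proved from invariance and non-degeneracy on $\tilde{H}$, as in Corollary \ref{cor:prop_inv_bil_form}) shows that $\langle x,y\rangle\neq 0$ would produce a non-zero element of $I\cap\tilde{H}$, a contradiction. Together with $\operatorname{rad}\subseteq I$ this gives $\operatorname{rad}=I$, and only then does the form descend and become non-degenerate on $L(A)$. As written, the descent is unjustified, and your subsequent non-degeneracy argument depends on it.

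A second, smaller point: you correctly identify well-definedness of the forced formula as the main obstacle, but you do not actually discharge it; the appeal to freeness of $\tilde{N}^\pm$ is only an assertion. Freeness does not by itself make the recipe $\langle[e_i,x_i],y\rangle:=-\langle x_i,[e_i,y]\rangle$ independent of the chosen decomposition — one still needs the standard consistency check (decompose both arguments, compare the two evaluations using the Jacobi identity and the inductively established invariance on $L(r-1)$), and that check works verbatim on $L(A)$ itself. This is why Carter's proof, which the paper summarizes, runs the induction on the subspaces $L(r)\subset L(A)$ directly and never needs the detour through $\tilde{L}(A)$; your route is viable, but only after supplying both the consistency computation and the $\operatorname{rad}=I$ argument above.
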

The full proof of the theorem can be found in the book \cite{Carter}. The idea is as follows. We have $L(A)=\underset{\alpha\in  Q}{\bigoplus}L_\alpha$. For $\alpha= m_1\alpha_1+\cdots+m_n\alpha_n\in Q$ we define the height of $\alpha$ by $\operatorname{ht}\alpha = m_1+\cdots+m_n$. Then
\begin{equation*}
	L(A)=\bigoplus_{i\in\mathbb{Z}} L_i
\end{equation*}
where $L_i$ is the direct sum of all $L_\alpha$ with $\operatorname{ht}\alpha = i$. Since $[L_\alpha,L_\beta]\subset L_{\alpha+\beta}$ we have $[L_i,L_j]\subset L_{i+j}$. In this way, $L(A)$ may be considered as a $\mathbb{Z}$-graded Lie algebra. For each $r\geq 0$, define
\begin{equation*}
	L(r)=\bigoplus_{-r\leq i\leq r} L_i.
\end{equation*}
Then we have $\bigcup_{r\geq 0} L(r) = L(A)$ such that
\begin{equation*}
	H = L(0)\subset L(1) \subset L(2)\subset \cdots.
\end{equation*}
Now, since a symmetric bilinear form is already defined on $H=L(0)$, the idea is to extend this definition to $L(r)$ by induction on $r$ and thus defining it on $L(A)$.
Moreover, the proof of the theorem shows the following.
\begin{cor}\label{cor:unique_bil_restr_to_H}
	Any symmetric invariant bilinear form on $L(A)$ is uniquely determined by its restriction to $H$. $\odot$
\end{cor}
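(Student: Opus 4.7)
The plan is to show that any symmetric invariant bilinear form $\langle\cdot,\cdot\rangle$ on $L(A)$ is completely determined, root space by root space, once its restriction to $H=L_0$ is fixed. The argument proceeds in three steps, exploiting only the root space decomposition $L(A)=\bigoplus_{\alpha\in Q}L_\alpha$, invariance under $\operatorname{ad}H$, and the fact that $N$ (resp.\ $N^-$) is generated by $e_1,\dots,e_n$ (resp.\ $f_1,\dots,f_n$).

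First, I would prove the orthogonality of root spaces: $\langle L_\alpha,L_\beta\rangle=0$ whenever $\alpha+\beta\neq 0$. Indeed, for $x\in L_\alpha$, $y\in L_\beta$ and $h\in H$, invariance applied to $[h,x]=\alpha(h)x$ and $[h,y]=\beta(h)y$ gives
\begin{equation*}
\alpha(h)\langle x,y\rangle=\langle[h,x],y\rangle=-\langle x,[y,h]\rangle=-\beta(h)\langle x,y\rangle,
\end{equation*}
so $(\alpha+\beta)(h)\langle x,y\rangle=0$ for every $h\in H$. Since the $\alpha_i$ are linearly independent in $H^*$, any non-zero element of $Q$ is non-zero on some $h\in H$, and orthogonality follows. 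Hence the form is determined by its restriction to $H\times H$ together with the pairings $L_\alpha\times L_{-\alpha}\to\mathbb{C}$ for $\alpha\in Q^+$.

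The second step is the base case of an induction on $\operatorname{ht}(\alpha)$. For $\operatorname{ht}(\alpha)=1$ one has $\alpha=\alpha_i$ with $L_{\alpha_i}=\mathbb{C}e_i$ and $L_{-\alpha_i}=\mathbb{C}f_i$; using $[e_i,f_i]=h_i$ and invariance, for every $h\in H$
\begin{equation*}
\langle h,h_i\rangle=\langle h,[e_i,f_i]\rangle=\langle[h,e_i],f_i\rangle=\alpha_i(h)\langle e_i,f_i\rangle.
\end{equation*}
Since $\alpha_i\neq 0$ there exists $h\in H$ with $\alpha_i(h)\neq 0$, so $\langle e_i,f_i\rangle$ is forced by the restriction to $H$, and by bilinearity the entire pairing $L_{\alpha_i}\times L_{-\alpha_i}$ is determined.

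The third, inductive step handles $\operatorname{ht}(\alpha)\geq 2$. Because $N$ is generated by $e_1,\dots,e_n$, the root space $L_\alpha$ is spanned by iterated brackets of the $e_i$, hence
\begin{equation*}
L_\alpha=\sum_{i=1}^{n}[L_{\alpha_i},L_{\alpha-\alpha_i}].
\end{equation*}
Thus any $x\in L_\alpha$ can be written as $x=\sum_k[u_k,v_k]$ with $u_k\in L_{\alpha_i}$ and $v_k\in L_{\alpha-\alpha_i}$ (both of strictly smaller height). For $y\in L_{-\alpha}$, invariance gives
\begin{equation*}
\langle x,y\rangle=\sum_k\langle[u_k,v_k],y\rangle=\sum_k\langle u_k,[v_k,y]\rangle,
\end{equation*}
and each summand is now a pairing of elements in $L_{\alpha_i}\times L_{-\alpha_i}$, already determined by Step~2. (An entirely symmetric argument, using that $N^-$ is generated by the $f_i$, would allow us to decompose $y$ instead; either direction reduces the height.) By induction on $\operatorname{ht}(\alpha)$, every pairing $L_\alpha\times L_{-\alpha}\to\mathbb{C}$ is determined by $\langle\cdot,\cdot\rangle|_{H\times H}$, and combined with Step~1 this gives the claim.

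The only delicate point is the inductive reduction: one must check that writing $x\in L_\alpha$ as a sum of brackets $[u_k,v_k]$ with $u_k\in L_{\alpha_i}$, $v_k\in L_{\alpha-\alpha_i}$ is consistent, i.e.\ the value $\sum_k\langle u_k,[v_k,y]\rangle$ does not depend on the chosen decomposition. This is automatic, however, because invariance forces $\sum_k\langle u_k,[v_k,y]\rangle=\langle x,y\rangle$, and $x$ itself does not depend on the decomposition. This completes the proof.
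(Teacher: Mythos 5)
Your proof is correct and follows essentially the same route as the paper, which obtains the corollary from the inductive, height-by-height construction of the form in Carter's book: orthogonality of root spaces plus invariance force every pairing $L_\alpha\times L_{-\alpha}$ down to the values $\langle h,h_i\rangle$ on $H$, exactly as in your reduction via $[e_i,f_i]=h_i$ and the generation of $N$ by the $e_i$. Only note a small sign typo in your first display: invariance gives $\langle[h,x],y\rangle=-\langle x,[h,y]\rangle$ (not $-\langle x,[y,h]\rangle$); the endpoints and the conclusion $(\alpha+\beta)(h)\langle x,y\rangle=0$ are unaffected.
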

\begin{definition}[the standard invariant form]
	We call the form constructed in Theorem \ref{thm:standard_inv} the \textbf{standard invariant form} on $L(A)$.\footnote{ Note that it depends on the choice of $\hat{D}=\operatorname{diag}(\hat{d}_1,\dots,\hat{d}_n)$, which is unique up to a scalar multiple.} $\odot$
\end{definition}
Since the form $\langle,\rangle$ is non-degenerate on $H$ it determines a bijection $H^*\to H$ given by $\alpha\mapsto h'_\alpha$ such that
\begin{equation*}
	\langle h'_\alpha, h\rangle = \alpha(h)\qquad\text{for all }h\in H.
\end{equation*}
\begin{cor}\label{cor:prop_inv_bil_form}\hspace{1em}
	\begin{enumerate}[label=\normalfont(\roman*)]
		\item For each $i\in\mathbb{Z}$ the pairing $L_i\times L_{-i}\to\mathbb{C}$ given by $(x,y)\mapsto \langle x,y\rangle$ is non-degenerate.
		\item $\langle L_\alpha,L_\beta\rangle = 0$ unless $\alpha+\beta = 0$.
		\item The pairing $L_\alpha\times L_{-\alpha}\to \mathbb{C}$ given by $(x,y)\mapsto\langle x,y\rangle$ is non-degenerate.
		\item Suppose $x\in L_\alpha, y\in L_{-\alpha}$, then $[x,y] = \langle x,y\rangle h'_\alpha$.
		\item For each $x\in L_\alpha$ with $x\neq 0$ there exists $y\in L_{-\alpha}$ with $[x,y]\neq 0$. $\odot$
	\end{enumerate}	
\end{cor}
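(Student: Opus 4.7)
My plan is to derive all five items in an order that lets each build on the previous ones, starting from the already-established non-degeneracy and invariance of $\langle\cdot,\cdot\rangle$ on $L(A)$.

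First I would prove (ii), which is the engine for everything else. For $h\in H$, $x\in L_\alpha$, $y\in L_\beta$, invariance together with symmetry gives
\begin{equation*}
\alpha(h)\langle x,y\rangle = \langle [h,x],y\rangle = -\langle [x,h],y\rangle = -\langle x,[h,y]\rangle = -\beta(h)\langle x,y\rangle,
\end{equation*}
so $(\alpha+\beta)(h)\langle x,y\rangle=0$ for every $h\in H$. If $\alpha+\beta\neq 0$ one can pick $h\in H$ with $(\alpha+\beta)(h)\neq 0$ (using that $\Pi$ is linearly independent in $H^*$ and $H''$ is large enough; concretely, $\alpha+\beta=0$ as a functional on $H$ iff $\alpha+\beta=0$ in $Q\subset H^*$), forcing $\langle x,y\rangle=0$.

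With (ii) in hand, (iii) is immediate: given $0\neq x\in L_\alpha$, non-degeneracy on $L(A)$ produces some $y\in L(A)$ with $\langle x,y\rangle\neq 0$; decomposing $y$ into root components and applying (ii) shows the $L_{-\alpha}$-component already pairs non-trivially with $x$. Part (i) then follows because the height-$i$ piece $L_i=\bigoplus_{\operatorname{ht}\alpha=i}L_\alpha$ pairs with $L_{-i}$ as an orthogonal direct sum of the blocks $L_\alpha\times L_{-\alpha}$ by (ii), each block being non-degenerate by (iii). For (iv), note that $[x,y]\in[L_\alpha,L_{-\alpha}]\subset L_0=H$, and for every $h\in H$ invariance gives
\begin{equation*}
\langle [x,y],h\rangle = \langle x,[y,h]\rangle = \alpha(h)\langle x,y\rangle = \langle x,y\rangle\langle h'_\alpha,h\rangle,
\end{equation*}
so $[x,y]-\langle x,y\rangle h'_\alpha$ lies in the radical of $\langle\cdot,\cdot\rangle|_{H\times H}$, which is trivial; hence $[x,y]=\langle x,y\rangle h'_\alpha$. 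Finally (v) combines (iii) and (iv): choose $y\in L_{-\alpha}$ with $\langle x,y\rangle\neq 0$; since $\alpha\neq 0$ the element $h'_\alpha\in H$ is non-zero (the map $H^*\to H$, $\alpha\mapsto h'_\alpha$, is a bijection), so $[x,y]=\langle x,y\rangle h'_\alpha\neq 0$.

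The only step requiring any care is verifying in (ii) that $\alpha+\beta\neq 0$ in $H^*$ really produces an $h\in H$ on which it evaluates non-trivially — this is the one place the precise setup of the minimal realisation enters — but after that the argument is entirely formal manipulation of invariance and the grading. I do not expect a genuine obstacle; the corollary is essentially a packaging of what the non-degenerate invariant form forces on a $Q$-graded Lie algebra.
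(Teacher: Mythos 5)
Your proof is correct, and every ingredient you use is available at the point where the corollary is stated: invariance and symmetry of the form, its non-degeneracy on $L(A)$ (Theorem \ref{thm:standard_inv}), its non-degeneracy on $H$, the linear independence of $\Pi$ (so a non-zero element of $Q$ is a non-zero functional on $H$), and the bijection $H^*\to H$, $\alpha\mapsto h'_\alpha$. The chain (ii) $\Rightarrow$ (iii) $\Rightarrow$ (i), then (iv) by killing the radical on $H$, then (v) from (iii) and (iv), is sound; your implicit restriction to $\alpha\neq 0$ in (v) is the intended reading, since for $\alpha=0$ the claim fails trivially ($L_0=H$ is abelian). The only difference from the source is one of logical order: the thesis omits the proof and defers to Carter, where the orthogonality $\langle L_i,L_j\rangle=0$ for $i+j\neq 0$ and the non-degeneracy of each pairing $L_i\times L_{-i}$ are established \emph{inside} the inductive construction of the form on the subspaces $L(r)$, and global non-degeneracy on $L(A)$ is deduced from them; you instead take the finished theorem as a black box and recover (i)--(iii) a posteriori from global non-degeneracy plus the grading. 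That is a perfectly legitimate corollary-level repackaging and arguably the cleaner way to present it once Theorem \ref{thm:standard_inv} is quoted as a statement; Carter's order is what one needs if the graded non-degeneracy is to be used as the induction hypothesis in the construction itself.
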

In fact a non-degenerate symmetric invariant bilinear form on $L(A)$ is uniquely determined on the subalgebra $L(A)'$ up to a non-zero constant as follows.
\begin{prop}
	Let $A$ be an indecomposable symmetrisable GCM and $\{,\}$ a non-degenerate symmetric invariant bilinear form on the Kac--Moody algerba $L(A)$. Then there exists a non-zero constant factor $\xi\in\mathbb{C}$ such that
	\begin{equation*}
		\{x,y\} = \xi \langle x,y\rangle\text{ for all }x,y\in L(A)'.\quad \odot
	\end{equation*}
\end{prop}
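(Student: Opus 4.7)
My plan is to reduce the identity $\{\cdot,\cdot\}=\xi\langle\cdot,\cdot\rangle$ to a comparison on the span $H'=\langle h_1,\dots,h_n\rangle$, and then to propagate it across all of $L(A)'$ using invariance together with the root-space decomposition. The scalar $\xi$ will be determined by the ratios $\{e_i,f_i\}/\langle e_i,f_i\rangle$, and the essential work is to show that these ratios all coincide; this is the step where indecomposability of $A$ is really used.

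The first step is to extract the values of an arbitrary symmetric invariant bilinear form on $H'$ by combining invariance with $h_i=[e_i,f_i]$ and the relation $[f_i,h]=\alpha_i(h)f_i$. For any such form $B$,
\[
B(h_i,h)=B([e_i,f_i],h)=B(e_i,[f_i,h])=\alpha_i(h)\,B(e_i,f_i)
\]
for all $h\in H$. Writing $c_i:=\{e_i,f_i\}$ and noting that $\langle e_i,f_i\rangle=\hat d_i$ (a direct consequence of invariance together with the defining relations of $\langle\cdot,\cdot\rangle$ on $H$), I get $\{h_i,h\}=c_i\alpha_i(h)$ and $\langle h_i,h\rangle=\hat d_i\alpha_i(h)$. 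Specialising $h=h_j$ and using symmetry of both forms yields the two compatibility relations
\[
c_i A_{ji}=c_j A_{ij}\quad\text{and}\quad\hat d_i A_{ji}=\hat d_j A_{ij}.
\]

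Next I would invoke indecomposability. Whenever $A_{ij}\neq 0$ (equivalently $A_{ji}\neq 0$, by the GCM axioms), the two relations above force $c_i/\hat d_i=c_j/\hat d_j$. Connectedness of the Dynkin diagram of $A$ implies that any two indices $i,j$ are linked by a chain of such adjacencies, so the value $\xi:=c_i/\hat d_i$ is common to all $i$.

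It remains to prove that the symmetric invariant bilinear form $B:=\{\cdot,\cdot\}-\xi\langle\cdot,\cdot\rangle$ vanishes identically on $L(A)'$. By construction $B(e_i,f_i)=0$ for every $i$. Invariance with $h\in H$ gives $\alpha(h)B(x,y)=B([h,x],y)=-B(x,[h,y])=-\beta(h)B(x,y)$ for $x\in L_\alpha$, $y\in L_\beta$, so $B$ vanishes on $L_\alpha\times L_\beta$ whenever $\alpha+\beta\neq 0$; for $\alpha=0$ the identity $B(h_i,h)=\alpha_i(h)B(e_i,f_i)=0$ shows $B|_{H'\times H'}\equiv 0$. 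For $\alpha\in Q^+$ nonzero I would proceed by induction on the height of $\alpha$: every $x\in L_\alpha$ is a sum of brackets $[e_i,x']$ with $x'\in L_{\alpha-\alpha_i}$, and for $y\in L_{-\alpha}$ invariance reduces $B(x,y)=B(e_i,[x',y])$, with $[x',y]\in L_{-\alpha_i}=\mathbb{C}f_i$, so $B(x,y)$ is a scalar multiple of $B(e_i,f_i)=0$. The case $\alpha\in Q^-$ is symmetric. The main subtlety is the book-keeping of the root grading across the inductive reduction; the heart of the argument is the rigidity statement that indecomposability forces $c_i/\hat d_i$ to be $i$-independent.
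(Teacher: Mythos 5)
Your argument is sound in its main structure, and it follows what is essentially the standard route (the thesis itself does not prove this proposition — it is quoted from Carter's book with the proof omitted — so there is no in-paper proof to compare against): extract $\{h_i,h\}=\{e_i,f_i\}\,\alpha_i(h)$ from invariance and $h_i=[e_i,f_i]$, use symmetry to get $c_iA_{ji}=c_jA_{ij}$ alongside $\hat d_iA_{ji}=\hat d_jA_{ij}$, let indecomposability (connectedness of the diagram) force $c_i/\hat d_i$ to be constant, and then kill $B=\{\cdot,\cdot\}-\xi\langle\cdot,\cdot\rangle$ on $L(A)'$ via root-space orthogonality, $B|_{H'\times H'}=0$, and the reduction $B([e_i,x'],y)=B(e_i,[x',y])$ with $[x',y]\in L_{-\alpha_i}=\mathbb{C}f_i$ (note that this reduction closes in one step, since it always lands on $B(e_i,f_i)=0$; the induction on height is only needed to write $x\in L_\alpha$, $\operatorname{ht}\alpha\geq 2$, as a sum of $[e_i,x']$, which is legitimate because $N$ is generated by the $e_i$). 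The identification $\langle e_i,f_i\rangle=\hat d_i$ is also correct, since $\langle h_i,h\rangle=\hat d_i\alpha_i(h)$ on all of $H$ by the definition of the standard form and symmetrisability.

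There is, however, one genuine omission: you never use the non-degeneracy of $\{\cdot,\cdot\}$, and consequently you have not shown that $\xi\neq 0$, which is part of the statement. As written, your proof only yields proportionality with a possibly vanishing $\xi$. The fix is short: the orthogonality computation you already performed for $B$ applies verbatim to $\{\cdot,\cdot\}$, so $\{L_\alpha,H\}=0$ for $\alpha\neq 0$. If some $c_i=\{e_i,f_i\}$ were zero, then all $c_j=0$ (constant ratio), hence $\xi=0$ and your propagation argument would give $\{L(A)',L(A)'\}=0$; since $L(A)=L(A)'+H$, the element $e_i$ would then pair to zero with all of $L(A)$, contradicting non-degeneracy. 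Equivalently, one may argue directly that non-degeneracy forces the pairing $L_{\alpha_i}\times L_{-\alpha_i}$ under $\{\cdot,\cdot\}$ to be non-degenerate, so $c_i\neq 0$ for every $i$. Adding this remark completes the proof.
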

We now have the following corollaries.
\begin{cor}
	$L(A)'\cap H$ is the subspace of $H$ spanned by $h_1,\dots,h_n$. $\odot$
\end{cor}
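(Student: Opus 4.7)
The plan is to show both inclusions of the set equality $L(A)' \cap H = H'$, where $H' = \operatorname{span}(h_1,\ldots,h_n)$.

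First I would dispose of the easy inclusion $H' \subseteq L(A)' \cap H$. Since $e_i, f_i \in L(A)'$ by definition and $[e_i,f_i] = h_i$, each $h_i$ lies in $L(A)' \cap H$, and $H'$ is contained in this intersection by linearity.

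For the reverse inclusion, my main tool would be the identity $L(A)' = [L(A),L(A)]$ from the previous proposition, combined with the root space decomposition. Pick any $h \in L(A)' \cap H$; since $L(A)' = [L(A),L(A)]$, we can write $h$ as a finite sum of brackets, and projecting onto the zero root space using $[L_\alpha,L_\beta] \subseteq L_{\alpha+\beta}$ shows that the contributions to the $H$-part come only from pieces $[x_\alpha, y_{-\alpha}]$ with $x_\alpha \in L_\alpha$, $y_{-\alpha} \in L_{-\alpha}$, for various roots $\alpha$. By Corollary \ref{cor:prop_inv_bil_form}(iv), each such bracket has the form $\langle x_\alpha,y_{-\alpha}\rangle \, h'_\alpha$, so
\begin{equation*}
L(A)' \cap H \;\subseteq\; \operatorname{span}\{\,h'_\alpha \,:\, \alpha \text{ a root of } L(A)\,\}.
\end{equation*}

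Next I would observe that the map $\alpha \mapsto h'_\alpha$ defined by $\langle h'_\alpha, h\rangle = \alpha(h)$ is linear: for $\alpha = \lambda\beta + \mu\gamma$, both $h'_\alpha$ and $\lambda h'_\beta + \mu h'_\gamma$ satisfy the defining equation, so they agree by non-degeneracy of $\langle,\rangle$ on $H$. Since every root lies in $Q$ and is therefore a $\mathbb{Z}$-linear combination of the fundamental roots $\alpha_1,\dots,\alpha_n$, linearity reduces the problem to showing $h'_{\alpha_i} \in H'$ for each $i$.

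The main (routine) computation is identifying $h'_{\alpha_i}$ explicitly. Using the defining recipe of the standard invariant form on $H$, I would verify that $h'_{\alpha_i} = \hat{d}_i^{-1}\, h_i$: indeed, for $h_j \in \Pi^\vee$ one checks $\langle \hat{d}_i^{-1}h_i, h_j\rangle = \hat{d}_j \hat{B}_{ij} = A_{ji} = \alpha_i(h_j)$, and for $x \in H''$ one has $\langle \hat{d}_i^{-1}h_i, x\rangle = \alpha_i(x)$, matching the defining property of $h'_{\alpha_i}$. Thus each $h'_{\alpha_i} \in H'$, so $\operatorname{span}\{h'_\alpha : \alpha \text{ root}\} \subseteq H'$, completing the reverse inclusion. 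The only real subtlety is keeping track of the symmetrisability hypothesis, which is needed to define $\langle,\rangle$ in the first place and to make sense of the $\hat{d}_i$; everything else reduces to bookkeeping with the root space decomposition and the duality on $H$.
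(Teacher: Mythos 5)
Your argument is correct. The easy inclusion via $h_i=[e_i,f_i]$, the reduction of the reverse inclusion to $\operatorname{span}\{h'_\alpha\,:\,\alpha\in Q\}$ using $L(A)'=[L(A),L(A)]$ together with the $Q$-grading $[L_\alpha,L_\beta]\subseteq L_{\alpha+\beta}$ and Corollary \ref{cor:prop_inv_bil_form}(iv), and the identification $h'_{\alpha_i}=\hat d_i^{-1}h_i$ (which indeed follows from $A=\hat D\hat B$ with $\hat B$ symmetric, giving $\hat d_j\hat B_{ij}=A_{ji}=\alpha_i(h_j)$) are all sound. Two remarks. First, when you project a bracket onto $L_0=H$ you should also account for the $\alpha=0$ contributions $[x_0,y_0]$ with $x_0,y_0\in H$; these vanish because $H$ is abelian, so your conclusion stands, but the step deserves a sentence. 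Second, the paper itself omits the proof of this corollary (it defers to Carter), so there is no in-text argument to compare with; note, however, that your route passes through the standard invariant form, hence needs $A$ symmetrisable --- the standing hypothesis of this subsection, so this is legitimate here --- whereas the statement holds for an arbitrary GCM and admits a form-free proof, e.g.\ by checking that $N^-\oplus\operatorname{span}(h_1,\dots,h_n)\oplus N$ is a subalgebra containing all $e_i,f_i$ (hence containing $L(A)'$) and then intersecting with $H$ via the root space decomposition. Your approach buys a quick derivation from the quoted properties of $\langle\,,\rangle$; the form-free argument buys generality beyond the symmetrisable case.
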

\begin{cor}
	Any non-degenerate symmetric invariant bilinear form on a finite-dimensional simple Lie algebra is a constant multiple of the Killing form. $\odot$
\end{cor}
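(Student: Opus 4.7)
The plan is to deduce this from the immediately preceding proposition, which already handles the harder statement on $L(A)'$ inside an arbitrary indecomposable symmetrisable Kac--Moody algebra. Two observations reduce the corollary to that proposition.

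First, I would observe that a finite-dimensional simple Lie algebra $\mathfrak{g}$ is isomorphic to $L(A)$ where $A$ is a Cartan matrix (the finite-type case of the classification recalled in Subsection~\ref{subsect:GCM_finite_affine}), and $A$ is indecomposable because $\mathfrak{g}$ is simple (a non-trivial block decomposition of $A$ would split $\mathfrak{g}$ as a direct sum of ideals). Finite-type GCMs are symmetrisable, so the hypotheses of the preceding proposition hold for $L(A)$. Second, I would note that in the finite-type case $\det A\neq 0$, so the rank $l$ equals $n$ and $\dim H = 2n-l = n$; hence $H$ is spanned by $h_1,\dots,h_n$. Together with $L(A)=H\oplus\bigoplus_{\alpha\neq 0}L_\alpha$ and the immediately preceding corollary identifying $L(A)'\cap H$ with $\operatorname{span}(h_1,\dots,h_n)=H$, this forces $L(A)'=L(A)$.

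Given these two reductions, the argument is essentially a uniqueness-up-to-scalar argument. Let $\{\,,\,\}$ be any non-degenerate symmetric invariant bilinear form on $\mathfrak{g}=L(A)=L(A)'$. By the preceding proposition there is a non-zero scalar $\xi$ with $\{x,y\}=\xi\langle x,y\rangle$ for all $x,y\in L(A)'$, i.e.\ on all of $\mathfrak{g}$. I would then apply the same proposition to the Killing form $\kappa$: it is symmetric and invariant by standard elementary computation, and non-degenerate on $\mathfrak{g}$ by Cartan's criterion for semisimplicity, so there exists a non-zero $\eta$ with $\kappa(x,y)=\eta\langle x,y\rangle$ on $\mathfrak{g}$. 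Combining the two relations yields $\{x,y\}=(\xi/\eta)\kappa(x,y)$, which is the claim.

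The only step that is not a one-line reduction is the verification that $L(A)=L(A)'$ in the finite-type case, which is where the choice of minimal realisation interacts with $\det A\neq 0$; everything else is a direct citation of the preceding proposition applied twice, once to $\{\,,\,\}$ and once to $\kappa$. I do not expect any serious obstacle: the non-degeneracy of $\kappa$ on a semisimple finite-dimensional Lie algebra is classical and could alternatively be deduced internally from the setup of Subsection~\ref{subsect:standard_inv_bil} by noting that $\kappa$ restricts non-trivially to $H$ (the Cartan matrix being of finite type ensures the root system spans $H^*$), so that $\kappa$ must be the non-zero multiple $\eta\langle\,,\,\rangle$ of the standard form.
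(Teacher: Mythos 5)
Your proposal is correct and follows essentially the intended route: in finite type $\det A\neq 0$ gives $\dim H=n$, so $L(A)'=L(A)=\mathfrak{g}$, and the preceding proposition applied twice (to the given form and to the Killing form, the latter non-degenerate by Cartan's criterion) shows both are non-zero multiples of the standard form $\langle\,,\,\rangle$, hence proportional to each other. This is the same argument the text (following Carter) relies on, so no further comment is needed.
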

\subsection{Kac--Moody algebras of affine type}\label{subsect:aff_Kac--Moody}
Before we discuss the loop construction for untwisted affine Kac--Moody algebras, let us fix some notions which are used in the affine case (cf. \cite{Carter} Section 17.1). So let $A$ be an $n\times n$ GCM of affine type and consider the affine Lie algebra $L=L(A)$.\footnote{We will also say 'affine Cartan matrix' in the following.} Then $n=l+1$ with $l=\operatorname{rank}(A)$ and we shall number the rows and and columns of $A$ by the integers $0,1,\dots,l$. There exists unique vectors $(a_0,a_1,\dots,a_l)$, $(c_0,c_1,\dots,c_l)$ whose coordinates are positive integers with no common factor such that
\begin{equation*}
	A\begin{pmatrix}
		a_0	\\
		a_1	\\
		\vdots	\\
		a_l	
	\end{pmatrix}
	= 
	\begin{pmatrix}
		0	\\
		0	\\
		\vdots	\\
		0	
	\end{pmatrix} \quad\text{ and }\quad (c_0,c_1,\dots,c_l)A=0.\footnotemark
\end{equation*}\footnotetext{ In fact, the vector $(c_0,c_1,\dots,c_l)$ is the same as the vector $(a_0,a_1,\dots,a_l)$ for the transpose $A^t$.}
Let $(H,\Pi,\Pi^\vee)$ be a minimal realisation of $A$. Then $\dim H = 2n-l = l+2$. $\Pi^\vee = \{h_0,h_1,\dots,h_l\}$ is a linearly independent subset of $H$ and $\Pi = \{\alpha_0,\alpha_1,\dots,\alpha_l\}$ is a linearly independent subset of $H^*$. There exists a so-called \textbf{scaling element} $d\in H$ such that
\begin{equation*}
	\alpha_0(d)=1\qquad\alpha_i(d)=0\qquad\text{for }i=1,\dots,l.
\end{equation*}
\begin{prop}$h_0,h_1,\dots,h_l,d$ is a basis of $H$. $\odot$
\end{prop}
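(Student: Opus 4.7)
The plan is to establish the linear independence of $h_0, h_1, \ldots, h_l, d$; since $\dim H = 2n - l = l+2$ equals the number of these vectors, this is enough to conclude that they form a basis of $H$.

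Suppose $\sum_{i=0}^{l} \mu_i h_i + \mu d = 0$ for some scalars $\mu_i, \mu \in \mathbb{C}$. I would apply $\alpha_j$ to both sides for each $j \in \{0, 1, \ldots, l\}$. Using $\alpha_j(h_i) = A_{ij}$ together with $\alpha_0(d) = 1$ and $\alpha_j(d) = 0$ for $j \geq 1$, this gives the linear system
\begin{equation*}
	\sum_{i=0}^{l} \mu_i A_{i0} + \mu = 0, \qquad \sum_{i=0}^{l} \mu_i A_{ij} = 0 \quad (j = 1, \ldots, l).
\end{equation*}

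The equations for $j \geq 1$ say that the row vector $(\mu_0, \ldots, \mu_l)$ lies in the left kernel of the $(l+1) \times l$ submatrix $\tilde{A}$ formed from columns $1, \ldots, l$ of $A$. The key step is to pin down this kernel, and here I would use the affine-type corank $1$ property of $A$. Since $A(a_0,\ldots,a_l)^T = 0$ with \emph{all} $a_i > 0$, any vector of the form $(0, b_1, \ldots, b_l)^T$ in $\ker A$ must be a scalar multiple of $(a_0, \ldots, a_l)^T$, which forces it to be zero. Hence columns $1, \ldots, l$ of $A$ are linearly independent, so $\tilde{A}$ has full column rank $l$, and its left kernel is exactly one-dimensional. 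Since $(c_0, \ldots, c_l) A = 0$ in particular implies $(c_0, \ldots, c_l) \tilde{A} = 0$, we conclude $(\mu_0, \ldots, \mu_l) = \lambda (c_0, \ldots, c_l)$ for some $\lambda \in \mathbb{C}$.

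Plugging this into the $j = 0$ equation and again invoking $(c_0, \ldots, c_l) A = 0$ gives $\mu = -\lambda \sum_i c_i A_{i0} = 0$. The original relation then reduces to $\lambda \sum_i c_i h_i = 0$, and the linear independence of $\Pi^\vee = \{h_0, \ldots, h_l\}$ together with $c_i > 0$ forces $\lambda = 0$, so all $\mu_i$ and $\mu$ vanish. The main obstacle is the rank computation for $\tilde{A}$, which genuinely uses the affine hypothesis: it is the strict positivity of \emph{every} coordinate of $(a_0, \ldots, a_l)$ that guarantees that removing any single column of $A$ still leaves $l$ linearly independent columns; once this is in hand, the rest is a routine evaluation of the $\alpha_j$ on the candidate relation.
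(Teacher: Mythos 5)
Your proof is correct: evaluating the relation against $\alpha_0,\dots,\alpha_l$, showing columns $1,\dots,l$ of $A$ are linearly independent via the one-dimensional kernel spanned by $(a_0,\dots,a_l)^T$ with $a_0>0$, identifying the left kernel of the truncated matrix with $\mathbb{C}(c_0,\dots,c_l)$, and then using $(c_0,\dots,c_l)A=0$ together with the linear independence of $\Pi^\vee$ and $\dim H = l+2$ is a complete argument. The paper itself omits the proof (deferring to Carter), and your route is essentially the standard one given there, so there is nothing to correct.
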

Then we can define an element $\gamma\in H^*$ uniquely determined by
\begin{equation*}
	\gamma(h_0)=1\qquad\gamma(h_i)=0\qquad\text{for } i=1,\dots,l\qquad\gamma(d)=0.
\end{equation*}
\begin{prop}$\alpha_0,\alpha_1,\dots,\alpha_l,\gamma$ is a basis of $H^*$. $\odot$
\end{prop}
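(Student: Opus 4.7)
The plan is to prove this by a dimension count followed by a short linear-algebra argument using the defining property of the null vector $(c_0, c_1, \dots, c_l)$ of $A$.

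First, observe that since $(H, \Pi, \Pi^\vee)$ is a minimal realization, $\dim H = 2n - l = l+2$, so $\dim H^* = l+2$. Since the list $\alpha_0, \alpha_1, \dots, \alpha_l, \gamma$ contains exactly $l+2$ vectors, it suffices to establish linear independence.

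Suppose for contradiction that $\gamma = \sum_{i=0}^{l} \lambda_i \alpha_i$ for some scalars $\lambda_i \in \mathbb{C}$. Evaluating both sides on the scaling element $d$ and using $\alpha_0(d) = 1$, $\alpha_i(d) = 0$ for $i \geq 1$, and $\gamma(d) = 0$, I obtain $\lambda_0 = 0$. Hence $\gamma = \sum_{i=1}^{l} \lambda_i \alpha_i$. Evaluating next on each $h_j$ (using $\alpha_i(h_j) = A_{ji}$ together with $\gamma(h_0) = 1$ and $\gamma(h_j) = 0$ for $j \geq 1$) yields the system
\begin{equation*}
  \sum_{i=1}^{l} \lambda_i A_{0i} = 1, \qquad \sum_{i=1}^{l} \lambda_i A_{ji} = 0 \text{ for } j = 1, \dots, l.
\end{equation*}

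Now I bring in the left null vector of $A$. Multiplying the $j$-th equation by $c_j$ (including $j=0$) and summing over $j$, the right-hand side equals $c_0 \cdot 1 + 0 = c_0$, while the left-hand side equals
\begin{equation*}
  \sum_{i=1}^{l} \lambda_i \sum_{j=0}^{l} c_j A_{ji} = \sum_{i=1}^{l} \lambda_i \cdot 0 = 0,
\end{equation*}
where I used the defining relation $(c_0, c_1, \dots, c_l) A = 0$. This gives $c_0 = 0$, contradicting the fact that $c_0$ is a positive integer. Therefore $\gamma \notin \operatorname{span}(\alpha_0, \dots, \alpha_l)$, linear independence holds, and $\alpha_0, \dots, \alpha_l, \gamma$ is a basis of $H^*$.

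There is no real obstacle here: the only nontrivial input is the existence of the positive left null vector $(c_0, \dots, c_l)$ for $A$, which was already recorded in the preceding paragraph as one of the defining features of the affine case. The proof is essentially dual to the analogous argument showing that $h_0, \dots, h_l, d$ is a basis of $H$: there one uses the right null vector $(a_0, \dots, a_l)$ and the scaling element $d$ to rule out dependence, whereas here one uses the left null vector together with the element $\gamma$ pinned down by $\gamma(h_0) = 1$.
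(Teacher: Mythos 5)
Your proof is correct. The paper itself does not spell out an argument here (the proposition is quoted from Carter with the proof omitted), but your route is the standard one: after the dimension count $\dim H^*=l+2$, linear independence of $\Pi$ reduces everything to showing $\gamma\notin\operatorname{span}(\alpha_0,\dots,\alpha_l)$, and your weighted sum over $j$ with coefficients $c_j$ is precisely evaluation at the canonical central element $c=\sum_{j=0}^l c_j h_j$, on which every $\alpha_i$ vanishes (because $(c_0,\dots,c_l)A=0$) while $\gamma(c)=c_0>0$. One small remark: the preliminary step using $d$ to force $\lambda_0=0$ is harmless but unnecessary, since $\alpha_0(c)=0$ as well, so evaluating a putative relation $\gamma=\sum_{i=0}^l\lambda_i\alpha_i$ directly at $c$ already yields the contradiction $c_0=0$.
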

\begin{prop}
	We have $A = \hat{D}\hat{B}$ where $\hat{D}=\operatorname{diag}(\hat{d}_0,\hat{d}_1,\dots,\hat{d}_l)$, $B$ is symmetric and $\hat{d}_i = a_i/c_i$. $\odot$
\end{prop}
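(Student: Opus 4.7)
The plan is to invoke the earlier corollary in Subsection~\ref{subsect:GCM_finite_affine}, which guarantees that an indecomposable symmetrisable GCM decomposes as $A=\hat{D}\hat{B}$ with $\hat{D}=\operatorname{diag}(\hat{d}_0,\dots,\hat{d}_l)$ positive diagonal and $\hat{B}$ symmetric, the matrix $\hat{D}$ being unique up to an overall positive scalar. Since an indecomposable affine GCM is symmetrisable by the theorem cited in the same subsection, such a decomposition exists. The only remaining content of the proposition is to identify the diagonal entries explicitly as $\hat{d}_i=a_i/c_i$ for a suitable choice of the free scalar.

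First I would rewrite the symmetry condition on $\hat{B}=\hat{D}^{-1}A$ as a matrix identity involving only $A$ and $\hat{D}$. Since $\hat{D}$ is diagonal, $(\hat{D}^{-1})^T=\hat{D}^{-1}$, so $\hat{B}^T=\hat{B}$ is equivalent to $A^T\hat{D}^{-1}=\hat{D}^{-1}A$, or after multiplying by $\hat{D}$ on the left,
\begin{equation*}
	A\hat{D} = \hat{D}A^T.
\end{equation*}
Next I would exploit the two canonical null vectors provided by the affine type. The column vector $a=(a_0,\dots,a_l)^T$ spans $\ker A$ because $Aa=0$ and $\operatorname{corank}A=1$, and dually the relation $(c_0,\dots,c_l)A=0$ says that the column vector $c=(c_0,\dots,c_l)^T$ lies in $\ker A^T$. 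Applying the identity above to $c$ gives
\begin{equation*}
	A(\hat{D}c) = \hat{D}(A^Tc) = 0,
\end{equation*}
so $\hat{D}c\in\ker A=\mathbb{C}\,a$, whence $\hat{D}c=\lambda a$ for some $\lambda\in\mathbb{C}$. Reading off the $i$-th coordinate yields $\hat{d}_ic_i=\lambda a_i$, so $\hat{d}_i=\lambda\, a_i/c_i$.

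Since the $a_i$ and $c_i$ are positive integers, $\lambda$ must be positive in order for $\hat{D}$ to have positive diagonal entries. Using the scalar freedom guaranteed by the earlier corollary, we normalise $\lambda=1$ and obtain $\hat{d}_i=a_i/c_i$, as claimed; the rationality $\hat{B}_{ij}\in\mathbb{Q}$ asserted in that corollary is automatic. There is essentially no genuine obstacle: the whole argument is forced once one recognises that the symmetry of $\hat{B}$ encodes the conjugation relation $A\hat{D}=\hat{D}A^T$ between $A$ and $A^T$, and that this conjugation must send the one-dimensional kernel of $A^T$ (spanned by $c$) to the one-dimensional kernel of $A$ (spanned by $a$). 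The only point worth verifying is that $\ker A$ really is one-dimensional, which is precisely the defining condition $\operatorname{corank}A=1$ of affine type.
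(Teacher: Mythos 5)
Your argument is correct and complete: the symmetry of $\hat{B}=\hat{D}^{-1}A$ is indeed equivalent to $A\hat{D}=\hat{D}A^{T}$, which sends $c$ from the one-dimensional kernel of $A^{T}$ into $\ker A=\mathbb{C}\,a$ and so forces $\hat{d}_i=\lambda\,a_i/c_i$, with $\lambda$ removable by the scalar freedom in the symmetrisation. The paper itself states this proposition without proof (deferring to Carter), and your route -- symmetrisability of an indecomposable affine GCM plus the two null vectors $a$ and $c$ -- is exactly the standard argument given there, so there is nothing to correct.
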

The \textbf{standard invariant form} on $H$ with respect to $\hat{d}_i = a_i/c_i$ is then defined by
\begin{equation*}
	\begin{split}
		&\langle h_i,h_j\rangle = \hat{d}_i\hat{d}_jB_{ij} = a_jc_j^{-1}A_{ij}\qquad\text{for }i,j=0,1,\dots,l\\
		&\langle h_0,d\rangle = \hat{d}_0\alpha_0(d) = a_0\\
		&\langle h_i,d\rangle = 0\qquad\text{for }i=1,\dots,l\\
		&\langle d,d\rangle=0.
	\end{split}
\end{equation*}
It defines a bijection from $H^*$ to $H$ by $\alpha\mapsto h_\alpha'$ where $\alpha(h) = \langle h_\alpha',h\rangle$ for all $h\in H$.
\begin{prop}
	Using this bijection $H^*\to H$, $h_i\in H$ corresponds to $a_ic_i^{-1}\alpha_i\in H^*$ for $i=0,1,\dots,l$ and $d\in H$ corresponds to $a_0\gamma\in H^*$. $\odot$
\end{prop}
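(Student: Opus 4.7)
The bijection $H^*\to H$, $\alpha \mapsto h'_\alpha$, is characterized by the defining property $\langle h'_\alpha, h\rangle = \alpha(h)$ for all $h\in H$. Since the pairing is bilinear and $\{h_0,h_1,\dots,h_l,d\}$ is a basis of $H$ (by the previous proposition), to show that $h_i$ is the image of $a_ic_i^{-1}\alpha_i$ (respectively that $d$ is the image of $a_0\gamma$), I only need to verify the two identities
\begin{equation*}
\langle h_i, h\rangle = a_ic_i^{-1}\alpha_i(h) \quad\text{and}\quad \langle d, h\rangle = a_0\gamma(h)
\end{equation*}
as $h$ ranges over the basis elements $h_0,\dots,h_l,d$. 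So the plan is simply to evaluate both sides on the basis, using the explicit formulas for the standard invariant form just defined.

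The main computational ingredient I will need is a compatibility coming from the symmetrisability $A=\hat{D}\hat{B}$. Since $\hat{B}$ is symmetric, $A_{ij}/\hat{d}_i = \hat{B}_{ij} = \hat{B}_{ji} = A_{ji}/\hat{d}_j$, and using $\hat{d}_k = a_k/c_k$ this rearranges to
\begin{equation*}
a_jc_j^{-1}A_{ij} = a_ic_i^{-1}A_{ji}.
\end{equation*}
This is the key identity that will make Claim 1 work on the basis vectors $h_j$. Indeed, the defining formula gives $\langle h_i,h_j\rangle = a_jc_j^{-1}A_{ij}$, while the right-hand side evaluates to $a_ic_i^{-1}\alpha_i(h_j) = a_ic_i^{-1}A_{ji}$, and these agree by the identity above.

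For the basis element $d$, Claim 1 asks to compare $\langle h_i, d\rangle$ with $a_ic_i^{-1}\alpha_i(d)$. For $i\geq 1$, both vanish by the stipulated $\alpha_i(d)=0$ and $\langle h_i,d\rangle=0$. For $i=0$, the formula gives $\langle h_0,d\rangle = a_0$ while the right-hand side is $a_0c_0^{-1}\alpha_0(d)=a_0c_0^{-1}$; these agree, which is consistent with the identification $\hat{d}_0=a_0/c_0$ implicit in the definition of $\langle h_0,d\rangle=\hat{d}_0\alpha_0(d)$.

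Claim 2 is even more direct: evaluating $\langle d,\cdot\rangle$ against the basis, symmetry of the bilinear form gives $\langle d,h_0\rangle = a_0 = a_0\gamma(h_0)$, $\langle d,h_i\rangle = 0 = a_0\gamma(h_i)$ for $i\geq 1$, and $\langle d,d\rangle = 0 = a_0\gamma(d)$, matching $a_0\gamma$ on the whole basis. Since all checks reduce to direct substitution into the data, I do not anticipate any serious obstacle; the only point requiring care is the symmetrisability identity used to reconcile the two ways of writing $\langle h_i,h_j\rangle$.
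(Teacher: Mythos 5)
The paper itself gives no proof of this proposition --- it is quoted from Carter with the proof omitted --- so the relevant comparison is with the standard argument, and your proposal is exactly that argument: since the form is non-degenerate on $H$, the element representing a given linear functional is unique, so it suffices to check $\langle h_i,\cdot\rangle = a_ic_i^{-1}\alpha_i(\cdot)$ and $\langle d,\cdot\rangle = a_0\gamma(\cdot)$ on the basis $h_0,\dots,h_l,d$. Your key identity $a_jc_j^{-1}A_{ij}=a_ic_i^{-1}A_{ji}$, obtained from $A=\hat{D}\hat{B}$ with $\hat{B}$ symmetric and $\hat{d}_k=a_k/c_k$, together with the realization convention $\alpha_i(h_j)=A_{ji}$, settles the pairings $\langle h_i,h_j\rangle$ correctly, and the evaluations against $d$ and of $\langle d,\cdot\rangle$ are immediate from the listed values of the form.

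The one place where you fudge is the pairing $\langle h_0,d\rangle$. From the general definition of the standard form one gets $\langle h_0,d\rangle=\hat{d}_0\alpha_0(d)=a_0c_0^{-1}$, while your two claims require this number to equal both $a_0c_0^{-1}\alpha_0(d)=a_0c_0^{-1}$ (Claim 1 at $h=d$) and $a_0\gamma(h_0)=a_0$ (Claim 2 at $h=h_0$); saying that $a_0$ and $a_0c_0^{-1}$ ``agree, which is consistent with $\hat{d}_0=a_0/c_0$'' is circular and proves nothing. What is actually needed --- and what the paper's own displayed equality $\langle h_0,d\rangle=\hat{d}_0\alpha_0(d)=a_0$ silently uses --- is the fact that $c_0=1$ for every affine Cartan matrix in the standard labelling of the node $0$ (equivalently $\hat{d}_0=a_0$); this is part of the classification data in Carter/Kac, and in the untwisted case, which is the case the thesis subsequently uses, one even has $a_0=c_0=1$. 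Add one sentence invoking $c_0=1$ (or restrict to the untwisted setting) and your verification is complete; without it, your argument only yields that $d$ corresponds to $a_0c_0^{-1}\gamma$.
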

Moreover, we can use this bijection to transfer the standard bilinear form from $H$ to $H^*$. On $H^*$ it is given by
\begin{equation*}
	\begin{split}
		&\langle \alpha_i,\alpha_j\rangle = a_i^{-1}c_iA_{ij} =\hat{B}_{ij}\qquad\text{for }i,j=0,1,\dots,l\\
		&\langle \alpha_0,\gamma\rangle = a_0^{-1}\\
		&\langle \alpha_i,\gamma\rangle = 0\qquad\text{for }i=1,\dots,l\\
		&\langle \gamma,\gamma\rangle=0.
	\end{split}	
\end{equation*}
In particular, we have that
\begin{equation*}
	A_{ij}=\frac{2\langle\alpha_i,\alpha_j\rangle}{\langle\alpha_i,\alpha_i\rangle} \qquad \text{and}\qquad \frac{2\alpha_i}{\langle\alpha_i,\alpha_i\rangle} \mapsto h_i.\footnotemark
\end{equation*}
\footnotetext{$h_i\eqcolon h_{\alpha_i}$ is called the coroot of $\alpha_i$ and generally $h_\alpha := \frac{2h_\alpha'}{\langle\alpha,\alpha\rangle}$ the coroot of $\alpha\in H^*$.}
At last, we can define an element $c\in H$ by $c=\sum_{i=0}^{l}c_i h_i$ and, by making use of the bijection, the element $\delta$ through $c\mapsto \delta = \sum_{i=0}^{l}a_i \alpha_i$. $\delta$ is called the \textbf{basic imaginary root}.\footnote{ An imaginary root is a root which cannot be mapped onto any $\alpha_i\in \Pi$ by the action of the Weyl group (cf. \cite{Carter} Chapter 16.2-3). The roots $k\delta, k\in\mathbb{Z}$ are all the imaginary roots in the affine case.} The element $c$ has the following properties.
\begin{prop}
	The element $c$ lies in the centre of $L(A)$. Furthermore, the centre consists of all scalar multiples of $c$, i.e. it is one-dimensional. $\odot$
\end{prop}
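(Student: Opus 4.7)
The plan is to split the statement into two parts: first that $c$ is central, and second that every central element is a scalar multiple of $c$. For centrality, I would note that $L(A)$ is generated (as a Lie algebra) by $H$ together with the Chevalley generators $e_0,\dots,e_l,f_0,\dots,f_l$, so since $c\in H$ and $H$ is abelian it suffices to check $[c,e_i]=[c,f_i]=0$ for each $i$. Applying the defining relation $[h,e_i]=\alpha_i(h)e_i$ to $c=\sum_j c_j h_j$ yields $[c,e_i]=\bigl(\sum_j c_j\alpha_i(h_j)\bigr)e_i=\bigl(\sum_j c_jA_{ji}\bigr)e_i$, and the parenthesized sum is the $i$-th entry of the row vector $(c_0,\dots,c_l)A$, which vanishes by definition of the $c_i$. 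The bracket with $f_i$ is symmetric.

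For the second part I would first reduce an arbitrary central element $z$ to the Cartan subalgebra. Decomposing $z=\sum_\alpha z_\alpha$ according to $L(A)=\bigoplus_{\alpha\in Q}L_\alpha$, the condition $[h,z]=0$ for all $h\in H$ gives $\alpha(h)z_\alpha=0$ componentwise. Since for any non-zero $\alpha\in Q$ there exists some $h\in H$ with $\alpha(h)\neq 0$ (the $\alpha_i$ are linearly independent on $H$), this forces $z_\alpha=0$ for $\alpha\neq 0$, so $z\in L_0=H$.

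I would then expand $z=\sum_{i=0}^l\lambda_i h_i+\mu d$ in the basis $\{h_0,\dots,h_l,d\}$ of $H$ provided by the affine-type discussion. The remaining central condition $[z,e_i]=0$ becomes, via $\alpha_i(h_j)=A_{ji}$ and $\alpha_i(d)=\delta_{i,0}$, the linear system $(\lambda_0,\dots,\lambda_l)\,A=(-\mu,0,\dots,0)$. The key move is to pair both sides on the right with the column vector $(a_0,\dots,a_l)^t$, which lies in the kernel of $A$ by the very definition of the $a_i$. The left side collapses to $0$, while the right gives $-\mu a_0$, and since $a_0>0$ we conclude $\mu=0$. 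The remaining row vector $(\lambda_0,\dots,\lambda_l)$ then lies in the left null space of $A$, which is one-dimensional because $\operatorname{corank}A=1$ for an affine GCM, and it is spanned by $(c_0,\dots,c_l)$ by hypothesis. Hence $z$ is a scalar multiple of $c$, and the bracket $[z,f_i]=0$ imposes the same relation (with opposite sign) and is automatically satisfied.

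I do not anticipate a serious obstacle; the one slightly delicate point is the last step, where one is tempted to invert $A$ on the right-hand side of the linear system $(\lambda_0,\dots,\lambda_l)A=(-\mu,0,\dots,0)$, but $A$ is singular in the affine case. The clean way around this is exactly the pairing with the column null vector $(a_0,\dots,a_l)^t$, which uses both pieces of information attached to an affine GCM (the one-dimensional column kernel to eliminate $\mu$, and the one-dimensional row kernel to pin down the $\lambda_i$ up to scalar).
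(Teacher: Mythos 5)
Your proof is correct. Note that the thesis itself does not prove this proposition but defers to Carter (Section 17.1), where the first half is done exactly as you do it: $[c,e_i]=\alpha_i(c)e_i=\bigl(\sum_j c_jA_{ji}\bigr)e_i=0$ by $(c_0,\dots,c_l)A=0$, and likewise for the $f_i$, so $c$ centralizes a generating set. For the second half your route differs slightly from the standard one: after reducing a central element $z$ to $H$ via the root space decomposition (correct, using linear independence of the $\alpha_i$), Carter simply observes that centrality forces $\alpha_i(z)=0$ for all $i=0,\dots,l$, and that $\bigcap_{i}\ker\alpha_i\subset H$ has dimension $\dim H-(l+1)=(l+2)-(l+1)=1$, so it is spanned by the central element $c$; you instead expand $z=\sum_i\lambda_ih_i+\mu d$ and solve the system $(\lambda_0,\dots,\lambda_l)A=(-\mu,0,\dots,0)$, killing $\mu$ by pairing with the column null vector $(a_0,\dots,a_l)^t$ (using $a_0>0$) and then identifying $(\lambda_i)$ with a multiple of $(c_i)$ via the one-dimensional left kernel. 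Both arguments are sound; the dimension count is coordinate-free and a bit shorter, while your computation makes explicit exactly where the two affine-type ingredients (column kernel and row kernel of $A$, corank $1$) enter. No gaps.
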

$c$ is therefore called the \textbf{canonical central element} of $L(A)$.
Let us also write down the following important definition (cf. \cite{Carter} Section 20.1).
\begin{definition}
	The number $h=a_0+a_1+\dots+a_l$ is called the \textbf{Coxeter number} of $L$. The number $h^\vee=c_0+c_1+\dots+c_l$ is called the \textbf{dual Coxeter number} of $L$. $\odot$
\end{definition}
\subsection{The loop construction for untwisted affine Kac--Moody algebras}\label{subsect:loop_constr_untw_Kac--Moody}
We are now in position to explain a construction which describes the affine Kac--Moody algebras in terms of finite-dimensional simple Lie algebras. In general, there is such a construction for all affine Kac--Moody algebras up to twisting with certain outer automorphisms. The full description is given in Chapter 18 of the book \cite{Carter}. However, we will focus on the untwisted case which applies to our considerations (cf. \cite{Carter} Chapter 18.1-2).

Let $A^0 =(A^0_{ij})$, $i,j=1,\dots,l$, be an indecomposable Cartan matrix of finite type. Let $L^0 = L(A^0)$ be the finite-dimensional simple Lie algebra with Cartan matrix $A^0$. We may construct an $(l+1)\times(l+1)$ affine Cartan matrix $A$ from $A^0$ by adding an additional row and column, indexed by $0$ as follows. Let $\theta = \sum_{i=1}^{l}a_i\alpha_i$ be the highest root of $L^0$ and $h_\theta = \sum_{i=1}^{l}c_ih_i$ be the coroot of $\theta$. We define $A$ by
\begin{equation*}
	\begin{split}
		&A_{ij} = A^0_{ij}\qquad\text{if }i,j\in\{1,\dots,l\}\\
		&A_{i0} = -\sum_{j=1}^{l}a_jA^0_{ij}\qquad\text{if }i\in\{1,\dots,l\}\\
		&A_{0j} = -\sum_{i=1}^{l}c_iA^0_{ij}\qquad\text{if }j\in\{1,\dots,l\}\\
		&A_{00} = 2.
	\end{split}
\end{equation*}
Then we have $c_0=a_0 =1$ and $A$ is determined by $A^0$ and the relations
\begin{equation*}
	A\begin{pmatrix}
		a_0	\\
		a_1	\\
		\vdots	\\
		a_l	
	\end{pmatrix}
	= 
	\begin{pmatrix}
		0	\\
		0	\\
		\vdots	\\
		0	
	\end{pmatrix} \quad\text{ and }\quad (c_0,c_1,\dots,c_l)A=0.
\end{equation*}
\begin{prop}
	The type of the affine Cartan matrix $A$ is as follows.
	\begin{equation*}
		\begin{split}
			&\text{Type of }A^0:A_l,B_l,C_l,D_l,E_6,E_7,E_8,F_4,G_2.\\
			&\text{Type of }A\hspace{.4em}:\tilde{A}_l,\tilde{B}_l,\tilde{C}_l,\tilde{D}_l,\tilde{E}_6,\tilde{E}_7,\tilde{E}_8,\tilde{F}_4,\tilde{G}_2.\quad\odot
		\end{split}
	\end{equation*}
\end{prop}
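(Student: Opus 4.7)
The plan is to split the proof into two parts: first, establish that the newly constructed matrix $A$ is a GCM of affine type, and second, identify its specific affine Dynkin diagram by a case-by-case comparison with the classification in Section 15.3 of Carter \cite{Carter}.

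For the first part, I verify the GCM axioms directly. Diagonal entries equal $2$ by construction, and off-diagonal entries with $i, j \in \{1, \ldots, l\}$ are inherited from the Cartan matrix $A^0$. For the new entries, the formulas rewrite as $A_{i0} = -\theta(h_i)$ and $A_{0j} = -\alpha_j(h_\theta)$, using $\alpha_j(h_i) = A^0_{ij}$ together with $\theta = \sum_j a_j \alpha_j$ and $h_\theta = \sum_i c_i h_i$. Since $\theta$ is the highest root of $L^0$, both $\theta(h_i)$ and $\alpha_j(h_\theta)$ are non-negative integers (otherwise $\theta + \alpha_i$ would be a root higher than $\theta$), so $A_{i0}, A_{0j} \leq 0$ as required. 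The compatibility $A_{i0} = 0 \Leftrightarrow A_{0i} = 0$ reduces, via the symmetric invariant form on the finite-dimensional $L^0$, to the single condition $\langle \theta, \alpha_i\rangle = 0$. To establish affine type, I invoke the characterization that $A$ is affine iff $\det A = 0$ and every proper principal minor has positive determinant. The equation $A(a_0, \ldots, a_l)^T = 0$ with all $a_i > 0$ immediately yields $\det A = 0$. For a proper principal minor $A_S$ with $S \subsetneq \{0, \ldots, l\}$: if $0 \notin S$, then $A_S$ is a principal minor of $A^0$, positive by finite type of $A^0$; if $0 \in S$, positivity is transparent after identifying the extended Dynkin diagram in part two and invoking the standard fact that deleting any vertex from an extended untwisted Dynkin diagram leaves a disjoint union of finite-type diagrams.

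For the second part, the attachments of node $0$ to the existing finite diagram are entirely encoded in $-\theta(h_i)$ and $-\alpha_i(h_\theta)$; that is, the new node attaches to the Dynkin diagram of $A^0$ exactly as $-\theta$ would if treated as a simple root. For each of the nine finite types $X_l \in \{A_l, B_l, C_l, D_l, E_6, E_7, E_8, F_4, G_2\}$, I would recall the highest root from the standard tables (see Carter Section 13.1), compute the new edges and their orientations, draw the resulting extended Dynkin diagram, and compare it to the list of untwisted affine Dynkin diagrams $\tilde{X}_l$ in Section 15.3.

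The hard part is the bookkeeping in the non-simply-laced types $B_l, C_l, F_4, G_2$, where depending on whether $\theta$ is long or short the bonds between node $0$ and the existing diagram can be asymmetric (double or triple), and one must carefully track the orientation so that the resulting picture truly matches the expected $\tilde{X}_l$. In the simply-laced cases $A_l, D_l, E_l$ all bonds are single and the identification is immediate from well-known pictures of the extended diagrams. In addition, the case $A_l$ requires a small observation: since $\theta$ couples non-trivially to both $\alpha_1$ and $\alpha_l$, the added node glues the two ends of the linear diagram to produce a cycle, giving $\tilde{A}_l$ as expected.
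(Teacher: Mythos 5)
The paper does not actually prove this proposition: it is one of the $\odot$-marked facts whose proof is omitted and deferred to Carter (Chapter 18), where the argument is exactly the route you outline — adjoin a node playing the role of $-\theta$ and identify the resulting extended diagram, type by type, against the affine list of Section 15.3. So your strategy matches the reference, and the preliminary steps you do carry out are correct: $A_{i0}=-\theta(h_i)$ and $A_{0j}=-\alpha_j(h_\theta)$ are non-positive integers because the highest root is dominant (equivalently, $\theta+\alpha_i$ is never a root), and $A_{i0}=0\Leftrightarrow A_{0i}=0$ since both are equivalent to $\langle\theta,\alpha_i\rangle=0$.

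The weakness is that the substantive content of the proposition is precisely the nine-case identification, and your proposal announces it ("I would recall the highest root, compute the new edges, compare") rather than performs it; the bookkeeping you yourself flag in types $B_l$, $C_l$, $F_4$, $G_2$ (whether $\theta$ is long or short, hence the multiplicity and orientation of the new bond) and the degenerate case $A_1$ (where $A_{01}=A_{10}=-2$, a double bond rather than a cycle) is exactly where a slip could occur, so as written this is a plan, not a proof. Two smaller omissions: the determinant criterion you invoke applies to indecomposable GCMs, so you should note that node $0$ is joined to the rest of the diagram (because $\theta$ is not orthogonal to every simple root); and the relation $A(a_0,\dots,a_l)^T=0$ deserves a line of verification — the zeroth row rests on $\theta(h_\theta)=2$. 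Finally, there is a structural redundancy: once the diagram of $A$ is matched with an entry of the affine list, affine type already follows from the classification theorem, so the principal-minor argument adds nothing; conversely, if you want that argument to stand on its own, the "standard fact" that deleting a vertex of an extended diagram leaves finite-type pieces is itself a case-by-case inspection of the same list and so cannot be borrowed before part two is actually done.
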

\begin{definition}
	An affine Cartan matrix $A$ is called of \textbf{untwisted type} if it is one of
	\begin{equation*}
		\tilde{A}_l,\tilde{B}_l,\tilde{C}_l,\tilde{D}_l,\tilde{E}_6,\tilde{E}_7,\tilde{E}_8,\tilde{F}_4,\tilde{G}_2.\quad\odot
	\end{equation*}
\end{definition}
Since any affine Cartan matrix $A$ of untwisted type can be obtained as above by the addition of an extra row and column from a Cartan matrix $A^0$ of finite type, the question whether the affine Kac--Moody algebra $L(A)$ can be constructed in some way from the finite-dimensional simple Lie algebra $L^0 = L(A^0)$ may be natural. Indeed, there is a method of doing this which will be described in the following.

Let $\mathbb{C}[t,t^{-1}]$ be the ring of Laurent polynomials $\sum_{i\in\mathbb{Z}}\zeta_it^i$ for $\zeta_i\in\mathbb{C}$ with finitely many $\zeta_i\neq 0$. Let
\begin{equation*}
	\mathfrak{L}(L^0) = \mathbb{C}[t,t^{-1}]\otimes_\mathbb{C}L^0.
\end{equation*}
Then $\mathfrak{L}(L^0)$ can be provided with the structure of a Lie algebra in a unique way satisfying
\begin{equation*}
	[p\otimes x,q\otimes y]= pq\otimes[x,y]\qquad\text{for }p,q\in\mathbb{C}[t,t^{-1}],\,x,y\in L^0.
\end{equation*}
Then $\mathfrak{L}(L^0)$ is called the \textbf{loop algebra} of $L^0$.
We construct a one-dimensional central extension using the following Lemma.
\begin{lem}[the 2-cocycle condition]
	Let $\mathfrak{g}$ be a Lie algebra over $\mathbb{C}$ and $\tilde{\mathfrak{g}}$ be the set of elements $x+\lambda c$ with $x\in \mathfrak{g}$ and $\lambda \in \mathbb{C}$. Let $\kappa:\mathfrak{g}\times \mathfrak{g} \to \mathbb{C}$ be a bilinear map satisfying
	\begin{equation*}
		\begin{split}
			&\kappa(x,y) = -\kappa(y,x)\qquad\text{for }x,y\in \mathfrak{g}\\
			&\kappa([x,y],z)+\kappa([y,z],x)+\kappa([z,x],y) = 0\qquad\text{for }x,y,z\in \mathfrak{g}.
		\end{split}
	\end{equation*}
	($\kappa$ is called a 2-cocycle on $\mathfrak{g}$.) Then the Lie multiplication
	\begin{equation*}
		[x+\lambda c,y+\mu c] = [x,y]+\kappa(x,y)c
	\end{equation*}
	makes $\tilde{\mathfrak{g}}$ into a Lie algebra. $\odot$
\end{lem}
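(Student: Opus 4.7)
The plan is to verify that the given bracket equips $\tilde{\mathfrak{g}} = \mathfrak{g} \oplus \mathbb{C}c$ with the three defining properties of a Lie algebra. Bilinearity is immediate from the bilinearity of $[\cdot,\cdot]$ on $\mathfrak{g}$ and of $\kappa$, together with the fact that the coefficients $\lambda,\mu$ of the central summands $\lambda c$, $\mu c$ never appear on the right-hand side of the formula (so no quadratic dependence on them is introduced).

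Second, I would check that $[z,z]=0$ for every $z = x + \lambda c \in \tilde{\mathfrak{g}}$. Unpacking the definition yields $[x+\lambda c, x+\lambda c] = [x,x] + \kappa(x,x)\,c$. The first term vanishes because $\mathfrak{g}$ is a Lie algebra, and the second vanishes because the antisymmetry of $\kappa$ forces $\kappa(x,x) = -\kappa(x,x)$, hence $\kappa(x,x) = 0$. (Note that char $\mathbb{C} = 0$ is used here; antisymmetry is the right hypothesis, not the stronger alternating condition.)

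Third, and this is the heart of the statement, I would verify the Jacobi identity. For elements $z_i = x_i + \lambda_i c$, $i=1,2,3$, a direct computation gives
\begin{equation*}
[[z_1,z_2], z_3] = \bigl[[x_1,x_2] + \kappa(x_1,x_2)\,c,\; x_3 + \lambda_3 c\bigr] = [[x_1,x_2], x_3] + \kappa([x_1,x_2], x_3)\,c,
\end{equation*}
since by definition every bracket involving $c$ produces zero in both the $\mathfrak{g}$- and $c$-components, so $c$ is central in $\tilde{\mathfrak{g}}$ and the $\lambda_i$'s drop out entirely. Summing the three cyclic permutations yields
\begin{equation*}
\sum_{\text{cyc}} [[x_i, x_j], x_k] + \Bigl(\sum_{\text{cyc}} \kappa([x_i, x_j], x_k)\Bigr) c.
\end{equation*}
The first sum vanishes by the Jacobi identity in $\mathfrak{g}$, and the second is exactly the 2-cocycle condition imposed on $\kappa$, so the whole expression is zero.

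There is no real obstacle; the only point requiring care is the bookkeeping in the Jacobi step, namely observing that because $c$ is central the scalars $\lambda_i$ play no role and the Jacobi identity in $\tilde{\mathfrak{g}}$ splits cleanly into the Jacobi identity in $\mathfrak{g}$ plus the 2-cocycle identity in $\kappa$. This also makes transparent \emph{why} the 2-cocycle condition is stated in precisely that form: it is the minimal condition that kills the $c$-part of the Jacobiator for the chosen bracket.
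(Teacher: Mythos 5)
Your verification is correct and complete: bilinearity, the alternating property via antisymmetry of $\kappa$ (valid since $\operatorname{char}\mathbb{C}=0$), and the Jacobi identity splitting into the Jacobi identity of $\mathfrak{g}$ plus the 2-cocycle condition because $c$ is central. The paper omits the proof and defers to Carter's book, where the argument is exactly this direct computation, so your approach coincides with the intended one.
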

Note that $\tilde{\mathfrak{g}}$ is a one-dimensional central extension of $\mathfrak{g}$ in the sense that there is a surjective homomorphism $\theta:\tilde{\mathfrak{g}}\to \mathfrak{g}$ given by $\theta(x+\lambda c)=x$, such that $\dim(\ker \theta) =1$ and $\ker \theta$ lies in the centre of $\tilde{\mathfrak{g}}$.
We apply this idea to construct a one-dimensional central extension of $\mathfrak{L}(L^0)$ by taking a 2-cocycle on $\mathfrak{L}(L^0)$. Let $\langle,\rangle$ be the invariant bilinear form on $L^0$ satisfying $\langle h_\theta,h_\theta\rangle = 2$. Since an invariant bilinear form is determined up to a scalar multiple on $L^0$, this condition determines it uniquely. In fact, this form on $L^0$ is the restriction to $L^0$ of the standard invariant bilinear form on $L = L(A)$ as defined in Subsection \ref{subsect:aff_Kac--Moody}, since for the standard form we have $\langle\theta,\theta\rangle =2$, hence
\begin{equation*}
	\langle h_\theta,h_\theta\rangle = \langle \frac{2\theta}{\langle\theta,\theta\rangle},\frac{2\theta}{\langle\theta,\theta\rangle}\rangle= \frac{4}{\langle\theta,\theta\rangle} =2.\footnotemark
\end{equation*}
\footnotetext{This is explained in Proposition 17.18 in the book \cite{Carter}.}We define a bilinear form
\begin{equation*}
	\langle ,\rangle_t:\mathfrak{L}(L^0)\times\mathfrak{L}(L^0)\to\mathbb{C}[t,t^{-1}]
\end{equation*}
by $\langle p\otimes x,q\otimes y\rangle_t = pq\langle x,y\rangle$ and the residue function
\begin{equation*}
	\operatorname{Res} : \mathbb{C}[t,t^{-1}]\to \mathbb{C}
\end{equation*}
by $\operatorname{Res}(\sum\zeta_it^i) =\zeta_{-1}$. Then we construct a 2-cocycle as follows.
\begin{lem}
	The function $\kappa:\mathfrak{L}(L^0)\times\mathfrak{L}(L^0)\to \mathbb{C}$ defined by
	\begin{equation*}
		\kappa(a,b)=\operatorname{Res}\langle \frac{\operatorname{d}a}{\operatorname{d}t},b\rangle_t
	\end{equation*}
	is a 2-cocycle on $\mathfrak{L}(L^0)$. $\odot$
\end{lem}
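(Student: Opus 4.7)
The plan is to verify the two defining axioms of a 2-cocycle, namely antisymmetry and the cocycle identity, reducing everything to the two elementary facts that $\operatorname{Res}(f')=0$ for every Laurent polynomial $f$ and that the form $\langle,\rangle$ on $L^0$ is symmetric and invariant. Bilinearity of $\kappa$ follows immediately from bilinearity of $\langle,\rangle_t$ combined with linearity of both $d/dt$ and $\operatorname{Res}$, so I would mention this in one line and move on. By bilinearity it is enough to check the two axioms on pure tensors $a=p\otimes x$, $b=q\otimes y$, $c=r\otimes z$, where $\kappa(a,b)=\operatorname{Res}(p'q)\langle x,y\rangle$.

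For antisymmetry I would compute
\begin{equation*}
\kappa(a,b)+\kappa(b,a)=\bigl(\operatorname{Res}(p'q)+\operatorname{Res}(q'p)\bigr)\langle x,y\rangle=\operatorname{Res}\bigl((pq)'\bigr)\langle x,y\rangle=0,
\end{equation*}
using symmetry of $\langle,\rangle$ on $L^0$ together with the Leibniz rule in $\mathbb{C}[t,t^{-1}]$; the vanishing of the residue of a derivative is immediate since $d/dt$ sends $t^0$ to $0$ and shifts every other $t^i$ away from $t^{-1}$.

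The cocycle identity is the only step that requires genuine work, and I expect it to be the main obstacle in the sense that one has to book-keep all six terms carefully. Writing $[a,b]=pq\otimes[x,y]$ and similarly for the cyclic permutations, the cyclic sum becomes
\begin{equation*}
\operatorname{Res}\!\bigl((pq)'r\bigr)\langle[x,y],z\rangle+\operatorname{Res}\!\bigl((qr)'p\bigr)\langle[y,z],x\rangle+\operatorname{Res}\!\bigl((rp)'q\bigr)\langle[z,x],y\rangle.
\end{equation*}
Invariance and symmetry of $\langle,\rangle$ on $L^0$ give $\langle[x,y],z\rangle=\langle[y,z],x\rangle=\langle[z,x],y\rangle$, so the Lie-algebra factor can be pulled out as a common coefficient. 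It then remains to show that the polynomial sum vanishes, which I would do by expanding each derivative via the Leibniz rule and using commutativity of $\mathbb{C}[t,t^{-1}]$: the six resulting monomials pair up to give $2(p'qr+pq'r+pqr')=2(pqr)'$, whose residue is zero by the same argument used for antisymmetry. Combining these two observations finishes the proof.
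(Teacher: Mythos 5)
Your proof is correct. Note that the thesis itself does not prove this lemma: it is one of the statements marked $\odot$, whose proofs are deferred to Carter's book, so there is no in-paper argument to compare against. Your verification is essentially the standard one found there; the only cosmetic difference is that Carter checks the two axioms on monomials $t^i\otimes x$, $t^j\otimes y$, $t^k\otimes z$, where your observation that $\operatorname{Res}(f')=0$ appears as the identities $(i+j)\delta_{i+j,0}=0$ and $2(i+j+k)\delta_{i+j+k,0}=0$, while you work with general Laurent polynomials and pull out the common factor $\langle[x,y],z\rangle$ via invariance and symmetry before reducing to $\operatorname{Res}\bigl(2(pqr)'\bigr)=0$ — the same computation in slightly different clothing.
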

We construct the one-dimensional central extension $\tilde{\mathfrak{L}}(L^0)$ of $\mathfrak{L}(L^0)$ given by
\begin{equation*}
	\tilde{\mathfrak{L}}(L^0) = \mathfrak{L}(L^0)\oplus\mathbb{C}c
\end{equation*}
whose Lie multiplication is given by
\begin{equation*}
	[a+\lambda c,b+\mu c]=[a,b]_0+\kappa(a,b)c
\end{equation*}
where $a,b\in\mathfrak{L}(L^0)$ and $[a,b]_0$ is the Lie product of $a$ with $b$ in $\mathfrak{L}(L^0)$.
Let us also adjoin to $\tilde{\mathfrak{L}}(L^0)$ an element $d$ which acts on $\tilde{\mathfrak{L}}(L^0)$ as a derivation.
\begin{lem}
	The map $\Delta:\tilde{\mathfrak{L}}(L^0)\to\tilde{\mathfrak{L}}(L^0)$ given by $\Delta(a+\lambda c)=t\frac{\operatorname{d}a}{\operatorname{d}t}$ for $a\in\mathfrak{L}(L^0), \lambda\in\mathbb{C}$, is a derivation. $\odot$
\end{lem}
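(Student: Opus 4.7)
The plan is to verify the two defining properties of a derivation: $\mathbb{C}$-linearity and the Leibniz rule with respect to the Lie bracket on $\tilde{\mathfrak{L}}(L^0)$. Linearity is immediate, since $t\tfrac{d}{dt}$ is $\mathbb{C}$-linear on $\mathbb{C}[t,t^{-1}]$, hence on $\mathfrak{L}(L^0)=\mathbb{C}[t,t^{-1}]\otimes_{\mathbb{C}}L^0$, and $\Delta$ kills the central summand $\mathbb{C}c$ by definition. The substance is the Leibniz rule.

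Because $c$ is central and $\Delta$ annihilates $\mathbb{C}c$, both sides of the Leibniz identity for arbitrary $a+\lambda c,\,b+\mu c\in \tilde{\mathfrak{L}}(L^0)$ reduce to the same expression in terms of $a,b\in\mathfrak{L}(L^0)$. So it suffices to verify
\begin{equation*}
\Delta\bigl([a,b]_0+\kappa(a,b)c\bigr)=\bigl[\Delta(a),b\bigr]+\bigl[a,\Delta(b)\bigr]
\end{equation*}
for $a,b\in\mathfrak{L}(L^0)$. The left-hand side is simply $t\tfrac{d}{dt}[a,b]_0$, since $\Delta$ annihilates the $c$-term. Expanding the right-hand side using the definition of the bracket in $\tilde{\mathfrak{L}}(L^0)$ gives
\begin{equation*}
\bigl[t\tfrac{da}{dt},b\bigr]_0+\bigl[a,t\tfrac{db}{dt}\bigr]_0+\Bigl(\kappa\bigl(t\tfrac{da}{dt},b\bigr)+\kappa\bigl(a,t\tfrac{db}{dt}\bigr)\Bigr)c.
\end{equation*}
Matching the two sides therefore splits into two independent claims: the equality of the $\mathfrak{L}(L^0)$-parts, and the vanishing of the coefficient of $c$.

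For the $\mathfrak{L}(L^0)$-part, by bilinearity I reduce to pure tensors $a=p\otimes x$, $b=q\otimes y$. The identity becomes $t(pq)'\otimes[x,y]=(tp'q+pt q')\otimes[x,y]$, which is exactly the ordinary Leibniz rule for $d/dt$ in $\mathbb{C}[t,t^{-1}]$. For the $c$-coefficient, again reducing to pure tensors, I must show
\begin{equation*}
\operatorname{Res}\bigl((tp')'q\bigr)\langle x,y\rangle+\operatorname{Res}\bigl(p'\cdot tq'\bigr)\langle x,y\rangle=0,
\end{equation*}
and an easy rearrangement using $(tp')'=p'+tp''$ collapses the bracketed sum to $\operatorname{Res}\bigl((tp'q)'\bigr)\langle x,y\rangle$. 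The main point — and really the only non-bookkeeping step — is now the observation that the residue of any total derivative vanishes: if $f=\sum\zeta_i t^i\in\mathbb{C}[t,t^{-1}]$, then $f'=\sum i\zeta_i t^{i-1}$ has coefficient $0$ in front of $t^{-1}$, so $\operatorname{Res}(f')=0$. This kills the $c$-coefficient and completes the verification.

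The one step that requires real thought is the cancellation of the cocycle contributions; everything else is formal. The cancellation is essentially the statement that $\kappa$ is $d$-invariant under $t\tfrac{d}{dt}$, and the clean way to see it is to repackage the two terms as the residue of a single exact form and then invoke $\operatorname{Res}\circ\tfrac{d}{dt}=0$. No further input beyond the bilinearity of $\kappa$ and the explicit description of the bracket on $\tilde{\mathfrak{L}}(L^0)$ is needed.
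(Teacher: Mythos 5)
Your proof is correct, and it is the standard argument: the paper itself omits the proof (deferring to Carter, Chapter 18), where the same direct verification appears. The two points you isolate are exactly the right ones — the ordinary Leibniz rule for $t\frac{d}{dt}$ on the loop-algebra part, and the cancellation $\kappa\bigl(t\frac{da}{dt},b\bigr)+\kappa\bigl(a,t\frac{db}{dt}\bigr)=0$, which your computation correctly reduces (on pure tensors) to $\operatorname{Res}\bigl((tp'q)'\bigr)\langle x,y\rangle=0$, i.e.\ to the vanishing of the residue of a total derivative.
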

Now, we define $\hat{\mathfrak{L}}(L^0)$ by
\begin{equation*}
	\hat{\mathfrak{L}}(L^0)=\tilde{\mathfrak{L}}(L^0)\oplus\mathbb{C}d
\end{equation*}
and provide it with the structure of a Lie algebra by defining
\begin{equation*}
	[a+\lambda d,b+\mu d]=[a,b]+\lambda\Delta(b)-\mu\Delta(a).
\end{equation*}
In particular we have
\begin{equation*}
	\begin{split}
		[(t^i\otimes x&)+\lambda  c+\mu d,(t^j\otimes y)+\lambda'c+\mu'd]\\
		=&(t^{i+j}\otimes [x,y])+\mu j(t^j\otimes y)-\mu'i(t^i\otimes x)+\delta_{i,-j}i\langle x,y\rangle c
	\end{split}	
\end{equation*}
for $x,y\in L^0$, $\lambda,\mu,\lambda',\mu'\in\mathbb{C}$.

The claim is now that $\hat{\mathfrak{L}}(L^0)$ is isomorphic to the affine Kac--Moody algebra $L(A)$. So $L(A)$ can be constructed from $L^0=L(A^0)$ by first considering the loop algebra $\mathfrak{L}(L^0)$, then constructing a one-dimensional central extension and finally adjoining an element $d$ which acts as a derivation. We can now write down the following Theorem.
\begin{thm}[the loop construction]
	Let $L^0 = L(A^0)$ be a finite-dimensional simple Lie algebra and let $A$ be the corresponding untwisted affine Cartan matrix obtained from $A^0$ as explained above. Then $L(A)$ is isomorphic to $\hat{\mathfrak{L}}(L^0)$. $\odot$
\end{thm}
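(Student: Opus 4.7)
The plan is to verify the hypotheses of Proposition \ref{prop:isoto_g(A)} for the Lie algebra $\hat{\mathfrak{L}}(L^0)$ with a carefully chosen Cartan-like subalgebra and a set of Chevalley-like generators that incorporate the highest root of $L^0$. First I would set up the data: let $H^0\subset L^0$ be the Cartan subalgebra with Chevalley generators $e_i^0,f_i^0,h_i^0$ for $i=1,\dots,l$, fix root vectors $e_\theta\in L^0_\theta$ and $f_\theta\in L^0_{-\theta}$ normalised by $\langle e_\theta,f_\theta\rangle=1$, and take
\begin{equation*}
H=(1\otimes H^0)\oplus\mathbb{C}c\oplus\mathbb{C}d,\qquad e_i=1\otimes e_i^0,\;f_i=1\otimes f_i^0,\;h_i=1\otimes h_i^0\;\;(i\geq 1),
\end{equation*}
together with $e_0=t\otimes f_\theta$, $f_0=t^{-1}\otimes e_\theta$, $h_0=c-h_\theta$. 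Then $H$ is abelian of dimension $l+2=2n-l$, matching that of a minimal realisation. Define $\Pi=\{\alpha_0,\dots,\alpha_l\}\subset H^*$ by extending each simple root $\alpha_i^0$ of $L^0$ by zero on $\mathbb{C}c\oplus\mathbb{C}d$ for $i\geq 1$, and letting $\alpha_0|_{1\otimes H^0}=-\theta$, $\alpha_0(c)=0$, $\alpha_0(d)=1$.

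Next, I would verify the realisation conditions and the Chevalley-like relations by explicit computation in $\hat{\mathfrak{L}}(L^0)$. Linear independence of $\Pi$ and $\Pi^\vee$ is visible from the $d$- and $c$-coordinates, and the equalities $\alpha_j(h_i)=A_{ij}$ follow directly from the construction of $A$ from $A^0$ given in this subsection. For the Chevalley relations, $[e_i,f_j]=\delta_{ij}h_i$ for $i,j\geq 1$ is inherited from $L^0$. The key calculation is
\begin{equation*}
[e_0,f_0]=[t\otimes f_\theta,\,t^{-1}\otimes e_\theta]=1\otimes[f_\theta,e_\theta]+\langle f_\theta,e_\theta\rangle c=-h_\theta+c=h_0,
\end{equation*}
using Corollary \ref{cor:prop_inv_bil_form}(iv). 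The orthogonalities $[e_0,f_i]=t\otimes[f_\theta,f_i^0]=0$ and $[e_i,f_0]=t^{-1}\otimes[e_i^0,e_\theta]=0$ for $i\geq 1$ vanish because $\pm(\theta+\alpha_i^0)$ is not a root of $L^0$ ($\theta$ is highest). The $H$-action relations $[x,e_i]=\alpha_i(x)e_i$ and $[x,f_i]=-\alpha_i(x)f_i$ follow directly from the explicit bracket formula, noting $[d,t^k\otimes x]=k(t^k\otimes x)$ and $[c,\cdot]=0$. Generation of $\hat{\mathfrak{L}}(L^0)$ from $H$ and the $e_i,f_i$ is then easy: bracketing $e_0=t\otimes f_\theta$ with $1\otimes L^0$ yields $t\otimes I$ where $I\triangleleft L^0$ is the ideal generated by $f_\theta$, and by simplicity $I=L^0$; symmetrically $t^{-1}\otimes L^0$ is produced, and induction via $[t\otimes x,t^k\otimes y]=t^{k+1}\otimes[x,y]$ yields every $t^k\otimes L^0$, while $c$ and $d$ already lie in $H$.

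The hard part will be showing that $\hat{\mathfrak{L}}(L^0)$ admits no nonzero ideal $J$ with $J\cap H=0$. The key is to use the $\operatorname{ad}H$-weight decomposition: $H$ is exactly the zero-weight space, and each nonzero weight space is either one-dimensional of the form $t^k\otimes L^0_\beta$ for a root $\beta$ of $L^0$, or equal to $t^k\otimes H^0$ for some $k\neq 0$. Since $J$ is $\operatorname{ad}H$-stable and misses the zero weight, it is a direct sum of such pieces. If $J$ contains a nonzero $t^k\otimes x_\beta$, then bracketing with $t^{-k}\otimes x_{-\beta}$ produces, via the explicit formula for the central extension,
\begin{equation*}
-\langle x_\beta,x_{-\beta}\rangle(h'_\beta+kc)\in J\cap H,
\end{equation*}
which is nonzero by non-degeneracy of the pairing $L^0_\beta\times L^0_{-\beta}\to\mathbb{C}$ (choosing $x_{-\beta}$ appropriately) together with $(h'_\beta,k)\neq(0,0)$, contradicting $J\cap H=0$. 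If instead $J$ contains a nonzero element $t^k\otimes x\in t^k\otimes H^0$ with $k\neq 0$, pick $i$ with $\alpha_i^0(x)\neq 0$ and compute $[e_i,t^k\otimes x]=-\alpha_i^0(x)\,t^k\otimes e_i^0\in J$, reducing to the first case. Thus $J=0$, and Proposition \ref{prop:isoto_g(A)} yields $\hat{\mathfrak{L}}(L^0)\cong L(A)$.
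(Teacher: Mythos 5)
Your proposal is correct and follows essentially the same route as the paper: choose the Chevalley-type generators $e_i,f_i,h_i$ with $e_0,f_0$ built from root vectors in $L^0_{\mp\theta}$ and $h_0=c-h_\theta$, check that $(H,\Pi,\Pi^\vee)$ realises $A$ together with the defining relations and generation, and then invoke Proposition \ref{prop:isoto_g(A)} after excluding ideals $J$ with $J\cap H=0$ via the weight-space decomposition $t^k\otimes L^0_\beta=\mathfrak{g}_{\beta+k\delta}$. You merely supply in full the generation and ideal-exclusion details that the paper defers to Carter (and the sign in your bracket $[t^{k}\otimes x_\beta,t^{-k}\otimes x_{-\beta}]$ is immaterial), so there is nothing to correct.
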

The idea of the proof is to define elements $e_0,e_1,\dots,e_l;\,f_0,f_1,\dots,f_l;\,h_0,h_1,\dots,h_l$ in $\hat{\mathfrak{L}}(L^0)$ and use Proposition \ref{prop:isoto_g(A)} to show that it is isomorphic to $L(A)$. Instead of writing down the entire proof which can be found in Section 18.2 of the book \cite{Carter}, let us make a few comments on the isomorphism.

So let $E_1,\dots,E_l\,;\;F_1,\dots,F_l\,;\;H_1,\dots,H_l$ be the corresponding (Chevalley) generators of $L^0$.\footnote{ We use capital letters here to not confuse them with the Chevalley generators of $L(A)$.} We define
\begin{equation*}
	e_i\coloneq1\otimes E_i,\quad f_i\coloneq1\otimes F_i,\quad h_i\coloneq 1\otimes H_i
\end{equation*}
for $i=1,\dots,l$ such that $[e_i,f_i] = h_i$ for all $i$. To define $e_0,f_0,h_0\in \hat{\mathfrak{L}}(L^0)$ we consider the root spaces $L^0_\theta$ and $L^0_{-\theta}$ where $\theta$ is the highest root of $L^0$. These are both one-dimensional and the restriction of the standard invariant bilinear form of $L(A)$ to $L^0_\theta\times L^0_{-\theta}$ (as above) is non-degenerate. Let $\omega^0$ be the automorphism of $L^0$ satisfying $\omega^0(E_i)=-F_i$ and $\omega^0(F_i)=-E_i$. Then $\omega^0(L^0_\theta)=L^0_{-\theta}$ and we can choose elements $F_0\in L^0_\theta$ and $E_0\in L^0_{-\theta}$ such that $\omega^0(F_0)=-E_0$ and $\langle F_0,E_0\rangle = 1$. Now, we define the elements $e_0\coloneq t\otimes E_0$ and $f_0 \coloneq t^{-1}\otimes F_0$. Let $H^0$ be the subspace of $L^0$ spanned by $h_1,\dots,h_l$ and
\begin{equation*}
	H=(1\otimes H^0)\oplus\mathbb{C}c\oplus\mathbb{C}d.
\end{equation*}
Finally, we define $h_0\in H$ by
\begin{equation*}
	h_0 \coloneq (1\otimes(-H_\theta))+c.
\end{equation*}
We have
\begin{equation*}
	[e_0,f_0] = [t\otimes E_0,t^{-1}\otimes F_0]=(1\otimes [E_0,F_0])+\langle E_0, F_0\rangle c.
\end{equation*}
and
\begin{equation*}
	[E_0,F_0]=\langle E_0,F_0\rangle H'_{-\theta}= H_{-\theta}=-H_\theta
\end{equation*}
using Corollary \ref{cor:prop_inv_bil_form} and $\langle \theta,\theta\rangle =2$. Therefore
\begin{equation*}
	[e_0,f_0]=(1\otimes(-H_\theta))+c = h_0.
\end{equation*}
We also extend $\alpha_i\in (H^0)^*$, $i=1,\dots,l,$ to $H^*$ by demanding  $\alpha_i(c)=\alpha_i(d)=0$. Similarly we extend $\theta$ to $H^*$ by demanding $\theta(c)=\theta(d)=0$. Now let $\delta\in H^*$ be the element defined by
\begin{equation*}
	\delta(h_i)=0\quad\text{for }i=0,1,\dots,l,\quad \delta(d)=1.
\end{equation*}
Then we can define $\alpha_0\in H^*$ by $\alpha_0=-\theta+\delta$. The isomorphism is now obtained by checking that $(H,\Pi,\Pi^\vee)$ is indeed a realization of $A$, $\hat{\mathfrak{L}}(L^0)$ is generated by $H$, $e_0,e_1,\dots,e_l$, $f_0,f_1,\dots,f_l$, and $\hat{\mathfrak{L}}(L^0)$ has no non zero ideal $J$ with $J\cap H=0$. The last assertion is easily proven by checking that
\begin{equation*}
	\mathfrak{g}\coloneq\hat{\mathfrak{L}}(L^0)= H\oplus \sum_{(i,\alpha)\neq(0,0)}(t^i\otimes(L^0)_\alpha)
\end{equation*}
summed over $i\in\mathbb{Z}$ and $\alpha\in(H^0)^*$ is the root space decomposition of $\mathfrak{g}$ with respect to $H$ such that $H=\mathfrak{g}_0$ and $t^i\otimes(L^0)_\alpha = \mathfrak{g}_{\alpha+i\delta}$. Therefore, Proposition \ref{prop:isoto_g(A)} can be applied.
\begin{rem}[the standard invariant form on $\hat{\mathfrak{L}}(L^0)$]
	It is easily checked that the standard invariant bilinear form on $L(A)$ maps under this isomorphism to the form on $\hat{\mathfrak{L}}(L^0)$ given by
	\begin{equation*}
		\begin{split}
			&\langle t^i\otimes x,t^j\otimes y\rangle =0\qquad \text{if }j\neq i,\quad\text{for }x,y\in L^0\\
			&\langle t^i\otimes x,t^{-i}\otimes y\rangle=\langle x,y\rangle\\
			&\langle t^i\otimes x,c\rangle=0\\
			&\langle t^i\otimes x,d\rangle=0\\
			&\langle c,c\rangle =0\\
			&\langle d,d\rangle =0\\
			&\langle c,d\rangle =1.
		\end{split}
	\end{equation*}
	The form defined on $\hat{\mathfrak{L}}(L^0)$ in this way is clearly invariant. Moreover, this form on the subspace $(1\otimes H)\oplus \mathbb{C}c\oplus\mathbb{C}d$ of $\hat{\mathfrak{L}}(L^0)$ agrees with the standard invariant form on the subspace $H$ of $L(A)$ under our isomorphism between the subspaces. Since a symmetric invariant bilinear form on $L(A)$ is uniquely determined by its restriction to $H$ by Corollary \ref{cor:unique_bil_restr_to_H}, the above form on $\hat{\mathfrak{L}}(L^0)$ must correspond to the standard invariant bilinear form on $L(A)$. $\odot$
\end{rem}

We also observe that the element $c\in \hat{\mathfrak{L}}(L^0)$ corresponds to the canonical central element in $L(A)$ under the isomorphism. It is because
\begin{equation*}
	h_0 = (1\otimes (-H_\theta))+c
\end{equation*}
and therefore $\sum_{i=0}^{l} c_i h_i = c$ in  $\hat{\mathfrak{L}}(L^0)$ is mapped onto the canonical central element of $L(A)$.

Finally, since $\alpha_0(d)=1,\alpha_i(d)=0$ for $i=1,\dots,l$ the element $d\in\hat{\mathfrak{L}}(L^0)$ corresponds to an analogous scaling element $d$ for $L(A)$.
\subsection{Cartan data of finite semisimple Lie algebras}\label{subsect:Cartan_data}
To the end of this section, let us also say a few words about some notions that are quite standard in the (finite-dimensional) representation theory of (semi)simple finite-dimensional Lie algebras over $\mathbb{C}$. It will be helpful to recognize them when we formulate their 'loop analogues' in Chapter \ref{ch:fin_dim_rep_quantum_aff}. We shall refer the reader to the first part of the book \cite{Carter}, or, for example to the book of Humphreys \cite{Humphreys}, but we chose a the normalization for the invariant bilinear form which fits with the loop construction and the standard bilinear form for untwisted affine Kac--Moody algebras in the sense we explained in Subsection \ref{subsect:loop_constr_untw_Kac--Moody}.\\

Let $\mathfrak{g}$ be a semisimple Lie algebra over $\mathbb{C}$ and let $\mathfrak{h}$ be a \textbf{Cartan subalgebra}. In Definition \ref{def:Cartan,neg_pos_fund_mult_roots} which includes the finite case, i.e. all simple complex Lie algebras, we have already defined a particular Cartan subalgebra $H$ of $L(A)$ through its construction. More generally, a Cartan subalgebra of a semisimple Lie algebra $\mathfrak{g}$ over an algebraically closed field of characteristic zero can be defined as a maximal toral (or commuting) subalgebra.\footnote{ Of course, there is also the completely general definition which doesn't require any assumptions on the base field. Then, a Cartan subalgebra is defined to be a nilpotent and self-normalizing subalgebra (see \cite{Carter} Chapter 3).} Furthermore, one can show that any two Cartan subalgebras are conjugate under automorphisms of $\mathfrak{g}$. The common dimension of the Cartan subalgebras is called the rank of $\mathfrak{g}$.

Let now $\rho:\mathfrak{g}\to \End(V)$ be a representation of $\mathfrak{g}$ on a vector space $V$ over a field of characteristic zero, say $\mathbb{C}$, and let $\lambda:\mathfrak{h}\to\mathbb{C}$ be a linear functional, i.e. $\lambda\in\mathfrak{h}^*$. Then the \textbf{weight space} of $V$ of weight $\lambda$ is the subspace
\begin{equation*}
	V_\lambda\coloneq\{v\in V|\,\text{for all }h\in\mathfrak{h},\quad\rho(h).v=\lambda(h)v\}.
\end{equation*}
If $V_\lambda$ is not the zero space, then $\lambda$ is called a \textbf{weight} of $V$ and any non-zero vector $v\in V_\lambda$ is called a weight vector.\footnote{ Note that we will oftentimes refer to a representation $\rho:\mathfrak{g}\to \End(V)$ only by its vector space $V$ over which elements of the Lie algebra act.} "A weight vector is a simultaneous eigenvector for the action of the elements of the Cartan subalgebra and the corresponding eigenvalues are given by the weight $\lambda$." Furthermore, if $V$ is the direct sum of its weight spaces, i.e.
\begin{equation}\label{eqn:weight_space_dec}
	V=\bigoplus_{\lambda\in\mathfrak{h}^*}V_\lambda,
\end{equation}
then $V$ is called a weight module and equation (\ref{eqn:weight_space_dec}) is called its weight space decomposition. In fact, this corresponds to the existence of a common eigenbasis for $\mathfrak{h}$. Certainly, we know from linear algebra that this is the case when the representation $V$ is finite-dimensional over $\mathbb{C}$.
\begin{rem}\label{rem:roots_and_adj_rep_weights}
	We note that \textbf{roots} are exactly the weights of the \textbf{adjoint representation} $\operatorname{ad}:\mathfrak{g}\to \End(\mathfrak{g})$ of $\mathfrak{g}$. Hence, the \textbf{root space} to the root $\alpha\in\mathfrak{h}^*$ is defined by
	\begin{equation*}
		\mathfrak{g}_\alpha=\{x\in\mathfrak{g}\,|\,[h,x]=\alpha(h)x\,\text{for all}\,h\in\mathfrak{h}\}.\footnotemark
	\end{equation*}
	\footnotetext{ We will use this definition for any $\alpha\in \mathfrak{h}^*$ even if $\alpha$ is not a root.}
	Equation (\ref{eqn:weight_space_dec}) is then called the \textbf{Cartan decomposition} of $\mathfrak{g}$ and we have $\mathfrak{g}_0=\mathfrak{h}$ the \textbf{Cartan subalgebra}. Moreover, one can show that the set $\Phi$ of roots of $\mathfrak{g}$ span $\mathfrak{h}^*$. Explicitly, $\alpha\in\mathfrak{h}^*$ is called a root if $\alpha\neq 0$ and there exists a non-zero root vector $x\in\mathfrak{g}$ such that
	\begin{equation*}
		[h,x]=\alpha(h)x
	\end{equation*}
	for all $h\in\mathfrak{h}$. Further, it can be shown that the set of all roots $\Phi$ forms a root system which is important for the classification of $\mathfrak{g}$. For, if $\rho:\mathfrak{g}\to\End(V)$ is a representation of $\mathfrak{g}$, $v$ is a weight vector with weight $\lambda$ and $x$ a root vector to the root $\alpha$, then
	\begin{equation*}
		\rho(h).(\rho(x).v) = [(\lambda+\alpha)(h)](\rho(x).v)
	\end{equation*}
	for all $h\in\mathfrak{h}$, i.e. $\rho(x).v$ is either the zero vector or a weight vector with weight $\lambda+\alpha$. Hence, the action of the root vector $x$ maps the weight space with weight $\lambda$ into the weight space with weight $\lambda+\alpha$. $\odot$
\end{rem}

Let $\mathfrak{h}_0^*$ be the real subspace of $\mathfrak{h}^*$ generated by the roots. Since every real vector space has total orderings, we can chose a total ordering on $\mathfrak{h}_0^*$. Then, we have a \textbf{positive system} $\Phi^+\subset \Phi$ of roots $\alpha > 0$, the \textbf{positive roots}, and a the corresponding set $\Phi^-$ of \textbf{negative roots} $\alpha < 0$. We further define the \textbf{fundamental system} $\{\alpha_i|\,i=1,\dots,l \}\eqcolon\Pi\subset\Phi^+$ as the set of all roots $\alpha\in\Phi^+$ which cannot be written as the sum of two other positive roots. Then, one can show $\Phi = \Phi^+\amalg\Phi^-$, the fundamental system $\Pi$ forms a basis of $V=\mathfrak{h}_0^*$ and each $\alpha\in\Phi$ can be expressed in the form $\alpha=\sum n_i\alpha_i$ with $\alpha_i\in\Pi$, $n_i\in\mathbb{Z}$ and either $n_i\geq0$ for all $i$ or $n_i\leq 0$ for all $i$. We can therefore define the \textbf{height} of a root $\alpha=\sum_i n_i\alpha_i$ as $\operatorname{ht}(\alpha) = \sum_i n_i$.\footnote{ The correspondence to the construction of Kac--Moody algebras of finite type should be clear from here.}\\

We can now normalize the invariant inner product $\br{\cdot,\cdot}$ on $\mathfrak{g}$ such that the square length of the maximal root $\theta$ equals $2$.\footnote{ With respect to the induced inner product on $\mathfrak{h}^*$.} Furthermore, we identify $\mathfrak{h}^*$ and $\mathfrak{h}$ (resp. the subspaces $\mathfrak{h}_0^*$ and $\mathfrak{h}_0$) through $\alpha\mapsto h_\alpha'$ such that $\alpha=\br{h_\alpha',\cdot}$. Then, we define for $\alpha\in\Phi$ the coroot $\alpha^\vee$ by
\begin{equation*}
	\alpha^\vee=\frac{2\alpha}{\langle\alpha,\alpha\rangle}
\end{equation*}
and its corresponding element in $\mathfrak{h}$ under the isomorphism by $h_\alpha$. 
It can be shown that the coroots also form a root system $\Phi^\vee$ called the dual root system.

However, we call an element $\lambda\in\mathfrak{h}^*$ \textbf{algebraically integral}, if
\begin{equation*}
	\lambda(h_\alpha)=\langle\lambda,\alpha^\vee\rangle\in\mathbb{Z}
\end{equation*}
for all roots $\alpha\in\Phi$. The motivation for this condition is that the coroot $h_\alpha$ can be identified with the element $H$ of the Chevalley basis $E,F,H$ for an $\mathfrak{sl}_2$ subalgebra of $\mathfrak{g}$. For, it is known that, for $\mathfrak{sl}_2$, the eigenvalues of $H_\alpha$ in any finite-dimensional representation must be integer. Therefore, we conclude that the weight of any finite-dimensional representation of $\mathfrak{g}$ is algebraically integral.\footnote{ Note that we use the identification between $\mathfrak{h}^*$ and $\mathfrak{h}$ implicitly here.}

This motivates the following definition.
\begin{definition}[fundamental weights]
	The fundamental weights $\omega_1,\dots,\omega_n$ are defined as the basis dual to the fundamental system of coroots $\{\alpha_i^\vee|\,i=1,\dots,l \}\eqcolon\Pi^\vee$, i.e.
	\begin{equation*}
		\langle\omega_i,\alpha_j^\vee\rangle = \delta_{i,j}\quad\text{for all }i,j=1,\dots,l.\quad\odot
	\end{equation*}
\end{definition}

A weight $\lambda$ is then algebraically integral if and only if it is an integral combination of the fundamental weights. We call a weight $\lambda$ \textbf{dominant} if it is a non-negative combination of the fundamental weights. Equivalently, $\lambda$ is dominant if $\langle \lambda,\alpha\rangle\geq 0$ for each positive root $\alpha\in\Phi^+$. The set of all (not necessarily integral) $\lambda$ such that $\langle\lambda,\alpha\rangle\geq 0$ is called the \textbf{fundamental Weyl chamber} corresponding to $\Phi^+$. Moreover, the set of all $\mathfrak{g}$-integral weights is a lattice in $\mathfrak{h}_0^*$ (resp. $\mathfrak{h}_0$) called the \textbf{weight lattice} $P=P(\mathfrak{g})$.\footnote{ By the identification of $\mathfrak{h}^*$ and $\mathfrak{h}$ ($\alpha\mapsto h_\alpha'$).}

\begin{rem}[analytically integral]
	If $\mathfrak{g}$ is the Lie algebra of a Lie group $G$ we say that $\lambda\in\mathfrak{h}_0$ is \textbf{analytically integral} (or $G$-integral) if for each $w\in\mathfrak{h}_0$ with $\exp(w)=1\in G$ we have $\langle \lambda, w\rangle\in 2\pi i \mathbb{Z}$. This definition has the consequence that a representation of $\mathfrak{g}$ which is constructed from a representation of $G$ has $G$-integral weights. Additionally, if $G$ is semisimple, the set of all $G$-integral weights is a sublattice $P(G)\subset P(\mathfrak{g})$. If $G$ is also simply connected, then $P(G)=P(\mathfrak{g})$. Otherwise, $P(G)$ is a sublattice of $P(\mathfrak{g})$ and their quotient is isomorphic to the fundamental group of $G$. $\odot$
\end{rem}
Before we continue with the discussion on representations, let us briefly make a remark about another important concept, namely, the \textbf{Weyl group} (cf. \cite{CPBook} Appendix A 5-6).
\begin{rem}[the Weyl group]
	The \textbf{Weyl group} $W$ of $\mathfrak{g}$ is the subgroup of $GL(\mathfrak{h})$ generated by the so-called fundamental (or simple) reflections $s_i:\mathfrak{h}\to\mathfrak{h}$, defined by
	\begin{equation*}
		s_i(h)=h-\alpha_i(h)h_i=h-\langle h_{\alpha_i}',h\rangle h_i, \qquad(h\in\mathfrak{h}). \footnotemark
	\end{equation*}
	\footnotetext{Where we use the short hand notation $h_i=h_{\alpha_i}$ as above.} The action of $W$ preserves the bilinear form $\langle,\rangle$ on $\mathfrak{h}$.
	
	As an abstract group, the Weyl group $W$ is a Coxeter group with generators $s_1,\dots,s_l$ and defining relations
	\begin{equation*}
		s_i^2=\boldsymbol{1},\quad(s_is_j)^{m_{ij}}=\boldsymbol{1},\,\text{ if }i\neq j,
	\end{equation*}
	where the integers $m_{ij}$ are given by $2$, $3$ , $4$, $6$ (or $\infty$) when $a_{ij}a_{ji}$ is $0$, $1$, $2$, $3$ ( or $\geq 4$), respectively.\footnote{The case $m_{ij}=\infty$ means that the relation $(s_is_j)^{m_{ij}}=\boldsymbol{1}$ is omitted. However, this doesn't happen when $\mathfrak{g}$ is finite, rather when we allow the more general assumption that $\mathfrak{g}$ is an infinite-dimensional Kac--Moody algebra (or a direct sum of the latter).} The element $c=s_1s_2\cdots s_l$ is called a \textbf{Coxeter element} and its order in $W$ is called the \textbf{Coxeter number} $h$ of $\mathfrak{g}$.
	
	An expression $w=s_{i_1}\cdots s_{i_k}$ of an element $w\in W$ as a product of simple reflections is called a reduced decomposition if $k$ is the minimal number of simple reflections which appear in any decomposition of $w$. Then, $l(w)\coloneq k$ is called the length of $w$. If $w'\in W$ is another element, we say that $w'\leq w$ if $w'=s_{j_1}s_{j_2}\cdots s_{j_l}$ and the sequence $j_1,j_2,\dots,j_l$ is obtained from the sequence $i_1,\dots,i_k$ by (possibly) omitting some of its terms. This defines a partial ordering $\leq$ on $W$, called the Bruhat ordering.
	
	By applying the identification of $\mathfrak{h}$ with $\mathfrak{h}^*$ through the bilinear form of $\mathfrak{g}$ (carefully!), the Weyl group $W$ acts on $\mathfrak{h}^*$ by
	\begin{equation*}
		s_i(\alpha)=\alpha-\alpha(h_i)\alpha_i= \alpha-\langle\alpha,\alpha_i^\vee\rangle\alpha_i,\qquad(\alpha\in\mathfrak{h}^*).
	\end{equation*}
	We then have $W(\Phi) = \Phi$ and $\dim(g_\alpha)=\dim(g_{w(\alpha)})$ for all $w\in W$, $\alpha\in\Phi$.
	
	Moreover, there is a unique element $w_0\in W$ of maximal length $l(w_0) = N = |\Phi^+|$ and we have $w_0^2 =\boldsymbol{1}$. If $w_0 =s_{i_1}\cdots s_{i_N}$ is a reduced decomposition, then
	\begin{equation*}
		\Phi^+ = \{\alpha_{i_1},s_{i_1}(\alpha_{i_2}),\dots,s_{i_1}\cdots s_{i_{N-1}}(\alpha_{i_N})\},
	\end{equation*}
	each positive root occurring exactly once on the right hand side.
	
	Let us use the short hand notation $x_i^+$ and $x_i^-$ for the Chevalley generators $e_i$ and $f_i$, respectively. Using this, we can define automorphisms $T_1,\dots,T_n$ of $\mathfrak{g}$ such that
	\begin{equation*}
		\begin{split}
			&T_i(x_i^\pm)=-x_i^\mp,\quad T_i(h_j)=h_j-a_{ji}h_i,\\
			&T_i(x_j^+)=(-a_{ij})!^{-1}(\operatorname{ad}_{x_i^+})^{-a_{ij}}(x_j^+),\quad\text{if }i\neq j,\\
			&T_i(x_j^+)=(-1)^{a_{ij}}(-a_{ij})!^{-1}(\operatorname{ad}_{x_i^-})^{-a_{ij}}(x_j^-),\quad\text{if }i\neq j.
		\end{split}
	\end{equation*}
	They satisfy the defining relations of the braid group $\mathcal{B}_{\mathfrak{g}}$
	\begin{equation*}
		T_iT_j=T_jT_i,\quad T_iT_jT_i=T_jT_iT_j,\quad (T_iT_j)^2=(T_jT_i)^2,\quad (T_iT_j)^3=(T_jT_i)^3,
	\end{equation*}
	according as $a_{ij}a_{ji}=0$, $1$, $2$ or $3$, respectively. If $\mathfrak{g}$ is of type $A_n$, for instance, we have
	\begin{equation*}
		\begin{split}
			T_iT_{i+1}T_i &= T_{i+1}T_i T_{i+1}\\
			T_iT_j &= T_jT_i\quad\text{if }|i-j|>1.
		\end{split}
	\end{equation*}
	Further, each $T_i$ preserves $\mathfrak{h}$ and induces on it the action of the reflection $s_i$.
	
	If $\alpha=s_{i_1}s_{i_2}\cdots s_{i_{k-1}}(\alpha_{i_k})\in \Phi^+$, where $w_0 = s_{i_1}s_{i_2}\cdots s_{i_{N}}$ is a reduced decomposition, we define
	\begin{equation*}
		x_\alpha^\pm = T_{i_1}T_{i_2}\cdots T_{i_{k-1}} (x_{i_k}^\pm).
	\end{equation*}
	Then, $x_\alpha^\pm$ is a non-zero element of $\mathfrak{g}_{\pm\alpha}$ for all $\alpha\in\Phi^+$ and (up to a sign) independent of the choice of the reduced decomposition of $w_0$. $\odot$
\end{rem}
\begin{rem}
	The integer factor $h^\vee$ between the Killing form $(x,y)=\tr(\operatorname{ad}_x\circ\operatorname{ad}_y)$ of $\mathfrak{g}$ and $\langle,\rangle$ is sometimes called the \textbf{dual Coxeter number} of $\mathfrak{g}$. However, the term 'dual' cannot be meant in a group theoretic sense here, as the Weyl group of the dual root system is canonically isomorphic to the Weyl group itself (see for example \cite{Humphreys} Section 9.2). In fact, the (dual) Coxeter number of $\mathfrak{g}$ can be defined as the (dual) Coxeter number of the untwisted affine Kac--Moody algebra $\hat{\mathfrak{g}}\coloneq \hat{\mathfrak{L}}(\mathfrak{g})$ as was done at end of Subsection \ref{subsect:aff_Kac--Moody}, respectively, where the terminology is obvious.
\end{rem}
Coming back to the action of root vectors which we have discussed in Remark \ref{rem:roots_and_adj_rep_weights}, we can introduce a partial ordering on the set of weights which will enable us to formulate the \textbf{theorem of the highest weight} for representations of $\mathfrak{g}$.
\begin{definition}[partial ordering of weights]
	Let $\lambda$ and $\mu$ be weights of $\mathfrak{g}$. We define a partial ordering on the set of weights by saying that $\lambda$ is higher than $\mu$ ($\lambda\geq \mu$), if $\lambda-\mu$ is expressible as a linear combination of positive roots with non-negative real coefficients. Equivalently, $\mu$ is called lower than $\lambda$. $\odot$
\end{definition}
Note that this only defines a partial ordering since it can happen that $\lambda$ is neither higher nor lower than $\mu$.
A weight $\lambda$ of a representation $V$ of $\mathfrak{g}$ is called a \textbf{highest weight} if every other weight of $V$ is lower than $\lambda$.

\begin{thm}[theorem of the highest weight]
	Let $\mathfrak{g}$ be a complex semisimple Lie algebra, then
	\begin{enumerate}[label=\normalfont(\roman*)]
		\item every irreducible (finite-dimensional) representation has a highest weight,
		\item the highest weight is always dominant and algebraically integral,
		\item two irreducible representations with the same highest weight are isomorphic and
		\item \label{enum:irred_repr_to_dom_int}every dominant, algebraically integral weight $\lambda$ is the highest weight of an irreducible representation. $\odot$
	\end{enumerate}
\end{thm}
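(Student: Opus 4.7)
The plan is to prove the four statements in order, with (iv) being the substantial one. Throughout, I fix a Cartan subalgebra $\mathfrak{h}$, a positive system $\Phi^+$, and the corresponding Borel subalgebra $\mathfrak{b} = \mathfrak{h} \oplus \mathfrak{n}^+$ where $\mathfrak{n}^+ = \bigoplus_{\alpha \in \Phi^+} \mathfrak{g}_\alpha$. For each simple root $\alpha_i$, the triple $(e_i, f_i, h_i)$ generates an $\mathfrak{sl}_2$-subalgebra $\mathfrak{sl}_2^{(i)} \subset \mathfrak{g}$, which will drive every integrality argument.

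For (i) and (ii), first I would argue that any finite-dimensional representation $V$ admits a weight space decomposition. This follows because the elements $h_i$ can each be embedded into an $\mathfrak{sl}_2^{(i)}$ where the classification of finite-dimensional $\mathfrak{sl}_2$-representations forces $\rho(h_i)$ to act diagonalizably with integer eigenvalues. Simultaneous diagonalizability of the commuting family $\{\rho(h_i)\}$ gives the weight decomposition, and integrality of the eigenvalues already shows every weight of $V$ is algebraically integral. Since $V$ is finite-dimensional the set of weights is finite, so there exists a weight $\lambda$ maximal with respect to the partial ordering introduced just above the theorem. Pick $v \in V_\lambda$ nonzero; by Remark \ref{rem:roots_and_adj_rep_weights}, $\rho(e_\alpha).v \in V_{\lambda + \alpha}$ for each $\alpha \in \Phi^+$, and maximality forces this to be zero. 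The cyclic submodule $U(\mathfrak{g}).v$ is therefore spanned (via the PBW theorem applied to $\mathfrak{g} = \mathfrak{n}^- \oplus \mathfrak{h} \oplus \mathfrak{n}^+$) by vectors of weight $\lambda - \sum n_i \alpha_i$ with $n_i \geq 0$, hence is a proper-or-all submodule in which $\lambda$ is the unique maximal weight. By irreducibility, $U(\mathfrak{g}).v = V$, so $\lambda$ is a highest weight of $V$ and is unique. Dominance then follows: applying the $\mathfrak{sl}_2^{(i)}$-theory to $v$ (which is a highest weight vector for $\mathfrak{sl}_2^{(i)}$ of $h_i$-eigenvalue $\lambda(h_i) = \langle \lambda, \alpha_i^\vee \rangle$) forces this integer to be non-negative, lest the $\mathfrak{sl}_2^{(i)}$-submodule generated by $v$ be infinite-dimensional.

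For (iii), given irreducibles $V, V'$ with highest weight $\lambda$ and highest vectors $v, v'$, I would form $W \subset V \oplus V'$ as the cyclic submodule generated by $(v, v')$. The projections $W \to V$ and $W \to V'$ are nonzero $\mathfrak{g}$-homomorphisms onto irreducibles, hence surjective. Their kernels are proper submodules of $W$ not containing $(v,v')$; analysing weight spaces (the $\lambda$-weight space of $W$ is one-dimensional, spanned by $(v,v')$) shows both kernels equal the unique maximal proper submodule of $W$, so they coincide, giving an isomorphism $V \cong V'$.

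For (iv), which I expect to be the main obstacle, I would use the Verma module construction. Let $\mathbb{C}_\lambda$ be the one-dimensional $\mathfrak{b}$-module on which $\mathfrak{n}^+$ acts by zero and $\mathfrak{h}$ by $\lambda$, and set $M(\lambda) = U(\mathfrak{g}) \otimes_{U(\mathfrak{b})} \mathbb{C}_\lambda$. By PBW, $M(\lambda)$ has weights of the form $\lambda - \sum n_i \alpha_i$ with $n_i \geq 0$ and each weight space is finite-dimensional, with the $\lambda$-weight space one-dimensional. Standard arguments show $M(\lambda)$ has a unique maximal proper submodule, and the quotient $L(\lambda)$ is an irreducible highest weight module with highest weight $\lambda$. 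The real work is to show $L(\lambda)$ is finite-dimensional when $\lambda$ is dominant integral. The key step is to produce, for each simple $i$, a singular vector in $M(\lambda)$ of the form $f_i^{\,\lambda(h_i)+1} \otimes 1_\lambda$: a direct computation with the $\mathfrak{sl}_2^{(i)}$-relations shows this vector is killed by every $e_j$, so it generates a proper submodule of $M(\lambda)$ and therefore vanishes in $L(\lambda)$. Hence each $f_i$ acts locally nilpotently on $L(\lambda)$, and by $[e_i, f_i] = h_i$ and the Serre relations so does each $e_i$. Then I would invoke the standard fact that on any $\mathfrak{g}$-module where all $e_i, f_i$ act locally nilpotently, the exponentials $\exp(\operatorname{ad} e_i), \exp(\operatorname{ad} f_i)$ induce a Weyl group action permuting the weight spaces with equal multiplicities. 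Since the set of weights of $L(\lambda)$ lies in $\lambda - Q^+$ and is $W$-invariant, it must be contained in the convex hull of $W.\lambda$, which is a finite set; combined with finite-dimensionality of each weight space, this gives $\dim L(\lambda) < \infty$. This assembles (iv) and completes the proof.
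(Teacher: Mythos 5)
Your proposal is correct and follows essentially the route the paper itself points to: the paper states this theorem without proof (deferring to Carter and Humphreys) and remarks only that part (iv) is obtained via Verma modules, which is exactly your construction, while your treatment of (i)--(iii) is the standard $\mathfrak{sl}_2$/weight-space argument those references use. The only blemishes are minor phrasing slips — the Weyl-group symmetry of the weights comes from exponentiating the module operators $\rho(e_i),\rho(f_i)$ (not $\operatorname{ad}$), and the convex hull of $W.\lambda$ is infinite, finiteness coming from intersecting it with the coset $\lambda+Q$ — neither of which affects the validity of the argument.
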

Therefore, finite-dimensional irreducible representations of $\mathfrak{g}$ are in correspondence with dominant integral weights, i.e. positive integral combinations of the fundamental weights $\omega_i$, $i=1,\dots,l$.
The representation in \ref{enum:irred_repr_to_dom_int} can be constructed using the so-called \textbf{Verma modules}. Let us formulate one more general remark.
\begin{rem}[highest-weight module]
	A representation $V$ of $\mathfrak{g}$ is called \textbf{highest-weight module} if it is generated by a weight vector $v\in V$ which is annihilated by the action of all positive root spaces $\mathfrak{g}_\alpha$, $\alpha\in\Phi^+$. Then, every irreducible representation with highest weight is necessarily a highest-weight module. However, when $V$ is infinite-dimensional it can be reducible. Moreover, to every $\lambda\in\mathfrak{h}^*$ there exists a unique (up to isomorphism) irreducible highest weight module $L(\lambda)$ with highest weight $\lambda$ which can be constructed as a quotient of the Verma module $M(\lambda)$. $\odot$
\end{rem}
Let us summarize our notions for a finite-dimensional semisimple complex Lie algebra $\mathfrak{g}$ as follows. We identify the Cartan subalgebra $\mathfrak{h}$ and its dual $\mathfrak{h}^*$ by means of the invariant inner product $\langle\cdot,\cdot\rangle$ on $\mathfrak{g}$ normalised such that the square of the maximal root equals $2$. Let $I=\{1,\dots,l\}$ and let $\{\alpha_i\}_{i\in I}$ be the set of simple roots with the corresponding simple coroots $\{\alpha_i^\vee\}_{i\in I}$ and fundamental weights $\{\omega_i\}_{i\in I}$. Let $A\coloneq(a_{ij})$ denote the Cartan matrix of $\mathfrak{g}$ and let $d_i$, $i=1,\dots,l$, be the relatively prime integers such that $B=(b_{ij})=(d_i a_{ij}) = DA$ is symmetric.\footnote{ $B$ is sometimes called the symmetrized Cartan matrix.} We have
\begin{align}
	2\br{\alpha_i,\alpha_j} = a_{ij}\br{\alpha_i,\alpha_i}, \quad 2\br{\alpha_i,\omega_j} = \delta_{i,j}\br{\alpha_i,\alpha_i}, \quad b_{ij} = d^\vee \br{\alpha_i,\alpha_j},
\end{align}
where $d^\vee$ is the maximal number of edges connecting two nodes in the Dynkin diagram of $\mathfrak{g}$, i.e. $d^\vee=1$ for simply-laced, $d^\vee =2$ for $B_l,C_l,F_4$, and $d^\vee=3$ for $G_2$.\footnote{ Hence, we have $d^\vee = \hat{d}_id_i$ for $i=1,\dots,l$, and $B=d^\vee \hat{B}$, where $\hat{d}_i$ and $\hat{B}$ are as in Subsection \ref{subsect:aff_Kac--Moody}.}
At last we denote the (positive) weight and root lattice by $P$ ($P^+$) and $Q$ ($Q^+$), respectively. We have that $P$ ($P^+$) and $Q$ ($Q^+$) are the $\mathbb{Z}$-span ($\mathbb{Z}_{\geq0}$-span) of the fundamental weights and simple roots, respectively. Then we have a partial order $\leq$ on $P$ in which $\lambda\leq\lambda'$ iff $\lambda'-\lambda\in Q^+$. If $\alpha\in Q$, we denote the root space of $\alpha$ by
\begin{align*}
	\mathfrak{g}_\alpha=\{x\in\mathfrak{g}\,|\,[h,x]=\alpha(h)x\,\text{for all}\,h\in\mathfrak{h}\}.
\end{align*}
We set $\Phi \coloneq \{\alpha\in Q\,|\,\alpha\neq0,\,\mathfrak{g}_\alpha\neq0\}$ the set of roots of $\mathfrak{g}$ and $\Phi^+\coloneq\Phi\cap Q^+$ (resp. $\Phi^-\coloneq-\Phi^+$) the set of positive (resp. negative roots). Then we have $\Phi = \Phi^+\amalg\Phi^-$.
\section{Quantum groups and quantum affine algebras}\label{sect:quantum_groups}
Having introduced the universal enveloping algebra and the notion for finite and affine Kac--Moody algebras in the last section, we are now in position to introduce the structures we are interested in. Those are generally called quantum groups, generally referred to as Hopf algebras or non-cocommutative Hopf algebras. Specifically, we are interested in the so-called quantum affine algebras, also known as affine quantum groups. These can be built on the basis of the finite and affine Kac--Moody algebras as we shall see. Later, we will explicitly consider the affine quantum groups of type $A_n$, which are important for the correlation functions discussed in my paper \cite{BJ} and arguably the simplest when it comes to the representation theory. However, let us begin by describing what a Hopf algebra is and explaining why it is so important. As we shall see in Section \ref{subsect:Hopf_algebras}, the comultiplication map $A\to A\otimes A$ of an arbitrary Hopf algebra $A$ allows one to define the tensor product $V\otimes W$ of two representations $V$ and $W$ of $A$. If $A$ is not cocommutative, $V\otimes W$ and $W\otimes V$ are not necessarily isomorphic as representations of $A$, although they are isomorphic for the special case of \textit{quasitriangular} Hopf algebras, discussed in Section \ref{subsect_quasitriang_Hopf}. Such Hopf algebras contain a distinguished invertible element $\mathcal{R}\in A\otimes A$, called the \textit{universal} $R$\textit{-matrix}, from which the isomorphism $V\otimes W\to W\otimes V$ is constructed. The affine quantum groups we consider in this thesis are at least in a weak sense quasitriangular (sometimes called \textit{pseudotriangular}) such that we can make use of their universal $R$-matrices. The crucial property of $\mathcal{R}$ is that it satisfies the so-called (\textit{quantum}) \textit{Yang--Baxter equation}
\begin{equation*}
	\mathcal{R}_{12}\mathcal{R}_{13}\mathcal{R}_{23}=\mathcal{R}_{23}\mathcal{R}_{13}\mathcal{R}_{12}
\end{equation*}
We will later introduce a rigorous graphical notation to visualise it. We refer the reader to Chapter 4-6, 9 and 12 in the book \cite{CPBook} for more details. Here, we discuss only a selection of aspects which is suitable for this thesis.
\subsection{Hopf algebras in terms of string diagrams}\label{subsect:Hopf_algebras}
The structure of a Hopf algebra is usually visualized in terms of commutative diagrams. Such a description can be found in the book \cite{CPBook} in Section 4.1 for instance. In this thesis, however, I prefer to give a slightly different description which illustrates the tensor product in a nice way. I should also give the reference to Jules Lamers who taught me this on the Les Houches summer school in 2018 when I was a young researcher. The diagrams I will draw are sometimes referred to as "string diagrams" and in our case we read them from bottom to top.\footnote{ Instead, one could use arrows to give them an independent orientation.} However, anything else can also be found in the book \cite{CPBook} in Section 4.1. In general, everything is defined over a commutative ring $k$. 
In our string diagrams $k$ is represented by a point. Moreover, we represent a $k$-module $A$ by a line and every morphism from $A$ to $A$ by a symbol, for instance a point, which intersects the line. Now, whenever we find a line $B$ next to a line $A$ such that the line $B$ is on the left of the line $A$, we identify them together with the tensor product $A\otimes B$ of the $k$-modules $A$ and $B$. In this way, two lines next to each other can be interpreted in terms of a single line for the tensor product of the corresponding modules. The commutative ring $k$, a $k$-module $A$, a morphism $\alpha:A\to A$ and the tensor product of $k$-modules $A$ and $B$ are described by the diagrams
\begin{figure}[H]
	\centering
	\begin{tikzpicture}[scale=1]
		\draw[thick] (0,0)node[anchor=north]{$A$} -- (0,1) node[anchor=west]{};
		\draw[thick] (0,1) -- (0,2) ;
		\draw[thick] (1,0)node[anchor=north]{$A$} -- (1,1) node[anchor=west]{};
		\draw[thick] (1,1) -- (1,2) ;
		\filldraw[black] (1,1) circle (2pt) node[anchor=west]{$\alpha$};
		\filldraw[black] (-1,1) circle (2pt) node[anchor=north]{$k$};
		\draw[thick] (3,0)node[anchor=north]{$B$} -- (3,1) node[anchor=west]{};
		\draw[thick] (3,1) -- (3,2) ;
		\draw[thick] (4,0)node[anchor=north]{$A$} -- (4,1) node[anchor=west]{};
		\draw[thick] (4,1) -- (4,2) ;
		\node at (5,1) {=};
		\draw[thick] (6,0)node[anchor=north]{$A\otimes B$} -- (6,1) node[anchor=west]{};
		\draw[thick] (6,1) -- (6,2) ;
		\node at (7,0) {.} ;
	\end{tikzpicture}
\end{figure}
Therefore, multiple tensor products of different modules can iteratively be built by drawing the respective lines next to each other in the corresponding order. Note that we will omit writing letters on the dots and lines when there is only one commutative ring $k$ and one $k$-module $A$ other than $k$ itself in the context. For the obvious $k$-module $k$, we will use a dashed line by default. The definitions are as follows (cf. \cite{CPBook} pp 101-105).
\begin{definition}
	A \textbf{unital associative algebra} over a commutative ring\footnote{ A commutative ring is always assumed to have a unit element $1$ if not stated otherwise.} $k$ is a $k$-module $A$ equipped with $k$-module maps called the multiplication $m:A\otimes A\to A$ and the unit $\eta:k\to A$, $\lambda\mapsto\lambda \boldsymbol{1}$ such that the multiplication is associative, i.e. $m\circ (m\otimes\id) = m\circ (\id\otimes m)$, and compatible with the unit in the sense that $m\circ (\eta\otimes\id) = \tilde{\id}= m\circ\,\sigma\circ(\eta\otimes\id)$ where $\tilde{\id}$ is the natural isomorphism\footnote{The isomorphism obtained from the universal property of the tensor product.} $:k\otimes A \overset{\sim}{\longrightarrow} A$ and $\sigma: V\otimes W \overset{\sim}{\longrightarrow} W\otimes V$ the flip map\footnote{ Defined for any two $k$-modules $V$ and $W$.}, i.e. the $k$-linear map such that $v\otimes w \mapsto w\otimes v$ for $v\in V$ and $w\in W$. In terms of string diagrams, the maps are visualised by
	\begin{figure}[H]
		\centering
		\begin{tikzpicture}[scale=1]
			\draw[thick] (-1,0) -- (-1,2);
			\filldraw[black] (-1,0) circle (2pt) node[anchor=north]{};
			\draw[thick] (-1,1) node[anchor=west]{$\eta$};
			\draw[thick] (1.5,0) arc[start angle=180,end angle=0,radius=.5];
			\draw[thick] (2,.5) -- (2,2);
			\draw[thick] (2,1) node[anchor=west]{$m$};
			\node[anchor=west] at (3,0) {,};
		\end{tikzpicture}
	\end{figure}
	for the compatibility condition we have
	\begin{figure}[H]
		\centering
		\begin{tikzpicture}[scale=1]
			\node (A) at (6,0) {};
			\node (B) at (7,1) {};
			\node (C) at (7,0) {};
			\node (D) at (6,1) {};
			\node (i) at (intersection of A--B and D--C) {}; 
			\draw[thick] (7,0) -- (6,1) arc[start angle=180,end angle=0,radius=.5] (7,1) -- (i);
			\draw[thick] (6,0) -- (i);
			\filldraw[black] (7,0) circle (2pt) node[anchor=north]{};
			\draw[thick] (6.5,1.5) -- (6.5,2.5);
			
			\node at (2,1) {=};
			\node at (5,1) {=};
			
			\draw[thick] (3.5,0) -- (3.5,2.5);
			\draw[dashed] (4,0) -- (4,1) arc[start angle=0,end angle=90,radius=.5];
			\filldraw[black] (4,0) circle (2pt) node[anchor=north]{};
			
			\draw[thick] (0.5,1.5) -- (0.5,2.5);
			\draw[thick] (0,0) -- (0,1) arc[start angle=180,end angle=0,radius=.5] (1,1) -- (1,0);
			\filldraw[black] (1,0) circle (2pt) node[anchor=north]{};
		\end{tikzpicture}
	\end{figure}
	and the associativity is given by
	\begin{figure}[H]
		\centering
		\begin{tikzpicture}[scale=1]
			\draw[thick] (1,1.5) -- (1,2);
			\draw[thick] (.5,0) -- (.5,1) arc[start angle=180,end angle=0,radius=.5] (1.5,1) -- (1.5,.5);
			\draw[thick] (1,0) arc[start angle=180,end angle=0,radius=.5];
			
			\node at (3,1) {=};
			
			\draw[thick] (5,1.5) -- (5,2);
			\draw[thick] (4.5,.5) -- (4.5,1) arc[start angle=180,end angle=0,radius=.5] (5.5,1) -- (5.5,0);
			\draw[thick] (4,0) arc[start angle=180,end angle=0,radius=.5];
			\node[anchor=west] at (6,0) {. $\odot$};
		\end{tikzpicture}
	\end{figure}
\end{definition}
The condition that an algebra is commutative ($m=m\circ\,\sigma$) is represented by
\begin{figure}[H]
	\centering
	\begin{tikzpicture}[scale=1]
		\node (A) at (3,0) {};
		\node (B) at (4,1) {};
		\node (C) at (4,0) {};
		\node (D) at (3,1) {};
		\node (i) at (intersection of A--B and D--C) {}; 
		\draw[thick] (4,0) -- (3,1) arc[start angle=180,end angle=0,radius=.5] (4,1) -- (i);
		\draw[thick] (3,0) -- (i);
		\draw[thick] (3.5,1.5) -- (3.5,2.5);
		
		\node at (2,1) {=};
		
		\draw[thick] (0.5,1.5) -- (0.5,2.5);
		\draw[thick] (0,0) -- (0,1) arc[start angle=180,end angle=0,radius=.5] (1,1) -- (1,0);
		\node[anchor=west] at (4.5,0) {.};
	\end{tikzpicture}
\end{figure}
Moreover, if we set $m_{\operatorname{op}}:=m\circ\,\sigma$, then $(A,\eta,m_{\operatorname{op}})$ is also a unital associative algebra, called the opposite of $A$ and denoted by $A_{\operatorname{op}}$.
\begin{definition}
	A \textbf{counital coassociative coalgebra} over a commutative ring $k$ is a $k$-module $A$ together with $k$-module maps $\Delta:A\to A\otimes A$, the comultiplication, and $\epsilon:A\to k$, the counit, such that the comultiplication is coassociative, i.e. $(\Delta\otimes \id)\circ\Delta  =  (\id \otimes \Delta)\circ\Delta$, and compatible with the counit in the sense that $(\epsilon\otimes\id)\circ\Delta = \hat{\id} = (\epsilon\otimes\id)\circ\sigma \circ\Delta$ where $\hat{\id}\coloneq(\tilde{\id})^{-1}$. In terms of string diagrams, the maps are visualised by
	\begin{figure}[H]
		\centering
		\begin{tikzpicture}[scale=1]
			\draw[thick] (0,0) -- (0,2);
			\filldraw[black] (0,2) circle (2pt) node[anchor=north]{};
			\draw[thick] (0,1) node[anchor=west]{$\epsilon$};
			
			\draw[thick] (2.5,2) arc[start angle=-180,end angle=0,radius=.5];
			\draw[thick] (3,1) node[anchor=west]{$\Delta$};
			\draw[thick] (3,0) -- (3,1.5);
			\node[anchor=west] at (4,0) {,};
		\end{tikzpicture}
	\end{figure}
	for the compatibility condition we have
	\begin{figure}[H]
		\centering
		\begin{tikzpicture}[scale=1]
			\node (A) at (6,1.5) {};
			\node (B) at (7,2.5) {};
			\node (C) at (7,1.5) {};
			\node (D) at (6,2.5) {};
			\node (i) at (intersection of A--B and D--C) {}; 
			\draw[thick] (6,2.5) -- (7,1.5) arc[start angle=0,end angle=-180,radius=.5] (6,1.5) -- (i);
			\draw[thick] (7,2.5) -- (i);
			\filldraw[black] (7,2.5) circle (2pt) node[anchor=north]{};
			\draw[thick] (6.5,0) -- (6.5,1);
			
			\node at (2,1) {=};
			\node at (5,1) {=};
			
			\draw[thick] (3.5,0) -- (3.5,2.5);
			\draw[dashed] (4,2.5) -- (4,1.5) arc[start angle=0,end angle=-90,radius=.5];
			\filldraw[black] (4,2.5) circle (2pt) node[anchor=north]{};
			
			\draw[thick] (0.5,0) -- (0.5,1);
			\draw[thick] (0,2.5) -- (0,1.5) arc[start angle=-180,end angle=0,radius=.5] (1,1.5) -- (1,2.5);
			\filldraw[black] (1,2.5) circle (2pt) node[anchor=north]{};
		\end{tikzpicture}
	\end{figure}
	and the coassociativity is given by
	\begin{figure}[H]
		\centering
		\begin{tikzpicture}[scale=1]
			\draw[thick] (1,.5) -- (1,0);
			\draw[thick] (.5,2) -- (.5,1) arc[start angle=-180,end angle=0,radius=.5] (1.5,1) -- (1.5,1.5);
			\draw[thick] (1,2) arc[start angle=-180,end angle=0,radius=.5];
			
			\node at (3,1) {=};
			
			\draw[thick] (5,.5) -- (5,0);
			\draw[thick] (4.5,1.5) -- (4.5,1) arc[start angle=-180,end angle=0,radius=.5] (5.5,1) -- (5.5,2);
			\draw[thick] (4,2) arc[start angle=-180,end angle=0,radius=.5];
			\node[anchor=west] at (6,0) {. $\odot$};
		\end{tikzpicture}
	\end{figure}
\end{definition}
The condition that a coalgebra is cocommutative ($\Delta=\sigma\circ\Delta$) is represented by
\begin{figure}[H]
	\centering
	\begin{tikzpicture}[scale=1]
		\node (A) at (3,1.5) {};
		\node (B) at (4,2.5) {};
		\node (C) at (4,1.5) {};
		\node (D) at (3,2.5) {};
		\node (i) at (intersection of A--B and D--C) {}; 
		\draw[thick] (3,2.5) -- (4,1.5) arc[start angle=0,end angle=-180,radius=.5] (3,1.5) -- (i);
		\draw[thick] (4,2.5) -- (i);
		\draw[thick] (3.5,0) -- (3.5,1);
		
		\node at (2,1) {=};
		
		\draw[thick] (0.5,0) -- (0.5,1);
		\draw[thick] (0,2.5) -- (0,1.5) arc[start angle=-180,end angle=0,radius=.5] (1,1.5) -- (1,2.5);
		\node[anchor=west] at (4.5,0) {,};
	\end{tikzpicture}
\end{figure}
i.e. $\Delta(A)$ is contained in the symmetric part of $A\otimes A$.
Moreover, if we set $\Delta^{\operatorname{op}}:=\sigma\circ\Delta$, then $(A,\epsilon,\Delta^{\operatorname{op}})$ is also a counital coassociative coalgebra, called the opposite of $A$ and denoted by $A^{\operatorname{op}}$. We note that the definition of a coalgebra is dual to the definition of an algebra. Moreover, it is easy to see that tensor products of algebras are algebras and tensor products of coalgebras are coalgebras by tensoring their defining maps and composing them appropriately with the permutation map $\sigma$ in the case of the multiplication and comultiplication. If $A$ and $B$ are coalgebras, a $k$-module map $\varphi:A\to B$ is a homomorphism of coalgebras if
\begin{equation*}
	(\varphi \otimes \varphi)\circ\Delta^A=\Delta^B\circ\varphi,\quad\epsilon^B\circ\varphi = \epsilon^A.
\end{equation*}
\begin{definition}[bialgebra]
	A \textbf{bialgebra} over a commutative ring $k$ is a $k$-module $A$ which is both a unital associative algebra and a counital coassociative coalgebra such that one of the following two equivalent conditions is satisfied.
	\begin{enumerate}[label=\normalfont(\roman*)]
		\item The comultiplication $\Delta: A\to A\otimes A$ and the counit $\epsilon:A\to k$ are homomorphisms of algebras.
		\item The multiplication $m: A\otimes A\to A$ and the unit $\eta:k\to A$ are homomorphisms of coalgebras.
	\end{enumerate}
	Writing them out, the conditions are $\Delta\circ m = (m\otimes m)\circ (\id\otimes\,\sigma\otimes\id)\circ(\Delta\otimes\Delta)$, $\epsilon\circ m= \tilde{\id}^k\circ\epsilon\otimes\epsilon$, $\Delta\circ\eta=\eta\otimes\eta\circ\hat{\id}^k$ and $\epsilon\circ\eta = \id^k$ where the upper index $k$ indicates that the maps are defined for the $k$-module $k$ instead of $A$.\footnote{ The dotted lines are sometimes omitted in all the diagrams and the natural isomorphisms $k\otimes A\cong A \cong A\otimes k$ are implicit. In this sense, we will just write $\id$ from now on and the natural isomorphisms $\tilde{\id}$, $\tilde{\id}\circ\sigma$, $\hat{\id}$, $\hat{\id}\circ\sigma$, etc. are understood.} In terms of string diagrams the conditions are illustrated by
	\begin{figure}[H]
		\centering
		\begin{tikzpicture}[scale=1]
			\draw[thick] (0,3) arc[start angle=-180,end angle=0,radius=.5];	
			\draw[thick] (.5,.5) -- (.5,2.5);
			\draw[thick] (0,0) arc[start angle=180,end angle=0,radius=.5];
			
			\node at (2,1.5) {=};
			
			\node (A) at (4,1) {};
			\node (B) at (5,2) {};
			\node (C) at (5,1) {};
			\node (D) at (4,2) {};
			\node (i) at (intersection of A--B and D--C) {};
			\draw[thick] (i) -- (5,2) arc[start angle=180,end angle=0,radius=.5] (6,2) -- (6,1) arc[start angle=0,end angle=-180,radius=.5] (5,1) -- (4,2) arc[start angle=0,end angle=180,radius=.5] (3,2) -- (3,1) arc[start angle=-180,end angle=0,radius=.5] (4,1) -- (i);
			\draw[thick] (3.5,0) -- (3.5,.5);
			\draw[thick] (5.5,0) -- (5.5,.5);
			\draw[thick] (3.5,2.5) -- (3.5,3);
			\draw[thick] (5.5,2.5) -- (5.5,3);
			\node at (7,0) {,};
		\end{tikzpicture}
	\end{figure}
	\begin{figure}[H]
		\centering
		\begin{tikzpicture}[scale=1]
			\draw[thick] (.5,1) -- (.5,2);
			\draw[thick] (0,0) -- (0,.5) arc[start angle=180,end angle=0,radius=.5] (1,.5) -- (1,0);
			\filldraw[black] (.5,2) circle (2pt) node[anchor=north]{};
			
			\node at (2,1) {=};
			
			\draw[thick] (3,0) -- (3,1);
			\filldraw[black] (3,1) circle (2pt) node[anchor=north]{};
			\draw[dashed] (3,1) -- (3,2);
			\filldraw[black] (3,2) circle (2pt) node[anchor=north]{};
			
			\draw[thick] (3.5,0) -- (3.5,1);
			\draw[dashed] (3.5,1) arc[start angle=0,end angle=90,radius=.5];
			\filldraw[black] (3.5,1) circle (2pt) node[anchor=north]{};
			
			\node at (4.5,0) {,};
			
			\draw[thick] (6,1) -- (6,0);
			\draw[thick] (5.5,2) -- (5.5,1.5) arc[start angle=-180,end angle=0,radius=.5] (6.5,1.5) -- (6.5,2);
			\filldraw[black] (6,0) circle (2pt) node[anchor=north]{};
			
			\node at (7.5,1) {=};
			
			\draw[thick] (8.5,2) -- (8.5,1);
			\filldraw[black] (8.5,1) circle (2pt) node[anchor=north]{};
			\draw[dashed] (8.5,1) -- (8.5,0);
			\filldraw[black] (8.5,0) circle (2pt) node[anchor=north]{};
			
			\draw[thick] (9,2) -- (9,1);
			\draw[dashed] (9,1) arc[start angle=0,end angle=-90,radius=.5];
			\filldraw[black] (9,1) circle (2pt) node[anchor=north]{};
			
			\node at (10,0) {,};
			
			\draw[thick] (11,0) -- (11,2);
			\filldraw[black] (11,2) circle (2pt) node[anchor=north]{};
			\filldraw[black] (11,0) circle (2pt) node[anchor=north]{};
			
			\node at (12,1) {=};
			
			\draw[dashed] (13,0) -- (13,2);
			\filldraw[black] (13,2) circle (2pt) node[anchor=north]{};
			\filldraw[black] (13,0) circle (2pt) node[anchor=north]{};
			
			\node at (14,0) {. $\odot$};
		\end{tikzpicture}
	\end{figure}
\end{definition}
If $A$ and  $B$ are bialgebras, a $k$-module map $\varphi:A\to B$ is a \textit{homomorphism of bialgebras} if it is a homomorphism of both the algebra and coalgebra structures of A. Similarly, the tensor product $A\otimes B$ of two bialgebras is a bialgebra. If $A$ is a bialgebra, there are three associated bialgebras $A_{\operatorname{op}}$, $A^{\operatorname{op}}$ and $A_{\operatorname{op}}^{\operatorname{op}}$ with algebra and coalgebra structures as defined above.
\begin{definition}[Hopf algebra]\label{def:Hopf_alg}
	A \textbf{Hopf algebra} over a commutative ring $k$ is a $k$-module $A$ which is a bialgebra over $k$ equipped with a bijective $k$-module map $S:A\to A$, called the antipode, such that the conditions $m\circ (S\otimes\id)\circ\Delta = \eta\circ\epsilon=m\circ (\id\otimes S)\circ\Delta$ are satisfied. These are illustrated by the string diagrams
	\begin{figure}[H]
		\centering
		\begin{tikzpicture}[scale=1]
			\draw[thick] (.5,0) -- (.5,1);
			\draw[thick] (.5,1) arc[start angle=-90,end angle=270,radius=.5];
			\draw[thick] (.5,2) -- (.5,3);
			\filldraw[black] (0,1.5) circle (2pt) node[anchor=north]{};
			\node[anchor=east] at (0,1.5) {$S$};
			
			\node[anchor=east] at (2,1.5) {$=$};
			
			\draw[thick] (3,0) -- (3,1);
			\filldraw[black] (3,1) circle (2pt) node[anchor=north]{};
			\draw[dashed] (3,1) -- (3,2);
			\filldraw[black] (3,2) circle (2pt) node[anchor=north]{};
			\draw[thick] (3,2) -- (3,3);
			
			\node[anchor=east] at (4,1.5) {$=$};
			
			\draw[thick] (5.5,0) -- (5.5,1);
			\draw[thick] (5.5,1) arc[start angle=-90,end angle=270,radius=.5];
			\draw[thick] (5.5,2) -- (5.5,3);
			\filldraw[black] (6,1.5) circle (2pt) node[anchor=north]{};
			\node[anchor=west] at (6,1.5) {$S$};
			
			\node[anchor=east] at (7.5,0) {. $\odot$};
		\end{tikzpicture}
	\end{figure}
\end{definition}
\textit{Homomorphisms of Hopf algebras} are the homomorphisms of the underlying bialgebra structure. A \textit{Hopf ideal} of a Hopf algebra $A$ over $k$ is a two-sided ideal $I$ of $A$ as an algebra such that
\begin{equation*}
	\Delta(I)\subseteq I\otimes A+A\otimes I,\quad \epsilon(I)=0,\quad S(I)\subseteq I.
\end{equation*}
Then, the Hopf algebra structure of $A$ is passed down to the quotient $k$-module $A/I$ in the obvious way. The kernel of any homomorphism of Hopf algebras is a Hopf ideal. The tensor product $A\otimes B$ of two Hopf algebra with the antipode
\begin{equation*}
	S^{A\otimes B}=S^A\otimes S^B.
\end{equation*}
The associated bialgebras $A_{\operatorname{op}}$, $A^{\operatorname{op}}$ and $A_{\operatorname{op}}^{\operatorname{op}}$ of a Hopf algebra $A$ are Hopf algebras and their antipodes are given by $S^{-1}$, $S^{-1}$ and $S$, respectively.
\begin{rems}\hspace{1em}
	\begin{enumerate}
		\item If a bialgebra has an antipode as in Definition \ref{def:Hopf_alg}, then it is unique. In the literature on Hopf algebras, the antipode is sometimes not required to be bijective. In this case, bijectivity can be shown if the Hopf algebra is either commutative or cocommutative.\label{rem:uniqueness_antipode}
		\item A homomorphism of Hopf algebras $\varphi:A\to B$ automatically satisfies $S^B\circ\varphi = \varphi\circ S^A$ (cf. \cite{Hazewinkel}, Proposition 37.1.10).
		\item The antipode $S$ is automatically an \textit{anti-automorphism} of $A$, i.e. $S:A\to A_{\operatorname{op}}^{\operatorname{op}}$ is an isomorphism of Hopf algebras. Thus, $S^2$ is an automorphism of $A$. It can be shown that, if $A$ is either commutative or cocommutative, then $S^2$ is the identity map (cf. \cite{Abe}, Theorem 2.1.4).
		\item The defining properties of the antipode $S$ in Definition \ref{def:Hopf_alg} can be interpreted in terms of the \textit{convolution} algebra structure on the set of $k$-module maps $A\to A$. If $f,g\in\End(A)$, their convolution is the map $f\bullet g:A\to A$ defined by
		\begin{equation*}
			f\bullet g = m\circ(f\otimes g)\circ\Delta.
		\end{equation*}
		Then it is easy to see that $\eta\circ\epsilon$ is the neutral element for the convolution product. Thus, the defining properties of $S$ assert that $S$ is the inverse of the identity map $\id:A\to A$ for the convolution product. The uniqueness of the antipode in Remark \ref{rem:uniqueness_antipode} is a direct consequence of this observation. $\odot$
	\end{enumerate}
\end{rems}
There are several variants of the definition for Hopf algebras we have given. In some situations, $A$ and $A\otimes A$ (as well as the ground ring $k$) are equipped with topologies for which the $k$-module structure maps of $A$ are continuous. In this case, on usually replaces the algebraic tensor product $A\otimes A$ by a suitable completion, and requires that the Hopf algebra structure maps are continuous. The most important example of such a situation for us is where the ground ring is $k[[h]]$, the ring of formal power series in an indeterminate $h$ over a field $k$. Every $k[[h]]$-module $V$ has the $h$\textit{-adic topology}, which is characterized by requiring that $\{h^nV\,|\,n\geq 0\}$ is a base of the neighbourhoods of $0$ in $V$, and that translations in $V$ are continuous. It is easy to see that, for modules equipped with this topology, every $k[[h]]$-module map is automatically continuous.
\begin{definition}
	A \textbf{topological Hopf algebra} over $k[[h]]$, the ring of formal power series in an indeterminate $h$ over a field $k$, is a complete $k[[h]]$-module $A$ equipped with $k[[h]]$-linear maps $\eta,m,\epsilon,\Delta$ and $S$ satisfying the axioms of Definition \ref{def:Hopf_alg}, but with the algebraic tensor products replaced by their completions in the $h$-adic topology. $\odot$
\end{definition}
Let us now give two important examples. The first one is based on a finite Group $G$. The second defines the Hopf algebra structure for $U(\mathfrak{g})$ (cf. \cite{CPBook} pp. 105-107).
\begin{xmpl}\label{xmpl:group_algebra}
	Let $G$ be a finite group. The group algebra $k[G]$ of $G$ over a commutative ring $k$ is the free $k$-module with basis $G$ and algebra structure obtained by extending linearly the product on $G$. Then, there is a Hopf algebra structure on $k[G]$ defined by extending linearly the formulas $\eta(1)=e$ (where $e$ is the identity element of G), $\epsilon(g)=1$, $\Delta(g)=g\otimes g$ and $S(g)=g^{-1}$ for all $g\in G$. Note that $k[G]$ is always cocommutative, but commutative only if $G$ is abelian. $\odot$
\end{xmpl}
\begin{xmpl}
	Let $\mathfrak{g}$ be a Lie algebra over a field $k$. The Poincaré--Birkhoff--Witt theorem (PBW) asserts that, if $\{x_i\}_{i\in I}$ is a basis of $\mathfrak{g}$, where the index set $I$ is totally ordered, the set of monomials $\{x_{i_1}x_{i_2}\cdots x_{i_k}\}$, where $k\geq 1$ and $i_1\leq i_2\leq\dots\leq i_k$, together with the unit element $\boldsymbol{1}\in U(\mathfrak{g})$ is a basis of $U(\mathfrak{g})$. It follows that the canonical linear map $\mathfrak{g}\to T^1(\mathfrak{g}) \to T(\mathfrak{g}) \to U(\mathfrak{g})$ gives an embedding of $\mathfrak{g}$ into $U(\mathfrak{g})$ and we identify $\mathfrak{g}$ with its image under this map. Since $\mathfrak{g}$ generates $U(\mathfrak{g})$ as an algebra, it is enough to give the structure maps on elements of $\mathfrak{g}$ to define the Hopf structure on $U(\mathfrak{g})$. We define
	\begin{equation*}
		\Delta(x) = x\otimes \boldsymbol{1} + \boldsymbol{1}\otimes x,\quad S(x) = -x,\quad\epsilon(x) = 0 \quad\text{for } x\in \mathfrak{g}\lhook\joinrel\xrightarrow{}U(\mathfrak{g}).
	\end{equation*}
	Note that $U(\mathfrak{g})$ is cocommutative, but commutative only if $\mathfrak{g}$ is abelian. $\odot$
\end{xmpl}
\begin{rem}[algebra-like]
	In fact, $\mathfrak{g}$ can be characterized as the set of \textbf{primitive} (or sometimes \textbf{algebra like}) elements of $U(\mathfrak{g})$, i.e. the set of elements $x\in U(\mathfrak{g})$ for which $\Delta(x)=x\otimes\boldsymbol{1}+\boldsymbol{1}\otimes x$. $\odot$
\end{rem}
\begin{rem}[group-like]
	Because of Example \ref{xmpl:group_algebra}, an element $a$ of an arbitrary Hopf algebra $A$ is called \textbf{group-like} if $\Delta(a)=a\otimes a$. $\odot$
\end{rem}

Now, after introducing Hopf algebras and discussing two important examples, let us explain the use of the "co-structure" and the antipode when it comes to discussing representations of Hopf algebras. In before, we intend to give the basic definition for the term "representation" in this thesis here and explain the possible further constructions in this general setting (cf. \cite{CPBook} pp. 108-111).
\begin{definition}[left $A$-module]
	Let $A$ be an algebra over a commutative ring $k$, a $k$-module $V$ is called a \textbf{left} $\boldsymbol{A}$\textbf{-module} if there is a $k$-module map $\lambda:A\otimes V\to V$ such that the following diagrams commute.
	\begin{equation*}
		\begin{tikzcd}[row sep=large, column sep=large]
			A\otimes A \otimes V \arrow{r}{m\otimes\id_V} \arrow[swap]{d}{\id_A\otimes\lambda} & A\otimes V \arrow{d}{\lambda} \\
			A\otimes V \arrow{r}{\lambda} & V
		\end{tikzcd}\qquad
		\begin{tikzcd}[row sep=large, column sep=large]
			k \otimes V \arrow{r}{\eta\otimes\id_V}  \arrow[swap]{d}{\cong} & A\otimes V \arrow{d}{\lambda} \\
			V \arrow{r}{\id_V} & V
		\end{tikzcd}
	\end{equation*}
	Equivalently, $a\mapsto \lambda(a\otimes \cdot)$ should be a homomorphism of algebras from $A$ into the endomorphsims of $V$. We shall sometimes write $\lambda(a\otimes v)$ as $a.v$ if the action is understood. Right $A$-modules are defined similarly. $\odot$
\end{definition}
\begin{definition}[Hopf algebra representation]
	A \textbf{representation} of a Hopf algebra $A$ over a commutative ring $k$ is a left $A$-module ($A$ being regarded simply as an algebra). The terms subrepresentation, irreducible representation, etc. are as usual. $\odot$
\end{definition}
\begin{rem}
	It is only necessary to consider left $A$-modules, for if $\rho:V\otimes A\to V$ is a right $A$-module, then $\lambda=\rho\circ(\id_V\otimes S)$ is a left $A$-module. $\odot$
\end{rem}
\begin{xmpl}\label{xmpl:trivial_rep}
	The \textbf{trivial representation} of a Hopf algebra $A$ on a $k$-module $V$ is given by
	\begin{equation*}
		\lambda(a\otimes v)=\epsilon(a)v.
	\end{equation*}
	Generally, an element $v$ of an arbitrary representation of $A$ is said to be \textbf{invariant} under the action of $A$ if the preceding equation holds for all $a\in A$. $\odot$
\end{xmpl}
\begin{xmpl}
	The \textbf{adjoint representation} of a Hopf algebra $A$ on itself is given by
	\begin{equation*}
		\operatorname{ad}(a\otimes a') = \sum_{i} a_ia'S(a^i),
	\end{equation*}
	where the comultiplication is given by $\Delta(a)=\sum_{i} a_i\otimes a^i$.\footnote{ Furthermore, this makes $A$ into a left $A$-module algebra (cf. \cite{CPBook} pp. 109-110).} This reduces, of course, in the case of a group algebra $k[G]$ or a universal enveloping algebra $U(\mathfrak{g})$, to the adjoint action $\operatorname{Ad}_G$ of a group $G$ on itself by conjugation or to the adjoint representation $\operatorname{ad}_{\mathfrak{g}}$ of a Lie algebra $\mathfrak{g}$, respectively.\footnote{ Moreover, this makes $A$ into a left $A$-module coalgebra (cf. \cite{CPBook} pp. 109-110).} $\odot$
\end{xmpl}
\begin{xmpl}
	The \textbf{regular representation} of a Hopf algebra $A$ on itself is the multiplication $m$ of $A$. $\odot$
\end{xmpl}

Let us now explain the role of the antipode and the coalgebra structure in representation theory.
In Example \ref{xmpl:trivial_rep} we have already seen that the counit allows one to define the trivial representation of $A$ and we have used the antipode to define the adjoint representation. Now, if $V$ and $W$ are representations of $A$, then $V\otimes W$ is naturally a representation of $A\otimes A$ by
\begin{equation*}
	(a_1\otimes a_2).(v\otimes w)=a_1.v\otimes a_2.w
\end{equation*}
and we can use the comultiplication to make the following definition.
\begin{definition}[the tensor product of representations]\label{def:tensor_prod_of_reps}
	Let $V$ and $W$ be representations of $A$, then we can make the \textbf{tensor product} $V\otimes W$ into a representation of $A$ by defining
	\begin{equation*}
		a.(v\otimes w)=\Delta(a).(v\otimes w).\quad\odot
	\end{equation*}
\end{definition}
Furthermore, we can use the antipode to make $\Hom_k(V,W)$ into a representation in two ways as follows.
\begin{definition}\label{def:Hom_reps}
	If $V$ and $W$ are representations of $A$, the antipode allows us to make $\Hom_k(V,W)$ into a representation in two ways by setting either
	\begin{equation*}
		a.f(v) = \sum_{i} a_i.f(S(a^i).v)\;\text{ or }\; a.f(v) = \sum_{i} a_i.f(S^{-1}(a^i).v),
	\end{equation*}
	for $f\in\Hom_k(V,W),v\in V$, where $\Delta(a) = \sum_{i}a_i\otimes a^i$ as before. $\odot$
\end{definition}
\begin{cor}[the dual representations]\label{cor:the_dual_reps}
	Choosing the trivial representation on $k$ for $W$ in Definition \ref{def:Hom_reps}, we turn the dual $V^\star=\Hom_k(V,k)$ into a representation of $A$ in two ways.\footnote{ Note that $V^\star$ is naturally a right $A$-module, as $V$ is a left $A$-module. Also, we use the symbol $\star$ here to avoid confusion with the notion for dual vector space and dual representation.} We call them the left and the right duals $V^*$ and $^*V$, respectively. Equivalently, the module operation on $V^*$ or $^*V$ can immediately be defined by setting
	\begin{equation*}
		\br{a.v|w}\eqcolon\br{v|S(a).w}\;\text{ or }\;\br{a.v|w}\eqcolon\br{v|S^{-1}(a).w}\quad v\in V^\star,\quad w\in V,
	\end{equation*}
	respectively, where $\br{\cdot|\cdot}$ is the dual pairing between $V^\star$ and $V$. $\odot$
\end{cor}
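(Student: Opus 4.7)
The plan is to unwind Definition \ref{def:Hom_reps} in the special case $W=k$ with the trivial $A$-action, and show that the resulting formulas for the left and right action on $\Hom_k(V,k)$ coincide with the two displayed formulas via the dual pairing. The key inputs are (a) the definition of the trivial representation (Example \ref{xmpl:trivial_rep}), which says $a.\lambda = \epsilon(a)\lambda$ for $\lambda\in k$, and (b) the counit axiom of the Hopf algebra, which in Sweedler-like notation $\Delta(a)=\sum_i a_i\otimes a^i$ reads $\sum_i \epsilon(a_i)\, a^i = a = \sum_i a_i\,\epsilon(a^i)$.

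First, I would take $f\in V^\star$, $v\in V$, and substitute $W=k$ with trivial action into the first formula of Definition \ref{def:Hom_reps}:
\begin{equation*}
(a.f)(v) \;=\; \sum_i a_i.f(S(a^i).v) \;=\; \sum_i \epsilon(a_i)\, f(S(a^i).v),
\end{equation*}
since the action of $a_i$ on the scalar $f(S(a^i).v)\in k$ is multiplication by $\epsilon(a_i)$. By $k$-linearity of $f$ and $S$, this equals
\begin{equation*}
f\!\left(S\!\Big(\sum_i \epsilon(a_i)\, a^i\Big).v\right) \;=\; f\bigl(S(a).v\bigr),
\end{equation*}
using the counit axiom. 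Rewriting via the pairing $\br{\cdot|\cdot}$ between $V^\star$ and $V$, this says precisely $\br{a.v\mid w} = \br{v\mid S(a).w}$ (after relabelling the element of $V^\star$ as $v$ and that of $V$ as $w$, as in the statement). The same computation with $S$ replaced by $S^{-1}$, which is available since the antipode is bijective by Definition \ref{def:Hopf_alg}, gives the second formula and defines the right dual $^*V$.

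Finally, I would verify that each formula really does define a left $A$-module structure on $V^\star$. The only non-trivial axiom is compatibility with multiplication: for $a,b\in A$ one needs $(ab).v = a.(b.v)$ in $V^\star$, which via the pairing reduces to $\br{v\mid S(ab).w} = \br{v\mid S(b)S(a).w}$, i.e.\ to the identity $S(ab)=S(b)S(a)$. This is precisely the content of the remark recorded just after Definition \ref{def:Hopf_alg} that $S$ is an anti-automorphism of $A$; for the second formula one uses the analogous identity $S^{-1}(ab) = S^{-1}(b)S^{-1}(a)$, obtained by applying $S^{-1}$ to the previous one. Unitality $\br{\boldsymbol{1}.v\mid w} = \br{v\mid w}$ follows from $S(\boldsymbol{1})=\boldsymbol{1}$. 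The main ``obstacle'' is really just bookkeeping — ensuring that the trivial-action clause is applied in the correct tensor slot and that the counit axiom is invoked on the correct factor of $\Delta(a)$ — rather than any genuine difficulty.
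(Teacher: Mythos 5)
Your proposal is correct and is exactly the argument the paper intends (it leaves the computation implicit as a specialisation of Definition \ref{def:Hom_reps}): setting $W=k$ with the trivial action, using $a_i.\lambda=\epsilon(a_i)\lambda$ and the counit axiom $\sum_i\epsilon(a_i)a^i=a$ to collapse the sum to $f(S(a).v)$, and noting that the module axioms for the pairing formulas reduce to the anti-automorphism property of $S$ (resp.\ $S^{-1}$) recorded after Definition \ref{def:Hopf_alg}. No gaps.
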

\subsection{Quasitriangular Hopf algebras}\label{subsect_quasitriang_Hopf}
So far, we have only seen examples of Hopf algebras which are either commutative or cocommutative. In this thesis, however, we are interested in Hopf algebras which are neither commutative nor cocommutative but somewhat close to being cocommutative. For, if a Hopf algebra $A$ is cocommutative ($\Delta = \Delta^{\operatorname{op}}$) and $V,W$ are representations of $A$, we have that the representations $V\otimes W$ and $W\otimes V$ are naturally isomorphic. In our case, almost cocommutative means that there exists an invertible element $\mathcal{R}$ in $A\otimes A$ or some appropriate extension of $A\otimes A$ which intertwines $\Delta$ and $\Delta^{\operatorname{op}}$, i.e. it can be used to identify $V\otimes W$ and $W\otimes V$. In physics, such an intertwiner of representations is called an $R$-matrix and it appeared naturally in the study of solutions of the so-called Yang--Baxter equation which is the local integrability condition for two-dimensional vertex models on a square lattice. Such a solution, however, is oftentimes found only on the level of (the model specific) representations and it is a priori unclear if it can be derived from an element $\mathcal{R}$ sitting in the tensor product $A\otimes A$ of a suitable Hopf algebra $A$. In mathematics, we call such a thing a \textit{universal} $R$\textit{-matrix} and its general properties shall be discussed in this subsection. Again, we refer the reader to Section 4.2 A - C in the book \cite{CPBook} for the proofs and further details.\\
\begin{definition}[almost cocommutative Hopf algebra]\label{def:almost_cocommutative_Hopf_alg}
	A Hopf algebra $A$ over a commutative ring $k$ is said to be almost cocommutative if there exists an invertible element $\mathcal{R}\in A\otimes A$ such that
	\begin{equation}
		\Delta^{\operatorname{op}}(a)=\mathcal{R}\Delta(a)\mathcal{R}^{-1}\label{eqn:almost_cocommutative}
	\end{equation}
	for all $a\in A$. $\odot$
\end{definition}
\begin{rems}\label{rems:almost_cocomm}\hspace{1em}
	\begin{enumerate}
		\item A Hopf algebra which is commutative and almost cocommutative is actually cocommutative.
		\item \label{enum:intertwiner_R_matr} If $\rho_V:A\to \End_k(V)$ and $\rho_W:A\to \End_k(W)$ are representations of an almost cocommutative Hopf algebra $(A,\mathcal{R})$, the tensor products $V\otimes W$ and $W\otimes V$ are isomorphic as representations of $A$. One easily checks that if $\sigma:V\otimes W \to W\otimes V$ is the flip map defined by $v\otimes w\mapsto w\otimes v$, $v\in V,w\in W$, then $\sigma\circ(\rho_V\otimes\rho_W)(\mathcal{R})$ is an isomorphism $V\otimes W\to W\otimes V$ which commutes with the action of $A$.
		\item If $A$ is a topological Hopf algebra over $k[[h]]$, one says that $A$ is \textbf{topologically almost cocommutative} if there exists an element $\mathcal{R}$ in the $h$-adic completion of $A\otimes A$ that satisfies (\ref{eqn:almost_cocommutative}). $\odot$
	\end{enumerate}
\end{rems}
If $A$ is almost cocommutative, then $\mathcal{R}$ is obviously unique up to right multiplication by an element in the centralizer $C$ of $\Delta(A)$ in $A\otimes A$. Moreover, it is clear that $Z\otimes Z \subseteq C$, where $Z$ is the centre of $A$. For the other direction, we can write down the following.
\begin{prop}
	Let $\varphi:A\otimes A \to A$ be the $k$-module map defined by $\varphi(a_1\otimes a_2) = a_1S(a_2)$. Then $\varphi(C)\subseteq Z$. $\odot$
\end{prop}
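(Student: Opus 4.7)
My plan is first to show that $\varphi(x)$ is invariant under the adjoint action of $A$, and then to upgrade this invariance to centrality. Write $x = \sum_i u_i \otimes v_i \in C$; the defining condition of $C$ reads $\sum_i u_i c_{(1)} \otimes v_i c_{(2)} = \sum_i c_{(1)} u_i \otimes c_{(2)} v_i$ for every $c \in A$, where I use Sweedler notation $\Delta(c) = \sum c_{(1)} \otimes c_{(2)}$. Applying the $k$-module map $m \circ (\id \otimes S) \colon A \otimes A \to A$ to both sides, using that $S$ is an anti-homomorphism of algebras together with the Hopf-algebra identity $m \circ (\id \otimes S) \circ \Delta = \eta \circ \epsilon$, the left-hand side collapses to $\epsilon(c)\varphi(x)$ and the right-hand side becomes $\sum c_{(1)} \varphi(x) S(c_{(2)})$. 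This gives the adjoint-invariance relation
\begin{equation*}
\sum c_{(1)}\varphi(x) S(c_{(2)}) = \epsilon(c)\,\varphi(x) \qquad \text{for all } c \in A.
\end{equation*}

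For the second step, I would evaluate the expression $\sum c_{(1)}\varphi(x) S(c_{(2)}) c_{(3)}$, built from the three-fold coproduct $\Delta^2(c) = \sum c_{(1)} \otimes c_{(2)} \otimes c_{(3)}$, in two ways that coassociativity forces to agree. Viewing the triple as $(\id \otimes \Delta)\Delta(c)$, so that $(c_{(2)}, c_{(3)})$ is the inner coproduct, and applying the antipode axiom $\sum S(b_{(1)}) b_{(2)} = \epsilon(b)\cdot 1$ inside, the expression simplifies to $\sum c_{(1)}\,\epsilon(c_{(2)})\,\varphi(x) = c\,\varphi(x)$. Viewing the triple instead as $(\Delta \otimes \id)\Delta(c)$, so that $(c_{(1)}, c_{(2)})$ is the inner coproduct, and applying the adjoint-invariance relation from the first step to this inner pair, the partial sum $\sum c_{(1)}\varphi(x) S(c_{(2)}) \otimes c_{(3)}$ collapses to $\varphi(x) \otimes c$; multiplying out gives $\varphi(x)\,c$. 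Comparing the two evaluations yields $c\,\varphi(x) = \varphi(x)\,c$ for every $c \in A$, so $\varphi(x) \in Z$.

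The main subtlety lies in the careful Sweedler bookkeeping across the two coassociative rearrangements, where one must distinguish which pair of adjacent indices in $\Delta^2(c)$ is being treated as the inner coproduct in each evaluation. Otherwise the argument is a routine application of the Hopf-algebra axioms, and it in fact also establishes the general Hopf-algebraic fact that the subspace of adjoint invariants of $A$ acting on itself coincides with $Z$.
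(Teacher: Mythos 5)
Your proof is correct, and it is essentially the standard argument for this fact (the paper itself omits the proof and defers to Chari--Pressley, Section 4.2, where the same computation appears): applying $m\circ(\id\otimes S)$ to the centralizer identity and then evaluating $\sum c_{(1)}\varphi(x)S(c_{(2)})c_{(3)}$ in the two coassociative groupings is exactly that computation, merely organized with the adjoint-invariance step isolated as an intermediate claim. The only cosmetic remark is that your closing assertion that the adjoint invariants coincide with $Z$ uses, for the reverse inclusion, the trivial observation $\sum c_{(1)}aS(c_{(2)})=a\,\epsilon(c)$ for $a\in Z$, which your argument does not spell out but which is immediate.
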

Now, applying the flip map to both sides of (\ref{eqn:almost_cocommutative}), we deduce that $\mathcal{R}_{21}\mathcal{R}\in C$, and hence that $\varphi(\mathcal{R}_{21}\mathcal{R})\in Z$, where $\mathcal{R}_{21}\coloneq \sigma(\mathcal{R})$. We can therefore find the following.
\begin{prop}\label{prop:inv_elem_u}
	Let $u=m(S\otimes \id)(\mathcal{R}_{21})$. Then, $u$ is an invertible element of $A$ and
	\begin{equation*}
		S^2(a) = uau^{-1}
	\end{equation*}
	for all $a\in A$. $\odot$
\end{prop}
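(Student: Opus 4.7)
The plan is to first derive a key identity for $u$ from the almost cocommutativity relation, then use coassociativity to extract the conjugation formula, and finally establish invertibility by an analogous construction using $\mathcal{R}^{-1}$. Write $\mathcal{R}=\sum_i a_i\otimes b_i$ so that $u=\sum_i S(b_i)\,a_i$, and denote $\Delta(a)=\sum_{(a)} a_{(1)}\otimes a_{(2)}$ in Sweedler notation. The first step is to apply the flip $\sigma$, then $S\otimes\id$, and then the multiplication $m$ to both sides of $\mathcal{R}\Delta(a)=\Delta^{\operatorname{op}}(a)\mathcal{R}$. On the resulting right-hand side, the antipode axiom $\sum_{(a)} S(a_{(1)})\,a_{(2)}=\epsilon(a)\cdot 1$ collapses the contribution of $\Delta^{\operatorname{op}}(a)$, yielding the key identity
\begin{equation*}
    \sum_{(a)} S(a_{(2)})\,u\,a_{(1)}=\epsilon(a)\,u,\qquad a\in A.
\end{equation*}

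To extract the conjugation formula, substitute $a\mapsto S(a)$ in the key identity and use $\Delta\circ S=\sigma\circ(S\otimes S)\circ\Delta$ to rewrite it as $\sum_{(a)} S^2(a_{(1)})\,u\,S(a_{(2)})=\epsilon(a)\,u$. Then consider the three-factor element
\begin{equation*}
    Y\coloneq \sum S^2(a_{(1)})\,u\,S(a_{(2)})\,a_{(3)},\qquad\text{where}\quad \Delta^{(2)}(a)=\sum a_{(1)}\otimes a_{(2)}\otimes a_{(3)},
\end{equation*}
and evaluate $Y$ in the two ways afforded by coassociativity. Using $\Delta^{(2)}=(\id\otimes\Delta)\Delta$, the antipode axiom $\sum S(a_{(2)})\,a_{(3)}=\epsilon(a_{(2)})\cdot 1$ acts on the last two tensorands and collapses $Y$ to $S^2(a)\,u$. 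Using $\Delta^{(2)}=(\Delta\otimes\id)\Delta$, the rewritten key identity acts on the first two tensorands and collapses $Y$ to $u\,a$. Equating the two evaluations gives $u\,a=S^2(a)\,u$ for all $a\in A$.

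For invertibility, I would repeat the procedure with $\mathcal{R}^{-1}$ playing the role of $\mathcal{R}$. Inverting both sides of (\ref{eqn:almost_cocommutative}) gives $\mathcal{R}^{-1}\Delta^{\operatorname{op}}(a)=\Delta(a)\mathcal{R}^{-1}$, so the roles of $\Delta$ and $\Delta^{\operatorname{op}}$ are exchanged (and $S$ is replaced by $S^{-1}$ at the analogous steps), which produces a candidate partner element $u'\in A$. The identities $uu'=u'u=1$ should then follow by combining $\mathcal{R}\mathcal{R}^{-1}=\mathcal{R}^{-1}\mathcal{R}=1\otimes 1$ with the antipode axioms and the manipulations above. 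I expect this final step to be the main obstacle: the conjugation formula is essentially forced once the key identity and the two coassociative groupings are in hand, whereas pinning down the correct formula for $u^{-1}$ and verifying that the cross terms cancel requires delicate bookkeeping of the interplay between $\mathcal{R}$, $\mathcal{R}^{-1}$, $S$ and $S^{-1}$, and in the topological case over $k[[h]]$ one must additionally check convergence in the $h$-adic topology.
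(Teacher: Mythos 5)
Your derivation of the conjugation relation is correct and complete. Applying $m\circ(S\otimes\id)\circ\sigma$ to $\mathcal{R}\Delta(a)=\Delta^{\operatorname{op}}(a)\mathcal{R}$ does yield the key identity $\sum_{(a)}S(a_{(2)})\,u\,a_{(1)}=\epsilon(a)u$ (the Sweedler sum factors past the legs of $\mathcal{R}$, so the antipode axiom collapses the $\Delta^{\operatorname{op}}(a)$ contribution as you claim), the substitution $a\mapsto S(a)$ together with $\Delta\circ S=\sigma\circ(S\otimes S)\circ\Delta$ and $\epsilon\circ S=\epsilon$ gives the rewritten identity, and your two coassociative evaluations of $Y$ correctly produce $ua=S^2(a)u$ for all $a\in A$. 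Note that the paper itself gives no proof of this proposition (it defers to Chari--Pressley, Section 4.2), and your argument for this half is the standard one.

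The genuine gap is invertibility, which you only sketch and which is needed even to write $S^2(a)=uau^{-1}$. As stated, ``$uu'=u'u=1$ should then follow'' is not justified: two elements each satisfying a conjugation identity are not thereby mutually inverse, and the cancellation must be organized explicitly. Here is the completion your plan is pointing at. Write $\mathcal{R}=\sum_i a_i\otimes b_i$, $\mathcal{R}^{-1}=\sum_j c_j\otimes d_j$, and set $u'=m(S^{-1}\otimes\id)\big((\mathcal{R}^{-1})_{21}\big)=\sum_j S^{-1}(d_j)c_j$. Since $\Delta(a)=\mathcal{R}^{-1}\Delta^{\operatorname{op}}(a)\mathcal{R}$, the pair $(A^{\operatorname{op}},\mathcal{R}^{-1})$ (opposite coalgebra, antipode $S^{-1}$) is almost cocommutative and its associated element is exactly $u'$, so the part of the argument you have already completed, run in $A^{\operatorname{op}}$, gives $u'a=S^{-2}(a)u'$ --- note the crucial point that $u'$ conjugates by $S^{-2}$, not $S^{2}$. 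Using $u\,S^{-1}(d_j)=S(d_j)\,u$ and $u'\,S(b_i)=S^{-1}(b_i)\,u'$ one then gets
\begin{equation*}
	uu'=\sum_j S(d_j)\,u\,c_j=m(S\otimes\id)\big(\sigma(\mathcal{R}\mathcal{R}^{-1})\big)=1,
	\qquad
	u'u=\sum_i S^{-1}(b_i)\,u'\,a_i=m(S^{-1}\otimes\id)\big(\sigma(\mathcal{R}^{-1}\mathcal{R})\big)=1,
\end{equation*}
because $\mathcal{R}\mathcal{R}^{-1}=\mathcal{R}^{-1}\mathcal{R}=1\otimes1$. These three ingredients --- the explicit candidate $u'$, its $S^{-2}$-conjugation property, and the two telescoping computations --- are precisely what your proposal leaves out; without them the final step does not go through. (Your worry about the $h$-adic topology is harmless: all maps involved are $k[[h]]$-linear, hence continuous.)
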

Similarly, one can show that the elements
\begin{equation*}
	u_2=m(S\otimes\id)(\mathcal{R}^{-1}),\quad u_3=m(\id\otimes S^{-1})(\mathcal{R}_{21}),\quad u_4=m(\id\otimes S)(\mathcal{R}^{-1}),
\end{equation*}
have the same property. So let $u\coloneq u_1$, then $S^2(a)=u_1 a u_i^{-1}$ for all $a\in A$ and $i=1,2,3,4$.
\begin{cor}\label{cor:centr_elts}
	The elements $u_i\in A$ commute with each other and the elements $u_i u_j^{-1}$ are in the centre of $A$, for $i,j=1,2,3,4$. $\odot$
\end{cor}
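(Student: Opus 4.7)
The proof is short and essentially follows from the observation established just before the statement, namely that $S^2(a) = u_i a u_i^{-1}$ holds for the same automorphism $S^2$ regardless of which $i \in \{1,2,3,4\}$ we choose. The plan is to exploit this uniform conjugation property to show centrality, and then derive commutation of the $u_i$ among themselves as a corollary of centrality.

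First I would observe that for any $i,j \in \{1,2,3,4\}$ and any $a \in A$, we have
\begin{equation*}
u_i a u_i^{-1} = S^2(a) = u_j a u_j^{-1},
\end{equation*}
which after rearranging gives $(u_j^{-1} u_i)\, a = a\,(u_j^{-1} u_i)$ for every $a \in A$. Hence $u_j^{-1} u_i$ lies in the centre $Z$ of $A$. This is the key step, and it is essentially immediate once Proposition \ref{prop:inv_elem_u} (and its analogues for $u_2, u_3, u_4$) is in hand.

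Next I would use the centrality of $u_j^{-1} u_i$ to establish that the $u_i$ commute with each other. Indeed, since $u_j^{-1} u_i \in Z$, it commutes in particular with $u_j$, so
\begin{equation*}
u_i = u_j (u_j^{-1} u_i) = (u_j^{-1} u_i) u_j = u_j^{-1} u_i u_j,
\end{equation*}
which gives $u_j u_i = u_i u_j$. Finally, from $u_i u_j = u_j u_i$ it follows that $u_i u_j^{-1} = u_j^{-1} u_i$, and the right-hand side has already been shown to be central. Thus $u_i u_j^{-1} \in Z$ for all $i,j$, completing the proof.

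I do not foresee a genuine obstacle here; the only small subtlety is being careful that one really has $S^2(a) = u_i a u_i^{-1}$ for all four indices $i$, but this is precisely the content of the remark immediately preceding the corollary (together with Proposition \ref{prop:inv_elem_u}). Everything else is formal manipulation in the group of units of $A$.
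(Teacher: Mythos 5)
Your proof is correct and is exactly the standard argument: the paper itself omits the proof (the statement is quoted from Chari--Pressley, Section 4.2), and the argument there is the same one you give, namely that conjugation by each $u_i$ implements $S^2$, so $u_j^{-1}u_i$ centralizes $A$, from which commutativity of the $u_i$ and centrality of $u_iu_j^{-1}$ follow formally. Note only that the line in the thesis reading ``$S^2(a)=u_1 a u_i^{-1}$'' is evidently a typo for $S^2(a)=u_i a u_i^{-1}$, which is the form you (correctly) used.
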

As a straightforward consequence of Proposition \ref{prop:inv_elem_u} we have
\begin{cor}
	The left and the right duals of any representation of an almost cocommutative Hopf algebra are isomorphic. $\odot$
\end{cor}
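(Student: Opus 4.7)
The plan is to build an explicit intertwiner using the distinguished invertible element $u\in A$ of Proposition \ref{prop:inv_elem_u}. Recall from Corollary \ref{cor:the_dual_reps} that the left and right duals $V^*$ and ${}^*V$ have the same underlying $k$-module $V^\star=\Hom_k(V,k)$, and differ only in the action, which reads $\br{a.v|w}=\br{v|S(a).w}$ on $V^*$ and $\br{a.v|w}=\br{v|S^{-1}(a).w}$ on ${}^*V$. So the whole task is to find a $k$-linear bijection $V^\star\to V^\star$ that twists the first action into the second.

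The key input is the identity $S^2(a)=uau^{-1}$. Applying it with $a$ replaced by $S^{-1}(a)$ and rearranging gives the relation I will actually need, namely
\begin{equation*}
S(a)\,u = u\,S^{-1}(a)\qquad\text{for all }a\in A.
\end{equation*}
This identity strongly suggests defining $\phi:V^*\to{}^*V$ by $\phi(v)(w)\coloneqq\br{v|u.w}$, i.e.\ pre-composing with the action of $u$ on $V$ (which makes sense because $u\in A$ acts on any representation). Invertibility is immediate: $\phi^{-1}(v)(w)=\br{v|u^{-1}.w}$ works because $u$ is invertible in $A$.

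To verify $A$-equivariance I would simply compute both sides on an arbitrary $w\in V$:
\begin{equation*}
\br{\phi(a.v)|w}=\br{a.v|u.w}=\br{v|S(a)u.w},
\end{equation*}
\begin{equation*}
\br{a.\phi(v)|w}=\br{\phi(v)|S^{-1}(a).w}=\br{v|uS^{-1}(a).w},
\end{equation*}
and these agree by the boxed identity above. Hence $\phi$ is an isomorphism of $A$-modules $V^*\xrightarrow{\sim}{}^*V$.

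There is essentially no obstacle here once one recognises that Proposition \ref{prop:inv_elem_u} is exactly the needed input: the only real content is guessing the correct formula for the intertwiner, and $\phi(v)=v\circ\rho_V(u)$ is forced by the asymmetry between $S$ and $S^{-1}$ in the two dual actions. A small sanity check worth noting is that one could equally well use any of the elements $u_2,u_3,u_4$ from the discussion preceding Corollary \ref{cor:centr_elts}, and Corollary \ref{cor:centr_elts} guarantees that the resulting isomorphisms differ by central (hence $A$-equivariant) scalars, so the isomorphism class of the intertwiner is well defined.
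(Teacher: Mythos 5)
Your argument is correct and is exactly the route the paper intends: the corollary is stated there as a direct consequence of Proposition \ref{prop:inv_elem_u}, with the isomorphism $V^*\xrightarrow{\sim}{}^*V$ given by precomposition with the action of $u$, using $S(a)u=uS^{-1}(a)$ obtained from $S^2=\operatorname{Ad}_u$ applied to $S^{-1}(a)$. Only a cosmetic remark: the elements $u_iu_j^{-1}$ in Corollary \ref{cor:centr_elts} are central elements of $A$, not scalars, but your conclusion that the alternative intertwiners differ by an $A$-equivariant automorphism still stands.
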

Moreover, we will later see that under some additional assumptions the element $u$ is useful for defining a trace for representations of $A$ which commutes with the $A$-action.\\

An element $\mathcal{R}$ in (\ref{eqn:almost_cocommutative}) cannot be arbitrary since we know that $A^{\operatorname{op}}$ is a Hopf algebra. Writing down the coassociativity for $\Delta^{\operatorname{op}}$ and applying (\ref{eqn:almost_cocommutative}), we find that a sufficient condition for the coassociativity of $\Delta^{\operatorname{op}}$ is\footnotetext{ When we consider elements in higher tensor powers of $A$, we use indices to clarify how we embed $A^{\otimes k}$ into $A^{\otimes l}$, $0<k<l$. For instance $\mathcal{R}=\sum_{j}r_j\otimes r^j \in A\otimes A$, then $\mathcal{R}_{13} = \sum_{j}r_j\otimes \boldsymbol{1} \otimes r^j \in A^{\otimes3}$ and $\mathcal{R}_{31} = (\sigma(\mathcal{R}))_{13} = (R_{21})_{13}=\sum_{j}r^j\otimes \boldsymbol{1} \otimes r_j \in A^{\otimes3}$.}
\begin{equation}
	\mathcal{R}_{12}(\Delta\otimes\id)(\mathcal{R}) = \mathcal{R}_{23}(\id\otimes\Delta)(\mathcal{R}).\label{eqn:R_matrix_coassociativity_cond}\footnotemark
\end{equation}
Similarly, a sufficient condition for the compatibility $(\epsilon\otimes\id)\circ\Delta^{\operatorname{op}}=\id = (\id\otimes\epsilon)\circ\Delta^{\operatorname{op}}$ of the counit to hold is that $(\epsilon\otimes\id)(\mathcal{R}) = \boldsymbol{1} = (\id\otimes\epsilon)(\mathcal{R})$. We make a slightly stronger assumption which turns out to be convenient.
\begin{definition}[quasitriangular, coboundary and triangular]
	An almost cocommutative Hopf algebra $(A,\mathcal{R})$ is said to be
	\begin{enumerate}[label=\normalfont(\roman*)]
		\item \textbf{quasitriangular} if
		\begin{equation*}
			(\Delta\otimes\id)(\mathcal{R})=\mathcal{R}_{13}\mathcal{R}_{23},
		\end{equation*}
		\begin{equation*}
			(\id\otimes\Delta)(\mathcal{R})=\mathcal{R}_{13}\mathcal{R}_{12};
		\end{equation*}
		\item \textbf{coboundary} if $\mathcal{R}$ satisfies (\ref{eqn:R_matrix_coassociativity_cond}), $\mathcal{R}_{21}=\mathcal{R}^{-1}$ and $(\epsilon\otimes\epsilon)(\mathcal{R})=1$;
		\item \textbf{triangular} if $\mathcal{R}$ is quasitriangular and $\mathcal{R}_{21}=\mathcal{R}^{-1}$.
	\end{enumerate}
	If $A$ is quasitriangular, the element $\mathcal{R}$ is called the \textbf{universal $R$-matrix} of $(A,\mathcal{R})$. $\odot$
\end{definition}
\begin{rems}
	\begin{enumerate}
		\item If $(A,\mathcal{R})$ is quasitriangular, so is $(A,\mathcal{R}_{21}^{-1})$.
		\item If $(A,\mathcal{R})$ is quasitriangular, so is $(A_{\operatorname{op}},\mathcal{R}_{21})$, $(A^{\operatorname{op}},\mathcal{R}_{21})$ and $(A_{\operatorname{op}}^{\operatorname{op}},\mathcal{R})$. $\odot$
	\end{enumerate}
\end{rems}
If $A$ is a quasitriangular Hopf algebra, the central elements constructed in Corollary \ref{cor:centr_elts} reduce essentially to the single element $uS(u)$. In fact, it can be shown that $u_1=u_3 = u$ and $u_2=u_4=S(u)^{-1}$. Therefore, the central elements $u_iu_j^{-1}$ are either trivial or equal to $uS(u)$ or its inverse. Furthermore, Proposition \ref{prop:inv_elem_u} and the next proposition imply that
\begin{equation}\label{eqn:Delta(u)_property}
	\Delta(u) = (\mathcal{R}_{21}\mathcal{R}_{12})^{-1}(u\otimes u)=(u\otimes u)(\mathcal{R}_{21}\mathcal{R}_{12})^{-1}.
\end{equation}
\begin{prop}\label{prop:properties_quasitri}
	Let $(A,\mathcal{R})$ be a quasitriangular Hopf algebra. Then
	\begin{align}
		&\mathcal{R}_{12}\mathcal{R}_{13}\mathcal{R}_{23}=\mathcal{R}_{23}\mathcal{R}_{13}\mathcal{R}_{12},\label{eqn:Yang--Baxter}\\
		&(\epsilon\otimes\id)(\mathcal{R}) = \boldsymbol{1} = (\id\otimes\epsilon)(\mathcal{R}),\\
		&(S\otimes\id)(\mathcal{R}) = \mathcal{R}^{-1} = (\id\otimes S^{-1})(\mathcal{R}),\label{eqn:abstr_crossing}\\
		&(S\otimes S)(\mathcal{R}) = \mathcal{R}. \quad\odot
	\end{align}
\end{prop}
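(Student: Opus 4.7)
The plan is to derive the four identities in the order they appear, using only the two quasitriangularity axioms, the almost cocommutativity relation $\Delta^{\operatorname{op}}(a) = \mathcal{R}\Delta(a)\mathcal{R}^{-1}$, the defining Hopf algebra axioms for $\epsilon$ and $S$, and the earlier remark that $S$ is an anti-automorphism of $A$. Throughout I work with the Sweedler-style expansion $\mathcal{R} = \sum_i r_i\otimes r^i$ so that $\mathcal{R}_{13}\mathcal{R}_{23} = \sum_{i,j} r_i\otimes r_j\otimes r^ir^j$ and $\mathcal{R}_{13}\mathcal{R}_{12} = \sum_{i,j} r_ir_j\otimes r^j\otimes r^i$.

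For the Yang--Baxter equation I would compute $(\Delta^{\operatorname{op}}\otimes\id)(\mathcal{R})$ in two different ways. On one hand, since $\Delta^{\operatorname{op}} = \sigma\circ\Delta$ and the flip $\sigma$ is an algebra map on $A\otimes A$ (making $\sigma\otimes\id$ an algebra map on $A^{\otimes 3}$), applying $\sigma\otimes\id$ to the first quasitriangularity axiom swaps the legs and yields $(\Delta^{\operatorname{op}}\otimes\id)(\mathcal{R}) = \mathcal{R}_{23}\mathcal{R}_{13}$. On the other hand, inserting almost cocommutativity entrywise gives $(\Delta^{\operatorname{op}}\otimes\id)(\mathcal{R}) = \mathcal{R}_{12}(\Delta\otimes\id)(\mathcal{R})\mathcal{R}_{12}^{-1} = \mathcal{R}_{12}\mathcal{R}_{13}\mathcal{R}_{23}\mathcal{R}_{12}^{-1}$. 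Equating and right-multiplying by $\mathcal{R}_{12}$ produces (\ref{eqn:Yang--Baxter}).

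For the counit identities I would apply $\epsilon\otimes\id\otimes\id$ to $(\Delta\otimes\id)(\mathcal{R}) = \mathcal{R}_{13}\mathcal{R}_{23}$. The left-hand side collapses to $\mathcal{R}$ by the counit axiom $(\epsilon\otimes\id)\circ\Delta = \id$, while the right-hand side evaluates to $\bigl(\boldsymbol{1}\otimes(\epsilon\otimes\id)(\mathcal{R})\bigr)\cdot \mathcal{R}$ under the identification $k\otimes A\otimes A\cong A\otimes A$. Cancelling the invertible $\mathcal{R}$ leaves $(\epsilon\otimes\id)(\mathcal{R}) = \boldsymbol{1}$; the symmetric argument applied to the second quasitriangularity axiom yields $(\id\otimes\epsilon)(\mathcal{R}) = \boldsymbol{1}$.

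For the antipode identities I would apply $m\circ(S\otimes\id)\otimes\id$ to $(\Delta\otimes\id)(\mathcal{R}) = \mathcal{R}_{13}\mathcal{R}_{23}$: the antipode axiom $m\circ(S\otimes\id)\circ\Delta = \eta\circ\epsilon$ combined with the just-established $(\epsilon\otimes\id)(\mathcal{R}) = \boldsymbol{1}$ forces the left-hand side to equal $\boldsymbol{1}\otimes\boldsymbol{1}$, while direct manipulation of the right-hand side yields $(S\otimes\id)(\mathcal{R})\cdot\mathcal{R}$; hence $(S\otimes\id)(\mathcal{R}) = \mathcal{R}^{-1}$. The analogous computation starting from $(\id\otimes\Delta)(\mathcal{R}) = \mathcal{R}_{13}\mathcal{R}_{12}$ with the map $\id\otimes m\circ(\id\otimes S)$ produces the identity $\sum_{i,j} r_ir_j\otimes r^jS(r^i) = \boldsymbol{1}\otimes\boldsymbol{1}$; applying $\id\otimes S^{-1}$ and using that $S^{-1}$ is an anti-homomorphism rewrites this as $\mathcal{R}\cdot(\id\otimes S^{-1})(\mathcal{R}) = \boldsymbol{1}\otimes\boldsymbol{1}$, so $(\id\otimes S^{-1})(\mathcal{R}) = \mathcal{R}^{-1}$. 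Finally, $(S\otimes S)(\mathcal{R}) = (\id\otimes S)\bigl((S\otimes\id)(\mathcal{R})\bigr) = (\id\otimes S)(\mathcal{R}^{-1})$, and applying $\id\otimes S$ to the identity $(\id\otimes S^{-1})(\mathcal{R}) = \mathcal{R}^{-1}$ identifies the right-hand side with $\mathcal{R}$, proving $(S\otimes S)(\mathcal{R}) = \mathcal{R}$. The main obstacle throughout is purely the bookkeeping when transferring identities between the two tensor slots, in particular the correct use of the anti-homomorphism property of $S$ in the second half of the antipode derivation.
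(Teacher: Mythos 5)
Your argument is correct and is precisely the standard proof (the one in Chari--Pressley, Section 4.2, to which the thesis defers, since the proposition is stated there without proof): the Yang--Baxter equation from computing $(\Delta^{\operatorname{op}}\otimes\id)(\mathcal{R})$ in two ways, the counit identities by applying $\epsilon$ to one leg of the quasitriangularity axioms and cancelling the invertible $\mathcal{R}$, the antipode identities by hitting $(\Delta\otimes\id)(\mathcal{R})=\mathcal{R}_{13}\mathcal{R}_{23}$ resp.\ $(\id\otimes\Delta)(\mathcal{R})=\mathcal{R}_{13}\mathcal{R}_{12}$ with $m\circ(S\otimes\id)$ resp.\ $m\circ(\id\otimes S)$, and the last identity by composing the two. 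All the Sweedler-index bookkeeping, including the use of $S^{-1}$ as an anti-homomorphism in the second antipode identity, checks out.
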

With Proposition \ref{prop:properties_quasitri} it is also clear that every triangular Hopf algebra is coboundary. Equation (\ref{eqn:Yang--Baxter}) is called the (quantum) \textbf{Yang--Baxter equation}.

\begin{rem}\label{rem:left_mult_intertwiner_R-matrix}
	If we denote left multiplication with $\mathcal{R}\in A\otimes A$ by $m_{\mathcal{R}}$ and define the string diagram for $\sigma\circ m_{\mathcal{R}}:A\otimes A\to A\otimes A$ by
	\begin{figure}[H]
		\centering
		\begin{tikzpicture}[scale=1]
			\draw[thick] (1,0) -- (0,1);
			\draw[thick] (0,0) -- (1,1);
			\node at (1.5,0) {,};
		\end{tikzpicture}
	\end{figure}
	then the string diagram for the (quantum) Yang--Baxter equation is given by
	\begin{figure}[H]
		\centering
		\begin{tikzpicture}[scale=1]
			\draw[thick] (2,0) -- (1,1);
			\draw[thick] (1,0) -- (2,1);
			
			\draw[thick] (2,1) -- (2,2);
			\draw[thick] (0,0) -- (0,1);
			\draw[thick] (0,2) -- (0,3);
			
			\draw[thick] (1,1) -- (0,2);
			\draw[thick] (0,1) -- (1,2);
			
			\draw[thick] (2,2) -- (1,3);
			\draw[thick] (1,2) -- (2,3);
			\node at (3,1.5) {=};
			
			\draw[thick] (4,0) -- (5,1);
			\draw[thick] (5,0) -- (4,1);
			
			\draw[thick] (4,1) -- (4,2);
			\draw[thick] (6,0) -- (6,1);
			\draw[thick] (6,2) -- (6,3);
			
			\draw[thick] (5,1) -- (6,2);
			\draw[thick] (6,1) -- (5,2);
			
			\draw[thick] (4,2) -- (5,3);
			\draw[thick] (5,2) -- (4,3);
			\node at (6.5,0) {.};
		\end{tikzpicture}
	\end{figure}
	In fact, by Equation (\ref{eqn:almost_cocommutative}), the operator $\sigma\circ m_{\mathcal{R}}$ is designed such that it commutes with the left multiplication $m_{\Delta(a)}:A\otimes A \to A\otimes A$ by any comultiplication $\Delta(a)$, for all $a\in A$. Thus, it commutes with taking tensor products of representations. Indeed, this explains the \ref{enum:intertwiner_R_matr}nd of the Remarks \ref{rems:almost_cocomm}. Furthermore we will clarify its categorical interpretation in the next subsection. $\odot$
\end{rem}

Combining Equation (\ref{eqn:abstr_crossing}) and the dual representations in Corollary \ref{cor:the_dual_reps}, we obtain equations for the corresponding $R$-matrices as follows.
\begin{cor}[crossing relations]
	Let $(V,\pi_V)$ and $(W,\pi_W)$ be two representations of a quasitriangular Hopf algebra $A$ and denote by $\;^t:\End(U)\to\End(U^*)$ the transposition map defined for any vector space $U$ and $M\in \End(U)$ by
	\begin{equation*}
		\langle M^t t|u\rangle = \langle v|Mw\rangle,\quad\text{for }t\in U^*, u\in U.
	\end{equation*}
	Then, by applying $(S\otimes\id)(\mathcal{R})=\mathcal{R}^{-1}$ we obtain the relation
	\begin{equation}
		(\pi_{V^*}\otimes\pi_W)(\mathcal{R}) = (\pi_{V}\otimes\pi_W)(\mathcal{R}^{-1})^{t_1},\label{eqn:crossing_1}
	\end{equation}
	and by applying $(\id\otimes S^{-1})(\mathcal{R})=\mathcal{R}^{-1}$ we obtain
	\begin{equation}
		(\pi_{V}\otimes\pi_{^*W})(\mathcal{R}) =\label{eqn:crossing_2} (\pi_{V}\otimes\pi_W)(\mathcal{R}^{-1})^{t_2}.\footnotemark
	\end{equation}\footnotetext{Where $t_1$ and $t_2$ denote $t\otimes 1$ and $1\otimes t$, respectively. I.e. transposition in the first resp. second component.}
	Equations (\ref{eqn:crossing_1}) and (\ref{eqn:crossing_2}) and combinations (or iterations) of them are sometimes called \textbf{crossing relations} for the $R$-matrix $R_{VW} = (\pi_{V}\otimes\pi_W)(\mathcal{R})$ (cf. \cite{NR}). $\odot$
\end{cor}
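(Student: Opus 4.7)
The plan is to unfold both sides of each claimed equality in Sweedler-type notation $\mathcal{R}=\sum_i r_i\otimes r^i$, translate the representation on the dual into a transpose of the original representation precomposed with the antipode, and then substitute the identity $(S\otimes\id)(\mathcal{R})=\mathcal{R}^{-1}=(\id\otimes S^{-1})(\mathcal{R})$ from equation~(\ref{eqn:abstr_crossing}). Everything reduces to bookkeeping; there is no genuine obstacle.

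First I would make explicit the relation between dual representations and transposes. By Corollary~\ref{cor:the_dual_reps}, the left dual action on $V^{*}$ satisfies $\langle \pi_{V^{*}}(a)v\,|\,w\rangle = \langle v\,|\,\pi_V(S(a))w\rangle$ for $v\in V^{*}$, $w\in V$, which by the definition of $^t$ is exactly $\pi_{V^{*}}(a) = \pi_V(S(a))^t$. Similarly, from the right dual action $\pi_{^{*}W}(a) = \pi_W(S^{-1}(a))^t$. Since transposition is a $k$-linear map $\End(U)\to\End(U^{*})$, tensoring on the appropriate side gives $t_1$ and $t_2$.

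Next I would compute the first side. Writing $\mathcal{R}=\sum_i r_i\otimes r^i$, I have
\begin{equation*}
(\pi_{V^{*}}\otimes\pi_W)(\mathcal{R})=\sum_i \pi_V(S(r_i))^t\otimes \pi_W(r^i)=\Bigl(\sum_i \pi_V(S(r_i))\otimes \pi_W(r^i)\Bigr)^{t_1}=\bigl((\pi_V\otimes\pi_W)((S\otimes\id)\mathcal{R})\bigr)^{t_1}.
\end{equation*}
Now apply $(S\otimes\id)(\mathcal{R})=\mathcal{R}^{-1}$ from (\ref{eqn:abstr_crossing}) to obtain (\ref{eqn:crossing_1}). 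For (\ref{eqn:crossing_2}), the same manipulation in the second tensor factor gives
\begin{equation*}
(\pi_V\otimes\pi_{^{*}W})(\mathcal{R})=\sum_i \pi_V(r_i)\otimes \pi_W(S^{-1}(r^i))^t=\bigl((\pi_V\otimes\pi_W)((\id\otimes S^{-1})\mathcal{R})\bigr)^{t_2},
\end{equation*}
and the second half of (\ref{eqn:abstr_crossing}) finishes the job.

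The main (non-)obstacle is just keeping the two kinds of dual straight: the left dual $V^{*}$ uses $S$, so it pairs with the identity in the first factor of $\mathcal{R}$ via $(S\otimes\id)$, while the right dual $^{*}W$ uses $S^{-1}$ and pairs with the identity via $(\id\otimes S^{-1})$. Once this dictionary is fixed, each crossing relation follows by a one-line substitution of (\ref{eqn:abstr_crossing}) into the transpose formula, so I would keep the write-up short and simply present both identities in parallel.
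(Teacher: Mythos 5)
Your proposal is correct and is exactly the argument the paper intends: the corollary is stated as an immediate consequence of combining the antipode identities $(S\otimes\id)(\mathcal{R})=\mathcal{R}^{-1}=(\id\otimes S^{-1})(\mathcal{R})$ from (\ref{eqn:abstr_crossing}) with the left/right dual actions of Corollary \ref{cor:the_dual_reps}, which is precisely your dictionary $\pi_{V^{*}}(a)=\pi_V(S(a))^t$, $\pi_{^{*}W}(a)=\pi_W(S^{-1}(a))^t$ followed by the one-line substitution. Your write-up simply makes explicit the Sweedler-notation bookkeeping the paper leaves implicit.
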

Let us make another refinement.
\begin{definition}[ribbon Hopf algebra]
	A \textbf{ribbon Hopf algebra} $(A,\mathcal{R},v)$ is a quasitriangular Hopf algebra $(A,\mathcal{R})$ equipped with an invertible central element $v$ such that
	\begin{equation*}
		\begin{split}
			v^2 = uS(u),\;S(v)=v,\;\epsilon(v)=1,\\
			\Delta(v)=(\mathcal{R}_{21}\mathcal{R}_{12})^{-1}(v\otimes v),
		\end{split}
	\end{equation*}
	where $u=m(S\otimes \id)(\mathcal{R}_{21})$.
\end{definition}
Note that, if $(A,\mathcal{R})$ is triangular, we have $S(u) = u^{-1}$ and together with (\ref{eqn:Delta(u)_property}) we conclude that $(A,\mathcal{R},1)$ is a ribbon Hopf algebra in this case.

Although, not every quasitriangular Hopf algebra is a ribbon Hopf algebra, every quasitriangular Hopf algebra $A$ can be enlarged to a ribbon Hopf algebra by formally adjoining a square root of $uS(u)$ (cf. \cite{CPBook} p. 126). However, having a ribbon Hopf algebra, we can make the following Definition.
\begin{definition}[quantum trace]
	Let $(A,\mathcal{R},v)$ be a ribbon Hopf algebra and let $\rho:A\to \End_k(V)$ be a representation of $A$ which is a free $k$-module of finite rank. If $f:V\to V$ is a $k$-linear map, its quantum trace is defined to be
	\begin{equation*}
		\operatorname{qtr}(f)=\operatorname{trace}(\rho(g)f),
	\end{equation*}
	where $g=v^{-1}u$ and $u=m(S\otimes\id)(\mathcal{R}_{21})$. In particular, the quantum dimension of $V$ is
	\begin{equation*}
		\operatorname{qdim}(V)=\operatorname{trace}(\rho(g)).\quad\odot
	\end{equation*}
\end{definition}
Note that a cocommutative Hopf algebra is always triangular with $\mathcal{R}=\boldsymbol{1}\otimes\boldsymbol{1}$. Then we can take $u=v=g=\boldsymbol{1}$ and the quantum trace becomes the usual trace.
\begin{rems}\hspace{1em}
	\begin{enumerate}
		\item The quantum trace is multiplicative, i.e. if $V$ and $W$ are representations of $A$ and $f\in\End_k(V)$, $g\in\End_k(W)$, then the quantum trace of $f\otimes g\in\End_k(V\otimes W)$ is $\operatorname{qtr}(f\otimes g)=\operatorname{qtr}(f)\operatorname{qtr}(g)$.
		\item The quantum trace $\operatorname{qtr}:\End_k(V)\to k$ is a homomorphism of representations of $A$, where $\End_k(V)$ is made into a representation of $A$ as in Definition \ref{def:Hom_reps} (via $S$) and $k$ is the trivial representation. In fact, $\operatorname{qtr}$ is the composite
		\begin{equation*}
			\End_k(V)\cong V\otimes V^*\cong V^{**}\otimes V^*\to k
		\end{equation*}
		where each map is a homomorphism of representations of $A$ and, except the isomorphism $V\to V^{**}$, canonical. The isomorphism $V\to V^{**}$ is defined by $v\mapsto v'$, where $\langle v'|\xi\rangle=\langle\xi|\rho(g)(v)\rangle$ for all $v\in V^*$. The fact that this commutes with the action of $A$ is equivalent to the statement that $S^2$ is given by the conjugation with $g$. $\odot$
	\end{enumerate}
\end{rems}

\subsection{Abelian monoidal categories}\label{subsect:Ab_mon_cat}
Finally, before we describe the kind of Hopf algebras we are interested in and draw the bow to the first section about Kac--Moody algebras, let us loose a few general words about what all this has to do with (rigid) abelian monoidal, quasitensor (or braided monoidal), and tensor categories and why they are so interesting for us. However, we leave it to the reader to look up the precise definitions and instead focus on explaining some useful consequences (cf. \cite{CPBook} Chapter 5).\\

Let us give a similar but shorter description of what is written in the book \cite{CPBook} on pages 135-136 as follows.
It is well known, that the properties of the operation of taking direct sums (of vector spaces, representations of groups, rings, Lie algebras, $\dots$) can be abstracted in the notion of an \textbf{abelian category}. Abstracting the properties of the tensor product operation leads to the notion of a \textbf{monoidal category}. Although, the basic theory of such categories was developed in the early 1960s by S. MacLane, it has only attracted a wider audience largely because of the discovery of interesting new examples. It is no surprise that many of these are related to \textbf{quantum groups}, which are non-trivial deformations (quantisations) of universal enveloping algebras of semisimple Lie algebras introduced independently in 1986 by V. Drinfeld and M. Jimbo. Indeed, we have just seen that, from an abstract point of view, a Hopf algebra may be regarded as an algebra $A$ equipped with the extra structure necessary to define tensor products (and duals) of representations of $A$.

In many examples of monoidal categories such as the triangular Hopf algebras, the tensor product operation is commutative up to involutive isomorphisms. Then, the monoidal category is also called a \textbf{tensor category}. However, if we weaken 'triangular' to 'quasitriangular', the tensor product is still commutative, but the commutativity isomorphisms don't have to be involutive anymore. Such a monoidal category is then called a \textbf{quasitensor} (or \textbf{braided monoidal}) \textbf{category}.

Considering the example of a semisimple abelian quasitensor category, the tensor product is determined by the decomposition into simple objects of the tensor product of any two simple objects and therefore by a collection of non-negative integers satisfying conditions which reflect the commutativity and associativity properties of the tensor product. Such combinatorial data was encountered in physics in the study of conformal field theories, where they are called \textbf{fusion rules}.

Moreover, Ribbon tangles can be organized into a quasitensor category and the invariants can be obtained by constructing a functor from this category into the category of representations of a particular type of quasitriangular Hopf algebra. In a special case one obtains the invariants of links in $\mathbb{R}^3$ and a method to construct the \textbf{Jones polynomial}.\\

As we have mentioned above, the notion of an abelian category $\mathcal{C}$ is meant to capture the properties of direct sums of vector spaces, representations of groups, Lie algebras, etc. To be more precise, an abelian category is defined such that we have the fundamental homomorphism theorem. Therefore, every morphism must have a \textbf{kernel} and a \textbf{cokernel}, every \textbf{monomorphism} is the kernel of its cokernel and every \textbf{epimorphism} is the cokernel of its kernel. Then, every morphism is expressible as the composite of an epimorphism followed by a monomorphism. Moreover, if a morphism in $\mathcal{C}$ is both a mono- and an epimorphism, it is an isomorphism.

Hence, we can generalize the notion of a short exact sequence. For, if
\begin{equation*}
	U\stackrel{f}{\to}W\stackrel{g}{\to}V
\end{equation*}
is a pair of morphisms such that $g\circ f=0$, we call it a \textbf{short exact sequence} if the morphisms $U\to \ker(g)$ and $\operatorname{coker}(f)\to V$, given by the defining properties of kernel and cokernel, are isomorphisms.

Furthermore, the concepts of an \textbf{irreducible}, or \textbf{completely reducible}, representation have analogues in any abelian category $\mathcal{C}$. An object $U$ in $\mathcal{C}$ is \textbf{simple} if every non-zero monomorphism $V\to U$ and every non-zero epimorphism $U\to W$ are isomorphisms. Any non-zero morphism between simple objects is an isomorphism. An object is \textbf{semisimple} if it is a direct sum of simple objects. The category $\mathcal{C}$ itself is called semisimple if every object of $\mathcal{C}$ is semisimple. In a semisimple abelian category, every short exact sequence splits.

The direct sum operation of an abelian category $\mathcal{C}$ is encoded in its \textbf{Grothendieck group} $\operatorname{Gr}(\mathcal{C})$. It is defined as follows.
\begin{definition}[Grothendieck group]
	Let $\mathcal{F}(\mathcal{C})$ be the free abelian group with generators the isomorphism classes of objects of $\mathcal{C}$, and let $[U]$ be the element of $\mathcal{F}(\mathcal{C})$ corresponding to an object $U$ of $\mathcal{C}$. Then, $\operatorname{Gr}(\mathcal{C})$ is the quotient of $\mathcal{F}(\mathcal{C})$ by the subgroup generated by the elements of the form $[W]-[U]-[V]$ for all possible short exact sequences
	\begin{equation*}
		U\to W\to V
	\end{equation*}
	in $\mathcal{C}$. $\odot$
\end{definition}
Note that, if $\mathcal{C}$ is semisimple, the defining relations of $\operatorname{Gr}(\mathcal{C})$ are simply $[U\oplus V] = [U]+[V]$. In this case, $\operatorname{Gr}(\mathcal{C})$ is generated as an abelian group by the simple objects of $\mathcal{C}$.

Coming back to the axiomatization of the properties of the tensor product, a monoidal category is defined as follows.
\begin{definition}[monoidal category]
	A \textbf{monoidal category} is a category $\mathcal{C}$ together with a functor $\otimes:\mathcal{C}\times\mathcal{C}\to\mathcal{C}$, written $(U,V)\mapsto U\otimes V$ for objects $U$ and $V$ of $\mathcal{C}$, which satisfies the conditions
	\begin{enumerate}[label=\normalfont(\roman*)]
		\item there are natural (associativity) isomorphisms $\alpha_{U,V,W}:U\otimes(V\otimes W)\to (U\otimes V)\otimes W$ such that
		\begin{equation*}
			\alpha_{U\otimes V,W,Z}\circ\alpha_{U,V,W\otimes Z} = (\alpha_{U,V,W}\otimes\id)\circ \alpha_{U,V\otimes W,Z}\circ (\id\otimes\alpha_{V,W,Z})\quad\text{for all }U,V,W,Z,
		\end{equation*}
		\item there is an identity object $\boldsymbol{1}$ in $\mathcal{C}$ and natural isomorphisms $\rho_U:U\otimes\boldsymbol{1}\to U$ and $\lambda_U:\boldsymbol{1}\otimes U\to U$ such that
		\begin{equation*}
			(\rho\otimes\id)\circ\alpha_{U,\boldsymbol{1},V} = \id\otimes(\id\otimes\lambda)\quad\text{for all }U,V.\quad \odot
		\end{equation*}
	\end{enumerate}
\end{definition}
With this, one can define an \textbf{abelian monoidal category} as a monoidal category which is also an abelian category such that $\otimes$ is a bi-additive functor. Functors between monoidal categories are defined such that the tensor product structure is preserved up to natural isomorphisms. We refer the reader to the book \cite{CPBook} for the precise definition.

Moreover, if $\mathcal{C}$ is an abelian monoidal category, its Grothendieck group $\operatorname{Gr}(\mathcal{C})$ is a unital ring with product given by
\begin{equation*}
	[U][V]=[U\otimes V],
\end{equation*}
and unit $[\boldsymbol{1}]$. The associativity of this product, and the basic property $[\boldsymbol{1}][U]=[U][\boldsymbol{1}] = [U]$ of the unit, follow from the existence of the isomorphisms $\alpha$, $\lambda$ and $\rho$. We will therefore call $\operatorname{Gr}(\mathcal{C})$ the \textbf{Grothendieck ring} of the abelian monoidal category $\mathcal{C}$.\\

If we incorporate the notion of a dual of a representation into the categorical framework, we have the notion of \textbf{rigidity} (cf. \cite{CPBook} Subsection 5.1C).

If $\mathcal{C}$ is a monoidal category, an object $U^*$ is said to be a (left) \textbf{dual} of an object $U$ in $\mathcal{C}$ if there are morphisms $\operatorname{ev}_U:U^*\otimes U\to \boldsymbol{1}$ and $\pi_U:\boldsymbol{1}\to U\otimes U^*$ such that $(\pi\otimes\id)\circ\id\circ(\id\otimes\operatorname{ev})=\alpha$ on $U\otimes (U^*\otimes U)$ and $(\operatorname{ev}\otimes\id)\circ\alpha\circ(\id\otimes\pi)=\id$ on $U^*$, where we have suppressed the isomorphisms $\lambda$ and $\rho$ for simplicity. If $U$ has a dual object, it is unique up to isomorphism.

Suppose now that every object in $\mathcal{C}$ has a dual object. If $\psi:U\to V$ is a morphism in $\mathcal{C}$, then the composite
\begin{equation*}
	V^*=V^*\otimes\boldsymbol{1}\xrightarrow{\id\otimes\pi}V^*\otimes U\otimes U^*\xrightarrow{\id\otimes\psi\otimes\id}V^*\otimes V\otimes U^*\xrightarrow{\operatorname{ev}\otimes\id}\boldsymbol{1}\otimes U^*=U^*
\end{equation*}
is a morphism $\psi^*:V^*\to U^*$ (we are now abusing notation by omitting the associativity maps). Thus, we obtain a contravariant functor $\mathcal{C}\to\mathcal{C}$, called the \textbf{dual object functor}, which takes $U$ to $U^*$ and $\psi$ to $\psi^*$.
\begin{definition}[rigid]
	A monoidal category $\mathcal{C}$ is \textbf{rigid} if every object in $\mathcal{C}$ has a dual object, and if the dual object functor $\mathcal{C}\to\mathcal{C}$ is an anti-equivalence of categories. $\odot$
\end{definition}
\begin{rems}\hspace{1em}
	\begin{enumerate}
		\item If $\mathcal{C}$ is a rigid monoidal category, it is easy to see that, for all objects $U$, $V$ and $W$ of $\mathcal{C}$, the map which assigns to a morphism $\psi:U\otimes V\to W$ the morphism $U\to W\otimes V^*$ given by
		\begin{equation*}
			U=U\otimes\boldsymbol{1}\xrightarrow{\id_U\otimes\pi_V}U\otimes V\otimes V^*\xrightarrow{\psi\otimes\id_{V^*}}W\otimes V^*
		\end{equation*}
		is bijective. In particular, there is a bijection between morphisms $U\otimes V\to \boldsymbol{1}$ and morphisms $U\to V^*$, and between morphisms $V\to W$ and morphisms $\boldsymbol{1}\to W\otimes V^*$.
		\item The dual object functor in a rigid monoidal category has a natural monoidal structure. In fact, the composite
		\begin{equation*}
			(V^*\otimes U^*)\otimes(U\otimes V) \xrightarrow{\id_{V^*}\otimes\operatorname{ev}_U\otimes\id_V}V^*\otimes\boldsymbol{1}\otimes V= V^*\otimes V\xrightarrow{\operatorname{ev}_V}\boldsymbol{1}
		\end{equation*}
		corresponds to a morphism $V^*\otimes U^*\to (U\otimes V)^*$, whereas
		\begin{equation*}
			\boldsymbol{1}\xrightarrow{\pi_U} U\otimes U^* = U\otimes\boldsymbol{1}\otimes U^* \xrightarrow{\id_U\otimes\pi_V\otimes\id_{U^*}}(U\otimes V)\otimes (V^*\otimes U^*)
		\end{equation*}
		corresponds to a morphism $W\to U\otimes V$, such that $W^*=V^*\otimes U^*$. It is easy to see that the dual of the latter morphism is the inverse of the former, thus $(U\otimes V)^*$ is naturally isomorphic to $V^*\otimes U^*$.
		\item Note also that if $U$ is an object of a rigid category $\mathcal{C}$, $U^{**}$ is not necessarily isomorphic to $U$. In the case where $U^{**}$ is isomorphic to $U$ for any $U$, the rigid category $\mathcal{C}$ is called reflexive.
	\end{enumerate}
\end{rems}
Let us give a few examples.
\begin{xmpl}\label{xmpl:k-modules_monoid_cat}
	The category of all modules over a commutative ring $k$ is an abelian monoidal category with the obvious associativity maps and the identity object $k$. The same is true for its full subcategory $\boldsymbol{\operatorname{mod}}_k$ consisting of the finitely-generated projective modules. However, only the latter category is rigid (even if $k$ is a field). The dual of a $k$-module $U$ is $U^*=\Hom_k(U,k)$ as usual, and the map $\operatorname{ev}_U$ is the obvious dual pairing which evaluates an element of $U^*$ on an element of $U$. To define $\pi_U$, we need the 'dual basis lemma' for finitely-generated projective modules, according to which, for any set $\{u_i\}_{i\in I}$ of generators of $U$, there exists a set of elements $\{\xi^i\}_{i\in I}$ in $U^*$ such that $u=\sum_{i\in I}u_i \xi^i(u)$ for all $u\in U$. Then, we set $\pi_U(\boldsymbol{1})=\sum_{i\in I}u_i\otimes\xi^i$.
	
	Two special cases are of particular interest here. If $k$ is a field, $\boldsymbol{\operatorname{mod}}_k$ is simply the category of finite-dimensional vector spaces over $k$ and the dual basis lemma gives the dual basis in the usual elementary sense. Whereas if $k=K[[h]]$ is the ring of formal power series in an indeterminate $h$ over a field $K$, then, since $K[[h]]$ is a local ring, $\boldsymbol{\operatorname{mod}}_{K[[h]]}$ consists of the free $K[[h]]$-modules of finite rank. We shall therefore call such $K[[h]]$-modules 'finite-dimensional'. $\odot$
\end{xmpl}
The following example is the main motivation of the discussion of (rigid) abelian monoidal categories in this subsection.
\begin{xmpl}\label{xmpl:Hopf_algebra_monoid_cat}
	If $A$ is a Hopf algebra over a commutative ring $k$, the category of all representations of $A$ is an abelian monoidal category with the same associativity maps as in Example \ref{xmpl:k-modules_monoid_cat}. The identity object is $k$, regarded as the trivial representation of $A$. The full subcategory $\boldsymbol{\operatorname{rep}}_A$ consisting of the representations of $A$ on finitely-generated projective $k$-modules is also rigid abelian monoidal. The dual of a representation $U$ of $A$ is the $k$-module dual $U^*$ defined in Example \ref{xmpl:k-modules_monoid_cat}, provided with the action of $A$ given by
	\begin{equation*}
		\langle a.\xi|u\rangle = \langle \xi|S(a).u\rangle,
	\end{equation*}
	for $a\in A$, $u\in U$ and $\xi\in U^*$, where $S$ is the antipode of $A$. The forgetful functor $\boldsymbol{\operatorname{rep}}_A\to\boldsymbol{\operatorname{mod}}_k$ is monoidal. $\odot$
\end{xmpl}
We are now in the position to give a more precise definition of tensor and quasitensor categories.
\begin{definition}[quasitensor category]\label{def:quasitensor_cat}
	A \textbf{quasitensor category} is a monoidal category $\mathcal{C}$ which is equipped with natural (braiding) isomorphisms $\sigma_{U,V}:U\otimes V\to V\otimes U$, for all objects $U$ and $V$ of $\mathcal{C}$, such that
	\begin{enumerate}[label=\normalfont(\roman*)]
		\item $\sigma$ and $\alpha$ are compatible, i.e.
		\begin{equation*}
			\begin{split}
				&\alpha_{W,U,V}\circ\sigma_{U\otimes V,W}\circ\alpha_{U,V,W} = (\sigma_{U,W}\otimes\id)\circ\alpha_{U,W,V}\circ(\id\otimes\sigma_{V,W})\quad\text{and}\\
				&(\id\otimes\sigma_{U,W})\circ(\alpha_{V,U,W})^{-1}\circ(\sigma_{U,V}\otimes\id)=(\alpha_{V,W,U})^{-1}\circ\sigma_{U, V\otimes W}\circ(\alpha_{U,V,W})^{-1} \quad\text{for all }U,V,W,
			\end{split}
		\end{equation*}
		\item $\sigma$ and $\rho,\lambda$ are compatible, i.e.
		\begin{equation*}
			\begin{split}
				&\rho_U \circ \sigma_{\boldsymbol{1}\otimes U} = \lambda_U\quad\text{and}\\
				& \lambda_U\circ\sigma_{U\otimes\boldsymbol{1}} =\rho_U \quad\text{for all }U.\quad\odot
			\end{split}
		\end{equation*}
	\end{enumerate}
\end{definition}
\begin{definition}[tensor category]
	A \textbf{tensor category} is a quasitensor category where the braiding isomorphisms satisfy the additional constraint
	\begin{equation*}
		\sigma_{V,U}\circ\sigma_{U,V}=\id_{U\otimes V} \quad\text{for all } U,V. \quad\odot
	\end{equation*}
\end{definition}
Note that in the case of a tensor category, the two equalities in Definition \ref{def:quasitensor_cat} (i) and (ii) are equivalent.
$\mathcal{C}$ is called an abelian (quasi)tensor category if $\mathcal{C}$ is an abelian monoidal category and $\sigma$ is bi-additive. Note that in this case, the Grothendieck Ring $\operatorname{Gr}(\mathcal{C})$ is commutative. Furthermore, a monoidal functor $\Phi:\mathcal{C}\to\mathcal{C}'$ between (quasi)tensor categories is a (quasi)tensor functor if it commutes with the action of the braiding isomorphisms up to the natural isomorphisms given in the definition of monoidal functor, i.e. $\Phi(\sigma_{U,V}) \cong \sigma'_{\Phi(U),\Phi(V)}$ for all $U,V$.

Finally, let us give an example which spans the bow between triangular (resp. quasitriangular) Hopf algebras and tensor (resp. quasitensor) categories.
\begin{xmpl}
	Of course, tensor (resp. quasitensor) categories are designed such that the category of representations of any triangular (resp. quasitriangular) Hopf algebra $(A,\mathcal{R})$ on free modules of finite rank is a tensor (resp. quasitensor) category. However, the braiding (or commutativity) maps $\sigma_{U,V}$ are not obvious. For, if $\rho_U:A\to\End(U)$ and $\rho_V:A\to\End(V)$ are representations of $A$, only the choice
	\begin{equation*}
		\sigma_{U,V} = \sigma\circ(\rho_U\otimes\rho_V)(\mathcal{R})
	\end{equation*}
	commutes with the action of $A$, where the $\sigma$ on the right side is the usual flip map as defined in Section \ref{subsect:Hopf_algebras} (cf. \cite{CPBook} p. 151).\footnote{ Instead of $\sigma_{U,V}$ it is convenient to write $\check{R}_{U,V}=\sigma\circ R_{U,V}$ in this case. This is because one usually defines the (physical) $R$-matrix corresponding to the representation $\rho_U\otimes\rho_V$ as $R_{U,V}\coloneq(\rho_U\otimes\rho_V)(\mathcal{R})$.} In fact, we have seen in the last section that the condition that $(A,\mathcal{R})$ is almost cocommutative is sufficient to obtain a quasitensor category. $\odot$
\end{xmpl}
\subsection{Quantized universal enveloping algebras}\label{subsect:QUE_algebras}
The most important examples of quantum groups are deformations of universal enveloping algebras and of algebras of functions on groups. In this section, however, we will focus on giving the first definition of a quantum Kac--Moody algebra and introduce their rational form (cf. \cite{CPBook} Section 6.5, Chapter 8 and Section 9.1) rather than going through the whole theory of deformations of Hopf algebras, described in Chapter 6 in the book \cite{CPBook}. Nevertheless, we will certainly loose a few words about the general idea.

Before we do this, however, let us cite the physics motivation (in the introduction of \cite{CPBook}) of the word 'quantum group', which explains what the term 'quantum' and 'quantization' has to do with our whole discussion about Hopf algebras, non-cocommutativity and why it is interesting to consider deformations of Hopf algebras.

\begin{quote}
	We shall now describe what a quantum group is, beginning by trying to explain the motivation for the use of the adjective 'quantum'.
	
	In classical mechanics, the phase space $M$ of a dynamical system is a \textit{Poisson manifold}. This means that the space $\mathcal{F}(M)$ of (differentiable) complex valued functions on $M$ is equipped with a Lie bracket $\{,\}:\mathcal{F}(M)\times\mathcal{F}(M)\to\mathcal{F}(M)$ (satisfying certain additional additional conditions), called the Poisson bracket. The dynamical equations defining the time evolution of the system are equivalent to the equations
	\begin{equation*}
		\frac{d}{dt}f(m(t))=\{\mathcal{H}_{\text{cl}},f\}(m(t))
	\end{equation*}
	for $f\in \mathcal{F}(M)$, where $\mathcal{H}_{\text{cl}}$ is a fixed function on $M$ called the (classical) hamiltonian, and $m(t)\in M$ is the 'state' of the system at time $t$. For example, for a single particle moving along the real line, $M$ is the cotangent bundle $T^*(\mathbb{R})$, and if $q$ is the coordinate on $\mathbb{R}$ ('position') and $p$ the coordinate in the fibre direction ('momentum'), the Poisson bracket is
	\begin{equation*}
		\{f_1,f_2\}=\frac{\partial f_1}{\partial p}\frac{\partial f_2}{\partial q}-\frac{\partial f_2}{\partial p}\frac{\partial f_1}{\partial q}.
	\end{equation*}
	in particular, the Poisson bracket of the coordinate functions is
	\begin{equation}
		\{p,q\} = 1.\label{eqn:pq}
	\end{equation}
	In quantum mechanics, the space $M$ is replaced by the set of rays in a complex Hilbert space $V$, and the space $\mathcal{F}(M)$ of functions on $M$ by the algebra $\operatorname{Op}(V)$ of (not necessarily bounded) operators on $V$. The time evolution of an operator $A$ is given by
	\begin{equation*}
		\frac{dA}{dt} = [\mathcal{H}_{\text{qu}},A]
	\end{equation*}
	for some operator $\mathcal{H}_{\text{qu}}\in \operatorname{Op}(V)$, called the (quantum) hamiltonian. For example, in the case of a single particle moving along the real line, $V$ is the space $\operatorname{L}^2(\mathbb{R})$ of square integrable functions of $q$, and the operators $P$ and $Q$ corresponding to the coordinate functions $p$ and $q$ are given by
	\begin{equation*}
		P = -\sqrt{-1}h\frac{\partial}{\partial q},\qquad Q=\text{mutiplication by }q,
	\end{equation*}
	where h is $i/2\pi$ times Planck's constant. Note that
	\begin{equation}
		[P,Q] = -\sqrt{-1}h\id_V.\label{eqn:PQ}
	\end{equation}
	The question is: how to pass from the classical to the quantum description of a system. This is the problem of \textit{quantization}. Ideally, one would like a map $\mathcal{Q}$ which assigns to each function $f\in\mathcal{F}(M)$ an operator $\mathcal{Q}(f)$ on $V$. Moreover, since time evolution in the classical and quantum descriptions is given by taking the Poisson bracket and the commutator with the hamiltonian, respectively, $\mathcal{Q}$ should satisfy the relation
	\begin{equation*}
		\mathcal{Q}\{f_1,f_2\}=\frac{[\mathcal{Q}(f_1),\mathcal{Q}(f_2)]}{-\sqrt{-1}h}
	\end{equation*}
	(the normalization comes from (\ref{eqn:pq}) and (\ref{eqn:PQ})). Unfortunately, it is known that, even for the simplest case of a single particle moving along the real line, no such map $\mathcal{Q}$ exists.\\
	
	There is, however, an alternative formulation of the quantization problem, introduced by J. E. Moyal in 1949. This begins by noting that the fundamental difference between the classical and quantum descriptions is that $\mathcal{F}(M)$ is a commutative algebra, whereas $\operatorname{Op}(V)$ is non-commutative (when $\operatorname{dim}(V)>1$). Moyal's idea is to try to reproduce the results of quantum mechanics by replacing the usual product on $\mathcal{F}(M)$ by a non-commutative product $*_h$, depending on a parameter $h$, such that $*_h$ becomes the usual product as $h\to 0$, just as 'quantum mechanics becomes classical mechanics as Planck's constant tends to zero', and such that
	\begin{equation}
		\lim_{h\to 0}\frac{f_1*_hf_2-f_2*_hf_1}{h}=\{f_1,f_2\}.\label{eqn:Moyal_deformation}
	\end{equation}
	If we think of $\mathcal{F}(M)$ with the Moyal product $*_h$ as a non-cocommutative algebra of functions $\mathcal{F}_h(M)$, we find ourselves in the realm of non-cocommutative geometry in the sense of A. Connes. The philosophy here is that any 'space' is determined by the algebra of functions on it (with the usual product). For example, every affine algebraic variety over $\mathbb{C}$ is determined (up to isomorphism) by the commutative algebra of regular functions on it, whereas every compact topological space is determined by its commutative $\operatorname{C}^*$-algebra of complex-valued continuous functions. More precisely, the category of 'spaces' in these examples is dual to the category of the corresponding algebras. Thus, a non-commutative algebra should be viewed as the space of functions on a 'non-cocommutative space', and we can say that Moyal's construction gives a deformation of the classical space $M$ to a family of non-cocommutative (or 'quantum') spaces $M_h$ such that $\mathcal{F}_h(M)$ is the algebra of functions on $M_h$.\\
	
	The category of quantum spaces, then, might be defined as the category dual to the category of associative, but not necessarily commutative, algebras. To define the notion of a quantum group, let us first return for a moment to the classical situation. If $G$ is a group, the multiplication $\mu:G\times G\to G$ of $G$ induces a homomorphism $\mu^*=\Delta:\mathcal{F}(G)\to \mathcal{F}(G\times G)$ of algebras of functions. Now, if we define the algebra $\mathcal{F}(G)$ and the tensor product appropriately, $\mathcal{F}(G\times G)$ will be isomorphic to $\mathcal{F}(G)\otimes\mathcal{F}(G)$ as an algebra. For example, if $G$ is an affine algebraic group over $\mathbb{C}$, and $\mathcal{F}(G)$ is the algebra of regular functions on $G$, the ordinary algebraic tensor product will do. Thus, we have a comultiplication $\Delta:\mathcal{F}(G)\to\mathcal{F}(G)\otimes\mathcal{F}(G)$. (The reason for this terminology is that the multiplication on $\mathcal{F}(G)$ can be viewed as a map $\mathcal{F}(G)\otimes\mathcal{F}(G)\to\mathcal{F}(G)$.) Similarly, the inverse map $\iota:G\to G$ induces a map $\iota^*=S:\mathcal{F}(G)\to\mathcal{F}(G)$, called the antipode, and evaluation at the identity element of $G$ is a homomorphism $\epsilon:\mathcal{F}(G)\to\mathbb{C}$, called the counit. The maps $\Delta$, $S$ and $\epsilon$ satisfy certain compatibility properties which reflect the defining properties of the inverse and the associativity of multiplication in $G$, and combine to give $\mathcal{F}(G)$ the structure of a \textit{Hopf algebra}. We might therefore \textit{define the category of quantum groups to be the category dual to the category of (not necessarily commutative) Hopf algebras.} (We said 'might' here, and in our tentative definition of a quantum space, because, to ensure that the categories of quantum spaces and quantum groups have reasonable properties, it would be necessary to impose some restrictions on the class of algebras which are acceptable as 'quantized algebras of functions'. Manin suggests that one should work with 'Koszul algebras', but we shall not discuss this point here.) As is common practice in the literature, we shall often abuse this terminology by referring to a Hopf algebra itself by a quantum group.
	
	As the preceding discussion suggests, one way to try to construct non-classical examples of quantum groups is to look for deformations, in the category of Hopf algebras, of classical algebras of functions $\mathcal{F}(G)$. Just as the classical Poisson bracket can be recovered as the 'first order part' of Moyal's deformation (see (\ref{eqn:Moyal_deformation})), so it turns out that the existence of a deformation $\mathcal{F}_h(G)$ of $\mathcal{F}(G)$ automatically endows the group $G$ itself with extra structure, namely that of a \textit{Poisson-Lie group}. This is a Poisson structure on $G$ which is compatible with the group structure in a certain sense. Conversely, to construct deformations of $\mathcal{F}(G)$, it is natural to begin by describing the possible Poisson-Lie group structures on $G$ an then to attempt to extend these 'first order deformations' to full deformations. This is the approach taken in this book. Poisson-Lie groups are also of interest in their own right, for they form the natural setting for the study of classical integrable systems with symmetry.
	
	There is another Hopf algebra associated to any Lie group $G$, namely the universal enveloping algebra $U(g)$ of its Lie algebra $\mathfrak{g}$. This is essentially the \textit{dual} of $\mathcal{F}(G)$ in the category of Hopf algebras. In general, the vector space dual $A^*$ of any finite-dimensional Hopf algebra $A$ is also a Hopf algebra: the multiplication $A^*\otimes A^*\to A$ is dual to the comultiplication $\Delta:A\to A\otimes A$ of $A$, and the comultiplication of $A^*$ is dual to the multiplication of $A$. Note that $A^*$ is commutative if and only if $A$ is cocommutative, i.e. if and only if $\Delta(A)$ is contained in the symmetric part of $A\otimes A$. If, as is usually the case in examples of interest, $A$ is infinite-dimensional, this duality often continues to hold provided the dual and tensor product are defined appropriately. To a deformation $\mathcal{F}_h(G)$ of $\mathcal{F}(G)$ through (not necessarily commutative) Hopf algebras therefore corresponds a deformation $U_h(\mathfrak{g})$ of $U(\mathfrak{g})$ through (not necessarily cocommutative) Hopf algebras.
	
	In fact, only non-cocommutative deformations of $U(\mathfrak{g})$ are of interest, since any deformation of $U(\mathfrak{g})$ through cocommutative Hopf algebras is necessarily of the form $U(\mathfrak{g}_h)$ for some deformation $\mathfrak{g}_h$ of $\mathfrak{g}$ through Lie algebras. However, many interesting Lie algebras have non-trivial deformations. This is the case, for example, if $\mathfrak{g}$ is a (finite-dimensional) complex semisimple Lie algebra, such as the Lie algebra $\mathfrak{sl}_2(\mathbb{C})$ of $2\times 2$ complex matrices of trace zero. This follows from the fact that the condition being semisimple is open, so that any small deformation of $\mathfrak{g}$ will still be semisimple, whereas semisimple Lie algebras are discretely parametrized (by their Dynkin diagrams, for example).\\
	(Chari and Pressley, \cite{CPBook}, pp. 1-4)
\end{quote}

As explained, we would like to refer the reader to Chapter 6 in the book \cite{CPBook} for a detailed explanation of the foundations of the deformation theory of Hopf algebras such as universal enveloping algebras and algebras of functions on groups. In fact, the obstructions to the existence and uniqueness of deformations of a given class of mathematical objects are generally described by some cohomology theory of the objects concerned. In the case of Hopf algebras, the appropriate theory is a combination of the Hochschild cohomology theory of associative algebras and the dual theory for coalgebras. Moreover, this theory implies, that, for semisimple algebraic groups and Lie algebras over fields of characteristic zero, every deformation of the algebra of functions has unchanged coalgebra structure an every deformation of the universal enveloping algebra has unchanged algebra structure.

Let us just write down the definition of a deformation of a Hopf algebra and the first few remarks (cf. \cite{CPBook} Section 6.1).
\begin{definition}[deformation]
	A \textbf{deformation} of a Hopf algebra $(A,\iota,m,\epsilon,\delta,S)$ over a field $k$ is a topological Hopf algebra $(A_h,\iota_h,m_h,\epsilon_h,\Delta_h,S_h)$ over the ring $k[[h]]$ of formal power series in an indeterminate $h$ over $k$, such that
	\begin{enumerate}[label=\normalfont(\roman*)]
		\item $A_h$ is isomorphic to $A[[h]]$ as a $k[[h]]$-module;
		\item $m_h\equiv m \;(\operatorname{mod} h)$, $\Delta_h\equiv\Delta \;(\operatorname{mod} h)$. $\odot$
	\end{enumerate}
\end{definition}
Two deformations $A_h$ and $A_{h'}$ are equivalent if there is an isomorphism $f_h:A_h\to A_{h'}$ of Hopf algebras over $k[[h]]$ which is the identity ($\operatorname{mod} h$).
\begin{rems}\hspace{1em}
	\begin{enumerate}
		\item The notation $A[[h]]$ means the algebra of formal power series in $h$ with coefficients in $A$
		\begin{equation*}
			a_h = a_0+a_1h+a_2h^2+\dots \quad(a_0,a_1,a_2,\dots\in A).
		\end{equation*}
		Note that, unless $A$ is finite-dimensional, $A[[h]]$ is 'bigger' than $A\otimes_kk[[h]]$, i.e. $A[[h]]$ is in general only the completion of $A\otimes_kk[[h]]$ in the $h$-adic topology.
		\item The definition does not mention the unit, counit and antipode of $A_h$. Indeed, it can be shown that any deformation is equivalent to one in which the unit and counit are obtained simply by extending the initial ones of $A$ $k[[h]]$-linearly. Further, any deformation of $A$ as a bialgebra (i.e. forgetting the antipode) is automatically a Hopf algebra.
		\item If $f_h:A_h\to A_{h'}$ is an equivalence, we write $A_{h'} = f_h*A_h$. We then have $g_h*(f_h*A_h) = (g_hf_h)*A_h$.
		\item The simplest deformation is called the null deformation. It is obtained simply by extending the structure maps of $A$ $k[[h]]$-linearly. Any deformation equivalent to the null deformation is said to be trivial.
	\end{enumerate}
\end{rems}
Before we can write down the different definitions of affine Kac--Moody algebras, it is helpful to introduce the notion of $q$\textit{-binomial coefficients}. So let $q$ be an indeterminate, and let $k\geq l\in \mathbb{N}$, then we define
\begin{align*}
	[k]_q\coloneq\frac{q^k-q^{-k}}{q-q^{-1}},\quad[k]_q!\coloneq[k]_q[k-1]_q\cdots[1]_q,\quad\recbinom{k}{l}_q\coloneq\frac{[k]_q!}{[k-l]_q![l]_q!}
\end{align*}
for the $q$-number, $q$-factorial and $q$-binomial, respectively. Moreover, these symbols are well-defined elements of the field $\mathbb{Q}(q)$ of rational functions of $q$ over $\mathbb{Q}$ and it is clear that $[k]_q\in\mathbb{Z}[q,q^{-1}]$. Further, taking $q=e^h$ we have that $[k]_{e^h}\equiv k\;(\operatorname{mod} h)$.
\begin{definition}[topologically generated]
	Let $k$ be a field and let $P=k\{X_j|\,j\in J\}$ be the algebra of non-commutative polynomials in the generators $X_j$, $j\in J$, let $\mathfrak{W}$ be a set of relations for the generators $X_j$, let $I$ be the two sided ideal of $P[[h]]$ generated by $\mathfrak{W}$ and let $\bar{I}$ be the closure of $I$ in the $h$-adic topology. We say that an algebra $A$ is \textbf{topologically generated} by the generators $X_j,j\in J$, subject to the relations $\mathfrak{W}$, if $A=P[[h]]/\bar{I}$ as an algebra over $k[[h]]$.
\end{definition}
With this, we can define the quantum Kac--Moody algebra over $\mathbb{C}[[h]]$ (cf. \cite{CPBook} Definition-proposition 6.5.1).
\begin{defprop}[quantum Kac--Moody algebra $U_h(\mathfrak{g})$]\label{defprop:U_h(g)_chevalley}
	Let $\mathfrak{g}=L(A)'$ be the Kac--Moody (sub)algebra associated to a symmetrisable GCM $A=(A_{ij})_{i,j=1,\dots,n}$.\footnote{As introduced in Section \ref{sect:Kac--Moody}.} Let $U_h(\mathfrak{g})$ be the algebra over $\mathbb{C}[[h]]$ topologically generated by the elements $H_i$ and $X_i^\pm$, $i=1,\dots,n$, and with defining relations
	\begin{equation*}
		\begin{split}
			[H_i&,H_j] = 0,\quad[H_i,X_j^\pm]=\pm a_{ij}X_j^\pm,\\
			[&X_i^+,X_j^-]=\delta_{i,j}\frac{e^{d_ihH_i}-e^{-d_ihH_i}}{e^{d_ih}-e^{-d_ih}},\\
			\sum_{k=0}^{1-a_{ij}}(-1)^{k}&\recbinom{1-a_{ij}}{k}_{e^{d_ih}}(X_i^\pm)^kX_j^\pm(X_i^\pm)^{1-a_{ij}-k}=0\quad\text{for }i\neq j.
		\end{split}
	\end{equation*}
	Then, $U_h(\mathfrak{g})$ is the topological Hopf algebra over $\mathbb{C}[[h]]$ with comultiplication defined by
	\begin{equation*}
		\begin{split}
			&\Delta_h(H_i) = H_i\otimes 1+ 1\otimes H_i,\\
			\Delta_h(X_i^+)=X_i^+\otimes &e^{d_ihH_i}+1\otimes X_i^+,\quad\Delta_h(X_i^-)= X_i^-\otimes 1+e^{-d_ihH_i}\otimes X_i^-,
		\end{split}
	\end{equation*}
	antipode defined by
	\begin{equation*}
		S_h(H_i)=-H_i,\quad S_h(X_i^+) =-X_i^+e^{-d_ihH_i},\quad S_h(X_i^-)=-e^{d_i h H_i}X_i^-,
	\end{equation*}
	and counit defined by
	\begin{equation*}
		\epsilon_h(H_i)=\epsilon_h(X_i^\pm) = 0.\quad \odot
	\end{equation*}
\end{defprop}
For the verification that the formulas for the comultiplication, antipode and counit extend to algebra homomorphisms, we refer the reader to the book.\\

Let us state some important theorems for $U_h(\mathfrak{g})$ before we move to a slightly different version of it (cf. \cite{CPBook} Chapter 8).
For any integer $r\geq 0$, and for $1\leq i\leq n$, we define the divided powers
\begin{equation*}
	(X_i^\pm)^{(r)}=\frac{(X_i^\pm)^{r}}{[r]_{q_i}!}.
\end{equation*}
\begin{thm}
	There is an action of the braid group $\mathcal{B}_{\mathfrak{g}}$ by algebra automorphisms of $U_h(\mathfrak{g})$ defined on the generators by
	\begin{equation*}
		\begin{split}
			&T_i(X_i^+) = -X_i^- e^{d_ihH_i},\quad T_i(X_i^-) = -e^{-d_ihH_i}X_i^+,\quad T_i(H_j)=H_j - a_{ij}H_i,\\
			&T_i(X_j^+)= \sum_{r=0}^{-a_{ij}}(-1)^{r-a_{ij}}e^{-rd_ih}(X_i^+)^{(-a_{ij}-r)}X_j^+(X_i^+)^{(r)}\quad\text{if }i\neq j,\\
			&T_i(X_j^-)= \sum_{r=0}^{-a_{ij}}(-1)^{r-a_{ij}}e^{rd_ih}(X_i^-)^{(r)}X_j^-(X_i^-)^{(-a_{ij}-r)}\quad\text{if }i\neq j. \quad\odot
		\end{split}
	\end{equation*}
\end{thm}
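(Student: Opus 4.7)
The plan is to establish the theorem in three steps: well-definedness of each $T_i$ as an algebra endomorphism of $U_h(\mathfrak{g})$, invertibility of $T_i$, and the braid-group relations among the $T_i$.

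First, I would verify that each $T_i$ preserves all defining relations of $U_h(\mathfrak{g})$ listed in Definition-proposition \ref{defprop:U_h(g)_chevalley}. The $H$--$H$ and $H$--$X$ relations are essentially linear and follow from $T_i(H_j) = H_j - a_{ij}H_i$ by direct substitution. The commutator $[X_j^+, X_k^-]$ splits into cases according to how many of $j,k$ equal $i$: the diagonal case $j=k=i$ uses the antipode-like formulas for $T_i(X_i^\pm)$ together with the quantum $\mathfrak{sl}_2$ identity, while the off-diagonal cases reduce to computations inside the rank-two subalgebra generated by $X_i^\pm, X_j^\pm, X_k^\pm$. The quantum Serre relations form the technically heaviest part: one expands the divided powers $(X_i^\pm)^{(r)}$ appearing in the formula for $T_i(X_j^\pm)$ and repeatedly applies standard $q$-binomial identities such as the $q$-Vandermonde identity for $\recbinom{m+n}{k}_{q}$.

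Second, I would exhibit an explicit inverse $T_i^{-1}$, obtained from the formulas for $T_i$ by exchanging $X_i^+ \leftrightarrow X_i^-$ and flipping the signs of the exponentials in $e^{\pm r d_i h}$ and $e^{\pm d_i h H_i}$, and then verify $T_i \circ T_i^{-1} = \operatorname{id}$ on generators using the same divided-power manipulations. This promotes each $T_i$ from an endomorphism to an automorphism.

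Third, the braid relations require verifying one of $T_iT_j = T_jT_i$, $T_iT_jT_i = T_jT_iT_j$, $(T_iT_j)^2 = (T_jT_i)^2$, or $(T_iT_j)^3 = (T_jT_i)^3$ according as $a_{ij}a_{ji} = 0,1,2,3$. Since each of $T_i(X_l^\pm)$ and $T_j(X_l^\pm)$ involves (for $l\in\{i,j\}$) only the generators $H_i, H_j, X_i^\pm, X_j^\pm$, the identity to be checked takes place inside the rank-two quantum subalgebra of type $A_1 \times A_1$, $A_2$, $B_2$ or $G_2$ respectively. The case $a_{ij}a_{ji} = 0$ is immediate because the two automorphisms then act on disjoint sets of generators. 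In the other cases one evaluates both sides on each Chevalley generator: the check on $H_k$ reproduces the classical braid relation among the simple reflections $s_i, s_j$, while the checks on $X_i^\pm$ and $X_j^\pm$ yield the genuinely quantum identities.

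The main obstacle is precisely this last step in types $B_2$ and especially $G_2$: the iterated braid action produces polynomials of rapidly increasing degree in $X_i^\pm$ and $X_j^\pm$, and matching the two sides requires delicate bookkeeping. The practical strategy is to push all $e^{\pm r d_i h H_i}$ factors to one side using the $H$--$X$ relations, reduce the resulting products of divided powers to a normal form via the quantum Serre relations, and then compare coefficients as rational functions of $e^h$. Since the Chevalley generators together with their $T_i, T_j$-images generate the rank-two subalgebra, agreement on them forces equality of the two algebra morphisms and hence the desired braid relations.
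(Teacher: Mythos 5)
You should first be aware that the thesis itself contains no proof of this theorem: it is stated with the $\odot$ symbol and an explicit pointer to Chapter 8 of \cite{CPBook} (where the result, due to Lusztig, is proved by direct computation), consistent with the paper's declared policy of omitting proofs taken from the literature. So there is no in-paper argument to compare against; what you outline — check that each $T_i$ respects the defining relations of $U_h(\mathfrak{g})$, exhibit an explicit inverse, and reduce the braid relations to rank-two verifications — is precisely the strategy of the source the paper defers to, and as a strategy it is sound.

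As a proof, however, your proposal stops where the content of the theorem begins: none of the verifications is actually carried out, and for a statement of this kind the computations (preservation of the quantum Serre relations, and the braid identities in types $A_2$, $B_2$, $G_2$) \emph{are} the proof; naming the $q$-binomial identities one would "repeatedly apply" does not discharge them. Two further points need repair. First, your recipe for $T_i^{-1}$ (exchange $X_i^+\leftrightarrow X_i^-$ and flip the signs of the exponentials) does not give the inverse: with the conventions of the theorem one has $T_i(X_i^+)=-X_i^-e^{d_ihH_i}$ but $T_i^{-1}(X_i^+)=-e^{-d_ihH_i}X_i^-$, which differs from your candidate $-X_i^-e^{-d_ihH_i}$ by the factor $e^{2d_ih}$; the correct inverse also reverses the order of the divided-power factors in $T_i^{-1}(X_j^\pm)$ for $j\neq i$, as recorded in the remark immediately following the theorem in the paper. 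Your planned check $T_i\circ T_i^{-1}=\operatorname{id}$ would expose this, but as written the formulas are wrong. Second, for the braid relations it is not enough to evaluate both sides on $H_i,H_j,X_i^\pm,X_j^\pm$: the maps must agree on all generators, and for $l\notin\{i,j\}$ the images $T_iT_j\cdots(X_l^\pm)$ involve $X_l^\pm$ and hence do not lie in the rank-two subalgebra you invoke; these additional checks are comparatively easy (they take place in the subalgebra generated by $X_i^\pm,X_j^\pm,X_l^\pm$), but they are part of the argument and your reduction should state them explicitly.
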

\begin{rems}\hspace{1em}
	\begin{enumerate}
		\item The action of the $T_i$ on the $H_j$ is the 'same' as that of the fundamental reflections $s_i\in W$ (Weyl group) on the corresponding generators of $\mathfrak{g}$.
		\item The inverses of the  automorphisms $T_i$ are given by
		\begin{equation*}
			\begin{split}
				&T_i^{-1}(X_i^+) =e^{-d_ihH_i} -X_i^- ,\quad T_i^{-1}(X_i^-) = -X_i^+e^{d_ihH_i},\quad T_i^{-1}(H_j)=H_j - a_{ji}H_i,\\
				&T_i^{-1}(X_j^+)= \sum_{r=0}^{-a_{ij}}(-1)^{r-a_{ij}}e^{-rd_ih}(X_i^+)^{(r)}X_j^+(X_i^+)^{(-a_{ij}-r)}\quad\text{if }i\neq j,\\
				&T_i^{-1}(X_j^-)= \sum_{r=0}^{-a_{ij}}(-1)^{r-a_{ij}}e^{rd_ih}(X_i^-)^{(-a_{ij}-r)}X_j^-(X_i^-)^{(r)}\quad\text{if }i\neq j.
			\end{split}
		\end{equation*}
		\item The $T_i$ are no Hopf algebra automorphisms of $U_h(\mathfrak{g})$.
		\item The $T_i$ can be expressed in terms of the Hopf algebra adjoint representation of $U_h(\mathfrak{g})$ and that of the opposite coalgebra $U_h^{\operatorname{op}}(\mathfrak{g})$
		\begin{equation*}
			T_i(X_j^+)= \operatorname{ad}^{\operatorname{op}}_{-(X_i^+)^{(-a_{ij})}}(X_j^+),\quad T_i(X_j^-)=\operatorname{ad}_{-(X_i^-)^{(-a_{ij})}}(X_j^-).
		\end{equation*}
		\item There is a $\mathbb{C}$-algebra anti-automorphism $\omega$ and a $\mathbb{C}$-algebra automorphism $\phi$ of $U_h(\mathfrak{g})$ compatible with the action of the braid group $\mathcal{B}_{\mathfrak{g}}$ in the sense that
		\begin{equation*}
			\omega\circ T_i = T_i\circ\omega,\quad \phi\circ T_i = T_i^{-1}\circ \phi.
		\end{equation*}
		On generators, they are defined in exactly the same way as
		\begin{equation*}
			X_i^+\mapsto X_i^-,\quad X_i^-\mapsto X_i^+,\quad H_i\mapsto H_i,\quad h\mapsto -h. \quad \odot
		\end{equation*}
	\end{enumerate}
\end{rems}
\begin{prop}
	Let $w=s_{i_1}s_{i_2}\cdots s_{i_{l(w)}}$ be a reduced decomposition of an element $w\in W$. Then, the automorphism $T_w=T_{i_1}T_{i_2}\cdots T_{i_{l(w)}}$ of $U_h(\mathfrak{g})$ depends only on $w$, and not on the choice of the reduced decomposition. $\odot$
\end{prop}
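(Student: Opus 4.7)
The plan is to reduce the statement to the well-known fact that any two reduced decompositions of an element of a Coxeter group can be transformed into one another using only the braid relations, without recourse to the quadratic relations $s_i^2=\boldsymbol{1}$. This is Matsumoto's theorem (sometimes attributed to Tits, \emph{Le problème des mots dans les groupes de Coxeter}).

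First, I would recall from the preceding theorem that the assignment $s_i\mapsto T_i$ defines an action of the braid group $\mathcal{B}_{\mathfrak{g}}$ by algebra automorphisms of $U_h(\mathfrak{g})$. In particular, the $T_i$ satisfy exactly the braid relations
\begin{equation*}
\underbrace{T_iT_jT_i\cdots}_{m_{ij}\text{ factors}} \;=\; \underbrace{T_jT_iT_j\cdots}_{m_{ij}\text{ factors}},
\end{equation*}
with $m_{ij}\in\{2,3,4,6\}$ according to $a_{ij}a_{ji}\in\{0,1,2,3\}$, matching the defining braid relations of $\mathcal{B}_{\mathfrak{g}}$.

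Next, I would invoke Matsumoto's theorem: if $s_{i_1}s_{i_2}\cdots s_{i_{l(w)}}$ and $s_{j_1}s_{j_2}\cdots s_{j_{l(w)}}$ are two reduced decompositions of the same element $w\in W$, then one can be transformed into the other by a finite sequence of substitutions of the form $\underbrace{s_is_js_i\cdots}_{m_{ij}}\leftrightarrow\underbrace{s_js_is_j\cdots}_{m_{ij}}$ applied to consecutive subwords. Crucially, since both expressions are reduced and of the same length, no quadratic relation $s_i^2=\boldsymbol{1}$ is needed to pass between them.

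Now apply the homomorphism $\mathcal{B}_{\mathfrak{g}}\to \operatorname{Aut}(U_h(\mathfrak{g}))$, $\tilde{s}_i\mapsto T_i$, obtained from the preceding theorem. Each braid move performed on the reduced expressions translates to a corresponding identity
\begin{equation*}
T_{i_1}\cdots T_{i_{l(w)}} \;=\; T_{j_1}\cdots T_{j_{l(w)}},
\end{equation*}
since the $T_i$ satisfy the braid relations. Hence $T_w$ depends only on $w$ and not on the chosen reduced decomposition. The main substantive ingredient is Matsumoto's theorem on reduced words in Coxeter groups; the rest is bookkeeping, since the braid-group action of the previous theorem is precisely what is needed to translate that purely combinatorial statement into the required equality of automorphisms.
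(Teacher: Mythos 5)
Your argument is correct and is precisely the standard proof: the paper itself omits the proof (referring to Chari--Pressley, Chapter 8), and the argument given there likewise combines the braid-group action of the preceding theorem with Matsumoto--Tits' theorem that any two reduced words for the same $w\in W$ are connected by braid moves alone. One small point worth making explicit: the relations listed in the paper, e.g.\ $(T_iT_j)^2=(T_jT_i)^2$ and $(T_iT_j)^3=(T_jT_i)^3$, are exactly the alternating-word braid relations of length $m_{ij}=4$ and $6$ that you quote, so your translation step is legitimate.
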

Using the braid group action on $U_h(\mathfrak{g})$ it is possible to establish a PBW type basis for $U_h(\mathfrak{g})$ and define the universal $\mathcal{R}$-matrix at least when $\mathfrak{g}$ is of finite type. More generally, we should refer to the reader to the papers \cite{Beck94_1,Beck94_2} and \cite{BeckCP99} where a PBW basis in the affine case is discussed. We will come back to this later when we come to the second Drinfeld realisation, which is the quantum analogue of the 'loop construction' (or 'loop realisation') for affine Kac--Moody algebras. 

In the case of a quantized universal enveloping (such as $U_h(\mathfrak{g})$) algebra it is generally not immediately clear how to obtain an analogue of the PBW theorem. This is because one doesn't have an underlying Lie algebra, neither a basis of it. The most natural basis of the classical enveloping algebra $U(\mathfrak{g})$ is the PBW basis associated to the basis of $\mathfrak{g}$ given by a basis of the Cartan subalgebra $\mathfrak{h}\subset \mathfrak{g}$ together with a root vector in each root space of $\mathfrak{g}$. In $U_h(\mathfrak{g})$, one has an analogue of the basis of $\mathfrak{h}$, namely $\{H_i\}_{i=1,\dots,n}$, but one is only given 'root vectors' $X_i^\pm$ associated to the simple (or fundamental) roots $\alpha_i$ (and their negatives). One must therefore define analogues of root vectors associated to non-simple root of $\mathfrak{g}$. We have seen in Subsection \ref{subsect:Cartan_data} that non-simple root vectors of $\mathfrak{g}$ can be defined by using the action of the braid group $\mathcal{B}_{\mathfrak{g}}$ (a finite covering of the Weyl group $W$) on $\mathfrak{g}$. This is based on the fact, that if 
\begin{equation}
	w_0=s_{i_1}s_{i_2}\cdots s_{i_N}\label{eqn:fixed_dec_longest_elem_W}
\end{equation}
is a reduced decomposition of the longest element of $W$, then every positive root occurs exactly once in the set
\begin{equation}
	\beta_1 = \alpha_{i_1},\;\beta_2= s_{i_1}(\alpha_{i_2}),\;\dots,\;\beta_N= s_{i_1}\cdots s_{i_{N-1}}(\alpha_{i_N}).\label{eqn:roots_beta_1_to_beta_N}
\end{equation}
Fortunately, we were already able to define the action of the braid group $\mathcal{B}_{\mathfrak{g}}$ based on the fundamental 'root vectors' in the quantum case.
We can therefore make the following definition.
\begin{definition}\label{def:affine_root_vectors_U_h}
	Let $\mathfrak{g}$ be a finite-dimensional complex simple Lie algebra, consider a fixed decomposition (\ref{eqn:fixed_dec_longest_elem_W}) of the longest element $w_0\in W$ and define $\beta_1,\dots,\beta_N$ as in (\ref{eqn:roots_beta_1_to_beta_N}). We then define elements $X_{\beta_r}^\pm\in U_h(\mathfrak{g})$ by
	\begin{equation*}
		X_{\beta_r}^\pm = T_{i_1}T_{i_2}\cdots T_{i_{r-1}}(X_{i_r}^\pm)
	\end{equation*}
	and call them the positive (resp. negative) root vectors of $U_h(\mathfrak{g})$. $\odot$
\end{definition}
If $\mathfrak{g}$, however, is an infinite-dimensional Kac--Moody algebra, it is no longer true that every root lies in the Weyl group orbit of a simple root. In this case, Definition \ref{def:affine_root_vectors_U_h} doesn't suffice for defining root vectors. Moreover, in contrast to the classical case, the definition of the root vectors strongly depends on the choice of the reduced decomposition (\ref{eqn:fixed_dec_longest_elem_W}). In fact, the root vectors corresponding to different decompositions are in general not even proportional to each other.\\

Let $U_h^+$, $U_h^0$ and $U_h^-$ be the $\mathbb{C}[[h]]$-subalgebras of $U_h=U_h(\mathfrak{g})$ topologically generated by the $X_i^+$, by the $H_i$ and by the $X_i^-$, respectively. Then we have
\begin{prop}\hspace{1em}
	\begin{enumerate}[label=\normalfont(\roman*)]
		\item Let $w\in W$ be such that $w(\alpha_i)\in\Phi^+$ for some simple root $\alpha_i$. Then $T_w(X_i^\pm)\in U_h^\pm$. If $w(\alpha_i)=\alpha_j$ is simple, then $T_w(X_i^\pm)=X_j^\pm$.
		\item In particular, for any choice of reduced decomposition of $w_0$, the positive (resp. negative) root vectors lie in $U_h^+$ (resp. $U_h^-$).
		\item The braid group action preserves $U_h^0$ and induces on it the classical action of the Weyl group.
	\end{enumerate}
\end{prop}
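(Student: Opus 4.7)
The plan is to prove (iii) first, which is essentially immediate, then deduce (ii) from (i), and finally attack (i) by induction on $l(w)$.

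\textbf{For (iii):} The formula $T_i(H_j) = H_j - a_{ij}H_i$ (with $T_i(H_i) = -H_i$ from $a_{ii} = 2$) shows each $T_i$ sends the generators of the commutative subalgebra $U_h^0$ back into $U_h^0$, hence $T_i|_{U_h^0}$ is an algebra automorphism. Under the identification of $\operatorname{span}\{H_1,\dots,H_n\}$ with the coroot lattice inside $\mathfrak{h}$, it reproduces the simple reflection $s_i$. A direct computation yields $T_i^2(H_j) = H_j$, so together with the braid relations this presents the Coxeter group $W$, giving a well-defined $W$-action on $U_h^0$.

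\textbf{For (ii):} Writing $X_{\beta_r}^\pm = T_{u_r}(X_{i_r}^\pm)$ with $u_r = s_{i_1}\cdots s_{i_{r-1}}$, reducedness of the chosen decomposition of $w_0$ is equivalent to $u_r(\alpha_{i_r}) = \beta_r \in \Phi^+$, so (ii) is (i) applied to $w = u_r$, $i = i_r$.

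\textbf{For (i):} Induct on $l(w)$; the base $l(w) = 0$ is trivial. The inductive step rests on the combinatorial lemma: if $l(w) \geq 1$ and $w(\alpha_i) \in \Phi^+$, there is a simple reflection $s_j$ with $l(s_jw) = l(w)-1$ and $s_jw(\alpha_i) \in \Phi^+$. Indeed, any $s_j$ with $l(s_jw) = l(w)-1$ satisfies $w^{-1}(\alpha_j) \in \Phi^-$, which forces $w(\alpha_i) \neq \alpha_j$ (else $w^{-1}(\alpha_j) = \alpha_i \in \Phi^+$, contradiction), and $s_j$ permutes $\Phi^+ \setminus \{\alpha_j\}$. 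Iterating yields a reduced decomposition $w = s_{j_1}\cdots s_{j_k}$ along which the partial images $\alpha^{(r)} := s_{j_r}\cdots s_{j_k}(\alpha_i)$ all lie in $\Phi^+$. Reverse-inducting on $r$ then shows $y_r := T_{j_r}\cdots T_{j_k}(X_i^\pm) \in U_h^\pm$: the base $y_{k+1} = X_i^\pm$ is clear, and at step $r$ the task is to show $T_{j_r}(y_{r+1}) \in U_h^\pm$, where $y_{r+1}$ has weight $\alpha^{(r+1)} \in \Phi^+ \setminus \{\alpha_{j_r}\}$. The refinement ``$T_w(X_i^\pm) = X_j^\pm$ when $w(\alpha_i) = \alpha_j$'' then follows because the $\alpha_j$-weight space of $U_h^+$ is one-dimensional (spanned by $X_j^+$), so $T_w(X_i^+) = c\,X_j^+$ for some $c \in \mathbb{C}[[h]]$, whose normalization can be tracked through the inductive construction (using in particular $T_l(X_m^+) = X_m^+$ whenever $a_{lm} = 0$, which handles chains of commuting simple roots).

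The main obstacle is the single-step claim at the core of the reverse induction. It does not hold for an arbitrary element of $U_h^+$ of weight $\alpha^{(r+1)}$: monomials of $y_{r+1}$ containing a factor $X_{j_r}^+$ pick up $X_{j_r}^-$-contributions via $T_{j_r}(X_{j_r}^+) = -X_{j_r}^- e^{d_{j_r}hH_{j_r}}$. What rescues the argument is that $y_{r+1}$ is not generic but is constructed iteratively by $T$-operators all preserving positivity of $\alpha_i$, forcing the unwanted $X_{j_r}^-$-terms to cancel via the quantum Serre relations. I would formalize this by strengthening the inductive hypothesis to keep track of the ``Levi support'' of $y_{r+1}$ (the simple roots whose $X_l^\pm$ actually appear in its monomial expansion), thereby reducing to rank-two computations in the corresponding quantum subalgebras, where the $q$-Serre identities can be invoked directly. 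A cleaner but less self-contained route is to invoke Lusztig's braid-group-compatible PBW construction (cf.\ \cite{Beck94_1}), which renders the containment a formal consequence.
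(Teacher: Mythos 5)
The paper does not actually prove this proposition; it is one of the statements deferred to \cite{CPBook} (where the argument goes back to Lusztig), so your proposal has to stand on its own. Parts (iii) and (ii) are fine: the $T_i$ send the $H_j$ into the span of the $H_k$ and square to the identity there, so the braid action on $U_h^0$ factors through $W$; and (ii) is indeed (i) applied to $u_r=s_{i_1}\cdots s_{i_{r-1}}$ and $i=i_r$, since reducedness gives $u_r(\alpha_{i_r})=\beta_r\in\Phi^+$. Your combinatorial lemma producing a reduced word $w=s_{j_1}\cdots s_{j_k}$ whose partial products keep $\alpha_i$ positive is also correct.

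The genuine gap is in (i), at exactly the step you flag. Knowing $y_{r+1}=T_{j_{r+1}}\cdots T_{j_k}(X_i^+)\in U_h^+$ with weight $\alpha^{(r+1)}\in\Phi^+\setminus\{\alpha_{j_r}\}$ does not give $T_{j_r}(y_{r+1})\in U_h^+$: already in type $A_2$ the weight space of $U_h^+$ of weight $\alpha_1+\alpha_2$ is two-dimensional, and $T_1$ maps only the one-dimensional preimage of the $\alpha_2$-line back into $U_h^+$, so positivity of the weight alone carries no force and the reverse induction reduces to an unproved assertion about the particular element $y_{r+1}$. Your proposed repair --- tracking the ``Levi support'' of $y_{r+1}$ so as to reduce to rank-two quantum subalgebras --- does not obviously work, because $\alpha^{(r+1)}=s_{j_{r+1}}\cdots s_{j_k}(\alpha_i)$ is in general a root of large height, so $y_{r+1}$ involves root vectors attached to many simple roots and sits in no rank-two subalgebra. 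The standard proof performs the rank-two reduction on the Weyl-group side, not on the algebra side: pick $j\neq i$ with $l(ws_j)<l(w)$, write $w=w_1u$ with $u$ in the dihedral subgroup $\langle s_i,s_j\rangle$ and $w_1$ the minimal-length coset representative (so $w_1(\alpha_i),w_1(\alpha_j)\in\Phi^+$ and $l(w_1)<l(w)$), verify by explicit rank-two computations (types $A_1\times A_1$, $A_2$, $B_2$, $G_2$) that $T_u(X_i^+)$ lies in the subalgebra generated by $X_i^+$ and $X_j^+$ --- and equals the appropriate generator when $u(\alpha_i)$ is simple --- and then apply the algebra automorphism $T_{w_1}$ together with the induction hypothesis for $w_1$ at both indices $i$ and $j$. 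The normalization statement $T_w(X_i^\pm)=X_j^\pm$ comes out of the same rank-two check, so your ``track the constant through the induction'' remark inherits the same gap. Falling back on Lusztig's or Beck's results (\cite{Beck94_1}) is legitimate, but as a self-contained argument the proposal is incomplete at its central step.
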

The meaning of the third part is that, if $\beta =\sum_i k_i\alpha_i\in Q$, and if we define $H_\beta=\sum_i k_i H_i$, then $T_w(H_\beta)= H_{w(\beta)}$ for all $w\in W$. Note that, if $\beta=\beta_r\in \Phi^+$ is a positive root (as in (\ref{eqn:roots_beta_1_to_beta_N})), the vectors $\{H_{\beta_r},X_{\beta_r}^\pm\}$ satisfy the defining relations of $U_{d_{i_r}h}(\mathfrak{sl}_2)$. This follows from the application of the automorphism $T_{i_1}T_{i_2}\cdots T_{i_{r-1}}$ on the relations satisfied by $\{H_{\beta_r},X_{\beta_r}^\pm\}$.

Using the quantum analogues of the root vectors, we can now construct a topological basis of $U_h$ as follows. Fix a choice of positive and negative root vectors as in Definition \ref{def:affine_root_vectors_U_h}. For $\boldsymbol{r}=(r_1,\dots,r_N)\in \mathbb{N}^N$, define
\begin{equation*}
	\begin{split}
		&(X^+)^{\boldsymbol{r}}=(X_{\beta_N}^+)^{r_N}(X_{\beta_{N-1}}^+)^{r_{N-1}}\cdots(X_{\beta_1}^+)^{r_1},\\
		&(X^-)^{\boldsymbol{r}}= \omega(X^+)^{\boldsymbol{r}}= (X_{\beta_1}^-)^{r_1}(X_{\beta_{2}}^-)^{r_{2}}\cdots(X_{\beta_N}^-)^{r_N},
	\end{split}
\end{equation*}
and for $\boldsymbol{s} = (s_1,\dots,s_n)\in \mathbb{N}^n$,
\begin{equation*}
	H^{\boldsymbol{s}} = H_1^{s_1}H_2^{s_2}\cdots H_n^{s_n}.
\end{equation*}
With this, we have
\begin{prop}\label{prop:pbw_U_h}
	The products $(X^-)^{\boldsymbol{r}}$, $H^{\boldsymbol{s}}$ and $(X^+)^{\boldsymbol{t}}$, for $\boldsymbol{r}$, $\boldsymbol{t}\in \mathbb{N}^N$, $\boldsymbol{s}\in \mathbb{N}^n$ form topological bases of $U_h^-$, $U_h^0$ and $U_h^+$, respectively, and the products $(X^-)^{\boldsymbol{r}}H^{\boldsymbol{s}}(X^+)^{\boldsymbol{t}}$ form a topological basis of $U_h$. $\odot$
\end{prop}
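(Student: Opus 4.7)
The strategy I propose is to reduce the statement to the classical Poincaré--Birkhoff--Witt theorem for $U(\mathfrak{g})$ by passing to the semiclassical limit $h\to 0$, and then invoke a general lifting principle for topological bases of $h$-adic deformations. Concretely, since $U_h(\mathfrak{g})\cong U_h(\mathfrak{g})[[h]]$ as a $\mathbb{C}[[h]]$-module and since all defining relations in Definition-proposition \ref{defprop:U_h(g)_chevalley} reduce modulo $h$ to the defining relations of $U(\mathfrak{g})$ (using $[k]_{e^h}\equiv k\,(\mathrm{mod}\,h)$ and $(e^{d_ihH_i}-e^{-d_ihH_i})/(e^{d_ih}-e^{-d_ih})\equiv H_i\,(\mathrm{mod}\,h)$), we have $U_h(\mathfrak{g})/hU_h(\mathfrak{g})\cong U(\mathfrak{g})$ as Hopf algebras. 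The key lifting lemma, proven by a standard $h$-adic approximation argument, says that a family $\{a_\nu\}\subset U_h(\mathfrak{g})$ is a topological $\mathbb{C}[[h]]$-basis if and only if its reduction modulo $h$ is a $\mathbb{C}$-basis of $U(\mathfrak{g})$; the same holds for the subalgebras $U_h^{\pm}, U_h^0$ relative to their classical analogues $U(\mathfrak{n}^{\pm})$, $U(\mathfrak{h})$.

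The next step is to check that the braid group action constructed in the previous theorem descends modulo $h$ to the classical action of $\mathcal{B}_{\mathfrak{g}}$ on $U(\mathfrak{g})$ summarised in Subsection \ref{subsect:Cartan_data}. By inspection of the formulas for $T_i(X_j^{\pm})$, setting $h=0$ and using $e^{\pm r d_i h}\equiv 1$, the sums collapse to iterated commutators $(-a_{ij})!^{-1}(\mathrm{ad}_{x_i^{\pm}})^{-a_{ij}}(x_j^{\pm})$, which are exactly the defining formulas for $T_i$ in the classical case. Consequently, the quantum root vectors $X_{\beta_r}^{\pm}$ from Definition \ref{def:affine_root_vectors_U_h} reduce to the classical root vectors $x_{\beta_r}^{\pm}\in \mathfrak{g}_{\pm\beta_r}$. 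Therefore the ordered monomials $(X^-)^{\boldsymbol{r}} H^{\boldsymbol{s}} (X^+)^{\boldsymbol{t}}$ reduce to the ordered monomials $(x^-)^{\boldsymbol{r}} H^{\boldsymbol{s}} (x^+)^{\boldsymbol{t}}$ in $U(\mathfrak{g})$, which form a classical PBW basis once we fix the total ordering prescribed by the reduced decomposition (\ref{eqn:fixed_dec_longest_elem_W}). The analogous claim holds separately for $U^{\pm}$ and $U^0$.

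Applying the lifting lemma termwise gives the four basis statements simultaneously. To finish, one must also argue that the proposed elements indeed lie in the correct subspaces: the previous proposition already tells us that the $X_{\beta_r}^{\pm}$ lie in $U_h^{\pm}$, and by construction the $H^{\boldsymbol{s}}$ lie in $U_h^0$; the triangular decomposition $U_h = U_h^-\cdot U_h^0\cdot U_h^+$ then follows formally from the basis property of the full monomials, since the closure of $U_h^-\cdot U_h^0\cdot U_h^+$ in the $h$-adic topology contains all of these basis elements and equals $U_h$.

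The main obstacle is the second step: verifying carefully that the semiclassical reduction of the braid automorphisms $T_i$ is well-defined on the nose and coincides with the classical $T_i$, so that the quantum root vectors $X_{\beta_r}^{\pm}$ are genuinely non-zero and reduce to linearly independent classical root vectors. Once this is in hand, everything else is a direct application of the classical PBW theorem together with the $h$-adic lifting principle; the delicate combinatorics of choosing the convex order on $\Phi^+$ only enters through the classical PBW theorem, which we assume.
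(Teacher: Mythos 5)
First, a note on the comparison: the thesis does not actually prove Proposition \ref{prop:pbw_U_h} at all; it is stated with the usual ``$\odot$'' and a pointer to Subsection 8.1B of \cite{CPBook}, so there is no in-paper argument to measure yours against, and your proposal has to stand on its own. As written it contains a genuine gap. The lifting principle you rely on --- a family in an $h$-adic algebra is a topological $\mathbb{C}[[h]]$-basis if and only if its reduction modulo $h$ is a $\mathbb{C}$-basis of the classical algebra --- is valid only for modules that are topologically \emph{free} over $\mathbb{C}[[h]]$ (separated, complete, no $h$-torsion). You assert at the outset that $U_h(\mathfrak{g})\cong U(\mathfrak{g})[[h]]$ as a $\mathbb{C}[[h]]$-module (the proposal writes $U_h(\mathfrak{g})[[h]]$, presumably a typo), but $U_h(\mathfrak{g})$ is defined as a quotient of a topologically free algebra by the closure of an ideal, and such quotients can perfectly well acquire $h$-torsion; the absence of torsion, i.e.\ flatness of the deformation, is precisely the nontrivial content of the statement that $U_h(\mathfrak{g})$ is a quantization of $U(\mathfrak{g})$, and in the standard treatments it is established by means of (or simultaneously with) the PBW theorem you are trying to prove. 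Even the identification $U_h/hU_h\cong U(\mathfrak{g})$ is not free in the injective direction: reducing the defining relations modulo $h$ only yields a surjection $U(\mathfrak{g})\twoheadrightarrow U_h/hU_h$, and excluding further collapse again needs either representation-theoretic input or a spanning-plus-independence argument of PBW type. So the proposal is circular, or at least assumes the hardest part of the statement.

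A second, smaller gap: even granting topological freeness of $U_h$, applying the lifting lemma to $U_h^{\pm}$ and $U_h^{0}$ separately requires these subalgebras to be $h$-adically complete, torsion-free and saturated in $U_h$ (that is, $U_h^{\pm}\cap hU_h=hU_h^{\pm}$), so that their reductions modulo $h$ really are $U(\mathfrak{n}^{\pm})$ and $U(\mathfrak{h})$ inside $U(\mathfrak{g})$; none of this is addressed, and it does not ``follow formally'' from the basis property of the full monomials --- separating the triangular factors inside $U_h$ needs its own argument. Your observation that the braid operators, and hence the root vectors $X_{\beta_r}^{\pm}$, specialize correctly at $h=0$ is correct and is indeed an ingredient of the independence part of the standard proof; but to turn the proposal into a proof you would have to either establish the flatness and saturation statements independently, or follow the route of \cite{CPBook}: spanning of $U_h^{\pm}$ via the Levendorskii--Soibelman commutation relations among the root vectors, and linear independence via specialization (with flatness in hand) or via the nondegenerate pairing between $U_h^{+}$ and $U_h^{-}$.
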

with the immediate consequence
\begin{cor}\label{cor:pbw_U_h}
	Multiplication defines an isomorphism of $\mathbb{C}[[h]]$-modules
	\begin{equation*}
		U_h^-\otimes U_h^0\otimes U_h^+ \to U_h
	\end{equation*}
	(completed tensor products). $\odot$
\end{cor}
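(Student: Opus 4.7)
My plan is to deduce the corollary directly from Proposition \ref{prop:pbw_U_h}, which is really where the substantive PBW-type content lies. The corollary is essentially a reformulation of that proposition in a more intrinsic (module-theoretic) form, so the task is to package the bases statement into an isomorphism of completed $\mathbb{C}[[h]]$-modules.

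First I would let $\mu : U_h^- \otimes U_h^0 \otimes U_h^+ \to U_h$ denote the $\mathbb{C}[[h]]$-linear map induced by the multiplication of $U_h$, namely $a \otimes b \otimes c \mapsto abc$. Since multiplication in $U_h$ is $\mathbb{C}[[h]]$-bilinear and continuous with respect to the $h$-adic topology, it factors through the algebraic tensor product and extends continuously to the $h$-adically completed tensor product, so $\mu$ is well-defined between the topological modules in question.

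Next I would exploit the fact that if $\{u_i\}_{i\in I}$, $\{v_j\}_{j\in J}$ and $\{w_k\}_{k\in K}$ are topological $\mathbb{C}[[h]]$-module bases of $U_h^-$, $U_h^0$ and $U_h^+$ respectively, then the family of pure tensors $\{u_i \otimes v_j \otimes w_k\}_{(i,j,k) \in I \times J \times K}$ is a topological basis of the completed tensor product $U_h^- \otimes U_h^0 \otimes U_h^+$. Applying this with the PBW bases furnished by Proposition \ref{prop:pbw_U_h}, the elements $(X^-)^{\boldsymbol{r}} \otimes H^{\boldsymbol{s}} \otimes (X^+)^{\boldsymbol{t}}$ form a topological basis of the completed tensor product, and by definition $\mu$ sends this basis to the family $\{(X^-)^{\boldsymbol{r}} H^{\boldsymbol{s}} (X^+)^{\boldsymbol{t}}\}$, which is again a topological basis of $U_h$ by Proposition \ref{prop:pbw_U_h}. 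A continuous $\mathbb{C}[[h]]$-linear map between complete $h$-adically separated $\mathbb{C}[[h]]$-modules which carries a topological basis bijectively onto a topological basis is an isomorphism, so $\mu$ is an isomorphism.

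The only real subtlety I expect is the check that the completed tensor product of topological $\mathbb{C}[[h]]$-bases really is a topological basis of the completion; this amounts to verifying that elements of $U_h^- \otimes U_h^0 \otimes U_h^+$ admit unique convergent expansions in the pure tensors $(X^-)^{\boldsymbol{r}} \otimes H^{\boldsymbol{s}} \otimes (X^+)^{\boldsymbol{t}}$, which follows from the fact that each factor is isomorphic as a complete topological $\mathbb{C}[[h]]$-module to $\mathbb{C}[[h]]^{\widehat{\,}}$-valued functions (i.e.\ to the $h$-adic completion of the respective free module) on the indexing set. Everything else is essentially formal once Proposition \ref{prop:pbw_U_h} has been established.
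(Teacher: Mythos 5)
Your argument is correct and is exactly the route the paper intends: it states the corollary as an immediate consequence of Proposition \ref{prop:pbw_U_h}, and your write-up simply fills in the routine verification that the multiplication map carries the topological basis $(X^-)^{\boldsymbol{r}}\otimes H^{\boldsymbol{s}}\otimes (X^+)^{\boldsymbol{t}}$ of the completed tensor product onto the topological basis $(X^-)^{\boldsymbol{r}}H^{\boldsymbol{s}}(X^+)^{\boldsymbol{t}}$ of $U_h$, hence is an isomorphism of complete $\mathbb{C}[[h]]$-modules. No gaps.
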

(cf. \cite{CPBook} Subsection 8.1B)\\

Let now $X_{\beta_1}^\pm,X_{\beta_2}^\pm,\dots,X_{\beta_N}^\pm$ be the root vectors in Definition \ref{def:affine_root_vectors_U_h} and define the $q$-exponential
\begin{equation*}
	\exp_q(x)=\sum_{k=0}^{\infty}q^{\frac{1}{2}k(k+1)}\frac{x^k}{[k]_q!}.
\end{equation*}
Let also $q_\beta=e^{d_\beta h}$ and $d_\beta =d_i$ if the positive root $\beta$ is Weyl group conjugate to the simple root $\alpha_i$.
Then, one can show
\begin{thm}\label{thm:finite_univ_R_matr_U_h}
	For any finite-dimensional complex simple Lie algebra $\mathfrak{g}$, $U_h(\mathfrak{g})$ is topologically quasitriangular with universal $R$-matrix
	\begin{equation*}
		\mathcal{R}_h=\exp\left[h\sum_{i,j}(B^{-1})_{ij}H_i\otimes H_j\right]\prod_\beta \exp_{q_\beta}[(1-q_\beta^{-2})X_\beta^+\otimes X_\beta^-],
	\end{equation*}
	where the product is over all the positive roots of $\mathfrak{g}$, and the order of the terms is such that the $\beta_r$-term appears to the left of the $\beta_s$-term if $r>s$.
\end{thm}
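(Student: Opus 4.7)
The plan is to verify the three defining axioms from the definition of a quasitriangular Hopf algebra, namely the almost-cocommutativity $\Delta^{\operatorname{op}}(a)=\mathcal{R}_h\Delta(a)\mathcal{R}_h^{-1}$ for all $a\in U_h(\mathfrak{g})$ together with the two hexagon identities $(\Delta\otimes\id)(\mathcal{R}_h)=(\mathcal{R}_h)_{13}(\mathcal{R}_h)_{23}$ and $(\id\otimes\Delta)(\mathcal{R}_h)=(\mathcal{R}_h)_{13}(\mathcal{R}_h)_{12}$. First I would note that the expression makes sense in the $h$-adic completion of $U_h(\mathfrak{g})\otimes U_h(\mathfrak{g})$: each coefficient $(1-q_\beta^{-2})$ lies in $h\mathbb{C}[[h]]$ so every $q$-exponential converges $h$-adically, and the product over positive roots is finite since $\mathfrak{g}$ has only finitely many of them.

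I would then split $\mathcal{R}_h=K\cdot\Theta$ into the Cartan factor $K=\exp[h\sum_{ij}(B^{-1})_{ij}H_i\otimes H_j]$ and the ordered nilpotent factor $\Theta=\prod_\beta\exp_{q_\beta}[(1-q_\beta^{-2})X_\beta^+\otimes X_\beta^-]$. For the almost-cocommutativity it suffices to check on the generators $H_i,X_i^\pm$ since $U_h(\mathfrak{g})$ is topologically generated by them. Both sides agree trivially on $H_i$ because each factor of $\mathcal{R}_h$ is weight-homogeneous and $\Delta(H_i)=\Delta^{\operatorname{op}}(H_i)$. On the $X_i^\pm$ one computes that conjugation of $\Delta(X_i^\pm)$ by $K$ produces, via the identity $[H_j\otimes H_k,X_i^\pm\otimes 1]=\pm a_{ji}X_i^\pm\otimes H_k$ combined with the defining relation of $B^{-1}$, precisely the Cartan twists $e^{\pm d_ihH_i}$ that distinguish $\Delta^{\operatorname{op}}(X_i^\pm)$ from $\Delta(X_i^\pm)$. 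The residual mismatch between the placement of $X_i^\pm$ in the two tensor slots is then absorbed by conjugation with the simple root block $\exp_{q_i}[(1-q_i^{-2})X_i^+\otimes X_i^-]$ of $\Theta$. Here the higher-root blocks commute past these operations to leading order thanks to the normal ordering of Proposition \ref{prop:pbw_U_h}, so the verification reduces to a rank-one $q$-exponential identity in $U_h(\mathfrak{sl}_2)$.

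For the hexagon identities the cleanest route is via Drinfeld's quantum double construction. Denote by $U_h^{\geq 0}$ and $U_h^{\leq 0}$ the Borel subalgebras topologically generated by $\{H_i,X_i^+\}$ and $\{H_i,X_i^-\}$ respectively. I would construct a non-degenerate Hopf algebra pairing $U_h^{\leq 0}\otimes U_h^{\geq 0}\to\mathbb{C}[[h]]$ by specifying it on generators (proportional to $\delta_{ij}$ on pairs $(X_i^-,X_j^+)$, determined by $B^{-1}$ on the Cartan part, and zero on mixed pairs), and extending uniquely by the bialgebra compatibility relations $\langle xy,a\rangle=\langle x\otimes y,\Delta(a)\rangle$ and $\langle x,ab\rangle=\langle\Delta^{\operatorname{op}}(x),a\otimes b\rangle$. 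After identifying the two Cartan copies, $U_h(\mathfrak{g})$ is realized as a quotient of the quantum double $D(U_h^{\geq 0})$, whose canonical element $\sum_\mu e_\mu\otimes e^\mu$ (with $\{e_\mu\},\{e^\mu\}$ dual bases) automatically satisfies both hexagon relations, since these are formal consequences of the duality between multiplication in one Borel and comultiplication in the other. Computing the dual basis explicitly in terms of PBW monomials from Proposition \ref{prop:pbw_U_h} then reproduces the ordered product formula for $\Theta$, while the Cartan factor $K$ emerges from the pairing on $U_h^0$.

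The main obstacle is establishing the non-degeneracy of the Hopf pairing and computing the dual PBW basis explicitly. Because the braid group automorphisms $T_i$ are not Hopf algebra homomorphisms, the pairing is not invariant under $T_i\otimes T_i$ without a Cartan correction; one has to verify that such a correction exists and propagate it inductively along a reduced decomposition $w_0=s_{i_1}\cdots s_{i_N}$ of the longest element, in order to show that the PBW root vectors $X_\beta^-$ and $X_\beta^+$ are mutually dual with pairing value proportional to $(q_\beta-q_\beta^{-1})^{-1}$ and with vanishing pairings across distinct roots. This pairwise duality combined with the ordering convention on $\Theta$ then matches the product formula factor by factor, and the sign $1-q_\beta^{-2}$ in the exponent is fixed by the normalization $\langle X_i^-,X_i^+\rangle=-(q_i-q_i^{-1})^{-1}$ chosen on simple generators.
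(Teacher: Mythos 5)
Your proposal is correct in outline and follows essentially the same route as the paper, which proves the theorem by observing that $U_h(\mathfrak{g})$ is "almost" a quantum double of its Borel subalgebra, so that the canonical element built from dual PBW bases (paired via a non-degenerate Hopf pairing of the two Borels) automatically satisfies the quasitriangularity axioms and, when computed explicitly, yields the Cartan factor times the ordered product of $q$-exponentials. The technical points you flag — $h$-adic convergence, non-degeneracy of the pairing, and the Cartan-corrected compatibility of the braid group action needed to show the root vectors $X_\beta^\pm$ are mutually dual — are exactly the ingredients handled in the reference the paper defers to.
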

The construction of the universal $R$-matrix and therefore the proof of Theorem \ref{thm:finite_univ_R_matr_U_h} is due to the observation that $U_h=U_h(\mathfrak{g})$ is 'almost' a so-called 'quantum double'. It is explained in Subsection 4.2D in the book \cite{CPBook} that quantum doubles naturally have a universal $R$-matrix. Moreover, it is explained in Subsection 8.3D \cite{CPBook} that the universal $R$-matrix in Theorem \ref{thm:finite_univ_R_matr_U_h} is independent of the choice of reduced decomposition of the longest element of the Weyl group of $\mathfrak{g}$. In fact, let $u_h =m_h(S_h\otimes\id)(\mathcal{R}_h)_{21}$ and let $\rho^*$ be the unique element of $\mathfrak{h}\subset U_h(\mathfrak{h})$ such that $\alpha_i(\rho^*)=2d_i$ for $i=1,\dots,n$.\footnote{If $\sum k_i \alpha_i$ is the sum of the positive roots of $\mathfrak{g}$, then $\rho^*=\sum k_i d_i H_i$.} Then $(U_h(\mathfrak{g}),v_h)$ is a ribbon Hopf algebra with $v_h = e^{-h\rho^*}u_h$ (see \cite{CPBook} Subsection 8.3F).\\

Let us now introduce the rational form of the quantum Kac--Moody algebra. Although one can think of $U_h(\mathfrak{g})$ as a family of Hopf algebras (over $\mathbb{C}$) depending on a parameter $h$, it should be clear that an algebra defined over the formal power series ring $\mathbb{C}[[h]]$ cannot be specialised to any value of $h$ except $h=0$. In fact, one can introduce an algebra $U_q(\mathfrak{g})$, defined over the field of rational functions of an indeterminate $q$, which can be viewed as the rational counterpart of the formal object $U_h(\mathfrak{g})$. The main advantage is that one can specialise $q$ to any transcendental complex number. Moreover, the definition and properties closely resemble those of $U_h(\mathfrak{g})$. In addition, on can construct an 'integral' form of the algebra, defined over $\mathbb{Z}[q,q^{-1}]$, which enables one to specialise $q$ to any non zero complex number. Furthermore, its definition is somewhat less technical, as it doesn't require the consideration of $h$-adic topologies, completed tensor products, etc. (see \cite{CPBook} pp. 279-280 for more details).
We introduce $U_q(\mathfrak{g})$ as follows (cf. \cite{CPBook} Section 9.1 A-B).
\begin{defprop}[quantum Kac--Moody algebra $U_q(\mathfrak{g})$]\label{defprop:U_q(g)_chevalley}
	Let $\mathfrak{g}=L(A)'$ be the Kac--Moody (sub)algebra associated to a symmetrisable GCM $A=(A_{ij})_{i,j=1,\dots,n}$ and let $d_i$ be the coprime positive integers such that $(d_ia_{ij})$ is symmetric. Let $q_i=q^{d_i}$, then $U_q(\mathfrak{g})$ is the associative algebra over $\mathbb{Q}(q)$ with generators $K_i$, $K_i^{-1}$ and $X_i^\pm$, $i=1,\dots,n$, and with defining relations
	\begin{equation*}
		\begin{split}
			[K_i,K_j] &= 0,\quad K_iK_i^{-1}=K_i^{-1}K_i=1,\\
			&K_iX_j^\pm K_i^{-1}=q_i^{\pm a_{ij}}X_j^\pm,\\
			&[X_i^+,X_j^-]=\delta_{i,j}\frac{K_i-K_i^{-1}}{q_i-q_i^{-1}},\\
			\sum_{k=0}^{1-a_{ij}}(-1)^{k}&\recbinom{1-a_{ij}}{k}_{q_i}(X_i^\pm)^kX_j^\pm(X_i^\pm)^{1-a_{ij}-k}=0\quad(i\neq j).
		\end{split}
	\end{equation*}
	Then, $U_q(\mathfrak{g})$ has the Hopf algebra structure with comultiplication $\Delta_q$, antipode $S_q$ and counit $\epsilon_q$ defined on generators by
	\begin{equation*}
		\begin{split}
			\Delta_q(K_i) = K_i&\otimes K_i,\\
			\Delta_q(X_i^+)=X_i^+\otimes K_i+1\otimes X_i^+,\quad&\Delta_q(X_i^-)= X_i^-\otimes 1+K_i^{-1}\otimes X_i^-,\\
			S_q(K_i)=K_i^{-1},\quad S_q(X_i^+) =-&X_i^+K_i^{-1},\quad S_q(X_i^-)=-K_iX_i^-,\\
			\epsilon_q(K_i)=1,\quad\epsilon_q&(X_i^\pm) = 0.\quad \odot
		\end{split}
	\end{equation*}
\end{defprop}
The verification that this defines the structure of a Hopf algebra is analogous to $U_h(\mathfrak{g})$. We shall also make the following remarks.
\begin{rems}\hspace{1em}
	\begin{enumerate}
		\item We omit the subscripts on the structure maps whenever the Hopf algebra is clear from the context.
		\item If we let $q_i = e^{d_i h}$ and $K_i=e^{d_ihH_i}$, the relations \ref{defprop:U_q(g)_chevalley} are consequences of the relations in \ref{defprop:U_h(g)_chevalley}.
		\item There are other rational forms of $U_h(\mathfrak{g})$, which have the same relation to $U_q(\mathfrak{g})$ as the various complex Lie groups with Lie algebra $\mathfrak{g}$ have to the adjoint group of $\mathfrak{g}$. The largest of these, the \textit{simply-connected} rational form, is obtained by adjoining to $U_q(\mathfrak{g})$ invertible elements $L_i$, $i=1,\dots,n$, such that $K_i=\prod_j L_j^{a_{ji}}$ and the relations are
		\begin{equation*}
			L_iX_j^\pm L_i^{-1} = q_i^{\pm\delta_{i,j}}X_j^\pm.
		\end{equation*}
		More generally, if $M$ is any lattice such that $Q\subseteq M\subseteq P$, there is a rational form $U_q^M(\mathfrak{g})$ obtained by adjoining to $U_q(\mathfrak{g})$ the elements $K_\beta=\prod_j L_j^{m_{ij}}$ for every $\beta=\sum_jm_{ij}\omega_j$, where $\omega_1,\dots,\omega_n$ are the fundamental weights of $\mathfrak{g}$. In this sense, $U_q(\mathfrak{g})$ is the adjoint form $U_q^Q(\mathfrak{g})$. For instance if $\mathfrak{g}=\mathfrak{sl}_2$, $P/Q=\mathbb{Z}_2$, so there are two rational forms, the adjoint form and the simply-connected form. The latter is obtained from the former by adjoining a square root of $K=K_1$. $\odot$
	\end{enumerate}
\end{rems}
Let us also recover some properties of $U_h(\mathfrak{g})$ for $U_q=U_q(\mathfrak{g})$ (cf. \cite{CPBook} Subsection 9.1B). The first is the braid group action for $U_q$.
\begin{prop}
	The braid group $\mathcal{B}_{\mathfrak{g}}$ of $\mathfrak{g}$ acts by $\mathbb{Q}(q)$-algebra automorphisms on $U_q$ given by
	\begin{equation*}
		\begin{split}
			&T_i(X_i^+) = -X_i^- K_i,\quad T_i(X_i^-) = -K_i^{-1}X_i^+,\quad T_i(K_j)=K_jK_i^{ - a_{ij}},\\
			&T_i(X_j^+)= \sum_{r=0}^{-a_{ij}}(-1)^{r-a_{ij}}q_i^{-r}(X_i^+)^{(-a_{ij}-r)}X_j^+(X_i^+)^{(r)}\quad\text{if }i\neq j,\\
			&T_i(X_j^-)= \sum_{r=0}^{-a_{ij}}(-1)^{r-a_{ij}}q_i^{r}(X_i^-)^{(r)}X_j^-(X_i^-)^{(-a_{ij}-r)}\quad\text{if }i\neq j. \quad\odot
		\end{split}
	\end{equation*}
\end{prop}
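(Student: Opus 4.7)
The plan is to reduce the assertion to the analogous theorem for $U_h(\mathfrak{g})$ established earlier. Under the substitution $q_i = e^{d_ih}$ and $K_i = e^{d_ihH_i}$, the formulas defining $T_i$ on $U_q$ are precisely the specializations of those for $U_h$: the term $-X_i^- e^{d_ihH_i}$ becomes $-X_i^- K_i$, the scalar $e^{-rd_ih}$ becomes $q_i^{-r}$, and so on. As noted in the remarks following Definition-proposition \ref{defprop:U_q(g)_chevalley}, every defining relation of $U_q(\mathfrak{g})$ is a consequence of the corresponding relation in $U_h(\mathfrak{g})$ under this correspondence. However, since $U_q$ is not literally a subalgebra of $U_h$, I would give a direct verification inside the rational form.

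First I would check that each $T_i$ extends to a well-defined $\mathbb{Q}(q)$-algebra endomorphism by verifying that it preserves every defining relation. The relations $K_jK_k = K_kK_j$ and $K_jK_j^{-1}=1$ follow immediately from the formula $T_i(K_j) = K_jK_i^{-a_{ij}}$. The commutations $T_i(K_j)T_i(X_k^\pm)T_i(K_j)^{-1} = q_j^{\pm a_{jk}}T_i(X_k^\pm)$ and the bracket relations $[T_i(X_j^+), T_i(X_k^-)] = \delta_{j,k}(T_i(K_j)-T_i(K_j)^{-1})/(q_j-q_j^{-1})$ split into the subcases $\{j=k=i\}$, $\{j=i\neq k\}$ (and its symmetric partner), and $\{i\neq j, k\}$, each of which reduces to a short calculation using the explicit formulas and standard $q_i$-integer identities. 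The subtle point is the preservation of the quantum Serre relations: applying $T_i$ to such a relation produces an expression involving $q_i$-divided powers, which must be shown to vanish. The particular form of $T_i(X_j^\pm)$ in terms of $(X_i^\pm)^{(r)}$ is designed exactly to make the required $q_i$-binomial cancellations occur.

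Next I would establish that each $T_i$ is invertible by writing down $T_i^{-1}$ explicitly, namely by the rational-form analogue of the inverse formulas given for $U_h$ (e.g.\ $T_i^{-1}(X_i^+) = -K_i^{-1}X_i^-$, together with the appropriate sign-and-$q$-twisted versions on $X_j^\pm$ for $i\neq j$). Verifying $T_iT_i^{-1} = T_i^{-1}T_i = \id$ on the generators $K_j, X_j^\pm$ amounts to a short identity on $q_i$-divided powers. This shows that the $T_i$ are $\mathbb{Q}(q)$-algebra automorphisms of $U_q(\mathfrak{g})$.

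Finally, the braid relations must be checked. Since each relation involves only two indices $(i,j)$, the verification reduces to a rank-two calculation in the subalgebra generated by $K_i^{\pm 1}, K_j^{\pm 1}, X_i^\pm, X_j^\pm$, with the precise form of the relation determined by $a_{ij}a_{ji}\in\{0,1,2,3\}$. The case $a_{ij}a_{ji}=0$ is immediate, while the cases of types $A_2$, $B_2$, $G_2$ require explicit but finite computations on the generators. The main obstacle throughout is the verification that $T_i$ preserves the quantum Serre relations; once this $q_i$-combinatorial identity is in hand, the remaining checks reduce to bookkeeping in small rank.
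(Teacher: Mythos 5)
The paper itself gives no argument for this proposition: it is one of the results quoted with a reference to Chari--Pressley (Subsection 9.1B), where the proof is in turn a direct verification going back to Lusztig, i.e. exactly the strategy you outline (check that each $T_i$ preserves the defining relations, exhibit $T_i^{-1}$, then verify the braid relations). So your route coincides with the one the cited source takes, and your inverse formula $T_i^{-1}(X_i^+)=-K_i^{-1}X_i^-$ is correct (indeed it fixes a typo in the thesis's own $U_h$ remark). Your opening observation is also handled correctly: the substitution $q_i=e^{d_ih}$, $K_i=e^{d_ihH_i}$ only shows that the $U_q$-relations are consequences of the $U_h$-ones, so it does not by itself transport the automorphism property to the rational form, and you rightly fall back on a direct check inside $U_q$.

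Two caveats. First, the claim that the braid relations ``reduce to a rank-two calculation in the subalgebra generated by $K_i^{\pm1},K_j^{\pm1},X_i^\pm,X_j^\pm$'' is not quite accurate: the relation $T_iT_jT_i=T_jT_iT_j$ (etc.) must be verified on \emph{all} generators, and for $X_k^\pm$ with $k\notin\{i,j\}$ the images under the composites involve divided powers in three indices, so the computation leaves that rank-two subalgebra; the case analysis is still governed by $a_{ij}a_{ji}$, but an extra argument (or further explicit computation, as in Lusztig) is needed for these mixed generators. Second, the genuinely laborious steps --- preservation of the quantum Serre relations and the explicit rank-two/three computations, especially in the $G_2$ case --- are asserted rather than carried out; that is acceptable for an outline, but it is precisely the content that the reference supplies, so as it stands your text is a correct proof plan rather than a complete proof.
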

Similarly, root vectors can be defined in exactly as in Definition \ref{def:affine_root_vectors_U_h}. For $\beta=\sum_i k_i \alpha_i\in Q$, we define $K_\beta =\prod_i K_i^{k_i}$. Then, $T_w(K_\beta) = K_{w(\beta)}$ for all $w\in W$. Further, let $U_q^+$, $U_q^0$ and $U_q^-$ be the $\mathbb{Q}(q)$-subalgebras of $U_q$ generated by the $X_i^+$, the $K_i^\pm $ and the $X_i^-$, respectively. We can conclude the analogues to Proposition \ref{prop:pbw_U_h} and Corollary \ref{cor:pbw_U_h} in
\begin{prop}\label{prop:PBW_U_q}
	Multiplication defines an isomorphism of vector spaces over $\mathbb{Q}(q)$
	\begin{equation*}
		U_q^-\otimes U_q^0\otimes U_q^+ \to U_q.
	\end{equation*}
	Moreover, the products
	\begin{enumerate}[label=\normalfont(\roman*)]
		\item $(X^-)^{\boldsymbol{r}}=(X_{\beta_1}^-)^{r_1}(X_{\beta_{2}}^-)^{r_{2}}\cdots(X_{\beta_N}^-)^{r_N}$, for $\boldsymbol{r}\in \mathbb{N}^N$, form a basis of $U_q^-$.
		\item $K^{\boldsymbol{s}}=K_1^{s_1}K_2^{s_2}\cdots K_n^{s_n}$, for $\boldsymbol{s}\in \mathbb{Z}^n$, form a basis of $U_q^0$.
		\item $(X^+)^{\boldsymbol{t}}=(X_{\beta_N}^+)^{r_N}(X_{\beta_{N-1}}^+)^{r_{N-1}}\cdots(X_{\beta_1}^+)^{r_1}$, for $\boldsymbol{t}\in \mathbb{N}^N$, form a basis of $U_q^+$. $\odot$
	\end{enumerate}
\end{prop}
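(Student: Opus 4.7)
The plan is to decompose the proposition into three basis claims for the subalgebras $U_q^\pm$ and $U_q^0$ together with the triangular decomposition, since the two together formally imply the multiplication isomorphism $U_q^-\otimes U_q^0\otimes U_q^+\to U_q$. First, $U_q^0$ is handled immediately: the generators $K_i^{\pm1}$ pairwise commute and satisfy no further relations, so $U_q^0\cong\mathbb{Q}(q)[K_1^{\pm1},\dots,K_n^{\pm1}]$ has basis $\{K^{\boldsymbol{s}}\}_{\boldsymbol{s}\in\mathbb{Z}^n}$. Second, the assignment $X_i^+\mapsto X_i^-$, $X_i^-\mapsto X_i^+$, $K_i\mapsto K_i^{-1}$, $q\mapsto q^{-1}$ extends to a $\mathbb{Q}$-algebra anti-automorphism $\omega$ of $U_q$ that intertwines $U_q^+$ and $U_q^-$ and sends ordered PBW monomials to ordered PBW monomials (in the opposite order and up to sign, via the braid-group compatibility $\omega\circ T_i=T_i\circ\omega$). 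It therefore suffices to treat $U_q^+$ explicitly.

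For the spanning part of the $U_q^+$ basis, I would first transfer the braid-group results established for $U_h(\mathfrak{g})$ to $U_q$, confirming that $T_w(X_i^+)\in U_q^+$ whenever $w(\alpha_i)\in\Phi^+$, so that the root vectors $X_{\beta_r}^+=T_{i_1}\cdots T_{i_{r-1}}(X_{i_r}^+)$ lie in $U_q^+$. Applying the automorphisms $T_{i_1}\cdots T_{i_{r-1}}$ to the quantum Serre relations yields a Levendorskii–Soibelman type straightening law of the form
\[
X_{\beta_s}^+X_{\beta_r}^+-q^{\langle\beta_r,\beta_s\rangle}X_{\beta_r}^+X_{\beta_s}^+=\sum_{\boldsymbol{c}}\gamma_{\boldsymbol{c}}\,(X_{\beta_{s-1}}^+)^{c_{s-1}}\cdots(X_{\beta_{r+1}}^+)^{c_{r+1}}\qquad(r<s),
\]
with $\gamma_{\boldsymbol{c}}\in\mathbb{Q}(q)$. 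Iterating this straightening (and combining with the Serre relations to reduce arbitrary words in the $X_i^+$) rewrites any element of $U_q^+$ as a $\mathbb{Q}(q)$-linear combination of ordered PBW monomials $(X^+)^{\boldsymbol{t}}$. The triangular decomposition itself is then obtained by pushing $K$'s to the middle via $K_iX_j^\pm=q_i^{\pm a_{ij}}X_j^\pm K_i$ and pushing every $X_i^+$ to the right of every $X_j^-$ via $[X_i^+,X_j^-]=\delta_{i,j}(K_i-K_i^{-1})/(q_i-q_i^{-1})$; an evident complexity function on monomials strictly decreases under these rewritings, so the procedure terminates in $U_q^-\cdot U_q^0\cdot U_q^+$.

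The main obstacle is linear independence, and I would resolve it by comparison with $U_h(\mathfrak{g})$, where Proposition \ref{prop:pbw_U_h} and Corollary \ref{cor:pbw_U_h} are available. Concretely, consider a $\mathbb{Z}[q,q^{-1}]$-form $U_\mathcal{A}$ of $U_q$ generated by $X_i^\pm$ and $K_i^{\pm1}$, and the $\mathbb{C}[[h]]$-algebra map $U_\mathcal{A}\otimes_{\mathbb{Z}[q,q^{-1}]}\mathbb{C}[[h]]\hookrightarrow U_h(\mathfrak{g})$ determined by $q\mapsto e^h$, $K_i\mapsto e^{d_ihH_i}$. Under this embedding, an ordered monomial $(X^-)^{\boldsymbol{r}}K^{\boldsymbol{s}}(X^+)^{\boldsymbol{t}}$ maps to the corresponding ordered $U_h$-monomial multiplied by a polynomial in the $H_i$ coming from the expansion of $K^{\boldsymbol{s}}$; by the PBW theorem for $U_h$, distinct such images are $\mathbb{C}[[h]]$-linearly independent, which forces the original monomials to be $\mathbb{Q}(q)$-linearly independent. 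An alternative, more self-contained route is to build, for each generic $\lambda\in(U_q^0)^*$, a Verma module $M(\lambda)$ for $U_q$ from its triangular decomposition, observe that the $(X^-)^{\boldsymbol{r}}v_\lambda$ automatically span $M(\lambda)$, and then verify via highest-weight-module arguments that they are linearly independent for generic $\lambda$; this yields the basis for $U_q^-$, and $\omega$ transports it to $U_q^+$. The delicate point in either approach is controlling how the straightening coefficients $\gamma_{\boldsymbol{c}}$ interact with either the $h$-adic filtration on $U_h$ or the highest-weight filtration on $M(\lambda)$, which is where one genuinely uses the detailed structure of the braid group action rather than just the algebra relations.
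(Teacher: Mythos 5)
The paper itself contains no proof of this proposition: it is stated (with the $\odot$ mark) as the rational-form analogue of Proposition \ref{prop:pbw_U_h} and Corollary \ref{cor:pbw_U_h}, and the argument is deferred to Chari--Pressley, where it is obtained by precisely the kind of transfer from the $U_h(\mathfrak{g})$ picture that you propose. Your outline therefore follows the intended standard route and is sound as a sketch: braid-group root vectors plus a Levendorskii--Soibelman straightening law give the spanning statements, the commutation relations between the $K_i$, $X_i^+$, $X_j^-$ give the triangular factorisation, and linear independence is imported from the topological PBW basis of $U_h(\mathfrak{g})$ via $q\mapsto e^h$, $K_i\mapsto e^{d_ihH_i}$ (or alternatively from Verma modules). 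Three points are asserted rather than argued and would need care in a written-out version. First, the straightening relations do not simply fall out of applying $T_{i_1}\cdots T_{i_{r-1}}$ to the quantum Serre relations; their proof is a separate induction on length with rank-two computations, which is why they are normally quoted as a theorem. Second, your opening claim that the $K_i^{\pm1}$ satisfy no further relations is itself part of what is being proven; it is harmless only because your specialisation argument later establishes the independence of the $K^{\boldsymbol{s}}$ in any case. Third, under the specialisation the $K^{\boldsymbol{s}}$ map to the exponentials $e^{h\sum_i s_id_iH_i}$, which are not PBW monomials $H^{\boldsymbol{t}}$, so Proposition \ref{prop:pbw_U_h} alone does not finish the job: one needs the small extra lemma that finitely many such exponentials with distinct integer exponent vectors are $\mathbb{C}[[h]]$-linearly independent (compare coefficients of the $H^{\boldsymbol{t}}$ and use a Vandermonde-type argument), and one should work over $\mathbb{C}((h))$ so that $\mathbb{Q}(q)\hookrightarrow\mathbb{C}((h))$, $q\mapsto e^h$, makes the specialisation a well-defined ring map on the rational form. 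None of these is a fatal gap; they are the places where the detailed structure genuinely enters.
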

However, looking at the Definition of the universal $R$-matrix for $U_h$ (for $\mathfrak{g}$ finite), it is clear that it can't be well defined in $U_q(\mathfrak{g})$. This is because the $q$-exponentials in Theorem \ref{thm:finite_univ_R_matr_U_h} involve infinite sums and therefore can't be elements of $U_q(\mathfrak{g})\otimes U_q(\mathfrak{g})$. Furthermore, there is no element in $U_q(\mathfrak{g})\otimes U_q(\mathfrak{g})$ corresponding to the first factor
\begin{equation*}
	\exp\left[h\sum_{i,j}(B^{-1})_{ij}H_i\otimes H_j\right]
\end{equation*}
in $\mathcal{R}$. However, even though the universal $R$-matrix doesn't exist as an element of $U_q(\mathfrak{g})\otimes U_q(\mathfrak{g})$, it can be shown that it nevertheless acts on any tensor product of finite-dimensional representations (see \cite{CPBook} Subsection 10.1D).\\

Finally, let us also introduce the following algebra automorphisms.
\begin{prop}\label{prop:twisting}
	For each sequence of signs $(\sigma_1,\dots,\sigma_n)\in\{\pm1\}^n$, there is an algebra automorphism of $U_q$ given by
	\begin{equation*}
		K_i\mapsto \sigma_i K_i,\quad X_i^+\mapsto \sigma_i X_i^+,\quad X_i^-\mapsto X_i^-.\quad\odot
	\end{equation*}
\end{prop}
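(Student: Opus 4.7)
The plan is to use the presentation of $U_q=U_q(\mathfrak{g})$ by generators and relations given in Definition--proposition~\ref{defprop:U_q(g)_chevalley}: it suffices to check that the assignment
\begin{equation*}
\phi_\sigma:\;K_i\mapsto \sigma_i K_i,\quad K_i^{-1}\mapsto \sigma_i K_i^{-1},\quad X_i^+\mapsto \sigma_i X_i^+,\quad X_i^-\mapsto X_i^-
\end{equation*}
on generators preserves every defining relation, so that it extends uniquely to an algebra endomorphism of $U_q$. Since each $\sigma_i\in\{\pm 1\}$ satisfies $\sigma_i^2=1$, the extended map $\phi_\sigma$ will square to the identity, whence it is an involutive automorphism.

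First I would verify the easy relations: the commutation $[K_i,K_j]=0$ and $K_iK_i^{-1}=1$ are immediate since the $\sigma_i$ are scalars. For $K_iX_j^\pm K_i^{-1}=q_i^{\pm a_{ij}}X_j^\pm$, the prefactors on the left combine as $\sigma_i\cdot\sigma_j\cdot\sigma_i=\sigma_j$ in the $+$ case and as $\sigma_i\cdot 1\cdot\sigma_i=1$ in the $-$ case, which matches the sign prefactor on $\phi_\sigma(X_j^\pm)$ on the right. For the bracket $[X_i^+,X_j^-]=\delta_{ij}(K_i-K_i^{-1})/(q_i-q_i^{-1})$, applying $\phi_\sigma$ scales both sides by $\sigma_i$ (noting $\delta_{ij}$ forces the sign on the right to be $\sigma_i$ as well), so the relation is preserved.

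The key step is the quantum Serre relation for $i\neq j$. The main observation is that under $\phi_\sigma$ the summand $(X_i^\pm)^kX_j^\pm(X_i^\pm)^{1-a_{ij}-k}$ gets multiplied by an overall factor
\begin{equation*}
\sigma_i^k\cdot\sigma_j\cdot\sigma_i^{1-a_{ij}-k}=\sigma_i^{1-a_{ij}}\sigma_j\qquad(\text{in the }+\text{ case}),
\end{equation*}
independently of the summation index $k$, and by $1$ in the $-$ case. Thus this common scalar factors out of the Serre sum, which therefore remains zero. This is really the only place where something needs checking; if the exponent of $\sigma_i$ depended on $k$, the argument would fail. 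So the whole matter reduces to the elementary identity that in the monomial $(X_i^\pm)^kX_j^\pm(X_i^\pm)^{1-a_{ij}-k}$ the total number of $X_i^\pm$ factors is the $k$-independent constant $1-a_{ij}$.

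Having verified that $\phi_\sigma$ respects all defining relations, the universal property of the presentation produces an algebra homomorphism $\phi_\sigma:U_q\to U_q$. Finally, observe that $\phi_\sigma\circ\phi_\sigma$ acts on the generators as $K_i\mapsto\sigma_i^2 K_i=K_i$, $X_i^+\mapsto\sigma_i^2 X_i^+=X_i^+$ and $X_i^-\mapsto X_i^-$; hence $\phi_\sigma^2=\mathrm{id}_{U_q}$ and $\phi_\sigma$ is an automorphism with $\phi_\sigma^{-1}=\phi_\sigma$.
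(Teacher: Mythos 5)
Your verification is correct and is exactly the standard argument: the paper itself omits the proof (deferring to \cite{CPBook}, Section 9.1), and the intended justification is precisely this direct check that the assignment preserves the defining relations of Definition-proposition \ref{defprop:U_q(g)_chevalley}, the only nontrivial point being that the sign picked up by each Serre summand is independent of the summation index, plus the observation that the map is involutive and hence bijective. Nothing is missing.
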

Note that, except for the identity, these automorphisms are not compatible with the coalgebra structure of $U_q$. Equivalently, one can formulate Proposition \ref{prop:twisting} more invariantly as follows. Let $Q_2^*$ be the group of homomorphisms $Q\to \mathbb{Z}_2=\{\pm1\}$ with product given by pointwise multiplication. Then, $Q_2^*$ acts as a group of automorphisms on $U_q$ given by
\begin{equation*}
	\sigma. K_\beta = \sigma(\beta) K_\beta,\quad\sigma.X_\alpha^+ = \sigma(\alpha)X_\alpha^+,\quad\sigma.X_\alpha^- = X_\alpha^-
\end{equation*}
for all $\beta\in Q$, $\alpha\in \Phi^+$, $\sigma\in Q_2^*$. Note that $W$ acts on $Q_2^*$ by
\begin{equation*}
	(w.\sigma)(\beta) = \sigma(w^{-1}(\beta)).
\end{equation*}
Hence, the action of $Q_2^*$ on $U_q^0$ extends to an action of the semidirect product $W \ltimes Q_2^*$.

Up to applying these 'twisting' automorphisms in Proposition \ref{prop:twisting} one can show that, if $\mathfrak{g}$ is finite, there is a one to one correspondence between the finite-dimensional representations of $U_q(\mathfrak{g})$ and those of $\mathfrak{g}$. We will come back to the role of these 'twistings' when we discuss the finite-dimensional representations of $U_q(\mathfrak{g})$ when $\mathfrak{g}$ is an affine Kac--Moody algebra im Section \ref{sect:finite-dim_reps}.
\subsection{Quantum affine algebras}\label{subsect:quantum_affine}
Let us now discuss a different realisation of $U_q(\mathfrak{g})$ in the special case when $\mathfrak{g}$ is an affine Kac--Moody algebra. Even more specifically, let us focus on the case when $\mathfrak{g}$ is an untwisted affine Kac--Moody algebra. We have seen in Section \ref{sect:Kac--Moody} that these can be understood as a one-dimensional central extension (and by adjoining a derivation) of the Loop algebra $\mathfrak{L}(\mathfrak{g})=\mathfrak{g}[t,t^{-1}]=\mathbb{C}[t,t^{-1}]\otimes_{\mathbb{C}}\mathfrak{g}$ of a finite-dimensional complex simple Lie algebra $\mathfrak{g}$. So let $\mathfrak{g}$ be from now on a finite-dimensional complex simple Lie algebra with Cartan matrix $A^0$.\footnote{The space of Laurent polynomial maps $\mathbb{C}^\times\to \mathfrak{g}$.} Then we denote by $\tilde{\mathfrak{g}}=\tilde{\mathfrak{L}}(\mathfrak{g})=\mathfrak{L}(\mathfrak{g})\oplus\mathbb{C}c$ (resp. $\hat{\mathfrak{g}}=\hat{\mathfrak{L}}(\mathfrak{g})=\tilde{\mathfrak{L}}(\mathfrak{g})\oplus\mathbb{C}d$) the untwisted affine Kac--Moody (sub)algebra $L(A)'$ (resp. $L(A)$) where $A$ is build from $A^0$ as explained in Subsection \ref{subsect:loop_constr_untw_Kac--Moody}. Thus we have $\hat{\mathfrak{g}}=\tilde{\mathfrak{g}}\oplus\mathbb{C}d=\mathfrak{L}(\mathfrak{g})\oplus\mathbb{C}c\oplus\mathbb{C}d$. The difference is that the simple roots in the Cartan subalgebra of $\tilde{\mathfrak{g}}$ are linear dependent, whereas $d$ removes the degeneracy for $\hat{\mathfrak{g}}$.

Luckily, as is the case for affine Kac--Moody algebras, Drinfeld (1988 \cite{D}) found another realisation of $U_h(\tilde{\mathfrak{g}})$ on a set of loop algebra-like generators which is, although still in terms of generators and relations, much closer to the description of affine Kac--Moody algebras as a space of maps. It is therefore called the 'new realization' or 'second Drinfeld realization'.
Let us just cite the comment of Beck on Drinfeld's findings (1993 \cite{Beck94_1}, p.1), "Drinfel'd found that the study of finite-dimensional representations of $U_q(\hat{\mathfrak{g}})$ is made much easier by the use of a ``new realization'' on a set of loop algebra-like generators over $\mathbb{C}[[h]]$. He gives (the proof is unpublished) an isomorphism to the usual presentation, although by his construction there is no explicit correspondence between the two sets of generators."

However, as discussed in Subsection \ref{subsect:QUE_algebras} it is more convenient for us to work with $U_q(\tilde{\mathfrak{g}})$ than with $U_h(\tilde{\mathfrak{g}})$. Moreover, we will refer to $U_q(\tilde{\mathfrak{g}})$, or any of its specialisations, as a \textbf{quantum affine algebra}.
The following presentation of $U_q(\tilde{\mathfrak{g}})$ is proved in the paper \cite{Beck94_1} by Beck 1993.

\begin{thm}[second Drinfeld realization of $U_q(\tilde{\mathfrak{g}})$]
	\label{thm:second_drin_quantum_aff}
	Let $\tilde{\mathfrak{g}}$ be the untwisted affine Lie algebra associated to a finite-dimensional complex simple Lie algebra $\mathfrak{g}$.\footnote{ I.e. the untwisted affine Kac--Moody subalgebra $L(A)'$, where $A$ is the GCM of $\tilde{\mathfrak{g}}$.} The standard quantisation $U_q(\tilde{\mathfrak{g}})$ of $\mathfrak{g}$ defined in \ref{defprop:U_q(g)_chevalley} is isomorphic, an an algebra over $\mathbb{C}(q)$, to the algebra $\mathcal{A}_q$ with generators $\mathcal{C}^{\pm1/2}$, $\mathcal{K}_i^{\pm1}$, $\mathcal{H}_{i,r}$, $\mathcal{X}^\pm_{i,s}$, $i\in I = \{1,\dots,l\}$, $r\in\mathbb{Z}\backslash\{0\}$, $s\in\mathbb{Z}$ and defining relations
	\begin{equation*}
		\begin{split}
			\mathcal{K}_i\mathcal{K}_i^{-1} =  \mathcal{K}_i^{-1}\mathcal{K}_i &= 1,\; \mathcal{C}^{1/2}\mathcal{C}^{-1/2}=1,\\
			\mathcal{C}^{\pm1/2}&\text{ are}\text{ central},\\
			[\mathcal{K}_i,\mathcal{K}_j]&=[\mathcal{K}_i,\mathcal{H}_{j,r}]=0,\\
			[\mathcal{H}_{i,r},\mathcal{H}_{j,s}]&=\delta_{r,-s}\frac{1}{r}[ra_{ij}]_{q_i}\frac{\mathcal{C}^r-\mathcal{C}^{-r}}{q_j-q_j^{-1}},\\
			\mathcal{K}_i\mathcal{X}^\pm_{j,r}\mathcal{K}_i^{-1}&=q_i^{\pm a_{ij}}\mathcal{X}_{j,r}^\pm,\\
			[\mathcal{H}_{i,r},\mathcal{X}^\pm_{j,s}]&=\pm \frac{1}{r}[ra_{ij}]_{q_i}\mathcal{C}^{\mp|r|/2}\mathcal{X}^\pm_{j,r+s},
		\end{split}
	\end{equation*}
	\begin{equation*}
		\begin{split}
			\mathcal{X}_{i,r+1}^\pm\mathcal{X}_{j,s}^\pm-q_i^{\pm a_{ij}}\mathcal{X}_{j,s}^\pm\mathcal{X}_{i,r+1}^\pm&=q_i^{\pm a_{ij}}\mathcal{X}_{i,r}^\pm\mathcal{X}_{j,s+1}^\pm-\mathcal{X}_{j,s+1}^\pm\mathcal{X}_{i,r}^\pm\\
			[\mathcal{X}_{i,r}^+,\mathcal{X}_{j,s}^-]&=\delta_{i,j}\frac{\mathcal{C}^{(r-s)/2}\varPhi_{i,r+s}^+-\mathcal{C}^{-(r-s)/2}\varPhi_{i,r+s}^-}{q_i-q_i^{-1}}\\
			\sum_{\sigma\in S_m}\sum_{k=0}^{m}(-1)\recbinom{m}{k}_{q_i}
			\mathcal{X}^\pm_{i,r_{\sigma(1)}}&\cdots\mathcal{X}^\pm_{i,r_{\sigma(k)}}\mathcal{X}_{j,s}^\pm\mathcal{X}_{i,r_{\sigma(k+1)}}^\pm\cdots\mathcal{X}^\pm_{i,r_{\sigma(m)}}=0,\, i\neq j,
		\end{split}
	\end{equation*}
	for all sequences of non-negative integers $r_1,\dots,r_m$, where $m=1-a_{ij}$, $q_i=q^{d_i}$ and the elements $\varPhi_{i,r}^\pm$ are determined by equating coefficients of powers of $u$ in the formal power series
	\begin{equation*}
		\boldsymbol{\varPhi}^\pm_i(u) = \sum_{r=0}^{\infty}\varPhi_{i,\pm r}^\pm u^{\pm r}= \mathcal{K}_i^{\pm1}\exp\left(\pm(q_i-q_i^{-1})\sum_{s=1}^{\infty}\mathcal{H}_{i,\pm s}u^{\pm s}\right).\footnotemark
	\end{equation*}
	\footnotetext{ We shall remind the reader that $(a_{ij})$ denotes the Cartan matrix of $\mathfrak{g}$, a complex semisimple Lie algebra of rank $l$ (see Subsection \ref{subsect:Cartan_data}).}
	Let $\theta = \sum_{i=1}^{l}a_i\alpha_i$ be the highest root of $\mathfrak{g}$, set $q_\theta = q_i$ if $\theta$ is Weyl group conjugate to $\alpha_i$, and set $\mathcal{K}_\theta\coloneq\prod_{i=1}^{l}\mathcal{K}_i^{a_i}$. Suppose that the root vector $x_\theta^+$ of $\mathfrak{g}$ is expressed in terms of the simple root vectors as
	\begin{align*}
		x_\theta^+=\lambda [x_{i_1}^+,[x_{i_2}^+,\dots,[x_{i_k}^+,x_j^+]\cdots]]
	\end{align*}
	for some $\lambda\in\mathbb{C}$.\footnote{ We use small letters to denote elements of $\mathfrak{g}$ here.} Define maps $w_i^\pm:U_q(\tilde{g})\to U_q(\tilde{g})$ by
	\begin{align*}
		w_i^\pm(a) \coloneq \mathcal{X}_{i,0}^\pm a - \mathcal{K}_i^{\pm1}a\mathcal{K}_i^{\mp1}\mathcal{X}_{i,0}^\pm.
	\end{align*}
	Then, there exists an isomorphism of $\mathbb{C}(q)$-algebras $f:U_q(\tilde{\mathfrak{g}})\to \mathcal{A}_q$ defined on generators by
	\begin{equation*}
		\begin{split}
			f(K_0)&\coloneq \mathcal{C}\mathcal{K}_\theta^{-1},\; f(K_i)\coloneq\mathcal{K}_i,\; f(X_i^\pm)\coloneq\mathcal{X}_{i,0}^\pm,\; i=1,\dots,l,\\
			f(X_0^+)&\coloneq\mu w_{i_1}^-\cdots w_{i_k}^-(\mathcal{X}_{j,1}^-)\mathcal{K}_\theta^{-1},\; f(X_0^-)\coloneq\lambda \mathcal{K}_\theta w_{i_1}^+\cdots w_{i_k}^+(\mathcal{X}_{j,-1}^+),
		\end{split}
	\end{equation*}
	where $\mu\in\mathbb{C}(q)$ is determined by the condition (cf. \cite{CPBook} p. 393)
	\begin{align*}
		[X_0^+,X_0^-]=\frac{K_0-K_0^{-1}}{q_\theta-q_\theta^{-1}}.\quad \odot
	\end{align*}
\end{thm}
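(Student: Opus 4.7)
The plan is to split the proof into four stages.

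First, I would verify that the displayed formulas extend to a well-defined algebra homomorphism $f: U_q(\tilde{\mathfrak{g}}) \to \mathcal{A}_q$. The relations among generators indexed only by $i,j \in I = \{1,\dots,l\}$ are automatic: the subalgebra of $\mathcal{A}_q$ generated by $\mathcal{K}_i^{\pm 1}$ and $\mathcal{X}_{i,0}^\pm$ is isomorphic to $U_q(\mathfrak{g})$, directly from the $r=s=0$ slices of the relations of $\mathcal{A}_q$. The non-trivial task is the relations coupling the $0$-th node to the others, in particular $[f(X_0^+), f(X_0^-)] = (f(K_0) - f(K_0)^{-1})/(q_\theta - q_\theta^{-1})$ and the affine Serre relations at the $0$-th node. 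Verifying these reduces, via the shift relation $[\mathcal{H}_{i,r}, \mathcal{X}_{j,s}^\pm] = \pm r^{-1}[r a_{ij}]_{q_i} \mathcal{C}^{\mp |r|/2} \mathcal{X}_{j,r+s}^\pm$, to showing that the iterated twisted adjoints $w_{i_1}^\pm\cdots w_{i_k}^\pm(\mathcal{X}_{j,\mp 1}^\mp)$ behave as root vectors attached to $\alpha_0 = \delta - \theta$; the constant $\mu$ is then uniquely fixed by the $\mathfrak{sl}_2$-like bracket condition.

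Second, I would construct a candidate inverse $g: \mathcal{A}_q \to U_q(\tilde{\mathfrak{g}})$ using the braid group action of $\mathcal{B}_{\tilde{\mathfrak{g}}}$ on $U_q(\tilde{\mathfrak{g}})$, extended by the diagram automorphism corresponding to translation by fundamental coweights. For each $i \in I$ there is a composition $\tau_i$ of these braid generators whose action on real root vectors shifts the loop degree by one. The candidate inverse is then defined on generators by
\begin{equation*}
\mathcal{X}_{i,r}^\pm \longmapsto o(i)^{r}\, \tau_i^{\mp r}(X_i^\pm), \qquad \mathcal{C} \longmapsto K_0 \prod_{j=1}^{l} K_j^{a_j},
\end{equation*}
for a sign $o(i) \in \{\pm 1\}$, with the imaginary generators $\mathcal{H}_{i,r}$ recovered from the brackets $[\mathcal{X}_{i,r}^+, X_i^-]$ via the defining generating function $\boldsymbol{\varPhi}_i^\pm(u)$.

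Third, and this is where the main obstacle lies, I would verify that the images under $g$ satisfy every defining relation of $\mathcal{A}_q$. The computation splits into three blocks: the commutation of the $\mathcal{H}_{i,r}$ among themselves, the mixed $[\mathcal{X}_{i,r}^+, \mathcal{X}_{j,s}^-]$ relations producing $\boldsymbol{\varPhi}_i^\pm$, and the quantum Drinfeld--Serre relations symmetrised over arbitrary $r_1,\dots,r_m$ with $m = 1-a_{ij}$. The commutation of the $\mathcal{H}_{i,r}$ is cleanest when packaged into generating series, where one proves $\boldsymbol{\varPhi}_i^\pm(u)\boldsymbol{\varPhi}_j^\pm(v) = F_{ij}(u,v)\,\boldsymbol{\varPhi}_j^\pm(v)\boldsymbol{\varPhi}_i^\pm(u)$ for an explicit rational $F_{ij}$, by induction on loop degree using the braid relations satisfied by the $\tau_i$. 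The symmetrised Serre identities are the deepest step: they must be derived from the classical Serre relations of $U_q(\mathfrak{g})$ by repeatedly applying the $\tau_i$ and collapsing the symmetrisation through a combinatorial identity on $q$-binomials.

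Finally, I would conclude bijectivity by comparing PBW-type bases. The affine PBW theorem for $U_q(\tilde{\mathfrak{g}})$ (proven in \cite{Beck94_1} via the extended braid group) gives an ordered monomial basis built from real and imaginary root vectors, while $\mathcal{A}_q$ admits a manifest monomial basis in $\mathcal{X}_{i,r}^\pm$, $\mathcal{H}_{i,r}$, $\mathcal{K}_i^{\pm 1}$, $\mathcal{C}^{\pm 1/2}$. The map $f$ intertwines these bases up to a unitriangular change determined by the braid group action, so $f$ and $g$ are mutually inverse. The principal difficulty throughout is stage three: this is precisely the point at which Drinfeld's original 1988 announcement \cite{D} was left unproven, and where the braid-group and PBW machinery of Beck \cite{Beck94_1} performs the essential work.
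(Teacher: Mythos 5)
The paper does not prove this theorem at all: it is stated with a reference, ``The following presentation of $U_q(\tilde{\mathfrak{g}})$ is proved in the paper \cite{Beck94_1} by Beck 1993,'' and the remarks after it even quote Beck's observation that Drinfeld's own proof was unpublished. So there is no in-paper argument to compare against; what you have written is a reconstruction of Beck's strategy (extended affine braid group with translation elements $\tau_i$, $\mathcal{X}_{i,r}^\pm\mapsto o(i)^r\tau_i^{\mp r}(X_i^\pm)$, imaginary generators recovered from mixed brackets, PBW comparison), and as an outline of that route it is pointed in the right direction.

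As a proof, however, it has genuine gaps, and they sit exactly where you acknowledge the difficulty. Stage three is not an obstacle you have located and then overcome; it is the entire content of Beck's paper, and your sketch replaces it with a description of what would have to be checked. Moreover two of the intermediate claims you do assert are off. First, the relation $\boldsymbol{\varPhi}_i^\pm(u)\boldsymbol{\varPhi}_j^\pm(v)=F_{ij}(u,v)\,\boldsymbol{\varPhi}_j^\pm(v)\boldsymbol{\varPhi}_i^\pm(u)$ with a nontrivial rational $F_{ij}$ is wrong for same-sign series: since $[\mathcal{H}_{i,r},\mathcal{H}_{j,s}]$ vanishes unless $r+s=0$, the $\boldsymbol{\varPhi}_i^+$ commute among themselves (likewise the $\boldsymbol{\varPhi}_i^-$); the nontrivial rational-function exchange relation is between $\boldsymbol{\varPhi}_i^+(u)$ and $\boldsymbol{\varPhi}_j^-(v)$ and involves powers of $\mathcal{C}$, and a correct verification must keep track of that. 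Second, in stage four you assume $\mathcal{A}_q$ ``admits a manifest monomial basis'' in the Drinfeld generators; only the spanning is manifest, and linear independence of those ordered monomials is part of what the isomorphism theorem establishes, so invoking it begs the question. Beck closes this loop differently, by proving surjectivity of the comparison map and then obtaining injectivity via triangular decomposition and specialization at the classical limit, rather than by matching two independently known bases. To turn your proposal into a proof you would either have to carry out these verifications in full or, as the thesis does, simply cite \cite{Beck94_1} for them.
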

\begin{rems}\label{rems:quant_aff}\hspace{1em}
	\begin{enumerate}
		\item The reader may wonder why we didn't write down the comultiplication $\Delta_q$ in terms of the (new) Drinfeld generators. The reason is that explicit formulas are only known in terms of the Chevalley (or Drinfeld--Jimbo) generators $X_i^+,\,X_i^-,\,K_i$ and $K_i^{-1}$, $i=0,\dots,l$, of $U_q(\tilde{\mathfrak{g}})$ as in \ref{defprop:U_q(g)_chevalley}.
		Therefore, the Hopf algebra structure for the (new) Drinfeld generators is defined only implicitly by the isomorphism $f$.
		\item Here, the description of the isomorphism $f$ is as in the book (cf. \cite{CPBook} p. 393). Though, it seems like it is not completely correct, which can easily be verified by computing the antipode $S_q(\mathcal{X}_{j,\mp1}^\pm)$ via $S_q(f(X_0^\mp)) = f(S_q(X_0^\mp))$ for $\mathfrak{g}=\mathfrak{sl}_3$. In this case, one can change the sign in the exponent of the $\mathcal{K}_i$ for the maps $w_i^\pm$ to make it work.\footnote{ Unfortunately, this doesn't provide the fix for $\mathfrak{sl}_{l+1}$ with $l>2$ (cf. \cite{CP1994a}).}
		
		In general, however, there must be a different fix and we shall we shall refer the reader to the paper of Beck \cite{Beck94_1} for the precise result. It seems like $S_q(\mathcal{X}_{i,1}^-) = -q^2  \mathcal{X}_{i,1}^- \frac{\mathcal{K}_i}{K_\theta^2}$ is the correct description for the action of the antipode on the $\mathcal{X}_{i,1}^-$.\footnote{ And then $S_q(\mathcal{X}_{i,-1}^+) = -q^{-2} \frac{K_\theta^2}{\mathcal{K}_i} \mathcal{X}_{i,-1}^+$.}
		\item The result for $U_h(\mathfrak{g})$ by Drinfeld \cite{D} 1988 is obtained by replacing the generators $\mathcal{K}_i$ and $\mathcal{C}^{1/2}$ by new generators $\mathcal{H}_{i,0}$ and $c/2$. The element $c$ is central and the third and fifth set of relations in Theorem \ref{thm:second_drin_quantum_aff} are replaced by
		\begin{equation*}
			[\mathcal{H}_{i,0},\mathcal{H}_{j,r}]=0,.\quad[\mathcal{H}_{i,0},\mathcal{X}_{j,r}^\pm]=\pm d_i a_{ij}\mathcal{X}_{j,r}^\pm.
		\end{equation*}
		In addition, $q_i$ is replaced by $e^{d_ih}$ and $\mathcal{C}^{1/2}$ by $e^{hc/2}$.\label{enum:second_Drin_h}
		\item To obtain a presentation of $U_q(\hat{\mathfrak{g}})$ from $U_q(\tilde{\mathfrak{g}})$, one introduces an additional set of generators $\mathcal{D}^{\pm1}$ and relations
		\begin{equation*}
			\begin{split}
				\mathcal{D}\mathcal{D}^{-1}=\mathcal{D}^{-1}\mathcal{D}=&1\\
				\mathcal{D}\mathcal{H}_{i,r}\mathcal{D}^{-1}=q^r\mathcal{H}_{i,r},\,[\mathcal{D},\mathcal{K}_i]&=[\mathcal{D},\mathcal{C}]=0,\\
				\mathcal{D}\mathcal{X}_{i,r}^\pm\mathcal{D}^{-1}=q^r&\mathcal{X}_{i,r}^\pm.
			\end{split}
		\end{equation*}
		\item One has a \textbf{one-parameter group of Hopf algebra automorphisms} $\tau_a$, $a\in\mathbb{C}^\times$, of $U_q(\tilde{\mathfrak{g}})$ defined on generators by 
		\begin{equation}
			\begin{split}
				&\tau_a(\mathcal{X}_{i,r}^\pm)=a^r\mathcal{X}_{i,r}^\pm,\quad\tau_a(\mathcal{H}_{i,s})=a^s\mathcal{H}_{i,s},\\
				&\tau_a(\mathcal{K}_i^{\pm1})=\mathcal{K}_i^{\pm1},\quad\tau_a(\mathcal{C}^{\pm1/2})=\mathcal{C}^{\pm1/2}.\footnotemark
				\label{eqn:tau_a_q-deformed}
			\end{split}
		\end{equation}
		\footnotetext{ The proof is a simple verification.}
		It is one of the main reasons for the importance of $U_q(\tilde{\mathfrak{g}})$, as it allows one to define for any given $U_q(\tilde{\mathfrak{g}})$-module a one parameter family of modules by adjoining different parameters $a\in\mathbb{C}^\times$ through $\tau_a$, i.e., pulling back by $\tau_a$. In fact, if $V$ is any representation of $U_q(\tilde{\mathfrak{g}})$, it is convenient to write $V(a)$ for the pull-back of $V$ by
		$\tau_a$, and, of course, $V=V(1)$ (cf. \cite{CP1991}). It is also clear from the definition that $(V(a))(b)=V(ab)$. Therefore, for any given representation $V$, $\tau$ defines an \textbf{action of the multiplicative group} of $\mathbb{C}$ on the family of representations $\{V(a)\}_{a\in\mathbb{C}^\times}$.
		\item In fact, $\tau$ and the square of the antipode $S_q^2$ are closely related. For, we have $S_q^2 = \tau_{q^c}\circ \operatorname{Ad}_{q^x}$, where $c=2d^\vee h^\vee$ is the eigenvalue of the Casimir element of $\mathfrak{g}$ in the adjoint representation,\footnote{ We remind the reader that $h^\vee$ is the dual Coxeter number $h^\vee=c_0+c_1+\dots+c_l$, and that we refer to the (quadratic) Casimir element defined in terms of the standard invariant bilinear form of $\tilde{g}$ (restricted to $\mathfrak{g}$) as defined at the end of Subsection \ref{subsect:aff_Kac--Moody}, and \ref{subsect:Cartan_data}, respectively.} $x=2d^\vee h'_\rho$ and $\rho = \sum_{i=1}^{n}\omega_i$ such that $\rho\mapsto h'_\rho$ by the isomorphism from $\mathfrak{h}^*$ to $\mathfrak{h}$ given by the standard invariant bilinear form of $\tilde{\mathfrak{g}}$ restricted to $\mathfrak{g}$.\footnote{ We used the definition $\mathcal{K}_i=q_i^{h_i}$ here by identifying $d_ih_i$ and $\mathcal{H}_{i,0}$ in Part \ref{enum:second_Drin_h}. We also remind the reader that $d^\vee$ is the maximal number of edges connecting two nodes in the Dynkin diagram of $\mathfrak{g}$, i.e. $d^\vee=1$ for simply-laced, $d^\vee =2$ for $B_l,C_l,F_4$, and $d^\vee=3$ for $G_2$.}\label{enum:almost_ribbon} Let us also emphasize that the signs here depend strongly on the choices of the comultiplication and the antipode for $U_q$, i.e. $\Delta_q$ or $\Delta_q^{\operatorname{op}}$ and then $S_q$ or $S_q^{-1}$ and that there is no strict convention in the literature. We stick to the Definition-proposition \ref{defprop:U_q(g)_chevalley} as given in the book \cite{CPBook} in Section 9.1.  (cf. \cite{NR} Equation (2.59))
		\item Setting \label{enum:grading_quant_aff}
		\begin{equation*}
			\deg(\mathcal{K}_i)=\deg(\mathcal{C})=0,\quad\deg(\mathcal{H}_{i,r})=\deg(\mathcal{X}_{i,r}^\pm)=r,\quad\deg(q)=0
		\end{equation*}
		provides $U_q(\tilde{\mathfrak{g}})$ with the structure of a graded algebra. In fact, $\mathcal{H}_{i,r}$ and $\mathcal{X}_{i,r}$ are the quantum analogues of the elements $H_i t^r$ and $X_i^\pm t^r$ in the loop presentation of $\tilde{g}$ (cf. \cite{D}).
		\item The elements $\varPhi_{i,r}^\pm$ are given in terms of the $\mathcal{H}_{j,s}$ by formulas of the form
		\begin{equation*}
			\varPhi_{i,r}^\pm = \pm \mathcal{K}_i^{\pm 1}((q_i-q_i^{-1})\mathcal{H}_{i,r}+f_{i,r}^\pm(\mathcal{H}_{i,\pm1},\dots,\mathcal{H}_{i,\pm(r-1)})),
		\end{equation*}
		where the $f_{i,r}^\pm$ are polynomials with coefficients in $\mathbb{C}(q)$, homogeneous of degree $r$ with respect to the grading in Part \ref{enum:grading_quant_aff}. $\odot$
	\end{enumerate}
\end{rems}
We note that Part \ref{enum:almost_ribbon} of the preceding Remarks \ref{rems:quant_aff} shows that $U_q(\tilde{\mathfrak{g}})$ is up to a rescaling with $\tau$ somewhat close to being a ribbon Hopf algebra provided that one can define a universal $R$-matrix in some way. In fact, we will see that this is possible on the level of finite-dimensional representations in Subsection \ref{subsect:trigon_R_matrix_A_l}. This justifies the notion of a quantum trace for finite-dimensional representations of quantum affine algebras. Clearly, if $V$ is any finite-dimensional representation of $U_q(\tilde{\mathfrak{g}})$, we have $V^{**}(\lambda)\cong V(\lambda q^c)$ (and $^{**}V(\lambda) \cong V(\lambda q^{-c})$) where the isomorphism is given by conjugation with the element $g\coloneq q^x$ (and $g\coloneq q^{-x}$). This motivates the following definition (cf. \cite{HJue} Definition 3.10).
\begin{definition}[the shifted dual representation]\label{def:the_shifted_dual}
	Let $V$ be a finite-dimensional representation of $U_q(\tilde{\mathfrak{g}})$. Then we denote by $V^\circledast \cong V^*(q^{-d^\vee h^\vee}) \cong {}^*V(q^{d^\vee h^\vee})$ the unique (up to isomorphism) dual of $V$ considered as a representation of $U_q(\mathfrak{g})\lhook\joinrel\xrightarrow{\iota} U_q(\tilde{\mathfrak{g}})$ such that $V^{\circledast\circledast}\cong V$ as a representation of $U_q(\tilde{\mathfrak{g}})$. $\odot$
\end{definition}
Let us also state a weak version of the PBW theorem for the specialisation with $q\in\mathbb{C}^\times$ transcendental (cf. \cite{CPBook} Prop 12.2.2). We refer the reader to the same paper of Beck \cite{Beck94_1} for the more general result.
\begin{prop}
	Let $q\in\mathbb{C}^\times$ be transcendental. Let $U_q^\pm$ and $U_q^0$ be the subalgebras of $U_q(\tilde{\mathfrak{g}})$ generated by the elements $\mathcal{X}_{i,r}^\pm$ for all $i,r$ and $\mathcal{C}^{\pm 1/2}, \mathcal{K}_i^{\pm 1}, \mathcal{H}_{i,r}$ for all $i,r$, respectively. Then
	\begin{equation*}
		U_q(\tilde{\mathfrak{g}})= U_q^-.U_q^0.U_q^+.\quad\odot
	\end{equation*}
\end{prop}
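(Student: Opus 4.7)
The plan is to show that the multiplication map $m\colon U_q^-\otimes U_q^0\otimes U_q^+\to U_q(\tilde{\mathfrak{g}})$ is surjective; this is the triangular-decomposition (``weak PBW'') half of the statement proved in \cite{Beck94_1}. Since $U_q(\tilde{\mathfrak{g}})$ is generated as an algebra by the Drinfeld generators, it suffices to show that the image $U_q^-\!\cdot U_q^0\!\cdot U_q^+$ is closed under left multiplication by each generator. As it contains $1$, this will force it to be all of $U_q(\tilde{\mathfrak{g}})$.

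First I would set up the trivial inclusions. The generators of $U_q^0$ pairwise commute (the $\mathcal{K}_i$ and $\mathcal{H}_{i,r}$ satisfy $[\mathcal{K}_i,\mathcal{K}_j]=[\mathcal{K}_i,\mathcal{H}_{j,r}]=0$, the $\mathcal{H}_{i,r}$ among themselves produce only central scalars, and $\mathcal{C}^{\pm 1/2}$ is central), so $U_q^0$ is genuinely a subalgebra. The relations $\mathcal{K}_i\mathcal{X}_{j,r}^\pm\mathcal{K}_i^{-1}=q_i^{\pm a_{ij}}\mathcal{X}_{j,r}^\pm$ and $[\mathcal{H}_{i,r},\mathcal{X}_{j,s}^\pm]=\pm\tfrac{1}{r}[ra_{ij}]_{q_i}\mathcal{C}^{\mp|r|/2}\mathcal{X}_{j,r+s}^\pm$ give $U_q^0\cdot U_q^\pm\subseteq U_q^\pm\cdot U_q^0$ and symmetrically $U_q^\pm\cdot U_q^0\subseteq U_q^0\cdot U_q^\pm$. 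In particular, left multiplication by any element of $U_q^0$ preserves $U_q^-U_q^0U_q^+$, and left multiplication by a generator $\mathcal{X}_{i,r}^-$ trivially does too, since it can be absorbed into the leftmost factor.

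The only nontrivial step is left multiplication by a generator $\mathcal{X}_{i,r}^+$ on an element $xyz$ with $x\in U_q^-$, $y\in U_q^0$, $z\in U_q^+$. I would proceed by induction on the length $\ell(x)$ of $x$ as a monomial in the $\mathcal{X}_{j,s}^-$. For $\ell(x)=0$ the product $\mathcal{X}_{i,r}^+\cdot yz$ lies in $U_q^0U_q^+\subseteq U_q^-U_q^0U_q^+$ by the first step. For the inductive step, write $x=\mathcal{X}_{j,s}^-\,x'$ and use the key commutation relation
\begin{equation*}
\mathcal{X}_{i,r}^+\mathcal{X}_{j,s}^- \;=\; \mathcal{X}_{j,s}^-\mathcal{X}_{i,r}^+ \;+\; \delta_{ij}\,\frac{\mathcal{C}^{(r-s)/2}\varPhi_{i,r+s}^+-\mathcal{C}^{-(r-s)/2}\varPhi_{i,r+s}^-}{q_i-q_i^{-1}}.
\end{equation*}
Because the $\varPhi_{i,t}^\pm$ are polynomial expressions in $\mathcal{K}_i^{\pm1}$, $\mathcal{C}^{\pm1/2}$ and the $\mathcal{H}_{i,\pm s}$ (from the generating series $\boldsymbol{\varPhi}_i^\pm(u)$), the correction term lies in $U_q^0$. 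Hence
\begin{equation*}
\mathcal{X}_{i,r}^+\,x\,y\,z \;=\; \mathcal{X}_{j,s}^-\bigl(\mathcal{X}_{i,r}^+\,x'\,y\,z\bigr) \;+\; h\cdot x'\,y\,z
\end{equation*}
for some $h\in U_q^0$. The first summand lies in $U_q^-\bigl(U_q^-U_q^0U_q^+\bigr)\subseteq U_q^-U_q^0U_q^+$ by induction applied to $x'$ (which has length $\ell(x)-1$), and the second lies in $U_q^0\,U_q^-\,U_q^0\,U_q^+\subseteq U_q^-U_q^0U_q^+$ by the first step. This closes the induction.

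The main obstacle I anticipate is precisely the bookkeeping of this shortening: one must be careful that the correction term produced by each commutation is strictly simpler (shorter $\ell(x)$, same or smaller depth), so that the induction is well-founded; formulating a single integer filtration that decreases at each step makes the argument clean. I emphasize that the hypothesis that $q\in\mathbb{C}^\times$ is transcendental plays no role in this surjectivity argument: it enters only in the stronger statement, proved by Beck in \cite{Beck94_1}, that the ordered products form a basis, where one needs the Drinfeld presentation to behave faithfully under specialization. For the present proposition the commutation relations alone suffice.
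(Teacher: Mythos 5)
Your argument is correct, and it is worth noting that the paper itself does not prove this proposition at all: it is quoted as the ``weak PBW'' statement from Chari--Pressley (Prop.\ 12.2.2), with a pointer to Beck's paper for the stronger basis result. What you supply is a self-contained proof of the spanning statement from the Drinfeld relations alone: the two swap inclusions $U_q^0U_q^\pm\subseteq U_q^\pm U_q^0$ and $U_q^\pm U_q^0\subseteq U_q^0U_q^\pm$ (from the $\mathcal{K}$-conjugation and $[\mathcal{H}_{i,r},\mathcal{X}^\pm_{j,s}]$ relations), plus the $[\mathcal{X}^+,\mathcal{X}^-]$ relation whose right-hand side lies in $U_q^0$ because the $\varPhi^\pm_{i,t}$ are polynomials in $\mathcal{K}_i^{\pm1}$ and the $\mathcal{H}_{i,\pm s}$, fed into an induction on the length of the negative monomial; this is precisely the standard way such triangular decompositions are established, so the route is legitimate and more informative than the paper's citation. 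Two small remarks: your claim that the generators of $U_q^0$ ``pairwise commute'' is not literally true in $U_q(\tilde{\mathfrak{g}})$, since $[\mathcal{H}_{i,r},\mathcal{H}_{j,-r}]$ is a nonzero multiple of $(\mathcal{C}^r-\mathcal{C}^{-r})/(q_j-q_j^{-1})$ (it vanishes only in the quantum loop quotient); this is harmless because that commutator again lies in $U_q^0$, and $U_q^0$ is a subalgebra by definition, but the phrasing should be corrected. Also, the worry in your last paragraph about needing a carefully chosen filtration is unfounded: the correction term $h\,x'yz$ is disposed of by the swap inclusion alone, without any further induction, so induction on $\ell(x)$ is already well-founded as you set it up. Your observation that transcendence of $q$ is irrelevant for the spanning statement and only enters the linear-independence half (Beck) is accurate.
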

Finally, we note that, by Part \ref{enum:grading_quant_aff} of the preceding Remarks \ref{rems:quant_aff}, $U_q^0$ is also generated by the $\mathcal{C}^{\pm 1/2},K_i^{\pm 1}$ and $\varPhi_{i,r}^\pm$.
\section{Finite-dimensional representations}\label{sect:finite-dim_reps}
In this section, we prepare the discussion of finite-dimensional representations of (untwisted) quantum affine algebras. In fact, we will explain that this essentially reduces (up to twisting) to the consideration of finite-dimensional (type $\boldsymbol{1}$) representations of the quantum loop algebra $U_q(\mathfrak{L}(\mathfrak{g}))$ of a finite-dimensional complex simple Lie algebra $\mathfrak{g}$. Moreover, in Subsection \ref{subsect:drinfeld_poly} we will discuss the so-called evaluation representations for type $A_l$ and connect them with a notion of highest weights given by the so-called 'Drinfeld polynomials'. This is mostly explained in Subsection 12.2B in the book \cite{CPBook} and the paper \cite{CP1991}. Then, in Subsection \ref{subsect:trigon_R_matrix_A_l} we will briefly introduce the trigonometric $R$-matrix as the type $A_l$ fundamental representation '$(\rho_f\otimes\rho_f)(\tilde{\mathcal{R}})$' of the pseudo-universal $R$-matrix of $U_q(\tilde{\mathfrak{g}})$ as is done in the paper \cite{BGKNR} by applying the famous Khoroshkin and Tolstoy formula \cite{KT}. When $l=1$, this is the $R$-matrix of the so-called six-vertex model, a model which can be interpreted as a realisation of planar ice, but is also directly connected to the anisotropic Heisenberg XXZ spin chain. Finally, in Subsection \ref{subsect:graphical_notation}, we give a short introduction to a graphical notation which can be used to visualize the braiding of representations, and, in particular, the Yang-Baxter equation.\\

However, before we go into the details about Drinfeld polynomials and finite-dimensional representations of quantum loop algebras, we should say a few words about the analogy of the (finite-dimensional) representation theory of $U_q(\mathfrak{g})$ and $\mathfrak{g}$ when $\mathfrak{g}$ is a finite-dimensional complex simple Lie algebra. We refer the reader to Subsection 10.1A in the book \cite{CPBook} for the details. The analogy goes as follows.

We have seen in Subsection \ref{subsect:Cartan_data} that the finite-dimensional irreducible representations of $\mathfrak{g}$ are parametrised by their highest weight (theorem of the highest weight). The main result for $U_q(\mathfrak{g})$ (as defined in \ref{defprop:U_q(g)_chevalley}) is that, up to a rather trivial twisting, this is still the case. 

We remind the reader that $d^\vee$ denotes the maximal number of edges connecting two nodes in the Dynkin diagram of $\mathfrak{g}$, i.e. $d^\vee=1$ for simply-laced, $d^\vee =2$ for $B_l,C_l,F_4$, and $d^\vee=3$ for $G_2$.
Then, by a \textbf{weight} for $U_q(\mathfrak{g})$ we mean an $l$-tuple $\boldsymbol{\xi}=(\xi_1,\dots,\xi_l)\in(\mathbb{Q}(q)^\times)^l$. Moreover, we introduce a partial ordering of weights by writing $\boldsymbol{\xi}'\leq\boldsymbol{\xi}$, $\boldsymbol{\xi},\boldsymbol{\xi}'\in\mathbb{Q}(q)$, if $\xi_i'^{-1}\xi_i=q^{d^\vee \langle\alpha_i,\beta\rangle}$ for some $\beta\in Q^+$ and all $i\in I =\{1,\dots,l\}$. If $V$ is a (left) $U_q$-module, its \textbf{weight spaces} are all non-zero $\mathbb{Q}(q)$-linear subspaces of $V$ of the form
\begin{equation*}
	V_{\boldsymbol{\xi}} = \{v\in V\,|\,K_i.v=\xi_i v,\,i\in I\},
\end{equation*}
where $\boldsymbol{\xi}$ is a weight. A \textbf{primitive} vector in $V$ is a non-zero vector $v\in V$ such that $X_i^+.v=0$ for all $i\in I$, and $v\in V_{\boldsymbol{\xi}}$ for some $\boldsymbol{\xi}$. A \textbf{highest weight $U_q$-module} is a $U_q$-module $V$ which contains a primitive vector $v$ such that $V=U_q.v$. Then, by applying Proposition \ref{prop:PBW_U_q} it is clear that, if $v\in V_{\boldsymbol{\xi}}$, we have $\dim_{\mathbb{Q}(q)}(V_{\boldsymbol{\xi}})=1$ and that
\begin{equation*}
	V=\bigoplus_{\boldsymbol{\xi}'\leq\boldsymbol{\xi}}V_{\boldsymbol{\xi}'}.
\end{equation*}
In this case, $\boldsymbol{\xi}$ is called the \textbf{highest weight} of $V$ and $v$ is called a \textbf{highest weight vector}. Vice versa, for any weight $\boldsymbol{\xi}$ define the \textbf{Verma module} $M_q(\boldsymbol{\xi})$ to be the quotient of $U_q$ by the left ideal generated by the $X_i^+$ and $K_i-\xi_i1$, $i\in I$. Then, $M_q(\boldsymbol{\xi})$ is clearly a highest weight module with highest weight $\boldsymbol{\xi}$ and canonical highest weight vector $v_{\boldsymbol{\xi}}$ equal to the image of $\boldsymbol{1}\in U_q$. Moreover, every highest weight module with highest weight $\boldsymbol{\xi}$ is isomorphic to a quotient of $M_q(\boldsymbol{\xi})$. As in the classical case, it follows from Proposition \ref{prop:PBW_U_q} that $M_q(\boldsymbol{\xi})$ is a free $U_q^-$-module generated by $v_{\boldsymbol{\xi}}$. Since $\dim(M_q(\boldsymbol{\xi})_{\boldsymbol{\xi}})=1$, $M_q(\boldsymbol{\xi})$ similarly has a unique irreducible quotient $V_q(\boldsymbol{\xi})$ and every highest weight module is isomorphic to some $V_q(\boldsymbol{\xi})$.

Let us also introduce the notion of integrability. We may consider modules with highest weight $\boldsymbol{\xi}_{\sigma,\lambda}$ of the form
\begin{equation}
	\xi_i = \sigma(\alpha_i)q^{d^\vee\langle\alpha_i,\lambda\rangle},\label{eqn:integral_U_q}
\end{equation}
where $\lambda\in P$ and $\sigma\in Q_2^*$ ($=\{Q\to\{\pm1\}\}$ as explained after Proposition \ref{prop:twisting}). We note that the partial ordering induces on the weights $\boldsymbol{\xi}_{\sigma,\lambda}$ of fixed \textbf{type} $\sigma$ the usual partial ordering on $P$, i.e. $\boldsymbol{\xi}_{\sigma,\lambda'}\leq \boldsymbol{\xi}_{\sigma,\lambda}$ iff $\lambda-\lambda'\in Q^+$. In particular, the weights of a $U_q$-module with highest weight $\boldsymbol{\xi}_{\sigma,\lambda}$ are all of the form $\boldsymbol{\xi}_{\sigma,\mu}$ with $\mu\leq\lambda$. Then, we say that a $U_q$-module is \textbf{integrable} if it is the direct sum of its weight spaces and the $X_i^\pm$ act locally nilpotent on $V$, i.e. for any $v\in V$ there exists an $r\in \mathbb{N}$ such that $(X_i^\pm)^r.v=0$. In fact, we have the following propositions.
\begin{prop}
	The irreducible $U_q$-module $V_q(\boldsymbol{\xi})$ with highest weight $\boldsymbol{\xi}$ is integrable iff $\boldsymbol{\xi}=\boldsymbol{\xi}_{\sigma,\lambda}$ for some $\sigma\in Q_2^*$, $\lambda\in P^+$. $\odot$
\end{prop}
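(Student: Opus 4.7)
The plan is to imitate the classical proof of the theorem of the highest weight, reducing the analysis of the action of each $X_i^\pm$ to the $U_{q_i}(\mathfrak{sl}_2)$-subalgebra $U_i \subset U_q(\mathfrak{g})$ generated by $X_i^\pm$ and $K_i^{\pm 1}$. The $\mathfrak{sl}_2$-case is the content of the previous analysis in Chapter~10 of \cite{CPBook} and supplies the necessary bricks.

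For the forward direction, let $V = V_q(\boldsymbol{\xi})$ be integrable and $v \in V_{\boldsymbol{\xi}}$ a highest weight vector. Fix $i \in I$. Integrability implies $(X_i^-)^{r+1}.v = 0$ for some minimal $r \geq 0$, while $(X_i^-)^{r}.v \neq 0$. A standard induction in $U_{q_i}(\mathfrak{sl}_2)$ gives
\begin{equation*}
	[X_i^+, (X_i^-)^{r+1}] = [r+1]_{q_i} (X_i^-)^{r} \, \frac{q_i^{-r} K_i - q_i^{r} K_i^{-1}}{q_i - q_i^{-1}},
\end{equation*}
and evaluating on $v$ yields $[r+1]_{q_i}(q_i^{-r}\xi_i - q_i^{r}\xi_i^{-1})(q_i-q_i^{-1})^{-1}(X_i^-)^r.v = 0$. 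Since $(X_i^-)^r.v\neq 0$ and $[r+1]_{q_i}\neq 0$, we get $\xi_i^2 = q_i^{2r}$, hence $\xi_i = \sigma_i q_i^{m_i}$ with $\sigma_i\in\{\pm 1\}$ and $m_i := r \in \mathbb{Z}_{\geq 0}$. Using the normalisations of Subsection~\ref{subsect:Cartan_data} (so that $d^\vee\langle\alpha_i,\omega_j\rangle = \delta_{ij}d_i$ and $q_i = q^{d_i}$), the element $\lambda := \sum_i m_i\omega_i \in P^+$ and the homomorphism $\sigma \in Q_2^*$ defined by $\sigma(\alpha_i) = \sigma_i$ satisfy $\boldsymbol{\xi} = \boldsymbol{\xi}_{\sigma,\lambda}$.

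For the backward direction, assume $\boldsymbol{\xi} = \boldsymbol{\xi}_{\sigma,\lambda}$ with $\lambda\in P^+$, $\sigma\in Q_2^*$, and set $m_i := \lambda(\alpha_i^\vee)\in\mathbb{Z}_{\geq 0}$. Applying the twisting automorphism of Proposition~\ref{prop:twisting} associated with $\sigma$ sends integrable modules to integrable modules and changes the type from $\sigma$ to $1$, so we may assume $\sigma = 1$. Working inside the Verma module $M_q(\boldsymbol{\xi}_{1,\lambda})$ with canonical generator $v_\lambda$, the same $q$-commutator identity shows that $v_i := (X_i^-)^{m_i+1}.v_\lambda$ is annihilated by every $X_j^+$: for $j = i$ this follows from $\xi_i = q_i^{m_i}$, and for $j\neq i$ from $[X_j^+, X_i^-] = 0$ combined with the action of the relevant $K$'s. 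Thus each $v_i$ is itself a primitive vector of a strictly lower weight, hence generates a proper $U_q$-submodule $N_i \subset M_q(\boldsymbol{\xi}_{1,\lambda})$, and $N := M_q(\boldsymbol{\xi}_{1,\lambda})/\sum_i N_i$ is a non-zero highest weight module in which $(X_i^-)^{m_i+1}.\bar v_\lambda = 0$ for every $i$. Its unique irreducible quotient is $V_q(\boldsymbol{\xi}_{1,\lambda})$, and it remains to upgrade local nilpotency from the cyclic vector to all of $V_q(\boldsymbol{\xi}_{1,\lambda})$.

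The main obstacle is precisely this last upgrade: the Serre relations are used to show that the set of vectors on which $X_i^\pm$ acts locally nilpotently is stable under multiplication by every $X_j^\mp$ and $K_j^{\pm 1}$, hence is all of $V_q(\boldsymbol{\xi}_{1,\lambda})$ because $U_q$ is generated by these elements. The required propagation identity is a $q$-deformation of the classical commutator formula and follows from the quantum Serre relations applied sufficiently many times; it is exactly the quantum analogue of Lusztig's lemma, so I would invoke that result rather than reprove it. Weight-space decomposition of $V_q(\boldsymbol{\xi}_{1,\lambda})$ is automatic since $V_q(\boldsymbol{\xi}_{1,\lambda})$ is a quotient of a weight-graded module, and this combined with the propagated local nilpotency yields integrability.
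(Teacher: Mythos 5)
The paper itself gives no proof of this proposition: it is one of the $\odot$-marked statements imported from Chari--Pressley, Subsection 10.1A, so there is no in-paper argument to compare against. Your proof is correct and is essentially the standard argument from that source: the $U_{q_i}(\mathfrak{sl}_2)$-commutator identity $[X_i^+,(X_i^-)^{r+1}]=[r+1]_{q_i}(X_i^-)^{r}\frac{q_i^{-r}K_i-q_i^{r}K_i^{-1}}{q_i-q_i^{-1}}$ forces $\xi_i=\pm q_i^{m_i}$ on an integrable highest weight module (and your identification $d^\vee\langle\alpha_i,\omega_j\rangle=\delta_{ij}d_i$ matches the paper's normalisation), while conversely the primitive vectors $(X_i^-)^{m_i+1}.v_\lambda$ in the Verma module yield $(X_i^-)^{m_i+1}.\bar v_\lambda=0$ in $V_q(\boldsymbol{\xi}_{\boldsymbol{1},\lambda})$, after the harmless reduction to type $\boldsymbol{1}$ via Proposition \ref{prop:twisting}. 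Two small points: the subspace of locally nilpotent vectors must be shown stable under \emph{all} generators $X_j^{\pm}$ and $K_j^{\pm1}$ (not only $X_j^{\mp}$, though the $X_j^{+}$-cases are easy since $[X_i^-,X_j^+]=0$ for $j\neq i$ and the $j=i$ commutator lies in the Cartan part), and the propagation step itself is invoked as the quantum analogue of Lusztig's ad-nilpotency lemma rather than proved; citing it is legitimate, but it is the only genuinely technical ingredient of the converse direction, so the proof is complete only modulo that standard lemma.
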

The reason for the importance of integrable modules is stated in
\begin{prop}\label{prop:U_q_integrable_highest_weight}
	Every finite-dimensional irreducible $U_q$-module is integrable and highest weight. $\odot$
\end{prop}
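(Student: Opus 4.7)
The plan has two pieces to verify for a finite-dimensional irreducible $U_q$-module $V$: integrability (weight decomposition together with local nilpotence of the $X_i^\pm$) and existence of a highest weight vector. Both will follow once the weight decomposition is in hand.

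For the weight decomposition, I would leverage the rank-one subalgebras $U_{q_i}(\mathfrak{sl}_2) \subset U_q(\mathfrak{g})$ generated by $X_i^+$, $X_i^-$ and $K_i^{\pm 1}$ (this is immediate from the defining relations of Definition-proposition \ref{defprop:U_q(g)_chevalley}). The restriction of $V$ to each such subalgebra is again finite-dimensional, and the classification of finite-dimensional $U_{q_i}(\mathfrak{sl}_2)$-modules over $\mathbb{Q}(q)$ asserts that $K_i$ acts semisimply with eigenvalues in $\pm q_i^{\mathbb{Z}}$ and $X_i^\pm$ act nilpotently. Since the $K_i$ pairwise commute as operators on $V$ and are each individually diagonalizable, they are simultaneously diagonalizable, yielding a decomposition $V = \bigoplus_{\boldsymbol{\xi}} V_{\boldsymbol{\xi}}$ whose joint eigenvalues necessarily take the form $\boldsymbol{\xi}_{\sigma,\lambda}$ of (\ref{eqn:integral_U_q}) for some $\sigma \in Q_2^*$ and $\lambda \in P$. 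Nilpotence of the $X_i^\pm$ on $V$ is automatic from the rank-one picture combined with $\dim V < \infty$.

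For the highest weight property, once the weight decomposition is established, the set of weights of $V$ is a finite subset of the weight lattice, so with respect to the partial order $\leq$ (which stratifies by $Q^+$) there exists a weight $\boldsymbol{\xi}$ maximal among those occurring in $V$. Pick a nonzero $v \in V_{\boldsymbol{\xi}}$. Using the commutation relation $K_j X_i^+ K_j^{-1} = q_j^{a_{ji}} X_i^+$, one checks that $X_i^+.v$ lies in a weight space whose weight is strictly greater than $\boldsymbol{\xi}$; by maximality this image must vanish for every $i \in I$, so $v$ is primitive. Irreducibility of $V$ then forces $V = U_q.v$, exhibiting $V$ as a highest weight module with highest weight $\boldsymbol{\xi}$.

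The main obstacle is the rank-one input used above: classically one diagonalizes a toral Cartan subalgebra by general principles, but here $K_i$ is a multiplicative element whose eigenvalues on a finite-dimensional $\mathbb{Q}(q)$-vector space must be shown to lie in $\pm q_i^{\mathbb{Z}}$. Proving this requires a careful $U_{q_i}(\mathfrak{sl}_2)$-analysis, picking an eigenvector of $K_i$ (after passing to an algebraic closure, if necessary), exploiting the relation $K_i X_i^\pm K_i^{-1} = q_i^{\pm 2} X_i^\pm$ to show that the set of eigenvalues is stable under multiplication by $q_i^{\pm 2}$, and then using finite-dimensionality and nilpotency of the $X_i^\pm$ to pin the eigenvalues into the stated form. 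Once this rank-one result is in hand, the rest of the argument is routine and essentially mirrors the classical theorem of the highest weight.
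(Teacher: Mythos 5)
You should first be aware that the paper offers no proof of this proposition: it is one of the statements quoted from \cite{CPBook} (Chapter 10.1) with the proof omitted, so your proposal can only be judged against the standard arguments. Your overall route — restrict to the rank-one subalgebras, diagonalize the commuting $K_i$ simultaneously, take a weight maximal in the partial order to get a primitive vector, and invoke irreducibility to get $V=U_q.v$ — is a legitimate and standard strategy, and its second half is correct as written: $X_i^+$ shifts the weight $\boldsymbol{\xi}$ to $\boldsymbol{\xi}'$ with $\xi'_j=q_j^{a_{ji}}\xi_j$, which is strictly higher, so maximality forces $X_i^+.v=0$, and local nilpotency follows from the finiteness of the set of weights. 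The genuine gap is the rank-one input you lean on. What is available at this point (Example \ref{xmpl:spin_k/2_U_q}) is the classification of \emph{irreducible} finite-dimensional $U_{q_i}(\mathfrak{sl}_2)$-modules, whereas what you need is that $K_i$ acts semisimply, with eigenvalues in $\pm q_i^{\mathbb{Z}}$, on an \emph{arbitrary} finite-dimensional module — the restriction of $V$ to $U_{q_i}(\mathfrak{sl}_2)$ is not known to be irreducible or completely reducible. That statement is essentially rank-one complete reducibility (proved via the quantum Casimir, and stated in this exposition only \emph{after} the present proposition), and your sketched repair does not supply it: the eigenvalue-shifting relation plus finite-dimensionality gives nilpotency of the $X_i^\pm$, and the highest-weight-vector computation pins eigenvalues to $\pm q_i^{\mathbb{Z}}$, but nothing in the sketch rules out nontrivial Jordan blocks for $K_i$, which is exactly what ``direct sum of weight spaces'' requires. (Also, as phrased, ``the set of eigenvalues is stable under multiplication by $q_i^{\pm 2}$'' is not what you want — that would produce infinitely many eigenvalues; the correct statement is that $X_i^\pm$ shift generalized eigenspaces, and finiteness then forces nilpotency.)

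The gap can be closed without ever proving semisimplicity of the $K_i$ up front, using only what the paper has already set up. Pass to $\overline{\mathbb{Q}(q)}$ and decompose $V$ into joint generalized eigenspaces of the commuting $K_i$; the $X_i^+$ shift these by the factors $q_j^{a_{ji}}$, and since $\sum_i c_i\alpha_i\neq 0$ for $c_i\geq 0$ not all zero, the shift graph on the finitely many joint generalized eigenvalues is acyclic, so some joint generalized eigenspace is annihilated by all $X_i^+$. Inside the (nonzero, $K_i$-stable) space $\{v: X_j^+v=0\ \forall j\}$ choose a common eigenvector of the $K_i$ over the closure; the rank-one computation with the minimal $m$ such that $(X_i^-)^m v=0$ shows each eigenvalue equals $\pm q_i^{m-1}\in\mathbb{Q}(q)$, so a genuine primitive vector exists already over $\mathbb{Q}(q)$. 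Irreducibility gives $V=U_q.v$, i.e. $V$ is highest weight; and, as the paper points out just before the proposition via Proposition \ref{prop:PBW_U_q}, any highest weight module is automatically the direct sum of its weight spaces with weights $\leq\boldsymbol{\xi}$, so finite-dimensionality yields local nilpotency of the $X_i^\pm$, hence integrability. If you prefer to keep your original structure, you must instead import (or prove) rank-one complete reducibility, i.e. the Casimir argument, as Jantzen does — citing the classification of irreducibles alone is not enough.
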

In analogy to the classical case we may therefore call weights of the form (\ref{eqn:integral_U_q}) \textbf{integral} and, if $\lambda\in P^+$, \textbf{dominant}.

Further, we note that the module $V_q(\boldsymbol{\xi}_{\sigma,0})$ is one-dimensional, for we have
\begin{equation*}
	X_i^\pm.v_{\boldsymbol{\xi}_{\sigma,0}}=0,\quad K_i.v_{\boldsymbol{\xi}_{\sigma,0}}=\sigma(\alpha_i)v_{\boldsymbol{\xi}_{\sigma,0}}.
\end{equation*}
Hence
\begin{equation*}
	V_q(\boldsymbol{\xi}_{\sigma,\lambda})\cong V_q(\boldsymbol{\xi}_{\sigma,0})\otimes V_q(\boldsymbol{\xi}_{\boldsymbol{1},\lambda})
\end{equation*}
as $U_q$-modules, where $\boldsymbol{1}(\alpha_i)=1$ for all $i\in I$. We conclude that it is enough to consider modules of type $\boldsymbol{1}$ and, by abuse of notation, refer to a weight $\boldsymbol{\xi}_{\boldsymbol{1},\lambda}$ simply as $\lambda$, and $V_q(\boldsymbol{\xi}_{\boldsymbol{1},\lambda})$ as $V_q(\lambda)$.

If $W$ is any highest weight $U_q$-module with highest weight $\lambda\in P$, its character $\chi(W)\in\mathbb{Z}[P]$ is defined, as in the classical case, by
\begin{equation*}
	\chi(W)=\sum_{\mu\in P}\dim(W_\mu)e^\mu,
\end{equation*}
where $e^\mu$ denotes the basis element of the group algebra $\mathbb{Z}[P]$ of $P$ corresponding to $\mu\in P$.

We have the following important result about the representations of $U_q(\mathfrak{sl}_2)$.
\begin{xmpl}\label{xmpl:spin_k/2_U_q}
	There are exactly two irreducible representations of $U_q(\mathfrak{sl}_2)$ of each (finite) dimension $k+1$, $k\in\mathbb{N}$. These are the modules $V_{q}(\boldsymbol{\xi}_{\sigma,k})$, $\sigma=\pm 1$, with basis $\{v_0^{(k)},\dots,v_k^{(k)}\}$ on which the action of the generators of $U_q$ is given by
	\begin{equation*}
		K_1.v_r^{(k)}=\sigma q^{\lambda-2r}v_r^{(k)},\quad X_1^+.v_r^{(k)}=\sigma[k-r+1]_q v_{r-1}^{(k)},\quad X_1^-.v_r^{(k)}=[r+1]_q v_{r+1}^{(k)}.
	\end{equation*}
	The vector $v_0^{(k)}$ is a highest weight vector of $V_q(\boldsymbol{\xi}_{\sigma,k})$.  If $\sigma=\boldsymbol{1}$, these are the counterparts of the $k+1$-dimensional representation for $U_h(\mathfrak{sl}_2)$ if one puts $q=e^h$ and $K_1=e^{hH_1}$. Those with $\sigma=-1$ have no analogues for $U_h(\mathfrak{sl}_2)$ (see \cite{CPBook} Subsection 9.1B).\footnote{ Reducing $\sigma_i e^{d_i h H_i}=e^{d_i h \eta_i}$ $(\operatorname{mod}h)$ for some $\eta_i$ only allows for $\sigma_i=1$.} $\odot$
\end{xmpl}
We note that, in the limit $q\to 1$, the formulas in Example \ref{xmpl:spin_k/2_U_q} describe the $(k+1)$-dimensional irreducible representation of $\mathfrak{sl}_2$ (by interpreting '$K_1=q^H$'). More generally, it can be shown (by using an 'integral' form of $U_q$ and specialising $q$ to $1$) that the structure of the $U_q(\mathfrak{g})$-modules $V_q(\lambda)$ with $\lambda\in P^+$ is exactly parallel to that of the corresponding highest weight $\mathfrak{g}$-modules (see \cite{CPBook} Proposition 10.1.4 for the details).
This is summarised in
\begin{prop}
	Let $V_q(\lambda)$ be the irreducible highest weight $U_q(\mathfrak{g})$-module with highest weight $\lambda\in P^+$. Then, there exists an irreducible $U(\mathfrak{g})$-module $\overline{V_q(\lambda)}$ over $\mathbb{Q}$ with highest weight $\lambda$ such that
	\begin{equation*}
		\dim_{\mathbb{Q}(q)}(V_q(\lambda)_\mu) = \dim_{\mathbb{Q}}(\overline{V_q(\lambda)}_\mu)
	\end{equation*}
	for all $\mu\in P$. $\odot$
\end{prop}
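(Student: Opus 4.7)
The plan is to realize $\overline{V_q(\lambda)}$ as the $q\to 1$ specialization of a suitable integral form of $V_q(\lambda)$, in the spirit of Lusztig, and then identify the resulting module with the classical irreducible highest weight $\mathfrak{g}$-module $L(\lambda)$. Throughout, set $\mathcal{A}=\mathbb{Z}[q,q^{-1}]$.

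First, I would introduce the Lusztig integral form: let $U^{\mathcal{A}}$ denote the $\mathcal{A}$-subalgebra of $U_q(\mathfrak{g})$ generated by the divided powers $(X_i^{\pm})^{(r)}:=(X_i^{\pm})^r/[r]_{q_i}!$, the $K_i^{\pm 1}$, and the Cartan-type elements $\left[\begin{smallmatrix}K_i;\,0\\ r\end{smallmatrix}\right]_{q_i}$ for $r\geq 0$, $i\in I$. Fix a highest weight vector $v_\lambda\in V_q(\lambda)_\lambda$ and set $V^{\mathcal{A}}:=U^{\mathcal{A}}.v_\lambda$. Using a PBW-type basis of $U_q^-$ built from divided powers of the (quantum) root vectors of Definition \ref{def:affine_root_vectors_U_h} (applied in the finite case via Proposition \ref{prop:PBW_U_q}), one verifies that $V^{\mathcal{A}}=\bigoplus_{\mu\leq\lambda} V^{\mathcal{A}}_\mu$ with $V^{\mathcal{A}}_\mu:=V^{\mathcal{A}}\cap V_q(\lambda)_\mu$, each $V^{\mathcal{A}}_\mu$ being a free $\mathcal{A}$-module of rank $\dim_{\mathbb{Q}(q)}V_q(\lambda)_\mu$. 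In particular $V^{\mathcal{A}}\otimes_{\mathcal{A}}\mathbb{Q}(q)\cong V_q(\lambda)$ as $U_q(\mathfrak{g})$-modules.

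Next, I would specialize along the ring map $\mathcal{A}\to\mathbb{Q}$, $q\mapsto 1$, and define
\[
\overline{V_q(\lambda)}:=V^{\mathcal{A}}\otimes_{\mathcal{A}}\mathbb{Q}.
\]
Base change along a flat specialization preserves ranks of free summands, so the weight decomposition descends and
\[
\dim_{\mathbb{Q}}\overline{V_q(\lambda)}_\mu=\operatorname{rank}_{\mathcal{A}}V^{\mathcal{A}}_\mu=\dim_{\mathbb{Q}(q)}V_q(\lambda)_\mu
\]
for every $\mu\in P$. On the algebra side, $U^{\mathcal{A}}\otimes_{\mathcal{A}}\mathbb{Q}$ surjects onto the rational Kostant $\mathbb{Z}$-form of $U(\mathfrak{g})$ (the divided powers $(X_i^{\pm})^{(r)}$ specialize to $e_i^r/r!$, $f_i^r/r!$; the $K_i$ specialize to $1$; and $\left[\begin{smallmatrix}K_i;\,0\\ r\end{smallmatrix}\right]_{q_i}$ to $\binom{H_i}{r}$), so $\overline{V_q(\lambda)}$ is naturally a $U(\mathfrak{g})_{\mathbb{Q}}$-module, generated by the image $\bar v_\lambda$ of $v_\lambda$, which is a highest weight vector of weight $\lambda$.

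Finally, I would show that $\overline{V_q(\lambda)}$ is irreducible, so that it is forced to coincide with the classical irreducible module $L(\lambda)$ of highest weight $\lambda$. Two routes are available. The Shapovalov route: $V_q(\lambda)$ carries a contravariant form whose Gram matrix in a Chevalley-type $\mathcal{A}$-basis of $U_q^-v_\lambda$ has entries in $\mathcal{A}$, is non-degenerate on every weight space (since $V_q(\lambda)$ is irreducible), and specializes at $q=1$ to the classical Shapovalov Gram matrix on a highest weight $U(\mathfrak{g})$-module with highest weight $\lambda\in P^+$; non-degeneracy of that classical form (valid precisely because $\lambda\in P^+$) forces the specialized form, and hence $\overline{V_q(\lambda)}$, to be irreducible. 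Alternatively, one compares formal characters: both $\chi(V_q(\lambda))$ and $\chi(L(\lambda))$ are given by the Weyl character formula, and since $\overline{V_q(\lambda)}$ is a highest weight quotient of a Verma module $M(\lambda)$ with $\dim\overline{V_q(\lambda)}_\mu=\dim V_q(\lambda)_\mu$ for all $\mu$, its character equals that of $L(\lambda)$, yielding $\overline{V_q(\lambda)}\cong L(\lambda)$.

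The main obstacle is the integrality statement underlying step one: that $U^{\mathcal{A}}$ indeed preserves the lattice $V^{\mathcal{A}}$ and that each weight component is $\mathcal{A}$-free of the expected rank. This requires the divided-power quantum Serre relations, the quantum Kac commutation identity between divided powers of $X_i^+$ and $X_i^-$, and control of the PBW basis over $\mathcal{A}$; it is the technical core of Lusztig's integral form construction. Once this is granted, the $q=1$ specialization together with the Shapovalov or character argument cleanly yields the claimed equality of weight multiplicities.
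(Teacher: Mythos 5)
Your overall strategy is exactly the one the paper points to: the proposition is taken from Chari--Pressley (Proposition 10.1.4 of \cite{CPBook}), whose proof is precisely the construction of an integral form of $U_q(\mathfrak{g})$ over $\mathbb{Z}[q,q^{-1}]$ and of the lattice $U^{\mathcal{A}}.v_\lambda$, followed by specialisation at $q=1$; so at the level of method there is nothing genuinely different between your proposal and the cited proof, and the multiplicity-preservation part of your argument is the right technical core.

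The one step that is not sound as written is the final irreducibility argument. In your Shapovalov route, the specialised form lives on $\overline{V_q(\lambda)}$, which a priori is only \emph{some} highest weight module of highest weight $\lambda$, possibly strictly larger than the classical irreducible $L(\lambda)$; the classical contravariant form is nondegenerate on $L(\lambda)$ but not on an arbitrary highest weight module with dominant integral highest weight (it is degenerate on the Verma module, for instance), and nondegeneracy of the Gram determinant over $\mathbb{Q}(q)$ does not by itself prevent it from vanishing at $q=1$. In your character route, invoking the Weyl character formula for $\chi(V_q(\lambda))$ is circular in this setting, since the quantum character formula is normally deduced from the very multiplicity statement being proved. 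The standard repair is cheap: one first uses that $V_q(\lambda)$ is finite-dimensional for $\lambda\in P^+$ (part of the quantum theory developed in \cite{CPBook} and implicitly used in the surrounding discussion); then $\overline{V_q(\lambda)}$ is a finite-dimensional highest weight $U(\mathfrak{g})$-module over a field of characteristic zero, and Weyl's complete reducibility theorem together with the fact that a highest weight module has a unique maximal proper submodule (hence is indecomposable) forces it to be irreducible, so $\overline{V_q(\lambda)}\cong L(\lambda)$. With that replacement your argument is complete and coincides with the proof the paper refers to.
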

And we have
\begin{cor}\label{cor:finite_dim_irred_and_highest_weight}
	Every finite-dimensional highest weight $U_q(\mathfrak{g})$-module is irreducible and has highest weight in $P^+$.
\end{cor}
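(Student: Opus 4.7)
The plan is to proceed in two stages: first constrain the form of the highest weight via rank-one theory, then upgrade this to irreducibility via integrability.

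For the first stage, let $V$ be a finite-dimensional highest weight $U_q(\mathfrak{g})$-module with highest weight vector $v$ and highest weight $\boldsymbol{\xi} = (\xi_1,\dots,\xi_l)$. For each $i \in I$, the subalgebra $\mathcal{U}_i \subseteq U_q(\mathfrak{g})$ generated by $X_i^\pm$ and $K_i^{\pm 1}$ is (isomorphic to) $U_{q_i}(\mathfrak{sl}_2)$ by the relations in Definition-proposition \ref{defprop:U_q(g)_chevalley}, and the cyclic submodule $\mathcal{U}_i.v \subseteq V$ is a finite-dimensional highest weight $\mathcal{U}_i$-module with highest weight $\xi_i$. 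Its unique finite-dimensional irreducible quotient must appear in the list of Example \ref{xmpl:spin_k/2_U_q}, which forces $\xi_i = \sigma_i q_i^{k_i}$ with $\sigma_i \in \{\pm 1\}$ and $k_i \in \mathbb{Z}_{\geq 0}$. Assembling these, I define $\sigma \in Q_2^*$ by $\sigma(\alpha_i) = \sigma_i$ and $\lambda = \sum_i k_i \omega_i$; the identity $d^\vee \langle \alpha_i, \omega_j\rangle = \delta_{i,j} d_i$ (a consequence of $d^\vee\langle\alpha_i,\alpha_i\rangle/2 = d_i$) then gives $\boldsymbol{\xi} = \boldsymbol{\xi}_{\sigma,\lambda}$ with $\lambda \in P^+$. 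Pulling $V$ back by the twisting automorphism of Proposition \ref{prop:twisting} associated to $\sigma$, I may henceforth assume $\sigma = \boldsymbol{1}$ and $\boldsymbol{\xi}$ identified with $\lambda \in P^+$.

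For the second stage, by Proposition \ref{prop:PBW_U_q} the module $V = U_q^-.v$ is a direct sum of its weight spaces, each lying in $\lambda - Q^+$; the weight set being finite, the generators $X_i^\pm$ act locally nilpotently, so $V$ is integrable. I would then invoke complete reducibility of finite-dimensional integrable $U_q(\mathfrak{g})$-modules (the quantum analogue of Weyl's theorem, valid for $\mathfrak{g}$ of finite type and transcendental $q$) to decompose $V = \bigoplus_j V_q(\mu_j)$ with $\mu_j \in P^+$. The highest weight vectors of the summands are linearly independent primitive vectors of $V$; since $V$ is cyclic on the single primitive vector $v$ of weight $\lambda$, exactly one summand can appear and it must be $V_q(\lambda)$. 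Hence $V \cong V_q(\lambda)$ is irreducible with highest weight $\lambda \in P^+$, as required.

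The main obstacle is the appeal to complete reducibility of integrable modules. A more elementary substitute is to upgrade the braid group automorphisms $T_i$ of $U_q(\mathfrak{g})$ to invertible $\mathbb{Q}(q)$-linear operators $\mathbf{T}_i$ on the integrable module $V$ that permute weight spaces by the simple reflections $s_i \in W$; this forces $\chi(V) \in \mathbb{Z}[P]$ to be $W$-invariant and, combined with the fact that the $\lambda$-weight space is one-dimensional and generates $V$, reduces the equality $\chi(V) = \chi_\lambda$ to a classical Kostant/Weyl-character-formula argument. The preceding proposition then identifies $\chi_\lambda$ with $\chi(V_q(\lambda))$, so the canonical surjection $V \twoheadrightarrow V_q(\lambda)$ is an isomorphism on each weight space. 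Either route ultimately rests on the careful construction of the quantum Weyl group operators $\mathbf{T}_i$ on integrable modules via $q$-exponentials of the locally-nilpotent $X_i^\pm$, which is the technical core of Lusztig's theory and the deepest input in the argument.
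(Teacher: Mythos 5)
Your first stage is correct and is the standard reduction: restricting to the $U_{q_i}(\mathfrak{sl}_2)$-subalgebras and using the classification in Example \ref{xmpl:spin_k/2_U_q} forces $\xi_i=\sigma_i q_i^{k_i}$ with $k_i\ge 0$, hence $\boldsymbol{\xi}=\boldsymbol{\xi}_{\sigma,\lambda}$ with $\lambda\in P^+$, and the twist of Proposition \ref{prop:twisting} reduces to type $\boldsymbol{1}$. Where you diverge from the paper is the irreducibility step. The corollary is intended as a direct consequence of the proposition immediately preceding it (the $q\to 1$ comparison of $V_q(\lambda)$ with the classical module $\overline{V_q(\lambda)}$, following Chari--Pressley, Section 10.1): a finite-dimensional highest weight module $V$ with highest weight $\lambda$ surjects onto its unique irreducible quotient $V_q(\lambda)$, while an integral form of $V$ specializes at $q=1$ to a finite-dimensional highest weight $\mathfrak{g}$-module of the same dimension, which classically is irreducible of dimension $\dim\overline{V_q(\lambda)}=\dim V_q(\lambda)$; the dimension count then forces $V\cong V_q(\lambda)$. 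Your route instead invokes complete reducibility of finite-dimensional modules. That theorem is true, but in this development it is stated only \emph{after} the corollary, and its standard proof (the quantum analogue of Weyl's theorem) uses precisely this corollary, or at least the same specialization input, to handle extensions between highest weight modules; so as written your main argument is a forward reference that risks circularity rather than an independent proof. Granting complete reducibility, your cyclicity argument does work, provided you make explicit that each projection of $v$ onto a summand is a nonzero primitive generator of weight $\lambda$ and that $\dim V_\lambda=1$ caps the number of summands at one.

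Your proposed "more elementary substitute" does not close this gap. Weyl-group invariance of $\chi(V)$ (via quantum Weyl group operators), cyclicity, and one-dimensionality of the top weight space are all satisfied by a hypothetical finite-dimensional highest weight module with composition factors $V_q(\lambda)$ and $V_q(\mu)$, $\mu<\lambda$ dominant, whose character would be $\chi_\lambda+\chi_\mu$; these constraints alone cannot rule out extra composition factors. What actually excludes them is either a Casimir/linkage-type eigenvalue argument, complete reducibility itself, or the specialization comparison the paper relies on. So either cite the specialization proposition (the paper's route, which needs no complete reducibility), or supply a quantum Casimir argument; the character/Weyl-group sketch as stated would not suffice.
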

Aswell as
\begin{thm}
	Every finite-dimensional $U_q(\mathfrak{g})$-module is completely reducible. $\odot$
\end{thm}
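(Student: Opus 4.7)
The plan is to reduce complete reducibility to the vanishing of $\operatorname{Ext}^1$ between pairs of finite-dimensional irreducibles, and then to separate the two composition factors via a quantum Casimir element (when their highest weights differ) or via a weight-space argument (when they coincide).

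First, by induction on $\dim V$ together with the long exact sequence for $\operatorname{Ext}^1$, the statement reduces to showing that $\operatorname{Ext}^1_{U_q(\mathfrak{g})}(X, Y) = 0$ for every pair of finite-dimensional irreducible $U_q(\mathfrak{g})$-modules $X, Y$. By Proposition \ref{prop:U_q_integrable_highest_weight}, Corollary \ref{cor:finite_dim_irred_and_highest_weight}, and the twisting automorphisms of Proposition \ref{prop:twisting}, one may assume $X = V_q(\mu)$ and $Y = V_q(\lambda)$ for some $\lambda, \mu \in P^+$; equivalently, it suffices to split any short exact sequence $0 \to V_q(\lambda) \to V \to V_q(\mu) \to 0$.

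For the case $\lambda \ne \mu$: I would invoke a quantum Casimir element $C_q \in Z(U_q(\mathfrak{g}))$ which acts on each $V_q(\nu)$ as a scalar $c_\nu \in \mathbb{Q}(q)$, with $c_\lambda \ne c_\mu$. Since $C_q$ is central, it preserves $V_q(\lambda) \subset V$ and descends to $V/V_q(\lambda) \cong V_q(\mu)$, acting as $c_\lambda$ and $c_\mu$ on the two pieces respectively. Decomposing $V$ into generalized eigenspaces for $C_q$ then yields $V = V_q(\lambda) \oplus V[c_\mu]$, and the composite $V[c_\mu] \hookrightarrow V \twoheadrightarrow V_q(\mu)$ is a $U_q(\mathfrak{g})$-equivariant isomorphism, providing the desired splitting.

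For the case $\lambda = \mu$: The operators $K_i$ act semisimply on $V$, because the minimal polynomial of $K_i$ on $V$ is the least common multiple of those realized on the composition factors, each of which is squarefree with roots in $\{q^{d^\vee\langle\alpha_i,\nu\rangle}\,|\,\nu\leq\lambda\}$; hence $V$ has a genuine weight-space decomposition. Since $\lambda$ is the highest weight of $V$ and $V_{\lambda+\alpha_i}=0$, every vector in the two-dimensional weight space $V_\lambda$ is primitive. Picking any $v \in V_\lambda \setminus V_q(\lambda)_\lambda$, the submodule $U_q(\mathfrak{g}).v$ is a finite-dimensional highest-weight module, hence irreducible and isomorphic to $V_q(\lambda)$ by Corollary \ref{cor:finite_dim_irred_and_highest_weight}. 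By irreducibility, $V_q(\lambda) \cap U_q(\mathfrak{g}).v$ is either $0$ or $V_q(\lambda)$, and $v \notin V_q(\lambda)$ rules out the latter; a dimension count then gives $V = V_q(\lambda) \oplus U_q(\mathfrak{g}).v$.

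The main obstacle is the construction of the quantum Casimir element $C_q$ and the verification that the scalars $c_\lambda$ are distinct for distinct $\lambda \in P^+$. I would handle this either by writing down an explicit $q$-analogue of the classical quadratic Casimir, involving root vectors paired against shifted powers of the $K_i$, and computing $c_\lambda$ directly; or, more conceptually, by invoking a quantum Harish-Chandra homomorphism $Z(U_q(\mathfrak{g})) \to U_q^0$ whose image, after the usual $\rho$-shift, is large enough to separate dominant integral weights. With $C_q$ in hand, both the reduction to irreducibles and the weight-space splitting above are formal.
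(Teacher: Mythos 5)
Your overall architecture (reduce to $\operatorname{Ext}^1$-vanishing between irreducibles, separate distinct highest weights by a central element, handle equal highest weights by a primitive-vector argument) is the standard one; the paper itself omits the proof and defers to \cite{CPBook}, whose argument is of exactly this Casimir-plus-highest-weight-vector type. However, two of your steps would fail as written. First, the quadratic Casimir does \emph{not} separate arbitrary distinct dominant integral weights: its eigenvalue on $V_q(\nu)$ is governed by $\langle\nu,\nu+2\rho\rangle$, and already for $\mathfrak{sl}_3$ one has $\langle\omega_1,\omega_1+2\rho\rangle=\langle\omega_2,\omega_2+2\rho\rangle$, so your dichotomy ``$\lambda\neq\mu\Rightarrow c_\lambda\neq c_\mu$'' is false. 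The standard repair is to split the case $\lambda\neq\mu$ further: if $\lambda$ and $\mu$ are not comparable in the dominance order, then the larger one is a maximal weight of $V$ which is not a weight of the other composition factor, and a primitive vector there generates a finite-dimensional highest weight module, irreducible by Corollary \ref{cor:finite_dim_irred_and_highest_weight}, which splits the sequence exactly as in your $\lambda=\mu$ argument; only for comparable weights does one use the Casimir, where $\mu<\lambda$ with both dominant does force $c_\mu\neq c_\lambda$. Your alternative of invoking the full centre via a quantum Harish--Chandra homomorphism would also work (it does separate dominant integral weights at generic $q$), but that is a substantial external theorem you are importing as a black box, not something available in the paper.

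Second, in the case $\lambda=\mu$ your justification of the semisimplicity of the $K_i$ is not valid: for an operator preserving a submodule, the minimal polynomial on the whole space need not be the least common multiple of the minimal polynomials on the two factors (a unipotent Jordan block on an extension of two trivial factors is the basic counterexample), and ruling out precisely this Jordan-block behaviour on the top weight space is the heart of the no-self-extension statement, so it cannot be assumed for free. You need an actual argument here, e.g.\ exploiting the relation $[X_i^+,X_i^-]=(K_i-K_i^{-1})/(q_i-q_i^{-1})$ together with the vanishing of the $X_i^\pm$ on suitable generalized weight spaces, or the reduction $\operatorname{Ext}^1(V_q(\lambda),V_q(\lambda))\cong\operatorname{Ext}^1\bigl(\mathbb{Q}(q),V_q(\lambda)\otimes V_q(\lambda)^*\bigr)$ followed by a direct analysis of extensions of the trivial module. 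Once a primitive common eigenvector outside $V_q(\lambda)$ is actually produced, the remainder of your splitting argument (irreducibility of the finite-dimensional highest weight module it generates, intersection and dimension count) is correct.
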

This concludes our discussion of the analogy between finite-dimensional representations of $U_q(\mathfrak{g})$ and $\mathfrak{g}$.
\subsection{Drinfeld polynomials and $q$-characters}\label{subsect:drinfeld_poly}
Having introduced the second Drinfeld (or loop) realisation of (untwisted) quantum affine algebras in Subsection \ref{subsect:quantum_affine}, we are now in the position to explain its importance for the finite-dimensional representation theory of $U_q(\tilde{\mathfrak{g}})$. In fact, it turns out that the quantum loop description gives rise to the definition of the so-called 'Drinfeld polynomials', which describe the irreducible representations of $U_q(\tilde{\mathfrak{g}})$ in analogy to the description of irreducible representations of finite-dimensional complex simple Lie algebras in terms of highest weights. In fact, the term 'Drinfeld polynomial' goes back to an analogous classification result for Yangians obtained earlier by Drinfeld \cite{D2} which we are not going to discuss here (see for examples \cite{CPBook} Section 12.1). Here, our goal is to give an introduction to the notion of Drinfeld polynomials and write down the first consequences. Later, in Chapter \ref{ch:fin_dim_rep_quantum_aff}, we will introduce a more convenient notation, put it into the bigger context of $q$-characters, loop-weights and snake modules and discuss the cluster algebra structure of the Grothendieck ring of the category of (type $\boldsymbol{1}$) finite-dimensional representations of $U_q(\tilde{\mathfrak{g}})$ when $q$ is not a root of unity. Again, the introduction given here will be mostly in parallel to Subsection 12.2B in the book \cite{CPBook}, but we will also use some additional information provided in the papers \cite{CP1991} and \cite{CP1994}.\\

Let us assume in the following that $q\in\mathbb{C}^\times$ is transcendental.\footnote{In principle, the assumption that $q$ in not a root of unity is sufficient for everything we will discuss. However, the discussion of specialisations is not the focus of this thesis.}
\begin{definition}[type $\boldsymbol{1}$]
	\label{def:type_1}
	We say that a representation $V$ of $U_q(\tilde{\mathfrak{g}})$ is of \textbf{type} $\boldsymbol{1}$, if the $K_i$, $i=0,1,\dots,l$, act semisimply on $V$ with eigenvalues which are integer powers of $q$ and $\mathcal{C}^{1/2}=\boldsymbol{1}$ on $V$, i.e., $\mathcal{C}^{1/2}$ acts as the identity on $V$ and $V$ is of type $\boldsymbol{1}$ as a representation of $U_q(\mathfrak{g})$. $\odot$
\end{definition}
In fact, these are representations of the quotient of $U_q(\tilde{\mathfrak{g}})$ by the ideal generated by $\mathcal{C}^{1/2}-1$, which is the quantum loop algebra $U_q(\mathfrak{L}(\mathfrak{g}))$.

We have seen in Proposition \ref{prop:twisting} that there are $2^{l+1}$ $\mathbb{C}(q)$-algebra automorphisms of $U_q(\tilde{\mathfrak{g}})$ given on generators by
\begin{equation}
	K_i\mapsto \sigma_i K_i,\quad X_i^+\mapsto \sigma_i X_i^+,\quad X_i^-\mapsto X_i^-,\label{eqn:twisting_sigma}
\end{equation}
for any set of signs $\sigma_i\in\{\pm 1\}$, $i=0,1,\dots,l$. In addition, there is an automorphism given in terms of the second Drinfeld (or loop algebra-like) generators by
\begin{equation}\label{eqn:twisting_loop_grading}
	\begin{split}
		\mathcal{C}^{1/2}\mapsto -\mathcal{C}^{1/2},&\quad\mathcal{X}_{i,r}^\pm\mapsto (-1)^r\mathcal{X}_{i,r}^\pm,\\
		\mathcal{K}_i\mapsto\mathcal{K}_i,&\quad\mathcal{H}_{i,r}\mapsto\mathcal{H}_{i,r}.
	\end{split}
\end{equation}
Using these, we can state
\begin{prop}\label{prop:type_1_twisting_loop}\hspace{1em}
	\begin{enumerate}[label=\normalfont(\roman*)]
		\item\label{enum:type_1_twisting} Every finite-dimensional irreducible representation $V$ of $U_q(\tilde{\mathfrak{g}})$ can be obtained from a type $\boldsymbol{1}$ representation by twisting with a product of some of the automorphisms (\ref{eqn:twisting_sigma}) and (\ref{eqn:twisting_loop_grading}).
		\item\label{enum:part_2_requirements} Every finite-dimensional (non-zero) type $\boldsymbol{1}$ representation $V$ of $U_q(\tilde{\mathfrak{g}})$ contains a non-zero vector $v$ which is annihilated by the $\mathcal{X}_{i,r}^+$ for all $i$, $r$, and is a simultaneous eigenvector for the elements of $U_q^0$. $\odot$
	\end{enumerate}
\end{prop}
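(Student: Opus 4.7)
The plan is to handle (i) by a central-character argument together with twistings that normalise the signs, and (ii) by exploiting that $\mathcal{C}=1$ on type $\boldsymbol{1}$ modules forces $U_q^0$ to act commutatively.

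For part (i), let $V$ be a finite-dimensional irreducible $U_q(\tilde{\mathfrak{g}})$-module. By Schur's lemma, the central element $\mathcal{C}^{1/2}$ acts on $V$ as a scalar $c$. Taking the trace of the relation $[\mathcal{H}_{i,1},\mathcal{H}_{i,-1}]=[2]_{q_i}(\mathcal{C}-\mathcal{C}^{-1})/(q_i-q_i^{-1})$ from Theorem \ref{thm:second_drin_quantum_aff} (the left-hand side being a commutator has vanishing trace) forces $\mathcal{C}=\mathcal{C}^{-1}$, whence $\mathcal{C}\in\{\pm 1\}$. A suitable combination of the automorphism (\ref{eqn:twisting_loop_grading}) and of (\ref{eqn:twisting_sigma}) with $\sigma_0=-1$ (the latter affecting $\mathcal{C}$ through $K_0=\mathcal{C}\mathcal{K}_\theta^{-1}$) then adjusts $c$ so that $\mathcal{C}^{1/2}$ acts as $1$. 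Next, I restrict $V$ to $U_q(\mathfrak{g})$ and decompose into finite-dimensional irreducibles; by Proposition \ref{prop:U_q_integrable_highest_weight} each constituent carries a type $\sigma^{(j)}\in Q_2^*$. The relations $\mathcal{K}_i\mathcal{X}_{j,r}^{\pm}\mathcal{K}_i^{-1}=q_i^{\pm a_{ij}}\mathcal{X}_{j,r}^{\pm}$ show that the Drinfeld generators only rescale $\mathcal{K}_i$-eigenvalues by powers of $q$; since $V$ is generated from any non-zero vector by repeated application of these generators, the sign pattern of each $\mathcal{K}_i$ is constant on $V$ and defines a single common type $\sigma\in Q_2^*$. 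A further application of (\ref{eqn:twisting_sigma}) with $i\in\{1,\dots,l\}$ renders $\sigma$ trivial, and the resulting module is then of type $\boldsymbol{1}$.

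For part (ii), let $V$ be a non-zero finite-dimensional type $\boldsymbol{1}$ module. Because $\mathcal{C}$ acts as $1$ on $V$, the relation $[\mathcal{H}_{i,r},\mathcal{H}_{j,s}]=\delta_{r,-s}r^{-1}[ra_{ij}]_{q_i}(\mathcal{C}^r-\mathcal{C}^{-r})/(q_j-q_j^{-1})$ collapses to zero, while $[\mathcal{K}_i,\mathcal{H}_{j,s}]=0$ is already a defining relation, so $U_q^0$ acts commutatively on $V$. I decompose $V$ into $U_q(\mathfrak{g})$-weight spaces and pick a maximal weight $\lambda$, so $V_\lambda\neq 0$. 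For any $v\in V_\lambda$ and any $\mathcal{X}_{i,r}^+$ with $i\in I$, $r\in\mathbb{Z}$, the relation $\mathcal{K}_j\mathcal{X}_{i,r}^+\mathcal{K}_j^{-1}=q_j^{a_{ji}}\mathcal{X}_{i,r}^+$ forces $\mathcal{X}_{i,r}^+v\in V_{\lambda+\alpha_i}$, which vanishes by maximality of $\lambda$. Since $U_q^0$ preserves weight spaces, the commuting family $\{\mathcal{K}_i^{\pm 1},\mathcal{H}_{j,r}\}$ acts on the non-zero finite-dimensional $V_\lambda$; the standard iterated-eigenspace argument for commuting operators over $\mathbb{C}$ then produces a common eigenvector $v\in V_\lambda$ with the required properties.

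The main obstacle is the type-normalisation in (i): one must verify that the $U_q(\mathfrak{g})$-constituents of an irreducible $V$ really share one common sign pattern $\sigma$, and that the four-fold ambiguity of $c=\mathcal{C}^{1/2}$ left by the trace argument is resolvable by the available twistings, in particular tracking carefully how (\ref{eqn:twisting_sigma}) with $\sigma_0=-1$ interacts with $\mathcal{C}^{1/2}$ through the identification $K_0=\mathcal{C}\mathcal{K}_\theta^{-1}$.
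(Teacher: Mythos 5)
Your part (ii) is essentially correct and takes a slightly different route from the paper: instead of first proving that $V^0=\{v\in V:\mathcal{X}_{i,r}^+.v=0 \text{ for all }i,r\}$ is non-zero and then checking (via the fifth and sixth Drinfeld relations) that $U_q^0$ preserves $V^0$, you work inside a maximal $U_q(\mathfrak{g})$-weight space, which is automatically annihilated by every $\mathcal{X}_{i,r}^+$ and preserved by $U_q^0$; together with the collapse of the $[\mathcal{H}_{i,r},\mathcal{H}_{j,s}]$-relation when $\mathcal{C}=1$ this yields the common eigenvector. Similarly, your Schur-plus-trace argument showing $\mathcal{C}^2=1$ is a legitimate (and rather slick) replacement for the paper's computation, which instead evaluates the $K_i$ on a vector of $V^0$ and uses the $\mathfrak{sl}_2$-embeddings $X_1^\pm\mapsto\mathcal{X}_{i,\pm r}^\pm$, $K_1\mapsto\mathcal{C}^r\mathcal{K}_i$ to force $s_0+\sum_i a_is_i=0$ and hence $\mathcal{C}.v=\sigma_0\prod_i\sigma_i^{a_i}v$.

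In part (i), however, there is a genuine gap, which you yourself flag but do not close, and your final sentence is false in the order you perform the twists. Type $\boldsymbol{1}$ requires all $K_i$, $i=0,1,\dots,l$, to act semisimply with eigenvalues in $q^{\mathbb{Z}}$ \emph{and} $\mathcal{C}^{1/2}=1$. You first normalise $\mathcal{C}^{1/2}$ and only afterwards twist with (\ref{eqn:twisting_sigma}) for $i\in\{1,\dots,l\}$ to trivialise the sign pattern; but since $\mathcal{C}=K_0\mathcal{K}_\theta=K_0\prod_{i\geq1}K_i^{a_i}$, this last twist multiplies the action of $\mathcal{C}$ by $\prod_{i\geq1}\sigma_i^{a_i}$, which can be $-1$ (already for $\mathfrak{sl}_2$, where $a_1=1$ and $\sigma_1=-1$ occurs for type $-1$ constituents). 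In that case the twisted module has $\mathcal{C}=-1$, $\mathcal{C}^{1/2}=\pm i$ and $K_0$-eigenvalues in $-q^{\mathbb{Z}}$, so it is not of type $\boldsymbol{1}$. Also, your appeal to (\ref{eqn:twisting_sigma}) with $\sigma_0=-1$ "adjusting $c$" is delicate: that automorphism is only prescribed on the Chevalley generators, it sends $\mathcal{C}\mapsto-\mathcal{C}$, and its effect on $\mathcal{C}^{1/2}$ is determined only up to a factor $\pm i$. The repair is precisely the bookkeeping the paper builds in: the sign of $\mathcal{C}$ is slaved to the signs $\sigma_0,\dots,\sigma_l$ of the $K_i$-eigenvalues (by centrality, $\mathcal{C}$ acts as $\sigma_0\prod_i\sigma_i^{a_i}$ times a power of $q$, and your trace argument kills the $q$-power since $q$ is not a root of unity). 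So first twist with (\ref{eqn:twisting_sigma}) to make \emph{all} the signs $\sigma_0,\dots,\sigma_l$ trivial — this simultaneously makes every $K_i$ act semisimply with eigenvalues in $q^{\mathbb{Z}}$ and forces $\mathcal{C}=1$ — and only then apply (\ref{eqn:twisting_loop_grading}), which fixes the $\mathcal{K}_i$, $\mathcal{H}_{i,r}$ and $\mathcal{C}$ and therefore removes the residual sign of $\mathcal{C}^{1/2}$ without disturbing anything else. With this reordering your argument becomes a complete proof.
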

As this proposition is very important for the rest of our consideration, let us also write down the proof and comment on some of the steps to clarify what happens.
\begin{proof}[Proof of Proposition \ref{prop:type_1_twisting_loop}]
	Let $V$ be a finte-dimensional representation of $U_q(\mathfrak{g})$, and let
	\begin{equation*}
		V^0=\{v\in V\,|\,\mathcal{X}_{i,r}^+.v=0\text{ for all } i,r\}.
	\end{equation*}
	We assume for a contradiction that $V^0=0$. Then, let $w$ be any non-zero simultaneous eigenvector of the $K_i$, $i=1,\dots,l$. By the assumption, there is an infinite sequence of pairs $(i_k,r_k)$, $k\in\mathbb{N}$, such that the vectors $v, \mathcal{X}_{i_1,r_1}.w, \mathcal{X}_{i_2,r_2}\mathcal{X}_{i_1,r_1}.w, \dots$ are all non-zero. Since these vectors have different weights for the action of $U_q(\mathfrak{g})$, they are linearly independent and therefore leading to $V$ being infinite-dimensional, a contradiction.
	
	So, let $0\neq v\in V^0$. Then, by considering the $U_{q_i}(\mathfrak{sl}_2)$-subalgebra of $U_q(\tilde{\mathfrak{g}})$ generated by $X_i^\pm$, $K_i^\pm$, and using Corollary \ref{cor:finite_dim_irred_and_highest_weight} it follows that
	\begin{equation*}
		K_i.v = \sigma_i q^{s_i}v,\quad i=0,1,\dots,l,
	\end{equation*}
	for some $\sigma_i =\pm 1$ and $s_i\in \mathbb{Z}$ where $s_0\leq 0$ and $s_1,\dots,s_l\geq 0$.\\
	
	Let us stop here for a moment. Of course, one can verify that $s_0\leq 0$ and $s_1,\dots,s_l\geq 0$ indeed follows from a direct calculation. However, we have already seen in Subsection \ref{subsect:loop_constr_untw_Kac--Moody} how $\tilde{\mathfrak{g}}$ can be constructed from $\mathfrak{g}$ and therefore how '$\mathfrak{g}$ sits in $\tilde{\mathfrak{g}}$'. Thus, it is clear from the explanation of the representation theory for $U_q(\mathfrak{g})$ at the begin of this Section, that the $\sigma_i q^{s_i}$, $i\in I=\{1,\dots,l\}$, correspond to a dominant integral highest weight $\boldsymbol{\omega}_{\sigma,\lambda}$ such that $\sigma$ is given by the $\sigma_i$ and the $s_i$ determine $\lambda\in P^+$ by the condition $d^\vee\langle\alpha_i,\lambda\rangle = s_i$, $i\in I$. Therefore all the $s_i$, $i\in I$ are positive. Now, one only has to determine the action of $K_0$ on $v$. Using the isomorphism $f$ given in the second Drinfeld realization Theorem \ref{thm:second_drin_quantum_aff} and the definition that $v$ is annihilated by the $\mathcal{X}_{i,r}^+$, it follows immediately that $v$ is annihilated by $X_0^-$. Hence, $v$ is a lowest weight vector for $K_0$ and therefore $s_0\leq 0$ must follow. Moreover, we have seen in Subsection \ref{subsect:loop_constr_untw_Kac--Moody} that $\alpha_0=-\theta+\delta$ and it is therefore clear that $s_0 = d^\vee\langle \alpha_0,\lambda\rangle = -d^\vee\langle \theta,\lambda\rangle$, which is obviously smaller or equal than zero.\footnote{ Because $\lambda\in P^+$ dominant integral and $\theta\in Q^+$ is the highest root of $\mathfrak{g}$. To be precise, we should clarify that the weight $\lambda$ in the first bracket is understood by the trivial extension such that $\lambda(c)=\lambda(d)=0$ as in Subsection \ref{subsect:loop_constr_untw_Kac--Moody}.} We should also note that this calculation for $U_q(\tilde{\mathfrak{sl}}_2)$ is completely elementary as the isomorphism $f$ is given on generators by
	\begin{equation*}
		\begin{split}
			&K_0\mapsto \mathcal{C}\mathcal{K}^{-1}_1,\quad K_1\mapsto\mathcal{K}_1,\quad X_1^\pm\mapsto \mathcal{X}_{1,0}^\pm,\\
			& X_0^+\mapsto \mathcal{X}_{1,1}^-\mathcal{K}_1^{-1},\quad X_0^-\mapsto\mathcal{C}^{-1}\mathcal{K}_1\mathcal{X}_{1,-1}^+.
		\end{split}
	\end{equation*}\\
	
	However, let us come back to the actual proof. We remind the reader that the highest root $\theta$ may be written in the form $\theta =\sum_{i=1}^{n}a_i\alpha_i$. Then, by multiplying the $K_i^{a_i}$, $i\in I$, with $K_0$ we can determine the action of $\mathcal{C}$ as
	\begin{equation*}
		\mathcal{C}.v=\sigma_0 \prod_{i=1}^{n}\sigma_i^{a_i}q^{s_0+\sum_{i=1}^{n}a_i s_i}v.
	\end{equation*}
	On the other hand, considering $V$ as a representation of $U_q(\mathfrak{sl}_2)$ via the homomorphism $U_{q_i}(\mathfrak{sl}_2)\to U_q(\tilde{\mathfrak{g}})$ given by
	\begin{equation*}
		X_1^\pm \mapsto \mathcal{X}_{i,\pm r}^\pm,\quad K_1\mapsto 	\mathcal{C}^r\mathcal{K}_i,\quad i\in I,\, r\in\mathbb{Z},
	\end{equation*}
	we must have
	\begin{equation*}
		r\left(s_0+\sum_{i=1}^{n}a_i s_i\right)+s_i\geq 0
	\end{equation*}
	for all $r\in \mathbb{Z}$. Hence, $s_0+\sum_{i}a_i s_i=0$ and
	\begin{equation*}
		\mathcal{C}.v = \sigma_0\prod_{i=1}^{n}\sigma_i^{a_i}v.
	\end{equation*}\\
	
	Indeed, this was already clear from our first comment when we insert $\theta$ in the form $\theta=\sum_{i=1}^{n}a_i \alpha_i$ into our assertion that $s_0 = -d^\vee\langle\theta,\lambda\rangle$. However, this is a direct proof of this analogy.\\
	
	Now, by twisting with an automorphism (\ref{eqn:twisting_sigma}), we can assume that $\sigma_i=1$ for $i=0,1,\dots,l$, and then $\mathcal{C}.v=v$. If $V$ is irreducible, $\mathcal{C}$ acts as $\boldsymbol{1}$ on $V$. Hence, either $\mathcal{C}^{1/2}=1$ on $V$ or $\mathcal{C}^{1/2}=-1$ on $V$. The second case can be transformed into the first by twisting with the automorphism (\ref{eqn:twisting_loop_grading}). This proves \ref{enum:type_1_twisting}.
	
	If $V$ is of type $\boldsymbol{1}$, it follows from the fourth set of relations in Theorem \ref{thm:second_drin_quantum_aff} that the elements of $U_q^0$ act on $V$ as commuting operators, and from the fifth and sixth relations that these operators preserve $V^0$. Therefore, any simultaneous eigenvector $v\in V^0$ for the action of $U_q^0$ satisfies the requirements in part \ref{enum:part_2_requirements}.
\end{proof}

Thus, studying the finite-dimensional irreducible representations of $U_q(\tilde{\mathfrak{g}})$ reduces to studying the representations of the \textbf{quantum loop algebra} $U_q(\mathfrak{L}(\mathfrak{g}))$, the quotient of $U_q(\tilde{g})$ by the ideal generated by $C^{1/2}-1$. Therefore, we will only consider type $\boldsymbol{1}$ representations.
\begin{rem}[loop-weight spaces]\label{rem:loop_weight_spaces}
	Since the $\varPhi_{i,r}^\pm$ mutually commute with each other, we can decompose any finite-dimensional type $\boldsymbol{1}$ representation $V$ of $U_q(\tilde{\mathfrak{g}})$ into a direct sum
	\begin{equation*}
		V = \bigoplus_{\boldsymbol{\psi}=(\psi_{i,r}^\pm)_{i\in I,r\in\mathbb{Z}}} V_{\boldsymbol{\psi}}
	\end{equation*}
	of generalised eigenspaces
	\begin{equation*}
		V_{\boldsymbol{\psi}} = \{v\in V|\,\text{there exists a }p,\text{ such that } (\varPhi_{i,r}^\pm-\psi_{i,r}^\pm)^p.v=0,\text{ for all }i\in I,n\in \mathbb{Z}\}
	\end{equation*}
	
	Moreover, since $\varPhi_{i,0}^\pm= \mathcal{K}_i^{\pm1}$, all vectors in $V_{\boldsymbol{\psi}}$ have the same ($U_q(\mathfrak{g})$-)weight. Thus, the decomposition of $V$ into a direct sum of subspaces $V_{\boldsymbol{\psi}}$ is a refinement of its weight decomposition. We will later call the spaces $V_{\boldsymbol{\psi}}$ \textbf{loop-weight spaces}. (cf. \cite{FM} Subsection 2.2) $\odot$
\end{rem}
\begin{definition}
	Given a collection $(\psi_{i,r}^\pm)_{i\in I,r\in\mathbb{Z}}$ of generalised eigenvalues, we form the generating functions
	\begin{equation*}
		\Psi_i^\pm(u)=\sum_{r= 0}^{\infty}\psi^\pm_{i,\pm r}u^{\pm r}.
	\end{equation*}
	We will refer to each collection $\{\Psi_i^\pm(u)\}_{i\in I}$ occurring on a given representation $V$ as the \textbf{common (generalised) eigenvalues} of $\{\boldsymbol{\varPhi}_i^\pm(u)\}_{i\in I}$ on $V$, and to $\dim(V_{\boldsymbol{\psi}})$ as the multiplicity of this eigenvalue. (cf. \cite{FM} Subsection 2.2) $\odot$
\end{definition}
\begin{rem}
	Let $\mathfrak{B}_V$ be a Jordan basis of $\varPhi_{i,r}^\pm$, $i\in I$, $r\in\mathbb{Z}$ and consider the module $V(a)=\tau^*_a(V)$ (see formula (\ref{eqn:tau_a_q-deformed})). Then $V(a)=V$ as a vector space. Furthermore, the decomposition of $V$ into the direct sum of generalised eigenspaces for the $\varPhi_{i,r}^\pm$ does not depend on $a$. This is because the action of $\varPhi_{i,r}^\pm$ on $V$ and on $V(a)$ differs only by scalar factors $a^n$. In particular, $\mathfrak{B}_V$ is a Jordan basis for $\varPhi_{i,r}^\pm$ acting on $V(a)$ for all $a\in\mathbb{C}^\times$. If $v\in \mathfrak{B}_V$ is a generalised eigenvector with common eigenvalues $\{\Psi_i^\pm(u)\}_{i\in I}$, then the corresponding common eigenvalues on $v$ in $V(a)$ are $\{\Psi_i^\pm(au)\}_{i\in I}$. $\odot$
\end{rem}
With this, the definition of the so-called 'Drinfeld polynomials' goes back to the following suggestion.
\begin{definition}[highest weight]\label{def:pseudo-highest_weight}
	A type $\boldsymbol{1}$ representation $V$ of $U_q(\tilde{\mathfrak{g}})$ is called highest weight if it is generated by a vector $v_0$ which is annihilated by the $\mathcal{X}_{i,r}^+$ and a simultaneous eigenvector for the $\mathcal{K}_i$ and the $\mathcal{H}_{i,r}$, $i\in I=\{1,\dots,l\}$, $r\neq 0$. If $\varPhi_{i,r}^\pm.v_0=\varphi_{i,r}^\pm v_0$, the collection of complex numbers $\varphi_{i,r}^\pm$, denoted by $\boldsymbol{\varphi}$, is called the (pseudo-)highest weight of $V$.\footnote{ We refer the reader to the remark after Definition 12.2.4 in the book of Chari and Pressley \cite{CPBook} for an explanation of the term 'pseudo'. For us, the term will sometimes be useful to distinguish the $U_q(\tilde{\mathfrak{g}})$-weights from the usual weights of $U_q(\mathfrak{g})$, but we will leave it when it is clear.} $\odot$
\end{definition}
Together with the proof of Proposition \ref{prop:type_1_twisting_loop}, we conclude
\begin{cor}
	Every finite-dimensional irreducible type $\boldsymbol{1}$ representation $V$ of $U_q(\tilde{\mathfrak{g}})$ is highest weight. $\odot$
\end{cor}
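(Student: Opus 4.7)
The corollary follows almost immediately from Proposition \ref{prop:type_1_twisting_loop}(ii) together with the irreducibility assumption; no fundamentally new input is required. Here is how I would lay it out.

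The plan is, first, to invoke Proposition \ref{prop:type_1_twisting_loop}(ii) to produce a non-zero vector $v_0 \in V$ that is annihilated by every $\mathcal{X}_{i,r}^+$, $i\in I$, $r\in\mathbb{Z}$, and that is a simultaneous eigenvector for the whole subalgebra $U_q^0$ of $U_q(\tilde{\mathfrak{g}})$. Recall that $U_q^0$ is generated by $\mathcal{C}^{\pm 1/2}, \mathcal{K}_i^{\pm 1}$ and the $\mathcal{H}_{i,r}$, $i\in I$, $r\neq 0$, so in particular $v_0$ is a simultaneous eigenvector for the $\mathcal{K}_i$ and the $\mathcal{H}_{i,r}$. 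Since the generating functions $\boldsymbol{\varPhi}_i^\pm(u)$ are built out of the $\mathcal{K}_i^{\pm 1}$ and the $\mathcal{H}_{i,\pm s}$, the vector $v_0$ is automatically an eigenvector of every $\varPhi_{i,r}^\pm$, and we denote the resulting eigenvalues by $\varphi_{i,r}^\pm$.

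Second, I would consider the cyclic submodule $W = U_q(\tilde{\mathfrak{g}}).v_0 \subseteq V$. Because $v_0 \neq 0$, we have $W \neq 0$. Since $V$ is irreducible by assumption, the only non-zero $U_q(\tilde{\mathfrak{g}})$-submodule of $V$ is $V$ itself, so $W = V$. Thus $V$ is generated by the single vector $v_0$, which satisfies exactly the conditions of Definition \ref{def:pseudo-highest_weight}: it is annihilated by all $\mathcal{X}_{i,r}^+$ and is a simultaneous eigenvector for the $\mathcal{K}_i$ and the $\mathcal{H}_{i,r}$. Hence $V$ is a highest weight representation in the pseudo-highest weight sense, with highest weight $\boldsymbol{\varphi} = (\varphi_{i,r}^\pm)_{i\in I,\,r\in\mathbb{Z}}$.

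There is essentially no obstacle here; the heavy lifting was already done in Proposition \ref{prop:type_1_twisting_loop}, whose proof extracted the existence of a common eigenvector for $U_q^0$ inside the space $V^0 = \{v\in V : \mathcal{X}_{i,r}^+.v = 0 \text{ for all }i,r\}$ using the finite-dimensionality of $V$ and the fact that the $\varPhi_{i,r}^\pm$ pairwise commute and preserve $V^0$. The only additional ingredient beyond that proposition is the one-line observation that in an irreducible module any non-zero cyclic submodule is the whole module, which forces $v_0$ to be a generator.
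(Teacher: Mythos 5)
Your argument is correct and is exactly the route the paper takes: it deduces the corollary directly from Proposition \ref{prop:type_1_twisting_loop}(ii), with irreducibility forcing the cyclic submodule generated by the vector $v_0$ to be all of $V$. Nothing further is needed.
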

We note that $\varphi_{i,0}^+\varphi_{i,0}^-=1$, $\varphi_{i,r}^+=0$ for $r<0$ and $\varphi_{i,r}^-=0$ for $r>0$. Moreover, one proves that there is a unique irreducible $U_q(\mathfrak{L}(\mathfrak{g}))$-module $V(\boldsymbol{\varphi})$ with highest weight $\boldsymbol{\varphi}$ for any collection of scalars $\boldsymbol{\varphi}=\{\varphi_{i,r}^\pm\}_{i\in I, r\in\mathbb{Z}}$ satisfying these conditions. This is done, as in the classical case, by an analogous consideration of the Verma module $M(\boldsymbol{\varphi})$. Then, $V(\boldsymbol{\varphi})$ is the unique irreducible quotient of highest weight $\boldsymbol{\varphi}$.

It only remains to determine the condition for which the $V(\boldsymbol{\varphi})$ are finite-dimensional. To state it, let us make the following basic
\begin{definition}\label{def:affine_positive_weight_lattice}
	We define $\mathcal{P}^+$ to be the set of all $I$-tuples $\boldsymbol{P} = (P_i)_{i\in I}$ of polynomials $P_i\in\mathbb{C}[u]$ with constant term $1$.
\end{definition}
With this definition, the observation goes as follows.
\begin{thm}[Drinfeld polynomials]\label{thm:drinfeld_polynomials}
	let $V$ be a finite-dimensional irreducible $U_q(\tilde{\mathfrak{g}})$-module of type $\boldsymbol{1}$ with highest weight $\boldsymbol{\varphi}=\{\varphi_{i,r}^\pm\}_{i\in I, r\in\mathbb{Z}}$. Then, there exists a unique element $\boldsymbol{P}_V = (P_{i,V})_{i\in I}\in\mathcal{P}^+$, such that
	\begin{equation*}
		\sum_{r=0}^{\infty}\varphi_{i,r}^+ u^r = q_i^{\deg(P_{i,V})}\frac{P_{i,V}(q_i^{-2}u)}{P_{i,V}(u)} = \sum_{r=0}^{\infty}\varphi_{i,-r}^- u^{-r},
	\end{equation*}
	in the sense that the left- and the right-hand sides are the Laurent expansions of the middle term about $0$ and $\infty$, respectively. Moreover, every $I$-tuple $\boldsymbol{P}=(P_i)_{i\in I}\in\mathcal{P}^+$ arises from a finite-dimensional irreducible $U_q(\tilde{\mathfrak{g}})$-module of type $\boldsymbol{1}$ in this way.
	
	Furthermore, if $V$ and $W$ are finite-dimensional type $\boldsymbol{1}$ representations of $U_q(\tilde{\mathfrak{g}})$ such that $V\otimes W$ is irreducible, then
	\begin{equation*}
		P_{i,V\otimes W} = P_{i,V}P_{i,W}.
	\end{equation*}
	In particular, $V\otimes W\cong W\otimes V$. $\odot$
\end{thm}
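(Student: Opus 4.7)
The strategy is to reduce to the case $\mathfrak{g} = \mathfrak{sl}_2$ via the Drinfeld-style $\mathfrak{sl}_2$-subalgebras indexed by $I$, handle the $\mathfrak{sl}_2$ case by exploiting finite-dimensionality together with the Drinfeld commutation relations, and finally deal with existence and multiplicativity.

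First, observe that for each fixed $i \in I$, the elements $\mathcal{X}_{i,r}^\pm$, $\mathcal{H}_{i,r}$, $\mathcal{K}_i^{\pm 1}$ (with $\mathcal{C}^{\pm 1/2}$ acting as $1$ since we are in type $\boldsymbol{1}$) satisfy the defining relations of $U_{q_i}(\mathfrak{L}(\mathfrak{sl}_2))$. If $V$ is a finite-dimensional highest weight representation of $U_q(\tilde{\mathfrak{g}})$ of type $\boldsymbol{1}$ with highest weight vector $v_0$, then $V_i := U_{q_i}(\mathfrak{L}(\mathfrak{sl}_2)) \cdot v_0 \subset V$ is a finite-dimensional highest weight $U_{q_i}(\mathfrak{L}(\mathfrak{sl}_2))$-module whose highest weight vector is again $v_0$, and the eigenvalues $\varphi_{i,r}^\pm$ of $\varPhi_{i,r}^\pm$ on $v_0$ agree in $V$ and $V_i$. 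Hence the assertion about the generating series reduces to the $\mathfrak{sl}_2$ case applied to each $V_i$.

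The main technical step is the $\mathfrak{sl}_2$ case. Write $\pi^\pm(u) := \sum_{r \geq 0} \varphi_{\pm r}^\pm u^{\pm r}$. From Corollary \ref{cor:finite_dim_irred_and_highest_weight} applied to the $U_q(\mathfrak{sl}_2) \subset U_q(\mathfrak{L}(\mathfrak{sl}_2))$ subalgebra generated by $\mathcal{X}_{1,0}^\pm$ and $\mathcal{K}_1^{\pm 1}$, the weight of $v_0$ is $q^k$ for some $k \in \mathbb{Z}_{\geq 0}$, so $\varphi_0^+ = q^k$ and $\varphi_0^- = q^{-k}$. The key is to use the commutation relation
\begin{equation*}
[\mathcal{X}_r^+, \mathcal{X}_s^-] = \frac{\varPhi_{r+s}^+ - \varPhi_{r+s}^-}{q - q^{-1}}
\end{equation*}
to derive a recursion on the $\varphi_r^\pm$. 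The trick is to consider the family of vectors $w_r = \mathcal{X}_{-r}^- v_0 \in V$ for $r \geq 0$; since $V$ is finite-dimensional, these vectors satisfy nontrivial linear dependencies in each weight space, and applying $\mathcal{X}_s^+$ to such a dependency and evaluating against $v_0$ translates, via the commutation relations above, into a homogeneous linear recursion of length $k$ relating the $\varphi_r^\pm$. Expressing this recursion through the characteristic polynomial, I would conclude that $\pi^+(u)$ is the expansion at $u=0$ of a rational function $q^k P(q^{-2}u)/P(u)$ with $P \in \mathbb{C}[u]$ of degree at most $k$ normalized by $P(0)=1$. A parallel argument expanding around $\infty$ yields $\pi^-(u) = q^k P(q^{-2}u)/P(u)$ as the Laurent expansion at $\infty$, with the \emph{same} $P$ (matching of leading coefficients forces $\deg P = k$ and uniqueness).

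For surjectivity of $V \mapsto \boldsymbol{P}_V$, given $\boldsymbol{P} \in \mathcal{P}^+$ one factors each $P_i(u) = \prod_j (1 - a_{i,j} u)$ and constructs $V(\boldsymbol{P})$ as the unique irreducible quotient of the Verma-type module associated to the corresponding highest weight $\boldsymbol{\varphi}$; finite-dimensionality is shown by exhibiting $V(\boldsymbol{P})$ as a subquotient of a suitable tensor product of fundamental/evaluation modules (constructed explicitly for $\mathfrak{sl}_{l+1}$ via Jimbo's evaluation homomorphism in Subsection \ref{subsect:Jimbos_hom_und_ev_reps}, and more generally via Kirillov--Reshetikhin type constructions and applications of the automorphisms $\tau_a$). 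Multiplicativity follows from the observation that $v_V \otimes v_W$ is a (pseudo-)highest weight vector of $V \otimes W$, and that a triangularity argument on $\Delta_q(\varPhi_i^\pm(u))$ (computed via the expression of the Drinfeld generators in terms of Chevalley generators) gives $\pi_{V \otimes W}^\pm(u) = \pi_V^\pm(u)\pi_W^\pm(u)$, hence $P_{i,V\otimes W} = P_{i,V} P_{i,W}$. Finally, $V \otimes W \cong W \otimes V$ follows from uniqueness: both are irreducible (for $W \otimes V$, it contains a highest weight subquotient with the same Drinfeld polynomials, and dimension-counting forces irreducibility) with identical Drinfeld polynomials.

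The hard part is the $\mathfrak{sl}_2$ case: extracting a clean recursion from finite-dimensionality and the $[\mathcal{X}_r^+, \mathcal{X}_s^-]$ relation is delicate because one must control simultaneously the countably many eigenvalues $\varphi_r^\pm$. A secondary obstacle is multiplicativity: since the comultiplication $\Delta_q$ is given in terms of Chevalley generators rather than Drinfeld generators, computing its action on $\varPhi_i^\pm(u)$ (even just modulo vectors annihilating a highest weight vector) requires nontrivial bookkeeping using the isomorphism $f$ of Theorem \ref{thm:second_drin_quantum_aff}.
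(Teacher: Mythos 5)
Your overall architecture (reduction to the rank-one subalgebras $U_{q_i}(\mathfrak{L}(\mathfrak{sl}_2))$ acting on the highest weight vector, surjectivity via Verma quotients and evaluation/fundamental modules, and multiplicativity via the triangular form of $\Delta_q$ on the Drinfeld generators together with the group-like behaviour of $\boldsymbol{\varPhi}_i^\pm$) matches the paper, which handles the last point exactly through Proposition \ref{prop:comult_partial} and Remark \ref{rem:comult_partial_rem}. The problem is the central $\mathfrak{sl}_2$ step. Your recursion argument is sound as far as it goes: a linear dependency $\sum_r c_r\,\mathcal{X}^-_{-r}v_0=0$ in a fixed weight space, hit with every $\mathcal{X}^+_s$ and combined with $[\mathcal{X}^+_s,\mathcal{X}^-_{-r}]=(\varPhi^+_{s-r}-\varPhi^-_{s-r})/(q-q^{-1})$, does give a two-sided linear recursion for $\varphi^+_m-\varphi^-_m$, hence that $\pi^+(u)$ and $\pi^-(u)$ are the expansions at $0$ and $\infty$ of one rational function $R(u)$ with $R(0)=q^k$ and $R(\infty)=q^{-k}$. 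But this is strictly weaker than the theorem: it does not force the zeros of $R$ to be the poles multiplied by $q^{-2}$, i.e. the form $R(u)=q^{\deg P}P(q^{-2}u)/P(u)$. For instance $q^2\frac{(1-q^{-1}au)(1-q^{-3}bu)}{(1-au)(1-bu)}$ satisfies all the constraints you extract without being of Drinfeld form, so "expressing the recursion through the characteristic polynomial" cannot by itself yield the conclusion; the $q^{-2}$-shift pairing is the actual content of the theorem and must come from finer relations than generic linear dependence in a weight space.

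The paper closes exactly this gap with Lemma \ref{lem:Newtons_formulae} (the $q$-analogue of Newton's formulae): the explicit elements $\mathcal{P}^\pm_{i,\pm r}\in U_q^0$ satisfy $\boldsymbol{\varPhi}_i^\pm(u)=\mathcal{K}_i^{\pm1}\,\boldsymbol{\mathcal{P}}_i^\pm(q_i^{\mp2}u)/\boldsymbol{\mathcal{P}}_i^\pm(u)$, so the shift structure is built in identically; finite-dimensionality enters through the congruences (\ref{eqn:coeff_finite_poly}) and the nilpotency $(\mathcal{X}^-_{i,0})^{r_i+1}v_0=0$, $(\mathcal{X}^-_{i,1})^{r_i+1}v_0=0$, which make $\boldsymbol{\mathcal{P}}_i^+(u)v_0=P_i(u)v_0$ an honest polynomial of degree $r_i$, and the relations (\ref{eqn:coeff_inverse_poly}) with $r=r_i+1$ then produce the Laurent expansion at $\infty$ with the same $P_i$. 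To repair your proof you would need either to reproduce these identities (or an equivalent, e.g. the explicit computation on $\mathfrak{sl}_2$ strings) rather than rely on the recursion alone. A smaller point: your closing claim that "dimension-counting forces irreducibility" of $W\otimes V$ is also unsubstantiated as stated, though the paper's own write-up is equally terse on that final assertion.
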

It should be clear that the proof is one of the main steps towards understanding the finite-dimensional type $\boldsymbol{1}$ representations of $U_q(\mathfrak{L}(\mathfrak{g}))$. Therefore, as for the last proposition, we shall comment on it and possibly clarify minor typos in \cite{CPBook} or \cite{CP1991}. However, we will still omit some technical details and refer the reader to Theorem 3.4 in the paper \cite{CP1991} and Theorem 12.2.6 in the book \cite{CPBook} (and the corresponding proofs after them).
\begin{proof}[Proof of Theorem \ref{thm:drinfeld_polynomials}]
	The proof is based on the following lemma. Let $U_qX^k_\pm\coloneq\sum_{i_k,r_k}\dots\sum_{i_2,r_2}\sum_{i_1,r_1} U_q(\mathfrak{L}(\mathfrak{g})).\mathcal{X}_{i_1,r_1}^\pm\mathcal{X}_{i_2,r_2}^\pm\cdots\mathcal{X}_{i_k,r_k}^\pm$, then we have
	\begin{lem}\label{lem:Newtons_formulae}
		Define elements $\mathcal{P}_{i,\pm r}^\pm\in U_q^0$, for $i\in I$, $r\geq0$, inductively by setting $\mathcal{P}_{i,0}^\pm=1$ and, for $r>0$,
		\begin{equation}
			\mathcal{P}_{i,\pm r}^\pm =\frac{\mp q_i^{\pm r}}{q_i^r-q_i^{-r}}\sum_{s=0}^{r-1}\varPhi^\pm_{i,\pm(s+1)}\mathcal{P}^\pm_{i,\pm(r-s-1)}\mathcal{K}_i^{\mp 1}.\label{eqn:dinfeld_poly_coeff}
		\end{equation}
		Then,
		\begin{equation}\label{eqn:coeff_finite_poly}
			\begin{split}
				&\mathcal{P}_{i,r}^+\equiv (-1)^r q_i^{r^2}\frac{(\mathcal{X}_{i,0}^+)^r(\mathcal{X}_{i,1}^-)^r}{([r]_{q_i})^2},\\
				&\mathcal{P}_{i,-r}^-\equiv (-1)^r q_i^{-r^2}\frac{(\mathcal{X}_{i,-1}^+)^r(\mathcal{X}_{i,0}^-)^r}{([r]_{q_i})^2},
			\end{split}
		\end{equation}
		\begin{equation}\label{eqn:coeff_inverse_poly}
			\begin{split}
				&(-1)^rq_i^{r(r-1)}\frac{(\mathcal{X}_{i,0}^+)^{r-1}(\mathcal{X}_{i,1}^-)^r}{[r-1]_{q_i}[r]_{q_i}}\equiv-\sum_{s=0}^{r-1}\mathcal{X}_{i,s+1}^-\mathcal{P}^+_{i,r-s-1}\mathcal{K}_i^{r-1},\\
				&(-1)^rq_i^{-r(r-1)}\frac{(\mathcal{X}_{i,-1}^+)^{r-1}(\mathcal{X}_{i,0}^-)^r}{[r-1]_{q_i}[r]_{q_i}}\equiv-\sum_{s=0}^{r-1}\mathcal{X}_{i,-s}^-\mathcal{P}^-_{i,-r+s+1}\mathcal{K}_i^{-r+1},
			\end{split}
		\end{equation}
		the congruences being $(\operatorname{mod} U_qX_+)$. $\odot$
	\end{lem}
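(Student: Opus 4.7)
The plan is to prove both congruences by induction on $r$, using as the main tool the quantum commutation relation from Theorem \ref{thm:second_drin_quantum_aff},
$$[\mathcal{X}_{i,r}^+,\mathcal{X}_{j,s}^-]=\delta_{i,j}\frac{\mathcal{C}^{(r-s)/2}\varPhi_{i,r+s}^+-\mathcal{C}^{-(r-s)/2}\varPhi_{i,r+s}^-}{q_i-q_i^{-1}},$$
together with the $q$-commutation rules relating the $\varPhi_{i,r}^\pm$ with $\mathcal{X}_{j,s}^\pm$ and $\mathcal{K}_j$. I would establish relations (\ref{eqn:coeff_finite_poly}) first; relations (\ref{eqn:coeff_inverse_poly}) then follow by inverting the defining recurrence (\ref{eqn:dinfeld_poly_coeff}) and substituting.

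For (\ref{eqn:coeff_finite_poly}), the base case $r=0$ is immediate from $\mathcal{P}_{i,0}^\pm=1$. For the inductive step, I would take $(\mathcal{X}_{i,0}^+)^r(\mathcal{X}_{i,1}^-)^r$ and move the leftmost $\mathcal{X}_{i,0}^+$ through the string $(\mathcal{X}_{i,1}^-)^r$ on its right via the Leibniz identity
$$[\mathcal{X}_{i,0}^+,(\mathcal{X}_{i,1}^-)^r]=\sum_{k=0}^{r-1}(\mathcal{X}_{i,1}^-)^{k}\,\frac{\mathcal{C}^{-1/2}\varPhi_{i,1}^+}{q_i-q_i^{-1}}\,(\mathcal{X}_{i,1}^-)^{r-1-k},$$
while the term with $\mathcal{X}_{i,0}^+$ shifted fully to the right already sits in $U_qX_+$ and vanishes modulo it. After pulling each $\varPhi_{i,1}^+$ past the remaining $\mathcal{X}_{i,1}^-$'s on its right (which produces only $q_i$-rescalings and, via the commutator of $\mathcal{H}_{i,1}$ with $\mathcal{X}_{i,1}^-$, higher $\varPhi_{i,s}^+$'s) and iterating this peeling, I can isolate a factor of the form $(\mathcal{X}_{i,0}^+)^{r-1}(\mathcal{X}_{i,1}^-)^{r-1}$ times a polynomial in the $\varPhi_{i,s}^+$'s and $\mathcal{K}_i^{\pm 1}$. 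The inductive hypothesis identifies this factor with $\mathcal{P}_{i,r-1}^+$ up to the known scalar, and comparing with the defining recurrence (\ref{eqn:dinfeld_poly_coeff}) pins down the $(-1)^rq_i^{r^2}/([r]_{q_i})^2$ normalisation. The case of $\mathcal{P}_{i,-r}^-$ is entirely symmetric, swapping positive and negative modes.

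For (\ref{eqn:coeff_inverse_poly}), I would isolate the top summand ($s=r-1$) of (\ref{eqn:dinfeld_poly_coeff}) to express $\varPhi_{i,r}^+\mathcal{K}_i^{-1}$ as a combination of $\mathcal{P}_{i,r}^+$ and a shorter tail. Substituting the congruence $\varPhi_{i,r}^+\equiv(q_i-q_i^{-1})\mathcal{C}^{r/2}\mathcal{X}_{i,0}^+\mathcal{X}_{i,r}^- \pmod{U_qX_+}$ (valid for $r>0$ since $\varPhi_{i,r}^-=0$), multiplying on the appropriate side by $\mathcal{X}_{i,s+1}^-$, and telescoping by using (\ref{eqn:coeff_finite_poly}) on the $\mathcal{P}_{i,s}^+$'s on the right, yields an expression for $(\mathcal{X}_{i,0}^+)^{r-1}(\mathcal{X}_{i,1}^-)^{r}$ modulo $U_qX_+$ of the shape $\sum_{s=0}^{r-1}\mathcal{X}_{i,s+1}^-\mathcal{P}_{i,r-s-1}^+\mathcal{K}_i^{r-1}$, which is (\ref{eqn:coeff_inverse_poly}) once the $q$-scalars are collected. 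The negative-index statement is again handled symmetrically.

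The main obstacle is the careful bookkeeping of $q_i$-factors, $\mathcal{K}_i$-powers and $\mathcal{C}$-powers accumulated through the repeated commutations: each transposition $\mathcal{X}_{i,0}^+\leftrightarrow\mathcal{X}_{i,s}^-$ produces a $\varPhi_{i,s}^+$, and when this $\varPhi$-term is subsequently pulled through lower-weight generators it picks up $q_i$-rescalings whose cumulative $q$-binomial combinatorics must conspire to yield exactly the closed-form prefactor. Systematising this computation --- essentially verifying a quantum analogue of Newton's identities relating the ``power sums'' $\varPhi_{i,r}^\pm$ and ``elementary symmetric functions'' $\mathcal{P}_{i,r}^\pm$ inside $U_q(\tilde{\mathfrak{g}})$ --- is the technical heart of the argument.
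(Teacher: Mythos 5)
Your toolkit is the right one, and it is the same one the thesis points to (the proof is deferred to \cite{CP1991}, by induction on $r$): commute $\mathcal{X}^+_{i,0}$'s through strings of $\mathcal{X}^-_{i,1}$'s, kill everything ending in an $\mathcal{X}^+$ modulo the left ideal $U_qX_+$, and use $\varPhi^+_{i,s}\equiv(q_i-q_i^{-1})\mathcal{C}^{s/2}\mathcal{X}^+_{i,0}\mathcal{X}^-_{i,s}$. The gap is in the logical architecture: you propose to prove (\ref{eqn:coeff_finite_poly}) by a standalone induction and then deduce (\ref{eqn:coeff_inverse_poly}) from the recurrence (\ref{eqn:dinfeld_poly_coeff}) by substitution and telescoping, and that second step fails as described. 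Substituting $\varPhi^+_{i,s+1}\equiv(q_i-q_i^{-1})\mathcal{C}^{(s+1)/2}\mathcal{X}^+_{i,0}\mathcal{X}^-_{i,s+1}$ into (\ref{eqn:dinfeld_poly_coeff}) leaves a factor $\mathcal{X}^+_{i,0}$ on the left of every term, so comparing with (\ref{eqn:coeff_finite_poly}) only yields a relation of the form $\mathcal{X}^+_{i,0}\cdot W\equiv 0\ (\operatorname{mod} U_qX_+)$; since $U_qX_+$ is a left ideal, the left factor cannot be cancelled to give $W\equiv 0$. Equivalently, by weight: (\ref{eqn:coeff_inverse_poly}) is an identity in $U_q(\mathfrak{g})$-weight $-\alpha_i$, while anything manufactured from (\ref{eqn:dinfeld_poly_coeff}) together with (\ref{eqn:coeff_finite_poly}) in this way lives in weight $0$. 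Converting $\sum_{s}\mathcal{X}^-_{i,s+1}\mathcal{P}^+_{i,r-s-1}\mathcal{K}_i^{r-1}$ into the single monomial $(\mathcal{X}^+_{i,0})^{r-1}(\mathcal{X}^-_{i,1})^{r}$ requires the same nontrivial commutation analysis as the main induction (the higher modes $\mathcal{X}^-_{i,s}$, $s\geq 2$, and the quadratic relations among the $\mathcal{X}^-$'s enter), not just collecting $q$-powers.

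The standalone induction for (\ref{eqn:coeff_finite_poly}) also does not close as you describe it. After commuting $\mathcal{X}^+_{i,0}$ through $(\mathcal{X}^-_{i,1})^{r}$ and pushing $\varPhi^+_{i,1}$ to the right, the new contributions are not of the form $(\mathcal{X}^+_{i,0})^{r-1}(\mathcal{X}^-_{i,1})^{r-1}$ times an element of $U_q^0$: the commutator of $\mathcal{H}_{i,1}$ with $\mathcal{X}^-_{i,1}$ produces $\mathcal{X}^-_{i,2}$ (higher $\mathcal{X}^-$-modes, not yet higher $\varPhi$'s), so you are left with unbalanced monomials such as $(\mathcal{X}^+_{i,0})^{r-1}(\mathcal{X}^-_{i,1})^{r-2}\mathcal{X}^-_{i,2}(\cdots)$ which the hypothesis (\ref{eqn:coeff_finite_poly}) at rank $r-1$ cannot evaluate. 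This is exactly why the lemma packages the two congruences together and why the proof in \cite{CP1991} is an interlocking induction: (\ref{eqn:coeff_inverse_poly}) at rank $r$ gives (\ref{eqn:coeff_finite_poly}) at rank $r$ (left-multiply it by $\mathcal{X}^+_{i,0}$, discard the terms ending in an $\mathcal{X}^+$, and resum the resulting $\varPhi^+_{i,s+1}\mathcal{P}^+_{i,r-s-1}$ via (\ref{eqn:dinfeld_poly_coeff})), while (\ref{eqn:coeff_inverse_poly}) at rank $r$ is in turn established from the lower-rank statements. If you restructure your argument as this joint induction, the rest of your bookkeeping plan is sound.
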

	The proof goes by induction on $r$ and can be found in the paper of Chari and Pressley \cite{CP1991} (only the $\mathfrak{sl}_2$ case is proven there, but the proof in the general case is the same). However, let us reformulate Remark 3.5 in \cite{CP1991} as follows.
	
	\begin{rem}
		In the classical limit $q\to 1$, the formulae in Lemma \ref{lem:Newtons_formulae} appear in \cite{Chari1986} (see Equation (4.5) in \cite{Chari1986}, for example). In the classical case, the $\mathcal{P}_{i,r}^+$ are interpreted as the coefficients of a certain polynomial and the classical limits of the $(q_i-q_i^{-1})^{-1}(P_{i,r}^+)$ as the sum of the $r^{th}$ powers of its roots. Thus, Equation (\ref{eqn:dinfeld_poly_coeff}) may be interpreted as a $q$-analogue of Newton's formulae relating the elementary symmetric functions and the power sums. (cf. Chari and Pressley \cite{CP1991} pp. 267-268) $\odot$
	\end{rem}
	
	The definition given by Equation (\ref{eqn:dinfeld_poly_coeff}) can be reformulated by introducing the formal power series
	\begin{equation*}
		\boldsymbol{\mathcal{P}}_i^\pm(u) = \sum_{r=0}^{\infty}\mathcal{P}_{i,\pm r}^\pm u^{\pm  r},\quad\boldsymbol{\varPhi}_i^\pm (u) = \sum_{r=0}^{\infty}\varPhi_{i,\pm r}^\pm u^{\pm r}
	\end{equation*}
	in $U_q^0[[u^{\pm 1}]]$. Hence, (\ref{eqn:dinfeld_poly_coeff}) can be rewritten in the simple from
	\begin{equation}
		\boldsymbol{\varPhi}_i^\pm(u)=\mathcal{K}_i^{\pm1}\frac{\boldsymbol{\mathcal{P}}_i^\pm(q_i^{\mp 2}u)}{\boldsymbol{\mathcal{P}}_i^\pm(u)}.\label{eqn:form_power_ser_drinfeld_rewr}
	\end{equation}
	Suppose now that $V(\boldsymbol{\varphi})$ is finite-dimensional with highest weight $\boldsymbol{\varphi}$ and let $v_0$ be a pseudo highest weight vector. Then
	\begin{equation*}
		\mathcal{K}_i.v_0 = q_i^{r_i}v_0
	\end{equation*}
	for some $r_i\in \mathbb{N}$. Moreover, it is clear from the discussion of the representation theory of $U_q(\mathfrak{g})$ at the begin of this section that the $U_q(\mathfrak{g})$-submodule of $V(\boldsymbol{\varphi})$ generated by $v_0$ is isomorphic to the irreducible module $V_q(\lambda)$, where $\lambda=\sum_{i}r_i\omega_i$.\footnote{ We remind the reader that the $\omega_i$ are the fundamental weights defined by the condition $\langle\alpha_i,\omega_j\rangle =\delta_{i,j}.$} In particular we have $(\mathcal{X}_{i,0}^-)^{r_i+1}.v_0=0$.\footnote{ And then for weight reasons, i.e. the $5$th relation in Theorem \ref{thm:second_drin_quantum_aff}, we must obviously have that the action of any product of the $\mathcal{X}_{i,k}^-$, $k\in\mathbb{Z}$, of degree $r_i+1$ on $v_0$ equals zero.} From the first Equation in (\ref{eqn:coeff_finite_poly}), we obtain
	\begin{equation*}
		\mathcal{P}_i^+(u).v_0 = P_i(u)v_0
	\end{equation*}
	for some polynomial $P_i(u)=\sum_{r=0}^{r_i}p_{i,r}u^r$ of degree $r_i$. Therefore, the first equality in Theorem \ref{thm:drinfeld_polynomials} follows immediately from (\ref{eqn:form_power_ser_drinfeld_rewr}).
	
	To prove the second equation, apply both sides of the first Equation in (\ref{eqn:coeff_inverse_poly}), with $r=r_i+1$, to $v_0$. Since $(\mathcal{X}_{i,1}^-)^{r_i+1}=0$ for weight reasons, we clearly get
	\begin{equation*}
		\sum_{s=0}^{r_i}\mathcal{X}_{i,s+1}^-\mathcal{P}_{i,r_i-s}^+\mathcal{K}_i^{r_i}v_0=0.
	\end{equation*}
	Then, by applying $\mathcal{X}_{i,-s-1}^+$ for $s\geq 0$ and using the relation
	\begin{equation*}
		[\mathcal{X}_{i,r}^+,\mathcal{X}_{i,s}^-]=\frac{\varPhi^+_{i,r+s}-\varPhi^-_{i,r+s}}{q_i-q_i^{-1}},
	\end{equation*}
	we find that
	\begin{equation}
		\sum_{t=0}^{s}\varphi^-_{i,-t}p_{i,r_i-s-t} =\label{eqn:s<r_i} \sum_{t=0}^{r_i-s}\varphi^+_{i,t}p_{i,r_i-s-t}
	\end{equation}
	for $0\leq s\leq r_i$, and that
	\begin{equation}
		\sum_{t=s-r_i}^{s}\varphi^-_{i,-t}p_{i,r_i-s+t}=0\label{eqn:s>r_i}
	\end{equation}
	for $s>r_i$. Applying now Equation (\ref{eqn:dinfeld_poly_coeff}), with $r$ replaced by $r_i-s$, the right-hand side of Equation (\ref{eqn:s<r_i}) is equal to $q_i^{r_i}q_i^{-2(r_i-s)}p_{i,r_i-s}$. Then, by multiplying the $s$th equation with $u^{r_i-s}$ and summing from $s=0$ to $\infty$, we obtain
	\begin{equation*}
		\left(\sum_{t=0}^{\infty}\varphi_{i,-t}^-u^{-t}\right)P_i(u) = q_i^{r_i}P_i(q_i^{-2}u)
	\end{equation*}
	as required.\\
	
	Let us stop here for a moment and actually do the calculation for the right- and the left-hand side. It goes as follows. For the right hand-side, we write down $q_i^{r_i}q_i^{-2(r_i-s)}p_{i,r_i-s}$ and check the assertion by applying Equation \ref{eqn:dinfeld_poly_coeff}
	\begin{equation*}
		\begin{split}
			q_i^{r_i}q_i^{-2(r_i-s)}p_{i,r_i-s} \stackrel{Eq. (\ref{eqn:dinfeld_poly_coeff})}{=}& \frac{-q_i^{r_i}}{q_i^{2(r_i-s)}-1}\sum_{l=0}^{r_i-s-1}\varphi^+_{i,l+1}p^+_{r_i-s-l-1}q_i^{-r_i}\\
			=\quad&\frac{-1}{q_i^{2(r_i-s)}-1}\sum_{l=0}^{r_i-s-1}\varphi^+_{i,l+1}p^+_{r_i-s-l-1}\\
			\stackrel{\varphi_{i,0}^+=q_i^{r_i}}{=}& \frac{-1}{q_i^{2(r_i-s)}-1}\left(\sum_{t=0}^{r_i-s}\varphi^+_{i,t}p_{i,r_i-s-t}-q^{r_i}p^+_{i,r_i-s}\right).
		\end{split}
	\end{equation*}
	Solving this equation for $\sum_{t=0}^{r_i-s}\varphi^+_{i,t}p_{i,r_i-s-t}$ we find the desired equality.
	
	For the left-hand side, we simply write down the sum and manipulate it in a few steps
	\begin{equation*}
		\begin{split}
			&\sum_{s=0}^{r_i}\sum_{t=0}^{s}\varphi^-_{i,-t}p_{i,r_i-s+t}u^{r_i-s}+\sum_{s=r_i+1}^{\infty}\sum_{t=s-r_i}^{s}\varphi^-_{i,-t}p_{i,r_i-s+t}u^{r_i-s}\\
			&\stackrel{\varphi^-_{i,l}=0\text{ for }l>0}{=} \sum_{s=0}^{\infty}\sum_{t=s-r_i}^{s}\varphi^-_{i,-t}p_{i,r_i-s+t}u^{r_i-s}\\
			&\stackrel{t'=r_i-s+t}{=}\sum_{s=0}^{\infty}\sum_{t'=0}^{r_i}\varphi^-_{i,-t'+r_i-s}p_{i,t'}u^{r_i-s}\\
			&\stackrel{s'=t'-r_i+s}{=}\sum_{t'=0}^{r_i}\sum_{s'=t'-r_i}^{\infty}\varphi^-_{i,-s'}p_{i,t'}u^{t'-s'}\\
			&\stackrel{\varphi^-_{i,l}=0\text{ for }l>0}{=}\sum_{s'=0}^{\infty}\sum_{t'=0}^{r_i}\varphi^-_{i,-s'}p_{i,t'}u^{t'-s'}\\
			&=\left(\sum_{s'=0}^{\infty}\varphi^-_{i,-s'}u^{-s'}\right)P_i(u),
		\end{split}
	\end{equation*}
	and find what we were looking for. Hence, the second equality in Theorem \ref{thm:drinfeld_polynomials} follows. \\
	
	The proof that every $n$-tuple $(P_i)_{i\in I}\in\mathcal{P}^+$ of monic polynomials with non-zero constant term occurs is similar to the classical proof that the irreducible $\mathfrak{g}$-module $V(\lambda)$ of highest weight $\lambda\in P^+$ is finite-dimensional. We may therefore refer the reader to the book of Humphreys \cite{Humphreys}, or even the paper \cite{CP1994}. However, in type $A_l$, i.e. $\mathfrak{g}=\mathfrak{sl}_{l+1}$, we will see how to construct such representations using Jimbo's (evaluation) homomorphism in Subsection \ref{subsect:Jimbos_hom_und_ev_reps}.
	
	The multiplicative property of the polynomials $P_i$ in the second paragraph of Theorem \ref{thm:drinfeld_polynomials} can be proven using the partial description of the comultiplication $\Delta_q$ by
	\begin{prop}\label{prop:comult_partial}
		The comultiplication $\Delta_q$ of $U_q(\mathfrak{L}(\mathfrak{g}))$ satisfies
		\begin{enumerate}[label=\normalfont(\roman*)]
			\item modulo $U_qX^2_+\otimes U_qX_-$,\label{enum:comult_partial_i}
			\begin{equation*}
				\begin{split}
					&\Delta_q(\mathcal{X}_{i,k}^+)\equiv \mathcal{X}_{i,k}^+\otimes \mathcal{K}_i+1\otimes\mathcal{X}_{i,k}^++\sum_{j=1}^{k}\mathcal{X}_{i,k-j}^+\otimes\varPhi^+_{i,j}\quad \text{for }k\geq 0,\\
					&\Delta_q(\mathcal{X}_{i,-k}^+)\equiv \mathcal{X}_{i,-k}^+\otimes \mathcal{K}^{-1}_i+1\otimes\mathcal{X}_{i,-k}^++\sum_{j=1}^{k-1}\mathcal{X}_{i,-k+j}^+\otimes\varPhi^-_{i,-j}\quad\text{for }k> 0,
				\end{split}
			\end{equation*}
			\item modulo $U_qX_+\otimes U_qX^2_-$,\label{enum:comult_partial_ii}
			\begin{equation*}
				\begin{split}
					&\Delta_q(\mathcal{X}_{i,k}^-)\equiv \mathcal{X}_{i,k}^-\otimes 1+\mathcal{K}_i\otimes\mathcal{X}_{i,k}^-+\sum_{j=1}^{k-1}\varPhi^+_{i,j}\otimes\mathcal{X}_{i,k-j}^-\quad\text{for }k> 0,\\
					&\Delta_q(\mathcal{X}_{i,-k}^-)\equiv \mathcal{X}_{i,-k}^-\otimes 1+\mathcal{K}_i^{-1}\otimes\mathcal{X}_{i,-k}^-+\sum_{j=1}^{k}\varPhi^-_{i,-j}\otimes\mathcal{X}_{i,-k+j}^-\quad\text{for }k\geq 0,
				\end{split}
			\end{equation*}
			\item modulo $U_qX_+\otimes U_qX_-+U_qX_-\otimes U_qX_+$,\label{enum:comult_partial_iii}
			\begin{equation*}
				\begin{split}
					\Delta_q(\varPhi^\pm_{i,\pm k})\equiv \sum_{j=0}^{k} \varPhi^\pm_{i,j}\otimes\varPhi^\pm_{i,k-j}\quad\text{for }k\geq 0.\quad\odot
				\end{split}
			\end{equation*}
		\end{enumerate}
	\end{prop}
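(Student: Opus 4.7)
The plan is to proceed by induction on $|k|$, exploiting the fact that $\Delta_q$ is an algebra homomorphism and that the loop generators $\mathcal{X}_{i,k}^\pm$ and $\varPhi_{i,k}^\pm$ are all obtained from the Chevalley generators via iterated commutators. The three parts are interrelated: part (iii) can be extracted from (i) and (ii) combined with the Drinfeld bracket
\begin{equation*}
	[\mathcal{X}_{i,r}^+,\mathcal{X}_{j,s}^-] = \delta_{i,j}\frac{\mathcal{C}^{(r-s)/2}\varPhi_{i,r+s}^+ - \mathcal{C}^{-(r-s)/2}\varPhi_{i,r+s}^-}{q_i - q_i^{-1}}.
\end{equation*}
Applying $\Delta_q$ to both sides and discarding terms that lie in $U_qX_+\otimes U_qX_-+U_qX_-\otimes U_qX_+$ should isolate precisely the Cartan-type piece to be identified with $\Delta_q(\varPhi_{i,r+s}^\pm)$. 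So the principal work lies in parts (i) and (ii), and I will focus the inductive argument there.

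First I would settle the base cases. For $k=0$ the formulas in (i) and (ii) are immediate: $\mathcal{X}_{i,0}^\pm=X_i^\pm$ for $i\in I$ under the isomorphism $f$ of Theorem \ref{thm:second_drin_quantum_aff}, and $\Delta_q(X_i^\pm)$ is exactly the required expression from Definition-proposition \ref{defprop:U_q(g)_chevalley}. For $k=\pm 1$ the distinguished generators $\mathcal{X}_{j,1}^-$ and $\mathcal{X}_{j,-1}^+$ (with $j$ the index appearing in the decomposition of $x_\theta^+$) are reached via $f(X_0^+)=\mu\, w_{i_1}^-\cdots w_{i_k}^-(\mathcal{X}_{j,1}^-)\mathcal{K}_\theta^{-1}$, together with the known $\Delta_q(X_0^+)=X_0^+\otimes K_0+1\otimes X_0^+$. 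The maps $w_i^\pm$ are adjoint actions by $\mathcal{X}_{i,0}^\pm$ whose comultiplication is already in hand, so unwinding them produces $\Delta_q(\mathcal{X}_{j,\pm 1}^\mp)$. To reach the remaining indices $(i,\pm 1)$ I would iterate the Drinfeld cross-relation
\begin{equation*}
	\mathcal{X}_{i,r+1}^\pm\mathcal{X}_{j,s}^\pm-q_i^{\pm a_{ij}}\mathcal{X}_{j,s}^\pm\mathcal{X}_{i,r+1}^\pm = q_i^{\pm a_{ij}}\mathcal{X}_{i,r}^\pm\mathcal{X}_{j,s+1}^\pm-\mathcal{X}_{j,s+1}^\pm\mathcal{X}_{i,r}^\pm,
\end{equation*}
following Beck's strategy in \cite{Beck94_1}, to transfer the information between different simple indices.

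For the inductive step I would use the Drinfeld bracket $[\mathcal{H}_{i,1},\mathcal{X}_{j,k}^\pm]=\pm [a_{ij}]_{q_i}\mathcal{C}^{\mp 1/2}\mathcal{X}_{j,k+1}^\pm$, which for $i=j$ (where $[a_{ii}]_{q_i}=q_i+q_i^{-1}\neq 0$) expresses $\mathcal{X}_{i,k+1}^\pm$ as a commutator of a Cartan-type element with a generator of one lower shift. Applying $\Delta_q$ and invoking the inductive hypothesis for $\Delta_q(\mathcal{X}_{i,k}^\pm)$, together with the formula for $\Delta_q(\mathcal{H}_{i,1})$ read off from the $k=1$ case of part (iii) via $\varPhi_{i,\pm 1}^\pm=\pm(q_i-q_i^{-1})\mathcal{K}_i^{\pm 1}\mathcal{H}_{i,\pm 1}$, produces an expression which one then checks is congruent to the asserted closed form modulo the stated ideal. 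The generating-function identity $\boldsymbol{\varPhi}_i^\pm(u) = \mathcal{K}_i^{\pm 1}\exp(\pm(q_i-q_i^{-1})\sum_s \mathcal{H}_{i,\pm s}u^{\pm s})$ then propagates part (iii) to all $k$ by the same inductive machine, since $\Delta_q$ respects the formal power series structure in $U_q^0$ modulo the mixed ideal.

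The main obstacle will be the careful bookkeeping of error terms. Every commutator of the form $[\Delta_q(\mathcal{H}_{i,1}),\Delta_q(\mathcal{X}_{j,k}^\pm)]$ generates cross-terms mixing $\mathcal{X}^+$ and $\mathcal{X}^-$ factors on both tensor slots, and I must show these lie in $U_qX_+^2\otimes U_qX_-$, respectively $U_qX_+\otimes U_qX_-^2$. The key stability property is that these ideals are closed under left multiplication by $U_q(\tilde{\mathfrak{g}})\otimes U_q(\tilde{\mathfrak{g}})$ and, crucially, under commutation with $\Delta_q(U_q^0)$, which modulo the mixed ideal decomposes as a sum of elementary tensors in $U_q^0\otimes U_q^0$. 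A secondary difficulty is reproducing the telescoping sums $\sum_{j=1}^{k}\mathcal{X}_{i,k-j}^+\otimes\varPhi_{i,j}^+$ on the right-hand sides: verifying that the inductive step generates precisely this pattern reduces to matching the coefficient expansions of $\boldsymbol{\varPhi}_i^\pm(u)$ against those produced by successive commutation with $\mathcal{H}_{i,1}$, where the most delicate combinatorial identities enter.
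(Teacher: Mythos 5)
Your plan coincides with the paper's proof (which follows Chari--Pressley, Proposition 4.4 of \cite{CP1991}): induction on $k$, base cases read off from the Chevalley coproduct in \ref{defprop:U_q(g)_chevalley} through the isomorphism $f$ of Theorem \ref{thm:second_drin_quantum_aff}, the explicit computation of $\Delta_q(\mathcal{H}_{i,\pm1})$ from $\pm(\mathcal{K}_i^{\mp1}\otimes\mathcal{K}_i^{\mp1})[\Delta_q(\mathcal{X}_{i,0}^\pm),\Delta_q(\mathcal{X}_{i,\pm1}^\mp)]$, the inductive step via $[\mathcal{H}_{i,s},\mathcal{X}_{i,k}^\pm]=\pm(q_i+q_i^{-1})\mathcal{X}_{i,k+s}^\pm$ for $s=\pm1$, and part (iii) extracted from (i)--(ii) through $(q_i-q_i^{-1})[\mathcal{X}_{i,\pm k}^+,\mathcal{X}_{i,0}^-]=\pm\varPhi^\pm_{i,\pm k}$. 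The only adjustment: to reach $\mathcal{X}_{i,\pm1}^\mp$ for nodes $i$ other than the one appearing in $f(X_0^\pm)$, the same-sign cross-relation you quote only constrains products and does not isolate single generators, so one should instead use $[\mathcal{H}_{j,\pm1},\mathcal{X}_{i,k}^\pm]\propto\mathcal{X}_{i,k\pm1}^\pm$ for adjacent nodes ($a_{ij}\neq0$) and propagate along the Dynkin diagram -- a point the paper itself passes over just as briefly.
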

	In fact, the multiplicative property of the Drinfeld polynomials is an obvious consequence. For, \ref{prop:comult_partial} \ref{enum:comult_partial_i} implies that the tensor product of highest weight vectors in $V$ and $W$ is a highest weight vector in $V\otimes W$. Then, the assertion follows immediately from the group-like property \ref{prop:comult_partial} \ref{enum:comult_partial_iii} of $\varPhi^\pm_{i,\pm k}$.
	\begin{rem}
		We didn't use the $\boldsymbol{\mathcal{P}}_i^-$ in this proof. In fact, similar arguments show that $\boldsymbol{\mathcal{P}}_i^-.v_0=P_{i,V}^-(u^{-1})v_0$ for some polynomials $P^-_{i,V}$, and
		\begin{equation*}
			\sum_{r=0}^{\infty}\varphi_{i,-r}^{-}u^{-r} = q_i^{-r_i}\frac{P^-_{i,V}(q_i^2u^{-1})}{P^-_{i,V}(u^{-1})}.
		\end{equation*}
		It follows $P_{i,V}^-(u) = u^{r_i}P_{i,V}(u^{-1})$, up to a scalar multiple, and that the action of the $\varPhi^-_{i,-r}$ on the highest weight vector $v_0$ is determined by the action of the $\varPhi^+_{i,r}$, $i\in I = \{1,\dots,l\}$, $r\geq0$, on it. $\odot$
	\end{rem}
\end{proof}
The proof of Proposition \ref{prop:comult_partial} goes similar to the $\mathfrak{sl}_2$ case explained in Proposition 4.4 in the paper \cite{CP1991}. However, one verifies in a few lines that the comultiplication indeed acts on $\mathcal{X}_{i,-1}^+$ and $\mathcal{X}_{i,1}^-$ in the described way.\footnote{ Just use the isomorphism $f$ in Theorem \ref{thm:second_drin_quantum_aff} and apply the multiplicativity of the comultiplication, i.e. $\Delta$ is a (Lie) algebra homomorphism.} Let us nevertheless write it down to complete the proof of Theorem \ref{thm:drinfeld_polynomials}.\footnote{At least together with the construction in type A we will describe below, and some technicalities.}
\begin{proof}[Proof of Proposition \ref{prop:comult_partial}]
	The formulae are proved by induction on $k$. The initial case of each of the five formulae follows from the action of $\Delta_q$ on the Chevalley generators given in \ref{defprop:U_q(g)_chevalley}, and the isomorphism $f$ in Theorem \ref{thm:second_drin_quantum_aff}. One obtains
	\begin{equation*}
		\begin{split}
			\Delta_q(\mathcal{H}_{i,\pm1}) &= \pm(q_i-q_i^{-1})^{-1}\Delta_q(\mathcal{K}_i^{\mp1}\varPhi^\pm_{i,\pm1})\\
			&=\pm(\mathcal{K}_i^{\mp1}\otimes \mathcal{K}_i^{\mp1})[\Delta_q(\mathcal{X}_{i,0}^\pm),\Delta_q(\mathcal{X}_{i,\pm1}^\mp)]\\
			&= \mathcal{H}_{i,\pm 1}\otimes1+1\otimes\mathcal{H}_{i,\pm1}-(q^2-q^{-2})\mathcal{X}^+_{i,-\frac{1\mp1}{2}}\otimes \mathcal{X}^-_{i,\frac{1\pm1}{2}}.
		\end{split}
	\end{equation*}
	Using the relations
	\begin{equation*}
		[\mathcal{H}_{i,s},\mathcal{X}^\pm_{i,k}]=\pm(q_i+q_i^{-1})\mathcal{X}^\pm_{i,k+s}\quad\text{for }s=\pm1,
	\end{equation*}
	the formulae in part \ref{enum:comult_partial_i} and \ref{enum:comult_partial_ii} follow. Finally, part \ref{enum:comult_partial_iii} follows from \ref{enum:comult_partial_i} and \ref{enum:comult_partial_ii} using for example the relation
	\begin{equation*}
		(q_i-q_i^{-1})[\mathcal{X}_{i,\pm k}^+,\mathcal{X}_{i,0}^-]=\pm\varPhi^\pm_{i,\pm k}\quad\text{for }k>0.
	\end{equation*}
\end{proof}
Generally, Proposition \ref{prop:comult_partial} can be formulated in the following simple way.
\begin{rem}\label{rem:comult_partial_rem}
	Define the elements
	\begin{equation*}
		\begin{split}
			&\boldsymbol{\varPhi}^\pm_i(u) = \sum_{k=0}^{\infty}\varPhi^\pm_{i,k}u^{\pm k},\\
			&\boldsymbol{\mathcal{X}}^+_{i,\geq 0}(u) = \sum_{k\geq 0}\mathcal{X}^+_{i,k}u^k,\quad\boldsymbol{\mathcal{X}}^+_{i,<0}(u)=\sum_{k<0}\mathcal{X}^+_{i,k}u^k,\\
			&\boldsymbol{\mathcal{X}}^-_{i,> 0}(u) = \sum_{k> 0}\mathcal{X}^-_{i,k}u^k,\quad\boldsymbol{\mathcal{X}}^-_{i,\leq0}(u)=\sum_{k\leq0}\mathcal{X}^-_{i,k}u^k,
		\end{split}
	\end{equation*}
	of the Hopf algebras of formal power series $U_q(\mathfrak{L}(\mathfrak{g}))\otimes \mathbb{C}[[u]]$ and $U_q(\mathfrak{L}(\mathfrak{g}))\otimes\mathbb{C}[[u^{-1}]]$. Then, parts \ref{enum:comult_partial_i} - \ref{enum:comult_partial_iii} in Proposition \ref{prop:comult_partial} are equivalent to the statements
	\begin{equation*}
		\Delta_q(\boldsymbol{\mathcal{X}}^+_{i,\geq 0})\equiv \boldsymbol{\mathcal{X}}^+_{i,\geq 0}\otimes\boldsymbol{\varPhi}^+_i+1\otimes\boldsymbol{\mathcal{X}}^+_{i,\geq 0},\quad
		\Delta_q(\boldsymbol{\mathcal{X}}^+_{i,<0})\equiv \boldsymbol{\mathcal{X}}^+_{i,< 0}\otimes\boldsymbol{\varPhi}^-_i+1\otimes\boldsymbol{\mathcal{X}}^+_{i,<0}
	\end{equation*}
	modulo $(U_qX^2_+\otimes U_qX_-)[[u]]$, respectively $(U_qX^2_+\otimes U_qX_-)[[u^{-1}]]$,
	\begin{equation*}
		\Delta_q(\boldsymbol{\mathcal{X}}^-_{i,> 0})\equiv 	\boldsymbol{\mathcal{X}}^-_{i,> 0}\otimes1+\boldsymbol{\varPhi}^+_i\otimes\boldsymbol{\mathcal{X}}^-_{i,>0},\quad
		\Delta_q(\boldsymbol{\mathcal{X}}^-_{i,\leq0})\equiv 	\boldsymbol{\mathcal{X}}^+_{i,\leq 0}\otimes1+\boldsymbol{\varPhi}^-_i\otimes\boldsymbol{\mathcal{X}}^+_{i,\leq0}
	\end{equation*}
	modulo $(U_qX_+\otimes U_qX^2_-)[[u]]$, respectively $(U_qX_+\otimes U_qX^2_-)[[u^{-1}]]$, and that the element $\boldsymbol{\varPhi}_i^\pm$ is group-like
	\begin{equation*}
		\Delta_q(\boldsymbol{\varPhi}_i^\pm) \equiv \boldsymbol{\varPhi}_i^\pm\otimes\boldsymbol{\varPhi}_i^\pm 
	\end{equation*}
	modulo $(U_qX_+\otimes U_qX_-+U_qX_-\otimes U_qX_+)[[u^{\pm1}]]$. $\odot$
\end{rem}
Finally, let us write down the immediate consequences of Theorem \ref{thm:drinfeld_polynomials}.
\begin{cor}\label{cor:notation_V(P)}
	Assigning to $V= V(\boldsymbol{\varphi})$ (as in Theorem \ref{thm:drinfeld_polynomials}) the $I$-tuple $\boldsymbol{P}_V=(P_{i,V})_{i\in I}\in\mathcal{P}^+$ defines a bijection between $\mathcal{P}^+$ and the set of isomorphism classes of finite-dimensional irreducible representations of $U_q(\tilde{\mathfrak{g}})$ of type $\boldsymbol{1}$.
	We can therefore write equivalently $V= V(\boldsymbol{P})$ in this case. $\odot$	
\end{cor}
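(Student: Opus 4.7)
The plan is to verify that the assignment $V\mapsto \boldsymbol{P}_V$ is well-defined on isomorphism classes and to check injectivity and surjectivity separately, each following essentially from what was already established in Theorem \ref{thm:drinfeld_polynomials} together with earlier structural facts.

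First I would observe that well-definedness is automatic: by Proposition \ref{prop:type_1_twisting_loop} and the corollary following Definition \ref{def:pseudo-highest_weight}, every finite-dimensional irreducible type $\boldsymbol{1}$ representation $V$ is a highest weight module with a uniquely determined highest weight $\boldsymbol{\varphi}$, and the first part of Theorem \ref{thm:drinfeld_polynomials} attaches to $\boldsymbol{\varphi}$ a unique tuple $\boldsymbol{P}_V\in\mathcal{P}^+$ via the Laurent expansions of $q_i^{\deg P_{i,V}}P_{i,V}(q_i^{-2}u)/P_{i,V}(u)$ around $0$ and $\infty$. Since isomorphic modules have the same highest weight, the map descends to isomorphism classes.

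Next, for surjectivity, I would simply invoke the second assertion of Theorem \ref{thm:drinfeld_polynomials}: every tuple $\boldsymbol{P}=(P_i)_{i\in I}\in\mathcal{P}^+$ arises as $\boldsymbol{P}_V$ for some finite-dimensional irreducible type $\boldsymbol{1}$ representation of $U_q(\tilde{\mathfrak{g}})$. (This is the non-trivial existence statement whose proof is sketched at the end of the proof of Theorem \ref{thm:drinfeld_polynomials}, relying on Jimbo's evaluation construction in type $A_l$ and more generally on arguments parallel to the classical finite-dimensionality proof for $V(\lambda)$.)

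For injectivity, suppose $V$ and $W$ are two finite-dimensional irreducible type $\boldsymbol{1}$ representations with $\boldsymbol{P}_V=\boldsymbol{P}_W$. By the explicit formula relating Drinfeld polynomials to the highest-weight eigenvalues, $V$ and $W$ have the same highest weight $\boldsymbol{\varphi}$. The discussion preceding Theorem \ref{thm:drinfeld_polynomials} (Verma-module construction $M(\boldsymbol{\varphi})$ and its unique irreducible quotient $V(\boldsymbol{\varphi})$) shows that, up to isomorphism, there is exactly one irreducible highest weight module of highest weight $\boldsymbol{\varphi}$. Hence $V\cong V(\boldsymbol{\varphi})\cong W$, which gives injectivity. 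The notation $V(\boldsymbol{P})$ is then unambiguous: it refers to the unique (up to isomorphism) representative of the isomorphism class corresponding to $\boldsymbol{P}\in\mathcal{P}^+$.

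The only step that is not entirely formal is the appeal to surjectivity, which is genuinely the content of Theorem \ref{thm:drinfeld_polynomials}; the rest amounts to bookkeeping. So I do not expect a real obstacle in the corollary itself, provided one is willing to quote Theorem \ref{thm:drinfeld_polynomials} in full.
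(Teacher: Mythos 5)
Your proposal is correct and follows the same route as the paper, which simply remarks that this corollary is a reformulation of Theorem \ref{thm:drinfeld_polynomials} combined with the earlier Verma-module uniqueness of $V(\boldsymbol{\varphi})$; you have merely spelled out the well-definedness, injectivity and surjectivity bookkeeping explicitly.
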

\begin{cor}\label{cor:pull-back_by_tau_a}
	If $\boldsymbol{P}=(P_i)_{i\in I}\in\mathcal{P}^+$, $a\in \mathbb{C}^\times$, then we have for the pull-back $\tau_a^*(V(\boldsymbol{P}))=V(\boldsymbol{P})(a)$ of $V(\boldsymbol{P})$ by the Hopf algebra automorphism $\tau_a$ that $V(\boldsymbol{P})(a)\cong V(\boldsymbol{P}^a)$ as representations of $U_q(\tilde{\mathfrak{g}})$, where $\boldsymbol{P}^a=(P_i^a)_{i\in I}$ and $P_i^a(u)=P_i(au)$. $\odot$
\end{cor}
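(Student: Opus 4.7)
The plan is to track the Drinfeld data of the highest weight vector through the pull-back and then invoke the uniqueness statement in Corollary \ref{cor:notation_V(P)}. Let $v_0\in V(\boldsymbol{P})$ be the pseudo-highest weight vector with eigenvalues $\varPhi^\pm_{i,\pm r}.v_0=\varphi^\pm_{i,\pm r}v_0$ and $\mathcal{X}^+_{i,r}.v_0=0$. Writing $V(\boldsymbol{P})(a)=\tau_a^*(V(\boldsymbol{P}))$, the underlying vector space is unchanged and $x\in U_q(\tilde{\mathfrak{g}})$ acts in $V(\boldsymbol{P})(a)$ by first applying $\tau_a(x)$ and then acting on the original module. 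Since $\tau_a(\mathcal{X}^+_{i,r})=a^r\mathcal{X}^+_{i,r}$, the vector $v_0$ is still annihilated by every $\mathcal{X}^+_{i,r}$ in the new action, and since $\tau_a(\mathcal{K}^{\pm 1}_i)=\mathcal{K}^{\pm 1}_i$ it remains an eigenvector of $\mathcal{K}_i$ with the same (integer-power) eigenvalue, so $V(\boldsymbol{P})(a)$ is of type $\boldsymbol{1}$ and $v_0$ is a pseudo-highest weight vector there as well.

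The key computation is to see how $\tau_a$ transports the generating functions $\boldsymbol{\varPhi}^\pm_i(u)$. From
\begin{equation*}
\boldsymbol{\varPhi}^\pm_i(u)=\mathcal{K}_i^{\pm 1}\exp\Bigl(\pm(q_i-q_i^{-1})\sum_{s=1}^{\infty}\mathcal{H}_{i,\pm s}u^{\pm s}\Bigr)
\end{equation*}
and $\tau_a(\mathcal{H}_{i,\pm s})=a^{\pm s}\mathcal{H}_{i,\pm s}$, one immediately reads off the identity of formal power series $\tau_a(\boldsymbol{\varPhi}^\pm_i(u))=\boldsymbol{\varPhi}^\pm_i(au)$, i.e. $\tau_a(\varPhi^\pm_{i,\pm r})=a^{\pm r}\varPhi^\pm_{i,\pm r}$. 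Consequently, in $V(\boldsymbol{P})(a)$ the eigenvalues of $\varPhi^\pm_{i,\pm r}$ on $v_0$ become $a^{\pm r}\varphi^\pm_{i,\pm r}$, so that the new highest weight generating series equals $\boldsymbol{\Psi}^\pm_i(u):=\sum_r a^{\pm r}\varphi^\pm_{i,\pm r}u^{\pm r}=\sum_r \varphi^\pm_{i,\pm r}(au)^{\pm r}$, which is exactly the original highest weight series evaluated at $au$.

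Applying the characterization in Theorem \ref{thm:drinfeld_polynomials} to $V(\boldsymbol{P})$ at the point $au$ gives
\begin{equation*}
\sum_{r=0}^{\infty}(a^r\varphi^+_{i,r})\,u^r=q_i^{\deg P_i}\frac{P_i(q_i^{-2}au)}{P_i(au)}=q_i^{\deg P_i^a}\frac{P_i^a(q_i^{-2}u)}{P_i^a(u)},
\end{equation*}
since $\deg P_i^a=\deg P_i$ and $a\in\mathbb{C}^\times$; the analogous identity holds for the $\varphi^-$ series. Thus the highest weight of the $U_q(\tilde{\mathfrak{g}})$-submodule $U_q(\tilde{\mathfrak{g}}).v_0\subseteq V(\boldsymbol{P})(a)$ has Drinfeld polynomials $\boldsymbol{P}^a=(P_i^a)_{i\in I}$.

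It remains to check that $V(\boldsymbol{P})(a)$ is irreducible; but pull-back by an algebra automorphism preserves the lattice of submodules, so $V(\boldsymbol{P})(a)$ inherits irreducibility from $V(\boldsymbol{P})$. Being finite-dimensional, irreducible, of type $\boldsymbol{1}$, and of highest weight corresponding to $\boldsymbol{P}^a$, the uniqueness in Corollary \ref{cor:notation_V(P)} yields $V(\boldsymbol{P})(a)\cong V(\boldsymbol{P}^a)$. There is no genuine obstacle here; the only point requiring a touch of care is the verification that $\tau_a$ commutes with the exponential in the definition of $\boldsymbol{\varPhi}^\pm_i(u)$, which is immediate from the homogeneity of $\tau_a$ with respect to the loop grading of Remark \ref{rems:quant_aff} (Part 8).
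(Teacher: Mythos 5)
Your argument is correct and is essentially the fully written-out version of what the paper does, which simply asserts that the corollary "is easily checked by applying the definitions": one computes $\tau_a(\boldsymbol{\varPhi}^\pm_i(u))=\boldsymbol{\varPhi}^\pm_i(au)$ on the pseudo-highest weight vector and invokes the uniqueness from Theorem \ref{thm:drinfeld_polynomials} and Corollary \ref{cor:notation_V(P)}, together with preservation of irreducibility under pull-back by an automorphism. The only nitpick is the pointer at the end: the loop grading is Part 7 (\ref{enum:grading_quant_aff}) of Remarks \ref{rems:quant_aff}, not Part 8, but this does not affect the proof.
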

The multiplicative property of the Drinfeld polynomials in the second paragraph of Theorem \ref{thm:drinfeld_polynomials} also suggests to formulate the slightly more general
\begin{cor}\label{cor:tensor_prod_drin_poly}
	For $\boldsymbol{P}$, $\boldsymbol{Q}\in\mathcal{P}$ denote by $\boldsymbol{P}\otimes\boldsymbol{Q}$ the $I$-tuple $(P_iQ_i)_{i\in I}\in \mathcal{P}^+$. Then $V(\boldsymbol{P}\otimes\boldsymbol{Q})$ is isomorphic to a quotient of the subrepresentation of $V(\boldsymbol{P})\otimes V(\boldsymbol{Q})$ generated by the tensor product of highest weight vectors. $\odot$
\end{cor}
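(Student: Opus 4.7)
The plan is to exploit the partial description of the comultiplication given in Proposition \ref{prop:comult_partial} together with the uniqueness of the irreducible quotient of a highest weight module, essentially reusing the argument that already proved the multiplicative property of Drinfeld polynomials in the tensor product portion of Theorem \ref{thm:drinfeld_polynomials}.

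First, let $v_{\boldsymbol{P}}$ and $v_{\boldsymbol{Q}}$ be the (pseudo-)highest weight vectors of $V(\boldsymbol{P})$ and $V(\boldsymbol{Q})$, respectively, and set $w\coloneq v_{\boldsymbol{P}}\otimes v_{\boldsymbol{Q}}$. I will verify that $w$ satisfies the two requirements of Definition \ref{def:pseudo-highest_weight}, so that the subrepresentation $W\coloneq U_q(\tilde{\mathfrak{g}}).w$ is a highest weight module. The key observation is that any element of the form $u_1x\otimes u_2$ or $u_1\otimes u_2 x$, with $x$ a positive root generator $\mathcal{X}^+_{j,s}$ and $u_1,u_2\in U_q(\tilde{\mathfrak{g}})$, annihilates $w$, simply because $x$ commutes past nothing before reaching a highest weight vector. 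Consequently the ideals $U_qX^2_+\otimes U_qX_-$, $U_qX_+\otimes U_qX_-$ and $U_qX_-\otimes U_qX_+$, modulo which the formulas in Proposition \ref{prop:comult_partial} are stated, all act as zero on $w$.

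Applying part \ref{enum:comult_partial_i} of Proposition \ref{prop:comult_partial} to $w$, every term on the right-hand side of the formula for $\Delta_q(\mathcal{X}^+_{i,k})$ contains a factor of the form $\mathcal{X}^+_{i,k-j}$ acting on $v_{\boldsymbol{P}}$ or $\mathcal{X}^+_{i,k}$ acting on $v_{\boldsymbol{Q}}$, and therefore vanishes; hence $\mathcal{X}^+_{i,k}.w=0$ for all $i\in I$ and $k\in\mathbb{Z}$. Next, by part \ref{enum:comult_partial_iii}, the element $\boldsymbol{\varPhi}_i^\pm$ is group-like modulo terms that kill $w$, so $\varPhi_{i,\pm k}^\pm.w$ is scalar and the corresponding generating function satisfies
\begin{equation*}
\sum_{k\geq 0}(\varPhi^\pm_{i,\pm k}.w\,|\,w)\,u^{\pm k} \;=\; \Bigl(\sum_{k\geq 0}\varphi^\pm_{i,\pm k,\boldsymbol{P}}u^{\pm k}\Bigr)\Bigl(\sum_{k\geq 0}\varphi^\pm_{i,\pm k,\boldsymbol{Q}}u^{\pm k}\Bigr),
\end{equation*}
where the symbols on the right are the eigenvalues on $v_{\boldsymbol{P}}$ and $v_{\boldsymbol{Q}}$. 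Using the identification of these generating functions with rational expressions in $P_{i,V(\boldsymbol{P})}$ and $P_{i,V(\boldsymbol{Q})}$ provided by Theorem \ref{thm:drinfeld_polynomials}, the product on the right is precisely the generating function associated with $P_iQ_i$. Thus $W$ is a highest weight module whose highest weight corresponds to the $I$-tuple $\boldsymbol{P}\otimes\boldsymbol{Q}\in\mathcal{P}^+$.

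Finally, I invoke the general fact, used in establishing Theorem \ref{thm:drinfeld_polynomials} via Verma modules, that any highest weight module admits a unique irreducible quotient isomorphic to $V(\boldsymbol{\varphi})$, where $\boldsymbol{\varphi}$ is its highest weight. Applying this to $W$ yields the surjection $W\rightarrowdbl V(\boldsymbol{P}\otimes\boldsymbol{Q})$, which is exactly the claimed statement. The only subtle point is really checking that the congruence ``modulo $U_qX^2_+\otimes U_qX_-$'' in Proposition \ref{prop:comult_partial} suffices (not merely $U_qX_+\otimes U_qX_-$) to kill the correction terms on $w$; but this follows immediately from the observation above, since any rightmost positive generator annihilates $v_{\boldsymbol{P}}$. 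No further computation is required, so there is no real obstacle beyond bookkeeping.
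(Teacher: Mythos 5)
Your proof is correct and takes essentially the same route as the paper, which disposes of this corollary in one line by citing Proposition \ref{prop:comult_partial}\,\ref{enum:comult_partial_i} (so that $v_{\boldsymbol{P}}\otimes v_{\boldsymbol{Q}}$ is annihilated by all $\mathcal{X}^+_{i,r}$), the group-like property \ref{enum:comult_partial_iii} of the $\varPhi^\pm_{i,\pm k}$ (so the eigenvalue generating functions multiply), and the unique irreducible quotient of a highest weight module — exactly the steps you spell out. Your extra bookkeeping, checking that the congruence ideals $U_qX^2_+\otimes U_qX_-$, $U_qX_+\otimes U_qX_-$ and $U_qX_-\otimes U_qX_+$ annihilate $v_{\boldsymbol{P}}\otimes v_{\boldsymbol{Q}}$ because the rightmost positive generators hit a highest weight vector, is a correct expansion of what the paper leaves implicit.
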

In fact, Corollary \ref{cor:notation_V(P)} is just a reformulation, Corollary \ref{cor:pull-back_by_tau_a} is easily checked by applying the definitions, and Corollary \ref{cor:tensor_prod_drin_poly} is just a restatement of the group-like property \ref{prop:comult_partial} \ref{enum:comult_partial_iii} of $\varPhi^\pm_{i,\pm k}$ and the fact that the tensor product of highest weight vectors is a highest weight vector.
\begin{definition}[fundamental modules]\label{def:fund_modules_quant_aff}
	A finite-dimensional irreducible $U_q(\mathfrak{L}(\mathfrak{g}))$-module of type $\boldsymbol{1}$ is called \textbf{fundamental} if its associated Drinfeld polynomials are given by
	\begin{equation*}
		P_j(u) =
		\begin{cases}
			1 & \text{if }j\neq i\\
			1-au & \text{if }j=i
		\end{cases}
	\end{equation*}
	for some $i=1,\dots,l$, $a\in\mathbb{C}$ and we denote the corresponding representation by $V_{\omega_i}(a)$. $\odot$
\end{definition}
Indeed, we have seen that $V_{\omega_i}(a)$, as a $U_q(\mathfrak{g})$-module, has $\omega_i$ as its unique maximal weight. Thus, it contains the irreducible $U_q(\mathfrak{g})$-module $V_q(\omega_i)$ with multiplicity one. Moreover, in view of Corollary \ref{cor:pull-back_by_tau_a}, our notation is consistent with the definition $V_{\omega_i}\coloneq V_{\omega_i}(1)$. Then $\tau_a^*(V_{\omega_i})=V_{\omega_i}(a)=V_{\omega_i}(1a)$.  Combining Definition \ref{def:fund_modules_quant_aff} with Theorem \ref{thm:drinfeld_polynomials}, we conclude
\begin{cor}\label{cor:tensor_product_of_fund}
	Every finite-dimensional irreducible $U_q(\mathfrak{L}(\mathfrak{g}))$-module is isomorphic to a subquotient of the tensor product $V_{\omega_{i_1}}(a_1)\otimes \cdots\otimes V_{\omega_{i_k}}(a_k)$, for some $i_1,\dots,i_k\in I = \{1,\dots,l\}$ and $k\in \mathbb{N}$, generated by the tensor product of the highest weight vectors. The parameters $(\omega_{i_1},a_1),\dots,(\omega_{i_k},a_k)$ are, up to permutation, uniquely determined by this representation. $\odot$
\end{cor}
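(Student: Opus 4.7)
The plan is to read off the statement directly from the bijection in Theorem \ref{thm:drinfeld_polynomials} combined with the multiplicative property of Drinfeld tuples in Corollary \ref{cor:tensor_prod_drin_poly}. Let $V$ be a finite-dimensional irreducible type $\boldsymbol{1}$ module, and let $\boldsymbol{P}_V=(P_{i,V})_{i\in I}\in\mathcal{P}^+$ be its tuple of Drinfeld polynomials. Since $\mathbb{C}$ is algebraically closed and every $P_{i,V}$ has constant term $1$, we can factor each $P_{i,V}$ uniquely (up to order) as $P_{i,V}(u)=\prod_{j=1}^{r_i}(1-a_{i,j}u)$ with $a_{i,j}\in\mathbb{C}^\times$. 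Enumerating all linear factors as $(i_1,a_1),\dots,(i_k,a_k)$ with $k=\sum_i r_i$, this gives a factorisation $\boldsymbol{P}_V=\boldsymbol{P}^{(1)}\otimes\cdots\otimes\boldsymbol{P}^{(k)}$, where $\boldsymbol{P}^{(m)}$ has $1-a_m u$ in position $i_m$ and $1$ elsewhere; by Definition \ref{def:fund_modules_quant_aff} and Corollary \ref{cor:pull-back_by_tau_a} we have $V(\boldsymbol{P}^{(m)})=V_{\omega_{i_m}}(a_m)$.

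For the existence part, I would verify that the tensor product $v_0 = v_1\otimes\cdots\otimes v_k$ of the pseudo-highest weight vectors in $V_{\omega_{i_1}}(a_1)\otimes\cdots\otimes V_{\omega_{i_k}}(a_k)$ is itself a pseudo-highest weight vector, with common eigenvalues for $\boldsymbol{\varPhi}_i^\pm(u)$ matching $\boldsymbol{P}_V$ in the sense of Theorem \ref{thm:drinfeld_polynomials}. This comes from Proposition \ref{prop:comult_partial}: the formulae in (i) together with the fact that $\mathcal{X}_{j,r}^+$ kills each $v_m$ give $\mathcal{X}_{j,r}^+.v_0=0$, while the group-like identity $\Delta(\boldsymbol{\varPhi}_i^\pm)\equiv\boldsymbol{\varPhi}_i^\pm\otimes\boldsymbol{\varPhi}_i^\pm$ modulo $U_qX_+\otimes U_qX_- + U_qX_-\otimes U_qX_+$ in (iii) yields, by an easy induction, $\boldsymbol{\varPhi}_i^\pm(u).v_0=\prod_m \Psi_{i,m}^\pm(u)\, v_0$, with the product over the factors matching exactly the eigenvalue series associated to $\boldsymbol{P}_V$ via Theorem \ref{thm:drinfeld_polynomials}. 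The $U_q(\mathfrak{L}(\mathfrak{g}))$-submodule $M\subseteq V_{\omega_{i_1}}(a_1)\otimes\cdots\otimes V_{\omega_{i_k}}(a_k)$ generated by $v_0$ is therefore a highest weight module with highest weight $\boldsymbol{P}_V$, and by the construction of $V(\boldsymbol{P}_V)$ as the unique irreducible quotient of any such highest weight module, $M$ surjects onto $V(\boldsymbol{P}_V)\cong V$. Hence $V$ is isomorphic to the subquotient of the tensor product generated by $v_0$.

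For uniqueness, suppose $V$ is also isomorphic to such a subquotient of $V_{\omega_{j_1}}(b_1)\otimes\cdots\otimes V_{\omega_{j_{k'}}}(b_{k'})$ generated by the tensor product of highest weight vectors. Running the same comultiplication argument as above, this tensor product of highest weight vectors is a pseudo-highest weight vector with Drinfeld tuple $\boldsymbol{P}_{V_{\omega_{j_1}}(b_1)}\otimes\cdots\otimes\boldsymbol{P}_{V_{\omega_{j_{k'}}}(b_{k'})}$, which by the bijection in Theorem \ref{thm:drinfeld_polynomials} must equal $\boldsymbol{P}_V$. Comparing the two resulting factorisations of each $P_{i,V}\in\mathbb{C}[u]$ and using unique factorisation gives $k'=k$ and the equality of multisets $\{(\omega_{j_m},b_m)\}=\{(\omega_{i_m},a_m)\}$.

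The main obstacle is purely bookkeeping in the first step: making sure that the partial comultiplication formulae of Proposition \ref{prop:comult_partial} really do force $v_0$ to be pseudo-highest weight with the correct Drinfeld tuple on a $k$-fold tensor product, rather than only on a $2$-fold one. Once this iterated computation is in place, the rest is a direct application of the bijection between $\mathcal{P}^+$ and irreducible type $\boldsymbol{1}$ modules together with the unique factorisation of polynomials over $\mathbb{C}$.
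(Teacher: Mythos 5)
Your proposal is correct and follows essentially the same route the paper takes: it factors the Drinfeld polynomials into linear factors (Definition \ref{def:fund_modules_quant_aff}), uses the partial comultiplication formulae of Proposition \ref{prop:comult_partial} and the group-like property of $\boldsymbol{\varPhi}_i^\pm$ to see that the tensor product of highest weight vectors is a pseudo-highest weight vector whose Drinfeld tuple is the product (as in Theorem \ref{thm:drinfeld_polynomials} and Corollary \ref{cor:tensor_prod_drin_poly}), and then obtains $V$ as the irreducible quotient of the submodule it generates, with uniqueness of the parameters coming from unique factorisation over $\mathbb{C}$. No gaps worth flagging.
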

\begin{rem}
	The last paragraph in Theorem \ref{thm:drinfeld_polynomials} together with Corollary \ref{cor:tensor_product_of_fund} is a first hint that the Grothendieck ring of the abelian monoidal category $\mathit{C}$ of finite-dimensional type $\boldsymbol{1}$ representations of $U_q(\tilde{\mathfrak{g}})$ is the commutative ring in the isomorphism classes of fundamental modules $V_{\omega_i}(a)$ of $\mathit{C}$. In fact, this is proven in the paper of Frenkel and Reshetikhin \cite{FR} with Corollary 2. We will come back to this in the next chapter.\footnote{ The $\mathfrak{sl}_2$ case is discussed in the next subsection, whereas a general explanation is given in Section \ref{sect:gen_untw_case}.} $\odot$
\end{rem}
However, the notion of $\boldsymbol{q}$\textbf{-characters} can be introduced with the following generalisation of Theorem \ref{thm:drinfeld_polynomials}.
\begin{thm}\label{thm:drinfeld-l_weights}
	The eigenvalues $\Psi_i^\pm(u)$ of $\boldsymbol{\varPhi}_i^\pm(u)$ on any finite-dimensional type $\boldsymbol{1}$ representation of $U_q(\tilde{\mathfrak{g}})$ have the form
	\begin{equation*}
		\Psi_i^\pm(u) = q_i^{\deg(Q_{i})-deg(R_{i})}\frac{Q_{i,V}(q_i^{-2}u)R_{i,V}(u)}{Q_{i,V}(u)R_{i,V}(q_i^{-2}u)},
	\end{equation*}
	as elements of $\mathbb{C}[[u^{\pm 1}]]$, respectively, where $Q_i(u)$ and $R_i(u)$ are polynomials with constant term 1, i.e. $Q_i(u),R_i(u)\in\mathcal{P}^+$. $\odot$
\end{thm}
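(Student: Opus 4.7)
The plan is to reduce the general rank statement to the rank one case by exploiting the subalgebras of $U_q(\tilde{\mathfrak{g}})$ generated by Drinfeld generators carrying a fixed index $i$. For each $i\in I$, the elements $\mathcal{X}_{i,r}^\pm$, $\varPhi_{i,r}^\pm$, $\mathcal{K}_i^{\pm 1}$, $\mathcal{H}_{i,s}$ (all with the same index $i$) satisfy the defining relations of $U_{q_i}(\tilde{\mathfrak{sl}}_2)$, giving an algebra homomorphism $U_{q_i}(\tilde{\mathfrak{sl}}_2)\to U_q(\tilde{\mathfrak{g}})$. Restricting any finite-dimensional type $\boldsymbol{1}$ module $V$ along this homomorphism yields a finite-dimensional $U_{q_i}(\tilde{\mathfrak{sl}}_2)$-module, and the generalised eigenvalues of $\boldsymbol{\varPhi}_i^\pm(u)$ on $V$ are exactly those appearing on the restricted module. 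So it suffices to treat $\mathfrak{g}=\mathfrak{sl}_2$ and then index the resulting polynomials $Q,R$ by $i$ to obtain $Q_{i,V},R_{i,V}$.

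For the $\mathfrak{sl}_2$ case, I would proceed in three steps. First, by Corollary \ref{cor:tensor_product_of_fund}, every finite-dimensional irreducible $U_q(\mathfrak{L}(\mathfrak{sl}_2))$-module is a subquotient of a tensor product of fundamental modules; by complete reducibility of the underlying $U_q(\mathfrak{sl}_2)$-structure, it is enough to handle tensor products of fundamental (or, more generally, evaluation) modules. Second, for an evaluation module $V^{(r)}(a)$ obtained from Jimbo's homomorphism (cf. Subsection \ref{subsect:Jimbos_hom_und_ev_reps}), I would compute the action of $\boldsymbol{\varPhi}^\pm(u)$ directly on the standard basis $v_0^{(r)},\dots,v_r^{(r)}$. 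The associated Drinfeld polynomial is
\begin{equation*}
P(u)=\prod_{k=0}^{r-1}(1-aq^{r-1-2k}u),
\end{equation*}
and a direct calculation shows that on the weight vector $v_k^{(r)}$ the eigenvalue has the form $q^{r-2k}\,Q_k(q^{-2}u)R_k(u)/\bigl(Q_k(u)R_k(q^{-2}u)\bigr)$, where $Q_k$ and $R_k$ are the "top" and "bottom" sub-products of the factors of $P(u)$. This is the sought rational form.

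Third, I would propagate the statement to tensor products. The key tool is the partial coproduct formula of Remark \ref{rem:comult_partial_rem}: the series $\boldsymbol{\varPhi}_i^\pm(u)$ is group-like modulo $(U_qX_+\otimes U_qX_-+U_qX_-\otimes U_qX_+)[[u^{\pm 1}]]$. Decomposing $V\otimes W$ into its joint generalised eigenspaces for the $\varPhi_{i,r}^\pm$ and noting that the "off-diagonal" terms $U_qX_+\otimes U_qX_-$ and $U_qX_-\otimes U_qX_+$ act nilpotently on such generalised eigenspace decompositions (because they strictly change the $\mathfrak{g}$-weight), one sees that the common eigenvalues on $V\otimes W$ are products of common eigenvalues on the factors. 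Since a product of expressions of the form $q^{\deg Q-\deg R}Q(q^{-2}u)R(u)/\bigl(Q(u)R(q^{-2}u)\bigr)$ is again of the same form (simply multiply numerators and denominators), the property is preserved under tensor product; it is also trivially preserved under subquotients. Combined with Theorem \ref{thm:drinfeld_polynomials} and the rank-one reduction, this yields the claim.

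The main obstacle will be step three: making the "diagonal" argument for tensor products precise. One must verify that the contribution of the off-diagonal terms in $\Delta_q(\boldsymbol{\varPhi}_i^\pm(u))$ does not alter the generalised eigenvalues on loop-weight spaces, which requires an inductive grading argument on the $\mathfrak{g}$-weight, combined with the fact that $V\otimes W$ is finite-dimensional so that all spectral data are genuinely rational. The explicit $\mathfrak{sl}_2$ evaluation computation in step two is elementary but bookkeeping-heavy; the rank-one reduction itself is routine once the subalgebra embedding is verified against the defining relations in Theorem \ref{thm:second_drin_quantum_aff}.
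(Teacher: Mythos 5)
Your proposal is correct and follows essentially the same route as the paper: reduction to rank one via the subalgebra $U_q(\tilde{\mathfrak{g}})_{\{i\}}\cong U_{q_i}(\tilde{\mathfrak{sl}}_2)$, the explicit computation of the generalised eigenvalues on $\mathfrak{sl}_2$ evaluation modules (Corollary \ref{cor:loop_weights_ev_reps_sl2}), and multiplicativity of the eigenvalues on tensor products via the group-like property of $\boldsymbol{\varPhi}_i^\pm(u)$ modulo the off-diagonal terms (Proposition \ref{prop:comult_partial}, Remark \ref{rem:comult_partial_rem}). The only cosmetic difference is that the paper invokes Proposition \ref{prop:iso_to_tens_of_ev} (every irreducible type $\boldsymbol{1}$ module of $U_q(\tilde{\mathfrak{sl}}_2)$ \emph{is} a tensor product of evaluation modules) where you pass through subquotients of tensor products of fundamental modules, which works just as well since eigenvalues on subquotients are among those of the ambient module.
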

We can therefore write equivalently $V_{\boldsymbol{\psi}}=V_{\boldsymbol{Q}\boldsymbol{R}^{-1}}$, for the generalised eigenspace $V_{\boldsymbol{\psi}}$ with generalised eigenvalues $\boldsymbol{\psi}=(\psi_{i,r})_{i\in I, r\in \mathbb{Z}}$, where $\boldsymbol{Q}\boldsymbol{R}^{-1}=(Q_iR_i^{-1})_{i\in I}$. Moreover, we may call $\boldsymbol{Q}\boldsymbol{R}^{-1}$ the \textbf{loop-weight} of the loop-weight space $V_{\boldsymbol{Q}\boldsymbol{R}^{-1}}$, explaining the terminology earlier in Remark \ref{rem:loop_weight_spaces}.
\begin{definition}[$q$-character]\label{def:q-character_drinfeld}
	We define the $\boldsymbol{q}$\textbf{-character} $\chi_q(V)$ of a finite-dimensional (type $\boldsymbol{1}$) representation of $U_q(\tilde{\mathfrak{g}})$ as the sum over all (finitely many) generalised eigenvalues $\boldsymbol{Q}\boldsymbol{R}^{-1}$ (loop-weights) of $\dim(V_{\boldsymbol{Q}\boldsymbol{R}^{-1}})$ times the formal product $\prod_{i\in I}Q_i(u_i)R_i^{-1}(u_i)$, i.e.
	\begin{equation*}
		\chi_q(V) = \sum_{\boldsymbol{Q}\boldsymbol{R}^{-1}} \dim(V_{\boldsymbol{Q}\boldsymbol{R}^{-1}}) \prod_{i\in I}Q_i(u_i)R_i^{-1}(u_i).\quad\odot
	\end{equation*}
\end{definition}
Anyhow, the original definition of $q$-characters goes back to the paper \cite{FR} of Frenkel and Reshetikhin and the equivalence to our Definition \ref{def:q-character_drinfeld} is stated (and proven thereafter) in Proposition 2.4 in the paper \cite{FM} of Frenkel and Mukhin. Again, we will come back to this and the general proof of Theorem \ref{thm:drinfeld-l_weights} in the next chapter.
\subsection{Jimbo's homomorphism and evaluation representations}\label{subsect:Jimbos_hom_und_ev_reps}
Let us now discuss Jimbo's (evaluation) homomorphism and construct explicit representations of $U_q(\tilde{\mathfrak{sl}}_{l+1})$ with it. We also intend to explain how to obtain the Drinfeld polynomial of the spin $r/2$ evaluation representation of $U_q(\mathfrak{sl}_2)$. We refer the reader to Subsection 12.2C in \cite{CPBook} where the following explanation can be found.\footnote{ We remind the reader that the finite-dimensional type $\boldsymbol{1}$ representations of $U_q(\tilde{\mathfrak{g}})$ and $U_q(\mathfrak{L}(\mathfrak{g}))$ are the same. We emphasize this, because we use both notions interchangeably here and write what fits to the context.}

We recall that the generators $\mathcal{X}^\pm_{i,r}$ have classical limits $x_i^\pm t^r$. In this sense, one might hope that, for any $a\in\mathbb{C}$, there is a homomorphism of algebras $U_q(\mathfrak{L}(\mathfrak{g}))\to U_q(\mathfrak{g})$ such that $\mathcal{X}_{i,r}^\pm\mapsto a^r X_i^\pm$ for all $i$. Unfortunately, this does not work in general. Nevertheless, Jimbo defined a quantum analogue of the latter homomorphism when $\mathfrak{g}=\mathfrak{sl}_{l+1}$. However, when $l>1$, it takes values in an 'enlargement' of $U_q(\mathfrak{sl}_{l+1})$. Fix a square root $q^{1/2}$ of $q$ and, for any elements $u$, $v$ of an algebra over $\mathbb{C}$, set
\begin{equation*}
	[u,v]_{q^{1/2}} = q^{1/2}uv-q^{-1/2}vu.
\end{equation*}
\begin{defprop}\label{defprop:U_q(gl_l+1)}
	$U_q(\mathfrak{gl}_{l+1})$ is the associative algebra over $\mathbb{C}$ with generators $x_i^\pm$, $i=1,\dots,l$, $t_r^{\pm1}$, $r=0,\dots,l$ and the defining relations
	\begin{equation*}
		\begin{split}
			t_rt_r^{-1}&=1=t_r^{-1}t_r,\\
			t_rt_s&= t_st_r,\\
			t_rx_i^\pm t_r^{-1} &= q^{\delta_{r,i-1}-\delta_{r,i}}x_i^\pm,\\
			[x_i^\pm,[x_j^\pm,x_i^\pm]_{q^{1/2}}]_{q^{1/2}}&=0\quad\text{if }|i-j|=1,\\
			[x_i^\pm,x_j^\pm] &= 0,\quad\text{if }|i-j|>1,\\
			[x_i^+,x_j^-] &=\delta_{i,j} \frac{k_i-k_i^{-1}}{q-q^{-1}},
		\end{split}
	\end{equation*}
	where $k_i=t_{i-1}t_i^{-1}$.\\
	The Hopf algebra structure on $U_q(\mathfrak{gl}_{l+1})$ is given by the same formulas as in \ref{defprop:U_q(g)_chevalley}, together with
	\begin{equation*}
		\Delta_q(t_r)=t_r\otimes t_r,\quad S_q(t_r) = t_r^{-1},\quad \epsilon(t_r) = 1.\quad\odot
	\end{equation*}
\end{defprop}
We also note that there is a homomorphism of Hopf algebras $U_q(\mathfrak{sl}_{l+1})\to U_q(\mathfrak{gl}_{l+1})$ mapping $X_i^\pm$ to $x_i^\pm$ and $K_i$ to $k_i$.
\begin{prop}[Jimbo's homomorphism]\label{prop:Jimbos_hom}
	For any $a\in\mathbb{C}$, there exist unique homomorphisms of algebras $\operatorname{ev}_a$ and $\operatorname{ev}^a$ from $U_q(\mathfrak{L}(\mathfrak{sl}_{l+1}))$ to $U_q(\mathfrak{gl}_{l+1})$ such that
	\begin{equation*}
		\begin{split}
			\operatorname{ev}_a(X_i^\pm)&=x_i^\pm=\operatorname{ev}^a(X_i^\pm),\quad i=1,\dots,l,\\
			\operatorname{ev}_a(K_0)&=(k_1\cdots k_l)^{-1}=\operatorname{ev}^a(K_0),\\
			\operatorname{ev}_a(X_0^\pm)&= (\pm1)^{l-1}q^{\mp\frac{l+1}{2}}a^{\pm1}[x_l^\mp,[x_{l-1}^\mp,\dots,[x_2^\mp,x_1^\mp]_{q^{1/2}}\cdots]_{q^{1/2}}]_{q^{1/2}}(t_0t_l)^{\pm1},\\
			\operatorname{ev}^a(X_0^\pm)&= (\pm1)^{l-1}q^{\mp\frac{l+1}{2}}a^{\pm1}[x_l^\mp,[x_{l-1}^\mp,\dots,[x_2^\mp,x_1^\mp]_{q^{1/2}}\cdots]_{q^{1/2}}]_{q^{1/2}}(t_0t_l)^{\mp1}.\quad\odot
		\end{split}
	\end{equation*}
\end{prop}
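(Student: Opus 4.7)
The plan is to establish uniqueness first, and then existence by verifying the defining relations of $U_q(\tilde{\mathfrak{sl}}_{l+1})$ (modulo $\mathcal{C}^{1/2}-1$, i.e.\ of $U_q(\mathfrak{L}(\mathfrak{sl}_{l+1}))$) in the Drinfeld--Jimbo presentation of Definition-proposition \ref{defprop:U_q(g)_chevalley}. Uniqueness is immediate: $U_q(\mathfrak{L}(\mathfrak{sl}_{l+1}))$ is generated as an algebra by the Chevalley generators $X_i^{\pm}, K_i^{\pm1}$ for $i=0,1,\dots,l$, so specifying $\operatorname{ev}_a$ and $\operatorname{ev}^a$ on these generators determines them on all of $U_q(\mathfrak{L}(\mathfrak{sl}_{l+1}))$. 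The remaining task is existence.

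For $i, j \in \{1,\dots,l\}$ the relations among $X_i^\pm, K_i$ simply transport via the standard homomorphism $U_q(\mathfrak{sl}_{l+1}) \to U_q(\mathfrak{gl}_{l+1})$, $X_i^\pm \mapsto x_i^\pm$, $K_i \mapsto k_i = t_{i-1}t_i^{-1}$, which is compatible with the defining relations in \ref{defprop:U_q(gl_l+1)}. Hence the only non-trivial checks are the relations involving the affine node $0$. Abbreviate
\begin{equation*}
Y^{\mp} \coloneq [x_l^\mp,[x_{l-1}^\mp,\dots,[x_2^\mp,x_1^\mp]_{q^{1/2}}\cdots]_{q^{1/2}}]_{q^{1/2}},
\end{equation*}
so $\operatorname{ev}_a(X_0^\pm) = (\pm 1)^{l-1} q^{\mp(l+1)/2} a^{\pm 1} Y^{\mp} (t_0 t_l)^{\pm 1}$ (and similarly for $\operatorname{ev}^a$, with the sign of the exponent on $t_0 t_l$ reversed). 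The key items are: (i) $K_0 \mapsto (k_1\cdots k_l)^{-1} = t_0^{-1} t_l$ acts on $x_i^\pm$ by the correct power $q^{\pm a_{i0}}$; (ii) $K_i \operatorname{ev}_a(X_0^\pm) K_i^{-1} = q^{\pm a_{i0}} \operatorname{ev}_a(X_0^\pm)$; (iii) the Serre relations between $X_0^\pm$ and $X_i^\pm$ for $i = 1,l$ (the only $i$ with $a_{i0} \neq 0$); and (iv) the central commutator relation $[\operatorname{ev}_a(X_0^+),\operatorname{ev}_a(X_0^-)] = (K_0 - K_0^{-1})/(q-q^{-1})$, where the right-hand side equals $((t_0^{-1}t_l) - (t_0 t_l^{-1}))/(q-q^{-1})$.

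For (i)--(ii), one computes directly from the weight relations in \ref{defprop:U_q(gl_l+1)}: $Y^{-}$ is a quantum root vector of weight $-(\alpha_1 + \cdots + \alpha_l)$, so $k_i Y^{-} k_i^{-1} = q^{-a_{i0}^{A_l^{(1)}}} Y^{-}$ up to the weight contributed by the $t_0 t_l$ factor, which is arranged precisely to make the total weight agree with $q^{-a_{i0}}$ (the extra $t_0 t_l$ compensates the two boundary simple roots $\alpha_1$ and $\alpha_l$). For (iii), the Serre relations follow from a general identity for iterated $q^{1/2}$-brackets in $U_q(\mathfrak{sl}_{l+1})$: one has $[x_1^\mp,[x_1^\mp, Y^\mp]_{q^{1/2}}]_{q^{1/2}} = 0$ and similarly at the other end, because each $x_1^\mp$ (resp.\ $x_l^\mp$) already appears once in $Y^\mp$ and the Jacobi-like identity for $[\,,\,]_{q^{1/2}}$, combined with the ordinary Serre relations between consecutive $x_i^\mp$, forces this triple commutator to vanish. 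The hard part is (iv): expanding $[Y^-(t_0 t_l), \, Y^+(t_0 t_l)^{-1}]$ requires iteratively commuting $x_i^+$ past the nested $q^{1/2}$-commutators of $x_j^-$. Using $[x_i^+, x_j^-] = \delta_{i,j}(k_i - k_i^{-1})/(q-q^{-1})$ and induction on $l$, everything telescopes down to $(t_0^{-1}t_l - t_0 t_l^{-1})/(q-q^{-1})$, and the normalization constant $(\pm 1)^{l-1} q^{\mp(l+1)/2}$ is exactly what is needed to eliminate the accumulated powers of $q^{1/2}$ and signs coming from each $q^{1/2}$-commutator.

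The main obstacle is the computation in (iv). It is the quantum analogue of the classical statement that the highest root vector $e_\theta \in \mathfrak{sl}_{l+1}$ and the lowest root vector $-f_\theta$ form an $\mathfrak{sl}_2$-triple with $h_\theta$, and one recovers the classical formula at $q=1$; but keeping track of the powers of $q^{1/2}$ and the signs $(\pm 1)^{l-1}$ is where the genuine work lies. It is most transparently done by induction on $l$: one writes $Y^{\mp}_{(l)} = [x_l^{\mp}, Y^{\mp}_{(l-1)}]_{q^{1/2}}$, assumes the analogous bracket identity for $l-1$, and uses the Drinfeld--Jimbo relations together with $[x_l^+, Y^-_{(l-1)}] = 0 = [x_l^-, Y^+_{(l-1)}]$ (which holds because $x_l^\pm$ commutes with $x_i^\pm$ for $i<l-1$ and its commutator with $x_{l-1}^\mp$ yields only $k_{l-1}^{\pm 1}$). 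The analogous verification for $\operatorname{ev}^a$ is identical, the only change being the sign in the exponent of $t_0 t_l$, which is compatible because $t_0 t_l$ is central with respect to the $x_i^\pm$ that appear in $Y^\mp$.
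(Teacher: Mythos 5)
Your overall strategy—uniqueness because the Chevalley generators $X_i^{\pm},K_i^{\pm1}$ ($i=0,\dots,l$) generate $U_q(\mathfrak{L}(\mathfrak{sl}_{l+1}))$, and existence by direct verification of the Drinfeld--Jimbo relations at the affine node, with the commutator $[X_0^+,X_0^-]$ as the main computation handled by induction on $l$—is exactly the approach of the paper, which itself offers no more detail than ``the proof is via direct verification.''

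One concrete caveat about your item (iii): after applying $\operatorname{ev}_a$, every affine Serre relation involving node $0$ mixes raising and lowering operators, since e.g.\ the Serre relation between $X_1^+$ and $X_0^+$ becomes a statement about $x_1^+$ together with the nested bracket $Y^-$ built from the $x_j^-$. It therefore cannot be deduced from the same-sign identity $[x_1^\mp,[x_1^\mp,Y^\mp]_{q^{1/2}}]_{q^{1/2}}=0$ that you cite; one must commute $x_1^+$ through $Y^-$ using $[x_1^+,x_1^-]=(k_1-k_1^{-1})/(q-q^{-1})$ (the verification then goes through quickly because $x_1^-$ occurs exactly once in $Y^-$, so $[x_1^+,Y^-]$ contains no $x_1^-$ and the second application of the quantum Serre operator annihilates it, up to bookkeeping of $k_1$-factors and powers of $q$). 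The same-sign bracket identity you invoke is instead what verifies the cross relations $[X_0^\pm,X_i^\mp]=0$ for $i=1,\dots,l$, which are also defining relations and are missing from your list of nontrivial checks; with these two points repaired, your verification scheme is complete and coincides with the paper's.
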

The proof is via direct verification. We shall make some remarks.
\begin{rems}\label{rems:ev_homs}\hspace{1em}
	\begin{enumerate}
		\item The homomorphism $\operatorname{ev}_a$ is due to Jimbo 1986 \cite{JimboQG2}. The homomorphism $\operatorname{ev}^a=\sigma\circ\operatorname{ev}_a\circ\tilde{\sigma}$, where $\sigma$ and $\tilde{\sigma}$ are the automorphisms of $U_q(\mathfrak{gl}_{l+1})$ and $U_q(\mathfrak{L}(\mathfrak{sl}_{l+1}))$, respectively, given on generators by
		\begin{equation*}
			\sigma(x_i^\pm)=x^\pm_{l-i+1},\quad\sigma(t_r^{\pm1})=t_{l-r}^{\mp1},
		\end{equation*} 
		and
		\begin{equation*}
			\tilde{\sigma}(\mathcal{X}_{i,r}^\pm)=\mathcal{X}^\pm_{l-i+1,r},\quad\tilde{\sigma}(\mathcal{H}_{i,r})=\mathcal{H}_{l-i+1,r},\quad\tilde{\sigma}(\mathcal{K}_i^{\pm1})=\mathcal{K}^{\pm 1}_{l-i+1}.
		\end{equation*}
		Alternatively, $\operatorname{ev}^a$ may be obtained from $\operatorname{ev}_a$ essentially by twisting with the antipodes $S$ and $\tilde{S}$ of $U_q(\mathfrak{gl}_{l+1})$ and $U_q(\mathfrak{L}(\mathfrak{sl}_{l+1}))$, respectively. Checking this on the generators $X_0^\pm$, one finds
		\begin{equation*}
			S\circ \operatorname{ev}_a\circ\tilde{S} = \operatorname{ev}^b,
		\end{equation*}
		where $b=(-1)^{n-1}aq^{-l+1}$.\label{enum:comp_betw_ev_homs}
		\item Neither $\operatorname{ev}_a$ nor $\operatorname{ev}^a$ is a homomorphism of Hopf algebras.
		\item If $l=1$, $\operatorname{ev}_a = \operatorname{ev}^a$ for all $a\in\mathbb{C}^\times$. $\odot$
	\end{enumerate}
\end{rems}
\begin{definition}
	Fix an $(l+1)$th root $q^{1/(l+1)}$ of $q$. We say that a representation $V$ of $U_q(\mathfrak{gl}_{l+1})$ is of type $\boldsymbol{1}$ if
	\begin{enumerate}[label=(\alph*)]
		\item $V$ is of type $\boldsymbol{1}$ regarded as a representation of $U_q(\mathfrak{sl}_{l+1})$,
		\item the $t_r$ act semisimply on $V$ with eigenvalues which are integer powers of $q^{1/(l+1)}$,
		\item $t_0t_1\cdots t_l$ acts as the identity on $V$.
	\end{enumerate}
\end{definition}

Moreover, the restriction to $U_q(\mathfrak{sl}_{l+1})$ is an equivalence of the categories of finite-dimensional type $\boldsymbol{1}$ representations of $U_q(\mathfrak{gl}_{l+1})$ and $U_q(\mathfrak{sl}_{l+1})$. Hence, if $V$ is a finite-dimensional type $\boldsymbol{1}$ representation of $U_q(\mathfrak{sl}_{l+1})$, we may regard it as a type $\boldsymbol{1}$ representation of $U_q(\mathfrak{gl}_{l+1})$. We denote by $V_a$ and $V^a$ the pull-back of $V$ by $\operatorname{ev}_a$ and $\operatorname{ev}^a$, respectively. Both $V_a$ and $V^a$ are called \textbf{evaluation representations}.

If $V$ is irreducible as a representation of $U_q(\mathfrak{sl}_{l+1})$, then clearly $V_a$ and $V^a$ are irreducible representations of $U_q(\mathfrak{L}(\mathfrak{sl}_{l+1}))$, for all $a\in \mathbb{C}^\times$. Using the formulas in \ref{defprop:U_q(gl_l+1)} and the decomposition \ref{prop:PBW_U_q}, we can compute the (Drinfeld) polynomials associated to $V_a$ and $V^a$. Let us do this for $l=1$, i.e. the $\mathfrak{sl}_2$ case in the following example.
\begin{xmpl}\label{xmpl:spin_r/2_evaluation_rep}
	Let $a\in\mathbb{C}^\times$, $r\in\mathbb{N}$, and let $V= V_q(r)$ be the spin $r/2$ (or $(r+1)$-dimensional) irreducible type $\boldsymbol{1}$ representation of $U_q(\mathfrak{sl}_2)$. In the usual basis $\{v_0,v_1,\dots,v_r\}$, the action of $U_q(\mathfrak{gl}_2)$ on $V_q(r)$ is given by
	\begin{equation*}
		\begin{split}
			t_0.v_k = q^{(r-2k)/2}v_k,\quad &t_1.v_k = q^{-(r-2k)/2}v_k,\\
			x_1^+.v_k =[r-k+1]_qv_{k-1},\quad&x_1^-.v_k = [k+1]_q v_{k+1}.
		\end{split}
	\end{equation*}
	We then use $\operatorname{ev}_a$ which is defined in terms of Chevalley generators in Proposition \ref{prop:Jimbos_hom} to obtain a representation of $U_q(\mathfrak{L}(\mathfrak{sl}_{2}))$ which we call $V^{(r)}(a)$, i.e.
	\begin{equation*}
		\operatorname{ev}_a(X_0^\pm) = q^{\mp1}a^{\pm1}x_1^\mp,\quad \operatorname{ev}_a(X_1^\pm) = x_1^\pm,\quad \operatorname{ev}_a(K_0)=k_1^{-1} \quad\operatorname{ev}_a(K_1) = k_1.
	\end{equation*}
	
	Now, to compute the action of the Drinfeld generators $\mathcal{X}_{1,p}$, we must apply the isomorphism $f$ given in Theorem \ref{thm:second_drin_quantum_aff}.\footnote{ Luckily, we have already written it down explicitly in our comment in the proof of Proposition \ref{prop:type_1_twisting_loop}.} We find
	\begin{equation*}
		\begin{split}
			\operatorname{ev}_a(C)=1,\quad\operatorname{ev}_a(K_1)=k_1,& \quad\operatorname{ev}_a(\mathcal{X}_{1,0}^\pm)=x_1^\pm,\\ \quad\operatorname{ev}_a(\mathcal{X}_{1,-1}^+)=qa^{-1}k_1^{-1}x_1^+,& \quad\operatorname{ev}_a(\mathcal{X}_{1,1}^-)=q^{-1}ax_1^-k_1,
		\end{split}
	\end{equation*}
	and using 
	\begin{equation*}
		\pm[\mathcal{X}_{1,0}^\pm,\mathcal{X}_{1,\pm1}^\mp]=\mathcal{H}_{1,\pm1}\mathcal{K}_1^{\pm1},
	\end{equation*}
	that
	\begin{equation*}
		\operatorname{ev}_a(\mathcal{H}_{1,\pm1})=a^{\pm1}\left(\pm q^{\mp1}\frac{k_1-k_1^{-1}}{q-q^{-1}}-(q-q^{-1})x_1^-x_1^+\right).
	\end{equation*}
	Then, we can use
	\begin{equation*}
		[\mathcal{H}_{1,s},\mathcal{X}_{1,p}^\pm] = \pm(q+q^{-1})\mathcal{X}_{1,p+s}^\pm\quad\text{for }s=\pm1
	\end{equation*}
	to compute $\operatorname{ev}_a(\mathcal{X}_{1,p})$ inductively. We find
	\begin{equation*}
		\begin{split}
			&\operatorname{ev}_a(\mathcal{X}_{1,p}^+)=q^{-p}a^pk_1^px_1^+,\\
			&\operatorname{ev}_a(\mathcal{X}_{1,p}^-)=q^{-p}a^px_1^-k_1^p,
		\end{split}
	\end{equation*}
	for all $s\in\mathbb{Z}$. Therefore, the action of the $\mathcal{X}_{1,p}^\pm$ on $V^{(r)}(a)$ is given by
	\begin{equation}\label{eqn:ev_action_of_the_X_1p}
		\begin{split}
			&\mathcal{X}_{1,p}^+.v_k = a^pq^{p(r-2k+1)}[r-k+1]_qv_{k-1},\\
			&\mathcal{X}_{1,p}^-.v_k = a^pq^{p(r-2k-1)}[k+1]_qv_{k+1}.
		\end{split}
	\end{equation}
	Finally, we can compute the eigenvalues $\varphi_{1,p}^+$ of the $\varPhi_{1,p}^+$ on the highest weight vector $v_0$ by using for instance the relation
	\begin{equation*}
		\varPhi_{1,p}^+ = (q-q^{-1})[\mathcal{X}_{1,p}^+,\mathcal{X}_{1,0}^-]
	\end{equation*}
	for $p>0$ and $\varPhi_{1,0}^+=\mathcal{K}_1$. The eigenvalues $\varphi_{1,p}^+$ are therefore
	\begin{equation*}
		\varphi_{1,p}^+= (q-q^{-1})(aq^{r-1})^p[r]_q
	\end{equation*}
	for $p>0$ and $\varphi_{1,0}^+=q^r$.
	Hence, we obtain
	\begin{equation*}
		\begin{split}
			\sum_{p=0}^{\infty}\varphi_{1,p}^+u^p &= q^r+(q^r-q^{-r})\frac{aq^{r-1}u}{1-aq^{r-1}u}\\
			&=q^r\left(\frac{1-q^{-r-1}au}{1-q^{r-1}au}\right).
		\end{split}
	\end{equation*}
	We conclude that the Drinfeld polynomial must be
	\begin{equation*}
		P_{a}^{(r)}(u) = \prod_{k=1}^{r}(1-q^{r-2k+1}au).\quad \odot
	\end{equation*}
\end{xmpl}
Note that we could have chosen any other vector $v_k$ in the last example to compute all the eigenvalues $\psi_{1,p}^+$ of the $\varPhi_{1,p}^+$ on $v_k$, we just didn't do it to keep the derivation of the formula for the Drinfeld polynomial simple. However, let us do this now to proof Theorem \ref{thm:drinfeld-l_weights} for the spin $r/2$ evaluation representations of $U_q(\tilde{\mathfrak{sl}}_2)$.
\begin{cor}\label{cor:loop_weights_ev_reps_sl2}
	Theorem \ref{thm:drinfeld-l_weights} is true for the spin $r/2$ evaluation representations $V^{(r)}(a)$ of $U_q(\tilde{\mathfrak{sl}}_2)$. Precisely, the vectors $v_k$, $k=0,1,\dots,r$, span a loop-weight space of loop-weight $\boldsymbol{Q_{k,a}R_{k,a}^{-1}}$, where the polynomials $Q_{k,a}$ and $R_{k,a}$ with constant term $1$ are given by
	\begin{equation*}
		Q_{k,a} = \prod_{m=k+1}^{r}(1-q^{r-2m+1}au),\quad R_{k,a}=\prod_{m=1}^{k}(1-q^{r-2m+3}au).\quad \odot
	\end{equation*}
\end{cor}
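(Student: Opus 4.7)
The strategy is to compute, for each $k \in \{0, 1, \ldots, r\}$, the eigenvalues $\psi_{1,p}^{\pm,k}$ of $\varPhi_{1,p}^\pm$ on $v_k$ explicitly, and then to show that the generating functions $\sum_{p \geq 0} \psi_{1,p}^{+,k} u^p$ and $\sum_{p \geq 0} \psi_{1,-p}^{-,k} u^{-p}$ are the Laurent expansions at $0$ and $\infty$, respectively, of the single rational function $q^{r-2k} \frac{Q_{k,a}(q^{-2}u) R_{k,a}(u)}{Q_{k,a}(u) R_{k,a}(q^{-2}u)}$. First observe that every $\mathfrak{sl}_2$-weight space of $V^{(r)}(a)$ is one-dimensional and that the $\varPhi_{1,p}^\pm$ lie in $U_q^0$, hence preserve $\mathfrak{sl}_2$-weights. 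Therefore each $v_k$ is automatically a simultaneous eigenvector for all $\varPhi_{1,p}^\pm$, the loop-weight spaces coincide with the $\mathfrak{sl}_2$-weight spaces, and it suffices to identify the eigenvalues on each $v_k$.

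For $p = 0$ the identity $\varPhi_{1,0}^+ = \mathcal{K}_1$ together with $\mathcal{K}_1.v_k = q^{r-2k} v_k$ yields $\psi_{1,0}^{+,k} = q^{r-2k}$, which matches the normalization factor $q^{\deg Q_{k,a} - \deg R_{k,a}} = q^{r-2k}$. For $p > 0$, apply the relation $\varPhi_{1,p}^+ = (q - q^{-1})[\mathcal{X}_{1,p}^+, \mathcal{X}_{1,0}^-]$ (which follows from Theorem \ref{thm:second_drin_quantum_aff} with $\mathcal{C} = 1$) and substitute the explicit action (\ref{eqn:ev_action_of_the_X_1p}) of the loop generators. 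A short calculation gives
\begin{equation*}
\psi_{1,p}^{+,k} = (q - q^{-1})\left([k+1]_q [r-k]_q A^p - [r-k+1]_q [k]_q B^p\right),
\end{equation*}
where $A = q^{r-2k-1} a$ and $B = q^{r-2k+1} a$. Summing the two resulting geometric series converts $\sum_{p \geq 0} \psi_{1,p}^{+,k} u^p$ into a rational function in $u$ with poles at $u = 1/A$ and $u = 1/B$.

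It then remains to match this rational function with $q^{r-2k} \frac{Q_{k,a}(q^{-2}u) R_{k,a}(u)}{Q_{k,a}(u) R_{k,a}(q^{-2}u)}$. The key point is that both of the ratios $Q_{k,a}(q^{-2}u)/Q_{k,a}(u)$ and $R_{k,a}(u)/R_{k,a}(q^{-2}u)$ telescope: the substitution $u \mapsto q^{-2} u$ shifts the product index by one, so all but the outermost factors cancel, leading to
\begin{equation*}
\frac{Q_{k,a}(q^{-2}u) R_{k,a}(u)}{Q_{k,a}(u) R_{k,a}(q^{-2}u)} = \frac{(1 - q^{-r-1}au)(1 - q^{r+1}au)}{(1 - Au)(1 - Bu)}.
\end{equation*}
A partial-fraction decomposition of the right-hand side (times $q^{r-2k}$) and a comparison with the computed generating function reduces the identification to the elementary $q$-number identity
\begin{equation*}
(q - q^{-1})\left([k+1]_q [r-k]_q - [r-k+1]_q [k]_q\right) = q^{r-2k} - q^{2k-r},
\end{equation*}
which follows by a direct expansion of the $q$-numbers.

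The analogous assertion for $\Psi_1^-$ follows because the same rational function, now expanded around $u = \infty$ rather than $u = 0$, yields the generating series $\sum_{p \geq 0} \psi_{1,-p}^{-,k} u^{-p}$; one can either verify this by repeating the above computation with the analogous commutator relation for $\varPhi_{1,-p}^-$, or invoke the observation in the proof of Theorem \ref{thm:drinfeld_polynomials} that the $\varPhi^-$-eigenvalues are determined by the $\varPhi^+$-eigenvalues on a weight vector. The main obstacle is the telescoping verification together with the careful bookkeeping of the many $q$-exponents appearing in the products; once the telescoping and the $q$-number identity above are in hand, the rest is formal manipulation of power series.
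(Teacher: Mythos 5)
Your proposal follows essentially the same route as the paper's own proof: you compute the eigenvalues of $\varPhi_{1,p}^+$ on $v_k$ from $\varPhi_{1,p}^+=(q-q^{-1})[\mathcal{X}_{1,p}^+,\mathcal{X}_{1,0}^-]$ and the explicit evaluation action (\ref{eqn:ev_action_of_the_X_1p}), sum the two geometric series, and identify the result with the telescoped ratio $q^{r-2k}\,Q_{k,a}(q^{-2}u)R_{k,a}(u)/\bigl(Q_{k,a}(u)R_{k,a}(q^{-2}u)\bigr)$, treating $\Psi_1^-$ as analogous, exactly as the paper does. The only minor caveat is that the final matching does not reduce to the single $q$-number identity you state (that identity only settles the constant term at $u\to\infty$, i.e. the $u^2$ coefficient after clearing denominators); one further elementary identity for the $u^1$ coefficient (equivalently, the residues at $u=A^{-1}$, $u=B^{-1}$) is needed, but it is the same kind of routine manipulation and does not affect the validity of the argument.
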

\begin{proof}[Proof of Corollary \ref{cor:loop_weights_ev_reps_sl2}]
	As in Example \ref{xmpl:spin_r/2_evaluation_rep}, we use the action of the $\mathcal{X}_{1,p}^\pm$ given by Equation (\ref{eqn:ev_action_of_the_X_1p}) and for instance the relation
	\begin{equation*}
		\varPhi_{1,p}^+ = (q-q^{-1})[\mathcal{X}_{1,p}^+,\mathcal{X}_{1,0}^-]
	\end{equation*}
	for $p>0$ and $\varPhi_{1,0}^+=\mathcal{K}_1$ to compute the eigenvalues $\psi_{1,p}^+$ of $\varPhi_{1,p}^+$ on the vector $v_k$ of the spin $r/2$ evaluation representation.
	
	We immediately get
	\begin{equation*}
		\psi_{1,p}^+ = (q-q^{-1})([k+1]_q[r-k]_q(aq^{r-2k-1})^p-[k]_q[r-k+1]_q(aq^{r-2k+1})^p)
	\end{equation*}
	for $p>0$ and $\psi_{1,0}^+= q^{r-2k}$. Hence, we obtain
	\begin{equation*}
		\begin{split}
			\sum_{p=0}^{\infty}\psi_{1,p}^+ &= q^{r-2k}+(q-q^{-1})\left([k+1]_q[r-k]_q \frac{aq^{r-2k-1}u}{1-aq^{r-2k-1}u}-[k]_q[r-k+1]_q\frac{aq^{r-2k+1}u}{1-aq^{r-2k+1}u}\right)\\
			&= q^{r-2k}\left(\frac{1-aq^{-r-1}u}{1-aq^{r-2k-1}u}\right)\left(\frac{1-aq^{r+1}u}{1-aq^{r-2k+1}u}\right)
		\end{split}
	\end{equation*}
	which shows that $v_k$ spans a loop-weight space $V_{ \boldsymbol{Q_{k,a}R_{k,a}^{-1}}}$ of loop-weight $\boldsymbol{Q_{k,a}R_{k,a}^{-1}}$ as asserted. In particular, every loop-weight space is one-dimensional and the loop-weight for $v_0$ is exactly the Drinfeld polynomial.\footnote{ Note that we have calculated only $\Psi_i^+(u)$ here. However, the calculation of $\Psi_i^-(u)$ is as usual completely analogous and we therefore decided to spare the details.}
\end{proof}

Note that our notation '$V^{(r)}(a)$' is again compatible with the action of the multiplicative group $\mathbb{C}^\times$ through the Hopf algebra automorphism $\tau$ given by Equation (\ref{eqn:tau_a_q-deformed}), i.e. $V^{(r)}(a)=(V^{(r)}(1))(a)$ and we can therefore set $V^{(r)}\coloneq V^{(r)}(1)$. The example suggests the following
\begin{definition}\label{def:q_segment}
	Let $a\in\mathbb{C}^\times$, $r\in\mathbb{N}$. The $q$-segment $\Sigma_r(a)$ with centre $a$ and length $r$ is the $r$-tuple $(aq^{-r+1},aq^{-r+3},\dots,aq^{r-1})$.
\end{definition}
It should be clear that we can generalise Example \ref{xmpl:spin_r/2_evaluation_rep} for any $U_q(\mathfrak{L}(\mathfrak{sl}_{2}))$ subalgebra of $U_q(\mathfrak{L}(\mathfrak{sl}_{l+1}))$. Moreover, the generalisation for any representation $V$ of $U_q(\mathfrak{sl}_{l+1})$ with highest weight $\lambda=\sum_i r_i\omega_i$ is proven in the paper \cite{CP1994a}. It is given in
\begin{prop}\label{prop:center_drin_poly_ev}
	Let $V$ be the finite-dimensional irreducible type $\boldsymbol{1}$ representation of $U_q(\mathfrak{sl}_{n+1})$ with highest weight $\lambda=\sum_i r_i\omega_i$ and let $a\in\mathbb{C}^\times$. Then,
	\begin{enumerate}[label=\normalfont(\roman*)]
		\item if $r_i=0$, we have $P_{i,V_a}=P_{i,V^a}=1$,
		\item if $r_i>0$, the roots of $P_{i,V_a}$ and $P_{i,V^a}$ form, when suitably ordered, the $q$-segment of length $r_i$ with centre
		\begin{equation*}
			c_i = a^{-1}q^{r_i+i-1-\frac{2}{l+1}\sum_{j=1}^{l}(l-j+1)r_j+2\sum_{j=1}^{i-1}r_j}
		\end{equation*}
		and, respectively, with centre
		\begin{equation*}
			c^i = a^{-1}q^{r_i+l-i-\frac{2}{l+1}\sum_{j=1}^{l}jr_j+2\sum_{j=i+1}^{l}r_j}.\quad\odot
		\end{equation*}
	\end{enumerate}
\end{prop}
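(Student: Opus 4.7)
The plan is to reduce the computation to the eigenvalues $\varphi_{i,p}^+$ of $\varPhi_{i,p}^+$ on the $\mathfrak{g}$-highest weight vector $v_0$ of $V$, which by Proposition \ref{prop:type_1_twisting_loop} is also pseudo-highest weight in $V_a$ and $V^a$. By Theorem \ref{thm:drinfeld_polynomials}, $P_{i,V_a}$ is encoded by the generating function $\sum_{p\geq 0}\varphi_{i,p}^+ u^p = q^{\deg P_{i,V_a}}\, P_{i,V_a}(q^{-2}u)/P_{i,V_a}(u)$. Using $\varPhi_{i,p}^+ = (q-q^{-1})[\mathcal{X}_{i,p}^+,\mathcal{X}_{i,0}^-]$ for $p>0$, the identification $\mathcal{X}_{i,0}^- = X_i^-$ from Theorem \ref{thm:second_drin_quantum_aff}, and $\mathcal{X}_{i,p}^+.v_0 = 0$, this becomes $\varphi_{i,p}^+ v_0 = (q-q^{-1})\,\mathcal{X}_{i,p}^+.(X_i^-.v_0)$. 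Case (i) is then immediate: if $r_i=0$ then $X_i^-.v_0=0$ (the $i$-th $\mathfrak{sl}_2$-string above $v_0$ has length zero), so $\varphi_{i,p}^+=0$ for $p>0$ and $\varphi_{i,0}^+ = q^{r_i}=1$, giving $P_{i,V_a} = P_{i,V^a} = 1$.

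For case (ii), the one-dimensionality of the weight spaces $V_\lambda$ and $V_{\lambda-\alpha_i}$ in the irreducible $U_q(\mathfrak{sl}_{l+1})$-module $V$ (which agree with the corresponding weight spaces of $V_a$) forces the existence of scalars $c_1, f_p \in \mathbb{C}$ with $\mathcal{X}_{i,1}^-.v_0 = c_1\, X_i^-.v_0$ and $\mathcal{X}_{i,p}^+.(X_i^-.v_0) = f_p\, v_0$. A direct use of $[\mathcal{H}_{i,1},\mathcal{X}_{i,p}^+] = [2]_q\,\mathcal{X}_{i,p+1}^+$ and $[\mathcal{H}_{i,1}, X_i^-] = -[2]_q\,\mathcal{X}_{i,1}^-$ shows that the scalar by which $\mathcal{H}_{i,1}$ differs on $v_0$ and on $X_i^-.v_0$ is exactly $[2]_q c_1$, yielding the recursion $f_{p+1} = c_1\, f_p$. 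Together with the initial datum $f_0 = [r_i]_q$ (from $X_i^+X_i^-.v_0 = [r_i]_q\,v_0$), this produces the geometric sequence $f_p = c_1^p\,[r_i]_q$, and hence
\begin{equation*}
\sum_{p\geq 0}\varphi_{i,p}^+\,u^p \;=\; q^{r_i} + (q^{r_i}-q^{-r_i})\frac{c_1 u}{1 - c_1 u} \;=\; q^{r_i}\cdot\frac{1 - c_1\, q^{-2r_i}u}{1 - c_1 u}.
\end{equation*}
Matching this with $q^{\deg P}\,P(q^{-2}u)/P(u)$ immediately identifies $P_{i,V_a}(u) = \prod_{k=1}^{r_i}(1 - q^{r_i-2k+1}b_i u)$ with $b_i = c_1\, q^{-(r_i-1)}$, so the roots automatically form a single $q$-segment $\Sigma_{r_i}(b_i^{-1})$ of length $r_i$ centred at $c_i = b_i^{-1} = c_1^{-1}\, q^{r_i-1}$.

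All that remains is to compute the single scalar $c_1$. This is done inside $U_q(\mathfrak{gl}_{l+1})$ via Jimbo's homomorphism $\operatorname{ev}_a$ together with the inverse of the isomorphism $f$ of Theorem \ref{thm:second_drin_quantum_aff}: $f^{-1}(\mathcal{X}_{i,1}^-)$ is built up inductively through the maps $w_j^\pm$ (equivalently the braid-group elements $T_j$) starting from $X_0^+$, mirroring the construction of $f(X_0^+)$. Applying $\operatorname{ev}_a$ turns this into a nested $q^{1/2}$-commutator expression in the Chevalley and Cartan generators of $U_q(\mathfrak{gl}_{l+1})$, whose action on $v_0$ produces $a$ times a product of powers $q^{\mu_r}$, where $\mu_j = \tfrac{1}{l+1}\sum_{k=1}^l (l-k+1)r_k - \sum_{k=1}^j r_k$ is the $U_q(\mathfrak{gl}_{l+1})$-weight of $v_0$ fixed uniquely by $\mu_{j-1}-\mu_j = r_j$ and $\sum_j \mu_j = 0$. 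The hard part will be this bookkeeping of $q$-powers: the iterated braid-group transport from node $i$ back to node $0$ through $i-1$ nested commutators produces many partial contributions, but their total simplifies to the exponent $-i + \tfrac{2}{l+1}\sum_j(l-j+1)r_j - 2\sum_{j=1}^{i-1}r_j$; combined with $b_i = c_1\, q^{-(r_i-1)}$ this gives precisely the claimed formula for $c_i$. Finally, the formula for $c^i$ follows from the relation $\operatorname{ev}^a = \sigma\circ\operatorname{ev}_a\circ\tilde{\sigma}$ of Remark \ref{rems:ev_homs}: since $\tilde{\sigma}$ swaps node $i$ with node $l+1-i$ on $U_q(\tilde{\mathfrak{sl}}_{l+1})$ while $\sigma$ performs the analogous swap on $U_q(\mathfrak{gl}_{l+1})$, pulling back by $\operatorname{ev}^a$ at node $i$ reduces to pulling back by $\operatorname{ev}_a$ at node $l+1-i$ for the relabelled weight $\lambda^\sigma = \sum_j r_{l+1-j}\,\omega_j$, and substituting $i \mapsto l+1-i$, $r_j \mapsto r_{l+1-j}$ in the formula for $c_i$ (reindexing the sums by $k = l+1-j$) yields $c^i$ exactly in the stated form.
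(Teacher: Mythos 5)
Your skeleton is sound, and it is essentially a rank-one reduction that extends the paper's Example \ref{xmpl:spin_r/2_evaluation_rep} (note the paper itself does not prove this proposition but cites \cite{CP1994a} for it): since $V_\lambda$ and $V_{\lambda-\alpha_i}$ are one-dimensional, the scalars $c_1$ and $f_p$ exist, the relation $[\mathcal{H}_{i,1},\mathcal{X}_{i,p}^+]=[2]_q\mathcal{X}_{i,p+1}^+$ does give $f_{p+1}=c_1f_p$ with $f_0=[r_i]_q$, the resummation $q^{r_i}(1-q^{-2r_i}c_1u)/(1-c_1u)$ is right, and together with the uniqueness in Theorem \ref{thm:drinfeld_polynomials} this correctly forces the roots of $P_{i,V_a}$ to form one $q$-segment of length $r_i$ centred at $c_1^{-1}q^{r_i-1}$; the $\sigma$, $\tilde\sigma$ symmetry of Remarks \ref{rems:ev_homs} transferring $c_i$ into $c^i$ also checks out. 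Case (i) is fine as well.

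The genuine gap is the last paragraph: the entire quantitative content of part (ii) is the value of the centre, i.e.\ the single scalar $c_1$ with $\mathcal{X}_{i,1}^-.v_0=c_1\,X_i^-.v_0$ in $V_a$, and you never compute it. You assert that the "bookkeeping of $q$-powers" from transporting through $i-1$ nested commutators "simplifies to" the exponent $-i+\frac{2}{l+1}\sum_j(l-j+1)r_j-2\sum_{j=1}^{i-1}r_j$, but this is exactly the formula to be proved; as written it is reverse-engineered from the desired $c_i$ rather than derived. Worse, the proposed route is not actually available from the tools in the paper: the isomorphism $f$ of Theorem \ref{thm:second_drin_quantum_aff} only expresses $X_0^\pm$ in terms of $\mathcal{X}_{j,\pm1}^\mp$ for the particular node $j$ and bracketing attached to the highest root vector, so "mirroring the construction of $f(X_0^+)$" does not by itself produce $f^{-1}(\mathcal{X}_{i,1}^-)$ for every $i$ — that requires Beck's braid-group (translation) construction, and Remarks \ref{rems:quant_aff} even caution that the quoted description of $f$ may be inaccurate. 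So the decisive step cannot be waved through as bookkeeping: your argument proves the $q$-segment structure, but assumes the centres $c_i$, $c^i$. To close it, either carry out the computation of $\operatorname{ev}_a(\mathcal{X}_{i,1}^-).v_0$ explicitly (for instance by realizing $V$ inside a tensor product of evaluation/fundamental modules where the needed generator can be tracked, or by an explicit rank-one computation inside $U_q(\mathfrak{gl}_{l+1})$ using a correct form of the Drinfeld generators), or do what the paper does and invoke \cite{CP1994a}.
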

It follows immediately from this result that every fundamental representation of $U_q(\mathfrak{L}(\mathfrak{sl}_{l+1}))$ is isomorphic to an evaluation representation. By Lemma \ref{lem:Newtons_formulae}, we have
\begin{cor}\label{cor:iso_to_subquo}
	Every finite-dimensional irreducible type $\boldsymbol{1}$ representation of $U_q(\mathfrak{L}(\mathfrak{sl}_{n+1}))$ is isomorphic to a subquotient of a tensor product of evaluation representations. $\odot$
\end{cor}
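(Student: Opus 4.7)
The plan is to combine Corollary \ref{cor:tensor_product_of_fund}, which writes any finite-dimensional irreducible type $\boldsymbol{1}$ representation as a subquotient of a tensor product of fundamental modules $V_{\omega_i}(a)$, with Proposition \ref{prop:center_drin_poly_ev} applied to the finite-dimensional irreducible $U_q(\mathfrak{sl}_{l+1})$-modules $V_q(\omega_i)$. The whole strategy therefore reduces to the intermediate claim, already noted in the paragraph preceding the corollary, that every fundamental module of $U_q(\mathfrak{L}(\mathfrak{sl}_{l+1}))$ is itself isomorphic to an evaluation representation.

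To establish that intermediate claim, I would specialise Proposition \ref{prop:center_drin_poly_ev} to $V = V_q(\omega_i)$, i.e.\ to the dominant integral weight $\lambda = \omega_i$ (so $r_i = 1$ and $r_j = 0$ for $j \neq i$). Part~(i) of the proposition immediately yields $P_{j,V_a} = 1$ for $j \neq i$, while part~(ii) gives $P_{i,V_a}(u) = 1 - c_i(a)\,u$, since the $q$-segment of length one with centre $c_i(a)$ consists of the single value $c_i(a)$. The formula for $c_i(a)$ exhibits it as $a^{-1}$ times a fixed power of $q$, so the map $a \mapsto c_i(a)$ is a bijection $\mathbb{C}^\times \to \mathbb{C}^\times$. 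Given any $b \in \mathbb{C}^\times$, choose $a$ with $c_i(a) = b$; the uniqueness statement in Corollary \ref{cor:notation_V(P)} (finite-dimensional irreducible type $\boldsymbol{1}$ modules are classified by their Drinfeld polynomials) then forces $V_q(\omega_i)_a \cong V_{\omega_i}(b)$ as representations of $U_q(\tilde{\mathfrak{sl}}_{l+1})$.

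With this in hand, the corollary follows in one line. Let $W$ be any finite-dimensional irreducible type $\boldsymbol{1}$ representation. By Corollary \ref{cor:tensor_product_of_fund}, $W$ is a subquotient of some $V_{\omega_{i_1}}(a_1) \otimes \cdots \otimes V_{\omega_{i_k}}(a_k)$. Replacing each factor by the isomorphic evaluation module $V_q(\omega_{i_j})_{a_j'}$ produced in the previous paragraph exhibits $W$ as a subquotient of a tensor product of evaluation representations, as required.

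The main technical obstacle is not in the corollary itself but is already absorbed into Proposition \ref{prop:center_drin_poly_ev}: the explicit computation of the Drinfeld polynomials of $V_q(\omega_i)_a$ from the defining formulas for Jimbo's homomorphism (Proposition \ref{prop:Jimbos_hom}) and the isomorphism $f$ to the second Drinfeld realisation (Theorem \ref{thm:second_drin_quantum_aff}). One must in particular compute the action of $\mathcal{X}_{i,0}^-$ and of a sufficient collection of $\mathcal{X}_{i,\pm 1}^{\pm}$ on the highest weight vector, extract the eigenvalues of $\boldsymbol{\varPhi}_i^\pm(u)$ using relations such as $(q_i - q_i^{-1})[\mathcal{X}_{i,p}^+, \mathcal{X}_{i,0}^-] = \varPhi_{i,p}^+$, and match the resulting generating series against the product form prescribed by Theorem \ref{thm:drinfeld_polynomials}; this is carried out in \cite{CP1994a}. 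Granting Proposition \ref{prop:center_drin_poly_ev}, the corollary is a purely formal consequence.
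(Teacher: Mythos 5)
Your proposal is correct and follows essentially the same route as the paper: the paper also deduces the corollary by noting that Proposition \ref{prop:center_drin_poly_ev} (specialised to $\lambda=\omega_i$) makes every fundamental module $V_{\omega_i}(b)$ an evaluation representation, and then invoking the fact that every finite-dimensional irreducible type $\boldsymbol{1}$ module is a subquotient of a tensor product of fundamental modules (Corollary \ref{cor:tensor_product_of_fund}). You merely spell out the bijection $a\mapsto c_i(a)$ and the appeal to the Drinfeld-polynomial classification, which the paper leaves implicit in its "it follows immediately" remark.
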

As mentioned earlier, this part proves the converse of the first part of Theorem \ref{thm:drinfeld_polynomials} in the case $\mathfrak{g}=\mathfrak{sl}_{l+1}$.

Let us also make an important remark to clarify the meaning of the shifted dual representation in Definition \ref{def:the_shifted_dual}.
\begin{rem}[Drinfeld polynomials of the shifted duals]\label{rem:shifted_dual}
	Let $S$ and $\tilde{S}$ 
	be the antipodes 
	of $U_q(\mathfrak{gl}_{l+1})$ and $U_q(\mathfrak{L}(\mathfrak{sl}_{l+1}))$ as in Part \ref{enum:comp_betw_ev_homs} of the Remarks \ref{rems:ev_homs} and let $V(a)\coloneq V_a=\operatorname{ev}_a^*(V)$ be an evaluation representation. We can use the statement $S\circ\operatorname{ev}_a\circ\tilde{S} = \operatorname{ev}^b$ to compute the Drinfeld polynomials of the (left) dual of $V(a)$ by comparing the action of the maps
	\begin{equation*}
		\operatorname{ev}_a\circ\tilde{S} = S^{-1}\circ \operatorname{ev}^b
	\end{equation*}
	and then applying Proposition \ref{prop:center_drin_poly_ev}.\footnote{ Where $b=(-1)^{n-1}aq^{-n+1}$ as above.} In fact, we only have to apply them on the generators $X_0^\pm$ since the action on the other generators obviously coincides. After a short computation it is clear that if $V(a)$ has the Drinfeld polynomials $P_{i,V}^a$, then $V^*(a)$ must have the Drinfeld polynomials $P_{i,V^*}^a = P_{l+1-i,V}^{aq^{l+1}}$ (and similarly $^*V(a)$ the Drinfeld polynomials $P_{i,^*V}^a = P_{l+1-i,V}^{aq^{-(l+1)}}$). Therefore, the shifted dual $V^\circledast$ has the Drinfeld polynomials $P_{i,V^\circledast}^a = P_{l+1-i,V}^{a}$ which are simply the Drinfeld polynomials of $V(a)$, but in reversed order.	$\odot$
\end{rem}

In the $\mathfrak{sl}_2$ case, Corollary \ref{cor:iso_to_subquo} can be made much more precise as follows.
\begin{prop}\hspace{1em}
	\begin{enumerate}[label=\normalfont(\roman*)]\label{prop:iso_to_tens_of_ev}
		\item Every finite-dimensional irreducible type $\boldsymbol{1}$ representation of $U_q(\mathfrak{L}(\mathfrak{sl}_2))$ is isomorphic to a tensor product
		\begin{equation}
			V^{(r_1)}(a_1)\otimes\cdots\otimes V^{(r_k)}(a_k)\label{eqn:tensor_of_ev_sl2}
		\end{equation}
		of evaluation representations.
		\item Two such irreducible tensor products are isomorphic if and only if one is obtained from the other by a permutation of the factors.
		\item An arbitrary tensor product (\ref{eqn:tensor_of_ev_sl2}) is reducible if and only if at least one pair of $q$-segments $\Sigma_{r_i}(a_i)$, $\Sigma_{r_j}(a_j)$ is in \textbf{special position}, in the sense that their union is a $q$ segment which properly contains them both. $\odot$
	\end{enumerate}
\end{prop}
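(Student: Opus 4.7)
The plan is to establish part (iii) first via a direct analysis of the two-factor tensor product, and then derive parts (i) and (ii) from (iii) by means of a canonical ``maximal $q$-segment'' decomposition of Drinfeld polynomials.

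For the two-factor tensor product $V^{(r)}(a)\otimes V^{(s)}(b)$, I would argue as follows. By the partial comultiplication of Proposition \ref{prop:comult_partial} together with Corollary \ref{cor:tensor_prod_drin_poly}, the vector $v_0\otimes v_0$ is annihilated by every $\mathcal{X}_{1,p}^+$ and generates a highest weight subrepresentation $W$ with Drinfeld polynomial $P_a^{(r)}P_b^{(s)}$; by Theorem \ref{thm:drinfeld_polynomials} the unique irreducible quotient of $W$ is $V(P_a^{(r)}P_b^{(s)})$. Hence the tensor product is irreducible if and only if $v_0\otimes v_0$ already generates the full space and $W$ has no nontrivial subrepresentation. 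To test this I would search for a second vector of the form $w=\sum_k c_k\, v_k\otimes v_{m-k}$ (for some $m\geq 1$) annihilated by all $\mathcal{X}_{1,p}^+$: using the explicit action in (\ref{eqn:ev_action_of_the_X_1p}) on each factor together with the comultiplication, the annihilation conditions become an infinite family of linear equations in the $c_k$ indexed by $p\in\mathbb{Z}$, and these are simultaneously solvable precisely when the two $q$-segments $\Sigma_r(a)$ and $\Sigma_s(b)$ admit a common extension into a larger $q$-segment containing them both, i.e.\ are in special position in the sense of Definition \ref{def:q_segment}. The existence direction would use a $q$-binomial identity to write down the coefficients explicitly, while nonexistence in the generic case follows from a Vandermonde-type linear independence argument applied to the geometric progressions $a^p q^{p(r-2k+1)}$ arising in (\ref{eqn:ev_action_of_the_X_1p}).

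For the general form of (iii) I would bootstrap from the two-factor case by induction on the number of factors. Since irreducible tensor products commute (Theorem \ref{thm:drinfeld_polynomials}), one can freely permute factors: if no pair among the $\Sigma_{r_i}(a_i)$ is in special position, any adjacent pair gives an irreducible product by the base case, and an induction on the number of factors (combined with Corollary \ref{cor:tensor_prod_drin_poly}) shows the full tensor product is irreducible with Drinfeld polynomial $\prod_i P_{a_i}^{(r_i)}$. Conversely, if some pair is in special position, permute those factors adjacent; the resulting two-factor subproduct is reducible, and one transfers this reducibility to the full product by using Theorem \ref{thm:drinfeld_polynomials} to compare highest weights of composition factors.

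Finally, for (i), I would observe that every $P\in\mathcal{P}^+$ (with $\mathfrak{g}=\mathfrak{sl}_2$, so a single polynomial) admits a canonical decomposition into maximal $q$-segments: group the roots of $P$ (with multiplicity) by the equivalence relation generated by $\alpha\sim\alpha q^2$, and within each class partition the roots into consecutive $q$-segments of maximal length. By maximality no two of the resulting segments are in special position, so by (iii) the corresponding tensor product of evaluation representations is irreducible, and its Drinfeld polynomial equals $P$, so by Theorem \ref{thm:drinfeld_polynomials} it is isomorphic to $V(P)$. Part (ii) then follows because any decomposition of $P$ into $q$-segments pairwise not in special position must coincide with this maximal decomposition up to permutation; otherwise two of the segments would admit a common extension, contradicting the assumption. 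The main obstacle I anticipate is the explicit two-factor computation: producing the second highest weight vector in the special-position case requires manipulating $q$-number identities carefully, and verifying that it actually generates a proper subrepresentation (rather than only witnessing non-cyclicity of $v_0\otimes v_0$) is the most delicate point of the argument.
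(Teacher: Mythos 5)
There is a genuine gap in the two-factor analysis on which everything else rests. Your criterion is: $V^{(r)}(a)\otimes V^{(s)}(b)$ is reducible iff there exists a second vector $w=\sum_k c_k\,v_k\otimes v_{m-k}$ annihilated by all $\mathcal{X}_{1,p}^+$. This equivalence fails in one direction. A proper subrepresentation need not avoid the vector $v_0\otimes v_0$: in the case $b/a=q^{-(k+l-2p)}$ of Proposition \ref{prop:special_pos_short_ex}, the unique proper subrepresentation $W$ \emph{contains} the top $U_q(\mathfrak{sl}_2)$-component (it contains $v_0\otimes v_0$), and since $W$ is irreducible its only singular vector is $v_0\otimes v_0$ itself. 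So for exactly half of the special positions your infinite linear system has no solution, yet the tensor product is reducible; your claimed equivalence ``solvable precisely when the segments are in special position'' is false, because existence of a second singular vector is not symmetric under reversing the order of the factors, while special position is. The paper (following Chari--Pressley) closes this hole by a duality step: a proper subrepresentation containing the highest weight line exists iff the left dual $(V^{(k)}(a)\otimes V^{(l)}(b))^*\cong V^{(l)}(q^2b)\otimes V^{(k)}(q^2a)$ has a subrepresentation not containing its highest component, which reduces the second case to the first. Without this (or an equivalent direct analysis of when $v_0\otimes v_0$ fails to be cyclic), part (iii) is not established, and with it the maximality argument you use for (i) and (ii). Note also that the delicate point you flag at the end is not the real one: a singular vector of weight strictly below the top weight automatically generates a proper subrepresentation, since the submodule it generates is $U_q^-U_q^0.w$ and has all weights bounded by $\operatorname{wt}(w)$.

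Two further remarks. First, the paper's route to the singular vectors is simpler than your infinite linear system: decompose the tensor product under $U_q(\mathfrak{sl}_2)$, so the only candidates are the finitely many classical highest weight vectors $w_j$, check annihilation by the single operator $\mathcal{X}_{1,-1}^+$, and then use $[\mathcal{H}_{1,-1},\mathcal{X}_{1,0}^+]=(q+q^{-1})\mathcal{X}_{1,-1}^+$ and the relations $[\mathcal{H}_{1,l},\mathcal{X}_{1,k}^\pm]=\pm\frac{1}{l}[2l]_q\mathcal{X}_{1,k+l}^\pm$ to promote this to annihilation by all $\mathcal{X}_{1,p}^+$; you may want to adopt this to avoid the Vandermonde bookkeeping. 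Second, your reduction of the many-factor statements (i)--(iii) to the two-factor case via induction and the maximal $q$-segment decomposition is the standard strategy and is fine in outline, but the step ``pairwise irreducible $\Rightarrow$ totally irreducible'' and the permutation of reducible factors both need an argument (in the paper's sources this is done via a cyclicity/highest-weight analysis, not merely by quoting commutativity of irreducible products), so once the two-factor case is repaired these steps should be spelled out rather than asserted.
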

The proposition suggests the following
\begin{definition}
	Two $q$-segments $\Sigma_{r_i}(a_i)$ and $\Sigma_{r_j}(a_j)$ are said to be in \textbf{special position}, if their union is a $q$ segment which properly contains them both. Otherwise, we say that $\Sigma_{r_i}(a_i)$ and $\Sigma_{r_j}(a_j)$ are in \textbf{general position}.
\end{definition}
Moreover, the tensor product of two evaluation representations is described precisely as follows.
\begin{prop}[special position]\label{prop:special_pos_short_ex}
	The tensor product $V= V^{(k)}(a)\otimes V^{(l)}(b)$ of evaluation representations of $U_q(\mathfrak{L}(\mathfrak{sl}_2))$ has a unique proper subrepresentation $W$ if and only if there is a $0\leq p < \min\{k,l\}$ such that $b/a=q^{\pm(k+l-2p)}$. In this case the $q$-segments $\Sigma_{k}(a)$ and $\Sigma_{l}(b)$ are in \textbf{special position} and we have the short exact sequence
	\begin{align*}
		W\lhook\joinrel\rightarrow V^{(k)}(a)\otimes V^{(l)}(b)\rightarrowdbl V/W.
	\end{align*}
	Furthermore, the composition factors $W$ and $V/W$ are irreducible and described as follows.
	\begin{enumerate}
		\item If $b/a=q^{k+l-2p}$, we have
		\begin{align}
			W&\cong V^{(k-p-1)}(q^{-p-1}a)\otimes V^{(l-p-1)}(q^{p+1}b),\notag\\
			V/W&\cong V^{(p)}(q^{k-p}a)\otimes V^{(k+l-p)}(q^{-(k-p)}b).
		\end{align}
		As a representation of $\mathfrak{sl}_2$,
		\begin{align*}
			W\cong V^{(k+l-2p)}\oplus V^{(k+l-2p-2)}\oplus\dots\oplus V^{(|m-n|)}.
		\end{align*}
		\item If $b/a=q^{-(k+l-2p)}$, we have
		\begin{align}
			W&\cong V^{(p)}(q^{-(k-p)}a)\otimes V^{(k+l-p)}(q^{k-p}b),\notag\\
			V/W&\cong V^{(k-p-1)}(q^{p+1}a)\otimes V^{(l-p-1)}(q^{-p-1}b).
		\end{align}
		As a representation of $\mathfrak{sl}_2$,
		\begin{align*}
			W\cong V^{(k+l)}\oplus V^{(k+l-2)}\oplus\dots\oplus V^{(k+l-2p+2)}.\quad\odot
		\end{align*}
	\end{enumerate}
\end{prop}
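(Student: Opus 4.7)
The plan is to reduce everything to the $q$-segment combinatorics already encoded in Theorem \ref{thm:drinfeld_polynomials} and Proposition \ref{prop:iso_to_tens_of_ev}. The existence of a proper subrepresentation is equivalent to reducibility of $V=V^{(k)}(a)\otimes V^{(l)}(b)$, which by Proposition \ref{prop:iso_to_tens_of_ev}(iii) is equivalent to the two $q$-segments $\Sigma_k(a)$ and $\Sigma_l(b)$ being in special position. A direct inspection of $\Sigma_k(a)=\{aq^{k-2j+1}\}$ and $\Sigma_l(b)=\{bq^{l-2j+1}\}$ shows that their union is a $q$-segment properly containing both if and only if $b/a=q^{\pm(k+l-2p)}$ with $0\le p<\min\{k,l\}$, in which case their intersection contains $p$ elements and their union is a $q$-segment of length $k+l-p$.

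In case 1 ($b/a=q^{k+l-2p}$), I would first identify the composition factor $V/W$ via the tensor product of highest weight vectors $v_0\otimes v_0$. By Theorem \ref{thm:drinfeld_polynomials} together with Proposition \ref{prop:comult_partial}(iii), the irreducible quotient of the submodule of $V$ generated by $v_0\otimes v_0$ has Drinfeld polynomial $P_a^{(k)}(u)P_b^{(l)}(u)$. The key combinatorial step is to regroup the roots of this product: they split as the $p$-element intersection, which is exactly the $q$-segment $\Sigma_p(q^{k-p}a)$, and the $(k+l-p)$-element union, which is the $q$-segment $\Sigma_{k+l-p}(q^{-(k-p)}b)$. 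Since one of these two new segments is contained in the other, they are in general position, so by Proposition \ref{prop:iso_to_tens_of_ev}(iii) the module $V^{(p)}(q^{k-p}a)\otimes V^{(k+l-p)}(q^{-(k-p)}b)$ is irreducible with precisely this Drinfeld polynomial. Next I would argue (using the explicit comultiplication in Proposition \ref{prop:comult_partial} and the fact that both factors of $V$ are cyclic on their highest weight vectors) that $v_0\otimes v_0$ in fact generates all of $V$, so $V/W$ is this irreducible tensor product.

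To identify $W$ itself, I would exploit that the classical $\mathfrak{sl}_2$-Clebsch-Gordan decomposition fixes $V\cong\bigoplus_{m=0}^{\min(k,l)}V^{(k+l-2m)}$ as a $U_q(\mathfrak{sl}_2)$-module, while a second application of Clebsch-Gordan gives the $\mathfrak{sl}_2$-content $\bigoplus_{m=0}^{p}V^{(k+l-2m)}$ of $V/W$. Subtraction yields the $\mathfrak{sl}_2$-content of $W$ as $\bigoplus_{m=p+1}^{\min(k,l)}V^{(k+l-2m)}$, which matches the $\mathfrak{sl}_2$-decomposition of the candidate $V^{(k-p-1)}(q^{-p-1}a)\otimes V^{(l-p-1)}(q^{p+1}b)$. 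Checking that $\Sigma_{k-p-1}(q^{-p-1}a)$ and $\Sigma_{l-p-1}(q^{p+1}b)$ are in general position (each sits strictly on opposite sides of the overlap), this candidate is irreducible, and because $V$ now has length two with matching Grothendieck class, one concludes $W$ is as claimed and is uniquely determined as the unique maximal (and hence unique proper) submodule.

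Case 2 ($b/a=q^{-(k+l-2p)}$) is handled symmetrically. The combinatorial roles of intersection and union with the factors of $V/W$ vs.\ $W$ get interchanged because the direction in which the segments slide is reversed; this can be phrased cleanly by noting that case 2 for $V^{(k)}(a)\otimes V^{(l)}(b)$ corresponds to case 1 for $V^{(l)}(b)\otimes V^{(k)}(a)$, and transporting the result across the non-commutativity of $\otimes$ swaps sub and quotient. The main obstacle I anticipate is precisely this swap: one must verify directly from Proposition \ref{prop:comult_partial} and the explicit action of the Drinfeld generators on evaluation modules (Example \ref{xmpl:spin_r/2_evaluation_rep}) that in case 2 the vector $v_0\otimes v_0$ no longer generates all of $V$ but only the submodule $W$, so that $V/W$ ends up with the \emph{smaller} Drinfeld polynomial. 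Everything else is bookkeeping with $q$-segments, dimensions, and the classical Clebsch-Gordan rule.
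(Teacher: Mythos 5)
Your segment bookkeeping is fine (the intersection and union of $\Sigma_k(a)$ and $\Sigma_l(b)$ are indeed $\Sigma_p(q^{k-p}a)$ and $\Sigma_{k+l-p}(q^{-(k-p)}b)$ in case 1, and the general-position checks for the candidate factors are correct), but the proof has two genuine gaps, both at the point where the affine module structure has to be pinned down. First, the step that decides which composition factor carries the full Drinfeld polynomial is not established: you assert that in case 1 the vector $v_0\otimes v_0$ generates all of $V$, citing Proposition \ref{prop:comult_partial} and cyclicity of the two factors, but that reasoning is sign-blind and would "prove" the same thing in case 2, where it is false. Whether $v_0\otimes v_0$ is cyclic is exactly the asymmetry between $b/a=q^{+(k+l-2p)}$ and $b/a=q^{-(k+l-2p)}$, and it needs an actual computation. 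The paper settles it by writing down the classical highest weight vectors $w_j$ of the Clebsch--Gordan components, checking by direct calculation that $w_j$ is killed by $\mathcal{X}_{1,-1}^+$ (and is then an eigenvector of all the $\mathcal{H}_{1,k}$) precisely when $b/a=q^{k+l-2j+2}$, and then handling the opposite sign by dualizing, $(V^{(k)}(a)\otimes V^{(l)}(b))^*\cong V^{(l)}(q^2b)\otimes V^{(k)}(q^2a)$; your remark that case 2 is case 1 with the factors swapped is morally this duality step, but it cannot carry weight until the directional statement in case 1 is proven.

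Second, the identification of the remaining factor ($W$ in case 1, $V/W$ in case 2) is not justified. Matching the $\mathfrak{sl}_2$-restriction of $W_{\max}$ with that of $V^{(k-p-1)}(q^{-p-1}a)\otimes V^{(l-p-1)}(q^{p+1}b)$ determines neither the $U_q(\mathfrak{L}(\mathfrak{sl}_2))$-isomorphism class nor the irreducibility of $W_{\max}$; the phrase "because $V$ now has length two" assumes precisely what is to be shown. To close this you must either exhibit a loop-highest-weight vector inside $W_{\max}$ (the paper's $w_{p+1}$) and compute its eigenvalues under the $\varPhi_{1,r}^\pm$, identifying its loop-weight with the Drinfeld polynomial of the candidate and then comparing dimensions, or run a $q$-character/dominant-monomial argument using the multiplicativity coming from Proposition \ref{prop:comult_partial} together with Corollary \ref{cor:loop_weights_ev_reps_sl2}; the plain Grothendieck class in $\operatorname{Rep}U_q(\mathfrak{sl}_2)$ that you invoke is too coarse. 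Note also that you cannot instead lean on the injectivity of $\chi_q$ from Theorem \ref{thm:properties_of_chi_q}, since in this thesis that is established after, and partly by means of, the present proposition.
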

In fact, this proposition is very important for the basic understanding of what happens in the more general setting discussed in \ref{sect:gen_untw_case}.
Therefore, before we conclude this subsection, let us also give an outline of the proof of Proposition \ref{prop:special_pos_short_ex} as described in the paper \cite{CP1991} in Subsection 4.8 and the book \cite{CPBook} for the Yangian $Y(\mathfrak{sl}_2)$ in Subsection 12.1E.
\begin{proof}[Outline of the proof of Proposition \ref{prop:special_pos_short_ex}]
	As an $U_q(\mathfrak{sl}_2)$ module,
	\begin{equation*}
		V^{(k)}(a)\otimes V^{(l)}(b) = \bigoplus_{j=0}^{\min\{k,l\}} V_q(k+l-2j).
	\end{equation*}
	The highest weight vector $w_j$ in the component $V_{k+l-2j}$ of $V^{(k)}(a)\otimes V^{(l)}(b)$ is given by
	\begin{equation*}
		w_j = \sum_{i=0}^{j}(-1)^iq^{i(l-i+1)}[k-j+i]_q![l-i]_q!\, v_{j-i}\otimes v_i.
	\end{equation*}
	Indeed, one verifies this in a few lines by checking that $w_j$ has the right weight and is annihilated by $x_1^+$.\footnote{ We remind the reader that $\Delta_q(x_1^+)=x_1^+\otimes k_1+ 1\otimes x_1^+$.} Then, again, by a direct computation, one checks that for $j>0$, $w_j$ is annihilated by $\mathcal{X}_{1,-1}^+$ if and only if
	\begin{equation*}
		b/a = q^{k+l-2j+2}.\footnotemark
	\end{equation*}
	\footnotetext{ We also remind the reader that $\Delta_q(\mathcal{X}_{1,-1}^+)=\mathcal{X}_{1,-1}^+\otimes K_1^{-1}+1\otimes\mathcal{X}_{1,-1}^+$.}
	In this case, it follows from
	\begin{equation*}
		[\mathcal{H}_{1,-1},\mathcal{X}_{1,0}^+]=(q+q^{-1})\mathcal{X}_{1,-1}^+
	\end{equation*}
	that $\mathcal{H}_{1,-1}.w_j$ is also annihilated by $\mathcal{X}_{1,0}^+$ and has the same weight as $w_j$. It must therefore be a scalar multiple of $w_j$. Using the relations
	\begin{equation*}
		[\mathcal{H}_{1,l},\mathcal{X}^\pm_{1,k}]=\pm\frac{1}{l}[2l]_q\mathcal{X}^\pm_{1,k+l}
	\end{equation*}
	it follows that $w_j$ is annihilated by all $\mathcal{X}_{1,k}^+$ and an eigenvector of the $\mathcal{H}_{1,k}$. This proves that $V^{(k)}(a)\otimes V^{(l)}(b)$ has a subrepresentation not containing the tensor product of the highest weight vectors in $V^{(k)}(a)$ and $V^{(l)}(b)$ if and only if
	\begin{equation*}
		b/a = q^{k+l-2j+2}
	\end{equation*}
	for some $0<j\leq\min\{m,n\}$. On the other hand, the tensor product has a proper subrepresentation containing the highest weight vectors if and only if its (left) dual
	\begin{equation*}
		(V^{(k)}(a)\otimes V^{(l)}(b))^* \cong V^{(l)}(q^2b)\otimes V^{(k)}(q^2a)
	\end{equation*}
	has a subrepresentation not containing the highest component.\footnote{ We shall say that we used the relation $S_q\circ\operatorname{ev}_{q^2a}=\operatorname{ev}_a\circ S_q$ between the antipodes of $U_q(\mathfrak{sl}_2)$, $U_q(\mathfrak{L}(\mathfrak{sl}_2))$ and the evaluation homomorphism to compute the (left) duals, which is easily checked on the Chevalley generators of $U_q(\mathfrak{L}(\mathfrak{sl}_2))$ in \ref{defprop:U_q(g)_chevalley}.} By the previous argument, this is the case if and only if
	\begin{equation*}
		b/a = q^{-(k+l-2j+2)}
	\end{equation*}
	for some $0<j\leq\min\{m,n\}$.
\end{proof}
In particular, whenever $V^{(k)}(a)\otimes V^{(l)}(b)$ is reducible, the proof implies that it is not completely reducible, nor is $V^{(k)}(a)\otimes V^{(l)}(b)$ isomorphic to $V^{(l)}(b)\otimes V^{(k)}(a)$. Therefore, $U_q(\mathfrak{L}(\mathfrak{g}))$ can't be quasitriangular (or even almost cocommutative). In fact, it implies that the category of finite-dimensional type $\boldsymbol{1}$ representations of $U_q(\tilde{\mathfrak{g}})$ is not quasitensor. Nevertheless, it turns out that $U_q(\tilde{\mathfrak{g}})$ is quasitriangular in a certain 'limiting' sense and that the Grothendieck ring $\operatorname{Rep}(U_q(\tilde{\mathfrak{g}}))$ of the category of finite-dimensional type $\boldsymbol{1}$ representations of $U_q(\tilde{\mathfrak{g}})$ is still a commutative ring. Indeed, Proposition \ref{prop:special_pos_short_ex} proves this for the $\mathfrak{sl}_2$ case, i.e. we have
\begin{cor}\label{cor:comm_of_groth_sl2}
	 The Grothendieck ring $\operatorname{Rep}(U_q(\tilde{\mathfrak{sl}}_2))$ of the category of finite-dimensional type $\boldsymbol{1}$ representations of $U_q(\tilde{\mathfrak{sl}}_2)$ is commutative. $\odot$
\end{cor}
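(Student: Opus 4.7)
The plan is to show that $\operatorname{Rep}(U_q(\tilde{\mathfrak{sl}}_2))$ is generated as a ring by the classes $[V^{(r)}(a)]$ of the evaluation modules, and then to verify that any two such generators commute. The first step is immediate from Proposition \ref{prop:iso_to_tens_of_ev}(i): every irreducible finite-dimensional type $\boldsymbol{1}$ module is (isomorphic to) a tensor product of evaluation modules, and the Grothendieck ring is additively spanned by isomorphism classes of simples. So it is enough to prove $[V^{(k)}(a)][V^{(l)}(b)] = [V^{(l)}(b)][V^{(k)}(a)]$ for all $k,l\in\mathbb{N}$ and $a,b\in\mathbb{C}^\times$.

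I split the argument into two cases depending on the relative position of the $q$-segments $\Sigma_k(\cdot)$ and $\Sigma_l(\cdot)$ attached to the two factors. If the segments are in general position, Proposition \ref{prop:iso_to_tens_of_ev}(iii) tells us that both $V^{(k)}(a)\otimes V^{(l)}(b)$ and $V^{(l)}(b)\otimes V^{(k)}(a)$ are irreducible. By the multiplicativity of Drinfeld polynomials (last paragraph of Theorem \ref{thm:drinfeld_polynomials}) combined with the bijection of Corollary \ref{cor:notation_V(P)}, the two tensor products have the same $I$-tuple of Drinfeld polynomials and are therefore isomorphic, so their classes coincide in the Grothendieck ring.

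The essential case is when the two segments are in special position, say $b/a=q^{k+l-2p}$ with $0\le p<\min(k,l)$; the other sign is analogous by interchanging the two factors. Proposition \ref{prop:special_pos_short_ex} then supplies a short exact sequence
\begin{equation*}
W\;\hookrightarrow\;V^{(k)}(a)\otimes V^{(l)}(b)\;\twoheadrightarrow\;V/W
\end{equation*}
whose composition factors are irreducible and given explicitly as tensor products of two evaluation modules. Applying the same proposition to $V^{(l)}(b)\otimes V^{(k)}(a)$, for which the ratio is $a/b=q^{-(k+l-2p)}$, places us in the opposite sign case of the proposition and produces a second short exact sequence with irreducible composition factors $W_2$ and $V_2/W_2$, again described by explicit tensor products.

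The last step is to compare the two descriptions and verify $W\cong V_2/W_2$ and $V/W\cong W_2$. Matching the formulas in the two cases of Proposition \ref{prop:special_pos_short_ex}, the relevant pairs consist of tensor products built from the same pair of evaluation factors (with the two centres interchanged), so a short computation of the parameter ratios (namely $q^{k+l+2}$ and $q^{l-k}$) shows that neither of these tensor products is itself in special position, hence both are irreducible. The multiplicativity of Drinfeld polynomials then yields the two isomorphisms via Theorem \ref{thm:drinfeld_polynomials}, and the short exact sequences give $[V^{(k)}(a)\otimes V^{(l)}(b)] = [W]+[V/W] = [V_2/W_2]+[W_2] = [V^{(l)}(b)\otimes V^{(k)}(a)]$ in the Grothendieck ring. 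The main obstacle is simply the careful bookkeeping of the shift parameters in Proposition \ref{prop:special_pos_short_ex} to identify the two pairs of composition factors; once the correct case distinction is made the rest is formal.
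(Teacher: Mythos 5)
Your proof is correct and follows essentially the same route as the paper: reduce to classes of evaluation modules via Proposition \ref{prop:iso_to_tens_of_ev}, handle general position by irreducibility and the multiplicativity statement in Theorem \ref{thm:drinfeld_polynomials}, and handle special position via Proposition \ref{prop:special_pos_short_ex}. The only difference is that you spell out the bookkeeping the paper leaves implicit, namely matching the composition factors of $V^{(k)}(a)\otimes V^{(l)}(b)$ and $V^{(l)}(b)\otimes V^{(k)}(a)$ from the two cases of Proposition \ref{prop:special_pos_short_ex}, and your parameter computations there are accurate.
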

The proof of this corollary is straightforward. For, Proposition \ref{prop:special_pos_short_ex} proves that the product of classes of evaluation representations is commutative whenever their corresponding $q$-segments are in special position. Otherwise the $q$-segments are in general position and the tensor product is irreducible. In this case, the commutativity follows from the second part of Theorem \ref{thm:drinfeld_polynomials}. Then, since every finite-dimensional irreducible type $\boldsymbol{1}$ representation is isomorphic to a tensor product of evaluation representations by Proposition \ref{prop:iso_to_tens_of_ev}, the assertion follows.

We will come back to the discussion of the general case in the next chapter.
\subsection{The trigonometric $R$-matrix of Type $A_l$}\label{subsect:trigon_R_matrix_A_l}
We have seen in the last subsection that (untwisted) quantum affine algebras are not quasitriangular. However, we also mentioned that they still turn out to be quasitriangular in a certain 'limiting' sense. (We also refer the reader to Subsection 12.5B in \cite{CPBook} for the following explanation.) To be more precise, one can show that (untwisted) quantum affine algebras are 'pseudotriangular' (the terminology is due to Drinfeld \cite{D} (1887)), and that, as for quasitriangular Hopf algebras, this property is sufficient to guarantee that there is a solution, the pseudo universal $R$-matrix, of the (quantum) Yang--Baxter equation associated to the tensor product of any pair of finite-dimensional representations. In principle, it can be constructed by the so-called quantum double method which was used to construct the universal $R$-matrix of $U_h(\mathfrak{g})$ (cf. Chapter 8 of \cite{CPBook}), but there are two difficulties caused by differences between the structure of the root system of an untwisted affine Kac--Moody algebra $\tilde{\mathfrak{g}}$ and that of its underlying finite-dimensional Lie algebra $\mathfrak{g}$. The first important difference is that, unlike the situation for $\mathfrak{g}$, not every root of $\tilde{\mathfrak{g}}$ can be obtained by applying an element of the Weyl group of $\tilde{\mathfrak{g}}$ to a simple root. This means that the Weyl group cannot be used to define the non-simple root vectors. The other difference is that $\tilde{\mathfrak{g}}$ has infinitely many positive roots. Thus, if one is to prove a formula like that in Subsection \ref{subsect:QUE_algebras} for the universal $R$-matrix $\tilde{\mathcal{R}}$ of $U_h(\tilde{\mathfrak{g}})$, the coefficients in its $h$-adic expansion will be formal infinite linear combinations of products of root vectors. Hence, there would be no guarantee that $\tilde{\mathcal{R}}$ would act on a tensor product of representations of $U_h(\tilde{\mathfrak{g}})$. This is why we obtain only a 'pseudo-universal' $R$-matrix.

However, despite everything, there is the following result of Drinfeld (1987) \cite{D}. Let $\lambda$ be an indeterminate and let $\tau'_\lambda$ be the automorphism of $U_h(\tilde{\mathfrak{g}})((\lambda))$ given by
\begin{equation*}
	\tau'_\lambda(X_i^\pm) = \lambda^\pm X_i^\pm,\quad\tau'_\lambda(H_i) = H_i,\quad i=0,1,\dots,l.
\end{equation*}
\begin{thm}
	There exists an element $\tilde{\mathcal{R}}(\lambda)\in(U_h(\tilde{\mathfrak{g}})\otimes U_h(\tilde{\mathfrak{g}}))((\lambda))$ such that, if $\rho:U_h(\tilde{\mathfrak{g}})\to \End(V)$ is an indecomposable representation of $U_h(\tilde{\mathfrak{g}})$ on a free $\mathbb{C}[[h]]$-module $V$ of finite rank, then $\tilde{R}^\rho(\lambda)=(\rho\otimes\rho)((\tau'_\lambda\otimes\id)(\tilde{\mathcal{R}}))$ is a well defined element of $\End(V\otimes V)((\lambda))$ which satisfies the (quantum) Yang--Baxter equation in the form
	\begin{equation}
		\tilde{R}_{12}^\rho(\lambda/\mu)\tilde{R}_{13}^\rho(\lambda/\nu)\tilde{R}_{23}^\rho(\mu/\nu)=\tilde{R}_{23}^\rho(\mu/\nu)\tilde{R}_{13}^\rho(\lambda/\nu)\tilde{R}_{12}^\rho(\lambda/\mu).\label{eqn:QYBE_mult}
	\end{equation}
	Up tp a scalar factor, $\sigma\circ \tilde{R}^\rho(\lambda)$ is a rational matrix-valued function of $\lambda$ and is an intertwining operator between the representations $(\rho\otimes\rho)(\tau'_\lambda\otimes\id)$ and $(\rho\otimes\rho)(\id\otimes\tau'_\lambda)$. $\odot$
\end{thm}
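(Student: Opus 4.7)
The plan is to construct $\tilde{\mathcal{R}}(\lambda)$ via a loop-adapted version of the quantum double construction that gave the universal $R$-matrix for $U_h(\mathfrak{g})$ in Theorem \ref{thm:finite_univ_R_matr_U_h}. The natural presentation of $U_h(\tilde{\mathfrak{g}})$ for this purpose is the second Drinfeld realization (Theorem \ref{thm:second_drin_quantum_aff} with $h$-adic generators as in Remark \ref{rems:quant_aff}(3)), since it replaces the Chevalley description by one mirroring the loop construction of Subsection \ref{subsect:loop_constr_untw_Kac--Moody}: the real root vectors $\mathcal{X}^\pm_{\beta,k}$ are indexed by $(\beta,k)\in\Phi^+\times\mathbb{Z}$, and the imaginary root vectors $\mathcal{H}_{i,r}$ generate the loop-Cartan. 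First I would use these, together with Beck's affine PBW basis, to write $U_h(\tilde{\mathfrak{g}})$ as a triangular decomposition $U_h^-\cdot U_h^0\cdot U_h^+$ in analogy with Corollary \ref{cor:pbw_U_h}, and identify an affine Hopf-pairing between the upper-triangular and lower-triangular pieces. The abstract $\tilde{\mathcal{R}}$ is then the canonical element of this pairing, taking the Khoroshkin--Tolstoy shape
\begin{equation*}
    \tilde{\mathcal{R}} \;=\; \tilde{\mathcal{R}}^+ \cdot \tilde{\mathcal{R}}^0 \cdot \tilde{\mathcal{R}}^- \cdot K,
\end{equation*}
where $\tilde{\mathcal{R}}^\pm$ are ordered products of $q$-exponentials $\exp_{q_\beta}[(1-q_\beta^{-2})\mathcal{X}^\pm_{\beta,k}\otimes \mathcal{X}^\mp_{-\beta,-k}]$ over $(\beta,k)$ with $k\geq0$ resp. $k<0$, the factor $\tilde{\mathcal{R}}^0$ is an infinite exponential in the $\mathcal{H}_{i,r}\otimes\mathcal{H}_{j,-r}$, and $K$ is the usual Cartan part $\exp[h\sum(B^{-1})_{ij}H_i\otimes H_j]$.

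Because $\Phi^+(\tilde{\mathfrak{g}})$ is infinite, $\tilde{\mathcal{R}}$ does not live in $U_h(\tilde{\mathfrak{g}})\otimes U_h(\tilde{\mathfrak{g}})$ but in an appropriate completion; this is the technical obstacle flagged in the introduction to the subsection. The trick for evaluating it on representations is precisely the automorphism $\tau'_\lambda$: it acts by $\lambda^k$ on the homogeneous degree-$k$ component with respect to the grading in Remark \ref{rems:quant_aff}(7), so
\begin{equation*}
    \tilde{\mathcal{R}}(\lambda) \;:=\; (\tau'_\lambda\otimes\id)(\tilde{\mathcal{R}})
\end{equation*}
is a formal Laurent series in $\lambda$ whose coefficient at each power $\lambda^N$ is a finite sum of products of root vectors (since each $(\beta,k)$-factor contributes $\lambda^{\pm k}$). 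Given an indecomposable representation $\rho$ on a finite rank module $V$, the generators $\mathcal{X}^\pm_{\beta,k}$ act locally nilpotently, so $(\rho\otimes\rho)$ applied to each $\lambda^N$-coefficient of $\tilde{\mathcal{R}}(\lambda)$ is a finite sum of well-defined operators; this produces the element $\tilde{R}^\rho(\lambda)\in\End(V\otimes V)((\lambda))$.

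Next I would verify the key structural identities. The key observation is that $\tau'_\lambda$ is a Hopf algebra automorphism, as one checks on the Chevalley generators using $\Delta(X_i^\pm)=X_i^\pm\otimes e^{\pm d_ihH_i}+1\otimes X_i^\pm$, so
\begin{equation*}
    \Delta\circ\tau'_\lambda = (\tau'_\lambda\otimes\tau'_\lambda)\circ\Delta.
\end{equation*}
Starting from the abstract quasitriangularity of $\tilde{\mathcal{R}}$ (which itself follows from the quantum double construction), I apply $\tau'_\lambda\otimes\tau'_\mu\otimes\tau'_\nu$ to the universal QYBE $\tilde{\mathcal{R}}_{12}\tilde{\mathcal{R}}_{13}\tilde{\mathcal{R}}_{23}=\tilde{\mathcal{R}}_{23}\tilde{\mathcal{R}}_{13}\tilde{\mathcal{R}}_{12}$. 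A direct grading count shows $(\tau'_a\otimes\tau'_b)(\tilde{\mathcal{R}})=(\tau'_{a/b}\otimes\id)(\tilde{\mathcal{R}})$ (the Cartan piece $K$ is $\tau$-invariant, and each $(\beta,k)$-term picks up $a^kb^{-k}$), so the universal QYBE converts into the spectral form \eqref{eqn:QYBE_mult} after applying $\rho\otimes\rho\otimes\rho$. The intertwining claim, i.e. that $\sigma\circ\tilde{R}^\rho(\lambda)$ commutes with $(\rho\otimes\rho)(\tau'_\lambda\otimes\id)(\Delta(a))=(\rho\otimes\rho)(\id\otimes\tau'_\lambda)(\Delta^{\mathrm{op}}(a))$, follows in the same way from the almost cocommutativity $\Delta^{\mathrm{op}}(a)\tilde{\mathcal{R}}=\tilde{\mathcal{R}}\Delta(a)$ combined with the Hopf automorphism property of $\tau'_\lambda$.

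The hardest step is verifying the abstract quasitriangularity of the formal $\tilde{\mathcal{R}}$ in the affine setting: the quantum double pairing must be shown to be nondegenerate between the two Borel-type halves coming from the Drinfeld generators, and the infinite product must be checked to intertwine $\Delta$ and $\Delta^{\mathrm{op}}$ modulo the completion. This is exactly what Khoroshkin--Tolstoy carried out; I would cite their explicit formula for $\tilde{\mathcal{R}}$ and verify the conclusions directly on it, so that the remaining assertions on $\tilde{R}^\rho(\lambda)$ reduce to the representation-theoretic grading argument above. Rationality (up to an overall scalar factor) of $\sigma\circ\tilde{R}^\rho(\lambda)$ in $\lambda$ then follows because, on an indecomposable $V$, the intertwining property forces $\sigma\circ\tilde{R}^\rho(\lambda)$ to be proportional to a unique (rational) intertwiner between the representations $(\rho\otimes\rho)(\tau'_\lambda\otimes\id)\circ\Delta$ and $(\rho\otimes\rho)(\tau'_\lambda\otimes\id)\circ\Delta^{\mathrm{op}}$, whose finite-dimensional matrix coefficients are rational functions of $\lambda$ by the usual Schur-type argument.
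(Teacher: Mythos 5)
The paper itself does not prove this theorem: it is quoted as Drinfeld's result (via Chari--Pressley, Subsection 12.5B), with the Khoroshkin--Tolstoy formula cited afterwards for an explicit realisation, and the text only records the two obstructions (not every affine root is Weyl-conjugate to a simple one, and there are infinitely many positive roots). Your outline follows essentially the route that this cited literature takes -- quantum-double canonical element for the two halves of the second Drinfeld realization, evaluation made meaningful by the $\lambda$-grading, universal YBE and almost-cocommutativity pushed through $\tau'_\lambda$ and $\rho\otimes\rho$, rationality from uniqueness of the intertwiner -- so as a plan it is sound and consistent with what the paper points to; like the paper, you ultimately defer the genuinely hard input (existence of $\tilde{\mathcal{R}}$ in a suitable completion together with its quasitriangularity/intertwining properties) to Khoroshkin--Tolstoy rather than proving it. One small difference worth noting: the paper's remark emphasizes that Khoroshkin--Tolstoy avoid the braid group and use a normal ordering of the affine positive roots, whereas you organise the construction around Beck's braid-group PBW; both are legitimate, but they are not the same bookkeeping.

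Two concrete soft spots. First, you justify coefficient-wise finiteness of $\tilde{\mathcal{R}}(\lambda)$ by saying $\tau'_\lambda$ acts by $\lambda^k$ on the degree-$k$ component of the grading in Part \ref{enum:grading_quant_aff} of Remarks \ref{rems:quant_aff}; that grading (loop degree $r$ of $\mathcal{H}_{i,r}$, $\mathcal{X}^\pm_{i,r}$) is the one integrated by $\tau_a$ of Equation (\ref{eqn:tau_a_q-deformed}), not by $\tau'_\lambda$, which rescales \emph{all} Chevalley generators $X_i^\pm$, $i=0,\dots,l$, and hence implements the principal grading. The distinction matters for exactly the point you are arguing: the degree-zero part of the loop grading is infinite-dimensional (all of $U_h(\mathfrak{g})$ sits there), so finiteness of each $\lambda$-coefficient only follows from the principal grading, under which each graded piece of the positive part is finite-dimensional; with that correction the argument (and the identity $(\tau'_a\otimes\tau'_b)(\tilde{\mathcal{R}})=(\tau'_{a/b}\otimes\id)(\tilde{\mathcal{R}})$, which uses that the canonical element pairs opposite degrees) goes through, and the appeal to local nilpotency is not needed. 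Second, the rationality claim is thinner than you present it: indecomposability of $V$ plus ``Schur-type'' reasoning does not by itself give a one-dimensional space of intertwiners between $V(\lambda)\otimes V$ and $V\otimes V(\lambda)$ for generic $\lambda$, nor rationality of its entries; one has to argue that the intertwining condition is a finite linear system with coefficients polynomial (Laurent) in $\lambda$ whose solution space is generically one-dimensional, so that a suitably normalised solution has rational matrix coefficients. That step is part of Drinfeld's proof and should either be carried out or cited explicitly rather than folded into a one-line Schur argument.
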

\begin{rems}\hspace{1em}
	\begin{enumerate}
		\item The Yang--Baxter equation (\ref{eqn:QYBE_mult}) (with multiplicative parameters) can be brought into difference type (i.e. additive parameters) simply by making the change of variable $\lambda=e^u$, $\mu = e^v$, $\nu = e^w$. As a consequence, $\tilde{R}^\rho$ becomes a 'trigonometric' function.
		\item Khoroshkin and Tolstoy (1992) \cite{KT2} give a formula for $\tilde{\mathcal{R}}(\lambda)$ for arbitrary (untwisted) quantum affine algebras. They define root vectors of $U_h(\tilde{\mathfrak{g}})$ without using the braid group action, using instead the notion of a \textbf{normal ordering} of the positive roots of $\tilde{\mathfrak{g}}$. $\odot$
	\end{enumerate}
\end{rems}
For $U_q(\tilde{\mathfrak{sl}_2})$ and $U_q(\tilde{\mathfrak{sl}_3})$ the Khoroshkin and Tolstoy construction has been explicitly carried out in the paper \cite{BGKNR} for the first fundamental representation and the result for general $U_q(\tilde{\mathfrak{sl}_n})$ is conjectured based on results from direct calculations of the universal $R$-matrix as the (up to normalisations) unique intertwiner $I=\sigma\circ R$. Precisely, they even use a slightly more general automorphism $\tilde{\tau}_\lambda$ defined on the Chevalley generators by
\begin{equation*}
	\begin{split}
		\tilde{\tau}_\lambda(\mathcal{H}_i) = \mathcal{H}_i,\quad \tilde{\tau}_\lambda(\mathcal{X}_i^\pm) = \lambda^{\pm s_i}\mathcal{X}_i^\pm,
	\end{split}
\end{equation*}
where the $s_i$, $i=0,\dots,l$, are some integers. Obviously, $\tilde{\tau}_\lambda$ generalises $\tau_\lambda$ ($s_0=1$ and $s_i=0$ for $i=1,\dots,l$) and $\tau'_\lambda$ ($s_i=1$ for $i=0,\dots,l$). However, it is a priori clear that the rescaling of the finite part, i.e. $i\neq 0$ doesn't have an interpretation in terms of the loop character of $U_q(\tilde{\mathfrak{sl}_n})$ and one can assume that $s_i=0$ for $i=1,\dots,l$ simply by rescaling the (Chevalley) basis elements $\mathcal{X}_{i,0}$ of the second Drinfeld realization \ref{thm:second_drin_quantum_aff}. On the other side, one can clearly take $s_0=1$ up to redefining the indeterminate $\lambda$. Therefore, let us write down the result for $\tau$, i.e. $s_0=1$ and $s_i=0$, $i=1,\dots,l$.
\begin{conj}[the (fundamental) trigonometric $R$-matrix of type $A_l$]
	Let $\xi_l$ be the function defined by
	\begin{equation*}
		\xi_l(\lambda) = \sum_{m=1}^{\infty}\frac{1}{[l+1]_{q^m}}\frac{\lambda^m}{m}
	\end{equation*}
	and let $(V_f,\rho_f)$ be a fundamental representation of $U_q(\tilde{\mathfrak{sl}}_{l+1})$.\footnote{ Where we have set $\rho_f\equiv \rho_{V_f}$ here by abuse of notation. } Then, the (pseudo-)universal $R$-matrix acting on the tensor product of the two fundamental representations $V_f(\lambda) = \tau_\lambda^*(V_f)$ and $V_f$, i.e. $R(\lambda)=(\rho_f\otimes\rho_f)((\tau_\lambda\otimes\id)(\tilde{\mathcal{R}}))$, is given by
	\begin{equation*}
		\begin{split}
			R(\lambda) &= q^{\frac{l}{l+1}}\frac{e^{\xi_l(q^2\lambda)}}{e^{\xi_l(q^{-2}\lambda)}}\left[\sum_aE_{aa}\otimes E_{aa}+q^{-1}\frac{1-\lambda}{1- q^{-2}\lambda}\sum_{a\neq b}E_{aa}\otimes E_{bb}\right.\\
			&\left.+\frac{1-q^{-2}}{1- q^{-2}\lambda}\sum_{a<b}E_{ab}\otimes E_{ba}+\frac{1-q^2}{1-q^2\lambda^{-1}}\sum_{a>b}E_{ab}\otimes E_{ba}\right].\footnotemark\quad \odot
		\end{split}
	\end{equation*}
	\footnotetext{ Where we shall make the definition $q = e^h$.}
\end{conj}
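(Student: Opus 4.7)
The plan is to establish the formula in two stages: first pin down the matrix structure in the bracket up to an overall scalar, then fix the scalar using the Khoroshkin--Tolstoy formula.

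First I would argue uniqueness. For generic $\lambda \in \mathbb{C}^\times$, the Drinfeld polynomials of $V_f(\lambda)$ and $V_f$ are in general position (in the sense of the type $A$ analogue of Proposition \ref{prop:iso_to_tens_of_ev} applied to fundamental $q$-segments, cf.\ Proposition \ref{prop:center_drin_poly_ev}), so $V_f(\lambda) \otimes V_f$ is irreducible and isomorphic to $V_f \otimes V_f(\lambda)$ by Theorem \ref{thm:drinfeld_polynomials}. Hence any intertwiner $V_f(\lambda) \otimes V_f \to V_f \otimes V_f(\lambda)$ is unique up to a scalar function of $\lambda$, which reduces the problem to (i) writing down one non-zero intertwiner of the claimed matrix shape, and (ii) computing the correct scalar prefactor.

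For step (i), I would make the $\mathfrak{sl}_{l+1}$-weight-preserving ansatz $R(\lambda) = \sum a_{ab}(\lambda) E_{aa}\otimes E_{bb} + \sum b_{ab}(\lambda) E_{ab}\otimes E_{ba}$ (the only blocks compatible with the diagonal $\mathcal{K}_i$-action) and impose that $\sigma \circ R(\lambda)$ commutes with $\Delta(\mathcal{X}_{i,0}^\pm)$ for $i = 1,\ldots,l$; this forces the four summands to have the coefficients $1$, $q^{-1}(1-\lambda)/(1-q^{-2}\lambda)$, $(1-q^{-2})/(1-q^{-2}\lambda)$, and $(1-q^2)/(1-q^2\lambda^{-1})$ (up to a common factor), since these are the familiar $U_q(\mathfrak{sl}_{l+1})$-invariant building blocks with the only freedom being their relative weighting. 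Imposing in addition the intertwining condition with $\Delta(\mathcal{X}_{0,0}^\pm)$ — equivalently, with the image under $\tau_\lambda \otimes \id$ of the affine Chevalley generator, which via Jimbo's homomorphism (Proposition \ref{prop:Jimbos_hom}) is a nested $q$-commutator in the $x_i^\mp$ — fixes the relative coefficients and confirms the rational $\lambda$-dependence.

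For step (ii), I would invoke the Khoroshkin--Tolstoy formula \cite{KT}, which gives $\tilde{\mathcal{R}}$ as a Cartan factor times an ordered product over positive real roots of $\tilde{\mathfrak{g}}$ of $q$-exponentials in the corresponding root vectors, times an infinite product over imaginary roots (indexed by $k \geq 1$) of exponentials in bilinears of the $\mathcal{H}_{i,k} \otimes \mathcal{H}_{j,-k}$. Evaluated in $(\rho_f \otimes \rho_f)(\tau_\lambda \otimes \id)$, the real-root contributions collapse onto precisely the matrix in the bracket (because each $\mathcal{X}_{\beta,r}^\pm$ acts with rank one on $V_f$, so the $q$-exponentials truncate to linear terms with coefficients proportional to $\lambda^r$), and the Cartan prefactor supplies the $q^{l/(l+1)}$. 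The imaginary-root contribution resums into the scalar $e^{\xi_l(q^2\lambda)}/e^{\xi_l(q^{-2}\lambda)}$ once one knows the eigenvalues of the $\mathcal{H}_{i,k}$ on $V_f$, which one extracts from the evaluation-representation formulas in Subsection \ref{subsect:Jimbos_hom_und_ev_reps} (each restricted $\mathfrak{sl}_2$-triple gives the eigenvalues computed in Example \ref{xmpl:spin_r/2_evaluation_rep} and Corollary \ref{cor:loop_weights_ev_reps_sl2}).

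The hard part will be this last resummation. It requires first a clean choice of normal ordering on positive roots of $\tilde{\mathfrak{sl}}_{l+1}$, then identifying, on the fundamental representation, the eigenvalue of each $\mathcal{H}_{i,k}$-bilinear appearing in the KT Cartan factor, and finally checking that the geometric-series--type sums telescope to exactly the generating function $\xi_l(\lambda) = \sum_{m\geq 1} \lambda^m/(m\,[l+1]_{q^m})$. It is precisely because this resummation step has only been checked by direct calculation for $l = 1, 2$ in \cite{BGKNR} that the statement is phrased as a conjecture, and a complete proof for general $l$ would consist in verifying the combinatorial identity underlying this resummation.
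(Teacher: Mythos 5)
Your proposal follows essentially the same route as the paper: the statement is left as a conjecture there, and its justification consists of exactly the two ingredients you name, namely uniqueness (up to a scalar function of $\lambda$) of the intertwiner between the generically irreducible tensor products of fundamental representations, together with the Khoroshkin--Tolstoy construction, whose explicit evaluation in \cite{BGKNR} exists only for $l=1,2$ and supplies the normalisation $q^{\frac{l}{l+1}}e^{\xi_l(q^2\lambda)}/e^{\xi_l(q^{-2}\lambda)}$. You also correctly identify that the open step for general $l$ is the resummation of the Cartan/imaginary-root contributions into $\xi_l$, which is precisely why the paper records this as a conjecture rather than a theorem.
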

We note that the fundamental representation $V_f$ in the paper \cite{BGKNR} is defined as $(\operatorname{ev}^{-q^2})^*(V_q(\omega_1))$. However, it is clear that any other fundamental representation leads to the same result since the $R$-matrix only depends on the fraction of the spectral parameters of the representations. More importantly, compared to other methods of computing the $R$-matrix, one naturally obtains a proper normalisation implied by the fact that $\tilde{\mathcal{R}}(\lambda)$ is the (pseudo-)universal $R$-matrix of $U_h(\tilde{\mathfrak{g}})((\lambda))$, i.e. $\tilde{\mathcal{R}}(\lambda)$ has the properties (cf. \cite{CPBook} Theorem 12.5.1)
\begin{equation*}
	\begin{split}
		(\Delta_h\otimes\id) (\tilde{\mathcal{R}}(\lambda))& = \tilde{\mathcal{R}}_{13}(\lambda)\tilde{\mathcal{R}}_{23}(\lambda),\quad (\id\otimes\Delta_h) (\tilde{\mathcal{R}}(\lambda)) = \tilde{\mathcal{R}}_{13}(\lambda)\tilde{\mathcal{R}}_{12}(\lambda),\\
		((\tau_\lambda\otimes\id)&(\Delta_h^{\operatorname{op}}(a)))\tilde{\mathcal{R}}(\lambda) = \tilde{\mathcal{R}}(\lambda)((\tau_\lambda\otimes\id)(\Delta_h(a)))
	\end{split}
\end{equation*}
for all $a\in U_h(\tilde{\mathfrak{g}})$. Moreover, $\tilde{\mathcal{R}}(\lambda)$ is a triangular solution of the (quantum) Yang--Baxter equation with multiplicative parameters, i.e.
\begin{equation*}
	\begin{split}
		\tilde{\mathcal{R}}_{12}(\lambda_{12})\tilde{\mathcal{R}}_{13}(\lambda_{13})\tilde{\mathcal{R}}_{23}(\lambda_{23}) &= \tilde{\mathcal{R}}_{23}(\lambda_{23})\tilde{\mathcal{R}}_{13}(\lambda_{13})\tilde{\mathcal{R}}_{12}(\lambda_{12})\text{ and}\\
		\tilde{\mathcal{R}}_{12}(\lambda)^{-1} &= \tilde{\mathcal{R}}_{21}(\lambda^{-1}).\footnotemark
	\end{split}
\end{equation*}
\subsection{Graphical notation}
\label{subsect:graphical_notation}
Let $V_f$ be a fundamental representation of $U_q(\tilde{\mathfrak{g}})$.\footnotetext{ We simplified the notation here by making the definition $\lambda_{ij}\coloneq \lambda_i/\lambda_j$.}\footnote{We usually take $V_f=V_{\omega_1}(1)$ the fundamental representation defined by its Drinfeld polynomials $P_1 = 1-u$ and $P_i=1$, $i\neq1$. However, when $\mathfrak{g}=\mathfrak{sl}_n$, we can also take $V_f=(\operatorname{ev}^{-q^2})^*(V_q(\omega_1))$ (or any other fundamental representation defined through one of the two evaluation homomorphisms), as above.} We depict $V_f$ graphically by an oriented line. An operator $O \in \End(V)$ is associated with a symbol, for instance a point, on the line. Let $\ket{v}\in V_f$ and $A, B \in \End(V_f)$, then $A\cdot B \ket{v}$ is depicted by
\begin{figure}[H]
	\centering
	\begin{tikzpicture}[scale=1.4]
		\draw[thick,-stealth] (0,0)node[anchor=east]{} -- (3,0) node[anchor=west]{$|v\rangle$};
		\filldraw[black] (1,0) circle (1.2pt) node[anchor=south]{$A$};
		\filldraw[black] (2,0) circle (1.2pt) node[anchor=south]{$B$};
		\node at (3.75,-0.1) {.} ;
	\end{tikzpicture}
\end{figure}
Using the automorphism $\tau_\lambda$ (see Equation (\ref{eqn:tau_a_q-deformed})) we define a family of (fundamental) representations with spectral parameter $\lambda\in\mathbb{C}^\times$ as usual, i.e. $V_f(\lambda)\coloneq\tau_\lambda^*(V_f)$ the pull-back of $V_f$ by $\tau_\lambda$.\footnote{This can be done for any $\mathfrak{g}$, but in general we don't have an evaluation homomorphism. A representation $V$ of $\mathfrak{g}$ can still be lifted, but the ambiguity of defining an origin (such as the pull-back $\operatorname{ev}_0^*(V)\eqcolon V(0)$ of $V$ at $0\in\mathbb{C}$) remains (see \cite{CPBook} Theorem 12.5.3).} Thus, we may identify the composition of the flip map $\sigma$ with the trigonometric $R$-matrix $(\rho_{V_f(\lambda)}\otimes\rho_{V_f(\mu)})(\tilde{\mathcal{R}}) \coloneq R(\lambda/\mu)\in \End(V_f(\lambda)\otimes V_f(\mu))$ with any positively oriented vertex between such oriented lines with spectral parameters $\lambda$ and $\mu$, respectively, where $\tilde{\mathcal{R}}$ is the (pseudo-)universal $R$-matrix of the untwisted quantum affine algebra $U_q(\tilde{\mathfrak{g}})$. Note that the orientation of the vertex is naturally induced by the orientation of the lines, i.e. we have
\begin{figure}[H]
	\centering
	\begin{tikzpicture}[scale=1.2]
		\draw[thick,-stealth] (3,0)node[anchor=east]{$\mu$} -- (3,2) node[anchor=west]{};
		\draw[thick,-stealth] (2,1)node[anchor=south]{$\lambda$} -- (4,1) node[anchor=west]{};
		\node at (4.2,0) {.} ;
		\node at (0,1) {$\sigma\circ R(\lambda/\mu)$} ;
		\node at (1.25,1) {$=$} ;
	\end{tikzpicture}
\end{figure}
The (quantum) Yang--Baxter equation in the form (\ref{eqn:QYBE_mult}) is then depicted by
\begin{figure}[H]
	\centering
	\begin{tikzpicture}[scale=1.2]
		\draw[thick,-stealth] (0,1)node[anchor=south]{$\lambda_2$} -- (3,3) node[anchor=west]{};
		\draw[thick,-stealth] (0,2)node[anchor=south]{$\lambda_1$} -- (3,0) node[anchor=west]{};
		\draw[thick,-stealth] (2.25,-.25)node[anchor=east]{$\lambda_3$} -- (2.25,3.25) node[anchor=west]{};
		\node at (8.5,-.25) {.} ;
		\node at (4,1.5) {$=$} ;
		\draw[thick,-stealth] (5,3)node[anchor=south]{$\lambda_1$} -- (8,1) node[anchor=west]{};
		\draw[thick,-stealth] (5,0)node[anchor=south]{$\lambda_2$} -- (8,2) node[anchor=west]{};
		\draw[thick,-stealth] (5.75,-.25)node[anchor=east]{$\lambda_3$} -- (5.75,3.25) node[anchor=west]{};
	\end{tikzpicture}
\end{figure}
As for string diagrams, we can now visualise any braiding of tensor products between (fundamental) representations.
We shall also note that this is a possible (mathematical) starting point for the definition of the $\mathfrak{sl}_{n}$ trigonometric vertex model. When $l=1$, the latter is the famous six-vertex model, a model which describes ice in the two-dimensional plane. However, we will stay with the basic definitions made up until now for this thesis and refer the reader to my dissertation \cite{HJue} for instance, where a more detailed explanation can be found. The general idea is that the behaviour of two-dimensional vertex models build from an $R$-matrix is roughly said completely determined\footnote{ At least in the limit of large lattice size.} by the spectrum of a one parameter family of transfer matrices which, due to the Yang--Baxter equation, constitute a commuting family. It turns out that the latter can be interpreted as the generating function of the local integrals of motion of certain quantum spin chains, the Heisenberg XXZ spin chain, i.e. $l=1$, being probably the most prominent example thereof. Let us briefly discuss this in the next subsection.
\subsection{A physics point of view}\label{subsect:A_physics_pov}
In retrospect of some hints we gave in the last section, let us briefly formulate a physics point of view. It is based on a few lectures about vertex models and spin chains by Frank Göhmann and Herman Boos. In fact, we may also refer the reader to Section 7.5 in the book \cite{CP1991}, where similar things are explained.\\

We define a \textbf{vertex model} on a $\boldsymbol{2D}$ \textbf{square lattice} $\boldsymbol{\mathcal{L}}$ through its \textbf{local configurations} at each vertex.
The possible (Boltzmann-)weights at a \textbf{vertex} $v_{ij}$ are collected in the so-called $\boldsymbol{R}$\textbf{-matrix} (of the model), and depend on variables $\epsilon_i,\,\epsilon_i',\,\epsilon_j,\,\epsilon_j'$ associated to the adjacent edges.
\begin{equation*}
	\begin{tikzpicture}[scale=0.6]
		\usepgflibrary{arrows}
		\draw [thick, -stealth] (1,0) node[anchor=east]{$\epsilon_j$} -- (1,4) node[anchor=east]{$\epsilon_j'$};
		\draw (1.6, 2.4) node{$v_{i\,j}$};
		\draw [thick, -stealth] (-1,2) node[anchor=south]{$\epsilon_i$} -- (3,2) node[anchor=south]{$\epsilon_i'$};
		\draw (6, 2.1) node{$=: R^{\epsilon_i'\epsilon_j'}_{\epsilon_i\epsilon_j}$};
	\end{tikzpicture}
	\vspace{-1em}
\end{equation*}
The \textbf{weight} of a configuration on the lattice is therefore given by the \textbf{product} of the \textbf{local weights}, $W(C) := \prod_{v_{ij}\in \mathcal{L}}R^{\epsilon_n'(C)\epsilon_l'(C)}_{\epsilon_n(C)\epsilon_l(C)}.$
The \textbf{partition function} of the model is then defined by the sum over all possible configurations $Z = \sum_{C}W(C)$.\\

We state the \textbf{integrability of the model} as follows. Assume we have a family of $R$-matrices $R:\mathbb{C}^2\to \operatorname{End}(\mathbb{C}^d\otimes \mathbb{C}^d)$ parametrised by \textbf{two spectral parameters}
\begin{equation*}
	\begin{tikzpicture}[scale=0.6]
		\usepgflibrary{arrows}
		\draw [thick, -stealth] (1,0) node[anchor=east]{$\mu$} -- (1,4);
		\draw (1.6, 2.4);
		\draw [thick, -stealth] (-1,2) node[anchor=south]{$\lambda$} -- (3,2);
		\draw (6, 2.1) node{$=: R(\lambda,\mu)$};
	\end{tikzpicture}
	\vspace{-1em}
\end{equation*}
such that $R(\lambda,\lambda)=P$ is the \textbf{permutation operator} (\textbf{regularity}).
The local \textbf{integrability condition} is then given by the (quantum) \textbf{Yang Baxter equation}
\begin{equation*}
	\begin{tikzpicture}[scale=0.8]
		\usepgflibrary{arrows}
		\draw [thick, -stealth] (3,0.25) node[anchor=east]{$\nu$} -- (3,6.25) ;
		\draw [thick, -stealth] (-1,2.5) node[anchor=south]{$\mu$} -- (4,5.5);
		\draw [thick, -stealth] (-1,4) node[anchor=south]{$\lambda$} -- (4,1);
		\draw (5, 3.25) node{$=$};
		\draw [thick, -stealth] (7,0.25) node[anchor=east]{$\nu$} -- (7,6.25) ;
		\draw [thick, -stealth] (6,1) node[anchor=south]{$\mu$} -- (11,4);
		\draw [thick, -stealth] (6,5.5) node[anchor=south]{$\lambda$} -- (11,2.5);
		\node at (12,0) {,};
	\end{tikzpicture}
	\vspace{-1.25em}
\end{equation*}
where the \textbf{graphical notation} is defined by summing over all configurations of the \textbf{inner edges}.\footnote{ Note that this corresponds exactly to the coordinate formulation of the graphical notation introduced in Subsection \ref{subsect:graphical_notation} after choosing a basis.}

To give the reader a feeling why this is a priori the right condition for the integrability of the model, we may assume that this equation reflects the general scattering of three particles. If we pick one direction as the 'time' direction, then the equation implies that the order of the scattering doesn't matter. Therefore, any $n$-particle scattering can be decomposed into a product of two particle interactions and the order doesn't matter. In fact, from classical mechanics we know that only the two particle problem is classically integrable. Moreover, we may interpret the fact that the order of the scattering doesn't matter in terms of the commutativity of the Grothendieck ring of some Hopf algebra. In fact, it can be elucidated that particles in quantum field theory should indeed correspond to irreducible representations of some Hopf algebra. However, this is as far as we will go with a particle physics motivation of the quantum Yang Baxter equation as an integrability condition. The direct explanation in statistical mechanics is as follows.\\

We define familys of \textbf{inhomogeneous transfer matrices}
\begin{equation*}
	\begin{tikzpicture}[scale=0.5]
		\usepgflibrary{arrows}
		\draw (-14, 1) node{$t_L(\lambda|\mu_1,...,\mu_L)\; :=$};
		\draw [thick, -stealth] (-4,-.5) node[anchor=north]{$\mu_L$} -- (-4,2.5);
		\draw [thick, -stealth] (1,-.5) node[anchor=north]{$\mu_3$} -- (1,2.5);
		\draw [thick, -stealth] (3,-.5) node[anchor=north]{$\mu_2$} -- (3,2.5);
		\draw [thick, -stealth] (5,-.5) node[anchor=north]{$\mu_1$} -- (5,2.5);
		\draw [right hook-stealth, thick] (-6,1) node[anchor=east]{$\lambda$} -- (7,1);
		\draw (-1.5, 0) node{$\cdots$};
		\draw (7.5, -1.5) node{};
	\end{tikzpicture}
	\vspace{-.75em}
\end{equation*}
\begin{equation*}
	\begin{tikzpicture}[scale=0.5]
		\usepgflibrary{arrows}
		\draw (-14, 1) node{$\bar{t}_L(\lambda|\mu_1,...,\mu_L)\; :=$};
		\draw [thick, -stealth] (-4,-.5) node[anchor=north]{$\mu_L$} -- (-4,2.5);
		\draw [thick, -stealth] (1,-.5) node[anchor=north]{$\mu_3$} -- (1,2.5);
		\draw [thick, -stealth] (3,-.5) node[anchor=north]{$\mu_2$} -- (3,2.5);
		\draw [thick, -stealth] (5,-.5) node[anchor=north]{$\mu_1$} -- (5,2.5);
		\draw [stealth-right hook, thick] (-6,1) node[anchor=east]{$\lambda$} -- (7,1);
		\draw (-1.5, 0) node{$\cdots$};
		\draw (7.5, -1.5) node{,};
	\end{tikzpicture}
	\vspace{-.75em}
\end{equation*}
where the \textbf{hook} is used to indicate that a \textbf{line is closed}, i.e. taking the \textbf{trace}. We therefore obtain a \textbf{commutative family} of transfer-matrices using the Yang Baxter equation
\begin{equation*}
	\begin{tikzpicture}[scale=0.4]
		\usepgflibrary{arrows}
		\draw [thick, -stealth] (-21,-.5)node[anchor=north]{$\mu_L$} -- (-21,4.5) ;
		\draw [thick, -stealth] (-16,-.5)node[anchor=north]{$\mu_2$} -- (-16,4.5) ;
		\draw [thick, -stealth] (-14,-.5)node[anchor=north]{$\mu_1$} -- (-14,4.5) ;
		\draw [right hook-stealth, thick] (-23,3) node[anchor=east]{$\lambda'\hspace{-.25em}$} -- (-12,3);
		\draw [right hook-stealth, thick] (-23,1) node[anchor=east]{$\lambda$} -- (-12,1);
		\draw (-18.5, 0) node{$\cdots$};
		\draw (-10, 1.9) node{$=$};
		\draw [thick, -stealth] (-5,-.5)node[anchor=north]{$\mu_L$} -- (-5,4.5) ;
		\draw [thick, -stealth] (0,-.5)node[anchor=north]{$\mu_2$} -- (0,4.5) ;
		\draw [thick, -stealth] (2,-.5)node[anchor=north]{$\mu_1$} -- (2,4.5) ;
		\draw [right hook-, thick] (-7,3) node[anchor=east]{$\lambda'\hspace{-.25em}$} -- (3,3);
		\draw [right hook-, thick] (-7,1) node[anchor=east]{$\lambda$} -- (3,1);
		\draw (-2.5, 0) node{$\cdots$};
		\draw [-, thick] (3, 3) .. controls (3.5,3) .. (4.5,2);
		\draw [-, thick] (3, 1) .. controls (3.5,1) .. (4.5,2);
		
		\draw [-, thick] (4.5, 2) .. controls (5.5,1) .. (6,1);
		\draw [-, thick] (4.5, 2) .. controls (5.5,3) .. (6,3);

		\draw [-, thick] (6, 3) .. controls (6.5,3) .. (7.5,2);
		\draw [-, thick] (6, 1) .. controls (6.5,1) .. (7.5,2);

		\draw [-stealth, thick] (7.5, 2) .. controls (8.5,1) .. (9,1);
		\draw [-stealth, thick] (7.5, 2) .. controls (8.5,3) .. (9,3);
		\draw (11, 1.9) node{$=$};
		\draw [thick, -stealth] (2,-8.5)node[anchor=north]{$\mu_L$} -- (2,-3.5) ;
		\draw [thick, -stealth] (7,-8.5)node[anchor=north]{$\mu_2$} -- (7,-3.5) ;
		\draw [thick, -stealth] (9,-8.5)node[anchor=north]{$\mu_1$} -- (9,-3.5) ;
		\draw [right hook-stealth, thick] (0,-5) node[anchor=east]{$\lambda$} -- (11,-5);
		\draw [right hook-stealth, thick] (0,-7) node[anchor=east]{$\lambda'\hspace{-.25em}$} -- (11,-7);
		\draw (4.5, -8) node{$\cdots$};

		\draw (-5, -6.1) node{$= ... =$};
		\draw [thick, -stealth] (-21,-8.5)node[anchor=north]{$\mu_L$} -- (-21,-3.5) ;
		\draw [thick, -stealth] (-16,-8.5)node[anchor=north]{$\mu_2$} -- (-16,-3.5) ;
		\draw [thick, -stealth] (-12,-8.5)node[anchor=north]{$\mu_1$} -- (-12,-3.5) ;
		\draw [right hook-, thick] (-23,-5) node[anchor=east]{$\lambda'\hspace{-.25em}$} -- (-15,-5);
		\draw [right hook-, thick] (-23,-7) node[anchor=east]{$\lambda$} -- (-15,-7);
		\draw [-, thick] (-15, -5) .. controls (-14.5,-5) .. (-13.5,-6);
		\draw [-, thick] (-15, -7) .. controls (-14.5,-7) .. (-13.5,-6);

		\draw [-, thick] (-13.5, -6) .. controls (-12.5,-7) .. (-12,-7);
		\draw [-, thick] (-13.5, -6) .. controls (-12.5,-5) .. (-12,-5);

		\draw (-18.5, -8) node{$\cdots$};
		\draw [-, thick] (-12, -5) .. controls (-11.5,-5) .. (-10.5,-6);
		\draw [-, thick] (-12, -7) .. controls (-11.5,-7) .. (-10.5,-6);

		\draw [-stealth, thick] (-10.5, -6) .. controls (-9.5,-7) .. (-9,-7);
		\draw [-stealth, thick] (-10.5, -6) .. controls (-9.5,-5) .. (-9,-5);
		\draw (-26, -6.1) node{$=$};
		\draw (11.5, -9.5) node{,};
	\end{tikzpicture}
	\vspace{-1em}
\end{equation*}
where the $\mu_{1},\dots,\mu_{L}$ are called \textbf{inhomogeneity parameters}. Clearly, the transfer matrix description reduces the problem of calculating the \textbf{partition function} and \textbf{correlation functions} on a torus $\mathcal{L}=\mathbb{T}_{L,M}=\mathbb{Z}/L\times \mathbb{Z}/M=:\mathcal{L}$ to the problem of \textbf{diagonalizing} the \textbf{transfer matrix}.\\
	
Now, if $R(\lambda,\mu)$ is \textbf{differentiable} in a \textbf{vicinity of the origin}, we can use the \textbf{homogeneous transfer matrices} $t_L(\lambda)$ and $\bar{t}_L(\lambda)$ to define a \textbf{Hamiltonian}
\begin{align*}
	H_L =& t_L'(\lambda)|_{\lambda=0}\cdot t_L^{-1}(0) = -t_L(0)\cdot\bar{t}_L'(\lambda)|_{\lambda=0}\\
	=& \sum_{j=1}^{L}\partial_\lambda (PR)_{j-1,j}(\lambda,0)|_{\lambda=0}, \quad (PR)_{0,1}:=(PR)_{L,1}
\end{align*}
on the tensor product $V^{\otimes L}$ of the \textbf{local Hilbert spaces} $V=\mathbb{C}^d$. It is not hard to conclude that the \textbf{Trotter formula}
\begin{align*}
	\exp\left(-\beta H_L\right) = \lim_{N\to\infty}\left(t_L\left(-\frac{\beta}{2N}\right)\bar{t}_L\left(\frac{\beta}{2N}\right)\right)^N
\end{align*}
is satisfied (in operator norm) on the space $V^{\otimes L}$.
Then, setting $T=\frac{1}{\beta}$, we can calculate the \textbf{free energy per lattice site} in the \textbf{thermodynamic limit} via
\begin{align*}
	f(T) = -T\lim_{L\to\infty}\frac{1}{L}\ln\{\operatorname{tr}_L(\exp\left(- H_L/T\right))\}.
\end{align*}
Moreover, we can calculate \textbf{correlation functions} by defining the \textbf{(reduced) density matrix}
\begin{equation*}
	\begin{tikzpicture}[scale=0.6]
		\usepgflibrary{arrows}
		\draw [right hook-stealth, thick] (-2,2) -- (-2,14);
		\draw [right hook-stealth, thick] (-4,2) -- (-4,14);
		
		\draw [right hook-stealth, thick] (0,2) -- (0,6.75) node[anchor=south]{\scriptsize{$\epsilon_{m}'$}};
		\draw [right hook-stealth, thick] (4,2) -- (4,6.75) node[anchor=south]{\scriptsize{$\epsilon_{1}'$}};
		
		\draw [-stealth, thick] (0,9.25) node[anchor=north]{\scriptsize{$\epsilon_{m}$}} -- (0,14);
		\draw [-stealth, thick] (4,9.25) node[anchor=north]{\scriptsize{$\epsilon_{1}$}} -- (4,14);
		
		\draw [right hook-stealth, thick] (6,2) -- (6,14);
		\draw [right hook-stealth, thick] (8,2) -- (8,14);
		
		\draw [stealth-right hook, thick] (-5,4) -- (9,4);
		\draw [right hook-stealth, thick] (-5,6) -- (9,6);
		\draw [stealth-right hook, thick] (-5,10) -- (9,10);
		
		\draw (2, 13) node{$\cdots$};
		
		\draw [right hook-stealth, thick] (-5,12) -- (9,12);
		
		\draw (2, 3) node{$\cdots$};
		
		\draw (-10, 8) node{$(D_{1,\dots,m})^{\epsilon_1',\dots,\epsilon_m'}_{\epsilon_1,\dots,\epsilon_m}$ $=$ $\frac{1}{Z}\times$ };
	\end{tikzpicture}
\end{equation*}
Then $D_{1,\dots,m}\in \operatorname{End}(V^{\otimes m})$ and the \textbf{expectation value} of an \textbf{observable} $\boldsymbol{\mathcal{O}}\in\operatorname{End}(V^{\otimes m})$ is defined by the \textbf{trace} $\operatorname{tr}_m(\mathcal{O}D_{1,\dots,m})$.	We shall also note that the term 'reduced' refers to the specific properties of $D_m\coloneq D_{1,\dots,m}$ which basically state that it doesn't matter which $D_m$ is used for the calculation of correlations, as long as the Observable $\boldsymbol{\mathcal{O}}$ is supported in (or an element of) a space $\operatorname{End}(V^{\otimes n})\lhook\joinrel\rightarrow\operatorname{End}(V^{\otimes m})$, $n\leq m$ (see \cite{HJue} for more details).\footnote{ The embedding of elements $\operatorname{End}(V^{\otimes n})\lhook\joinrel\rightarrow\operatorname{End}(V^{\otimes m})$ is (as in Subsection \ref{subsect_quasitriang_Hopf}) by tensoring $m-n$ identities from the right or left. Moreover, it is easy to see that the expectation value doesn't depend on how many of the $m-n$ identities are tensored from the left (resp. from the right).} Precisely, the $D_n$, $n\in\mathbb{N}$ define a morphism on the direct limit $\varinjlim\End(V^{\otimes n})$, call it $D\coloneq \varinjlim D_n$ (see \cite{HJue} for the details).
\newpage
\chapter{The finite-dimensional representation theory of $\boldsymbol{U_q(\tilde{\mathfrak{g}})}$}\label{ch:fin_dim_rep_quantum_aff}
Let us now turn to the general discussion of the finite-dimensional representation theory of $U_q(\tilde{\mathfrak{g}})$.

Section \ref{sect:fin-dim_rep_quantum_aff_sl2} begins with an analysis of the simplest case where $\mathfrak{g}=\mathfrak{sl}_2$, drawing a direct connection to the Drinfeld polynomial and loop-weight description provided in Subsections \ref{subsect:drinfeld_poly} and \ref{subsect:Jimbos_hom_und_ev_reps}.

In Section \ref{sect:gen_untw_case}, we shift our attention to the general case where $\mathfrak{g}$ is an arbitrary finite-dimensional simple Lie algebra over $\mathbb{C}$. We simplify the notation while maintaining the connection to the Drinfeld polynomial description in Subsection \ref{subsect:drinfeld_poly}. Moreover, we discuss a few of the more general assertions which have been made earlier, such as the loop-weight description in Theorem \ref{thm:drinfeld-l_weights} and the commutativity of the Grothendieck ring $\operatorname{Rep}U_q(\tilde{\mathfrak{g}})\coloneq\operatorname{Gr}(\mathrm{C})$ of the category $\mathrm{C}$ of finite-dimensional type $\boldsymbol{1}$ representations of $U_q(\tilde{\mathfrak{g}})$.

Then, in Subsection \ref{subsect:q-char_and_snake_mod}, we elucidate that the analysis of the category $\mathrm{C}$ can be reduced to the consideration of a subcategory $\mathrm{C}_{\mathbb{Z}}$, meaning that one can basically go 'modulo $\mathbb{C}^\times$' (up to special positions), i.e. dividing out the action of $\mathbb{C}^\times$ on $\mathrm{C}$ through $\tau$ (see Equation (\ref{eqn:tau_a_q-deformed})). We introduce the so-called (prime) snake modules and assert that these are the real (and prime) building blocks which describe all finite-dimensional representations in $\mathrm{C}$.\footnote{ Up to a precise description of short exact sequences.} We discuss the so-called 'extended T-system' for prime snakes, a (arguably complete) set of relations in the Grothendieck ring $\operatorname{Gr}(\mathrm{C}_{\mathbb{Z}})$, which has been proven in the paper \cite{MY} of Mukhin and Young in 2012. Moreover, we expect that our assertion can be proven with it. However, the extended T-system is just one (complete) set of relations in the Grothendieck ring and it is a priori far from obvious how to find an explicit description of all possible (tensor) products in $\operatorname{Gr}(\mathrm{C}_{\mathbb{Z}})$ (resp. $\mathrm{C}_{\mathbb{Z}}$). In fact, a proof of our assertion is already implied by the results in the paper \cite{BJY}, though, using a different set of relations called the $S$-system. We give the reference and a brief explanation. The proof of the equivalence of the $S$-system remains an open question.

Knowing about the importance of prime snake modules as the prime irreducible elements in $\mathrm{C}$, we are interested in their $q$-characters. These are discussed in in Subsection \ref{subsect:the_path_formula}. Fortunately, the so-called 'path formula' for $q$-characters (Theorem 6.1 in \cite{MY2}) was proven by Mukhin and Young shortly after introducing the snake modules. It allows an explicit calculation of the $q$-characters of snake modules. We conclude the discussion of the $q$-characters and the basic classification of the irreducible objects in $\mathrm{C}$ by suspecting an elegant solution to the problem of computing projectors onto prime snake modules.

In Subsection \ref{subsect:form_in_terms_of_clust_alg}, we return to a more detailed explanation of the results stated in the paper \cite{BJY}. In fact, the authors prove an entirely different description of the Grothendieck ring $\operatorname{Gr}(\mathrm{C}_{\mathbb{Z}})$ in terms of an infinite cluster algebra, lets call it $\mathcal{A}_\infty$, which can be defined in terms of the symmetrised Cartan matrix $B=DA$ of $\mathfrak{g}$. Precisely, the authors prove that $\mathrm{C}_{\mathbb{Z}}$ is a monoidal categorification of $\mathcal{A}_\infty$, i.e., the Grothendieck ring $\operatorname{Gr}(\mathrm{C}_{\mathbb{Z}})$ is isomorphic to $\mathcal{A}_\infty$ and cluster monomials respectively cluster variables correspond to classes of real simple respectively real prime simple objects. As explained earlier, this is done by proving a different set of relations for the prime snake modules called the $S$-systems. These, in fact, correspond to the possible mutations of cluster variables in the cluster algebra $\mathcal{A}_\infty$, and therefore the real and prime simple objects are exactly the prime snake modules, proving the assertion in Subsection \ref{subsect:q-char_and_snake_mod}.
\section{The finite-dimensional representations of $U_q(\mathfrak{L}(\mathfrak{sl}_2))$}\label{sect:fin-dim_rep_quantum_aff_sl2}
Let $\mathrm{C}$ be the category of finite-dimensional type $\boldsymbol{1}$ representations of $U_q(\tilde{\mathfrak{sl}}_2)$.
We collect the facts in Section \ref{sect:finite-dim_reps} for the case $\mathfrak{g}=\mathfrak{sl}_2$.

Due to Jimbos evaluation homomorphism $\text{ev}_a:U_q(\tilde{\mathfrak{sl}_2})\to U_q(\mathfrak{sl}_2)$ given by Proposition \ref{prop:Jimbos_hom} we can get irreducible type $\boldsymbol{1}$ spin $r/2$ evaluation representations $V^{(r)}(a)$. Moreover, we have calculated their \textbf{Drinfeld polynomials}
in Example \ref{xmpl:spin_r/2_evaluation_rep} as
\begin{equation*}
	P_{r,a}(u) = \prod_{k=1}^{r}(1-q^{r-2k+1}au).
\end{equation*}
To abbreviate the notation, it is convenient to refer to the polynomial $P_a(u) = 1-au$ by an indeterminate $Y_a$, $a\in\mathbb{C}^\times$, which we call a \textbf{fundamental loop-weight}. Thus, the polynomials $\mathcal{P}$ with constant term $1$ correspond to monomials in the fundamental loop-weights $Y_a$, $a\in\mathbb{C}^\times$, which are in one to one correspondence with the irreducible representations of type $\boldsymbol{1}$, reformulating the Drinfeld polynomial description in Theorem \ref{thm:drinfeld_polynomials}. This explains why we call the $Y_a$, $a\in\mathbb{C}^\times$, fundamental loop-weights. Moreover, the Drinfeld polynomials of the spin $r/2$ \textbf{evaluation representations} $V^{(r)}(a)$  (see Example \ref{xmpl:spin_r/2_evaluation_rep}) correspond to the monomial
\begin{equation*}
	S_r(a)\coloneq Y_{aq^{r-1}}Y_{aq^{r-3}}\cdots Y_{aq^{-r+1}},
\end{equation*}
which we may call a $\boldsymbol{q}$\textbf{-string} of length $r$ and centre $a$ in correspondence to the notion of a $q$-segment introduced earlier in Definition \ref{def:q_segment}. In particular, for $r=1$, the spin $1/2$ evaluation representation has the Drinfeld polynomial $P_{1,a}(u) = 1-au = P_a(u)$. Thus, it is the irreducible \textbf{fundamental module} of type $\boldsymbol{1}$ corresponding to the fundamental loop weight $Y_{a}$.

By Proposition \ref{prop:iso_to_tens_of_ev}, we know that every finite-dimensional irreducible type $\boldsymbol{1}$ representation of $U_q(\tilde{\mathfrak{sl}}_2)$ is isomorphic to a tensor product of evaluation representations. Therefore, the structure of the finite-dimensional type $\boldsymbol{1}$ representations and the product structure of the Grothendieck ring is completely described by Proposition \ref{prop:special_pos_short_ex} together with Corollary \ref{cor:comm_of_groth_sl2}.

In fact, the $q$-character of $V^{(r)}(a)$ is given by Corollary \ref{cor:loop_weights_ev_reps_sl2}. Indeed, we have seen in the proof that every loop-weight space is one-dimensional. Therefore the $\boldsymbol{q}$\textbf{-character} of $V^{(r)}(a)$ is given in terms of the fundamental loop-weights by
\begin{equation*}\label{eqn:q-char_sl2_loop_weights}
	\begin{split}
		\chi_q(V^{(r)}(a)) &= \sum_{k=0}^{r} \left(\prod_{m=k+1}^{r}Y_{aq^{r-2m+1}}\right)\left(\prod_{m=1}^kY_{aq^{r-2m+3}}\right)^{-1} \\
		&=\prod_{m=1}^r Y_{aq^{r-2m+1}}\left(\sum_{k=0}^{r}\prod_{j=1}^k 	Y_{aq^{r-2j+1}}^{-1}Y_{aq^{r-2j+3}}^{-1}\right).
	\end{split}
\end{equation*}
Note that, by mapping all the $Y_a^{\pm1}$, $a\in\mathbb{C}^\times$, to the fundamental weights $e^{\pm \omega_1}$ of the group algebra $\mathbb{Z}[P]$, we clearly recover the character formula for the spin $r/2$ representation of $U_q(\mathfrak{sl}_2)$. Therefore, it is natural to define $A_a\coloneq Y_{aq}Y_{aq^{-1}}$, $a\in\mathbb{C}^\times$, as the \textbf{affine roots}.\footnote{ Note that $A_a$ is mapped onto $\alpha_1$.} Moreover, as in the non-affine case, the affine roots define a \textbf{partial ordering} on the loop-weights,\footnote{ By Definition \ref{def:q-character_drinfeld} these correspond to monomials in the $Y_a^{\pm1}$, $a\in\mathbb{C}^\times$.} i.e., for two loop-weights $m'$ and $m$ we have $m'\geq m$, if $m'm^{-1}$ is a monomial in the affine roots $A_a$, $a\in\mathbb{C}^\times$. From here, it is clear that we can formulate a '\textbf{theorem of the highest loop-weight}' in full analogy to the theorem of the highest weight for finite-dimensional representations of $\mathfrak{sl}_2$, but let us leave this for the next subsection where we discuss this for general $\mathfrak{g}$.\footnote{ As usual, $\mathfrak{g}$ refers to a finite-dimensional simple Lie algebra over $\mathbb{C}$.}

Denote by $\mathcal{Y}\coloneq\mathbb{Z}[Y_{a}^{\pm 1}]_{a\in\mathbb{C}^\times}$ the group algebra in the fundamental loop-weights and by $\operatorname{Rep}U_q(\tilde{\mathfrak{sl}}_2)$ the Grothendieck ring of $\mathrm{C}$, which is commutative due to Corollary \ref{cor:comm_of_groth_sl2}. Then, by collecting the information in Proposition \ref{prop:iso_to_tens_of_ev}, it is straightforward to show that $\chi_q:\operatorname{Rep}U_q(\tilde{\mathfrak{sl}}_2)\to \mathcal{Y}$ is an \textbf{injective ring homomorphism} which reduces to the usual character homomorphism on $U_q(\mathfrak{sl}_2) \lhook\joinrel\rightarrow U_q(\tilde{\mathfrak{sl}}_2)$. Hence, $\operatorname{Rep}U_q(\tilde{\mathfrak{sl}}_2)$ is isomorphic to $\mathbb{Z}[t_a]_{a\in\mathbb{C}^\times}$, where $t_a$ is the class of $V_{\omega_1}(a)=V^{(1)}(a)$.

In view of the general discussion in the next section let us reformulate everything as follows. We define the \textbf{Kirillov--Reshetikhin} (KR) \textbf{module} $W^{(r)}(a)$, $r\in \mathbb{N}$, as the irreducible type $\boldsymbol{1}$ representation corresponding to the $q$-string
\begin{equation*}
	S_r(aq^{r-1}) = Y_a Y_{aq^2} \cdots Y_{aq^{2r-2}}.
\end{equation*}
Obviously, this is just the spin $r/2$ evaluation representation $V^{(r)}(aq^{r-1})$, but for the later discussion it is more convenient to work only with representations in the subcategory $\mathrm{C}_{\mathbb{Z}}$, which have loop-weights $Y_{aq^{2k}}$, $k\in\mathbb{Z}$, for some fixed $a\in\mathbb{C}^\times$. In fact, if we go 'modulo' general positions, it is clear by Proposition \ref{prop:special_pos_short_ex} that it is sufficient to only consider representations which correspond to such loop-weights, since otherwise tensor products of irreducible modules are always irreducible, i.e. they 'don't see each other'. On the other side, starting with the representations in the subcategory $\mathrm{C}_{\mathbb{Z}}$ for some fixed $a\in\mathbb{C}^\times$, we can describe all other type $\boldsymbol{1}$ representations in $\mathbb{C}$ by considering the action of $\mathbb{C}^\times$ through $\tau$ and taking tensor products.\footnote{ The Hopf algebra automorphism $\tau$ is defined by Equation (\ref{eqn:tau_a_q-deformed}).} In particular, Proposition \ref{prop:special_pos_short_ex} implies that the classes $[W^{(r)}(a)]$ satisfy the equation
\begin{equation}\label{eqn:Tsys_sl2}
	[W^{(r)}(a)][W^{(r)}(aq^2)]=[W^{(r+1)}(a)][W^{(r-1)}(aq^2)]+1,
\end{equation}
which is usually called the $\boldsymbol{\operatorname{T}}$\textbf{-system} for $\mathfrak{sl}_2$. Conversely, it is possible to recover all other relations in the Grothendieck ring from the $\operatorname{T}$-system. Indeed, the $\operatorname{T}$-system implies that $[W^{(r)}(a)]$ is given by the $r\times r$ determinant
\begin{equation*}
	[W^{(r)}(a)]=
	\begin{vmatrix}
	[W^{(1)}(a)]&1& 0 & \ldots & 0 \\
		1&[W^{(1)}(aq^2)]&1  & \ddots &\vdots  \\
		0&  1&[W^{(1)}(aq^4)]&\ddots  &\vdots  \\
		\vdots& \ddots &  \ddots&\ddots &\vdots  \\
		0&  \ldots &  0&  1&[W^{(1)}(aq^{2r-2})]
	\end{vmatrix},
\end{equation*}
which is easily proven by induction on $r$ (cf. \cite{HL}). With this we conclude the discussion of the $\mathfrak{sl}_2$ case.
\section{The general (untwisted) case}\label{sect:gen_untw_case}
Finally, let us discuss the general case. So let $\mathrm{C}$ be the category of finite-dimensional type $\boldsymbol{1}$ representations of $U_q(\tilde{\mathfrak{g}})$ and denote by $\operatorname{Rep}U_q(\tilde{\mathfrak{g}})$ its Grothendieck ring.
As usual we denote by $A=(a_{ij})$the Cartan matrix of $\mathfrak{g}$ and let $d_i$, $i=1,\dots,l$, the relatively prime integers such that $B=(b_{ij})=(d_i a_{ij}) = DA$ is symmetric.

Denote by $\mathcal{P}^+$ the set of all $I$-tuples of polynomials with constant term $1$ as in Definition \ref{def:affine_positive_weight_lattice}. Then, as extensively discussed in Subsection \ref{subsect:drinfeld_poly}, there is a one to one correspondence between elements in $\mathcal{P}^+$ and irreducible objects in $\mathrm{C}$. Furthermore, for any given representation $V$ in $\mathrm{C}$, we call its corresponding $I$-tuple of polynomials $(P_{i,V})_{i\in I}\in\mathcal{P}^+$ the Drinfeld polynomials of $V$. Equivalently, we can of course also consider indeterminates $u_i$, $i\in I$, and denote the $I$-tuple $(P_{i,V})_{i\in I}$ by the single polynomial $\prod_{i\in I} P_{i,V}(u_i)$. Then, by abuse of notation, we call $P_V\coloneq\prod_{i\in I} P_{i,V}(u_i) \in \mathcal{P}^+$ the Drinfeld polynomial of the representation $V$. Further, denote by $Y_{i,a}\coloneq 1-au_i$, $i\in I$, the Drinfeld polynomial of the fundamental module $V_{\omega_i}(a)$ (cf. Definition \ref{def:fund_modules_quant_aff}), then $\mathcal{P}^+$ is the set of all monomials in the variables $Y_{i,a}\coloneq 1-au_i$, $i\in I$. It is clear from Theorem \ref{thm:drinfeld_polynomials} and the discussion of the representation theory of $U_q(\mathfrak{g})$ at the begin of Section \ref{sect:finite-dim_reps} that, if we map the $Y_{i,a}$ onto the basis elements $e^{\omega_i}$ of the group algebra $\mathbb{Z}[P]$, the Drinfeld polynomial $P_V$ of an irreducible representation $V$ of $\mathrm{C}$ is mapped onto the basis element $e^{\lambda}$ such that $\lambda\in P^+$ is the highest weight of $V$ considered as a representation $U_q(\mathfrak{g})\lhook\joinrel\rightarrow U_q(\tilde{\mathfrak{g}})$. Therefore, it is natural to interpret the Drinfeld polynomials as affine (integral) highest weights. This is of course already clear in the $\mathfrak{sl}_2$ case discussed in the last section, and we will see how everything generalises in the following. 

After the discussion of the $\mathfrak{sl}_2$ case in Subsection \ref{subsect:Jimbos_hom_und_ev_reps} and Section \ref{sect:fin-dim_rep_quantum_aff_sl2} and in order to justify the Definition \ref{def:q-character_drinfeld} of the $q$-character, let us finally give the proof of Theorem \ref{thm:drinfeld-l_weights} (cf. \cite{FR}).
\begin{proof}[Proof of Theorem \ref{thm:drinfeld-l_weights}]
	Since the theorem was stated in Subsection \ref{subsect:drinfeld_poly}, let us write it down again.
	We proof the following assertion.
	\begin{quote}
		The eigenvalues $\Psi_i^\pm(u)$ of $\boldsymbol{\varPhi}_i^\pm(u)$ on any finite-dimensional type $\boldsymbol{1}$ representation of $U_q(\tilde{\mathfrak{g}})$ have the form
		\begin{equation*}
			\Psi_i^\pm(u) = q_i^{\deg(Q_{i})-deg(R_{i})}\frac{Q_{i,V}(q_i^{-2}u)R_{i,V}(u)}{Q_{i,V}(u)R_{i,V}(q_i^{-2}u)},
		\end{equation*}
		as elements of $\mathbb{C}[[u^{\pm 1}]]$, respectively, where $Q_i(u)$ and $R_i(u)$ are polynomials with constant term 1, i.e. $Q_i(u),R_i(u)\in\mathcal{P}^+$. $\odot$
	\end{quote}
	So let $U_q(\tilde{\mathfrak{g}})_{\{i\}}$ be the subalgebra of $U_q(\tilde{\mathfrak{g}})$ generated by $\mathcal{K}_i^{\pm1},\mathcal{H}_{i,r},\mathcal{X}_{i,s}$, $r\in\mathbb{Z}\backslash \{0\}$, $s\in\mathbb{Z}$ isomorphic to $U_{q_i}(\tilde{\mathfrak{sl}}_2)$. Then, the eigenvalues of the $\boldsymbol{\varPhi}_i^\pm(u)$ on a representation $V$ in $\mathrm{C}$ coincide with the eigenvalues of $\boldsymbol{\varPhi}_i^\pm(u)$ on the restriction of $V$ to $U_q(\tilde{\mathfrak{g}})_{\{i\}}$. Therefore, the proof is reduced to the case when $\mathfrak{g}=\mathfrak{sl}_2$ where everything is clear from the discussion in the last Section. Let us repeat it nevertheless.
	
	For an evaluation representation $V^{(r)}(a)$ the assertion is proven in Corollary \ref{cor:loop_weights_ev_reps_sl2}. Then, by Proposition \ref{prop:iso_to_tens_of_ev} every finite-dimensional irreducible type $\boldsymbol{1}$ representation of $U_q(\tilde{\mathfrak{sl}}_2)$ is isomorphic to a tensor product of evaluation representations. Hence, the assertion is proven if we can show that the eigenvalues of $\boldsymbol{\varPhi}_i^\pm(u)$ on the tensor product $V\otimes W$ are equal to the products of the eigenvalues on $V$ and $W$. This, however, is clear from the partial description of the comultiplication in Proposition \ref{prop:comult_partial} and Remark \ref{rem:comult_partial_rem}.
\end{proof}
Theorem \ref{thm:drinfeld-l_weights} motivates
\begin{definition}[the affine weight lattice]
	We define the \textbf{affine weight lattice} $\mathcal{P}$ as the set of all Laurent monomials in the fundamental loop-weights, i.e. a monomial in the $Y_{i,a}^{\pm1}$, $i\in I, a\in\mathbb{C}$. $\odot$
\end{definition}
Then, every loop-weight $\boldsymbol{QR^{-1}}$ is an element of $\mathcal{P}$.
Let us make the following
\begin{rems}\label{rems:affine_simple_roots}\hspace{1em}
	\begin{enumerate}
		\item Using the relations in Theorem \ref{thm:second_drin_quantum_aff} one shows that the generalised eigenvalues of the $\mathcal{H}_{i,m}$, $m>0$, in a loop-weight space $W$ of $V\in\mathrm{C}$ of loop-weight
		\begin{equation*}
			m(W)=\prod_{i\in I}\left(\prod_{r=1}^{k_i}Y_{i,a_{ir}}\prod_{s=1}^{l_i}Y_{i,b_{is}}^{-1}\right)\in\mathcal{P},
		\end{equation*}
		are always of the form
		\begin{equation*}
			\frac{[m]_{q_i}}{m}\left(\sum_{r=1}^{k_i}(a_{ir})^m-\sum_{s=1}^{l_i}(b_{is})^m\right)\qquad a_{ir},\, b_{ir}\in\mathbb{C}^\times,
		\end{equation*}
		and they completely determine the eigenvalues of the $\mathcal{H}_{i,m}$, $(m<0)$, and $\mathcal{K}_i$ for $i\in I$. Then, the $\boldsymbol{q}$\textbf{-character} given by Definition \ref{def:q-character_drinfeld} is the Laurent polynomial $\chi_q(V)\in\mathbb{Z_+}[\mathcal{P}]$ given by
		\begin{equation*}
			\chi_q(V) = \sum_{W}\dim(W)m(W),
		\end{equation*}
		where the sum goes over all loop-weight spaces $W$ of $V$.
		\item Since $U_q(\tilde{\mathfrak{g}})$ has the natural subalgebra $U_q(\mathfrak{g})\lhook\joinrel\rightarrow U_q(\tilde{\mathfrak{g}})$, every $V\in \mathrm{C}$ can be regarded a $U_q(\mathfrak{g})$-module by restriction. Then, as explained in Remark \ref{rem:loop_weight_spaces}, the loop-weight space decomposition of $V$ is a refinement of its decomposition as a direct sum of $U_q(\mathfrak{g})$-weight spaces. Therefore, the map $\operatorname{wt}:\mathcal{P}\to P$, $Y_{i,a}\mapsto \omega_i$ is a homomorphism of abelian groups. Moreover, it follows that its unique $\mathbb{Z}$-linear extension, denoted also by $\operatorname{wt}$, is a homomorphism of group algebras $\mathbb{Z}[\mathcal{P}]\to \mathbb{Z}[P]$ which maps the $q$-character onto the ordinary character of the underlying $U_q(\mathfrak{g})$-module, i.e. $\operatorname{wt}(\chi_q(V)) = \chi(V)$. In particular, the partial order on $P$ induces a partial order on $\mathcal{P}$.\label{enum:wt_remark}
		\item\label{enum:affine_simple_roots} Frenkel and Reshetikhin \cite{FR} give a different but equivalent definition of the fundamental loop-weights $Y_{i,a}$ as elements in an extension of $U_q(\tilde{\mathfrak{g}})$ as follows.
		
		Introduce new variables $\tilde{\mathcal{K}}_i$, $i\in I$, such that
		\begin{equation*}
			\mathcal{K}_j = \prod_{i\in I}\tilde{\mathcal{K}}_i^{a_{ij}}.
		\end{equation*}
		Then, the extended algebra is defined by replacing the generators $\mathcal{K}_i$, $i\in I$, with $\tilde{\mathcal{K}}_i$, $i\in I$.
		Note that, while $\mathcal{K}_i$ corresponds to the simple root $\alpha_i$, $\tilde{\mathcal{K}}_i$ corresponds to the fundamental weight $\omega_i$.\footnote{ In fact, the element $q^x$ in Part \ref{enum:almost_ribbon} of the Remarks \ref{rems:quant_aff} is given by $\prod_{i\in I}\tilde{\mathcal{K}}_i^2$.}
		Define $A(q)= (a_{ij}(q))\coloneq ((q_i+q_i^{-1})\delta_{ij}+(1-\delta_{ij})[a_{ij}]_q)$. Then, the appropriate definition is
		\begin{equation*}
			Y_{i,a} = 
			\tilde{\mathcal{K}}_i^{-1}\exp\left(-(q-q^{-1})\sum_{s=1}^{\infty}\tilde{\mathcal{H}}_{i,-s}u^sa^s\right),
		\end{equation*}
		where 
		$\tilde{\mathcal{H}}_{i,-m} \coloneq \sum_{j\in I} (A(q^m)^{-1})_{ji} \mathcal{H}_{j,-m}$. In their paper, the $q$-character is then defined by taking the quantum trace of the $L$-operator and then applying an analogue of the Harish--Chandra homomorphism.\footnote{ This is also explained in the paper \cite{FM}, where some typos which were confusing for us at the first view are corrected.} In fact, for representations in $\mathrm{C}$ it retains our Definition \ref{def:q-character_drinfeld}. However, we do not intend to review the entire paper. Instead, we point out that they motivate the generalisation of the \textbf{affine simple roots} by defining
		\begin{equation*}
			A_{i,a} = 
			\mathcal{K}_i^{-1}\exp\left(-(q-q^{-1})\sum_{s=1}^{\infty}\mathcal{H}_{i,-s}u^sa^s\right) =\boldsymbol{\varPhi}_i^-(u^{-1}a^{-1}),\quad a\in\mathbb{C}^\times.
		\end{equation*}
		Consequently, $A_{i,a}$ is given in terms of the fundamental loop-weights  by
		\begin{equation*}
			A_{i,a} = Y_{i,aq_i}Y_{i,aq_i^{-1}}\prod_{a_{ji}=-1}Y_{j,a}^{-1}\prod_{a_{ji}=-2}Y_{j,aq}^{-1}Y_{j,aq^{-1}}^{-1}\prod_{a_{ji}=-3}Y_{j,aq^2}^{-1}Y_{j,a}^{-1}Y_{j,aq^{-2}}^{-1},
		\end{equation*}
		and we have $\operatorname{wt}(A_{i,a}) = \alpha_i$ (see also \cite{FM}). $\odot$
	\end{enumerate}
\end{rems}
In view of Part \ref{enum:affine_simple_roots} of the preceding remarks we make the
\begin{definition}[affine simple roots]
	We define the \textbf{affine simple roots} $A_{i,a}$, $i\in I$, $a\in\mathbb{C}^\times$, by
	\begin{equation*}
		A_{i,a} = Y_{i,aq_i}Y_{i,aq_i^{-1}}\prod_{a_{ji}=-1}Y_{j,a}^{-1}\prod_{a_{ji}=-2}Y_{j,aq}^{-1}Y_{j,aq^{-1}}^{-1}\prod_{a_{ji}=-3}Y_{j,aq^2}^{-1}Y_{j,a}^{-1}Y_{j,aq^{-2}}^{-1}.\quad\odot
	\end{equation*}
\end{definition}
Then, in analogy to Equation (\ref{eqn:q-char_sl2_loop_weights}) for $\mathfrak{sl}_2$, we have (cf. \cite{FR} and \cite{FM})
\begin{thm}\label{thm:highest_loop-weight}
	Let $V$ be an irreducible representation in $\mathrm{C}$, then
	\begin{equation*}
		\chi_q(V) = P_{V}(1+\sum_{p} M_p),
	\end{equation*}
	where each $M_p$ is a monomial in the $A_{i,a}^{-1}$, $i\in I$, $a\in\mathbb{C}^\times$. $\odot$
\end{thm}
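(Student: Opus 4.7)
The plan is to establish the theorem in direct analogy with the classical theorem of the highest weight (Subsection \ref{subsect:Cartan_data}) and its proof for $U_q(\mathfrak{g})$, promoting the weight ordering to the loop-weight ordering induced by the affine simple roots. There are essentially three things to verify: that $P_V$ itself is a loop-weight, that it occurs with multiplicity one, and that every other loop-weight of $V$ is of the form $P_V\cdot M$ where $M$ is a (non-trivial) monomial in the $A_{i,a}^{-1}$.

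First, I would read off from Definition \ref{def:pseudo-highest_weight} and Theorem \ref{thm:drinfeld_polynomials} that the pseudo-highest weight vector $v_0\in V$ lies in the generalised eigenspace of the $\boldsymbol{\varPhi}_i^\pm(u)$ corresponding to $Q_i=P_{i,V}$ and $R_i=1$; thus in the notation of Theorem \ref{thm:drinfeld-l_weights} its loop-weight is precisely $P_V$. Multiplicity one follows because any vector of loop-weight $P_V$ must also be of $\mathfrak{g}$-weight $\operatorname{wt}(P_V)$, which by the classical highest-weight theory for $U_q(\mathfrak{g})$ is the highest weight space of $V$ (one-dimensional), and the loop-weight decomposition refines the $\mathfrak{g}$-weight decomposition (Remark \ref{rems:affine_simple_roots}\ref{enum:wt_remark}).

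The central step, and the main obstacle, is to show that whenever $w\in V$ is a non-zero vector in a loop-weight space $V_{m}$, any vector in $\mathcal{X}_{i,k}^-\cdot w$ (which lies in a possibly non-generalised eigenspace) decomposes into generalised eigenvectors whose loop-weights are of the form $m\cdot A_{i,a}^{-1}$ for appropriate $a\in\mathbb{C}^\times$. This reduces to the case of $U_{q_i}(\tilde{\mathfrak{sl}}_2)$ embedded as $U_q(\tilde{\mathfrak{g}})_{\{i\}}\subset U_q(\tilde{\mathfrak{g}})$ in the proof of Theorem \ref{thm:drinfeld-l_weights}: by Corollary \ref{cor:loop_weights_ev_reps_sl2} and the tensor-product description in Proposition \ref{prop:iso_to_tens_of_ev}, successive loop-weights in the evaluation modules differ exactly by factors $Y_{i,aq_i}^{-1}Y_{i,aq_i^{-1}}^{-1}$. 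The contribution of the remaining $Y_{j,b}^{\pm1}$-factors ($j\neq i$) making up $A_{i,a}^{-1}$ has then to be tracked through the commutation relations
\begin{equation*}
    [\mathcal{H}_{j,s},\mathcal{X}_{i,k}^-]=-\tfrac{1}{s}[sa_{ij}]_{q_j}\mathcal{C}^{|s|/2}\mathcal{X}_{i,k+s}^-
\end{equation*}
applied to the generating-function identity in Remark \ref{rems:affine_simple_roots}\ref{enum:affine_simple_roots} for $A_{i,a}$ in terms of $\boldsymbol{\varPhi}_i^-$. This is the computational heart of the argument and is precisely where one needs the delicate formula defining $A_{i,a}$; the subtlety is that $\mathcal{X}_{i,k}^-\cdot w$ need not itself be a generalised eigenvector, so one must argue with its projections onto loop-weight spaces.

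Finally, I would close the induction: since $V$ is an irreducible highest-weight module, it is spanned by products of the $\mathcal{X}_{i_s,k_s}^-$ applied to $v_0$. By iterating the key step, every loop-weight of $V$ is $P_V$ times a monomial in the $A_{i,a}^{-1}$, proving the claimed shape of $\chi_q(V)$. The ordering thus obtained is compatible with the $\mathfrak{g}$-weight ordering via $\operatorname{wt}(A_{i,a})=\alpha_i\in Q^+$, justifying a posteriori the terminology \emph{highest} loop-weight and confirming that $\chi_q(V)\in P_V\cdot\mathbb{Z}_{\geq 0}[A_{i,a}^{-1}]_{i\in I,\,a\in\mathbb{C}^\times}$.
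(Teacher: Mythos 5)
Your overall architecture is sound, and it is in fact the route taken in the sources the paper itself defers to: the paper states Theorem \ref{thm:highest_loop-weight} without proof, citing \cite{FR} and \cite{FM}, and your sketch reproduces the Frenkel--Mukhin strategy (highest loop-weight monomial of multiplicity one, all other loop-weights obtained from it by multiplying by inverses of affine simple roots, reduction to the subalgebras $U_q(\tilde{\mathfrak{g}})_{\{i\}}\cong U_{q_i}(\tilde{\mathfrak{sl}}_2)$ used in the proof of Theorem \ref{thm:drinfeld-l_weights}). The identification of the loop-weight of $v_0$ with $P_V$ is correct, and multiplicity one holds; note though that as a $U_q(\mathfrak{g})$-module $V$ need not be a highest weight module, so rather than invoking ``classical highest-weight theory'' you should argue via $V=U_q^-.v_0$ together with the fact that each $\mathcal{X}^-_{i,k}$ lowers the $\mathfrak{g}$-weight by $\alpha_i$, which gives $\dim V_{\operatorname{wt}(P_V)}=1$ directly.

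The genuine gap sits exactly where you say the ``computational heart'' is: the claim that $\mathcal{X}^-_{i,k}V_m\subseteq\bigoplus_{a\in\mathbb{C}^\times}V_{mA_{i,a}^{-1}}$ is essentially the whole content of the theorem, and nothing in your sketch establishes it. The reduction to $U_q(\tilde{\mathfrak{g}})_{\{i\}}$ via Corollary \ref{cor:loop_weights_ev_reps_sl2} and Proposition \ref{prop:iso_to_tens_of_ev} only controls the $Y_{i,\cdot}$-components of the loop-weights, because the series $\boldsymbol{\varPhi}^\pm_j(u)$ with $j\neq i$ do not lie in that subalgebra and are invisible to its representation theory; the factors $\prod_{a_{ji}=-1}Y_{j,a}$, $\prod_{a_{ji}=-2}Y_{j,aq}Y_{j,aq^{-1}}$, etc.\ in $A_{i,a}^{-1}$ therefore cannot come out of the $\mathfrak{sl}_2$ results alone. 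One must actually perform the computation you only name: conjugate the generating series $\sum_k\mathcal{X}^-_{i,k}u^k$ by $\boldsymbol{\varPhi}^\pm_j(u')$ using $[\mathcal{H}_{j,r},\mathcal{X}^-_{i,s}]=-\tfrac{1}{r}[ra_{ji}]_{q_j}\mathcal{C}^{|r|/2}\mathcal{X}^-_{i,r+s}$ from Theorem \ref{thm:second_drin_quantum_aff} (with $\mathcal{C}=1$ on type $\boldsymbol{1}$ modules), resum, and check that on generalised eigenspaces the $\boldsymbol{\varPhi}^\pm_j$-eigenvalue is multiplied by precisely the $j$-part of $A_{i,a}^{-1}$, while the $i$-part is handled by the $\tilde{\mathfrak{sl}}_2$ theory; moreover the restriction of $V$ to $U_q(\tilde{\mathfrak{g}})_{\{i\}}$ need not be completely reducible, so Proposition \ref{prop:iso_to_tens_of_ev} may only be invoked at the level of $q$-characters of the Jordan--H\"older constituents of the restriction. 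Without this lemma (Frenkel--Mukhin's key technical input) the closing induction over $V=U_q^-.v_0$ proves nothing; with it, your induction does close and gives $\chi_q(V)\in P_V\cdot\mathbb{Z}_{\geq0}[A_{i,a}^{-1}]_{i\in I,\,a\in\mathbb{C}^\times}$ as claimed.
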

In order to explain the full analogy to the representation theory of $\mathfrak{g}$ let us make the following definitions.
\begin{definition}[dominant loop-weights]
	We say that a \textbf{loop-weight} $m \in \mathcal{P}$ is \textbf{dominant} (resp. anti-dominant), if $m\in \mathcal{P}^+$ (resp. $m\in \mathcal{P}^-$), i.e. $m$ is a monomial in the (anti-)fundamental loop-weights $Y_{i,a}$ (resp. $Y_{i,a}^{-1}$), $i\in I$, $a\in \mathbb{C}^\times$. $\odot$
\end{definition}
By Theorem \ref{thm:drinfeld_polynomials}, every dominant loop-weight corresponds to an irreducible element in $\mathrm{C}$.
\begin{definition}[affine root lattice]
	Define the \textbf{affine root lattice} $\mathcal{Q}$ as the subgroup of $\mathcal{P}$ generated by the affine simple roots $A_{i,a}$, $i\in I$, $a\in \mathbb{C}^\times$. Moreover, we define the positive (resp. negative) root lattice $Q^+$ (resp. $\mathcal{Q}^-$) by semigroup of monomials in the affine simple roots $A_{i,a}$ (resp. $A_{i,a}^-$), $i\in I$, $a\in \mathbb{C}^\times$. $\odot$
\end{definition}
\begin{definition}[partial ordering of loop-weights]
	We define a \textbf{partial ordering} on the set $\mathcal{P}$ \textbf{of} all \textbf{loop-weights} by saying that $m'\in\mathcal{P}$ is higher than $m\in\mathcal{P}$, denoted $m'\geq m$, if $m' m^{-1}\in \mathcal{Q}^+$. $\odot$
\end{definition}
Consequently, we have
\begin{cor}\hspace{1em}\label{cor:compatibility}
	\begin{enumerate}
		\item The partial ordering on $\mathcal{P}$ is compatible with the partial order on $P$ in the sense $m\leq m' \, \Rightarrow \operatorname{wt}(m) \leq \operatorname{wt}(m')$.
		\item For all $m_+\in\mathcal{P}^+$ we have $\mathcal{L}(V(m_+))\subset m_+\mathcal{Q}^-$, where $\mathcal{L}(V)$ denotes the set of all loop-weights of $V\in\mathrm{C}$, and $V(m_+)$ is the irreducible representation with Drinfeld polynomial $m_+\in\mathcal{P}^+$.\footnote{ I.e. for $V\in\mathrm{C}$, $\mathcal{L}(V)= \{m\in\mathcal{P}\,|\,m\text{ is a loop-weight of }V\}$. The notation $V(m_+)$ is as in Corollary \ref{cor:notation_V(P)}.} $\odot$
	\end{enumerate}
\end{cor}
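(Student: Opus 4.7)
The plan is to derive both parts directly from the definitions and from Theorem \ref{thm:highest_loop-weight}, without needing any new machinery.

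For part (i), I would use the observation in Part \ref{enum:affine_simple_roots} of the Remarks \ref{rems:affine_simple_roots} that the affine simple root $A_{i,a}$ maps under $\operatorname{wt}$ to the simple root $\alpha_i\in Q^+$. Since $\operatorname{wt}:\mathcal{P}\to P$ is a homomorphism of abelian groups (Part \ref{enum:wt_remark} of the same Remarks), any element of $\mathcal{Q}^+$, being a monomial with non-negative exponents in the $A_{i,a}$, is sent to a non-negative integer combination of the $\alpha_i$, i.e.\ to an element of $Q^+$. Therefore, if $m\leq m'$, then $m'm^{-1}\in\mathcal{Q}^+$ gives $\operatorname{wt}(m')-\operatorname{wt}(m)=\operatorname{wt}(m'm^{-1})\in Q^+$, which is exactly $\operatorname{wt}(m)\leq\operatorname{wt}(m')$.

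For part (ii), I would appeal directly to Theorem \ref{thm:highest_loop-weight}, which asserts that
\begin{equation*}
\chi_q(V(m_+)) = m_+\!\left(1+\sum_p M_p\right),
\end{equation*}
where each $M_p$ is a monomial in the $A_{i,a}^{-1}$, $i\in I$, $a\in\mathbb{C}^\times$. By the definition of the $q$-character (Definition \ref{def:q-character_drinfeld}), the loop-weights of $V(m_+)$ are precisely the monomials appearing in $\chi_q(V(m_+))$ with non-zero coefficient. Thus every loop-weight of $V(m_+)$ is of the form $m_+\cdot M$ with $M\in\mathcal{Q}^-$ (taking $M=1$ for the highest loop-weight itself, which lies in the semigroup generated by the $A_{i,a}^{-1}$ by convention of the empty monomial). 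This gives $\mathcal{L}(V(m_+))\subset m_+\mathcal{Q}^-$.

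Neither step presents a real obstacle: part (i) is a one-line computation using that $\operatorname{wt}$ is a group homomorphism sending $A_{i,a}\mapsto\alpha_i$, and part (ii) is a direct translation of Theorem \ref{thm:highest_loop-weight} into the language of the partial order just introduced. The only subtlety is the conventional (but harmless) inclusion of the empty monomial $1$ in $\mathcal{Q}^\pm$, which is needed so that the highest loop-weight $m_+$ itself belongs to $m_+\mathcal{Q}^-$; this is consistent with the analogous convention for $Q^\pm$ in the classical setting of Subsection \ref{subsect:Cartan_data}.
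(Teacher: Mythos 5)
Your proposal is correct and follows exactly the route the paper intends: the corollary is presented there as an immediate consequence ("Consequently, we have") of the remark that $\operatorname{wt}$ is a group homomorphism sending $A_{i,a}\mapsto\alpha_i$ and of Theorem \ref{thm:highest_loop-weight}, which is precisely how you argue parts (i) and (ii). Nothing is missing; the remark about the empty monomial is a harmless convention consistent with the paper's usage.
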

We can now formulate the
\begin{thm}[theorem of the highest loop-weight]\label{thm:thm_of_the_highest_loop-weight}
	Let $U_q(\tilde{\mathfrak{g}})$ be the (untwisted) quantum affine algebra of the finite-dimensional simple Lie algebra $\mathfrak{g}$ over $\mathbb{C}$. Then,
	\begin{enumerate}
		\item every finite-dimensional irreducible type $\boldsymbol{1}$ representation has a highest loop-weight $m_+\in \mathcal{P}^+$,
		\item the highest loop-weight is always dominant,
		\item two irreducible representations with the same highest loop-weight are isomorphic,
		\item every dominant loop-weight $m_+\in\mathcal{P}^+$ is the highest loop-weight of an irreducible representation $V(m_+)$. $\odot$
	\end{enumerate}
\end{thm}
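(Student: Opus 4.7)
The plan is to reduce the four assertions almost entirely to results already established in the excerpt, with the single nontrivial piece being the "highest" property in the loop-weight partial order.

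First I would set up the would-be highest loop-weight vector. By Definition \ref{def:pseudo-highest_weight} and the corollary following it, every finite-dimensional irreducible type $\boldsymbol{1}$ module $V$ contains a pseudo-highest-weight vector $v_0$ which is annihilated by all $\mathcal{X}_{i,r}^+$, is a simultaneous eigenvector for $U_q^0$, and generates $V$. Reading off the generalised eigenvalues of $\boldsymbol{\varPhi}_i^\pm(u)$ on $v_0$ and invoking Theorem \ref{thm:drinfeld_polynomials} identifies the corresponding loop-weight with the $I$-tuple of Drinfeld polynomials $m_+ := P_V = \prod_{i\in I} P_{i,V}(u_i) \in \mathcal{P}^+$. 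This already delivers assertion (2), since $m_+ \in \mathcal{P}^+$ by construction (each $P_{i,V}$ has constant term $1$); and the bijection established in the second paragraph of Theorem \ref{thm:drinfeld_polynomials} between isomorphism classes of finite-dimensional irreducible type $\boldsymbol{1}$ representations and $\mathcal{P}^+$ gives assertions (3) and (4) with no further work.

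The substantive content is therefore assertion (1), that $m_+$ is actually the \emph{highest} loop-weight of $V$, i.e.\ $m \leq m_+$ for every $m \in \mathcal{L}(V)$, equivalently $m_+ m^{-1} \in \mathcal{Q}^+$. This is precisely Corollary \ref{cor:compatibility}(2), and it is equivalent to the statement of Theorem \ref{thm:highest_loop-weight}, since writing $\chi_q(V) = P_V(1 + \sum_p M_p)$ with each $M_p$ a monomial in the $A_{i,a}^{-1}$ exhibits every loop-weight of $V$ as an element of $P_V \cdot \mathcal{Q}^-$. The strategy I would pursue is to lower from $v_0$: for any loop-weight space $V_m$ with nonzero $v \in V_m$ and any $j \in I$, $s \in \mathbb{Z}$, I would show that $\mathcal{X}_{j,s}^-.v$ lies in a sum of generalised eigenspaces $V_{m'}$ whose loop-weights $m'$ are all of the form $m \cdot A_{j,b}^{-1}$ for certain $b \in \mathbb{C}^\times$. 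This is a calculation: commute the generating series $\boldsymbol{\varPhi}_i^\pm(u)$ past $\mathcal{X}_{j,s}^-$ using the fifth and sixth relations of Theorem \ref{thm:second_drin_quantum_aff}, and match the resulting rational shifts with the explicit form of $A_{j,b}$ recorded in Remarks \ref{rems:affine_simple_roots}(\ref{enum:affine_simple_roots}). Since $V = U_q^-.v_0$ by the pseudo-highest-weight property, iterating this lowering step from $v_0 \in V_{m_+}$ exhausts $\mathcal{L}(V) \subset m_+ \cdot \mathcal{Q}^-$, giving (1).

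The main obstacle will be pinning down precisely which shifts $b \in \mathbb{C}^\times$ can arise when applying $\mathcal{X}_{j,s}^-$ to a vector in $V_m$: the generator $\mathcal{X}_{j,s}^-$ does not carry a single well-defined spectral parameter, and the admissible $b$'s are constrained by the roots of the polynomials $Q_{i,V}, R_{i,V}$ appearing in the factorisation of $m$ given by Theorem \ref{thm:drinfeld-l_weights}. Working with the full generating functions $\boldsymbol{\mathcal{X}}_j^\pm(u)$ of Remark \ref{rem:comult_partial_rem} and exploiting the Q/R-structure of loop-weights from Theorem \ref{thm:drinfeld-l_weights} is what allows this pinpointing; this is the Frenkel--Reshetikhin/Frenkel--Mukhin argument underlying Theorem \ref{thm:highest_loop-weight}. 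Once this is in hand, the compatibility of the two partial orders in Corollary \ref{cor:compatibility}(1) is automatic from $\operatorname{wt}(A_{i,a}) = \alpha_i$, and the classical theorem of the highest weight for $U_q(\mathfrak{g})$ becomes a direct consequence of the affine statement via the restriction $U_q(\mathfrak{g}) \hookrightarrow U_q(\tilde{\mathfrak{g}})$.
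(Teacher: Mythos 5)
Your proposal is correct and matches the paper's own treatment: the paper likewise observes that the theorem is just a reformulation of Theorems \ref{thm:drinfeld_polynomials}, \ref{thm:drinfeld-l_weights} and \ref{thm:highest_loop-weight} (with Corollary \ref{cor:compatibility} supplying $\mathcal{L}(V(m_+))\subset m_+\mathcal{Q}^-$), where $m_+$ is the Drinfeld polynomial of $V(m_+)$. Your extra sketch of the lowering argument only fleshes out the Frenkel--Reshetikhin/Frenkel--Mukhin input that the paper cites rather than reproves, so the route is essentially identical.
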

Clearly, Theorem \ref{thm:thm_of_the_highest_loop-weight} is just a reformulation of the Theorems \ref{thm:drinfeld_polynomials}, \ref{thm:drinfeld-l_weights} and \ref{thm:highest_loop-weight}, where $m_+\in\mathcal{P}^+$ is just the Drinfeld polynomial of $V(m_+)$ as in Corollary \ref{cor:compatibility}. Finally, let us collect the properties of the $q$-character.
\begin{thm}[properties of $\chi_q$]\hspace{1em}
	\label{thm:properties_of_chi_q}
	\begin{enumerate}
		\item The $q$-character $\chi_q:\mathrm{C}\to \mathbb{Z}[\mathcal{P}]$ depends only on the class of $V\in\mathrm{C}$ in its Grothendieck ring $\operatorname{Rep}U_q(\tilde{\mathfrak{g}})$. Moreover, the induced map $\operatorname{Rep}U_q(\tilde{\mathfrak{g}}) \to \mathbb{Z}[\mathcal{P}]$, denoted also by $\chi_q$, defines an injective ring-homomorphism.
		\item For any finite-dimensional representation $V$ of $U_q(\tilde{\mathfrak{g}})$ we have
		$\chi_q(V)\in\mathbb{Z}_+[\mathcal{P}]$.
		\item Let $\chi : \operatorname{Rep}(U_q(\mathfrak{g}))\to \mathbb{Z}[P]$ be the $U_q(\mathfrak{g})$-character homomorphism, let $\operatorname{wt}:\mathbb{Z}[\mathcal{P}]\to\mathbb{Z}[P]$ be the homomorphism defined by $Y^{\pm1}_{i,a}\mapsto e^{\pm\omega_i}$ as in Part \ref{enum:wt_remark} of the Remarks \ref{rems:affine_simple_roots} and let $\operatorname{res}:\operatorname{Rep}U_q(\tilde{\mathfrak{g}})\to \operatorname{Rep}U_q(\mathfrak{g})$ be the restriction homomorphism. Then the diagram
		\begin{equation*}
			\begin{tikzcd}[row sep=large, column sep=large]
				\operatorname{Rep}U_q(\tilde{\mathfrak{g}}) \arrow{r}{\chi_q} \arrow[swap]{d}{\operatorname{res}} & \mathbb{Z}[\mathcal{P}] \arrow{d}{\operatorname{wt}} \\
				\operatorname{Rep}U_q(\mathfrak{g}) \arrow{r}{\chi} & \mathbb{Z}[P]
			\end{tikzcd}
		\end{equation*}
		commutes, i.e. $\chi_q(V)$ reduces to $\chi(V)$ on the subalgebra $U_q(\mathfrak{g}) \leq U_q(\tilde{\mathfrak{g}})$.
		\item\label{enum:q_char_four} $\operatorname{Rep}U_q(\tilde{\mathfrak{g}})$ is a commutative ring which is isomorphic to $\mathbb{Z}[t_{i,a}]_{i\in I;a\in\mathbb{C}^\times}$, where $t_{i,a}$ is the class of $V_{\omega_i}(a)$. $\odot$
	\end{enumerate}
\end{thm}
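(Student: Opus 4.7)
The plan is to address the four parts in order, leveraging the loop-weight machinery already established.

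For Part 1, I would first verify that the loop-weight decomposition is additive on short exact sequences: if $0\to U\to V\to W\to 0$ is exact in $\mathrm{C}$, then the mutually commuting operators $\varPhi_{i,r}^\pm$ preserve $U$ and descend to $W$, so $\dim V_{\boldsymbol{\psi}}=\dim U_{\boldsymbol{\psi}}+\dim W_{\boldsymbol{\psi}}$ for each loop-weight. This shows $\chi_q$ descends to the Grothendieck group. The crucial step is multiplicativity, $\chi_q(V\otimes W)=\chi_q(V)\chi_q(W)$. Here I would invoke the partial description of $\Delta_q$ in Remark \ref{rem:comult_partial_rem}: the generating series $\boldsymbol{\varPhi}_i^\pm$ is group-like modulo terms in $(U_qX_+\otimes U_qX_-+U_qX_-\otimes U_qX_+)[[u^{\pm 1}]]$. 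For $v\otimes w$ in a product of loop-weight spaces of $V$ and $W$, these error terms land in $U_q(\mathfrak{g})$-weight spaces strictly higher or lower than $\operatorname{wt}(v)+\operatorname{wt}(w)$ (since $X_i^\pm$ shifts the $U_q(\mathfrak{g})$-weight by $\pm\alpha_i$). Choosing a Jordan basis that refines the $U_q(\mathfrak{g})$-weight decomposition, the action of $\Delta_q(\varPhi_{i,r}^\pm)$ on the subspace of fixed total weight is upper-triangular, with diagonal entries given precisely by the group-like term. Hence the loop-weight of $v\otimes w$ is the product of the loop-weights of $v$ and $w$, giving $\chi_q(V\otimes W)=\chi_q(V)\chi_q(W)$.

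Part 2 is immediate from the Definition \ref{def:q-character_drinfeld}, since $\dim V_{\boldsymbol{QR^{-1}}}\in\mathbb{Z}_{\geq 0}$. For Part 3, I would compare eigenvalues of $\mathcal{K}_i=\varPhi_{i,0}^+$: on a loop-weight space $V_m$ with $m=\prod_{i,a}Y_{i,a}^{n_{i,a}}$, the theorem gives $\mathcal{K}_i.v=q_i^{\sum_a n_{i,a}}v$. Meanwhile, on the $U_q(\mathfrak{g})$-weight space $V_\lambda$ for $\lambda=\sum_j m_j\omega_j$, one has $\mathcal{K}_i.v=q_i^{m_i}v$ (using the identification $d^\vee\langle\alpha_i,\lambda\rangle=d_im_i$). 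Thus the $U_q(\mathfrak{g})$-weight of $V_m$ is $\sum_i\bigl(\sum_a n_{i,a}\bigr)\omega_i=\operatorname{wt}(m)$, proving commutativity of the diagram.

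For Part 4, the commutativity of $\operatorname{Rep}U_q(\tilde{\mathfrak{g}})$ follows once injectivity of $\chi_q$ is established, since $\mathbb{Z}[\mathcal{P}]$ is commutative. To prove injectivity and the polynomial ring statement simultaneously, I would use the "unitriangular" structure afforded by the theorem of the highest loop-weight (Theorem \ref{thm:thm_of_the_highest_loop-weight}) together with Corollary \ref{cor:compatibility}: for any $m_+\in\mathcal{P}^+$,
\begin{equation*}
\chi_q(V(m_+))=m_+ + \sum_{m<m_+}c_{m}\,m,\qquad c_m\in\mathbb{Z}_{\geq 0}.
\end{equation*}
The classes $\{[V(m_+)]\}_{m_+\in\mathcal{P}^+}$ form a $\mathbb{Z}$-basis of $\operatorname{Rep}U_q(\tilde{\mathfrak{g}})$, and $\{m_+\}_{m_+\in\mathcal{P}^+}$ form a $\mathbb{Z}$-basis of the subring $\mathbb{Z}[Y_{i,a}]\subset\mathbb{Z}[\mathcal{P}]$; the matrix of $\chi_q$ with respect to these bases is unitriangular with respect to the partial order, so $\chi_q$ is injective. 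For the polynomial identification, consider the ring map $\Phi:\mathbb{Z}[t_{i,a}]\to\operatorname{Rep}U_q(\tilde{\mathfrak{g}})$ with $t_{i,a}\mapsto[V_{\omega_i}(a)]$. By the multiplicativity of leading terms, a monomial $t_{i_1,a_1}\cdots t_{i_k,a_k}$ maps to $[V(m_+)]$ plus strictly lower classes, where $m_+=Y_{i_1,a_1}\cdots Y_{i_k,a_k}$. This makes $\Phi$ unitriangular with respect to the bases indexed by $\mathcal{P}^+$, hence a $\mathbb{Z}$-module isomorphism, and in particular a ring isomorphism.

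The subtlest step is the multiplicativity in Part 1, because the partial description of $\Delta_q$ only covers terms modulo $U_qX_\pm\otimes U_qX_\mp$; one must argue carefully that these residues do not contribute to the diagonal generalized eigenvalue on a given $U_q(\mathfrak{g})$-weight space, which is exactly where the fact that loop-weight spaces refine $U_q(\mathfrak{g})$-weight spaces becomes essential. Everything else is a bookkeeping consequence of the Drinfeld polynomial classification and the partial order structure developed earlier.
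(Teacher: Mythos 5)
Your overall architecture is sound but reaches Part 4 by a genuinely different route than the paper. You deduce the commutativity of $\operatorname{Rep}U_q(\tilde{\mathfrak{g}})$ from the injectivity of the ring homomorphism $\chi_q$ into the commutative ring $\mathbb{Z}[\mathcal{P}]$ (the Frenkel--Reshetikhin argument), and obtain the polynomial-ring statement from a unitriangularity argument with respect to the highest loop-weight order. The paper instead regards Parts 1--3 (including the ring-homomorphism property, with injectivity attributed to Corollary \ref{cor:tensor_product_of_fund}) as already established, and proves commutativity by comparing the composition factors of $V\otimes W$ and $W\otimes V$ directly: either via the duality $V(\boldsymbol{P})^*\cong V(\boldsymbol{P}^*)$ combined with $(V\otimes W)^*\cong W^*\otimes V^*$, or via the fact that $PR_{V,W}(u)$ is a meromorphic intertwiner $V(u)\otimes W\to W\otimes V(u)$ which is an isomorphism for generic $u$. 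Your route gives injectivity, commutativity and the basis statement in one stroke, but it makes the full multiplicativity of $\chi_q$ load-bearing; the paper's route does not need that input for commutativity.

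That is exactly where your proposal has a genuine gap. Your justification of $\chi_q(V\otimes W)=\chi_q(V)\chi_q(W)$ asserts that the error terms of the coproduct ``land in $U_q(\mathfrak{g})$-weight spaces strictly higher or lower than $\operatorname{wt}(v)+\operatorname{wt}(w)$''; this is false, since each $\varPhi^{\pm}_{i,r}$ has weight zero and hence $\Delta_q(\varPhi^{\pm}_{i,r})$ preserves the total weight of $V\otimes W$. What you actually need is triangularity with respect to the weight of one tensor slot, and that does not follow from the congruences of Proposition \ref{prop:comult_partial} and Remark \ref{rem:comult_partial_rem}: the error terms lie in the left ideals $U_qX_+\otimes U_qX_-+U_qX_-\otimes U_qX_+$, and left multiplication by arbitrary elements destroys slot-wise weight control (for instance $\mathcal{X}^-_{i,s}\mathcal{X}^+_{i,r}\otimes x$ lies in $U_qX_+\otimes U_qX_-$ yet has weight zero in the first slot, so it can contribute to the ``diagonal'' block you want to compute). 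Those congruences are designed to be applied to tensor products of highest-weight vectors, where the raising operators act by zero --- which is precisely how the paper uses them to get multiplicativity of Drinfeld polynomials in Theorem \ref{thm:drinfeld_polynomials}. To obtain multiplicativity of the full $q$-character you must either invoke the transfer-matrix construction of $\chi_q$ via the universal $R$-matrix (where multiplicativity comes from $(\Delta\otimes\id)(\mathcal{R})=\mathcal{R}_{13}\mathcal{R}_{23}$) or carry out a substantially finer filtration analysis of the coproduct, as in \cite{FR} and \cite{FM}; note the paper itself defers this point to those references, and its own appeal to the partial coproduct in the proof of Theorem \ref{thm:drinfeld-l_weights} is likewise only a sketch. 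The remaining steps of your proposal (additivity on short exact sequences, Parts 2 and 3, and the unitriangularity argument for injectivity and for the identification with $\mathbb{Z}[t_{i,a}]$, the latter also requiring the standard finiteness of dominant monomials below a given one) are fine.
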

In fact, we have already proven almost all the assertions in Theorem \ref{thm:properties_of_chi_q}. Indeed, the injectivity of $\chi_q$ follows from Corollary \ref{cor:tensor_product_of_fund}, and then only Part \ref{enum:q_char_four} is not completely obvious. It mainly relies on the fact that $V(\boldsymbol{P})^*$ is isomorphic to $V(\boldsymbol{P}^*)$, where $P_i^*(u) = P_{\bar{i}}(uq^{d^\vee h^\vee})$ and $\bar{i}$ is defined by $\alpha_{\bar{i}} = -w_0(\alpha_i)$ with $w_0$ the longest element of the Weyl group of $\mathfrak{g}$. In fact, this is clear by Remark \ref{rem:shifted_dual} in the case $\mathfrak{g}=\mathfrak{sl}_{l+1}$. In general, it is proven in the paper \cite{CP1996} by Proposition 5.1(b). Then, one applies the identity $(V\otimes W)^* = W^*\otimes V^*$ to compare the composition factors in the tensor products $V\otimes W$ and $W\otimes W$. Alternatively, one can use the fact that the $R$-matrix defines an intertwiner $PR_{V,W}(u):V(u)\otimes W \to W\otimes V(u)$ as explained in Remark \ref{rem:left_mult_intertwiner_R-matrix}.\footnote{ Where $P(a\otimes b) = b\otimes a$.} Then the assertion follows since $R_{V,W}(u)= (\rho_{V(u)}\otimes\rho_W)(\tilde{\mathcal{R}})\in \End(V\otimes W)[[u]]$ is an expansion of a meromorphic function in $u$, which is an isomorphism for generic $u\in\mathbb{C}$.

\begin{rem}\label{rem:tau_equivariant_action}
	Although we didn't explicitly state it, one easily verifies that (by definition) the $q$-character homomorphism $\chi_q:\mathrm{C}\to \mathbb{Z}[\mathcal{P}]$ is $\mathbb{C}^\times$-equivariant. The action of $a\in\mathbb{C}^\times$ on $\mathrm{C}$ is defined through the pullback by $\tau_a$ (see Equation (\ref{eqn:tau_a_q-deformed})), whereas the action of $a\in\mathbb{C}^\times$ is defined on the fundamental loop-weights by $Y_{i,b}\mapsto Y_{i,ab}$, $i\in I$, $b\in\mathbb{C}^\times$. In fact, this is just another reformulation of Corollary \ref{cor:pull-back_by_tau_a}.\footnote{ Combined with Theorem \ref{thm:drinfeld-l_weights}, to be a little more precise.} $\odot$
\end{rem}
\subsection{$q$-characters and snake modules}\label{subsect:q-char_and_snake_mod}
In contrast to the representation theory of finite-dimensional simple Lie algebras $\mathfrak{g}$ over $\mathbb{C}$ discussed in Subsection \ref{subsect:Cartan_data}, $U_q(\tilde{\mathfrak{g}})$ is not simple. This has the consequence that the (loop-)weight space decomposition in Remark \ref{rem:loop_weight_spaces} is in general only a decomposition into Jordan blocks. Hence, the category $\mathrm{C}$ of finite-dimensional type $\boldsymbol{1}$ representations is not semisimple, whereas the category $\mathrm{C}_{\mathfrak{g}}$ of finite-dimensional representations in $\mathfrak{g}$ is. While every short exact sequence in $\mathrm{C}_{\mathfrak{g}}$ splits, short exact sequences in $\mathrm{C}$ in general don't. Therefore, elements in $\mathrm{C}$ are not necessarily completely reducible. Moreover, it can happen that the tensor product of two nontrivial irreducible representations is irreducible. Indeed, we have seen in the last section that this is the case for almost all possible tensor products, i.e. tensor products of irreducible representations of $\mathrm{C}$ are reducible only when their corresponding Drinfeld polynomials are in special position. Precisely, considering only tensor products with irreducible representations that cannot be written as a tensor product of two other nontrivial representations, there is always only countably many such representations in special position to a given representation $V\in\mathrm{C}$. By Corollary \ref{cor:pull-back_by_tau_a}, this follows from the fact that, for two given representations $V$ and $W$, there is always only a finite set of points $a\in\mathbb{C}^\times$ such that $V(a)\otimes W$ is reducible. We may refer to these points as the 'special positions'. However, our discussion certainly motivates
\begin{definition}[special, thin, prime, real]\label{def:special_thin_prime_real}
	A representation $V\in\mathrm{C}$ is called
	\begin{enumerate}
		\item \textbf{special} (resp. anti-special), if $\chi_q(V)$ has exactly one dominant (resp. anti-dominant) loop-weight.
		\item \textbf{thin}, if no loop-weight space of $V$ has dimension greater than one.
		\item \textbf{prime}, if $V$ is not isomorphic to a tensor product of two nontrivial representations in $\mathrm{C}$.
		\item \textbf{real}, if $V\otimes V$ is irreducible. $\odot$
	\end{enumerate}
\end{definition}
We have seen above (Theorem \ref{thm:properties_of_chi_q} Part \ref{enum:q_char_four}) that the Grothendieck ring of $\mathrm{C}$ is the polynomial ring over $\mathbb{Z}$ in the classes $[V_{\omega_i}(a)]\;(i\in I,\, a\in \mathbb{C}^\times)$ of fundamental modules. Moreover, we can generalise Kirillov--Reshetikhin modules (KR) for arbitrary $\mathfrak{g}$ as follows.
\begin{definition}[Kirillov--Reshetikhin module]\label{def:KR_module}
	Let $i\in I$ and $a\in \mathbb{C}^\times$. The Kirillov--Reshetikhin module $W_{i}^{(r)}(a)$ is defined in terms of Theorem \ref{thm:thm_of_the_highest_loop-weight} by
	\begin{equation*}
		W_{i}^{(r)}(a):=V(S_r^i(aq^{r-1})),
	\end{equation*}
	where the $q$-string $S_r^i(a)$ is defined as $S_r^i(a)\coloneq Y_{i,aq^{r-1}}Y_{i,aq^{r-3}}\cdots Y_{i,aq^{-r+1}}$.\footnote{It is obtained from the $q$ string $S_r(a)$ above by mapping $Y_{aq^l}\mapsto Y_{i,aq^l}$.} $\odot$
\end{definition}
Note that $\text{wt}(W_{i}^{(k)}(a)) = k\omega_i$.
In particular, $W^{(1)}_{i}(a)$ coincides with the fundamental module $V_{\omega_i}(a)$. Let us also make the following
\begin{rem}\label{rem:T-system}
	One can show that KR modules are special (and anti-special), thin, prime and real, and the classes $[W_{i}^{(k)}(a)]$ in $\mathbb{Z}[\mathcal{P}]$ satisfy the $\boldsymbol{\operatorname{T}}$\textbf{-system}
	\begin{align}
		[W_{i}^{(k)}(a)][W_{i}^{(k)}(aq^2)]=[W_{i}^{(k+1)}(a)][W_{i}^{(k-1)}(aq^2)]+\prod_{a_{ij}=-1}[W_{j}^{(k)}(aq)]
	\end{align}
	for simply laced $\mathfrak{g}$ \cite{H}.\footnote{ I.e. when $\mathfrak{g}$ is of type $ADE$.}
	Note that it generalises the $\boldsymbol{\operatorname{T}}$\textbf{-system} (\ref{eqn:Tsys_sl2}) \cite{KNS} \cite{N} \cite{H}. As in the $\mathfrak{sl}_2$ case, it can be used to calculate the class $[W^{(k)}_i(a)]$ inductively as a polynomial in the classes of fundamental modules $[V_{\omega_i}(a)]$, $i\in I,\, a\in \mathbb{C}^\times$ (cf. \cite{HL}). Moreover, it can be generalised for arbitrary $\mathfrak{g}$ (see \cite{H}). In physics, this system of equations is usually written in terms of transfer matrices with corresponding auxiliary spaces (cf. \cite{KNS}).\footnote{"The product in $\text{Rep}(U_q(\tilde{\mathfrak{g}}))$ describes traces of tensor products."} However, the Kirillov--Reshetikhin modules and \textit{the $T$-system} only cover a small part of the prime irreducible modules for rank $l\geq 2$. In type $A$, they cover exactly the evaluation representations of representations of $U_q(\mathfrak{gl}_{l+1})$ which have a rectangular Young diagram. $\odot$
\end{rem}
We are interested in a general description of the tensor product in $\mathrm{C}$. To answer this question, it is helpful to keep in mind that simple objects are essentially monomials in the fundamental loop-weights $Y_{i,a}=(1-au_i)$, $i\in I$, $a\in\mathbb{C}^\times$, and, up to short exact sequences,\footnote{ Corresponding to Jordan blocks, as explained above.} objects in $\mathrm{C}$ are finite sums of those. Taking into account Remark \ref{rem:tau_equivariant_action} and the preceding discussion, this question reduces to finding all the special positions, as in the $\mathfrak{sl}_2$-case. Indeed, it is enough to understand all the prime objects in $\mathrm{C}$ since the tensor product can always be factorised into a product of prime objects. Since representations in general position with respect to each other don't lead to anything new in this sense, the idea is to go 'modulo general positions'. Luckily, the set of special positions is described for arbitrary simple objects of $\mathrm{C}$ in the paper \cite{C2002} by Chari. Moreover, going modulo general positions for single laced $\mathfrak{g}$ is described in 3.6 and 3.7 in the paper \cite{HL} of Hernandez and Leclerc. For type $A$ and $B$, it is explained in the papers \cite{MY} and \cite{MY2} by Mukhin and Young, where they introduce a class of irreducible representations, called 'snake modules'. In fact, it turns out that these cover all the prime simple objects of $\mathrm{C}$.
Unfortunately, in view of the scope of this thesis, we will not be able to go through many of the nice proofs in these papers. Furthermore, we will focus on the description of type $A$ and refer the reader to the original papers for type $B$.

\begin{definition}[the subcategory $\mathrm{C}_{\mathbb{Z}}$]\label{def:subcat_C_Z}
	Define subsets $\mathcal{X}\coloneq\{(i,k)\in I\times\mathbb{Z}:i-k=0\mod 2\}$ and $\mathcal{W}\coloneq\{(i,k):(i,k-1)\in \mathcal{X}\}\subset I\times \mathbb{Z}$. Fix $a\in \mathbb{C}^\times$, then we define the subcategory $\mathrm{C}_{\mathbb{Z}}$ of $\mathrm{C}$ by considering only representations whose $q$-characters lie in the subring $\mathbb{Z}[Y_{i,aq^k}^{\pm1}]_{(i,k)\in\mathcal{X}}\subset \mathbb{Z}[\mathcal{P}]$. $\odot$
\end{definition}
In fact, it is clear that the subcategory $\mathrm{C}_{\mathbb{Z}}$ of $\mathrm{C}$ is closed under taking tensor products. Moreover, the definition of $\mathrm{C}_{\mathbb{Z}}$ doesn't depend on $a$ since one can be mapped onto every other by the Hopf algebra automorphism $\tau$, stating that $\mathrm{C}_{\mathbb{Z}}$ is a subcategory of $\mathrm{C}$ 'modulo $\tau$'. Now, the crucial part is to proof the converse statement, that 'modulo $\tau$' $\mathrm{C}$ is also a subcategory of $\mathrm{C}_{\mathbb{Z}}$. Clearly, this is equivalent to saying that elements in $\mathrm{C}_{\mathbb{Z}}$ are always in general position with respect to elements in $\mathrm{C}\backslash\mathrm{C}_{\mathbb{Z}}$, which is precisely one of the main statements in the paper \cite{C2002} by Chari. We therefore have (see \cite{HL} 3.6 and 3.7 and \cite{C2002})
\begin{cor}\label{cor:relation_between_C_and_C_Z}
	Every simple object $S$ in $\mathrm{C}$ can be written as a tensor product $S_1(a_1)\otimes\cdots\otimes S_k(a_k)$ for some simple objects $S_1,\dots,S_k \in \mathcal{C}_{\mathbb{Z}}$ and $\frac{a_i}{a_j}\in \mathbb{C}\backslash q^{2\mathbb{Z}}$.\footnote{ Here $S(a)$ is the pull-back of $S$ by $\tau_a \in \operatorname{Aut}U_q(\tilde{\mathfrak{g}})$ as usual.} $\odot$
\end{cor}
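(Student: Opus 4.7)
The plan is to prove the corollary by starting from the Drinfeld-polynomial classification in Theorem~\ref{thm:drinfeld_polynomials} and then appealing to Chari's characterisation of special positions in~\cite{C2002} to conclude irreducibility of a suitable tensor product. Since every simple object $S$ in $\mathrm{C}$ is of the form $V(\boldsymbol{P})$ for a unique $\boldsymbol{P}=(P_i)_{i\in I}\in\mathcal{P}^+$, and each $P_i$ factors as a product of polynomials $Y_{i,a}=1-au_i$, I may write $\boldsymbol{P}=\prod_{i,s}Y_{i,a_{i,s}}$ as a monomial in the fundamental loop-weights. The first, combinatorial, step is to group these factors into $\mathrm{C}_{\mathbb{Z}}$-compatible blocks.

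Concretely, I would introduce the equivalence relation on pairs $(i,a)\in I\times\mathbb{C}^\times$ generated by $(i,a)\sim(j,b)$ whenever $a/b=q^k$ for some $k\in\mathbb{Z}$ with $k\equiv i-j\pmod 2$. This is precisely the relation that says that $Y_{i,a}$ and $Y_{j,b}$ can both appear in the same $\mathrm{C}_{\mathbb{Z}}$-family after a common $\tau_c$-twist. Let the equivalence classes meeting the support of $\boldsymbol{P}$ be $\mathcal{E}_1,\dots,\mathcal{E}_k$, and group the factors of $\boldsymbol{P}$ accordingly as $\boldsymbol{P}=\boldsymbol{P}_1\cdots\boldsymbol{P}_k$. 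Picking one base point $a_j$ in each class, Corollary~\ref{cor:pull-back_by_tau_a} together with the $\mathbb{C}^\times$-equivariance of $\chi_q$ recorded in Remark~\ref{rem:tau_equivariant_action} allows me to write $V(\boldsymbol{P}_j)\cong V(\boldsymbol{P}_j')(a_j)$ where $V(\boldsymbol{P}_j')\in\mathrm{C}_{\mathbb{Z}}$. By construction the ratios $a_i/a_j$ lie outside $q^{2\mathbb{Z}}$, so it remains to establish the isomorphism
\begin{equation*}
V(\boldsymbol{P})\;\cong\;V(\boldsymbol{P}_1')(a_1)\otimes\cdots\otimes V(\boldsymbol{P}_k')(a_k).
\end{equation*}

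By the multiplicativity of Drinfeld polynomials under irreducible tensor products (second paragraph of Theorem~\ref{thm:drinfeld_polynomials}), this isomorphism is equivalent to the assertion that the right-hand side is irreducible. Here is where I invoke Chari's result~\cite{C2002}: the characterisation of the loop-weight monomials in special position shows that a tensor product of two simple modules $V(\boldsymbol{Q})\otimes V(\boldsymbol{R})$ is already irreducible as soon as every pair of roots $(i,a)$ from $\boldsymbol{Q}$ and $(j,b)$ from $\boldsymbol{R}$ fails to satisfy the special-position condition, and this condition only involves ratios $a/b$ lying in $q^{\mathbb{Z}}$ with the appropriate parity constraint linking $i$ and $j$. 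In our setting the cross-class pairs satisfy $a/b\notin q^{\mathbb{Z}}$ or have the wrong parity by the very definition of the equivalence relation, so no special position can occur between different factors. Inducting on $k$, I therefore obtain the required isomorphism. The simplest sanity check is the $\mathfrak{sl}_2$ case, where the statement reduces to the union-of-$q$-segments analysis of Proposition~\ref{prop:special_pos_short_ex}, and the argument above reproduces exactly that picture.

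The main obstacle is the precise import of Chari's criterion: it must be strong enough to guarantee that inter-class factors never sit in special position, for \emph{all} simple modules (not only KR or evaluation modules), and it must apply symmetrically to arbitrary $\mathfrak{g}$, not only $\mathfrak{sl}_{l+1}$. The cleanest route is to formulate Chari's condition in terms of the existence of a common $\tau$-orbit inside a single $\mathrm{C}_{\mathbb{Z}}$-block; then the disjointness of the equivalence classes $\mathcal{E}_1,\dots,\mathcal{E}_k$ exactly rules out any such common orbit, so that a clean appeal to~\cite{C2002} suffices. The rest of the proof is bookkeeping with Drinfeld polynomials and the Hopf automorphisms $\tau_a$.
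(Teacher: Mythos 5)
Your proposal is correct and follows essentially the same route as the paper: the paper also reduces the statement to the fact that simple modules whose spectral parameters lie in distinct classes (modulo the parity-compatible $q$-power shifts defining $\mathrm{C}_{\mathbb{Z}}$) are automatically in general position, and cites Chari \cite{C2002} (together with \cite{HL} 3.6--3.7) for exactly the irreducibility input you invoke. The only difference is that you spell out the bookkeeping -- the equivalence relation on the pairs $(i,a)$, the block factorisation of the Drinfeld polynomial, and the $\tau$-twist via Corollary \ref{cor:pull-back_by_tau_a} -- which the paper leaves implicit.
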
  
Hence, the description objects in $\mathrm{C}$ essentially reduces to the description of the objects in $\mathrm{C}_{\mathbb{Z}}$.
By abuse of notation, we set $Y_{i,aq^k}\eqcolon Y_{i,k}$, $A_{i,aq^k}\eqcolon A_{i,k}$, $\mathbb{Z}[Y_{i,k}^{\pm1}]_{(i,k)\in\mathcal{X}} = \mathcal{Y}_{\mathbb{Z}}$ and denote by $\mathcal{P}_{\mathbb{Z}} = 1[Y_{i,k}^{\pm1}]_{(i,k)\in\mathcal{X}}$ and $\mathcal{Q}_{\mathbb{Z}}= 1[A_{i,k}^{\pm1}]_{(i,k)\in\mathcal{W}}$ the restriction of $\mathcal{P}$ and $\mathcal{Q}$ to $\mathrm{C}_{\mathbb{Z}}$, respectively.\footnote{ Note that this is well defined by the $\mathbb{C}^\times$-equivariance of the $q$-character. We also thought that it is natural to denote the monoid of all monomials in indeterminates $x_j$, $j\in J$, by $1[x_j]_{j\in J}$.}

The snake modules in type $A$ are defined as follows. 
\begin{definition}[snake position, snakes and snake modules]
	Let $(i,k)\in\mathcal{X}$.\\
	We say that
	\begin{enumerate}
		\item a point $(i',k')\in\mathcal{X}$ is in \textbf{snake position} with respect to $(i,k)$, iff $k'-k \geq |i'-i|+2$.
		\begin{enumerate}
			\item the point $(i',k')$ is in \textbf{minimal snake position} to $(i,k)$, iff $k'-k$ is equal to the lower bound.
			\item the point $(i',k')\in\mathcal{X}$ is in \textbf{prime snake position} with respect to $(i,k)$, iff $\min\{i'+i,2l+2-i-i'\}\geq k'-k \geq |i'-i|+2$.
		\end{enumerate}
		\item a finite sequence $(i_t,k_t)_{1\leq t\leq M \in \mathbb{N}}$ of points in $\mathcal{X}$ is a \textbf{snake}, iff for all $2\leq t\leq M$, $(i_t,k_t)$ is in \textit{snake position} with respect to $(i_{t-1},k_{t-1})$.
		\begin{enumerate}
			\item $(i_t,k_t)_{1\leq t\leq M \in \mathbb{N}}$ is a \textbf{minimal} (resp. \textbf{prime}) \textbf{snake}, iff any two successive points are in minimal (resp. prime) snake position to each other.
		\end{enumerate}
		\item the simple module $V(m_+)$ is a (\textbf{minimal/prime}) \textbf{snake module}, iff $m_+ = \prod_{t=1}^{M}Y_{i_t,k_t} \in \mathcal{P}^+$ for some (minimal/prime) snake $(i_t,k_t)_{1\leq t\leq M}$. $\odot$
	\end{enumerate}
\end{definition}
In this definition, the term "snake" is meant to be suggestive of the pattern formed by the zeroes of the Drinfeld polynomials of such modules.

One can now proof the following properties (cf. \cite{MY2},\cite{MY},\cite{CH},\cite{BJY}).
\begin{thm}[snake modules]\hspace{1em}
	\label{thm:snake_modules}
	\begin{enumerate}
		\item Snake modules are special, anti-special and thin.
		\item A snake module is prime iff its snake is prime.
		\item Prime snake modules are real.
		\item If a snake module is not prime, then it is isomorphic to a tensor product of prime snake modules defined uniquely up to permutation. $\odot$
	\end{enumerate}
\end{thm}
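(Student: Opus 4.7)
My plan is to organize the proof around the path formula for $q$-characters of snake modules (stated in Subsection \ref{subsect:the_path_formula}, cf.~\cite{MY2}), which expresses $\chi_q(V(m_+))$ for a snake $(i_t,k_t)_{1 \leq t \leq M}$ as a sum over certain non-intersecting tuples of lattice paths. Part (1) falls out essentially at once: thinness is immediate because each loop-weight monomial is produced by a unique non-intersecting tuple of paths, so all loop-weight multiplicities equal one. For specialness and anti-specialness I would show that among all admissible tuples only the configuration of "highest" (resp. "lowest") paths contributes a dominant (resp. anti-dominant) monomial; any deviation of a path from the highest one introduces a factor $A_{i,k}^{-1}$ which, by Corollary \ref{cor:compatibility} and the precise form of $A_{i,k}$, leaves some $Y_{j,l}^{-1}$ uncancelled, and the snake inequalities prevent these factors from being cancelled by contributions from other paths.

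For part (2), the "only if" direction is the easier one. If some pair of consecutive points $(i_t,k_t),(i_{t+1},k_{t+1})$ is in snake position but fails the prime snake inequality $\min\{i+i',2l+2-i-i'\}\geq k'-k$, then by the characterization of special position in \cite{C2002} the tensor product $V(m_+^{(1)}) \otimes V(m_+^{(2)})$ of the two sub-snake modules is already in general position, hence simple, and therefore isomorphic to $V(m_+)$ by multiplicativity of Drinfeld polynomials (Theorem \ref{thm:drinfeld_polynomials}). The harder "if" direction proceeds by contradiction: if the snake is prime but $V(m_+) \cong V(m_1) \otimes V(m_2)$ nontrivially, then $m_+ = m_1 m_2$ splits the snake into two subsequences $S_1, S_2$, and I would compare $\mathcal{L}(V(m_+))$ (computed by the path formula) with $\mathcal{L}(V(m_1)) \cdot \mathcal{L}(V(m_2))$ to exhibit a loop-weight appearing on the right but not the left, contradicting the isomorphism.

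For part (3), I would use part (1) crucially. Since $V = V(m_+)$ is special, $m_+^2$ is the unique dominant monomial of $\chi_q(V)\cdot\chi_q(V) = \chi_q(V \otimes V)$: any cross term $m \cdot m'$ with $\{m,m'\}\neq\{m_+\}$ contains at least one factor $A_{i,k}^{-1}$ coming from the non-dominant factor, which cannot be cancelled because the other factor carries no $A_{j,l}$ available for cancellation without making $m$ or $m'$ itself non-dominant. By the theorem of the highest loop-weight (Theorem \ref{thm:thm_of_the_highest_loop-weight}) and the criterion that a module with a single dominant loop-weight in its $q$-character is necessarily irreducible (since any direct summand must have at least one dominant loop-weight), $V \otimes V$ is simple, so $V$ is real. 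Part (4) follows inductively: split the snake at every pair of consecutive points that is not in prime snake position; by part (2) this yields a decomposition of $V(m_+)$ as a tensor product of prime snake modules, and uniqueness (up to permutation) follows from the injectivity of $\chi_q$ (Theorem \ref{thm:properties_of_chi_q}) together with the fact that the locations of non-prime gaps in the snake are intrinsic to $m_+$.

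The main obstacle will be the "if" direction of (2), namely controlling the dominant monomials of the product $\chi_q(V(m_1)) \cdot \chi_q(V(m_2))$ and showing that a new dominant monomial appears precisely when the snake is non-prime. This is essentially a delicate combinatorial argument using the explicit path formula, in which the numerical inequalities defining prime snake position must enter in a sharp way; all the other parts reduce fairly quickly to the path formula and to Theorems \ref{thm:drinfeld_polynomials} and \ref{thm:thm_of_the_highest_loop-weight} once (1) is in hand.
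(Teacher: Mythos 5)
The first thing to note is that the thesis does not actually prove Theorem \ref{thm:snake_modules}: it is quoted from the literature (cf.\ \cite{MY2}, \cite{MY}, \cite{BJY}), and the only proof indication given in the text is that thinness and (anti-)specialness follow from the path formula, Theorem \ref{thm:path_formula}. Your part (1) follows exactly that route and is essentially the argument of \cite{MY2}: thinness is the injectivity of the map from non-overlapping tuples to monomials, and specialness amounts to showing that only the topmost tuple yields a dominant monomial. So for (1) your sketch is consistent with the source the paper relies on.

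For (2)--(4), however, there are genuine gaps. The most serious one is in (3): the step ``since $V$ is special, $m_+^2$ is the unique dominant monomial of $\chi_q(V)^2$'' does not follow. A cross term $m_+\cdot m$ with $m=m_+M$, $M$ a monomial in the $A_{i,k}^{-1}$, can be dominant, because the factors $Y_{j,l}^{-1}$ occurring in $M$ may be cancelled by the positive powers of $Y_{j,l}$ present in the \emph{other} factor $m_+$; cancellation does not require any ``$A_{j,l}$ available'' on that side, and specialness of $V$ says nothing about whether such cancellations occur. If specialness implied realness in this formal way, realness of prime snake modules would be an immediate corollary of \cite{MY2}; in fact it requires the detailed analysis of dominant monomials in products of two snake $q$-characters (the neighbouring-snake combinatorics behind Theorem \ref{thm:the_extended_T-system}) or the $S$-system/cluster-categorification arguments of \cite{BJY}. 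Relatedly, you yourself leave the ``if'' direction of (2) open, but that is precisely where the extended $\operatorname{T}$-system/$S$-system enters, so (2)--(4) are not yet established by the proposal; even the ``only if'' direction (simplicity of the tensor product across a non-prime junction) is a nontrivial statement about dominant monomials of the product and does not follow immediately from the special-position results of \cite{C2002}, which concern different families of modules. Finally, a minor repair: since $\mathrm{C}$ is not semisimple, the irreducibility criterion should be phrased via composition factors (each simple subquotient contributes a dominant monomial to $\chi_q$), not via direct summands.
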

Thus, prime snake modules are prime, special, anti-special, thin, and real and any snake module decomposes into a tensor product of prime snakes.

The claim is now that prime snake modules are exactly the prime irreducible elements of $\mathrm{C}_{\mathbb{Z}}$. In fact, the main result of Mukhin and Young in their paper \cite{MY}, proofs the existence of a short exact sequence called the extended $\operatorname{T}$-system, which can be used to proof this assertion. To write it down, it is helpful to add the following Definition.
\begin{definition}[neighbouring points and neighbouring snakes]\hspace{1em}
	\begin{enumerate}
			\item For any two successive points $(i,k)$ and $(i',k')$ define the \textbf{neighbouring points} by
			\begin{align*}
					&\mathbb{X}_{i,k}^{i',k'}\coloneq
					\begin{cases}
							((\frac{1}{2}(i+k+i'-k'),\frac{1}{2}(i+k-i'+k')))  & {\scriptstyle k+i>k'-i'}\\
							\emptyset & {\scriptstyle k+i=k'-i'}
						\end{cases}\\
					&\mathbb{Y}_{i,k}^{i',k'}\coloneq
					\begin{cases}
							((\frac{1}{2}(i'+k'+i-k),\frac{1}{2}(i'+k'-i+k)))  & {\scriptstyle k+l+1-i>k'-l-1+i'}\\
							\emptyset & {\scriptstyle k+l+1-i=k'-l-1+i'.}
						\end{cases}
				\end{align*}
			\item For any prime snake $(i_t,k_t)_{1\leq t\leq M}$ we define its \textbf{neighbouring snakes}\\
			$\mathbb{X}\coloneq\mathbb{X}_{(i_t,k_t)_{1\leq t\leq M}}$ and $\mathbb{Y}\coloneq\mathbb{Y}_{(i_t,k_t)_{1\leq t\leq M}}$ by concatenating its neighbouring points. $\odot$
		\end{enumerate}
\end{definition}
Then, the \textbf{extended $\boldsymbol{\operatorname{T}}$-system} is stated in (cf. \cite{MY} Theorem 4.1)
\begin{thm}[the extended $\operatorname{T}$-system]
	\label{thm:the_extended_T-system}
	Let $(i_t,k_t)_{1\leq t\leq M}\subset\mathcal{X}$ be a prime snake of length $M\geq 2$. Let $\mathbb{X}$ and $\mathbb{Y}$ be its neighbouring snakes. Then we have the following relation in the Grothendieck ring $\operatorname{Rep}U_q(\tilde{\mathfrak{g}})$.
	\begin{align}
			\notag \left[V\left(\prod_{t=1}^{M-1}Y_{i_t,k_t}\right)\right] \left[V\left(\prod_{t=2}^{M}Y_{i_t,k_t}\right)\right] = \left[V\left(\prod_{t=2}^{M-1}Y_{i_t,k_t}\right)\right] \left[V\left(\prod_{t=1}^{M}Y_{i_t,k_t}\right)\right]\\ +\left[V\left(\prod_{(i,k)\in\mathbb{X}}Y_{i_t,k_t}\right)\right] \left[V\left(\prod_{(i,k)\in\mathbb{Y}}Y_{i_t,k_t}\right)\right],
		\end{align}
	where the summands on the right hand side are classes of irreducible modules, i.e.
	\begin{align*}
			&V\left(\prod_{t=2}^{M-1}Y_{i_t,k_t}\prod_{t=1}^{M}Y_{i_t,k_t}\right)\cong V\left(\prod_{t=2}^{M-1}Y_{i_t,k_t}\right)\otimes V\left(\prod_{t=1}^{M}Y_{i_t,k_t}\right)\\
			&V\left(\prod_{(i,k)\in\mathbb{X}}Y_{i_t,k_t}\prod_{(i,k)\in\mathbb{Y}}Y_{i_t,k_t}\right)\cong V\left(\prod_{(i,k)\in\mathbb{X}}Y_{i_t,k_t}\right)\otimes V\left(\prod_{(i,k)\in\mathbb{Y}}Y_{i_t,k_t}\right).\quad\odot
		\end{align*}
\end{thm}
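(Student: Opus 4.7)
The plan is to prove the extended $\operatorname{T}$-system by combining the structural properties of snake modules from Theorem \ref{thm:snake_modules} with an explicit $q$-character computation, ultimately reducing the identity in $\operatorname{Rep}U_q(\tilde{\mathfrak{g}})$ to a matching of Laurent monomials via the injectivity of $\chi_q$ (Theorem \ref{thm:properties_of_chi_q}). First I would verify that both summands on the right-hand side are classes of irreducible modules. Each is, as written, a tensor product of two snake modules whose Drinfeld polynomials multiply to a dominant loop-weight (Corollary \ref{cor:tensor_prod_drin_poly}); using the definition of prime snake position together with the characterisation of special positions in \cite{C2002}, one checks that in each summand the two tensor factors are in relative general position, so the tensor product has irreducible highest-weight subquotient equal to the whole product. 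For the second summand this requires inspecting the definitions of $\mathbb{X}_{i,k}^{i',k'}$ and $\mathbb{Y}_{i,k}^{i',k'}$ and verifying that these neighbouring configurations are themselves snakes whose bending points lie outside the union of snakes of the factors.

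Next I would analyse the left-hand side. Because $(i_1,k_1),\ldots,(i_M,k_M)$ is a prime snake, the factors $V(\prod_{t=1}^{M-1} Y_{i_t,k_t})$ and $V(\prod_{t=2}^{M} Y_{i_t,k_t})$ are \emph{not} in general position — indeed, their overlap on $(i_2,\ldots,i_{M-1})$ forces the Drinfeld polynomials to share roots in special position — so their tensor product is reducible. By Corollary \ref{cor:tensor_prod_drin_poly}, the subrepresentation generated by the tensor product of highest-weight vectors surjects onto the irreducible module with Drinfeld polynomial $\prod_{t=1}^{M} Y_{i_t,k_t} \cdot \prod_{t=2}^{M-1} Y_{i_t,k_t}$, which by the previous step coincides with the first summand on the right. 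Hence this class occurs as a composition factor of the left-hand side, and it remains only to identify the remaining composition factors.

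The main obstacle, and the heart of the proof, is showing that the remaining composition factors collapse to precisely the single class of the second summand. Here I would exploit crucially that all six snake modules appearing in the identity are thin, special and anti-special (Theorem \ref{thm:snake_modules}): these three properties make their $q$-characters entirely rigid and computable from the unique dominant loop-weight by the Frenkel--Mukhin algorithm, or equivalently by the path formula of Subsection \ref{subsect:the_path_formula}. The identity then reduces to the combinatorial statement that the Laurent polynomial
\begin{equation*}
\chi_q\!\left(V(\textstyle\prod_{t=1}^{M-1} Y_{i_t,k_t})\right)\chi_q\!\left(V(\textstyle\prod_{t=2}^{M} Y_{i_t,k_t})\right) - \chi_q\!\left(V(\textstyle\prod_{t=2}^{M-1} Y_{i_t,k_t})\right)\chi_q\!\left(V(\textstyle\prod_{t=1}^{M} Y_{i_t,k_t})\right)
\end{equation*}
has non-negative integer coefficients and equals $\chi_q(V(\prod_\mathbb{X}))\,\chi_q(V(\prod_\mathbb{Y}))$. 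Expanding each factor as a sum over non-overlapping paths in the sense of the path formula, the difference is controlled by which pairs of paths on the two snakes of length $M-1$ can be rearranged into pairs on the snakes of length $M$ and $M-2$; the leftover pairs should biject exactly with pairs of paths on the neighbouring snakes $\mathbb{X}$ and $\mathbb{Y}$. The delicate part is ensuring that no dominant loop-weight other than $\prod_\mathbb{X}\prod_\mathbb{Y}$ appears in the difference, since any such monomial would force a further composition factor in the Grothendieck-ring identity. This rigidity is exactly what the \emph{prime} hypothesis delivers: the definitions of $\mathbb{X}_{i,k}^{i',k'}$ and $\mathbb{Y}_{i,k}^{i',k'}$ via the bending points of successive snake positions are engineered so that the combinatorics of non-prime overlaps cannot produce an extra dominant monomial, and this $A_l$-specific combinatorial verification is where I expect the real work of the proof to lie.
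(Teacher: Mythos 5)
Note first that the thesis does not prove this statement at all: it is quoted, with the symbol $\odot$, from Mukhin and Young (\cite{MY}, Theorem 4.1), so there is no in-paper argument to compare against; the only fair comparison is with the strategy of the cited source. Your overall plan -- reduce the Grothendieck-ring identity to a $q$-character identity via the injectivity of $\chi_q$ (Theorem \ref{thm:properties_of_chi_q}), exploit that all modules involved are thin, special and anti-special (Theorem \ref{thm:snake_modules}), compute everything with the path formula of Subsection \ref{subsect:the_path_formula}, and control dominant loop-weights -- is indeed the route taken in \cite{MY}, so the approach is sound in outline.

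However, as written the proposal has two genuine gaps. First, the irreducibility of the two tensor products on the right-hand side is part of the assertion to be proved, and your justification via ``relative general position'' and the special-position criterion of \cite{C2002} does not deliver it in rank $l>1$: Chari's results concern cyclicity/highest-weight generation of tensor products, not irreducibility of these specific products, and in \cite{MY} this irreducibility is itself a consequence of the dominant-monomial analysis rather than an input to it. Second, the central step -- that the difference of products of $q$-characters equals $\chi_q(V(\prod_{\mathbb{X}}Y))\,\chi_q(V(\prod_{\mathbb{Y}}Y))$, and in particular that the product $\chi_q\bigl(V(\prod_{t=1}^{M-1}Y_{i_t,k_t})\bigr)\chi_q\bigl(V(\prod_{t=2}^{M}Y_{i_t,k_t})\bigr)$ contains exactly two dominant monomials, each with multiplicity one -- is only announced as ``where the real work lies'' and never carried out. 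This counting is what bounds the number of composition factors: each composition factor contributes at least one dominant monomial (its highest loop-weight), but a priori a composition factor need not be a snake module, so you cannot argue factor by factor; you must count dominant monomials in the product itself and then identify the two factors by their highest loop-weights. Until that combinatorial bijection of non-overlapping path tuples is actually established, the proposal is a correct plan consistent with \cite{MY}, but not a proof.
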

To demonstrate it, a simple example of the \textbf{extended $\boldsymbol{\operatorname{T}}$-system} for a minimal snake of length $5$ in $A_4$ is depicted in Figure \ref{fig:ext_t-system_in_A_4} below (cf. \cite{HJue}). Note that in the case of KR modules, i.e., when the minimal snake is a straight line, the theorem reduces to the standard \textbf{$\boldsymbol{\operatorname{T}}$-system} in Remark \ref{rem:T-system}.
	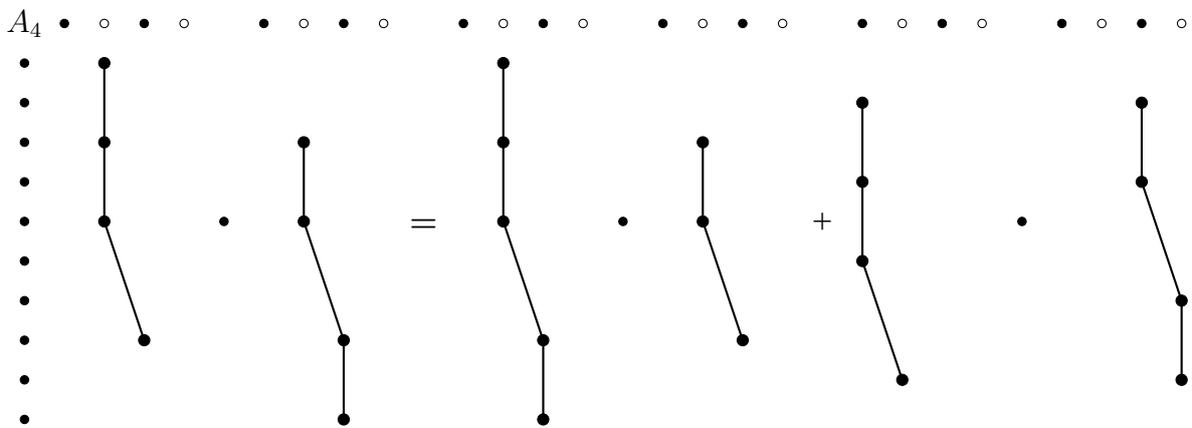
\begin{figure}[H]
	\centering
	\begin{tikzpicture}[scale=.525]
	\filldraw[black] (0,0) circle (3pt) node[anchor=north]{};
	\filldraw[black] (0,1) circle (3pt) node[anchor=north]{};
	\filldraw[black] (0,2) circle (3pt) node[anchor=north]{};
	\filldraw[black] (0,3) circle (3pt) node[anchor=north]{};
	\filldraw[black] (0,4) circle (3pt) node[anchor=north]{};
	\filldraw[black] (0,5) circle (3pt) node[anchor=north]{};
	\filldraw[black] (0,6) circle (3pt) node[anchor=north]{};
	\filldraw[black] (0,7) circle (3pt) node[anchor=north]{};
	\filldraw[black] (0,8) circle (3pt) node[anchor=north]{};
	\filldraw[black] (0,9) circle (3pt) node[anchor=north]{};
	\node at (0,10) {$A_4$};
	
	\filldraw[black] (1,10) circle (3pt) node[anchor=north]{};
	\draw (2,10) circle (3pt) node[anchor=north]{};
	\filldraw[black] (3,10) circle (3pt) node[anchor=north]{};
	\draw (4,10) circle (3pt) node[anchor=north]{};
	
	\draw[thick] (3,2) -- (2,5) -- (2,7) -- (2,9);
	\filldraw[black] (3,2) circle (4pt) (2,5) circle (4pt) (2,7) circle (4pt) (2,9) circle (4pt);
	
	\filldraw[black] (5,5) circle (3pt) node[anchor=north]{};
	
	\filldraw[black] (6,10) circle (3pt) node[anchor=north]{};
	\draw (7,10) circle (3pt) node[anchor=north]{};
	\filldraw[black] (8,10) circle (3pt) node[anchor=north]{};
	\draw (9,10) circle (3pt) node[anchor=north]{};
	
	\draw[thick] (8,0) -- (8,2) -- (7,5) -- (7,7);
	\filldraw[black] (8,0) circle (4pt) (8,2) circle (4pt) (7,5) circle (4pt) (7,7) circle (4pt);
	
	\node at (10,4.9) {$\boldsymbol{=}$};
	
	\filldraw[black] (11,10) circle (3pt) node[anchor=north]{};
	\draw (12,10) circle (3pt) node[anchor=north]{};
	\filldraw[black] (13,10) circle (3pt) node[anchor=north]{};
	\draw (14,10) circle (3pt) node[anchor=north]{};
	
	\draw[thick] (13,0) -- (13,2) -- (12,5) -- (12,7) -- (12,9);
	\filldraw[black] (13,0) circle (4pt) (13,2) circle (4pt) (12,5) circle (4pt) (12,7) circle (4pt) (12,9) circle (4pt);
	
	\filldraw[black] (15,5) circle (3pt) node[anchor=north]{};
	
	\filldraw[black] (16,10) circle (3pt) node[anchor=north]{};
	\draw (17,10) circle (3pt) node[anchor=north]{};
	\filldraw[black] (18,10) circle (3pt) node[anchor=north]{};
	\draw (19,10) circle (3pt) node[anchor=north]{};
	
	\draw[thick] (18,2) -- (17,5) -- (17,7);
	\filldraw[black] (18,2) circle (4pt) (17,5) circle (4pt) (17,7) circle (4pt);
	
	\node at (20,5) {$\scriptstyle\boldsymbol{+}$};
	
	\filldraw[black] (21,10) circle (3pt) node[anchor=north]{};
	\draw (22,10) circle (3pt) node[anchor=north]{};
	\filldraw[black] (23,10) circle (3pt) node[anchor=north]{};
	\draw (24,10) circle (3pt) node[anchor=north]{};
	
	\draw[thick] (22,1) -- (21,4) -- (21,6) -- (21,8);
	\filldraw[black] (22,1) circle (4pt) (21,4) circle (4pt) (21,6) circle (4pt) (21,8) circle (4pt);
	
	\filldraw[black] (25,5) circle (3pt) node[anchor=north]{};
	
	\filldraw[black] (26,10) circle (3pt) node[anchor=north]{};
	\draw (27,10) circle (3pt) node[anchor=north]{};
	\filldraw[black] (28,10) circle (3pt) node[anchor=north]{};
	\draw (29,10) circle (3pt) node[anchor=north]{};
	
	\draw[thick] (29,1) -- (29,3) -- (28,6) -- (28,8);
	\filldraw[black] (29,1) circle (4pt) (29,3) circle (4pt) (28,6) circle (4pt) (28,8) circle (4pt);
	\end{tikzpicture}
\caption{A simple example of the extended $\operatorname{T}$-system for a minimal snake $(i_t,k_t)_{1\leq t\leq 5}$ in $A_4$. It is a short exact sequence of snake modules defined by the minimal snake, its neighbouring snakes and the snakes obtained from it by omitting the first or the last element.}
\label{fig:ext_t-system_in_A_4}
\end{figure}
However, the proof of the fact that prime snake modules are prime irreducible elements of $\mathrm{C}_{\mathbb{Z}}$ is done in a slightly different way in \cite{BJY} for type $A$ and $B$ by showing a system of equations they call the $S$-system. In fact, they contribute to proving the Hernandez Leclerc conjecture, which states that $\mathrm{C}_{\mathbb{Z}}$ is a monoidal categorification of an infinite cluster algebra of type $A$ (respectively $B$), let's call them $\mathcal{A}_\infty$ (respectively $\mathcal{B}^\infty$), by proving that prime snake modules correspond to cluster variables in the corresponding cluster algebra. Then, the set of exchange relations of the mutations in the corresponding cluster algebras correspond precisely to the $S$-systems. In fact, it is reasonable to assume that the $S$-systems and the extended $\operatorname{T}$-systems are equivalent. Certainly, the extended $\operatorname{T}$-systems give rise to nontrivial explicit cluster relations. We will come back to this in Subsection \ref{subsect:form_in_terms_of_clust_alg}.
\subsection{The path formula for $q$-characters}\label{subsect:the_path_formula}
Since prime snake modules are the prime irreducible elements in $\mathrm{C}$, it seems natural to ask for something like Weyls character formula for (prime) snake modules. Fortunately, Mukhin and Young proof the so-called path formula right after introducing the snake modules in their paper \cite{MY2}. It allows an explicit calculation of the $q$-characters of arbitrary snake modules. Moreover, it is used to prove that snake modules are thin and special (and anti-special) in Theorem \ref{thm:snake_modules}. Let us explain how it works (cf. \cite{MY2}, \cite{BJY}).

\begin{definition}[path]
	We define a \textbf{path} as a finite sequence of points in the plane $\mathbb{R}^2$. We write $(a,b)\in \mathtt{p}$, if $(a,b)$ is an element of the sequence $\mathtt{p}$. $\odot$
\end{definition}
\begin{definition}[paths of type $A_l$]
	For each $(i,k)\in\mathcal{X}$ we define the set $\mathtt{P}_{i,k}$ of \textbf{paths} (of type $A_n$) by
	\begin{equation*}
		\begin{split}
			\mathtt{P}_{i,k}\coloneq \{&((0,y_0),(1,y_1),\dots,(l+1,y_{l+1}))|\\
			&y_0=i+k,\, y_{l+1} = l+1-i+k,\, y_{i+1}-y_i=\pm1 \text{ for all }i=0,1,\dots,l\}.
		\end{split}
	\end{equation*}
	Then, the sets $\mathtt{C}_{\mathtt{p}}^\pm$ of upper and lower corners of a path $\mathtt{p} = ((j,y_j))_{0\leq j \leq l+1}\in \mathtt{P}_{i,k}$ (of type $A_l$) are defined by
	\begin{equation*}
		\mathtt{C}_{\mathtt{p}}^\pm = \{(i,y_i)\in \mathtt{p}\, |\, i\in I,\, y_{i-1} = y_i\pm 1 = y_{i+1}\}.\quad \odot
	\end{equation*}
\end{definition}
\begin{definition}[paths to loop-weights]
	We also define a mapping
	\begin{equation*}
		\begin{split}
			\mathtt{m}:&\coprod_{(i,k)\in \mathcal{X}}\mathtt{P}_{i,k} \to \mathbb{Z}[\mathcal{P}_{\mathbb{Z}}],\\
			&\mathtt{p}\mapsto\mathtt{m}(\mathtt{p})\coloneq \prod_{(a,b)\in \mathtt{C}_p^+}Y_{a,b}\prod_{(a,b)\in \mathtt{C}_p^-}Y_{a,b}^{-1},
		\end{split}
	\end{equation*}
	which sends path to loop-weights. $\odot$
\end{definition}
Mukhin and Young also define an action of the affine simple roots on the set of corners of a path by saying that $\mathtt{p}\in\mathtt{P}_{i,k}$ can be lowered (resp. raised) at a point $(a,b)\in\mathcal{W}$, if and only if $(a,b-d_j)\in \mathtt{C}_p^+$ (resp. $\notin\mathtt{C}_p^-$) and $(a,b+d_j)\notin \mathtt{C}_{\mathtt{p}}^+$ (resp. $\in \mathtt{C}_{\mathtt{p}}^-$). If so, raising and lowering moves in type $A$ are defined as follows.
\begin{definition}[raising and lowering moves at corners]
	For each upper corner $(a,b-1)\in \mathtt{C}_{\mathtt{p}}^+$ (resp. lower corner $(a,b+1)\in \mathtt{C}_{\mathtt{p}}^-$) of a path $\mathtt{p}$, we define a lowering (resp. raising move) by
	\begin{equation*}
		\mathtt{p}\mathtt{A}_{a,b}^{\mp 1} \coloneq \left((0,y_0),\dots,(a-1,y_{a-1}),(a,y_a\pm 2),(a+1,y_{a+1}),\dots,(l+1,y_{l+1})\right)
	\end{equation*}
	such that $\mathtt{p}\mathtt{A}_{a,b}^{\mp 1}$ is again a path in $\mathtt{P}_{i,k}$. Graphically, raising and lowering moves are depicted as
	\begin{figure}[H]
		\centering
		\begin{tikzpicture}[scale=.75]
			\filldraw[black] (1,0) circle (3pt) node[anchor=north]{};
			\filldraw[black] (0,1) circle (3pt) node[anchor=north]{};
			\filldraw[black] (2,1) circle (3pt) node[anchor=north]{};
			\draw (0,3) circle (3pt) node at (0,3.4){$\scriptstyle a-1$};
			\draw (1,3) circle (3pt) node at (1,3.375){$\scriptstyle a$};
			\draw (2,3) circle (3pt) node at (2,3.4){$\scriptstyle a+1$};
			\draw[thick] (0,1) -- (1,0)  (1,0) -- (2,1);
			\draw[line width=1.5pt, line cap=round, dash pattern=on 0pt off 2\pgflinewidth] (0,1) -- (1,2)  (1,2) -- (2,1);
			\draw[dashed,thick,-stealth] (1,1.75)node[anchor=east]{} -- (1,0.25) node[anchor=west]{};
			
			\filldraw[black] (6,2) circle (3pt) node[anchor=north]{};
			\filldraw[black] (5,1) circle (3pt) node[anchor=north]{};
			\filldraw[black] (7,1) circle (3pt) node[anchor=north]{};
			\draw (5,3) circle (3pt) node at (5,3.4){$\scriptstyle a-1$};
			\draw (6,3) circle (3pt) node at (6,3.375){$\scriptstyle a$};
			\draw (7,3) circle (3pt) node at (7,3.4){$\scriptstyle a+1$};
			\draw[thick] (5,1) -- (6,2)  (6,2) -- (7,1);
			\draw[line width=1.5pt, line cap=round, dash pattern=on 0pt off 2\pgflinewidth] (5,1) -- (6,0)  (6,0) -- (7,1);
			\draw[dashed,thick,-stealth] (6,.25)node[anchor=east]{} -- (6,1.75) node[anchor=west]{};
			\node at (8,0) {$\odot$};
%
%
%
		\end{tikzpicture}
	\end{figure}
\end{definition}
\begin{definition}[non-overlapping paths]\hspace{1em}
	\begin{enumerate}
		\item We say that an $M$-tuple of paths $(\mathtt{p}_1,\dots,\mathtt{p}_M)$ is \textbf{non-overlapping}, if and only if $\mathtt{p}_s$ is strictly above $\mathtt{p}_t$, for all $s<t$, i.e. if and only if for all $(x,y)\in\mathtt{p}_s$ and $(x,z)\in \mathtt{p}_t$ $\Longrightarrow$ $y<z$.
		\item For any snake $(i_t,k_t)_{1\leq t\leq M}\subset \mathcal{X}$ of length $M\in \mathbb{Z}_{\geq 1}$, we define the corresponding set of non overlapping paths as
		\begin{equation*}
			\begin{split}
				\overline{\mathcal{P}}_{(i_t,k_t)_{1\leq t\leq M}}\coloneq \{(&\mathtt{p}_1,\dots,\mathtt{p}_M)\,|\\
				&\mathtt{p}_t\in \mathtt{P}_{i_t,k_t},\, 1\leq t\leq M, \, (\mathtt{p}_1,\dots,\mathtt{p}_M)\text{ is non-overlapping}\}.\quad \odot
			\end{split}
		\end{equation*}
	\end{enumerate}
\end{definition}
As an obvious consequence of these definitions, one then has
\begin{lem}
	Let $(i_t,k_t)_{1\leq t\leq M}\subset \mathcal{X}$ be a snake of length $M\in \mathbb{Z}_{\geq 1}$ and $(\mathtt{p}_1,\dots,\mathtt{p}_M)\in \overline{\mathcal{P}}_{(i_t,k_t)_{1\leq t\leq M}}$. If $(\mathtt{p}'_1,\dots,\mathtt{p}'_M)= (\mathtt{p}_1,\dots,\mathtt{p}_M)\mathtt{A}_{a,b}^{\pm 1}$, where $(a,b)\in\mathcal{W}$ is any point at which $(\mathtt{p}_1,\dots,\mathtt{p}_M)$ can be raised/lowered, then $\prod_{t=1}^M \mathtt{m}(\mathtt{p}_t') = A_{a,b}^{\pm 1}\prod_{t=1}^M \mathtt{m}(\mathtt{p}_t)$. $\odot$
\end{lem}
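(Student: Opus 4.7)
The plan is to reduce the lemma to a purely local computation on a single path and then verify the identity by a small case analysis at the two neighbouring columns. I would first observe that the move $(\mathtt{p}_1,\dots,\mathtt{p}_M)\mapsto(\mathtt{p}_1,\dots,\mathtt{p}_M)\mathtt{A}_{a,b}^{\pm 1}$ affects only one of the paths, namely the unique index $t$ such that $\mathtt{p}_t$ has the prescribed corner at $(a,b\mp 1)$. Uniqueness is forced by the non-overlapping condition: an upper corner of $\mathtt{p}_t$ at $(a,b-1)$ requires $y_a^{(t)}=b-1$, and since $y_a^{(s)}<y_a^{(t)}<y_a^{(u)}$ strictly for $s<t<u$, no other path can share this $y$-value at column $a$. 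Hence it suffices to show that for a single path $\mathtt{p}$ which can be lowered (resp. raised) at $(a,b)$ one has $\mathtt{m}(\mathtt{p}\mathtt{A}_{a,b}^{\pm 1})=A_{a,b}^{\pm 1}\mathtt{m}(\mathtt{p})$.

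I would carry out the computation for the lowering move and deduce the raising move from it (raising being the inverse operation). The move changes $y_a$ from $b-1$ to $b+1$ and leaves all other $y_j$ fixed. At column $a$ the upper corner at $(a,b-1)$ is destroyed and a new lower corner at $(a,b+1)$ is created, contributing the factor $Y_{a,b-1}^{-1}Y_{a,b+1}^{-1}$ to the ratio $\mathtt{m}(\mathtt{p}')/\mathtt{m}(\mathtt{p})$. The core step is the case analysis at the neighbouring columns. At column $a-1$ we have either $y_{a-2}=b-1$, in which case $(a-1,b)$ was a lower corner which is now destroyed, or $y_{a-2}=b+1$, in which case no corner existed before but an upper corner appears at $(a-1,b)$; in both sub-cases the factor contributed is $Y_{a-1,b}$. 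By the symmetric argument at column $a+1$ the contribution is $Y_{a+1,b}$, so altogether
\begin{equation*}
\frac{\mathtt{m}(\mathtt{p}\mathtt{A}_{a,b}^{-1})}{\mathtt{m}(\mathtt{p})}=Y_{a,b-1}^{-1}Y_{a,b+1}^{-1}Y_{a-1,b}Y_{a+1,b}=A_{a,b}^{-1}
\end{equation*}
by the explicit type $A_l$ formula for the affine simple roots recorded in Remark \ref{rems:affine_simple_roots}.

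The boundary cases $a=1$ and $a=l$ are consistent with the general computation: for $a=1$ there is no column $a-1\in I$, and correspondingly $A_{1,b}$ lacks the factor $Y_{0,b}^{-1}$, so only the column $a+1=2$ contribution is needed; the case $a=l$ is analogous. The only genuine obstacle is keeping the two-case analysis at the neighbouring columns clean, but it is self-correcting: the two sub-cases are arranged so that they both contribute the same factor $Y_{a\pm 1,b}$, one by removing an existing lower corner and the other by creating a new upper corner. Once this is observed, the identification with $A_{a,b}^{\pm 1}$ is by direct inspection of its defining formula.
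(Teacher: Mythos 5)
Your proof is correct, and it is exactly the verification the paper leaves implicit (the lemma is stated there as an "obvious consequence" of the definitions, with no written proof): reduce to the single path owning the corner via the non-overlapping condition, then check locally that the corner changes at columns $a-1,a,a+1$ contribute $Y_{a,b-1}^{-1}Y_{a,b+1}^{-1}Y_{a-1,b}Y_{a+1,b}=A_{a,b}^{-1}$, with the boundary columns $0$ and $l+1$ excluded from the corner sets so the cases $a=1,l$ match the truncated form of $A_{a,b}$. Nothing is missing; the raising case indeed follows since it is the inverse move.
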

In fact, it is not hard to see that the map
\begin{equation*}
	\overline{\mathcal{P}}_{(i_t,k_t)_{1\leq t\leq M}}\to \mathbb{Z}[\mathcal{P}_{\mathbb{Z}}],\quad(\mathtt{p}_1,\dots,\mathtt{p}_M)\mapsto \prod_{t=1}^{M}\mathtt{m}(\mathtt{p}_t)
\end{equation*}
is injective. Then, after proving a few further consequences and by applying their first main result about the structure of $q$-characters (i.e.Theorem 3.4 in \cite{MY2}), Mukhin and Young finally conclude the path formula
\begin{thm}[the path formula]\label{thm:path_formula}
	Let $(i_t,k_t)_{1\leq t\leq M}$ be a snake in $\mathcal{X}$. Then
	\begin{equation*}
		\chi_q\left(V\left(\prod_{t=1}^{M}Y_{i_t,k_t}\right)\right) = \sum_{(\mathtt{p}_1,\dots,\mathtt{p}_M)\in\overline{\mathcal{P}}_{(i_t,k_t)_{1\leq t\leq M}}}\prod_{t=1}^M m(\mathtt{p}_t).
	\end{equation*}
	The module $V\left(\prod_{t=1}^{M}Y_{i_t,k_t}\right)$ is \textbf{thin} and \textbf{special} (and anti-special). $\odot$
\end{thm}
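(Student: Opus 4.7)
The plan is to exploit two structural features of snake modules to pin down their $q$-character explicitly: speciality (unique dominant monomial) and thinness (all loop-weight spaces one-dimensional). If both hold with the unique dominant monomial being the highest loop-weight $\prod_{t=1}^M Y_{i_t,k_t}$, then the Frenkel--Mukhin algorithm — which computes the $q$-character of a special module by iteratively lowering the highest loop-weight by affine simple roots $A_{a,b}^{-1}$ — determines $\chi_q$ completely, and thinness forces every monomial to appear with multiplicity one. The combinatorial task then reduces to matching these lowerings with the enumeration of non-overlapping path tuples.

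First I would handle the base case $M=1$, where the module is the fundamental module $V(Y_{i,k})$. Here the set $\mathtt{P}_{i,k}$ has cardinality $\binom{l+1}{i}$, matching the dimension of $V_{\omega_i}$ as a $U_q(\mathfrak{sl}_{l+1})$-module, and one can verify the formula directly using Jimbo's homomorphism (Proposition \ref{prop:Jimbos_hom}) and Proposition \ref{prop:center_drin_poly_ev}. The unique highest path $\mathtt{p}^\circ \in \mathtt{P}_{i,k}$ has its only upper corner at $(i,k)$, giving $m(\mathtt{p}^\circ) = Y_{i,k}$, and every other path is reached from $\mathtt{p}^\circ$ by a sequence of lowering moves. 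The lemma preceding the theorem identifies each lowering move at an upper corner $(a,b-1)$ with multiplication by $A_{a,b}^{-1}$, establishing the crucial bridge between path combinatorics and the action of affine simple roots.

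For $M \geq 2$, I would proceed by induction on $M$. The snake condition $k_{t+1} - k_t \geq |i_{t+1}-i_t| + 2$ is precisely what guarantees that the $M$-tuple of highest paths $(\mathtt{p}_1^\circ,\ldots,\mathtt{p}_M^\circ)$ is non-overlapping and unique as such. One then shows, using the combinatorial description of raising/lowering moves, that every non-overlapping tuple is obtained from $(\mathtt{p}_1^\circ,\ldots,\mathtt{p}_M^\circ)$ by a succession of lowerings that each preserve the non-overlapping condition, and conversely that every lowering of a non-overlapping tuple yields another non-overlapping tuple or is forbidden — matching precisely the steps of the Frenkel--Mukhin algorithm applied to $\prod_t Y_{i_t,k_t}$.

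The main obstacle will be verifying that no non-dominant loop-weight produced this way can be lowered to a second dominant monomial, which is the content of speciality and cannot be read off from the combinatorics alone. Here one must invoke Theorem 3.4 of \cite{MY2}, which provides a sufficient criterion — formulated via the kernels of Frenkel--Reshetikhin's screening operators — for a formal sum of loop-weights to coincide with $\chi_q$ of the simple module of prescribed highest loop-weight. Once the path formula is established, thinness of $V(\prod_t Y_{i_t,k_t})$ follows from the injectivity of $(\mathtt{p}_1,\ldots,\mathtt{p}_M) \mapsto \prod_t m(\mathtt{p}_t)$, and speciality from the uniqueness of $(\mathtt{p}_1^\circ,\ldots,\mathtt{p}_M^\circ)$. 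Anti-speciality then follows by applying the whole argument to the shifted dual module, noting via Remark \ref{rem:shifted_dual} that duality reverses the order of the Drinfeld polynomial factors and hence sends a snake to another snake.
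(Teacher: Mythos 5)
The first thing to note is that the thesis does not actually prove Theorem \ref{thm:path_formula}: it records the lemma identifying lowering moves with multiplication by $A_{a,b}^{-1}$, the injectivity of $(\mathtt{p}_1,\dots,\mathtt{p}_M)\mapsto\prod_t\mathtt{m}(\mathtt{p}_t)$, and then defers entirely to Mukhin and Young, whose argument it summarizes as ``a few further consequences'' plus an application of Theorem 3.4 of \cite{MY2}. Your skeleton follows that same strategy in outline, but as it stands it has a genuine gap, in fact two related ones. First, your organizing device is circular: you propose to run the Frenkel--Mukhin algorithm on the grounds that the module is special and thin, but speciality and thinness are conclusions of the theorem you are proving, not available inputs; the algorithm's correctness for a given simple module cannot be invoked before those properties are known (and it is known to fail for general simple modules). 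In Mukhin and Young's actual proof the algorithm plays no role, and your announced induction on $M$ is never used in the argument you sketch.

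Second, you invoke Theorem 3.4 of \cite{MY2} for the wrong job and never verify its real hypotheses. You assign it the task of ruling out a second dominant monomial, but that is exactly the part which \emph{is} read off from the path combinatorics: one shows directly, using the snake condition, that the only non-overlapping tuple whose monomial $\prod_t\mathtt{m}(\mathtt{p}_t)$ is dominant is the tuple of highest paths (and dually for anti-dominant monomials). What the criterion of Theorem 3.4 actually requires, beyond uniqueness of the dominant monomial, is that the candidate sum behaves correctly under restriction to each subalgebra $U_{q_i}(\tilde{\mathfrak{sl}}_2)\subset U_q(\tilde{\mathfrak{g}})$, i.e.\ that for every $i\in I$ it decomposes into pieces whose $i$-coloured parts are $q$-characters of $U_{q_i}(\tilde{\mathfrak{sl}}_2)$-modules (equivalently, membership in the appropriate kernels of screening operators). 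Verifying this for the sum over non-overlapping path tuples --- via the analysis of $i$-corners --- is the technical heart of Mukhin and Young's proof, and it is absent from your proposal; without it the identification of the path sum with $\chi_q$ does not go through, and hence neither do thinness and speciality, which you derive from it. Your base case $M=1$ and the observation that anti-speciality can be obtained from the shifted dual (Remark \ref{rem:shifted_dual}) are fine, though the latter also follows directly from the path formula by the symmetry exchanging upper and lower corners.
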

Indeed, this formula allows an explicit calculation of the $q$-character of any snake module. Moreover, by applying the mapping $\operatorname{wt}$, one easily obtains the $\mathfrak{g}$-character of the snake module. Hence, it is possible to understand its decomposition into a direct sum of $\mathfrak{g}$-irreducible subspaces.\footnote{ Irreducible type $\boldsymbol{1}$ $U_q(\mathfrak{g})$-modules, to be precise.} We further suspect that there might be a more elegant way of computing the projectors onto prime snake modules (see for instance \cite{Zab}).

\subsection{Formulation in terms of cluster algebras}\label{subsect:form_in_terms_of_clust_alg}
Cluster algebras were invented by Fomin an Zelevinsky in \cite{FZ}. To state the results in the paper \cite{BJY} let us introduce cluster algebras as follows (cf. \cite{HL2}).\footnote{ We note that the explanation here will mostly also include type $B$. This makes some parts a little bulky for the purpose of discussing only type $A$. We apologize for not finding the time to either cut all the unnecessary parts, or write a comprehensive explanation for both types.}
\begin{definition}[seed]
	Let $\mathcal{F}\coloneq \mathbb{Q}(\{x_j\}_{j\in J})$ be the field of rational functions in the variables $x_j$, $j\in J$. A \textbf{seed} in $\mathcal{F}$ is a pair $\Sigma = (\boldsymbol{y},\mathrm{Q})$, where $\boldsymbol{y} = \{y_j\}_{j\in J}$ is a free generating set of $\mathcal{F}$, and $\mathrm{Q}$ is a quiver with no loops, nor $2$-cycles, and vertices labelled by $j\in J$ such that the number of incident arrows is finite for each vertex. $\odot$
\end{definition}
\begin{definition}[mutation of a seed]
	For $k\in J$ we define the \textbf{mutation} $\mu_k$ \textbf{of a seed} $(\boldsymbol{y},\mathrm{Q})$ at the vertex $k$ by $\mu_k(\boldsymbol{y},\mathrm{Q})= (\boldsymbol{y}',\mathrm{Q}')$ such that $\boldsymbol{y}'=\{y_j'\}_{j\in J}$, $y_j' =y_j$ for $i\neq k$, and
	\begin{equation*}
		y_k' = \frac{\prod_{i\to k}y_i+\prod_{k\to j}y_j}{y_k},
	\end{equation*}
	where the first (respectively, second) product in the right-hand side is over all arrows of $\mathrm{Q}$ with target (respectively, source) $k$, and $\mathrm{Q}'$ is obtained from $\mathrm{Q}$ by
	\begin{enumerate}[label=\normalfont(\roman*)]
		\item adding a new arrow $i\to j$ for every existing pair of arrows $i\to k$ and $k\to j$;\label{enum:cluster_mutation_op_i}
		\item reversing the orientation of every arrow with target or source equal to $k$;
		\item erasing every pair of opposite arrows created by
		\ref{enum:cluster_mutation_op_i}. $\odot$
	\end{enumerate}
\end{definition}
\begin{definition}[mutation class, cluster, cluster variables and cluster monomials]
	The \textbf{mutation class} $\mathcal{C}(\Sigma)$ is the set of all seeds obtained from an initial seed $\Sigma$ by a finite sequence of mutations. If $\Sigma'=(\boldsymbol{y}',\mathrm{Q}')$ is a seed in $\mathcal{C}(\Sigma)$, then the subset $\{y_j'\}_{j\in J}$ is called a \textbf{cluster}, and its elements are called \textbf{cluster variables}. Moreover, \textbf{cluster monomials} are monomials in the cluster variables supported on a single cluster. $\odot$
\end{definition}
\begin{definition}[cluster algebra]
	The \textbf{cluster algebra} $\mathcal{A}_{\Sigma}$ corresponding to an initial seed $\Sigma$ is the subring of $\mathcal{F}$ generated by all cluster variables. $\odot$
\end{definition}
It is worth noting that $\mu_k$ is an involution for each $k\in J$. In this sense, we call two quivers $\mathrm{Q}_1$ and $\mathrm{Q}_2$ \textbf{mutation equivalent}, if one is obtained from the other by a finite sequence of mutations, and write $\mathrm{Q}_1\sim \mathrm{Q}_2$. This motivates
\begin{definition}[mutation sequence]
	For a (not necessary finite) sequence $S = (k_1,k_2,\dots)$ we define the mutation sequence $\mu_S$ by mutating first at the vertex $k_1$, then at the vertex $k_2$, etc. Alternatively, we write $\mu_S = \cdots \circ \mu_{k_2}\circ \mu_{k_1}$. $\odot$
\end{definition}
Now, the quiver which is important for the definition of $\mathcal{A}_\infty$ (and $\mathcal{B}_\infty$) is introduced in
\begin{defprop}[the quiver $G^-$]\label{defprop:Quiver_G}
	Let $B=DA= (b_{ij})$ be the symmetrized Cartan matrix of $\mathfrak{g}$ ($B=A$ and $D=\operatorname{diag}(1)$ for type $A_n$). We define a quiver $\tilde{\Gamma}$ with vertex set $V=I\times\mathbb{Z}$ determined by
	\begin{equation*}
		((i,r)\rightarrow (j,s)) \Longleftrightarrow (b_{ij}\neq 0\quad\text{and}\quad s=r+b_{ij}).
	\end{equation*}
	The quiver $\tilde{\Gamma}$ has two isomorphic connected components.
	We pick one of the components of $\tilde{\Gamma}$ and call it $\Gamma$.
	Further, we define a second labelling $\mathcal{X}$ of the vertices of $\Gamma$ by means of the transformation $(i,r)\mapsto(i,r+d_i)$ and denote by $G$ the same quiver as $\Gamma$ but with vertices labelled by $\mathcal{X}$.
	
	Let $\mathcal{X}^-:= \mathcal{X}\cap (I\times \mathbb{Z}_{\leq 0})$, then \textbf{the} (infinite) \textbf{quiver} $\boldsymbol{G^-}$ is defined as the full subquiver of $G$ with vertex set $\mathcal{X}^-$.
	Moreover, for $i\in I$, we call the subset $\mathcal{X}^-_i:= \mathcal{X}\cap (\{i\}\times \mathbb{Z}_{\leq 0})$ the $i$th column of $G^-$. $\odot$
\end{defprop}
Indeed, in type $A_l$, $\mathcal{X}$ is as in the Definition \ref{def:subcat_C_Z} of the subcategory $\mathrm{C}_{\mathbb{Z}}$. With these definitions, the (infinite) cluster algebra $\mathcal{A}_\infty$ is given by
\begin{definition}[the cluster algebra $\mathcal{A}_\infty$]
	Consider the (infinite) set of indeterminates $\boldsymbol{z}^- =\{z_{i,r}\,|\,(i,r)\in \mathcal{X}^-\}$ over $\mathbb{Q}$, where $\mathcal{X}^-$ is given by $\mathcal{X}\cap (I\times \mathbb{Z}_{\leq 0})$. We define \textbf{the} (infinite) \textbf{cluster algebra} $\boldsymbol{\mathcal{A}_\infty}$ of type $A$ by the initial seed $\Sigma_\infty = (\boldsymbol{z}^-,G^-)$, i.e. we have $\mathcal{A}_\infty = A_{\Sigma_\infty}$ with the above definitions. $\odot$
\end{definition}
The first crucial observation, stated in part $(a)$ of Theorem 3.1 in \cite{HL2}, is now that one can realise a (down) shift  on $\mathcal{A}_\infty$ in terms of an infinite mutation sequence $\mu_{\mathcal{S}}$, meaning that $\mu_{\mathcal{S}}$ doesn't change the quiver $G^-$. The infinite sequence $\mathcal{S}$ is defined as follows (cf. \cite{HL2} Subsection 2.2.3).
\begin{definition}[the infinite sequence $\mathcal{S}$]
	We attach to each column of $G^-$ an initial label given by the index of the vertex $(i,r)$, for which $r$ is maximal among the vertices of the column. We form a sequence of $d^\vee l$ columns by induction as follows. At each step we pick a column whose label $(i,r)$ has maximal $r$ among labels of all columns. After picking a column with label $(i,r)$, we change its label to $(i,r-b_{ii})$. Then, reading column after column in this ordering, from top to bottom, we obtain the \textbf{infinite sequence} $\boldsymbol{\mathcal{S}}$ of vertices of $G^-$. $\odot$
\end{definition}
We observe
\begin{cor}
	The quiver of $\mu_{\mathcal{S}}(\Sigma_\infty)$ is equal to the quiver of $\Sigma_\infty$, that is, to $G^-$. $\odot$
\end{cor}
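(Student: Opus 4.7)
The plan is to prove the corollary by reducing the infinite mutation sequence to a sequence of local verifications. The key conceptual point is that the sequence $\mathcal{S}$ is designed so that mutating at the topmost available vertex of each column, in the prescribed order, realizes a "downward shift" of each column by $b_{ii}$, and since a simultaneous downward shift of all columns is an automorphism of the underlying unlabelled quiver $G^-$, the quiver returns to itself.

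First I would show that $\mu_{\mathcal{S}}$ is well defined on $G^-$ despite $\mathcal{S}$ being infinite. Since the only arrows in $G^-$ connect vertices $(i,r)$ and $(j,s)$ with $|r-s| = |b_{ij}| \leq d^\vee$, the neighborhood of any vertex $(i,r)$ in $G^-$, together with all arrows that could be created between neighbors by mutations at vertices in a finite region, lies in a bounded box. The definition of $\mathcal{S}$ ensures that we always process vertices of maximal label first, so by the time we reach $(i,r)$ only finitely many prior mutations can have affected the arrows incident to $(i,r)$. Thus each mutation in the sequence depends only on a finite truncation of the current quiver, and $\mu_{\mathcal{S}}(\Sigma_\infty)$ is well defined.

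Next, I would proceed by induction along $\mathcal{S}$, using the following inductive hypothesis: after performing the mutations indexed by the initial segment of $\mathcal{S}$ whose current labels all satisfy $r \geq r_0$, the resulting quiver, restricted to the full subquiver on vertices with labels still to be processed, is isomorphic to $G^-$ under the obvious bijection that identifies each column with itself (shifted down by the number of times it has been mutated times $b_{ii}$). The inductive step is a local computation at the vertex $(i,r)$ being mutated: one checks, using the mutation rules (reversal of incident arrows, composition of paths through $(i,r)$, and cancellation of $2$-cycles), that the arrow pattern around $(i,r)$ before mutation matches that of $G^-$ near $(i,r)$, and that after mutation the arrow pattern in the "not yet processed" region matches $G^-$ with $(i,r)$ removed and its role in column $i$ taken over by $(i, r-b_{ii})$.

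The main obstacle will be the combinatorial verification of how arrows propagate under successive mutations when $\mathfrak{g}$ is not simply laced, since then $b_{ij}$ can equal $\pm 2$ or $\pm 3$ and multiple arrows appear; this forces one to track arrow multiplicities carefully through the composition/cancellation rules. In type $A_l$, which is the focus from this subsection on, only $b_{ij}\in\{0,\pm 1,2\}$ occur, so all arrows of $G^-$ are simple and the bookkeeping simplifies considerably. A clean way to package the argument is to choose a consistent linear order on the vertices processed at the "same level" within each step, verify the inductive step for a single such vertex, and then observe that mutations at non-adjacent vertices of the current level commute, so the overall order within a level is immaterial. An alternative, more conceptual route would be to invoke Keller's periodicity results for quiver mutation and the relation between $G^-$ and the Auslander--Reiten quiver of a suitable preprojective algebra, which immediately yields the desired fixed-point property, but the direct combinatorial proof outlined above is self-contained.
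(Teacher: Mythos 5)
Your proposal has two genuine structural problems. First, the conceptual mechanism you lean on is false as stated: the downward shift $(i,r)\mapsto(i,r-b_{ii})$ maps $\mathcal{X}^-$ strictly into itself, so it is an isomorphism of $G^-$ onto a \emph{proper} subquiver of itself, not an automorphism of $G^-$. Hence ``the quiver is shift-invariant, so it returns to itself'' cannot be the argument; the entire difficulty of the statement sits at the top boundary of the half-infinite quiver, where the shifted picture has no preimage, and that boundary analysis is exactly what the explicit computation in Hernandez--Leclerc (Theorem 3.1(a) of \cite{HL2}, which is all the thesis itself invokes here --- the corollary is quoted, not proved) takes care of. Second, your inductive invariant is too weak to yield the conclusion: you only assert control of the full subquiver on the \emph{not yet processed} vertices, but every vertex of $\mathcal{X}^-$ is eventually mutated (the sequence $\mathcal{S}$ reads each chosen column all the way down), so in the limit the unprocessed region is empty and your invariant says nothing about the final quiver on $\mathcal{X}^-$. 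To conclude that $\mu_{\mathcal{S}}(\Sigma_\infty)$ has quiver $G^-$ you must track the arrows among already-mutated vertices, and between mutated and unmutated ones, after each stage --- which is precisely where the nontrivial arrow bookkeeping lives and which your outline never performs.

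There is also a mismatch with the actual definition of $\mathcal{S}$: it is read column after column (each column being an infinite top-to-bottom pass), not level by level across columns, so your ``process all labels $r\geq r_0$ first, mutations within a level commute'' scheme is a reordering of an infinite mutation sequence. Such a reordering can only be justified by commuting mutations at non-adjacent vertices, and knowing that the relevant vertices stay non-adjacent throughout requires the locality of the intermediate quivers --- again the very property the induction is supposed to establish, so as written the argument is circular on this point. A repaired proof would follow the route of \cite{HL2}: describe explicitly the quiver obtained after mutating one full column from top to bottom (this is a finite local check near the top of that column plus a periodic pattern below), then compose over the $d^\vee l$ column passes in the prescribed order and verify the result is $G^-$; your appeal to ``one checks'' skips exactly this computation.
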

The second crucial observation, stated in part $(b)$ of Theorem 3.1 in \cite{HL2}, is based on the following change of variables. For $(i,r)\in\mathcal{X}^-$, we set
\begin{equation}
	z_{i,r} = \prod_{k\geq 0, r+kb_{ii}\leq 0}Y_{i,r+kb_{ii}}.\label{eqn:change_of_var}
\end{equation}
In fact, this is just the highest loop-weight of the Kirillov--Reshetikhin module $W_i^{(k_{i,r})}(aq^r)$ as introduced in Definition \ref{def:KR_module}, where $k_{i,r}\in\mathbb{Z}$ is uniquely determined by the condition
\begin{equation}
	0< k_{i,r}b_{ii}-|r|\leq b_{ii}.\label{eqn:k_ir}
\end{equation}
With this change of variables, the statement is formulated in
\begin{thm}\label{thm:KR_cluster_seed}
	Let $\Sigma_{m,\infty} = \mu_{\mathcal{S}}^m(\Sigma_\infty)$ be the seed obtained from $\Sigma_\infty$ after $m$ repetitions of the mutation sequence $\mu_{\mathcal{S}}$. Let $z_{i,r}^{(m)}$ be the cluster variable of $\Sigma_{m,\infty}$ sitting at vertex $(i,r)\in \mathcal{X}^-$.\footnote{ This is a Laurent monomial in the initial variables $z_{j,s}$, $(j,s)\in \mathcal{X}^-$.} Let further $y_{i,r}^{(m)}\in \mathbb{Z}[\mathcal{P}]$ be the Laurent polynomial obtained from $z_{i,r}^{(m)}$ after performing the change of variables given by Equation (\ref{eqn:change_of_var}) and suppose that $m\geq h^\vee/2$. Then the $y_{i,r}^{(m)}$ are the $q$-characters of Kirillov--Reshetikhin modules. Precisely, for $m\geq h^\vee/2$,
	\begin{equation*}
		y_{i,r}^{(m)} = \chi_q(W_i^{(k)}(aq^{r-2d^\vee m})),
	\end{equation*}
	where $k = k_{i,r}$ is determined by Equation (\ref{eqn:k_ir}). $\odot$
\end{thm}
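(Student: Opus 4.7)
The plan is to proceed by induction on $m$, using the fact that the T-system of Remark \ref{rem:T-system} is exactly the exchange relation produced by mutating a quiver of the shape of $G^{-}$ at a vertex $(i,r)$. First I would make the base step concrete: evaluate $y_{i,r}^{(0)}$ and check directly that the change of variables (\ref{eqn:change_of_var}) expresses $z_{i,r}$ as the highest loop-weight of the KR module $W_{i}^{(k_{i,r})}(aq^{r})$, i.e. a dominant monomial in $\mathcal{P}^{+}$. Because KR modules are special (Theorem \ref{thm:snake_modules}), their $q$-characters begin with this dominant monomial and are determined by it; however, for $m=0$ the initial variables are only monomials, not full $q$-characters. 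So the initial data must be viewed as a \emph{boundary condition}, and the identification with $q$-characters is expected to hold only after sufficiently many mutations, which is the content of the hypothesis $m\geq h^{\vee}/2$.

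Next, I would analyse the effect of a single pass of $\mu_{\mathcal{S}}$ on a vertex $(i,r)$. Because the quiver $G^{-}$ is preserved by $\mu_{\mathcal{S}}$ (first observation cited from \cite{HL2}) the exchange relation at $(i,r)$ reads
\begin{equation*}
z_{i,r}^{(m)}\,z_{i,r}^{(m+1)} = \prod_{(j,s)\to(i,r)} z_{j,s}^{(\ast)} + \prod_{(i,r)\to(j,s)} z_{j,s}^{(\ast)},
\end{equation*}
where the starred superscripts depend on the position of $(j,s)$ in the sequence $\mathcal{S}$ relative to $(i,r)$. The key combinatorial step is to unwind these shifts and show, using the neighbour structure of $G^{-}$ coming from the symmetrized Cartan matrix $B$, that after applying the change of variables (\ref{eqn:change_of_var}) the relation becomes exactly the T-system
\begin{equation*}
\chi_q(W_i^{(k)}(b))\,\chi_q(W_i^{(k)}(bq^{2d_i})) = \chi_q(W_i^{(k+1)}(b))\,\chi_q(W_i^{(k-1)}(bq^{2d_i})) + \prod_{j\sim i}\chi_q(W_j^{(k')}(b')),
\end{equation*}
with $b$, $k$, $k'$, $b'$ tracked by the bookkeeping of $\mathcal{S}$. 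Since this T-system determines $\chi_q(W_i^{(k)}(\cdot))$ recursively from the fundamental ones and $\chi_q$ is an injective ring homomorphism on $\operatorname{Rep}U_q(\tilde{\mathfrak{g}})$ (Theorem \ref{thm:properties_of_chi_q}), the two families must coincide.

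The inductive step then only needs to verify that each mutation in $\mu_{\mathcal{S}}$ (rather than each full sweep) already produces a cluster variable matching the correct KR $q$-character. This amounts to checking that the partial-sweep-order encoded in $\mathcal{S}$ is compatible with the recursion in $k$ of the T-system: at the moment a vertex $(i,r)$ is mutated, its T-system neighbours $W_j^{(k')}(b')$ have already been identified in the current cluster with their $q$-characters. The threshold $m\geq h^{\vee}/2$ enters precisely here, since the top of $G^{-}$ requires roughly $h^{\vee}/2$ sweeps before the column labels stabilise and the induction can be launched uniformly on all $(i,r)\in \mathcal{X}^{-}$; below this threshold some of the KR indices that should appear on the right-hand side of the T-system would formally reach into $\mathcal{X}\setminus\mathcal{X}^{-}$ and have to be replaced by the trivial module $1$.

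The main obstacle will be this matching of the mutation order with the T-system recursion, in particular keeping track of which cluster variables appear in the exchange product at the moment of mutation and showing that the two monomials on the right-hand side recover, after the change of variables, both the product $\chi_q(W_i^{(k+1)})\chi_q(W_i^{(k-1)})$ and the $\prod_{j\sim i}\chi_q(W_j^{(k')})$ term with correctly shifted spectral parameters. Once the first full sweep is verified by a direct computation and the combinatorics of $\mathcal{S}$ is shown to iterate this verification consistently, the theorem follows by induction and the injectivity of $\chi_q$.
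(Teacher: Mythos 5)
The thesis itself does not prove Theorem \ref{thm:KR_cluster_seed}: it is quoted (with the $\odot$ convention) from part (b) of Theorem 3.1 of \cite{HL2}, so there is no in-paper proof to compare with, and your sketch has to be measured against the argument of Hernandez and Leclerc. At that level your overall strategy --- $\mu_{\mathcal{S}}$ preserves $G^-$, each exchange relation should turn, under the change of variables (\ref{eqn:change_of_var}), into an instance of the T-system of Remark \ref{rem:T-system}, and the recursion then pins down the Kirillov--Reshetikhin $q$-characters --- is indeed the strategy of the cited proof.

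As a proof, however, your induction does not close, and the gap sits exactly where the hypothesis $m\geq h^\vee/2$ enters. Your inductive step assumes that at the moment a vertex $(i,r)$ is mutated its T-system neighbours ``have already been identified in the current cluster with their $q$-characters''. During the first sweeps this is false: at $m=0$ the cluster variables are just the dominant monomials $z_{i,r}$, i.e.\ the highest loop-weights of KR modules, and the exchange relation applied to such monomials does not reproduce the T-system for full $q$-characters, so the assertion that ``after the change of variables the relation becomes exactly the T-system'' cannot be used to launch the induction; your proposal offers no mechanism by which the identification becomes exact after finitely many sweeps. In the actual argument this bridge is supplied by truncated $q$-characters: one shows that at every stage (including $m=0$, where the truncation reduces to the highest monomial) the cluster variables equal suitable truncations of KR $q$-characters, that the exchange relations are T-system relations for these truncations, and finally that once $m\geq h^\vee/2$ the truncation already contains every monomial of $\chi_q(W_i^{(k)}(aq^{r-2d^\vee m}))$, so truncated and genuine $q$-characters coincide; without this (or an equivalent device) the threshold $m\geq h^\vee/2$ remains an unproved assertion in your sketch. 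A smaller point: the injectivity of $\chi_q$ from Theorem \ref{thm:properties_of_chi_q} does not do the work you assign to it --- you are comparing Laurent polynomials in $\mathbb{Z}[\mathcal{P}]$ directly, and all you need is that the T-system recursion with fixed initial data has a unique solution, which is immediate because the exchange relation solves for the new variable by division.
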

Hence, when $m\geq h^\vee/2$, the action of $\mu_{\mathcal{S}}$ on the seed $\Sigma_{m,\infty}$ is indeed given by shifting the index $r$ by $-2d^\vee$. Moreover, since $\mu_{\mathcal{S}}$ and $\tau$ are clearly automorphisms of their corresponding categories, we have identified the initial seed $\Sigma_\infty$ of $\mathcal{A}_\infty$ with the set of Kirillov Reshetikhin modules. In a similar fashion, mutation sequences for prime snake modules are defined in the paper \cite{BJY} and prime snake modules are identified with cluster variables of $\mathcal{A}_\infty$.\footnote{ We omit the details in view of the scope of this thesis for now. However, we have the impression that some details can be reformulated in a slightly more elegant way, at least from the point of view of a mathematical physicist.} Together with the fact that prime snake modules are prime and real (cf. \cite{BJY}), this proves 
\begin{thm}[the Hernandez--Leclerc conjecture for prime snake modules]
	\textbf{Prime snake modules} in type $A_n$ (and $B_n$) correspond to \textbf{cluster variables} in their corresponding cluster algebra $\mathcal{A}_\infty$ (resp $\mathcal{B}_\infty$). $\odot$
\end{thm}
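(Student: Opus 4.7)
The plan is to bootstrap from Theorem \ref{thm:KR_cluster_seed}, which already identifies the cluster variables of every seed $\Sigma_{m,\infty}$ (for $m\geq h^\vee/2$) with the $q$-characters of Kirillov--Reshetikhin modules under the change of variables (\ref{eqn:change_of_var}). Since KR modules are exactly the prime snake modules whose snake is a single column (length $M=1$), the base case of the correspondence is in hand. The task is then to reach every other prime snake module by further mutating these seeds, and to show that the corresponding cluster exchange relations are precisely the $S$-systems of \cite{BJY}.

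First I would associate to every prime snake $\mathfrak{s}=(i_t,k_t)_{1\leq t\leq M}$ a canonical finite mutation sequence $\mu_{\mathfrak{s}}$ inside $\mathcal{C}(\Sigma_\infty)$, built by "walking along" the snake and mutating, at each step, at the vertex corresponding to the KR module that has to be replaced in order to incorporate the next point $(i_{t+1},k_{t+1})$. Applied after $\mu_{\mathcal{S}}^m$ with $m$ large enough, the first mutation swaps a KR variable for the cluster variable associated with the two-point prime snake, the second mutation produces the three-point prime snake, and so on, by an induction on $M$. The inductive step is the heart of the argument: assuming that the prime snake $(i_t,k_t)_{1\leq t\leq M-1}$ has been realized as a cluster variable $z_{\mathfrak{s}'}$ in some seed $\Sigma'$, one has to show that the mutation of $\Sigma'$ at the vertex of $z_{\mathfrak{s}'}$ yields the cluster variable corresponding to $(i_t,k_t)_{1\leq t\leq M}$. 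This is exactly where the $S$-system is needed, since its exchange form matches the shape $y_k y_k' = \prod_{i\to k} y_i + \prod_{k\to j} y_j$ of a cluster mutation.

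The main obstacle will be verifying this matching combinatorially: one must show that the arrows incident to the "snake vertex" of $\Sigma'$ are precisely those predicted by the $S$-system, so that the two monomials on the right-hand side of the mutation equal the two monomials on the right-hand side of the $S$-system relation for $\mathfrak{s}'$. This requires a careful analysis of how mutating along the sequence $\mu_{\mathfrak{s}}$ deforms the quiver $G^-$, keeping track of which arrows are created, reversed, or cancelled. I expect the type $A_l$ case to be clean because the underlying quiver of $G^-$ is bipartite and the snake condition $k'-k\geq|i'-i|+2$ admits a transparent planar description via the path formula of Theorem \ref{thm:path_formula}; type $B_l$ will require the additional bookkeeping induced by the symmetrizer $D$ and the doubled columns at the short-root node.

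Finally, to conclude that every cluster variable is a prime snake module (and not just the converse), I would use that $\mathcal{A}_\infty$ is generated as a cluster algebra from $\Sigma_\infty$, so every cluster variable is reached by some finite mutation sequence from the initial KR seed. Combining primality and reality of prime snake modules from Theorem \ref{thm:snake_modules} with the fact that cluster monomials of a monoidal categorification correspond to classes of real simple objects (and cluster variables to prime ones), the set of cluster variables obtained coincides with the set of prime snake modules. This gives the Hernandez--Leclerc conjecture for types $A$ and $B$, and also proves the claim asserted at the end of Subsection \ref{subsect:q-char_and_snake_mod} that prime snake modules exhaust the prime irreducible elements of $\mathrm{C}_{\mathbb{Z}}$.
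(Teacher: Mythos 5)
Your first three paragraphs follow essentially the route the paper itself relies on, since at this point the thesis simply defers to \cite{BJY}: starting from the identification of the seeds $\mu_{\mathcal{S}}^m(\Sigma_\infty)$ with Kirillov--Reshetikhin modules (Theorem \ref{thm:KR_cluster_seed}), one defines snake-specific mutation sequences and shows, by induction on the length of the snake, that each exchange relation is an $S$-system, so that the newly created cluster variable is the class of the longer prime snake module. Two caveats on this part: the representation-theoretic input that the $S$-system relations actually hold in $\operatorname{Gr}(\mathrm{C}_{\mathbb{Z}})$, with the two right-hand products being classes of irreducibles, is exactly what \cite{BJY} establishes by $q$-character and path-formula methods and cannot be extracted from the quiver combinatorics alone (your sketch, like the thesis, takes it as given); and your base case is misstated, since snakes of length $M=1$ give the fundamental modules, whereas the KR modules appearing in Theorem \ref{thm:KR_cluster_seed} are the single-column prime snakes of arbitrary length.

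The genuine gap is in your closing paragraph. From the fact that every cluster variable of $\mathcal{A}_\infty$ is reached from $\Sigma_\infty$ by some finite mutation sequence you cannot conclude that it lies in the family you constructed: your induction only controls the special sequences $\mu_{\mathfrak{s}}$ that walk along a snake, and an arbitrary mutation sequence need not be of that form. Moreover, the appeal to ``cluster monomials of a monoidal categorification correspond to real simple objects'' is circular, because the assertion that $\mathrm{C}_{\mathbb{Z}}$ is a monoidal categorification of $\mathcal{A}_\infty$ is precisely the Hernandez--Leclerc conjecture whose proof is at stake. Fortunately, the theorem as stated (and as established in the paper via \cite{BJY}) only asserts the forward correspondence, namely that prime snake modules are realized as cluster variables; the exhaustion statement in the other direction is the deeper part of the conjecture and is not derived by this argument --- in the thesis it is only invoked ``in agreement with'' the categorification, together with primality and reality from Theorem \ref{thm:snake_modules} and the claim of Subsection \ref{subsect:q-char_and_snake_mod}. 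Dropping your last paragraph, or replacing it by an explicit citation of the categorification results, brings your outline in line with the intended proof.
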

Hence, proving the assertion in Subsection \ref{subsect:q-char_and_snake_mod} in agreement with the Hernandez--Leclerc conjecture\footnote{That $\mathcal{A}_\infty$ is a monoidal categorification of $\mathrm{C}_{\mathbb{Z}}$. I.e., that there is a one to one correspondence between cluster monomials and irreducible elements, and that cluster monomials correspond to prime irreducible elements.} (see \cite{HL} Definition 2.1). Moreover, we conclude that the extended $\operatorname{T}$-systems stated in Subsection \ref{subsect:q-char_and_snake_mod} provide non-trivial explicit relations between cluster variables.
%
\newpage
\begin{appendices}
\chapter{Appendix}
\label{App:A}
Let us list some useful identities for $q$-numbers.
\begin{lem}\label{lem:propert_q-bin_coeff}
	Let $q$ be an indeterminate. Then
	\begin{enumerate}[label=\normalfont(\roman*)]
		\item $\recbinom{r}{k}_q=q^{-k}\recbinom{r-1}{k}_q+q^{r-k}\recbinom{r-1}{k-1}_q\text{ if }r\geq k\geq 0,$
		\item $\sum_{k=0}^{r}(-1)^k\recbinom{r}{k}_qq^{-(r-1)k}=0\text{ if }r\geq0$. $\odot$
	\end{enumerate}
\end{lem}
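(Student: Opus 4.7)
The plan is to handle (i) by unwinding the definition of the $q$-binomial and then to derive (ii) as a direct telescoping consequence of (i), so that no separate induction is really needed beyond the base case $r=1$ (the printed hypothesis $r\geq 0$ should read $r\geq 1$, since for $r=0$ the single term equals $1$).

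For (i), the key observation is the $q$-number identity
\begin{equation*}
[r]_q \;=\; q^{-k}[r-k]_q + q^{r-k}[k]_q,
\end{equation*}
which I would verify by plugging in $[n]_q=(q^n-q^{-n})/(q-q^{-1})$ and watching the middle terms cancel. Once this is in hand, the defining formula $\binom{r}{k}_q = [r]_q!/([k]_q![r-k]_q!)$ immediately gives
\begin{equation*}
\binom{r}{k}_q \;=\; \frac{[r-1]_q!\bigl(q^{-k}[r-k]_q + q^{r-k}[k]_q\bigr)}{[k]_q!\,[r-k]_q!} \;=\; q^{-k}\binom{r-1}{k}_q + q^{r-k}\binom{r-1}{k-1}_q,
\end{equation*}
with the usual convention that $\binom{r-1}{k}_q=0$ when $k>r-1$ and $\binom{r-1}{-1}_q=0$. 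This step is completely routine; the only mild subtlety is the endpoint behaviour, which I would treat by declaring those boundary values explicitly.

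For (ii), the idea is to substitute (i) inside the alternating sum and reindex. Concretely,
\begin{equation*}
\sum_{k=0}^{r}(-1)^{k}\binom{r}{k}_q q^{-(r-1)k} \;=\; \sum_{k=0}^{r-1}(-1)^{k}q^{-rk}\binom{r-1}{k}_q + \sum_{k=1}^{r}(-1)^{k}q^{r-rk}\binom{r-1}{k-1}_q,
\end{equation*}
after using (i) and collecting exponents. Reindexing the second sum by $j=k-1$ turns its coefficient $q^{r-rk}=q^{-rj}$ and flips its sign to $(-1)^{j+1}$, so it becomes minus the first sum and the total vanishes. This step requires no inductive hypothesis, only careful bookkeeping of signs and exponents.

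The main (and mild) obstacle is really just making sure the boundary indices and the convention $r\geq 1$ are handled cleanly; once that is fixed, (i) is a one-line manipulation of $q$-numbers and (ii) is an immediate cancellation following from (i).
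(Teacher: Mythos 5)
Your proof is correct and complete: the $q$-number identity $[r]_q=q^{-k}[r-k]_q+q^{r-k}[k]_q$ checks out directly from $[n]_q=(q^n-q^{-n})/(q-q^{-1})$, it gives (i) immediately from the definition of $\recbinom{r}{k}_q$ once the boundary conventions $\recbinom{r-1}{r}_q=\recbinom{r-1}{-1}_q=0$ are declared, and substituting (i) into the alternating sum and reindexing the second sum by $j=k-1$ makes the two resulting sums (both with exponent $q^{-rk}$) cancel identically, so (ii) holds for all $r\geq 1$ with no induction needed. The paper itself omits the proof of this lemma (it is taken from the Chari--Pressley book, where (ii) is typically deduced from (i) by induction on $r$), so there is no in-paper argument to compare against; your direct-cancellation route is, if anything, slightly cleaner than the usual inductive one, and your remark that the hypothesis in (ii) should read $r\geq 1$ rather than $r\geq 0$ is also correct, since for $r=0$ the sum equals $1$.
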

\begin{cor}
	$\recbinom{r}{k}_q\in \mathbb{Z}[q,q^{-1}]$ if $r\geq k\geq 0$. $\odot$
\end{cor}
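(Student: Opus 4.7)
My approach for (i) would be a direct computation from the definition $\recbinom{r}{k}_q = [r]_q!/([r-k]_q![k]_q!)$. Factoring the common term $[r-1]_q!/([r-k]_q![k]_q!)$ out of the right-hand side reduces the identity to the single equation
\begin{equation*}
[r]_q = q^{-k}[r-k]_q + q^{r-k}[k]_q.
\end{equation*}
Expanding both $q$-numbers via $[n]_q = (q^n-q^{-n})/(q-q^{-1})$ gives a numerator $q^{r-2k} - q^{-r} + q^r - q^{r-2k}$, in which the $q^{r-2k}$ terms cancel, leaving $q^r - q^{-r}$ and hence $[r]_q$.

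For (ii), the cleanest route is to substitute (i) into the alternating sum $S_r = \sum_{k=0}^{r}(-1)^k\recbinom{r}{k}_q q^{-(r-1)k}$. The term coming from $q^{-k}\recbinom{r-1}{k}_q$ contributes $\sum_{k=0}^{r-1}(-1)^k q^{-rk}\recbinom{r-1}{k}_q$, using $\recbinom{r-1}{r}_q=0$. The term coming from $q^{r-k}\recbinom{r-1}{k-1}_q$, after the shift $k \mapsto k+1$ and using $\recbinom{r-1}{-1}_q=0$, becomes
\begin{equation*}
-\sum_{k=0}^{r-1}(-1)^k q^{-(r-1)(k+1) + r-k-1}\recbinom{r-1}{k}_q = -\sum_{k=0}^{r-1}(-1)^k q^{-rk}\recbinom{r-1}{k}_q,
\end{equation*}
since $-(r-1)(k+1) + r-k-1 = -rk$. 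The two sums then cancel term-by-term, so $S_r=0$. (This is implicitly for $r\geq 1$; the $r=0$ case of the sum is just $1$, so the statement in the lemma is understood for $r\geq 1$.)

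The corollary follows by a straightforward induction on $r$: the base case $\recbinom{0}{0}_q=1\in\mathbb{Z}[q,q^{-1}]$ is immediate, and the recursion (i) expresses $\recbinom{r}{k}_q$ as a $\mathbb{Z}[q,q^{-1}]$-linear combination of $\recbinom{r-1}{k}_q$ and $\recbinom{r-1}{k-1}_q$, both of which lie in $\mathbb{Z}[q,q^{-1}]$ by the inductive hypothesis.

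I do not anticipate any substantial obstacle: these are standard $q$-analogues of Pascal's recursion and the alternating binomial sum. The only minor subtlety is bookkeeping with the boundary conventions $\recbinom{r-1}{-1}_q = \recbinom{r-1}{r}_q = 0$ so that the index shift in the proof of (ii) does not introduce spurious terms.
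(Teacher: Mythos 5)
Your argument is correct and follows the same route the paper intends: the Corollary is placed immediately after Lemma \ref{lem:propert_q-bin_coeff} precisely because it follows by induction on $r$ from the $q$-Pascal recursion (i), whose coefficients $q^{-k}$ and $q^{r-k}$ lie in $\mathbb{Z}[q,q^{-1}]$, exactly as you argue. Your verifications of (i) and (ii) and the remark that (ii) really requires $r\geq 1$ (the $r=0$ sum being $1$) are also accurate and handled with the right boundary conventions.
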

Since we are juggling around a lot with polynomials in this thesis, let us also give a fun reference \cite{Carroll_sep_poly}, which provides a nice and elementary proof of the assertion that a polynomial in each variable separately is a polynomial.
\end{appendices}
\cleardoublepage
\phantomsection
\addcontentsline{toc}{chapter}{References}{}
\bibliographystyle{amsplain}
\bibliography{literatur}
\newpage
\phantomsection
\addcontentsline{toc}{chapter}{Acknowledgement}{}
\section*{Acknowledgement}

Henrik Jürgens acknowledges the
support of the grant FAR UNIMORE project CUP-E93C23002040005.\\

I thank Prof. Dr. Matthias Wendt for agreeing to be my first assessor and especially for the ideas and almost weekly discussions over more than a year when I was still applying the representation theory in my PhD project in 2022. Furthermore, I am grateful for the support and discussions during the last months when I finally got to write down everything for my thesis.\\

I thank Prof. Dr. Markus Reineke for agreeing to be my second assessor. Especially, I am grateful that he did the very motivating and well structured courses on linear algebra 1 and 2 here in Wuppertal when I began studying in 2013.\\

I thank my whole physics working group and all my friends in research who have supported me over the years. Especially, I thank my doctor father Hermann Boos, my friends and colleagues Frank Göhmann, Andreas Klümper, Khazret Nirov, and Alec Cooper for all the scientific input and ideas over many years that led me to where I am now. Of course, there are many more, and I am thankful for all the nice and helpful conversations with so many other researchers I have met and friends I have made on countless conferences or have been researching together with in my working group for a long time. Thank you!\\

%
%
%
%

I thank my family for their support over the years and for enabling me to fully focus on my studies and research. Specifically, I thank my mother Doris, my father Heiko and my sister Nadine.\\

Last but not least I thank my friends, especially Jens, for their support, helpful advices and the help with the correction of my thesis.\\
\newpage
\end{document}